\newtheorem{theorem}{Theorem}[subsection]
\newtheorem{conjecture}[theorem]{Conjecture}
\newtheorem{proposition}[theorem]{Proposition}
\newtheorem{propdef}[theorem]{Proposition/Definition}
\newtheorem{corollary}[theorem]{Corollary}
\newtheorem{example}[theorem]{Example}
\numberwithin{figure}{section}
\newtheorem*{rep@theorem}{\rep@title}
\newcommand{\newreptheorem}[2]{%
\newenvironment{rep#1}[1]{%
 \def\rep@title{#2 \ref{##1}}%
 \begin{rep@theorem}}%
 {\end{rep@theorem}}}
\begin{document}

\title{\textsf{Algebraic and geometric methods in \\ enumerative combinatorics}}
\author{\textsf{Federico Ardila\footnote{\textsf{San Francisco State University, San Francisco, USA; Universidad de Los Andes, Bogot\'a, Colombia. federico@sfsu.edu} \newline Partially supported by the US National Science Foundation CAREER Award DMS-0956178 and the SFSU-Colombia Combinatorics Initiative.
This is a close-to-final draft of Chapter 1 of the upcoming \emph{Handbook of Enumerative Combinatorics} from Feb 9, 2015. Please refer to the book for the final version.}}}
\date{}

\maketitle
 
%\begin{abstract}
%We use algebra, geometry, and topology to count. 
%\end{abstract}

%\tableofcontents 

\setcounter{section}{-1}

\section{\textsf{Introduction}}\label{f.sec:intro}

Enumerative combinatorics is about counting. The typical question is to find the number of objects with a given set of properties.

However, enumerative combinatorics is not just about counting. 
%To enumerate a set, we usually need to understand it first. 
In ``real life", when we talk about \emph{counting}, we imagine lining up a set of objects and counting them off: 
%If the objects we wish to count were standing in line, and the line was not too long, we could just count them off: 
$1, 2, 3, \ldots$. However, families of combinatorial objects do not come to us in a natural linear order. To give a very simple example: we do not count the squares in an $m \times n$ rectangular grid linearly. Instead, we use the rectangular structure to understand that the number of squares is $m \cdot n$. Similarly, to count a more complicated combinatorial set, we usually spend most of our efforts understanding
%the most important step is usually to understand 
the underlying structure of the individual objects, or of the set itself. % Once we understand, we count.

Many combinatorial objects of interest have a rich and interesting \textbf{algebraic} or \textbf{geometric} structure, which often becomes a very powerful tool towards their enumeration. In fact, there are many families of objects that we \textbf{only} know how to count using these tools. %, even though the answer suggests there should be a directly combinatorial proof. % number ``looks combinatorial".
This chapter highlights some key aspects of the rich interplay between algebra, discrete geometry, and combinatorics, with an eye towards enumeration.

\bigskip

\noindent{\textsf{\textbf{About this survey.}}}  
Over the last fifty years, combinatorics has undergone a radical transformation. 
Not too long ago, 
%Many of today's experts have stories of how 
%combinatorics was not considered a ``respectable" field of study, with its
%; some called it a ``Mickey Mouse" subject, full of 
combinatorics mostly consisted of ad hoc methods and clever solutions to problems that were fairly isolated from the rest of mathematics. It has since grown to be a central area of mathematics, largely thanks to the discovery of deep connections to other fields. Combinatorics has become an essential tool in many disciplines. Conversely, even though ingenious methods and clever new ideas still abound, there is now a powerful, extensive toolkit of algebraic, geometric, topological, and analytic techniques that can be applied to combinatorial problems. 

It is impossible to give a meaningful summary of the many facets of algebraic and geometric combinatorics in a writeup of this length. I found it very difficult but necessary to omit several beautiful, important directions. In the spirit of a \emph{Handbook of Enumerative Combinatorics}, my guiding principle was to focus on algebraic and geometric techniques that are useful towards the solution of enumerative problems. The main goal of this chapter is to state clearly and concisely some of the most useful tools in algebraic and geometric enumeration, and to give many examples that quickly and concretely illustrate how to put these tools to use.

\newpage

%I COMMENTED THIS OUT
%\tableofcontents
%sSO I COULD ADD IN MY EDITED FILE ALGMETHODS12EDITED.TOC, WHERE I ADDED THE LABELS "PART 1" AND "PART 2"

\input algmethods12edited.toc

\newpage

\begin{Large}
\noindent {\textbf{\textsf{PART 1. ALGEBRAIC METHODS}}}
\end{Large}

\bigskip

The first part of this chapter focuses on algebraic methods in enumeration. In Section \ref{f.sec:answer} we discuss the question: what is a good answer to an enumerative problem? Generating functions are the most powerful tool to unify the different kinds of answers that interest us: explicit formulas, recurrences, asymptotic formulas, and generating functions. In Section \ref{f.sec:genfns} we develop the algebraic theory of generating functions. Various natural operations on combinatorial families of objects correspond to simple algebraic operations on their generating functions, and this allows us to count many families of interest. 
In Section \ref{f.sec:linalg} we show how many problems in combinatorics can be rephrased in terms of linear algebra, and reduced to the problem of computing determinants.
Finally, Section \ref{f.sec:posets} is devoted to the theory of posets. Many combinatorial sets have a natural poset structure, and this general theory is very helpful in enumerating such sets.

\section{\textsf{What is a good answer?}}\label{f.sec:answer}

The main goal of enumerative combinatorics is to count the elements of a finite set. Most frequently, we encounter a family of sets $T_0, T_1, T_2, T_3, \ldots$ and we need to find the number $t_n=|T_n|$ for $n=1, 2, \ldots$. What constitutes a good answer? 

Some answers are obviously good. For example, the number of subsets of $\{1,2, \ldots, n\}$ is $2^n$, and it seems clear that this is the simplest possible answer to this question. Sometimes an answer ``is so messy and long, and so full of factorials and sign alternations and whatnot, that we may feel that the disease was preferable to the cure" \cite{f.Wilf}. Usually, the situation is somewhere in between, and it takes some experience to recognize a good answer. 

A combinatorial problem often has several kinds of answers. Which answer is better depends on what one is trying to accomplish. Perhaps this is best illustrated with an example. Let us count the number $a_n$ of \textbf{domino tilings} of a $2 \times n$ rectangle into $2 \times 1$ rectangles. There are several different ways of answering this question.

\begin{figure}[ht]
 \begin{center}
  \includegraphics[scale=.8]{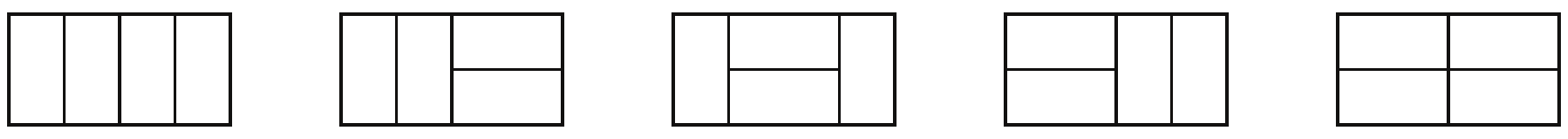}
  \caption{ \label{f.fig:2xn}
The five domino tilings of a $2 \times 4$ rectangle.}
 \end{center}
\end{figure}

\bigskip
\noindent \textbf{\textsf{Explicit formula 1.}} We first look for an explicit combinatorial formula for $a_n$. 
%That usually involves exploring and understanding the structure of the objects we are counting, which in this case is quite simple: 
To do that, we play with a few examples, and quickly notice that these tilings are structurally very simple: they are just a sequence of $2 \times 1$ vertical tiles, and $2 \times 2$ blocks covered by two horizontal tiles. Therefore constructing a tiling is the same as writing $n$ as an ordered sum of $1$s and $2$s. For example, the tilings of Figure \ref{f.fig:2xn} correspond, respectively, to $1+1+1+1, \,  1+1+2, \, 1+2+1, \, 2+1+1, 2+2$. 
These sums are easy to count. If there are $k$ summands equal to $2$ there must be $n-2k$ summands equal to $1$, and there are ${n-2k+k \choose k} = {n-k \choose k}$ ways of  ordering the summands. Therefore 
\begin{equation}\label{f.eq:Fib1}
a_n = \sum_{k=0}^{\lfloor n/2 \rfloor} {n-k \choose k} = {n \choose 0} + {n-1 \choose 1} + {n-2 \choose 2} + \cdots.
\end{equation}
This is a pretty good answer. It is certainly an explicit formula, and it may be used to compute $a_n$ directly for small values of $n$. It does have two drawbacks. Aesthetically, it is certainly not as satisfactory as ``$\,2^n\,$". In practice, it is also not as useful as it seems; after computing a few examples, we will soon notice that computing binomial coefficients is a non-trivial task. In fact there is a more efficient method of computing $a_n$.

\bigskip
\noindent \textbf{\textsf{Recurrence.}} Let $n \geq 2$. In a domino tiling, the leftmost column of a $2 \times n$ can be covered by a vertical domino or by two horizontal dominoes. If the leftmost domino is vertical, the rest of the dominoes tile a $2 \times (n-1)$ rectangle, so there are $a_{n-1}$ such tilings. 
On the other hand, if the two leftmost dominoes are horizontal, the rest of the dominoes tile a $2 \times (n-2)$ rectangle, so there are $a_{n-2}$ such tilings. We obtain the \textbf{recurrence relation}
\begin{equation}\label{f.eq:Fib2}
a_0 = 1, \quad a_1 = 1, \quad a_n = a_{n-1}+a_{n-2} \,\, \textrm{ for } n \geq 2.
\end{equation}
which allows us to compute each term in the sequence in terms of the previous ones. We see that $a_n = F_{n+1}$ is the $(n+1)$th \textbf{Fibonacci number}.

This recursive answer is not as nice as ``$\,2^n\,$" either; it is not even an explicit formula for $a_n$. 
If we want to use it to compute $a_n$, we need to compute all the first $n$ terms of the sequence $1, 1, 2, 3, 5, 8, 13, 21, 34, 55, 89, 144, \ldots$. However, we can  compute those  very quickly; we only need to perform $n-1$ additions. This is an extremely efficient method for computing $a_n$.

%The recurrence relation for $a_n$ also has the advantage that it brings us closer to an extremely powerful tool: generating functions.

%However, this answer is certainly not as satisfactory as ``$2^n$". In particular, it would be nice to be able to compute, say, $a_{100}$ without having to compute the previous $100$ terms of the sequence. Let us try to find an explicit formula for $a_n$.

\bigskip
\noindent \textbf{\textsf{Explicit formula 2.}} There is a well established method that turns linear recurrence relations with constant coefficients, such as (\ref{f.eq:Fib2}), into explicit formulas. We will review it in Theorem \ref{f.th:rational}. In this case, the method gives
\begin{equation}\label{f.eq:Fib4}
a_n = \frac1{\sqrt5}\left(\left(\frac{1+\sqrt5}2\right)^{n+1} - \left(\frac{1-\sqrt5}2\right)^{n+1}\right).
\end{equation}
This is clearly the simplest possible explicit formula for $a_n$; in that sense it is a great formula. 

A drawback is that this formula is really not very useful if we want to compute the exact value of, say, $a_{1000}$. It is not even clear why (\ref{f.eq:Fib4}) produces an integer, and to get it to produce the correct integer would require arithmetic calculations with extremely high precision.

An advantage is that, unlike (\ref{f.eq:Fib1}), (\ref{f.eq:Fib4}) tells us very precisely how $a_n$ grows with $n$.

\bigskip
\noindent \textbf{\textsf{Asymptotic formula.}} 
It follows immediately from (\ref{f.eq:Fib4}) that
\begin{equation}\label{f.eq:Fib5}
a_n \sim c \cdot \varphi^n, \quad
\end{equation}
where $c=\frac{1+\sqrt{5}}{2\sqrt{5}}$ and $\varphi =\frac{1+\sqrt{5}}2 \approx 1.6179\ldots$ is the golden ratio. This notation means that $\lim_{n \rightarrow \infty} a_n/(c \cdot \varphi^n) = 1$. In fact, since $|\frac{1-\sqrt{5}}2| < 1$, $a_n$ is the closest integer to  $c \cdot \varphi^n$.

%The precise rate of growth of our sequence is completely hidden from our other explicit formula (\ref{f.eq:Fib1}). %(This asymptotic formula also follows from (\ref{f.eq:Fib3}) and standard analytic considerations; see Chapter ??.)

\bigskip
\noindent \textbf{\textsf{Generating function.}} The last kind of answer we discuss is the generating function. This is perhaps the strangest kind of answer, but it is often the most powerful one.

Consider the infinite power series $A(x) = a_0 + a_1x + a_2x^2 + \cdots$. We call this the \textbf{generating function} of the sequence $a_0, a_1, a_2, \ldots$.\footnote{For the moment, let us not worry about where this series converges.  The issue of convergence can be easily avoided (as combinatorialists often do, in a way which will be explained in Section \ref{f.sec:fps}) or resolved and exploited to our advantage%\comment{(as Chapter  explains)}
; let us postpone that discussion for the moment.}
We now compute this power series: From (\ref{f.eq:Fib2}) we obtain that $A(x) = 1 + x + \sum_{n \geq 2} (a_{n-1} + a_{n-2}) x^n = 1 + x + x(A(x)-1) + x^2A(x)$ which implies:
\begin{equation}\label{f.eq:Fib3}
A(x) = a_0 + a_1x + a_2x^2 + \cdots = \frac1{1-x-x^2}.
\end{equation}

With a bit of theory and some practice, we will be able to write the equation (\ref{f.eq:Fib3}) immediately, with no further computations (Example 18 in Section \ref{f.sec:ogfs}).
To show this is an \textbf{excellent} answer, let us use it to derive all our other answers, and more. 

\medskip
$\bullet$ Generating functions help us obtain explicit formulas. For instance, rewriting  
\[
A(x) = \frac1{1-(x+x^2)} = \sum_{k \geq 0} (x+x^2)^k
\]
we recover (\ref{f.eq:Fib1}). If, instead, we use the method of partial fractions, we get
\[
A(x) =  \left( \frac{1/{\sqrt5}}{1-\frac{1+\sqrt{5}}2  x} \right) -\left( \frac{1/{\sqrt5}}{1-\frac{1-\sqrt{5}}2 x} \right)
\]
which brings us to our second explicit formula (\ref{f.eq:Fib4}). 

\medskip
$\bullet$ 
Generating functions help us obtain recursive formulas. In this example,
%If we had arrived to (\ref{f.eq:Fib3}) through a different route (and we will), 
we simply compare the coefficients of $x^n$ on both sides of the equation
$A(x)(1-x-x^2)=1$, and we get the recurrence relation (\ref{f.eq:Fib2}).

\medskip
$\bullet$ 
Generating functions help us obtain asymptotic formulas. In this example, (\ref{f.eq:Fib3}) leads to (\ref{f.eq:Fib4}), which gives (\ref{f.eq:Fib5}). In general, almost everything that we know about  the rate of growth of combinatorial sequences comes from their generating functions, because analysis tells us that the asymptotic behavior of $a_n$ is intimately tied to the singularities of the function $A(x)$. 
%\comment{This topic is explored at depth in Chapter ??}

\medskip
$\bullet$ 
Generating functions help us enumerate our combinatorial objects in more detail, and understand some of their statistical properties. 
%can also lead very quickly to a more refined enumeration of our objects.
For instance, say we want to compute the number $a_{m,n}$ of domino tilings of a $2 \times n$ rectangle which use exactly $m$ vertical tiles. Once we really understand (\ref{f.eq:Fib3}) in Section \ref{f.sec:ogfs}, we will get the answer immediately:
\[
\frac1{1-vx-x^2} = \sum_{m, n \geq 0} a_{m,n} v^m x^n.
\]
Now suppose we wish to know what fraction of the tiles is vertical in a large random tiling. Among all the $a_n$ domino tilings of the $2 \times n$ rectangle, there are $\sum_{m \geq 0}  ma_{m,n}$ vertical dominoes. We compute
\[
\sum_{n \geq 0}\left(\sum_{m \geq 0}  ma_{m,n}\right)x^n =\left[ \frac{\partial}{\partial v} \left(\frac1{1-vx-x^2}\right) \right]_{v = 1} = %\left. \frac{x}{(1-vx-x^2)^2} \right|_{v = 1} = 
\frac{x}{(1-x-x^2)^2}.
\]
Partial fractions then tell us that $\sum_{m \geq 0}  ma_{m,n} \sim \frac{n}5\left(\frac{1+\sqrt{5}}2\right)^{n+1} \sim \frac1{\sqrt{5}}na_n$. Hence the fraction of vertical tiles in a random domino tiling of a $2 \times n$ rectangle converges to $1/{\sqrt{5}}$ as $n \rightarrow \infty$.

\bigskip
\noindent
\textbf{\textsf{So what is a good answer to an enumerative problem?}} Not surprisingly, there is no definitive answer to this question. When we count a family of combinatorial objects, we look for explicit formulas, recursive formulas, asymptotic formulas, and generating functions. They are all useful. Generating functions are the most powerful framework we have to relate these different kinds of answers and, ideally, find them all.

\newpage

\section{\textsf{Generating functions}}\label{f.sec:genfns}

In combinatorics, one of the most useful ways of ``determining" a sequence of numbers $a_0, a_1, a_2, \ldots$ is to compute its \textbf{ordinary generating function}
\[
A(x) = \sum_{n \geq 0} a_nx^n = a_0 + a_1 x + a_2 x^2 + a_3 x^3 + a_4x^4 +  \cdots.
\]
or its \textbf{exponential generating function}
\[
A_{\textrm{exp}}(x) = \sum_{n \geq 0} a_n \frac{x^n}{n!} =  %a_0  + a_1 \frac{x^1}{1!} + a_2 \frac{x^2}{2!} + a_3 \frac{x^3}{3!} + \cdots.
a_0  + a_1 x + a_2 \, \frac{x^2}{2} + a_3 \, \frac{x^3}{6} +  a_4 \, \frac{x^4}{24} + \cdots.
\]
This simple idea is extremely powerful because some of the most common algebraic operations on ordinary and exponential generating functions correspond to some of the most common operations on combinatorial objects. This allows us to count many interesting families of objects; this is the content of Section \ref{f.sec:ogfs} (for ordinary generating functions) and Section \ref{f.sec:egfs} (for exponential generating functions). In Section \ref{f.sec:nicegfs} we see how nice generating functions can be turned into explicit, recursive, and asymptotic formulas for the corresponding sequences.

Before we get to this interesting theory, we have to understand what we mean by power series. Section \ref{f.sec:fps} provides a detailed discussion, which is probably best skipped the first time one encounters power series. In the meantime, let us summarize it in one paragraph:

There are two main attitudes towards power series in combinatorics: the analytic attitude and the algebraic attitude. To harness the full power of power series, one should really understand both. Chapter 2 of this \emph{Handbook of Enumerative Combinatorics} is devoted to the analytic approach, which treats $A(x)$ as an honest analytic function of $x$, and uses analytic properties of $A(x)$ to derive combinatorial properties of $a_n$. In this chapter we follow the algebraic approach, which treats $A(x)$ as a formal algebraic expression, and manipulates it using the usual laws of algebra, without having to worry about any convergence issues.

\subsection{\textsf{The ring of formal power series}}\label{f.sec:fps}

Enumerative combinatorics is full of intricate algebraic computations with power series, where justifying convergence is cumbersome, and usually unnecessary. In fact, many natural power series in combinatorics only converge at $0$, such as $\sum_{n \geq 0} n! x^n$; so analytic methods are not available to study them. For these reasons we often prefer to carry out our computations algebraically in terms of \textbf{formal} power series. We will see that even in this approach, analytic considerations are often useful.

In this section we review the definition and basic properties of the \textbf{ring of formal power series} ${\mathbb{C}}[[x]]$. For a more in depth discussion, including the (mostly straightforward) proofs of the statements we make here, see \cite{f.Niven}.

\bigskip
\noindent \textsf{\textbf{Formal power series.}}
A \textbf{formal power series} is an expression of the form
\[
A(x) = a_0 + a_1 x + a_2 x^2 + \cdots, \qquad a_0, a_1, a_2, \ldots \in {\mathbb{C}}.
\]
Formally, this series is just a sequence of complex numbers $a_0, a_1, a_2, \ldots.$ We will find it convenient to denote it $A(x)$, but we do not consider it to be a function of $x$.
%; we will never substitute in a complex number $x$.
%It is implicit in the definition, but worth mentioning, that $\sum_{n \geq 0} a_nx^n = \sum_{n \geq 0} b_nx^n$ if and only if $a_n = b_n$ for all $n \geq 0$.

Let ${\mathbb{C}}[[x]]$ be the \textbf{ring of formal power series}, where the sum and the product of the series $A(x) = \sum_{n \geq 0} a_nx^n$ and $B(x) = \sum_{n \geq 0} b_nx^n$ are defined to mimic analytic functions at $0$:
\[
A(x) + B(x)  = \sum_{n \geq 0} (a_n+b_n)x^n, 
\qquad
A(x)B(x) = \sum_{n \geq 0} \left(\sum_{k=0}^n a_k b_{n-k} \right) x^n.
\]
%In other words, these strictly algebraic objects are defined to have the same algebraic behavior as analytic functions at $0$.
%, algebraically, these formal expressions behave as if they were honest analytic functions. 
It is implicitly understood that
 %but worth mentioning, that 
 $\sum_{n \geq 0} a_nx^n = \sum_{n \geq 0} b_nx^n$ if and only if $a_n = b_n$ for all $n \geq 0$.
%\[
%\sum_{n \geq 0} a_nx^n  + \sum_{n \geq 0} b_nx^n  = \sum_{n \geq 0} (a_n+b_n)x^n \qquad\left(\sum_{n \geq 0} a_nx^n\right) \left(\sum_{n \geq 0} b_nx^n\right)  = \sum_{n \geq 0} \left(\sum_{k=0}^n a_k b_{n-k} \right) x^n.
%\]
%\begin{eqnarray*}
%\sum_{n \geq 0} a_nx^n  + \sum_{n \geq 0} b_nx^n  &=& \sum_{n \geq 0} (a_n+b_n)x^n \\
%%\left(\sum_{n \geq 0} a_nx^n\right) \left(\sum_{n \geq 0} b_nx^n\right)  &=& \sum_{n \geq 0} (a_nb_0 + a_{n-1}b_1 + \cdots + a_0b_n)x^n
%\left(\sum_{n \geq 0} a_nx^n\right) \left(\sum_{n \geq 0} b_nx^n\right)  &=& \sum_{n \geq 0} \left(\sum_{k=0}^n a_k b_{n-k} \right) x^n.
%\end{eqnarray*}

The \textbf{degree} of $A(x) = \sum_{n \geq 0} a_nx^n$ is the \textbf{smallest} $n$ such that $a_n \neq 0$. We also write
\[
[x^n]A(x) := a_n, \qquad A(0) := [x^0]A(x) = a_0.
\]
We also define formal power series inspired by  series from analysis, such as
\[
e^x := \sum_{n \geq 0} \frac{x^n}{n!}, 
\qquad 
- \log(1-x) := \sum_{n \geq 1} \frac{x^n}{n}, 
\qquad
(1+x)^r := \sum_{n \geq 0} {r \choose n} x^n,\qquad
\]
for any  complex number $r$, where ${r \choose n} := r(r-1) \cdots (r-n+1)/n!$.

The ring ${\mathbb{C}}[[x]]$ is commutative with $0 = 0 + 0x +\cdots$ and $1 = 1 + 0x + \cdots$. It is  an integral domain; that is, $A(x)B(x) = 0$ implies that $A(x) = 0$ or $B(x) = 0$. It is easy to describe the units:
\[
\sum_{n \geq 0} a_n x^n \textrm{ is invertible} \quad \Longleftrightarrow \quad a_0 \neq 0.
\] 
For example $\frac1{1-x} = 1+x+x^2+\cdots$ because $(1-x)(1+x+x^2+\cdots) = 1+0x+0x^2+\cdots$.

\bigskip
\noindent \textsf{\textbf{Convergence.}} 
When working in ${\mathbb{C}}[[x]]$, we will not consider convergence of sequences or series of complex numbers. In particular, we will never substitute a complex number $x$ into a formal power series $A(x)$.

However, we do need a notion of convergence for sequences in ${\mathbb{C}}[[x]]$. We say that a sequence $A_0(x), A_1(x), \ldots$ of formal power series \textbf{converges} to $A(x)= \sum_{n \geq 0} a_nx^n $ if $\lim_{n \rightarrow \infty} \deg(A_n(x) - A(x)) = \infty$; that is, if for any $n \in {\mathbb{N}}$, the coefficient of $x^n$ in $A_m(x)$ \textbf{equals} $a_n$ for all sufficiently large $m$. This gives us a useful criterion for convergence of infinite sums and products in ${\mathbb{C}}[[x]]$:
\begin{eqnarray*}
\sum_{j=0}^\infty A_j(x) \textrm{ converges} \quad &\Longleftrightarrow & \quad \lim_{j \rightarrow \infty} \deg A_j(x) = \infty \\ 
\prod_{j=0}^\infty (1+A_j(x)) \textrm{ converges} \quad &\Longleftrightarrow & \quad \lim_{j \rightarrow \infty} \deg A_j(x) = \infty \qquad (A_j(0) = 0)
\end{eqnarray*}
For example, the infinite sum $\sum_{n \geq 0} (x+1)^n/2^n$ does not converge in this topology. Notice that  the coefficient of $x^0$ in this sum cannot be obtained through a finite computation; it would require interpreting the infinite sum $\sum_{n \geq 0} 1/2^n$.
On the other hand, the following infinite sum converges:
\begin{equation}\label{f.eq:1-x}
\sum_{n \geq 0} \frac1{n!} {\left(-\sum_{m \geq 1} \frac{x^m}{m}\right)^n} = 1-x.
\end{equation}
It is clear from the criterion above that this series converges; but why does it equal $1-x$?

\bigskip
\noindent \textsf{\textbf{Borrowing from analysis.}} 
In ${\mathbb{C}}[[x]]$, (\ref{f.eq:1-x}) is an algebraic identity which says that the coefficients of $x^k$ in the left hand side -- for which we can give an ugly but finite formula -- equal $1, -1, 0, 0, 0, \ldots$. If we were to follow a purist algebraic attitude, we would give an algebraic or combinatorial proof of this identity. This is probably possible, but intricate and rather dogmatic. A much simpler approach is to shift towards an analytic attitude, at least momentarily, and recognize that (\ref{f.eq:1-x}) is the Taylor series expansion of 
\[
e^{-\log(1-x)} = 1-x
\]
for $|x| < 1$. Then we can just invoke the following simple fact from analysis.

\begin{theorem}
If two analytic functions are equal in an open neighborhood of $0$, then their Taylor series at $0$ are equal coefficient-by-coefficient; that is, they are equal as formal power series.
\end{theorem}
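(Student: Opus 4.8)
The plan is to prove the statement in two steps: first establish that an analytic function on an open neighborhood of $0$ has a unique Taylor series, and then show that two such functions agreeing near $0$ must have identical Taylor coefficients. Let $f$ and $g$ be analytic in an open disk $D$ around $0$, with $f(x) = g(x)$ for all $x \in D$. Set $h = f - g$, which is analytic on $D$ and vanishes identically there. It suffices to show that every Taylor coefficient of $h$ at $0$ is zero, since the Taylor coefficients of a difference are the differences of the Taylor coefficients.

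The key fact I would invoke is the standard characterization of Taylor coefficients of an analytic function: if $h$ is analytic at $0$ then $h$ is infinitely differentiable there and the $n$-th Taylor coefficient equals $h^{(n)}(0)/n!$. (Alternatively, one can use the Cauchy integral formula $[x^n]h = \frac{1}{2\pi i}\oint \frac{h(z)}{z^{n+1}}\,dz$, but the derivative formulation is lighter.) Since $h$ is the zero function on the open set $D$, all of its derivatives at $0$ vanish: $h^{(n)}(0) = 0$ for every $n \geq 0$. Hence $[x^n]h = 0$ for all $n$, so the Taylor series of $h$ is the zero series, which means the Taylor series of $f$ and $g$ coincide coefficient-by-coefficient. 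By definition of equality in ${\mathbb{C}}[[x]]$ (two series are equal iff all coefficients agree), the two Taylor series are equal as formal power series.

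The only subtle point — and really the crux of why the theorem is true rather than a triviality — is the passage from ``$h \equiv 0$ on a neighborhood'' to ``$h^{(n)}(0) = 0$ for all $n$.'' This is immediate once one knows that an analytic function is determined locally by its Taylor series and that differentiation is a local operation: the derivative at $0$ only depends on the values of $h$ in an arbitrarily small neighborhood of $0$, all of which are $0$. So I would not belabor this; I would simply cite that an analytic function is infinitely differentiable and that a function identically $0$ near a point has all derivatives $0$ there. In a more self-contained write-up one could instead argue via the Cauchy estimates or the identity theorem, but for the purposes of this chapter the proof is genuinely a one-line consequence of a basic complex-analysis fact, and the main ``obstacle'' is purely expository: deciding which standard result to take as a black box.
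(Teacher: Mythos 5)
Your proof is correct; the paper itself states this theorem as a black-box fact from analysis and supplies no proof, so there is nothing to diverge from. Your argument — reduce to $h=f-g$ vanishing near $0$, note that the $n$th Taylor coefficient is $h^{(n)}(0)/n!$, and that the identically zero function has all derivatives zero — is exactly the standard one-line justification the paper implicitly has in mind.
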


\bigskip
\noindent \textsf{\textbf{Composition.}} The \textbf{composition} of two series $A(x) = \sum_{n \geq 0} a_nx^n$ and $B(x) = \sum_{n \geq 0} b_n x^n$ with $b_0 = 0$ is naturally defined to be:
\[
A(B(x)) = \sum_{n \geq 0} a_n \left(\sum_{m \geq 0} b_mx^m\right)^n.
\]
Note that this sum converges if and only if $b_0=0$. Two very important special cases in combinatorics are the series $\frac1{1-B(x)}$ and $e^{B(x)}$.

\bigskip
\noindent \textsf{\textbf{``Calculus".}} We define the \textbf{derivative} of $A(x) = \sum_{n \geq 0} a_nx^n$ to be
\[
A'(x) = \sum_{n \geq 0} (n+1)a_{n+1} x^n.
\]
This formal derivative satisfies the usual laws of derivatives, such as:
\[
(A+B)' = A' + B', \qquad (AB)' = A'B + AB', \qquad [A(B(x))]' = A'(B(x)) B'(x).
\]
We can still solve differential equations formally. For example, if we know that $F'(x) = F(x)$ and $F(0) = 1$, then $(\log F(x))' = F'(x)/F(x) = 1$, which gives $\log F(x) = x$ and $F(x) = e^x$. 

\bigskip

This concludes our discussion on the formal properties of power series. Now let us return to combinatorics.

%
%
%\bigskip
%\noindent \textsf{\textbf{Advantages of the algebraic approach.}}
%There are at least two important advantages of the algebraic attitude towards formal power series.
%
%Firstly, in ${\mathbb{C}}[[x]]$ we do not have to constantly worry about convergence. Enumerative combinatorics is full of intricate algebraic computations with power series. Justifying the convergence of our series in the intermediate steps can be tedious, and is usually unnecessary. The final result of our algebraic computation is often a formula $\sum_{n \geq 0} a_n x^n = A(x)$ for the formal power series of the sequence that interests us. At that stage, we can shift back to analysis and study the convergence of $A(x)$, to derive consequences about $a_n$. This is done extensively in Chapter ??.
%
%
%Secondly, and perhaps more importantly, there are numerous examples of power series in combinatorics that converge at $0$; for example, the generating function $\sum_{n \geq 0} n! x^n$ for permutations of $[n]$. In ${\mathbb{C}}[[x]]$ it is allowed, and quite useful, to carry out computations with these formal power series, where analytic techniques are not available to us.  
%
%

\subsection{\textsf{Ordinary generating functions}}\label{f.sec:ogfs}

Suppose we are interested in enumerating a family ${\mathcal{A}}={\mathcal{A}}_0 \sqcup {\mathcal{A}}_1 \sqcup {\mathcal{A}}_2 \sqcup \cdots$ of combinatorial structures, where ${\mathcal{A}}_n$ is a finite set consists of the objects of ``size" $n$. Denote by $|a|$ the size of $a \in {\mathcal{A}}$. The \textbf{ordinary generating function}  of ${\mathcal{A}}$ is
\[
A(x) = \sum_{a \in {\mathcal{A}}} x^{|a|} = a_0 + a_1x + a_2x^2 + \cdots
\]
where $a_n$ is the number of elements of size $n$.

We are not interested in the philosophical question of determining what it means for ${\mathcal{A}}$ to be ``combinatorial"; we are willing to call ${\mathcal{A}}$ a combinatorial structure as long as $a_n$ is finite for all $n$. We consider two structures ${\mathcal{A}}$ and ${\mathcal{B}}$ combinatorially equivalent, and write ${\mathcal{A}} \cong {\mathcal{B}}$, if $A(x) = B(x)$. 

More generally, we may consider a family ${\mathcal{A}}$ where each element $a$ is given a weight ${\mathrm{wt}}(a)$ -- often a constant multiple of $x^{|a|}$, or a monomial in one or more variables $x_1, \ldots, x_n$. Again, we require that there are finitely many objects of any given weight. Then we define the \textbf{weighted} ordinary generating function of ${\mathcal{A}}$ to be the formal power series
\[
A_{{\mathrm{wt}}}(x_1, \ldots, x_n) =  \sum_{a \in {\mathcal{A}}} {\mathrm{wt}}(a)  
\]

Examples of combinatorial structures (with their respective size functions ni parentheses) are  words on the alphabet $\{0,1\}$ (length),  domino tilings of rectangles of height $2$ (width), or Dyck paths (length). We may weight these objects by $t^k$ where $k$ is, respectively, the number of $1$s, the number of vertical tiles, or the number of returns to the $x$ axis.

\subsubsection{\textsf{Operations on combinatorial structures and their generating functions}}\label{f.sec:operations}

There are a few simple but very powerful operations on combinatorial structures, all of which have nice counterparts at the level of ordinary generating functions. 
Many combinatorial objects of interest may be built up from basic structures using these operations.

\begin{theorem}\label{f.th:ogf}
Let ${\mathcal{A}}$ and ${\mathcal{B}}$ be combinatorial structures. 
\begin{enumerate}
\item (${\mathcal{C}} = {\mathcal{A}} + {\mathcal{B}}$: Disjoint union)
If a ${\mathcal{C}}$-structure of size $n$ is obtained by choosing an ${\mathcal{A}}$-structure of size $n$ \textbf{or} a ${\mathcal{B}}$-structure of size $n$, then 
\[
C(x) = A(x) + B(x).
\]
This result also holds for weighted structures if the weight of a ${\mathcal{C}}$-structure is the same as the weight of the respective ${\mathcal{A}}$ or ${\mathcal{B}}$-structures. 

\item (${\mathcal{C}} = {\mathcal{A}} \times {\mathcal{B}}$: Product) 
If a ${\mathcal{C}}$-structure of size $n$ is obtained by choosing an ${\mathcal{A}}$-structure of size $k$ \textbf{and} a ${\mathcal{B}}$-structure of size $n-k$ for some $k$, then
\[
C(x) = A(x) B(x).
\]
This result also holds for weighted structures if the weight of a ${\mathcal{C}}$-structure is the product of the weights of the respective ${\mathcal{A}}$ and ${\mathcal{B}}$-structures.

\item (${\mathcal{C}} = \textsf{Seq}({\mathcal{B}})$: Sequence)
Assume $|{\mathcal{B}}_0|=0$. If a ${\mathcal{C}}$-structure of size $n$ is obtained by choosing a sequence of ${\mathcal{B}}$-structures of total size $n$, then
\[
C(x) = \frac1{1-B(x)}.
\]
This result also holds for weighted structures if the weight of a ${\mathcal{C}}$-structure is the product of the weights of the respective ${\mathcal{B}}$-structures. 
In particular, if $c_k(n)$ is the number of ${\mathcal{C}}$-structures of an $n$-set which decompose into $k$ ``factors" (${\mathcal{B}}$-structures), we have
\[
\sum_{n, k, \geq 0} c_k(n) {x^n} y^k = \frac1{1-yB(x)} = \frac{C(x)}{y+(1-y)C(x)}.
\]

\item (${\mathcal{C}} = {\mathcal{A}} \circ {\mathcal{B}}$: Composition) \textbf{Compositional Formula.}
Assume that $|{\mathcal{B}}_0|=0$. If a ${\mathcal{C}}$-structure of size $n$ is obtained by choosing a sequence of (say, $k$) ${\mathcal{B}}$-structures of total size $n$ and placing an ${\mathcal{A}}$-structure of size $k$ on this sequence of ${\mathcal{B}}$-structures, then 
\[
C(x) = A(B(x)).
\]
This result also holds for weighted structures if the weight of a ${\mathcal{C}}$-structure is the product of the weights of the ${\mathcal{A}}$-structure on its blocks and the weights of the ${\mathcal{B}}$-structures on the individual blocks.

\item (${\mathcal{C}} = {\mathcal{A}}^{-1}$: Inversion) \textbf{Lagrange Inversion Formula.}

(a) \textbf{Algebraic version.} If $A^{<-1>}(x)$ is the compositional inverse of $A(x)$ then
\[
n[x^n] A^{<-1>}(x) = [x^{n-1}] \left(\frac{x}{A(x)}\right)^n.
\]

(b) \textbf{Combinatorial version.} Assume $|{\mathcal{A}}_0|=0$,  $|{\mathcal{A}}_1| = 1$, and let
\[
A(x) = x-a_2x^2-a_3x^3-a_4x^4-\cdots
\]
where $a_n$ is the number of ${\mathcal{A}}$-structures of size $n$ for $n \geq 2$.
\footnote{
Here,
%In Theorem \ref{f.th:ogf}.5(b), 
to simplify matters, we introduced signs into $A(x)$. Instead we could let $A(x)$ be the ordinary generating function for ${\mathcal{A}}$-structures, but we would need to give each ${\mathcal{A}}$-tree the sign $(-1)^m$, where $m$ is the number of internal vertices. Similarly, we could allow $a_1 \neq 1$ at the cost of some factors of $a_1$ on the ${\mathcal{A}}$-trees.}

Let an \textbf{${\mathcal{A}}$-decorated plane rooted tree} (or simply ${\mathcal{A}}$-tree) be a rooted tree $T$ where every internal vertex $v$ has an ordered set $D_v$ of at least two ``children", and each ordered set $D_v$ is given an ${\mathcal{A}}$-structure. The \textbf{size} of $T$ is the number of leaves. 
%Say a tree with $n$ leaves and $m$ internal vertices has \textbf{size} $n$ and \textbf{sign} $(-1)^m$. 

Let $C(x)$ be the 
%signed 
generating function for ${\mathcal{A}}$-decorated plane rooted trees. Then 
\[
C(x) = A^{<-1>}(x).
\]
This result also holds for weighted structures if the weight of a tree is the product of the weights of the ${\mathcal{A}}$-structures at its vertices.

%More generally
%\[
%n[x^n] A^{<-1>}(x)^k = [x^{n-k}] \left(\frac{x}{A(x)}\right)^n.
%\]

%In more explicit terms, let 
%\[
%A(x) = x+ a_2x^2 + a_3x^3 + \cdots, \qquad A^{<-1>}(x) = C(x) = x+ c_2x^2 + c_3x^3 + \cdots
%\]
%be compositional inverses. Then
%\[
%c_n = \sum_{m=2}^{n-1} \sum_{\mathbf{r}}  \frac{(-1)^m}{n+m}{n+m \choose n, r_2, \ldots, r_k} a_2^{r_2} \cdots a_m^{r_m}
%%b_n = \sum_{\mathbf{r}} \frac{1}{n}{n \choose r_0, \ldots, r_m} a_2^{r_2} \cdots a_m^{r_m}
%\]
%where the internal sum is over all sequences $\mathbf{r} = (r_2, \ldots, r_k)$ such that $\sum r_i = m$ and $\sum ir_i = n+m-1$.

\end{enumerate}
\end{theorem}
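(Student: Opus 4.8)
The plan is to dispatch Parts (1)--(4) by direct bookkeeping with the definition $A(x)=\sum_{a\in\mathcal A}x^{|a|}$, and to isolate the algebraic Lagrange inversion formula, Part (5)(a), as the one step requiring genuine machinery. Parts (1) and (2) are immediate: for a disjoint union $\sum_{c\in\mathcal A\sqcup\mathcal B}x^{|c|}=\sum_{\mathcal A}x^{|a|}+\sum_{\mathcal B}x^{|b|}$, and for a product $\sum_{(a,b)}x^{|a|+|b|}=\big(\sum_a x^{|a|}\big)\big(\sum_b x^{|b|}\big)$, whose coefficient of $x^n$ is $\sum_k a_k b_{n-k}$, exactly the number of $\mathcal C$-structures of size $n$; replacing $x^{|a|}$ by $\mathrm{wt}(a)$ throughout gives the weighted versions. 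For Part (3) I would first check that $\mathcal C=\mathsf{Seq}(\mathcal B)$ is a bona fide combinatorial structure: since $|\mathcal B_0|=0$, a sequence of total size $n$ has at most $n$ blocks, so $c_n<\infty$, and $\deg B(x)^k\ge k$ makes $\sum_{k\ge 0}B(x)^k$ converge in $\mathbb C[[x]]$. Iterating Parts (1)--(2) then gives $C(x)=\sum_{k\ge 0}B(x)^k=1/(1-B(x))$; marking the number of blocks by $y$ replaces $B(x)^k$ by $(yB(x))^k$, yielding $\sum c_k(n)x^n y^k=1/(1-yB(x))$, and substituting $B(x)=1-1/C(x)$ rewrites the right side as $C(x)/(y+(1-y)C(x))$. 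Part (4) combines these: a $\mathcal C$-structure of size $n$ is a length-$k$ sequence of $\mathcal B$-structures of total size $n$ together with one of the $a_k$ $\mathcal A$-structures on the $k$ blocks, so summing over $k$ gives $C(x)=\sum_{k\ge 0}a_k B(x)^k=A(B(x))$, the hypothesis $|\mathcal B_0|=0$ ensuring $B(0)=0$ so the composition converges and all steps are legal; the weighted statements are identical.

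For the combinatorial Lagrange inversion, Part (5)(b), I would avoid explicit computation and instead extract the functional equation satisfied by the tree generating function $C(x)$. An $\mathcal A$-tree is either a single leaf, contributing $x$ (it has size $1$ and $|\mathcal A_1|=1$), or a root whose ordered set of $k\ge 2$ children carries one of the $a_k$ $\mathcal A$-structures, each child spawning an $\mathcal A$-subtree; decomposing at the root and using the product-and-sequence reasoning of Parts (2)--(3) gives $C(x)=x+\sum_{k\ge 2}a_k\,C(x)^k$, with $C(0)=0$. Comparing with $A(x)=x-a_2x^2-a_3x^3-\cdots$ and substituting $C(x)$ for $x$,
\[
A(C(x))=C(x)-\sum_{k\ge 2}a_k C(x)^k=C(x)-\big(C(x)-x\big)=x,
\]
and since $A(C(x))=x$ with $C(0)=0$ characterizes the compositional inverse, $C(x)=A^{<-1>}(x)$; replacing $a_k$ by the corresponding weighted count yields the weighted version verbatim.

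The substance, and the step I expect to be the main obstacle, is Part (5)(a). I would pass to the field of formal Laurent series, define the formal residue $\mathrm{Res}\,F:=[x^{-1}]F(x)$, and establish three facts: (i) $[x^{-1}]F'(x)=0$ for every Laurent series $F$; (ii) the integration-by-parts identity $[x^{-1}]F'G=-[x^{-1}]FG'$, immediate from (i) applied to $FG$; and (iii) the change-of-variables rule $\mathrm{Res}_x\,R(h(x))h'(x)=\mathrm{Res}_t\,R(t)$ for every Laurent series $R$ and every power series $h$ with $h(0)=0$, $h'(0)\neq 0$. With these in hand, set $g=A^{<-1>}$ and substitute $x=A(t)$ (so $g(x)=t$) via (iii):
\[
[x^n]g(x)=\mathrm{Res}_x\,\frac{g(x)}{x^{n+1}}=\mathrm{Res}_t\,\frac{t\,A'(t)}{A(t)^{n+1}};
\]
since $A'(t)/A(t)^{n+1}=\big(-\tfrac1n A(t)^{-n}\big)'$, fact (ii) turns the right side into $\tfrac1n\mathrm{Res}_t\,A(t)^{-n}=\tfrac1n[t^{-1}]\,t^{-n}(t/A(t))^n=\tfrac1n[t^{n-1}](t/A(t))^n$, which is the claimed formula. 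The delicate point is fact (iii): unlike its analytic analogue it needs an honest formal proof, which I would reduce by linearity to the case $R(t)=t^m$ --- for $m\neq -1$ the integrand is $\big(h^{m+1}/(m+1)\big)'$, so (i) gives residue $0=\mathrm{Res}\,t^m$; for $m=-1$ the integrand is the logarithmic derivative $h'/h$, and writing $h(x)=x\,u(x)$ with $u(0)\neq 0$ shows $[x^{-1}](1/x+u'/u)=1=\mathrm{Res}\,t^{-1}$. An alternative I would keep in reserve: prove the identity analytically when $A(x)$ has positive radius of convergence, via Cauchy's residue theorem, and then pass to arbitrary formal $A(x)$ by the principle (used in the ``Borrowing from analysis'' subsection) that an identity of Taylor coefficients valid on an open set of inputs is a formal identity.
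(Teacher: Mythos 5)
Your treatment of parts (1)--(4) and of the combinatorial inversion (5)(b) is correct and essentially the paper's own: (1)--(4) are the same coefficient bookkeeping and iteration of the product rule, and your functional equation $C(x)=x+\sum_{k\ge 2}a_kC(x)^k$, obtained by decomposing an ${\mathcal{A}}$-tree at its root, is just a rephrasing of the paper's cancellation argument that $A(C(x))=x$ (there, each nontrivial tree appears once positively in $C(x)$ and once negatively in $\sum_{k\ge2}a_kC(x)^k$, leaving only the single leaf of weight $x$). Where you genuinely diverge is (5)(a). The paper stays combinatorial and deduces the algebraic formula from the tree model of (5)(b): it expands $1/A(x)$ as the generating function for ``${\mathcal{A}}$-sprigs'', trims an ${\mathcal{A}}$-tree into a sequence of $n$ sprigs of total size $-1$, and invokes the Dvoretsky--Motzkin Cycle Lemma to account for the factor $n$; strictly this proves the identity when the coefficients $a_i$ are nonnegative integers, the general case following because both sides are polynomials in those coefficients---a point the paper leaves implicit. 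You instead give the standard formal-residue proof: residue of a derivative vanishes, integration by parts, and the change-of-variables rule $\mathrm{Res}_x\,R(h(x))h'(x)=\mathrm{Res}_t\,R(t)$, which you rightly single out as the delicate step and prove by linearity reduction to $R(t)=t^m$, handling $m=-1$ via the logarithmic derivative $h'/h$ with $h(x)=xu(x)$. That argument is complete, is independent of (5)(b), and applies at once to any $A$ with $A(0)=0$ and $A'(0)\neq 0$, so it buys generality and brevity; what it gives up is the bijective content of the paper's route, namely the interpretation of the factor $n$ as a choice of cyclic shift and of the coefficients of $(x/A(x))^n$ as counting sequences of sprigs.
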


Theorem \ref{f.th:ogf}.3 is especially useful when we are counting combinatorial objects which ``factor" uniquely into an ordered ``product" of ``irreducible" objects. It tells us that we can count all objects if and only if we can count the irreducible ones.

\begin{proof}
1. is clear. The identity in 2. is equivalent to $c_n = \sum_k a_kb_{n-k}$, which corresponds to the given combinatorial description. Iterating 2., the generating function for $k$-sequences of ${\mathcal{B}}$-structures is $B(x)^k$, so in 3. we have $C(x) = \sum_k B(x)^k = 1/(1-B(x))$ and in 4. we have $C(x) = \sum_k a_k B(x)^k$. The weighted statements follow similarly.

5(b). Observe that, by the Compositional Formula, an ${\mathcal{A}} \circ {\mathcal{C}}$ structure is either
(i) an ${\mathcal{A}}$-tree, or 
(ii) a sequence of $k \geq 2$ ${\mathcal{A}}$-trees $T_1, \ldots, T_k$ with an ${\mathcal{A}}$-structure on $\{T_1, \ldots, T_k\}$. 

The structure in (ii) is equivalent to an ${\mathcal{A}}$-tree $T$, obtained by grafting $T_1, \ldots, T_k$ at a new root and placing the ${\mathcal{A}}$-structure on its offspring (which contributes a negative sign). This $T$ also arises in (i) with a positive sign. 
These two appearances of $T$ cancel each other out in $A(C(x))$, and the only surviving tree is the trivial tree $\bullet$ with one vertex, which only arises once and has weight $x$.

5(a). Let a \textbf{sprig} be a rooted plane tree consisting of a path $r=v_1 v_2 \cdots v_k=l$ starting at the root $r$ and ending at the leaf $l$, and at least one leaf hanging from each $v_i$ and to the right of $v_{i+1}$ for $1 \leq i \leq k-1$. The trivial tree $\bullet$ is an allowable sprig with $k=1$.

An \textbf{${\mathcal{A}}$-sprig} is a sprig where the children of $v_i$ are given an ${\mathcal{A}}$-structure for $1 \leq i \leq k-1$; its \textbf{size} is the number of leaves other than $l$ minus $1$.\footnote{We momentarily allow negative sizes, since the trivial ${\mathcal{A}}$-sprig $\bullet$ has size $-1$. Thus we need to compute with Laurent series, which are power series with finitely many negative exponents.} 
The right panel of Figure \ref{f.fig:treetosprigs} shows several ${\mathcal{A}}$-sprigs. An ${\mathcal{A}}$-sprig is equivalent to a sequence of ${\mathcal{A}}$-structures, with weights shifted by $-1$, so Theorem \ref{f.th:ogf}.3 tells us that
\[
\frac1{A(x)} = \frac1x \cdot \frac1{1-(a_2x + a_3x^2 + \cdots)} = \sum_{n \geq -1} (\# \textrm{ of ${\mathcal{A}}$-sprigs of size } n) \, x^n 
\]
Hence $[x^{n-1}](x/A(x))^n= [x^{-1}](1/A(x))^n$ is the number of sequences of $n$ ${\mathcal{A}}$-sprigs of total size $-1$ by Theorem \ref{f.th:ogf}.2. We need to show that
\[
n \cdot (\# \textrm{ of ${\mathcal{A}}$-trees with $n$ leaves}) = (\# \textrm{ of sequences of $n$ ${\mathcal{A}}$-sprigs of total size $-1$})
\]

\begin{figure}[ht]
 \begin{center}
  \includegraphics[scale=.8]{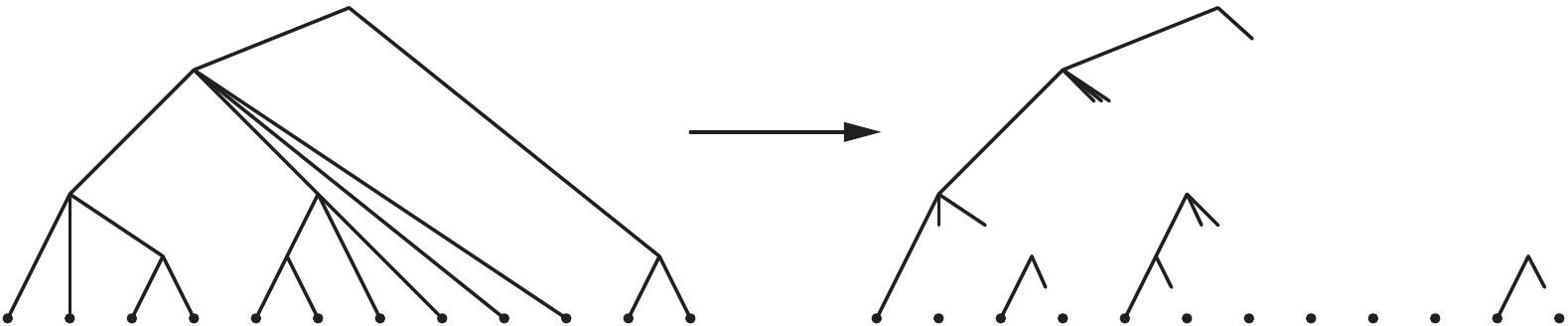}
  \caption{ \label{f.fig:treetosprigs}
The map from ${\mathcal{A}}$-trees to sequences of ${\mathcal{A}}$-sprigs.}
 \end{center}
\end{figure}

An ${\mathcal{A}}$-tree $T$ can be trimmed into a sequence of $n$ ${\mathcal{A}}$-sprigs $S_1, \ldots, S_n$ as follows. At each step, look at the leftmost leaf and the path $P$ to its highest remaining ancestor. Remove $P$ and all the branches hanging directly from $P$ (which form an ${\mathcal{A}}$-sprig), but do not remove any other vertices. Repeat this until the tree is completely decomposed into ${\mathcal{A}}$-sprigs. 
The total size of these sprigs is $-1$. Figure \ref{f.fig:treetosprigs} shows a tree %which gives rise to the monomial $(a_2a_4a_31)1(a_21)1(a_3a_21)11111(a_21)1$ 
of weight $x^{5+(-1)+0+(-1)+2+(-1)+(-1)+(-1)+(-1)+(-1)+0+(-1)} = x^{-1}$. Notice that all the partial sums of the sum $5+(-1)+0+(-1)+2+(-1)+(-1)+(-1)+(-1)+(-1)+0+(-1) = -1$ are non-negative.

Conversely, suppose we wish to recover the ${\mathcal{A}}$-tree corresponding to a sequence of sprigs $S_1, \ldots, S_n$ with $|S_1| + \cdots + |S_n|=-1$. We must reverse the process, adding $S_1, \ldots, S_n$ to $T$ one at a time; at each step we must graft the new sprig at the leftmost free branch. Note that after grafting $S_1, \ldots, S_k$ we are left with $1 + |S_1| + \cdots + |S_k|$ free branches, so a sequence of sprigs corresponds to a tree if and only if the partial sums $|S_1| + \cdots + |S_k|$  are non-negative for $k=1, \ldots, n-1$. Finally, it remains to observe that any sequence $a_1, \ldots, a_n$ of integers adding to $-1$ has a unique cyclic shift $a_i, \ldots, a_n, a_1, \ldots, a_{i-1}$ whose partial sums are all non-negative. Therefore, out of the $n$ cyclic shifts of $S_1, \ldots, S_n$, exactly one of them corresponds to an ${\mathcal{A}}$-tree. The desired result follows.
%The second formulation follows from the fact \cite[Theorem 5.3.10]{EC2} that there are exactly  $\frac{1}{n+m}{n+m \choose n, r_2, \ldots, r_k}$ rooted plane trees with $n$ leaves and $m$ internal vertices having $r_i$ vertices of degree $i$. 
\end{proof}

The last step of the proof above is a special case of the \textbf{Cycle Lemma} of Dvoretsky and Motzkin \cite[Lemma 5.3.7]{f.EC2}, which is worth stating explicitly.
Suppose $a_1, \ldots, a_n$ is a string of $1$s and $-1$s with $a_1+ \cdots + a_n=k>0$. Then there are exactly $k$ cyclic shifts $a_i, a_{i+1}, \ldots, a_n, a_1, \ldots, a_{i-1}$ whose partial sums are all non-negative.

\subsubsection{\textsf{Examples}}\label{f.sec:ogfexamples}

\bigskip
\noindent \textbf{\textsf{Classical applications.}} With practice, these simple ideas give very easy solutions to many classical enumeration problems.

\begin{enumerate}

\item (Trivial classes) It is useful to introduce the trivial class $\circ$ having only one element of size $0$, and the trivial class $\bullet$ having only one element of size $1$. Their generating functions are $1$ and $x$, respectively.

\item (Sequences) The slightly less trivial class $\textsf{Seq} = \{\emptyset, \bullet, \bullet \, \bullet, \bullet \bullet \bullet, \ldots\} =  \textsf{Seq}(\bullet)$  contains one set of each size. Its generating function is $\sum_n  x^n = 1/(1-x)$.

\item (Subsets and binomial coefficients) Let $\textsf{Subset}$ consist of the pairs $([n], A)$ where $n$ is a natural number and $A$ is a subset of $[n]$. Let the size of that pair be $n$. A $\textsf{Subset}$-structure is equivalent to a word of length $n$ in the alphabet $\{0,1\}$, so $\textsf{Subset} \cong \textsf{Seq}(\{0,1\})$ where $|0| = |1| = 1$, and
\[
\textsf{Subset}(x) = \frac{1}{1-(x^1+x^1)}= \sum_{n \geq 0} 2^n x^n.
\]

We can use the extra variable $y$ to keep track of the size of the subset $A$, by giving $([n], A)$ the weight $x^ny^{|A|}$. This corresponds to giving the letters $0$ and $1$  weights $x$ and $xy$ respectively, so we get the generating function
\[
%\textsf{Subset}_{\textsf{size}}(t,x) = 
\textsf{Subset}_{{\mathrm{wt}}}(x) = \frac{1}{1-(x+xy)} = \sum_{n \geq k \geq 0} {n \choose k} x^n y^k
\]
for the \textbf{binomial coefficients} ${n \choose k} = \frac{n!}{k!(n-k)!}$, which count the $k$-subsets of $[n]$.

From this generating function, we can easily obtain the main results about binomial coefficients. 
Computing the coefficient of $x^n y^k$ in $ \left(\sum {n \choose k} x^n y^k\right)({1-x-xy}) = 1$ gives \textbf{Pascal's recurrence}
\[
{n \choose k} = {n-1 \choose k} + {n-1 \choose k-1}, \qquad n \geq k \geq 1
\]
with initial values ${n \choose 0} = {n \choose n} = 1$. Expanding $\textsf{Subset}_{{\mathrm{wt}}}(x) = \frac{1}{1-x(1+y)}  = \sum_{n \geq 0} x^n(1+y)^n$ gives the \textbf{Binomial Theorem}
\[
(1+y)^n = \sum_{k = 0}^n {n \choose k} y^k.
\]

\item (Multinomial coefficients) Let $\textsf{Words}^k \cong \textsf{Seq}(\{1, \ldots, k\})$ consist of the words in the alphabet $\{1, 2, \ldots, k\}$. The words of length $n$ are in bijection with the ways of putting $n$ numbered balls into $k$ numbered boxes.  The placements having $a_i$ balls in box $i$, where $a_1 + \cdots + a_k=n$, are enumerated by the \textbf{multinomial coefficient} ${n\choose a_1, \ldots, a_k} = \frac{n!}{a_1! \cdots a_k!}$.

Giving the letter $i$ weight $x_i$,  we obtain the generating function:
\[
\textsf{Words}^k(x_1, \ldots, x_k) = 
\sum_{a_1, \ldots, a_k \geq 0} {a_1 + \cdots + a_k \choose a_1, \ldots, a_k}  x_1^{a_1} \cdots x_k^{a_k} = \frac{1}{1-x_1-\cdots - x_k}
\]
%where the \textbf{multinomial coefficient} ${a_1 + \cdots + a_k \choose a_1, \ldots, a_k} = \frac{(a_1+\cdots_a_k)!}{a_1! \cdots a_k!}$ 
from which we obtain the recurrence
\[
{n \choose a_1, \ldots, a_k} = 
%\sum_{i=1}^k {n-1 \choose a_1, \ldots a_{i-1}, a_i-1,a_{i+1}, \ldots, a_k} 
{n-1 \choose a_1-1, a_2, \ldots, a_k} + \cdots + {n-1 \choose a_1, \ldots a_{k-1},  a_k-1} 
\]
and the multinomial theorem
\[
(x_1 + \cdots + x_k)^n = \sum_{\stackrel{a_1, \ldots, a_k \geq 0}{a_1+ \cdots + a_k = n}}
{n \choose a_1, \ldots, a_k} x_1^{a_1} \cdots x_k^{a_k}.
\]

\item (Compositions) A \textbf{composition} of $n$ is a way of writing $n=a_1+\cdots+a_k$ as an \textbf{ordered} sum of positive integers $a_1, \ldots, a_k$. For example, $523212$ is a composition of $15$. A composition is just a sequence of positive integers, so $\textsf{Comp} \cong \textsf{Seq}({\mathbb{Z}_{>0}})$ where $|a| = a$. Therefore
\[
\textsf{Comp}(x) = \frac{1}{1-(x+x^2+x^3+\cdots)} = \frac{1-x}{1-2x}= \sum_{n \geq 1} 2^{n-1} x^n.
\]
and there are $2^{n-1}$ compositions of $n$.

If we give a composition of $n$ with $k$ summands the weight $x^ny^k$, the weighted generating function is:
\[
\textsf{Comp}_{{\mathrm{wt}}}(x) = \frac{1}{1-(xy+x^2y+x^3y+\cdots)} = \frac{1-x}{1-x(1+y)} = \sum_{n \geq 1} {n-1 \choose k-1} x^n y^k
\]
so there are ${n-1 \choose k-1}$ compositions of $n$ with $k$ summands.

\item (Compositions into restricted parts) Given a subset $A \subseteq {\mathbb{N}}$, an \textbf{$A$-composition} of $n$ is a way of writing $n$ as an ordered sum $n = a_1 + \cdots + a_k$ where $a_1, \ldots, a_k \in A$. The corresponding combinatorial structure is  $\textsf{$A$-Comp} \cong \textsf{Seq}(A)$ where $|a| = a$, so
\[
\textsf{$A$-Comp}(x) = \frac{1}{1-(\sum_{a \in A} x^a)}
\]
For example, the number of compositions of $n$ into odd parts is the Fibonacci number $F_{n-1}$, because the corresponding generating functions is
\[
\textsf{OddComp}(x) = \frac{1}{1-(x+x^3+x^5+\cdots)} = \frac{1-x^2}{1-x-x^2} = 1+\sum_{n \geq 1} F_{n-1}x^n.
\]

\item (Multisubsets) Let $\textsf{Multiset}^m$ be the collection of multisets consisting of possibly repeated elements of $[m]$. The size of a multiset is the number of elements, counted with repetition. 
For example, $\{1, 2, 2, 2, 3, 5\}$ is a multisubset of $[7]$ of size 6.
Then $\textsf{Multiset}^m \cong \textsf{Seq}(\{1\}) \times \cdots \times \textsf{Seq}(\{m\})$, where $|i| = 1$ for $i = 1, \ldots, m$, 
so the corresponding generating function is
%An $n$-\textbf{multisubset} of $[m]$ is a multiset of (possibly repeated) elements from the set $[m]$ containing $n$ elements, counted with repetitions. %Let $\left({m \choose n})\$ be the number of multisubsets of $[m]$ of size $n$. 
\[
\textsf{Multiset}^m(x) = \left(\frac1{1-x}\right)^m = \sum_{n \geq 0} {-m \choose n} (-x)^n,
\]
and the number of multisubsets of $[m]$ of size $n$ is  $\left({m \choose n}\right):=(-1)^n {-m \choose n} = {m+n-1 \choose n}$.

\item (Partitions)  A \textbf{partition} of $n$ is a way of writing $n=a_1+\cdots+a_k$ as an \textbf{unordered} sum of positive integers $a_1, \ldots, a_k$. We usually write the parts in weakly decreasing order. For example, $532221$ is a partition of $15$ into $6$ parts.
Let $\textsf{Partition}$ be the family of partitions weighted by $x^n y^k$ where $n$ is the sum of the parts and $k$ is the number of parts. Then $\textsf{Partition} \cong \textsf{Seq}(\{1\}) \times \textsf{Seq}(\{2\}) \times \cdots $, where ${\mathrm{wt}}(i) = x^iy$ for $i = 1, 2, \ldots$, 
so the corresponding generating function is
%An $n$-\textbf{multisubset} of $[m]$ is a multiset of (possibly repeated) elements from the set $[m]$ containing $n$ elements, counted with repetitions. %Let $\left({m \choose n})\$ be the number of multisubsets of $[m]$ of size $n$. 
\[
\textsf{Partition}(x,y) = \left(\frac1{1-xy}\right) \left(\frac1{1-x^2y}\right) \left(\frac1{1-x^3y}\right)\cdots 
\]
There is no simple explicit formula for the number $p(n)$ of partitions of $n$, although there is a very elegant and efficient recursive formula. Setting $y=-1$ in the previous identity, and invoking Euler's pentagonal theorem \cite{f.Aigner}
\begin{equation}\label{f.eq:pentagonal}
\prod_{n \geq 0}(1-x^n) = 1 + \sum_{j \geq 1} (-1)^j \left(x^{j(3j-1)/2} + x^{j(3j+1)/2}\right)
\end{equation}
we obtain:
\[
p(n) = p(n-1) + p(n-2) - p(n-5) - p(n-7) + p(n-12) + p(n-15) -  \cdots
\]
where $1, 2, 5, 7, 12, 15, 22, 26, \ldots$ are the pentagonal numbers.

\item (Partitions into distinct parts) Let $\textsf{DistPartition}$ be the family of partitions into distinct parts, weighted by $x^n y^k$ where $n$ is the sum of the parts and $k$ is the number of parts. Then $\textsf{DistPartition} \cong \{1, \overline{1}\} \times \{2, \overline{2}\} \times \cdots $, where ${\mathrm{wt}}(i) = x^iy$ and ${\mathrm{wt}}(\overline{i}) = 1$ for $i = 1, 2, \ldots$, 
so the corresponding generating function is
%An $n$-\textbf{multisubset} of $[m]$ is a multiset of (possibly repeated) elements from the set $[m]$ containing $n$ elements, counted with repetitions. %Let $\left({m \choose n})\$ be the number of multisubsets of $[m]$ of size $n$. 
\[
\textsf{DistPartition}(x,y) = (1+xy)(1+x^2y)(1+x^3y) \cdots
\]

\item (Partitions into restricted parts) It is clear how to adapt the previous generating functions to partitions where the parts are restricted. For example, the identity
\[
\frac1{1-x} = (1+x)(1+x^2)(1+x^4)(1+x^8)(1+x^{16})\cdots
\]
expresses the fact that every positive integer can be written uniquely in binary notation, as a sum of distinct powers of $2$. The identity
\[
(1+x)(1+x^2)(1+x^3)(1+x^4)\cdots = \frac{1}{1-x} \cdot \frac{1}{1-x^3} \cdot \frac{1}{1-x^5 } \cdot \frac{1}{1-x^7} \cdots,
\]
which may be proved by writing $1+x^k = (1-x^{2k})/(1-x^k)$, 
expresses that the number of partitions of $n$ into distinct parts equals the number of partitions of $n$ into odd parts.

\item (Partitions with restrictions on the size and the number of parts) Let $p_{\leq k}(n)$ be the number of partitions of $n$ into at most $k$ parts. This is also the number of partitions of $n$ into parts of size at most $k$. To see this, represent a partition $n=a_1 + \cdots + a_j$ as a left-justified array of squares, where the $i$th row has $a_i$ squares. Each partition $\lambda$ has a conjugate partition $\lambda'$ obtained by exchanging the rows and the columns of the Ferrers diagram. Figure \ref{f.fig:Ferrers} shows the Ferrers diagram of $431$ and its conjugate partition $3221$. It is clear that $\lambda$ has at most $k$ parts if and only if $\lambda'$ has parts of size at most $k$.

From the previous discussion it is clear that 
\[
\sum_{n \geq 0} p_{\leq k}(n) x^n = \frac{1}{1-x} \cdot \frac{1}{1-x^2} \cdots \frac{1}{1-x^k}
\]

\begin{figure}[ht]
 \begin{center}
  \includegraphics[scale=.8]{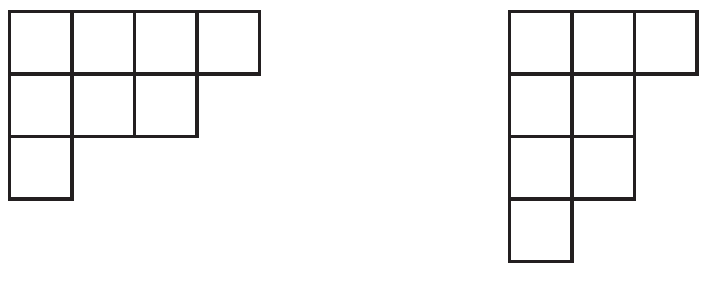}
  \caption{ \label{f.fig:Ferrers}
The Ferrers diagrams of the conjugate partitions 431 and 3221.}
 \end{center}
\end{figure}

Now let $p_{\leq j, \leq k}(n)$ be the number of partitions of $n$ into at most $j$ parts of size at most $k$. Then
\[
\sum_{n \geq 0} p_{\leq j, \leq k}(n) x^n = \frac{(1-x)(1-x^2)\cdots(1-x^{j+k})}{(1-x)(1-x^2)\cdots(1-x^{j}) \cdot (1-x)(1-x^2)\cdots(1-x^{k})}
\]

This is easily proved by induction, using that $p_{\leq j, \leq k}(n) = p_{\leq j, \leq k-1}(n)  + p_{\leq j, \leq k}(n-k)$.

%\comment{Mention connection with $q$-enumeration.}

\item (Even and odd partitions) 
Setting $y=-1$ into the generating function for partitions into distinct parts of Example 9, we get
\[
\sum_{n \geq 0} (\textsf{edp}(n)-\textsf{odp}(n))x^n = 
\prod_{n \geq 1} (1-x^n) = 1 - x - x^2 + x^5 + x^7 - x^{12} - x^{15} + \cdots
\]
where $\textsf{edp}(n)$ (resp. $\textsf{odp}(n)$) counts the partitions of $n$ into an even (resp. odd) number of distinct parts. Euler's pentagonal formula (\ref{f.eq:pentagonal}) says that $\textsf{edp}(n) - \textsf{odp}(n)$ equals $0$ for all $n$ except for the pentagonal numbers, for which it equals $1$ or $-1$. 

There are similar results for partitions into distinct parts coming from a given set $S$.

$\bullet$ When $S$ is the set of Fibonacci numbers, the coefficients of the generating function
\[
\prod_{n \geq 1}(1-x^{F_n}) = 1 - x - x^2 + x^4 + x^7 - x^8 + x^{11} - x^{12} - x^{13} + x^{14} + \cdots
\]
are also equal to $0, 1,$ or $-1$. \cite{f.Robbins, f.ArdilaFibonacci} 

$\bullet$
This is also true for any ``$k$-Fibonacci sequence" $S=\{a_1, a_2, \ldots\}$ given by $a_n=a_{n-1} + \cdots + a_{n-k}$ for $n > k$ and $a_j > a_{j-1}  + \cdots + a_1$ for $1 \leq j \leq k$. \cite{f.Diao} 

$\bullet$
The result also holds trivially for $S=\{2^j \, : \, j \in {\mathbb{N}}\}$ since there is a unique partition of any $n$ into distinct powers of $2$. 

These three results seem qualitatively different from (and increasingly less surprising than) Euler's result, as these sequences $S$ grow much faster than $\{1,2, 3, \ldots\}$, and $S$-partitions are sparser. 
Can more be said about the sets $S$ of positive integers for which the coefficients of $\prod_{n \in S}(1-x^n)$ are all $1, 0$ or $-1$?

\item (Set partitions) A \textbf{set partition} of a set $S$ is an unordered collection of pairwise disjoint sets $S_1, \ldots, S_k$ whose union is $S$. The family of set partitions with $k$ parts is $\textsf{SetPartition}^k \cong \bullet \times \textsf{Seq}(\{1\}) \times \bullet \times \textsf{Seq}(\{1,2\}) \times \cdots \times \bullet \times \textsf{Seq}(\{1,2, \ldots, k\})$, where the singleton $\bullet$ and all numbers $i$ have size $1$. To see this, we regard a word such as
$w = \bullet \, 11 \bullet 1221 \bullet 31$ as an instruction manual to build a set partition $S_1, \ldots, S_k$. The $j$th symbol $w_j$ tells us where to put the number $j$: if $w_j$ is a number $h$, we add $j$ to the part $S_h$;  if $w_j$ is the $i$th $\bullet$, then we add $j$ to a new part $S_i$. The sample word above leads to the partition $\{1, 2, 3, 5, 8, 11\},\{4, 6, 7\}, \{9, 10\}$. This process is easily reversible. It follows that
\[
\sum_{n \geq 0} S(n,k) x^n = \frac{x}{1-x} \cdot \frac{x}{1-2x} \cdot \cdots \cdot \frac{x}{1-kx} ,
\]
where $S(n,k)$ is the number of set partitions of $[n]$ into $k$ parts. These numbers are called the \textbf{Stirling numbers of the second kind}. 

The equation $(1-kx)\sum_{n \geq 0} S(n,k) x^n = x 
\sum_{n \geq 0} S(n,k-1) x^n$ gives the recurrence 
\[
S(n,k) = k S(n-1, k) + S(n-1,k-1), \qquad 1 \leq k \leq n,
\]
with initial values $S(n,0) = S(n,n) = 1$. Note the great similarity with Pascal's recurrence.

\item (Catalan structures) It is often said that if you encounter a new family of mathematical objects, and you have to guess how many objects of size $n$ there are, you should guess ``the Catalan number $C_n= \frac1{n+1}{2n \choose n}$." The Catalan family has more than 200  incarnations in combinatorics and other fields \cite{f.EC2, f.Catalanaddendum}; let us see three important ones.

\begin{enumerate}

\item (Plane binary trees) A \textbf{plane binary tree} is a rooted tree where every internal vertex has a left and a right child. Let $\textsf{PBTree}$ be the family of plane binary trees, where a tree with $n$ internal vertices (and necessarily $n+1$ leaves) has size $n$. 
A tree is either the trivial tree $\circ$ of size $0$, or the grafting of a left subtree and a right subtree at the root $\bullet$, so $\textsf{PBTree} \cong \circ + (\textsf{PBTree} \, \times \, \bullet \, \times \, \textsf{PBTree})$. It follows that the generating function for plane binary trees satisfies
\[
T(x) = 1 + T(x) \, x \, T(x).
\]
We may use the quadratic formula\footnote{Since this is the first time we are using the quadratic formula, let us do it carefully. Rewrite the equation as $(1-2xT(x))^2 = 1-4x$ , or $(1-2xT(x) - \sqrt{1-4x})(1-2xT(x) + \sqrt{1-4x}) =0$. Since ${\mathbb{C}}[[x]]$ is an integral domain, one of the factors must be $0$. From the constant coefficients we see that it must be the first factor.} and the binomial theorem to get
\[
T(x) = \frac{1-\sqrt{1-4x}}{2x} =
\sum_{n \geq 0} \frac1{n+1} {2n \choose n} x^n.
\]
It follows that the number of plane binary trees with $n$ internal vertices (and $n+1$ leaves) is the \textbf{Catalan number} $C_n = \frac1{n+1}{2n \choose n}$.

\item (Triangulations) A \textbf{triangulation} of a convex polygon is a subdivision into triangles using only the diagonals of $P$. A triangulation of an $(n+2)$-gon has $n$ triangles; we say it has size $n$. If we fix an edge $e$ of $P$, then a triangulation of $P$ is obtained by choosing the triangle $T$ that will cover $e$, and then choosing a triangulation of the two polygons to the left and to the right of $T$. Therefore $\textsf{Triang} \cong \circ + (\textsf{Triang} \, \times \, \bullet \, \times \, \textsf{Triang})$ and the number of triangulations of an $(n+2)$-gon is also the Catalan number $C_n$.

\item (Dyck paths) A \textbf{Dyck path} $P$ of length $n$ is a path from $(0,0)$ to $(2n,0)$ which uses the steps $(1,1)$ and $(1,-1)$ and never goes below the $x$-axis. Say $P$ is \textbf{irreducible} if it touches the $x$ axis exactly twice, at the beginning and at the end. Let $D(x)$ and $I(x)$ be the generating functions for Dyck paths and irreducible Dyck paths.

A Dyck path is equivalent to a sequence of \textbf{irreducible} Dyck paths. Also, an irreducible path of length $n$ is the same as a Dyck path of length $n-1$ with an additional initial and final step. Therefore
\[
D(x) = \frac1{1-I(x)}, \qquad I(x) = xD(x)
\]
from which it follows that $D(x) = \frac{1-\sqrt{1-4x}}{2x}$ as well, and the number of Dyck paths of length $n$ is also the Catalan number. 

\end{enumerate}

Generatingfunctionology gives us fairly easy algebraic proofs that these three families are enumerated by the Catalan numbers. Once we have discovered this fact, the temptation to search for nice bijections is hard to resist.

Our algebraic analysis suggests a bijection $\phi$ from (b) to (a). The families of plane binary trees and triangulations grow under the same recursive recipe, and so we can let the bijection grow with them, mapping a triangulation $T \times \bullet \times  T'$ to the tree $\phi(T) \times \bullet \times \phi(T')$. A non-recursive description of the bijection is the following. Consider a triangulation $T$ of the polygon $P$, and fix an edge $e$. Put a vertex inside each triangle of $T$, and a vertex outside $P$ next to each edge other than $e$. Then connect each pair of vertices separated by an edge. Finally, root the resulting tree at the vertex adjacent to $e$. This bijection is illustrated in Figure \ref{f.fig:triangtotrees}.

\begin{figure}[ht]
 \begin{center}
  \includegraphics[scale=.8]{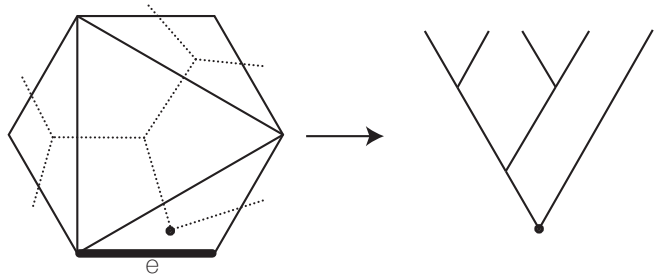}
  \caption{ \label{f.fig:triangtotrees}
The bijection from triangulations to plane binary trees.}
 \end{center}
\end{figure}

\begin{figure}[ht]
 \begin{center}
  \includegraphics[scale=.8]{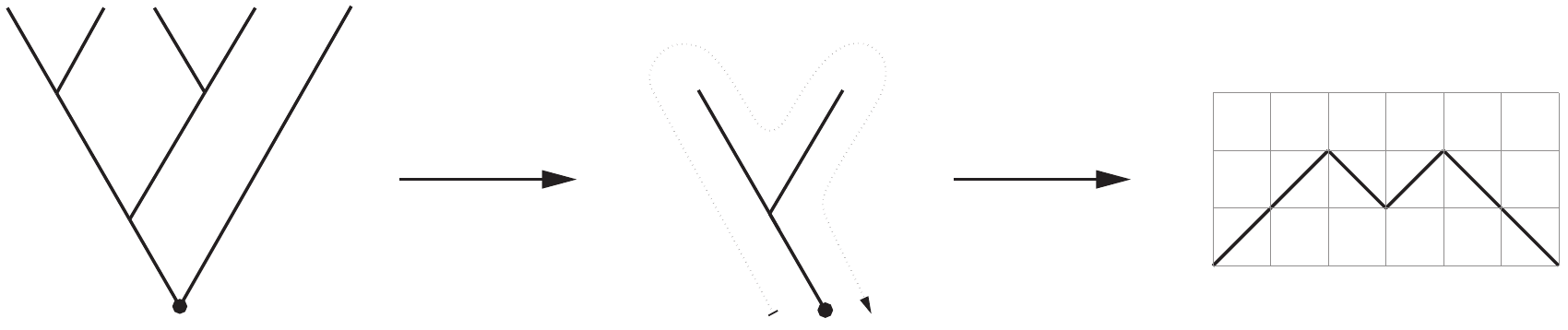}
  \caption{ \label{f.fig:treestoDyck}
The bijection from plane binary trees to Dyck paths.}
 \end{center}
\end{figure}

A bijection from (a) to (c) is less obvious from our algebraic computations, but is still not difficult to obtain. Given a plane binary tree $T$ of size $n$, prune all the leaves to get a tree $T'$ with $n$ vertices. Now walk around the periphery of the tree, starting on the left side from the root, and continuing until we traverse the whole tree. Record the walk in a Dyck path $D(T)$ : every time we walk up (resp. down) a branch we take a step up (resp. down) in $D(T)$. One easily checks that this is a bijection.

Even if it may be familiar, it is striking that two different (and straightforward) algebraic computations show us that two families of objects that look quite different are in fact equivalent combinatorially. 
Although a simple, elegant bijection can often explain the connection between two families more transparently, the algebraic approach is sometimes simpler, and better at discovering such connections.

\item ($k$-Catalan structures) Let $\textsf{PTree}_k$ be the class of \textbf{plane $k$-ary trees}, where every vertex that is not a leaf has $k$ ordered children; let the size of such a tree be its number of leaves. In the sense of Theorem \ref{f.th:ogf}.5(a), this is precisely an ${\mathcal{A}}$-tree, where ${\mathcal{A}} =  \{\bullet, \bullet^k\}$ consists of one structure of size $1$ and one of size $k$. Therefore $\textsf{PTree}_k = (x - x^k)^{<-1>}$. Lagrange inversion (Theorem \ref{f.th:ogf}.5) then gives
\[
m [x^m] A^{<-1>}(x) = [x^{m-1}] \left(\frac{1}{1-x^{k-1}}\right)^m = [x^{m-1}] \sum_{n\geq 0} {m + n - 1 \choose n} x^{(k-1)n}.
\]
It follows that the a plane $k$-ary tree must have $m=(k-1)n+1$ leaves for some integer $n$, and the number of such trees is the \textbf{$k$-Catalan number}
\[
C_n^k = \frac{1}{(k-1)n+1} {kn \choose n}.
\]
This is an alternative way to compute the ordinary Catalan numbers $C_n = C_n^2$.

The $k$-Catalan number $C_n^k$ also has many different interpretations \cite{f.HeubachLiMansour}; we mention two more. It counts the subdivisions of an $(n(k-1)+2)$-gon $P$ into (necessarily $n$) $(k+1)$-gons using diagonals of $P$, and the paths from $(0,0)$ to $(n, (k-1)n)$ with steps $(0,1)$ and $(1,0)$ that never rise above the line $y=(k-1)x$.
\end{enumerate}

\bigskip
\noindent \textbf{\textsf{Other applications.}} Let us now discuss a few other interesting applications which illustrate the power of Theorem \ref{f.th:ogf}.

\begin{enumerate}

\setcounter{enumi}{15}

\item (Motzkin paths) The \textbf{Motzkin number} $M_n$ is the number of paths from $(0,0)$ to $(n,0)$ using the steps $(1,1)$, $(1,-1)$, and $(1,0)$ which never go below the $x$-axis. Imitating our argument for Dyck paths, we obtain a formula for the generating function:
\[
M(x) = \frac{1}{1-(x + xM(x)x)} \qquad \Longrightarrow \qquad  M(x) = \frac{1-x - \sqrt{1-2x-3x^2}}{2x^2}. 
\]
The quadratic equation $x^2M^2 + (x-1)M + 1 = 0$ gives rise to the quadratic recurrence $M_n = M_{n-1} + \sum_{i} M_iM_{n-2-i}$. 
We will see in Section \ref{f.sec:algebraic} that the fact that $M(x)$ satisfies a polynomial equation leads to a more efficient recurrence:
\[
(n+2)M_n = (2n+1)M_{n-1} + (3n-3)M_{n-2}. %, \qquad M_n = \frac1{n+1} \sum_{k=0}^{n} {n+1 \choose k+1} {n - k \choose k}.
\]

\item (Schr\"oder paths)  
The (large) \textbf{Schr\"oder number }$r_n$ is the number of paths from $(0,0)$ to $(2n,0)$ using steps $NE=(1,1), SE=(1, -1),$ and $E=(2,0)$ which stays above the $x$ axis. Their generating function satisfies $R(x) = 1/(1-x-xR(x))$, and therefore
\[
R(x) = \frac{1-x-\sqrt{1-6x+x^2}}{2x}.
\]

\end{enumerate}

Let us see some additional applications of Theorem \ref{f.th:ogf}.3 to count combinatorial objects which factor uniquely as an ordered ``product" of ``irreducible" objects. 

\begin{enumerate}

\setcounter{enumi}{17}

\item (Domino tilings of rectangles) In Section \ref{f.sec:answer} we let $a_n$ be the number of domino tilings of a $2 \times n$ rectangle. Such a tiling is uniquely a sequence of blocks, where each block is either a vertical domino (of width $1$) or two horizontal dominoes (of width $2$). This truly explains the formula:
\[
A(x) = \frac{1}{1-(x+x^2)}.
\]
Similarly, if $a_{m,n}$ is the number of domino tilings of a $2 \times n$ rectangle using $v$ vertical tiles, we immediately obtain:
\[
\sum_{m, n \geq 0} a_{m,n} v^m x^n = \frac{1}{1-(vx+x^2)}.
\]

\end{enumerate}

\noindent Sometimes the enumeration of irreducible structures is not immediate, but still tractable.

\begin{enumerate}
\setcounter{enumi}{18}
\item(Monomer-dimer tilings of rectangles) Let $T(2,n)$ be the number of tilings of a $2 \times n$ rectangles with dominoes and unit squares. Say a tiling is irreducible if it does not contain an internal vertical line from top to bottom. Then $\textsf{Tilings} \cong \textsf{Seq}(\textsf{IrredTilings})$. It now takes some thought to recognize the irreducible tilings: 

\begin{figure}[ht]
 \begin{center}
  \includegraphics[scale=.8]{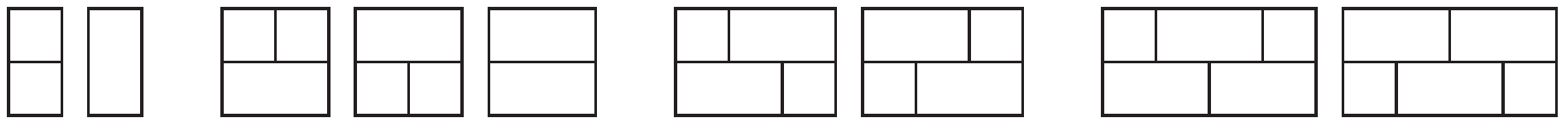}
  \caption{ \label{f.fig:irredtilings}
The irreducible tilings of $2 \times n$ rectangles into dominoes and unit squares.}
 \end{center}
\end{figure}

There are $3$ irreducible tilings of length $2$, and $2$ of every other length greater than or equal to $1$. Therefore
\[
\sum_{n \geq 0} T(2,n) x^n = \frac{1}{1-(2x+3x^2+2x^3+2x^4+\cdots)} = \frac{1-x}{1-3x-x^2+x^3}
\] 
We will see in Theorem \ref{f.th:rational}.2 that this gives $T(2,n) \sim c \cdot \alpha^n$ where $\alpha \approx 3.214\ldots$ is the inverse of the smallest positive root of the denominator.
\end{enumerate}

\noindent
Sometimes the enumeration of \textbf{all} objects is easier than the enumeration of the irreducible ones. In that case we can use Theorem \ref{f.th:ogf}.3 in the opposite direction.

\begin{enumerate}
\setcounter{enumi}{19}
\item (Irreducible permutations) 
A permutation $\pi$ of $[n]$ is \textbf{irreducible} if it does not factor as a permutation of $\{1, \ldots, m\}$ and a permutation of $\{m+1, \ldots, n\}$ for $1 \leq m < n$; that is, if $\pi([m]) \neq [m]$ for all $1 \leq m < n$. Clearly every permutation factors uniquely into irreducibles, so 
\[
\sum_{n \geq 0} n! x^n = \frac1{1-\textsf{IrredPerm}(x)}.
\]
This gives the series for $\textsf{IrredPerm}$.
%\comment{does the asymptotic enumeration follow? L. Comtet, Sur les coefficients de l'inverse de la serie formelle Sum $n! t^n$, Comptes Rend. Acad. Sci. Paris, A 275 (1972), 569-572.}
\end{enumerate}

\noindent
There are many interesting situations where it is possible, but not at all trivial, to decompose the objects that interest us into simpler structures. To a combinatorialist this is good news --  the techniques of this section are useful tools, but are not enough; there is no shortage of interesting work to do. Here is a great example.

\begin{enumerate}
\setcounter{enumi}{20}
\item (Domino towers) \cite{f.GouyouViennot, f.BetremaPenaud, f.Zeilberger} A \textbf{domino tower} is a stack of horizontal $2 \times 1$ bricks in a brickwork pattern, so that no brick is directly above another brick, such that the bricks on the bottom level are contiguous, and  every higher brick is (half) supported on at least one brick in the row below it. Let the size of a domino tower be the number of bricks. 

\begin{figure}[ht]
 \begin{center}
  \includegraphics[scale=.7]{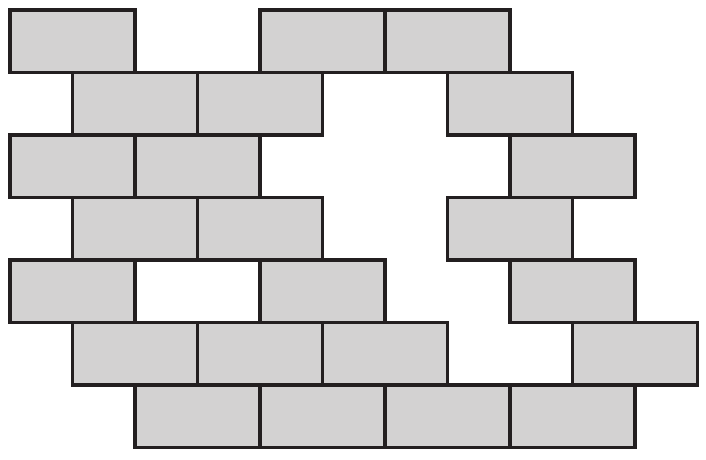}
  \caption{ \label{f.fig:xavier}
A domino tower of $19$ bricks.}
 \end{center}
\end{figure}

Remarkably, there are $3^{n-1}$ domino towers consisting of $n$ bricks. Equally remarkably, no simple bijection is known. The nicest argument that we know is as follows.

\begin{figure}[ht]
 \begin{center}
  \includegraphics[scale=.7]{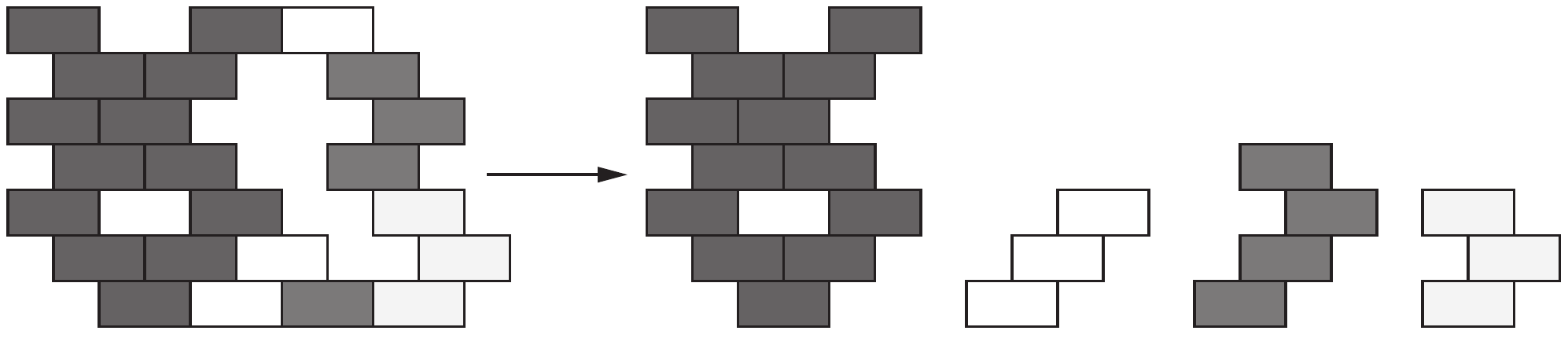}
  \caption{ \label{f.fig:xavier}
The decomposition of a domino tower into a pyramid and three half-pyramids.}
 \end{center}
\end{figure}

We decompose a domino tower $x$ into smaller pieces, as illustrated in Figure \ref{f.fig:xavier}. Each new piece is obtained by pushing up the leftmost remaining brick in the bottom row, dragging with it all the bricks encountered along the way. The first piece $p$ will be a \textbf{pyramid}, which we define to be a domino tower with only one brick in the bottom row. All subsequent pieces $h_1, \ldots, h_k$ are \textbf{half-pyramids}, which are pyramids containing no bricks to the left of the bottom brick. This decomposition is reversible. To recover $x$, we drop $h_k, h_{k-1}, \ldots, h_1, p$ from the top in that order; each piece is dropped in its correct horizontal position, and some of its bricks may get stuck on the previous pieces. This shows that the corresponding combinatorial classes satisfy $X \cong P \times \textsf{Seq}(H)$. 

\begin{figure}[ht]
 \begin{center}
  \includegraphics[scale=.7]{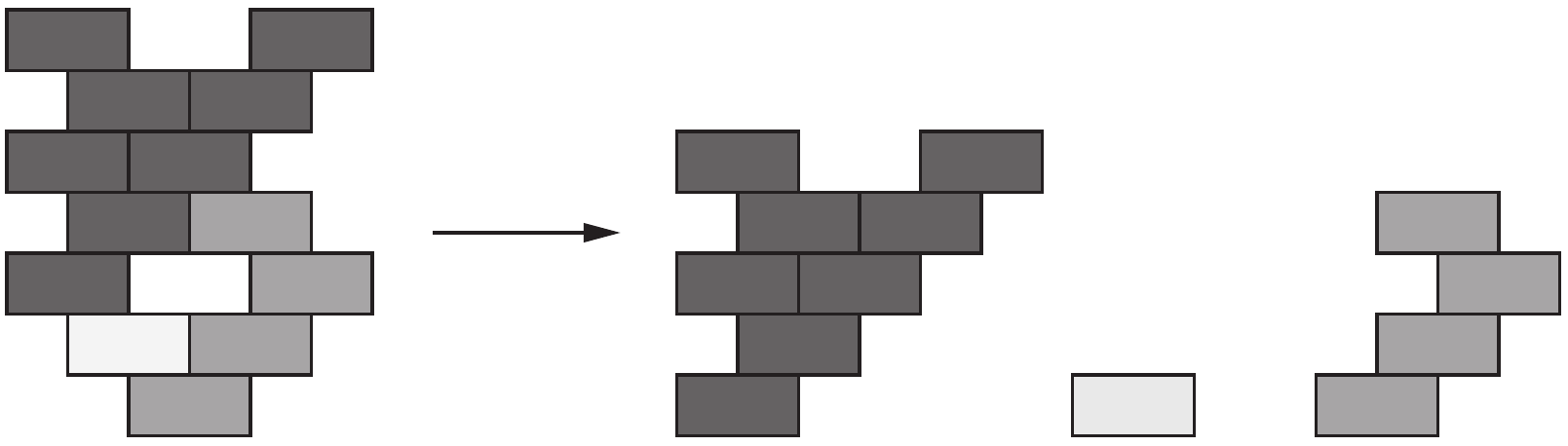}
  \caption{ \label{f.fig:pyramid}
A pyramid and its decomposition into half-pyramids.}
 \end{center}
\end{figure}

Similarly, we may decompose a pyramid $p$ into half-pyramids, as shown in Figure \ref{f.fig:pyramid}. Each new half-pyramid is obtained by pushing up the leftmost remaining brick (which is not necessarily in the bottom row), dragging with it all the bricks that it encounters along the way. This shows that $P \cong \textsf{Seq}_{\geq 1}(H) := H + (H \times H) + \cdots  $.

\begin{figure}[ht]
 \begin{center}
  \includegraphics[scale=.7]{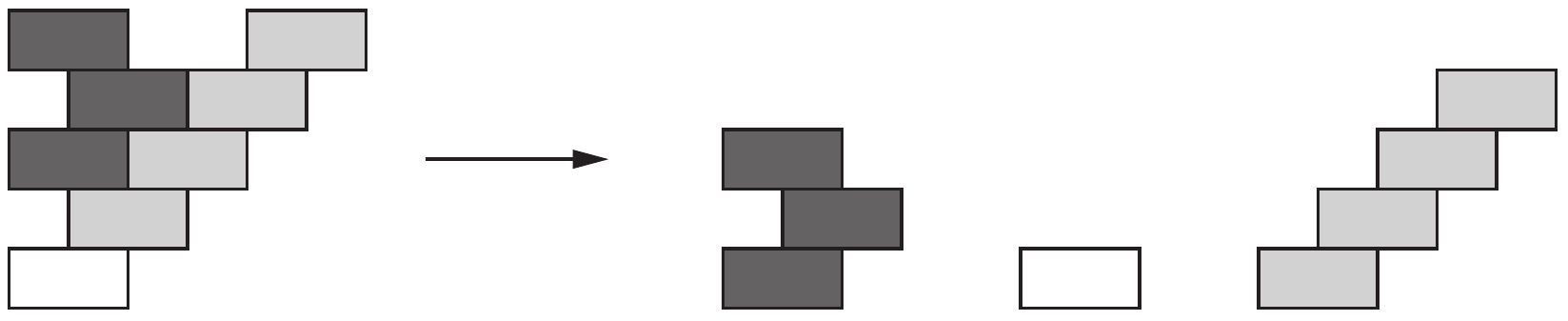}
  \caption{ \label{f.fig:halfpyramid}
A (non-half-pyramid) pyramid and its decomposition into two half-pyramids and a bottom brick.}
 \end{center}
\end{figure}

%Now consider a pyramid $p$. If it is not a half-pyramid, consider the lowest brick to the left of the bottom brick, and push it up, dragging with it all the bricks it encounters along the way. We are pushing up a pyramid $p'$, and what remains is a half-pyramid $h$. It is clear how to recover $p$ from $p'$ and $h$. Therefore $P \cong H + P \times H$.

Finally consider a half-pyramid $h$; there are two cases. If there are other bricks on the same horizontal position as the bottom brick, consider the lowest such brick, and push it up, dragging with it all the bricks it encounters along the way, obtaining a half-pyramid $h_1$. Now remove the bottom brick; what remains is a half-pyramid $h_2$. This is shown in Figure \ref{f.fig:halfpyramid}.
 As before, we can recover $h$ from $h_1$ and $h_2$. On the other hand, if there are no bricks above the bottom brick, removing the bottom brick leaves either a half-pyramid or the empty set. Therefore $H \cong (H \times \bullet \times H) + (\bullet \times H) + \bullet$. 

The above relations correspond to the following identities for the corresponding generating functions:
\[
X = \frac{P}{1-H}, \qquad P = \frac{H}{1-H}, \qquad H = xH^2 + xH + x
\]
\noindent Surprisingly cleanly, we obtain $X(x) = {x}/(1-3x) = \sum_{n \geq 1} 3^{n-1}x^n$. This proves that there are $3^{n-1}$ domino towers of size $n$. 

Although we do not need this here, it is worth noting that half-pyramids are enumerated by Motzkin numbers; their generating functions are related by $H(x)=xM(x)$.
\end{enumerate}

\subsection{\textsf{Exponential generating functions}}\label{f.sec:egfs}

Ordinary generating functions are usually not well suited for counting combinatorial objects with a labelled ground set. In such situations, exponential generating functions are a more effective tool.

Consider a family ${\mathcal{A}}={\mathcal{A}}_0 \sqcup {\mathcal{A}}_1 \sqcup A_2 \sqcup \cdots$ of labelled combinatorial structures, where ${\mathcal{A}}_n$ consists of the structures that we can place on the ground set $[n] = \{1, \ldots, n\}$ (or, equivalently, on any other labeled ground set of size $n$). 
If $a \in {\mathcal{A}}_n$  we let $|a|=n$ be the \textbf{size} of $a$. 
We also let $a_n$ be the number of elements of size $n$. 
The \textbf{exponential generating function} of ${\mathcal{A}}$ is
\[
A(x) = \sum_{a \in {\mathcal{A}}} \frac{x^{|a|}}{|a|!} = a_0\frac{x^0}{0!} + a_1\frac{x^1}{1!} + a_2\frac{x^2}{2!} + a_3\frac{x^3}{3!} + \cdots
\]

We may again assign a weight ${\mathrm{wt}}(a)$ to each object $a$, usually a monomial in variables $x_1, \ldots, x_n$, and consider the \textbf{weighted exponential generating function} of ${\mathcal{A}}$ to be the formal power series
\[
A_{{\mathrm{wt}}}(x_1, \ldots, x_n, x) =  \sum_{a \in {\mathcal{A}}} {\mathrm{wt}}(a) \frac{x^{|a|}}{|a|!} 
\]
Examples of combinatorial structures (with their respective size functions) are
permutations (number of elements), graphs (number of vertices), or set partitions (size of the set). We may weight these objects by $t^k$ where $k$ is, respectively, the number or cycles, the number of edges, or the number of parts.

\subsubsection{\textsf{Operations on labeled structures and their exponential generating functions}}\label{f.sec:operationsegfs}

Again, there are some simple operations on \emph{labelled} combinatorial structures,  which correspond to simple algebraic operations on the exponential generating functions. Starting with a few simple structures, these operations are sufficient to generate many interesting combinatorial structures. This will allow us to compute the exponential generating functions for those structures.

\begin{theorem}\label{f.th:egf}
Let ${\mathcal{A}}$ and ${\mathcal{B}}$ be labeled combinatorial structures. 
\begin{enumerate}
\item (${\mathcal{C}} = {\mathcal{A}} + {\mathcal{B}}$: Disjoint union)
If a ${\mathcal{C}}$-structure on a finite set $S$ is obtained by choosing an ${\mathcal{A}}$-structure on $S$ \textbf{or} a ${\mathcal{B}}$-structure on $S$, then 
\[
C(x) = A(x) + B(x)
\]

\item (${\mathcal{C}} = {\mathcal{A}} * {\mathcal{B}}$: Labelled Product) 
If a ${\mathcal{C}}$-structure on a finite set $S$ is obtained by partitioning $S$ into disjoint sets $S_1$ and $S_2$ and putting an ${\mathcal{A}}$-structure on $S_1$ \textbf{and} a ${\mathcal{B}}$-structure on $S_2$, then
\[
C(x) = A(x) B(x)
\]
This result also holds for weighted structures if the weight of a ${\mathcal{C}}$-structure is the product of the weights of the respective ${\mathcal{A}}$ and ${\mathcal{B}}$-structures.

\item (${\mathcal{C}} = \textsf{Seq}_*({\mathcal{B}})$: Labeled Sequence)
If a ${\mathcal{C}}$-structure on a finite set $S$ is obtained by choosing an \textbf{ordered} partition of $S$ into a sequence of blocks and putting a ${\mathcal{B}}$-structure on each block, then 
\[
C(x) = \frac{1}{1-B(x)}
\]
This result also holds for weighted structures if the weight of a ${\mathcal{C}}$-structure is the product of the weights of the respective ${\mathcal{B}}$ structures.

\item (${\mathcal{C}} = \textsf{Set}({\mathcal{B}})$: Set) \textbf{Exponential Formula.}
If a ${\mathcal{C}}$-structure on a finite set $S$ is obtained by choosing an \textbf{unordered} partition of $S$ into a set of blocks and putting a ${\mathcal{B}}$-structure on each block, then 
\[
C(x) = e^{B(x)}
\]
This result also holds for weighted structures if the weight of a ${\mathcal{C}}$-structure is the product of the weights of the respective ${\mathcal{B}}$-structures. 

In particular, if $c_k(n)$ is the number of ${\mathcal{C}}$-structures of an $n$-set which decompose into $k$ ``components" (${\mathcal{B}}$-structures), we have
\[
\sum_{n, k, \geq 0} c_k(n) \frac{x^n}{n!} y^k = e^{yB(x)} = C(x)^y
\]

\item (${\mathcal{C}} = {\mathcal{A}} \circ {\mathcal{B}}$: Composition) \textbf{Compositional Formula.}
If a ${\mathcal{C}}$-structure on a finite set $S$ is obtained by choosing an \emph{unordered} partition of  $S$ into a set of blocks, putting a ${\mathcal{B}}$-structure on each block, and putting an ${\mathcal{A}}$-structure on the set of blocks, then
\[
C(x) = A(B(x))
\]
This result also holds for weighted structures if the weight of a ${\mathcal{C}}$-structure is the product of the weights of the ${\mathcal{A}}$ structure on its set of blocks and the weights of the ${\mathcal{B}}$ structures on the individual blocks.

%\comment{\item (${\mathcal{C}} = {\mathcal{A}}^{-1}$) \textbf{Lagrange inversion formula}. Do I do this? In terms of Aigner's approach?}

\end{enumerate}
\end{theorem}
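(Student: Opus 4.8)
The plan is to prove part 2 first, since it is the combinatorial heart of the statement, and then obtain parts 1, 3, 4, 5 from it by iteration and by summing over the number of blocks. Part 1 is immediate: if every $\mathcal{C}$-structure on $S$ is an $\mathcal{A}$-structure on $S$ or a $\mathcal{B}$-structure on $S$, then $c_n = a_n + b_n$ for all $n$, and dividing by $n!$ gives $C(x) = A(x) + B(x)$; the weighted case is identical.

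For part 2, I would count the $\mathcal{C}$-structures on $[n]$ by first choosing the block $S_1 \subseteq [n]$ that receives the $\mathcal{A}$-structure. If $|S_1| = k$ there are ${n \choose k}$ choices of $S_1$; since an $\mathcal{A}$-structure is defined on every labeled $k$-set and any two such sets are identified by a relabeling bijection, there are exactly $a_k$ $\mathcal{A}$-structures on $S_1$, and likewise $b_{n-k}$ $\mathcal{B}$-structures on $S_2 = [n] \setminus S_1$. Hence
\[
c_n = \sum_{k=0}^n {n \choose k} a_k \, b_{n-k},
\]
and since ${n \choose k}/n! = 1/(k!\,(n-k)!)$ this says precisely that $[x^n] C(x) = \sum_k ([x^k]A(x))\,([x^{n-k}]B(x))$, i.e. $C(x) = A(x) B(x)$. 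For weighted structures the same count applies verbatim, with $a_k, b_{n-k}, c_n$ replaced by their weighted analogues, using the hypothesis that the weight of a $\mathcal{C}$-structure is the product of the two constituent weights.

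Parts 3, 4, 5 follow by bootstrapping. I would first note that, since a partition of $S$ has nonempty blocks, only the $b_n$ with $n \geq 1$ enter, so we may assume $b_0 = 0$; consequently $B(0) = 0$, $\deg B(x)^k \to \infty$, and all the infinite sums below converge in ${\mathbb{C}}[[x]]$. Iterating part 2, the (weighted) exponential generating function for ordered sequences of exactly $k$ blocks, each carrying a $\mathcal{B}$-structure, is $B(x)^k$; summing over $k \geq 0$ gives part 3, $C(x) = \sum_{k \geq 0} B(x)^k = 1/(1-B(x))$. For part 4, each unordered partition into $k$ blocks corresponds to exactly $k!$ ordered ones, so the generating function for set partitions into $k$ $\mathcal{B}$-blocks is $B(x)^k/k!$, whence $C(x) = \sum_{k \geq 0} B(x)^k/k! = e^{B(x)}$; marking the number of blocks by $y$ turns this into $\sum_k y^k B(x)^k/k! = e^{yB(x)} = C(x)^y$. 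Finally, part 5 refines part 4 by additionally placing one of the $a_k$ $\mathcal{A}$-structures on the set of $k$ blocks, so the $k$-block structures contribute $a_k B(x)^k/k!$ and $C(x) = \sum_{k \geq 0} a_k B(x)^k/k! = A(B(x))$ by the definition of the exponential generating function of $\mathcal{A}$. The weighted versions follow by the same reasoning, since the weight factors as prescribed.

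The step I expect to require the most care is the ordered-to-unordered passage in parts 4 and 5: one must check that the blocks of the partition are genuinely distinct, which forces them to be nonempty, so that the symmetric group $S_k$ permutes the orderings freely and the $k!$-to-one correspondence is exact, making $B(x)^k/k!$ the correct contribution. This is also exactly the hypothesis ($b_0 = 0$, i.e. $|\mathcal{B}_0| = 0$) that removes the constant term and lets the series defining $1/(1-B(x))$, $e^{B(x)}$, and $A(B(x))$ converge formally; without it both the enumeration and the algebra break down.
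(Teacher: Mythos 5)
Your proposal is correct and follows essentially the same route as the paper: verify part 2 via the binomial convolution $c_n = \sum_k \binom{n}{k} a_k b_{n-k}$, iterate to get $B(x)^k$ for $k$-sequences and $B(x)^k/k!$ for $k$-sets, and deduce parts 3, 4, 5 (and the weighted versions) from there. Your extra care about nonempty blocks, $b_0=0$, and formal convergence makes explicit what the paper leaves implicit, but it is the same argument.
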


Theorem \ref{f.th:egf}.4 is a ``labeled" analog of Theorem \ref{f.th:ogf}.3; it is 
 useful when we are counting labeled combinatorial objects which ``decompose" uniquely as a set of ``indecomposable" objects. 
It tells us that we can count all objects if and only if we can count the indecomposable ones, or vice versa. Amazingly, we also obtain for free the finer enumeration of the objects by their number of components.

\begin{proof}
1. is clear. The identity in 2. is equivalent to $c_n = \sum_k {n \choose k} a_kb_{n-k}$, which corresponds to the given combinatorial description. Iterating 2., we see that the exponential generating functions for $k$-sequences of ${\mathcal{B}}$-structures is $B(x)^k$, and hence the one for $k$-sets of ${\mathcal{B}}$-structures is $B(x)^k/k!$. This readily implies 3, 4, and 5. The weighted statements follow similarly.
\end{proof}

The following statements are perhaps less fundamental, but also useful.  

\begin{theorem}\label{f.th:egf}
Let ${\mathcal{A}}$ be a labeled combinatorial structure. 
\begin{enumerate}
\item (${\mathcal{C}} = {\mathcal{A}}_{+}$: Shifting)
If a ${\mathcal{C}}$-structure on $S$ is obtained by adding a new element $t$ to $S$ and choosing an ${\mathcal{A}}$-structure on $S \cup \{t\}$, then
\[
C(x) = A'(x).
\]

\item (${\mathcal{C}} = {\mathcal{A}}_{\bullet}$: Rooting)
If a ${\mathcal{C}}$-structure on $S$ is a \emph{rooted ${\mathcal{A}}$-structure}, obtained by choosing an ${\mathcal{A}}$-structure on $S$ and an element of $S$ called the \textbf{root}, then
\[
C(x) = xA(x).
\]

\item (Sieving by parity of size) 
If the ${\mathcal{C}}$-structures are precisely the ${\mathcal{A}}$-structures of even size, 
\[
C(x) = \frac{A(x) + A(-x)}{2}. 
\]

\item (Sieving by parity of components) 
Suppose ${\mathcal{A}}$-structures decompose uniquely into components, so ${\mathcal{A}} = \textsf{Set}({\mathcal{B}})$ for some ${\mathcal{B}}$. If the ${\mathcal{C}}$-structures are the ${\mathcal{A}}$-structures having only components of even size, 
\[
C(x) = \sqrt{A(x)A(-x)}.
\]

\item (Sieving by parity of number of components) 
Suppose ${\mathcal{A}}$-structures decompose uniquely into components, so ${\mathcal{A}} = \textsf{Set}({\mathcal{C}})$ for some ${\mathcal{C}}$. 
If the ${\mathcal{C}}$-structures are precisely the ${\mathcal{A}}$-structures having an even number of components, 
\[
C(x) = \frac12\left(A(x) + \frac1{A(x)}\right).
\]
\end{enumerate}
Similar sieving formulas hold modulo $k$ for any $k \in {\mathbb{N}}$.
\end{theorem}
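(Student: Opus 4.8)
The plan is to treat all five parts by one template: reduce each to a one-line identity between the coefficient sequences of ${\mathcal{A}}$ and ${\mathcal{C}}$, and then read off the corresponding operation on exponential generating functions. Write $A(x)=\sum_n a_n x^n/n!$ and $C(x)=\sum_n c_n x^n/n!$. For \emph{Shifting} I would observe that the recipe is a bijection between size-$n$ ${\mathcal{C}}$-structures and size-$(n+1)$ ${\mathcal{A}}$-structures, so $c_n=a_{n+1}$; since $A'(x)=\sum_n a_{n+1}x^n/n!$ by the definition of the formal derivative, $C(x)=A'(x)$. For \emph{Rooting}, a rooted structure of size $n$ is a choice of root together with an ${\mathcal{A}}$-structure on the remaining $n-1$ elements, so $c_n=n\,a_{n-1}$ and $\sum_n n\,a_{n-1}x^n/n!=x\sum_n a_{n-1}x^{n-1}/(n-1)!=xA(x)$. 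For \emph{Sieving by parity of size}, $c_n=a_n$ when $n$ is even and $c_n=0$ otherwise; since $\frac12(x^n+(-x)^n)$ equals $x^n$ for even $n$ and $0$ for odd $n$, summing over $n$ gives $C(x)=\frac12(A(x)+A(-x))$.

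Parts (4) and (5) I would obtain by feeding part (3) — or the same roots-of-unity idea, now applied in the variable that counts components — into the Exponential Formula. For \emph{sieving by parity of components}: if ${\mathcal{A}}=\textsf{Set}({\mathcal{B}})$, the ${\mathcal{C}}$-structures are precisely $\textsf{Set}({\mathcal{B}}^{\mathrm{ev}})$, where ${\mathcal{B}}^{\mathrm{ev}}$ is ${\mathcal{B}}$ restricted to blocks of even size; the Exponential Formula gives $A(x)=e^{B(x)}$ and $C(x)=e^{B^{\mathrm{ev}}(x)}$, part (3) applied to ${\mathcal{B}}$ gives $B^{\mathrm{ev}}(x)=\frac12(B(x)+B(-x))$, and hence $C(x)=e^{B(x)/2}e^{B(-x)/2}=\sqrt{A(x)A(-x)}$, the square root being the one with constant term $1$. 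For \emph{sieving by parity of number of components}: with ${\mathcal{A}}=\textsf{Set}({\mathcal{B}})$, a ${\mathcal{C}}$-structure is an unordered family of an even number of ${\mathcal{B}}$-structures, and, as in the proof of the Exponential Formula, the exponential generating function for $m$-sets of ${\mathcal{B}}$-structures is $B(x)^m/m!$; therefore $C(x)=\sum_{m\text{ even}}B(x)^m/m!=\cosh B(x)=\frac12(e^{B(x)}+e^{-B(x)})=\frac12(A(x)+1/A(x))$. Equivalently, one can start from the bivariate Exponential Formula $\sum_{n,m\ge0}c_m(n)\frac{x^n}{n!}y^m=A(x)^y$ and extract the even powers of $y$ by setting $y=\pm1$.

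For the closing remark I would replace the bisection $\frac12(f(x)+f(-x))$ by the general $k$th roots-of-unity filter: with $\omega=e^{2\pi i/k}$, one keeps the ${\mathcal{A}}$-structures whose size is $\equiv r\pmod k$ via $\frac1k\sum_{j=0}^{k-1}\omega^{-jr}A(\omega^j x)$, those whose number of components is $\equiv r\pmod k$ via $\frac1k\sum_{j=0}^{k-1}\omega^{-jr}A(x)^{\omega^j}$ (a legitimate element of ${\mathbb{C}}[[x]]$ because $A(0)=1$), and the $\textsf{Set}({\mathcal{B}})$-structures all of whose components have size $\equiv r\pmod k$ by applying the first filter to $B$ inside $C(x)=e^{B^{(r)}(x)}$, where $B^{(r)}$ is the part of $B$ supported on block-sizes $\equiv r\pmod k$. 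In each case the identity $\frac1k\sum_{j=0}^{k-1}\omega^{j(n-r)}=[\,n\equiv r\bmod k\,]$ turns the filter back into the coefficient statement just proved for $k=2$.

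I do not expect a genuine obstacle — the content is bookkeeping — but two spots deserve care. One is the off-by-one in \emph{Rooting}: the recipe must be read so that distinguishing a root leaves an ${\mathcal{A}}$-structure on the \emph{remaining} $n-1$ points, giving $c_n=n\,a_{n-1}$ and hence $xA(x)$. The other, in parts (4)--(5) and the mod-$k$ remark, is the need to check that $\sqrt{A(x)A(-x)}$, $1/A(x)$, and $A(x)^{\omega^j}$ are honest formal power series and that the branch or normalization chosen is the one with the correct constant term. Both are automatic once one notes that a $\textsf{Set}$-structure has exactly one structure on the empty set, so $A(0)=C(0)=1$ and every series in sight is invertible with constant term $1$.
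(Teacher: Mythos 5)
Your proposal is correct and follows essentially the same route as the paper: the coefficient identities $c_n=a_{n+1}$, $c_n$ in terms of $a_n$, and the parity filter $\tfrac12(a_n+(-1)^na_n)$ for parts 1--3, the Exponential Formula to deduce parts 4--5, and the $k$th roots-of-unity filter for the closing remark. The one divergence is in Rooting, where the paper's proof sets $c_n=na_n$ (an $\mathcal{A}$-structure on $S$ together with a chosen root in $S$), which in fact gives $xA'(x)$ rather than $xA(x)$; your reading $c_n=na_{n-1}$ (a root plus an $\mathcal{A}$-structure on the remaining $n-1$ points) is the one consistent with the stated formula $C(x)=xA(x)$, so the off-by-one you flag is a real issue in the statement/proof pairing, and your resolution is the right one for the formula as printed.
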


\begin{proof}
We have $c_n=a_{n+1}$ in 1., $c_n = na_n$ in 2., and $c_n =\frac12 (a_n + (-1)^na_n)$ in 3.; the generating function formulas follow. Combining 3. with the Exponential Formula we obtain 4. and 5.

Similarly we see that the generating function for ${\mathcal{A}}$-structures whose size is a multiple of $k$ is $\frac1k\left(A(x) + A(\omega x) + \cdots + A(\omega^{k-1} x)\right)$ where $\omega$ is a primitive $k$th root of unity. If we wish to count elements of size $i \textrm{ mod } k$, we use 1. to shift this generating function $i$ times.
\end{proof}

\subsubsection{\textsf{Examples}}\label{f.sec:egfexamples}

\bigskip
\noindent \textbf{\textsf{Classical applications.}} Once again, these simple ideas give very easy solutions to many classical enumeration problems.

\begin{enumerate}

\item (Trivial classes) Again we consider the trivial classes $\circ$ with only one element of size $0$, and $\bullet$ with only one element of size $1$. Their exponential generating functions are $1$ and $x$, respectively.

\item (Sets) A slightly less trivial class of $\textsf{Set} = \textsf{Set}(\bullet)$ contains one set of each size.  We also let \textsf{Set}$_{\geq 1}$ denote the class of non-empty sets, with generating function $e^x - 1$. The exponential generating functions are
\[
\textsf{Set}(x) = e^x, \qquad \textsf{Set}_{\geq 1}(x) = e^x-1.
\]

\item (Set Partitions) 
In Section \ref{f.sec:ogfexamples} we found the ordinary generating function for Stirling numbers $S(n,k)$ for a given $k$; but in fact it is easier to use exponential generating functions. Simply notice that $\textsf{SetPartition} \cong \textsf{Set}(\textsf{Set}_{\geq 1})$, and the Weighted Exponential Formula then gives 
\[
\textsf{SetPartition}(x,y) = \sum_{n, k \geq 0} S(n,k) \frac{x^n}{n!} y^k = e^{y(e^x-1)}.
\]

\item (Permutations) Let $\textsf{Perm}_n$ consist of the $n!$  permutations of $[n]$. A permutation is a labeled sequence of singletons, so $\textsf{Perm} = \textsf{Seq}_*(\bullet)$, and the generating function for permutations is 
\[
\textsf{Perm}(x) = \sum_{n \geq 0} n! \frac{x^n}{n!} = \frac{1}{1-x}.
\]

\item (Cycles) Let $\textsf{Cycle}_n$ consist of the cyclic orders of $[n]$. These are the ways of arranging $1, \ldots, n$ around a circle, where two orders are the same if they differ by a rotation of the circle. There is an $n$-to-$1$ mapping from permutations to cyclic orders obtained by wrapping a permutation around a circle, so 
\[
\textsf{Cycle}(x)=\sum_n (n-1)! x^n/n! = -\log (1-x).
\] 

There is a more indirect argument which will be useful to us later. Recall that a permutation $\pi$  can be written uniquely as a (commutative) product of disjoint cycles of the form $(i, \pi(i), \pi^2(i), \ldots, \pi^{k-1}(i))$ where $k$ is the smallest index such that $\pi^k(i)=i$. For instance, the permutation $835629741$ can be written in cycle notation as $(18469)(235)(7)$. Then $\textsf{Perm} = \textsf{Set}(\textsf{Cycle})$ so $1/(1-x) = e^{\textsf{Cycle}(x)}$.

\item (Permutations by number of cycles) 
The (signless) \textbf{Stirling number of the first kind} $c(n,k)$ is the number of permutations of $n$ having $k$ cycles. The Weighted Exponential Formula gives 
\[
\sum_{n, k \geq 0} c(n,k) \frac{x^n}{n!} y^k = e^{y \textsf{Cycle}(x)} =   \left(\frac{1}{1-x}\right)^{y} = \sum_{n \geq 0} y(y+1) \cdots (y+n-1) \frac{x^n}{n!}.
\]
It follows that the Stirling numbers of the first kind $c(n,k)$ are the coefficients of the polynomial $y(y+1) \cdots (y+n-1)$. 

\end{enumerate}

\bigskip

\noindent
\textbf{\textsf{Other applications}} The applications of these techniques are countless; let us consider a few more applications, old and recent.

\begin{enumerate}
\setcounter{enumi}{6}
\item (Permutations by cycle type) 
The \textbf{type} of a permutation $\pi \in S_n$ is $\textrm{type}(w) = (c_1, \ldots, c_n)$ where $c_i$ is the number of cycles of length $i$. For indeterminates $\mathbf{t} = (t_1, \ldots, t_n)$, let $\mathbf{t}^{\textrm{type}(w)} = t_1^{c_1} \cdots t_n^{c_n}$. The \textbf{cycle indicator} of the symmetric group $S_n$ is $Z_n = \frac{1}{n!} \sum_{w \in S_n} \mathbf{t}^{\textrm{type}(w)}$. The Weighted Exponential Formula immediately gives
\[
%\sum_{n \geq 0} Z_n x^n = e^{t_1x + t_2\frac{x^2}{2} + t_3 \frac{x^3}{3} + \cdots}
\sum_{n \geq 0} Z_n x^n = e^{t_1x \, + \, t_2{x^2}/{2} \, + \, t_3 {x^3}/{3} + \cdots}
\]
Let us discuss two special cases of interest.

\item (Derangements) A \textbf{derangement} of $[n]$ is a permutation such that $\pi(i) \neq i$ for all $i \in [n]$. Equivalently, a derangement is a permutation with no cycles of length $1$. It follows that $\textsf{Derangement} = \textsf{Set}(\textsf{Cycle}_{\geq 2})$, so  the number $d_n$ of derangements of $[n]$ is given by
\[
 \textsf{Derangement}(x) = \sum_{n \geq 0} d_n \frac{x^n}{n!} = e^{-\log(1-x)-x} = e^{-x} + x e^{-x} + x^2 e^{-x} + \cdots.
\]
which leads to the explicit formula:
\[
d_n = n! \left(1 - \frac1{1!} + \frac1{2!} - \frac1{3!} + \cdots \pm \frac1{n!}\right) \sim \frac{n!}{e}.
\]

\item (Involutions) An \textbf{involution} of $[n]$ is a permutation $w$ such that $w^2$ is the identity. Equivalently, an involution is a permutation with cycles of length $1$ and $2$, so the number $i_n$ of involutions of $[n]$ is given by
\[
\textsf{Inv}(x) = \sum_{n \geq 0} i_n \frac{x^n}{n!} =  e^{x+\frac{x^2}2}
\]
Note that $\textsf{Inv}'(x) = (x+1)\textsf{Inv}(x)$, which gives  $i_n = i_{n-1} + (n-1)i_{n-2}$. In Section \ref{f.sec:algebraic} we will explain the more general theory of D-finite power series, which turns differential equations for power series into recurrences for the corresponding sequences.

\item (Trees) A \textbf{tree} is a connected graph with no cycles. Consider a ``birooted" tree $(T,a,b)$ on $[n]$ with two (possibly equal) root vertices $a$ and $b$. Regard the unique path $a=v_0, v_1, \ldots, v_k = b$ as a ``spine" for $T$; the rest of the tree consists of rooted trees hanging from the $v_i$s; direct their edges towards the spine. Now regard $v_1 \ldots v_k$ as a permutation in one-line notation, and rewrite it in cycle notation, while continuing to hang the rooted trees from the respective $v_i$s.
This transforms $(T,a,b)$ into a directed graph consisting of a disjoint collection of cycles with trees directed towards them. Every vertex has outdegree $1$, so this defines a function $f:[n] \rightarrow [n]$. A moment's thought will convince us that this is a bijection. Therefore there are $n^n$ birooted trees on $[n]$, and hence there are $n^{n-2}$ trees on $[n]$.

\begin{figure}[h]
\begin{center}
\includegraphics[scale=.7]{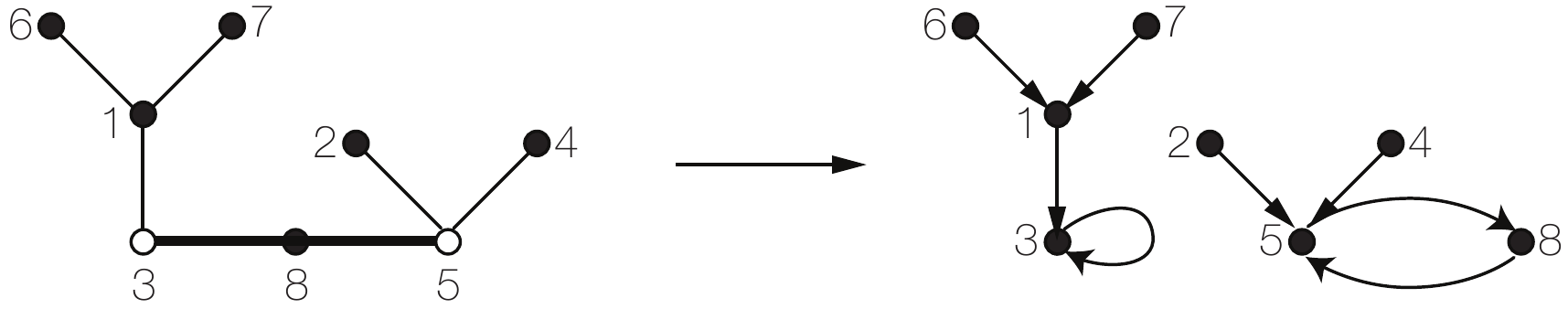} 
\end{center}
\caption{\label{f.fig:birooted}A tree on $[8]$ birooted at $a=3$ and $b=5$, and the corresponding function $f:[8] \rightarrow [8]$.}
\end{figure}

\item (Trees, revisited.) Let us count trees in a different way. Let a \textbf{rooted tree} be a tree with a special vertex called the root, and a \textbf{planted forest} be a graph with no cycles where each connected component has a root. Let $t_n, r_n, f_n$ and $T(x), R(x), F(x)$ be the sequences and exponential generating functions enumerating trees, rooted trees, and planted forests, respectively.

\begin{figure}[h]
\begin{center}
\includegraphics[scale=1.1]{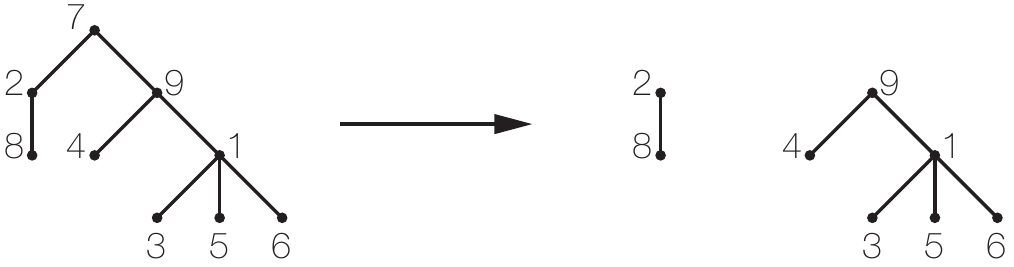} 
\end{center}
\caption{\label{f.fig:foresttotrees} A rooted tree seen as a root attached to the roots of a planted forest.}
\end{figure}

Planted forests are vertex-disjoint unions of rooted trees, so $F(x) = e^{R(x)}$.  Also, as illustrated in Figure \ref{f.fig:foresttotrees}, 
a rooted tree $T$ consists of a root attached to the roots of a planted forest, so $R(x) = xF(x)$. 
It follows that $x=R(x) e^{-R(x)}$, so 
\[
R(x) = (xe^{-x})^{<-1>}.
\]
Lagrange inversion (Theorem \ref{f.th:ogf}.5) gives $n \cdot \frac{r_n}{n!} = [x^{n-1}]e^{nx} = \frac{n^{n-1}}{(n-1)!}$, so 
\[
r_n=n^{n-1}, \qquad  f_n=(n+1)^{n-1}, \qquad t_n = n^{n-2}.
\]
We state a finer enumeration; see \cite[Theorem 5.3.4]{f.EC2} for a proof. The \textbf{degree sequence} of a rooted forest on $[n]$ is $(\deg 1, \ldots, \deg n)$ where $\deg i$ is the number of children of $i$. For example the degree sequence of the rooted tree in Figure \ref{f.fig:foresttotrees} is $(3, 1, 0, 0, 0, 0, 2, 0, 2)$. Then the number of planted forests with a given degree sequence $(d_1, \ldots, d_n)$ and (necessarily) $k=n - (d_1 + \cdots + d_n)$ components is
\[
{n-1 \choose k-1}{n-k \choose d_1, \ldots, d_n}.
\]

The number of forests on $[n]$ is given by a more complicated alternating sum; see \cite{f.Takacs}.
% Takacs. On the number of distinct forests.

\end{enumerate}

\begin{enumerate}
\setcounter{enumi}{11}
\item (Permutations, revisited) Here is an unnecessarily complicated way of proving there are $n!$ permutations of $[n]$. A permutation $\pi$ of $[n+1]$ decomposes uniquely as a concatenation $\pi = L(n+1)R$ for permutations $L$ and $R$ of two complementary subsets of $[n]$. Therefore $\textsf{Shift}(\textsf{Perm}) = (\textsf{Perm}) * (\textsf{Perm})$, and the generating function $P(x)$ for permutations satisfies $P'(x) = P(x)^2$ with $P(0)=1$. Solving this differential equation gives $P(x) = \frac{1}{1-x} = \sum_{n \geq 0} n! \frac{x^n}{n!}$

\item (Alternating permutations) The previous argument was gratuitous for permutations, but it will now help us to enumerate the class $\textsf{Alt}$ of \textbf{alternating} permutations $w$, which satisfy $w_1 < w_2 > w_3 < w_4 > \cdots$. The \textbf{Euler numbers} are $E_n = |\textsf{Alt}_n|$; let $E(x)$ be their exponential generating function. 
We will need the class RevAlt of permutations $w$ with $w_1 > w_2 < w_3 > w_4 < \cdots$. The map $w = w_1\ldots w_n \mapsto w' = (n+1-w_1)\ldots (n+1-w_n)$ on permutations of $[n]$ shows that $\textsf{Alt} \cong \textsf{RevAlt}$.

Now consider alternating permutations $L$ and $R$ of two complementary subsets of $[n]$. For $n \geq 1$, exactly one of the permutations $L(n+1)R$ and $L'(n+1)R$ is alternating or reverse alternating, and every such permutation arises uniquely in that way.  For $n = 0$ both are alternating. Therefore $\textsf{Shift}(\textsf{Alt} + \textsf{RevAlt}) = (\textsf{Alt} * \textsf{Alt}) + \circ$, so $2E'(x) = E(x)^2+1$ with $E(0) = 1$. Solving this differential equation we get
\[
E(x) = \sum_{n \geq 0} E_n \frac{x^n}{n!} = \sec x + \tan x
\]
Therefore $\sec x$ and $\tan x$ enumerate the alternating permutations of even and odd length, respectively. The Euler numbers are also called secant and tangent numbers for this reason. This surprising connection allows us to give combinatorial interpretations of various trigonometric identities, such as $1 + \tan^2 x = \sec^2 x$.

%\comment{
%\item \comment{Move this elsewhere} (Parking functions) A \textbf{parking function} of length $n$ is a sequence $(a_1, \ldots, a_n)$ of integers between $1$ and $n$ containing at least $i$ integers less than or equal to $i$ for $1 \leq i \leq n$. Equivalently, if we arrange the sequence in increasing order $b_1 \leq \cdots \leq b_n$, we should have $b_i \leq i$. Parking functions get their name from the following interpretation.
%
%Consider a one way street with parking spots labeled $1, \ldots, n$ in that order. Suppose that $n$ cars $C_1, \ldots, C_n$ drive into the street in that order. Each car $C_i$ has a preferred parking spot $a_i$. When it enters the street, $C_i$ tries to park in the parking spot $a_i$; if it is taken, then it will park on the next available spot, if there are any. In the end, all the cars will be able to park if and only if $(a_1, \ldots, a_n)$ is a parking function.
%}

\item (Graphs)
Let $g(v)$ and $g_{\textrm{conn}}(v)$ be the number of simple\footnote{containing no multiple edges or loops} graphs and connected graphs on $[v]$, respectively. The Exponential Formula tells us that their exponential generating functions are related by $G(x) = e^{G_\textrm{conn}(x)}$. In this case it is hard to count the connected graphs directly, but it is easy to count all graphs: to choose a graph we just have to decide whether each edge is present or not, so $g(v) = 2^{v \choose 2}$. This gives us 
\[
\sum_{v \geq 0} g_{\textrm{conn}}(v) \frac{x^v}{v!}  = \log \left(\sum_{v \geq 0} 2^{v \choose 2} \frac{x^v}{v!} \right)
\]

We may easily adjust this computation to account for edges and components. There are ${v(v-1)/2 \choose e}$ graphs on $[v]$ with $e$ edges; say $g(v,c,e)$ of them have $c$ components, and give them weight $y^cz^e$. Then
\[
\sum_{v, c, e \geq 0} g(v,c,e) \frac{x^v}{v!} y^c z^e = \left( \sum_{v,e \geq 0} {{v \choose 2} \choose e} \frac{x^v}{v!} z^e \right)^y = F(x, 1+z)^y
\]
where
\[
F(\alpha, \beta) = \sum_{n \geq 0} \frac{\alpha^n \, \beta^{n \choose 2}}{n!}.
\]
is the \textbf{deformed exponential function} of \cite{f.Sokal}.

%Let $g(n)$ and $g_{\textrm{conn}}(n)$ be the number of simple\footnote{containing no multiple edges or loops} graphs and connected graphs on $[n]$, respectively. The Exponential Formula tells us that their exponential generating functions are related by $G(x) = e^{G_\textrm{conn}(x)}$. In this case it is hard to count the connected graphs directly, but it is easy to count all graphs: to choose a graph we just have to decide whether each edge is present or not, so $g(n) = 2^{n \choose 2}$. This gives us 
%\[
%\sum_{n \geq 0} g_{\textrm{conn}}(n) \frac{x^n}{n!}  = \log \left(\sum_{n \geq 0} 2^{n \choose 2} \frac{x^n}{n!} \right), \qquad  \qquad
%\sum_{n, k \geq 0} g_k(n) \frac{x^n}{n!} y^k = \left(\sum_{n \geq 0} 2^{n \choose 2} \frac{x^n}{n!} \right)^y.
%\]
%where $g_k(n)$ is the number of graphs on $[n]$ with $k$ components. 
%
%We may further weight our graphs by the number of edges. There are ${n(n-1)/2 \choose m}$ graphs on $[n]$ with $m$ edges; suppose $g_k(m,n)$ of them have $k$ components. Then
%\[
%\sum_{n, m,  k \geq 0} g_k(m,n) \frac{x^n}{n!} y^k z^m = \sum_{n, m \geq 0} \left({n(n-1)/2 \choose m} \frac{x^n}{n!} z^m\right)^y = F(x, 1+z)^y
%\]
%where
%\[
%F(\alpha, \beta) = \sum_{n \geq 0} \frac{\alpha^n \, \beta^{n \choose 2}}{n!}.
%\]
%is the \emph{deformed exponential function} of \cite{f.Sokal} $F(\alpha, \beta)$, an evaluation of the three variable Rogers-Ramanujan function.
%

\item (Signed Graphs)
A simple \textbf{signed graph} $G$ is a set of vertices, with at most one ``positive" edge and one ``negative" edge connecting each pair of vertices. We say $G$ is \emph{connected} if and only if its underlying graph $\overline{G}$ (ignoring signs) is connected. A \emph{cycle} in $G$ corresponds to a cycle of $\overline{G}$; we call it \emph{balanced} if it contains an even number of negative edges, and \emph{unbalanced} otherwise. We say that $G$ is \emph{balanced} if all its cycles are balanced. Let $s(v, c_+,c_-, e)$ be the number of signed graphs with $v$ vertices, $e$ edges, $c_+$ balanced components, and $c_-$ unbalanced components; we will need the generating function
\[
S(x, y_+,y_-,z) = \sum_{G \textrm{ signed graph }} s(v, c_+,c_-, e)\, \frac{x^{v}}{v!} y_+^{c_+} \,y_-^{c_-}\, z^{e}
\]
in order to carry out a computation in Section \ref{f.sec:Tuttegeneralizations}; we follow \cite{f.ArdilaCastilloHenley}.

Let $S(x, y_+,y_-,z)$, $B(x, y_+,z) $, $C_+(x, z) $, and $C_-(x,z)$ be the generating functions for signed, balanced, connected balanced, and connected unbalanced graphs, respectively. The Weighted Exponential Formula gives:
\[
B =e^{y_+C_+}, \qquad S = e^{y_+C_+ + y_-C_-}
\]
so if we can compute $C_+$ and $C_-$ we will obtain $B$ and $S$. In turn, these equations give
\[
C_+(x,z) = \frac12 \log B(x, 2,z), \qquad C_+(x,z) + C_-(x,z) = \log S(x,1,1,z).
\]
and we now compute the right hand side of these two equations. (In the first equation, we set $t_+ = 2$ because, surprisingly, $B(x,2,z)$ is easier to compute than $B(x,1,z)$.)
One is easy:
\[
S(x,1,1,z) = \sum_{e, v \geq 0} {v(v-1) \choose e} \frac{x^v}{v!} z^e = F(x, (1+z)^2).
\]

\begin{figure}[h]
\begin{center}
\includegraphics[scale=1]{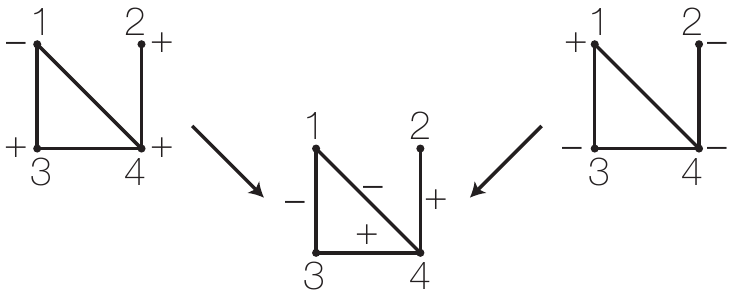} 
\end{center}
\caption{\label{f.fig:markedtosigned}The two marked graphs that give rise to one balanced signed graph.}
\end{figure}

For the other one, we count balanced signed graphs by relating them with \textbf{marked graphs}, which are simple graphs with a sign $+$ or $-$ on each \textbf{vertex}. \cite{f.HararyKabell} A marked graph $M$ gives rise to a balanced signed graph $G$ by assigning to each edge the product of its vertex labels. Furthermore, if $G$ has $c$ components, then it arises from precisely $2^{c}$ different marked graphs, obtained from $M$ by choosing some connected components and changing their signs. This correspondence is illustrated in Figure \ref{f.fig:markedtosigned}.
It follows that $B(x, 2y, z) = \sum_{B \textrm{ balanced}} 2^c \,  b(v, c, e) \frac{x^{v}}{v!} y^{c} z^{e} =  \sum_{M \textrm{ marked}} m(v, c, e) \frac{x^{v}}{v!} y^{c} z^{e} $ is the generating function for marked graphs, and hence $B(x, 2,z)$ may be computed easily:
\[
B(x, 2,z) = \sum_{e,v} {{v \choose 2} \choose e} 2^v \frac{x^v}{v!} z^e = F(2x,1+z)
\]
Putting these equations together yields 
\[
S(x,y_+,y_-,z) = F(2x, 1+z)^{(y_+ - y_-)/2} F(x, (1+z)^2)^{y_-}. 
\]

\end{enumerate}

%\comment{
%\subsection{\textsf{Ordinary or exponential?}}\label{f.sec:basic}
%
%
%Ordinary generating functions are best suited to study combinatorial objects on an unlabeled ground set. More vaguely speaking, many of the structures above have an underlying linear (left-to-right or top-to-bottom) structure that is crucial to makes this line of attack effective. 
%When the ground set comes with a natural set of labels, it is generally better to use exponential generating functions; see \cite{f.sec:egfs}. For instance, \comment{give an example where we compute both an ogf and an egf.}
%
%
%
%Sometimes use other things. See binomial posets.
%}
%
%

\subsection{\textsf{Nice families of generating functions}}\label{f.sec:nicegfs}

In this section we discuss three nice properties that a generating function can have: being rational, algebraic, or D-finite. Each one of these properties gives rise to useful properties for the corresponding sequence of coefficients.

\subsubsection{\textsf{Rational generating functions}}\label{f.sec:rational}

Many sequences in combinatorics and other fields satisfy three equivalent properties:  they satisfy a recursive formula with constant coefficients, they are given by an explicit formula in terms of polynomials and exponentials, and their generating functions are rational. We understand these sequences very well. The following theorem tells us how to translate any one of these formulas into the others.

\begin{theorem}\label{f.th:rational} \cite[Theorem 4.1.1]{f.EC1}
Let $a_0, a_1, a_2, \ldots$ be a sequence of complex numbers and let $A(x) = \sum_{n \geq 0} a_n x^n$ be its ordinary generating function. Let $q(x) = 1 + c_1 x + \cdots + c_d x^d = (1-r_1x)^{d_1} \cdots (1-r_kx)^{d_k}$ be a complex polynomial of degree $d$. The following are equivalent:
\begin{enumerate}
\item
The sequence satisfies the linear recurrence with constant coefficients
\[
a_n + c_1a_{n-1} + \cdots + c_da_{n-d} = 0 \qquad (n \geq d).
\]
\item
There exist polynomials $f_1(x), \ldots, f_k(x)$ with $\deg f_i(x) < d_i$ for $1 \leq i \leq n$ such that
\[
a_n = f_1(n) \,r_1^n + \cdots + f_k(n) \, r_k^n.
\]
\item
There exists a polynomial $p(x)$ with $\deg p(x) < d$ such that $A(x) = p(x)/q(x)$.
\end{enumerate}
\end{theorem}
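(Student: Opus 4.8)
The plan is to prove the three equivalences by establishing the cycle $(3) \Rightarrow (1) \Rightarrow (3)$ and $(3) \Rightarrow (2) \Rightarrow (1)$, treating the partial fraction decomposition of $A(x) = p(x)/q(x)$ as the central tool. First I would prove $(3) \Leftrightarrow (1)$, which is the easy part: if $A(x) = p(x)/q(x)$ then multiplying out $A(x) q(x) = p(x)$ and comparing the coefficient of $x^n$ on both sides for $n \geq d$ (where $p(x)$ contributes nothing since $\deg p < d$) yields exactly the recurrence $a_n + c_1 a_{n-1} + \cdots + c_d a_{n-d} = 0$. Conversely, given the recurrence, the same computation shows that $A(x) q(x)$ is a power series all of whose coefficients vanish in degree $\geq d$, hence it is a polynomial $p(x)$ with $\deg p < d$; since $q(0) = 1$, $q(x)$ is a unit in ${\mathbb{C}}[[x]]$ and we may write $A(x) = p(x)/q(x)$.

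Next I would prove $(3) \Rightarrow (2)$ via partial fractions. Since $q(x) = \prod_{i=1}^k (1 - r_i x)^{d_i}$ with the $r_i$ distinct and nonzero (they are nonzero because $q(0) = 1$), and $\deg p < d = \sum d_i$, the rational function $p(x)/q(x)$ admits a partial fraction decomposition
\[
A(x) = \sum_{i=1}^k \sum_{j=1}^{d_i} \frac{c_{ij}}{(1-r_ix)^j}
\]
for suitable constants $c_{ij} \in {\mathbb{C}}$. Now I would expand each term using the negative binomial series $(1-r_ix)^{-j} = \sum_{n \geq 0} \binom{n+j-1}{j-1} r_i^n x^n$, already introduced in the excerpt. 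The coefficient of $x^n$ coming from the block associated to $r_i$ is $\left(\sum_{j=1}^{d_i} c_{ij} \binom{n+j-1}{j-1}\right) r_i^n$, and the inner sum is a polynomial in $n$ of degree at most $d_i - 1$, which I would call $f_i(n)$. Summing over $i$ gives $a_n = \sum_i f_i(n) r_i^n$ with $\deg f_i < d_i$, which is exactly (2). (One should note that a priori $\deg f_i \le d_i - 1$; equality of the $\deg < d_i$ bound is all that is claimed.)

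Finally, $(2) \Rightarrow (1)$: I would verify directly that any sequence of the form $n \mapsto n^t r^n$ satisfies the recurrence whose characteristic polynomial is $(1 - rx)$ raised to a high enough power — more precisely, the operator ``multiply generating function by $(1-r_ix)^{d_i}$'' annihilates $f_i(n) r_i^n$ when $\deg f_i < d_i$, a standard fact provable by induction on $d_i$ using that the forward-difference operator lowers polynomial degree. Since the full $q(x) = \prod (1-r_ix)^{d_i}$ is divisible by each factor, multiplying $A(x)$ by $q(x)$ kills every term $f_i(n) r_i^n$ in degrees $\geq d$, so $A(x)q(x)$ is a polynomial of degree $< d$, which is (3), hence (1) by the first step. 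The main obstacle is not conceptual but bookkeeping: one must be careful that the $r_i$ are distinct and nonzero so the partial fraction decomposition is valid, and one must correctly track the degree bounds $\deg f_i < d_i$ and $\deg p < d$ through the back-and-forth between the polynomial $\{n^j r^n\}$ basis and the power-series $\{(1-rx)^{-j}\}$ basis; alternatively, one can cite that these two families each span the same space of dimension $d$ and use a dimension count to avoid explicit constant-chasing.
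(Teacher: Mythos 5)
Your proof is correct and follows essentially the same route the paper has in mind: the paper itself defers the formal proof to the cited reference (Stanley, EC1, Theorem 4.1.1) and only records translation recipes, but those recipes --- comparing coefficients in $A(x)q(x)=p(x)$ for $(1)\Leftrightarrow(3)$, and partial fractions with the expansion $(1-r_ix)^{-j}=\sum_n \binom{n+j-1}{j-1}r_i^n x^n$ for $(3)\Rightarrow(2)$ --- are exactly the steps you carry out. Your closing step $(2)\Rightarrow(3)$, showing that $(1-r_ix)^{d_i}$ kills $f_i(n)r_i^n$ in degrees $\geq d_i$ so that $q(x)A(x)$ is a polynomial of degree $<d$, is the standard way to complete the cycle and is sound.
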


Notice that Theorem \ref{f.th:rational}.2 gives us the asymptotic growth of $a_n$ immediately.
Let us provide more explicit recipes.

\medskip

\noindent 
$(1 \Rightarrow 2)$ Extract the inverses $r_i$ of the roots of $q(x) = 1 + c_1 x + \cdots + c_d x^d$ and their multiplicities $d_i$. The $d_1+\cdots+d_k=d$ coefficients of the $f_i$s are the unknowns in the system of $d$ linear equations $a_n = f_1(n) \,r_1^n + \cdots + f_k(n)r_k^n$  $(n=0, 1, \ldots, d-1)$, which has a unique solution.

\medskip
\noindent 
$(1 \Rightarrow 3)$ Read off $q(x) = 1 + c_1 x + \cdots + c_d x^d$ from the recurrence; the coefficients of $p(x)$ are $[x^k]p(x) = a_k + c_1a_{k-1} + \cdots + c_da_{k-d}$ for $0 \leq k < d$, where $a_i=0$ for $i < 0$.

\medskip
\noindent 
$(2 \Rightarrow 1)$ Compute the $c_i$s using $q(x) = \prod_{i}^k (1-r_ix)^{\deg f_i+1}$.

\medskip
\noindent 
$(2 \Rightarrow 3)$ Let $q(x) = \prod_i (1-r_ix)^{\deg f_i+1}$, and compute the first $k$ terms of $p(x)=A(x)q(x)$; the others are $0$.

\medskip
\noindent 
$(3 \Rightarrow 1)$ Extract the $c_i$s from the denominator $q(x)$.

\medskip
\noindent 
$(3 \Rightarrow 2)$ Compute the partial fraction decomposition $p(x)/q(x) = \sum_{i=1}^k p_i(x)/(1-r_ix)^{d_i}$ where $\deg p_i(x) < d_i$ and use $(1-r_ix)^{-d_i} = \sum_n {d_i+n-1 \choose d_i-1} r_i^n x^n$ to extract $a_n = [x^n]p(x)/q(x)$.

\bigskip

\noindent
\textbf{\textsf{Characterizing polynomials.}} 
As a special case of Theorem \ref{f.th:rational}, we obtain a useful characterization of  sequences given by a polynomial. The \textbf{difference operator} $\Delta$ acts on sequences, sending the sequence $\{a_n \, : \, n \in {\mathbb{N}}\}$ to the sequence $\{\Delta a_n \, : \, n \in {\mathbb{N}}\}$ where $\Delta a_n = a_{n+1}-a_n$.

\begin{theorem}\label{f.th:polynomial} \cite[Theorem 4.1.1]{f.EC1}
Let $a_0, a_1, a_2, \ldots$ be a sequence of complex numbers and let $A(x) = \sum_{n \geq 0} a_n x^n$ be its ordinary generating function. Let $d$ be a positive integer. The following are equivalent:
\begin{enumerate}
\item
We have $\Delta^{d+1} a_n = 0$ for all $n \in {\mathbb{N}}$.
\item
There exists a polynomial $f(x)$ with $\deg f \leq d$ such that $a_n = f(n)$ for all $n \in {\mathbb{N}}$.
\item
There exists a polynomial $p(x)$ with $\deg p(x) \leq d$ such that $A(x) = p(x)/(1-x)^{d+1}$.
\end{enumerate}
\end{theorem}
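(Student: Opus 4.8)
The plan is to deduce Theorem \ref{f.th:polynomial} as the special case of Theorem \ref{f.th:rational} in which $q(x) = (1-x)^{d+1}$, i.e. $k=1$, $r_1 = 1$, $d_1 = d+1$. The only genuinely new ingredient is the translation between the difference-operator condition $\Delta^{d+1}a_n = 0$ and the constant-coefficient recurrence $a_n + c_1 a_{n-1} + \cdots + c_{d+1}a_{n-d-1} = 0$ attached to this particular $q(x)$; everything else is already packaged in Theorem \ref{f.th:rational}. So the skeleton is: show $(1)\Leftrightarrow(1')$ where $(1')$ is ``the sequence satisfies the order-$(d+1)$ recurrence with characteristic polynomial $q(x) = (1-x)^{d+1}$'', then invoke Theorem \ref{f.th:rational} to pass among $(1')$, $(2')$, $(3)$, and finally identify $(2')$ with $(2)$.

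First I would record that, by definition of $\Delta$, applying $\Delta$ to $\{a_n\}$ corresponds to multiplying the shift operator difference $E - I$ (where $(Ea)_n = a_{n+1}$), so $\Delta^{d+1}a_n = \sum_{j=0}^{d+1}(-1)^{d+1-j}\binom{d+1}{j}a_{n+j}$. Thus $\Delta^{d+1}a_n = 0$ for all $n$ is precisely the statement that $\{a_n\}$ satisfies a linear recurrence of order $d+1$ whose characteristic polynomial (in the ``reciprocal'' normalization used in Theorem \ref{f.th:rational}) is $q(x) = \sum_{j=0}^{d+1}(-1)^j\binom{d+1}{j}x^j = (1-x)^{d+1}$. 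This is the content of $(1)\Leftrightarrow(1')$, and it is a one-line binomial identity. Then Theorem \ref{f.th:rational} with this $q$ gives at once: $(1')\Leftrightarrow(3)$, since $q$ has degree exactly $d+1$ and the numerator $p(x)$ is the unique polynomial of degree $<d+1$ (equivalently $\le d$); and $(1')\Leftrightarrow(2')$, where $(2')$ reads $a_n = f_1(n)\cdot 1^n = f_1(n)$ with $\deg f_1 < d_1 = d+1$, i.e. $\deg f_1 \le d$. But $(2')$ is verbatim condition $(2)$. Chaining these equivalences closes the loop.

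The one point requiring a word of care — and the only place the argument is not completely mechanical — is the boundary behavior of the degree inequalities: Theorem \ref{f.th:rational} states $\deg f_i < d_i$ and $\deg p < d$ with $d = \deg q$, whereas Theorem \ref{f.th:polynomial} is phrased with the parameter $d$ one less, using $\deg f \le d$, $\deg p \le d$, and $q(x) = (1-x)^{d+1}$. So when applying Theorem \ref{f.th:rational} I must substitute its ``$d$'' by ``$d+1$'' throughout, after which $\deg f_1 < d+1 \Leftrightarrow \deg f_1 \le d$ and $\deg p < d+1 \Leftrightarrow \deg p \le d$, matching the present statement exactly. I also want to note that Theorem \ref{f.th:rational}.2 a priori allows $\deg f_i$ to be anything $< d_i$ including the zero polynomial, which is harmless here: if $f_1$ has degree $< d$ that still satisfies $\deg f \le d$. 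No obstacle beyond this bookkeeping is expected; the ``hard part'' is really just making sure the two indexing conventions are lined up and stating the binomial identity $\Delta^{d+1}a_n = \sum_j (-1)^{d+1-j}\binom{d+1}{j}a_{n+j}$ correctly.
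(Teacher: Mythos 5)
Your proposal is correct and takes essentially the same route as the paper, which introduces Theorem \ref{f.th:polynomial} precisely as a special case of Theorem \ref{f.th:rational}; your specialization $q(x)=(1-x)^{d+1}$, together with the expansion $\Delta^{d+1}=(E-I)^{d+1}$ translating condition (1) into the constant-coefficient recurrence and the $d\mapsto d+1$ reindexing, supplies exactly the bookkeeping the paper leaves implicit. (The paper also sketches a separate direct argument for the equivalence of (1) and (2) later, in the discrete-derivatives discussion of Section \ref{f.sec:inclusionexclusion}, but its primary justification matches yours.)
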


We have already seen some combinatorial polynomials and generating functions whose denominator is a power of $1-x$; we will see many more examples in the following sections.
%\comment{Link to algebra, polytopes, etc here.}

\subsubsection{\textsf{Algebraic and D-finite generating functions}}\label{f.sec:algebraic}

When the series $A(x) = \sum_n a_nx^n$ we are studying is not rational, the next natural question to ask is whether $A(x)$ is algebraic. If it is, then just as in the rational case, the sequence $a_n$ still satisfies a linear recurrence, although now the coefficients are polynomial in $n$. This general phenomenon is best explained by introducing the wider family of ``D-finite" (also known as ``differentially finite" or ``holonomic") power series. Let us discuss a quick example before we proceed to the general theory. 

We saw that the generating function for the Motzkin numbers satisfies the quadratic equation 

\begin{equation}\label{f.eq:Motzkin}
x^2M^2 + (x-1)M + 1 = 0
\end{equation}
which gives rise to the quadratic recurrence $M_n = M_{n-1} + \sum_{i} M_iM_{n-2-i}$ with $M_0=1$. This is not a bad recurrence, but we can find a better one.
Differentiating (\ref{f.eq:Motzkin}) we can express $M'$ in terms of $M$. Our likely first attempt leads us to $M'=-(2xM^2+M)/(2x^2M+x-1)$, which is not terribly enlightening. However, using (\ref{f.eq:Motzkin}) and a bit of purposeful algebraic manipulation, we can rewrite this as a linear equation with polynomial coefficients:
\[
(x-2x^2-3x^3)M' + (2-3x-3x^2)M - 2 = 0.
\]
Extracting the coefficient of $x^n$ we obtain the much more efficient recurrence relation
\[
(n+2)M_n - (2n+1)M_{n-1} - (3n-3)M_{n-2} = 0. \qquad {(n \geq 2)}
\]
We now explain the theoretical framework behind this example.

\bigskip

\noindent
\textbf{\textsf{Rational, algebraic, and D-finite series}}. Consider a formal power series $A(x)$ over the complex numbers. We make the following definitions.

\bigskip

\begin{center}
\begin{tabular}{| l | p{12cm} |}
\hline
$A(x)$ is \textbf{rational} & There exist polynomials $p(x)$ and $q(x) \neq 0$ such that \begin{center} $q(x)A(x)=p(x)$. \end{center}\\
\hline
$A(x)$ is \textbf{algebraic} & There exist polynomials $p_0(x), \ldots, p_d(x)$ such that  \begin{center} $
p_0(x) + p_1(x)A(x) + p_2(x) A(x)^2 + \cdots + p_d(x) A(x)^d = 0$. \end{center}\\
%\displaystyle \sum_{k=0}^d P_i(x) A(x)^i = 0$. \end{center}\\
\hline
$A(x)$ is \textbf{D-finite} & There exist polynomials $q_0(x), \ldots, q_d(x), q(x)$ such that  \begin{center} $
q_0(x)A(x) + q_1(x) A'(x) + q_2(x)A''(x) +  \cdots + q_d(x) A^{(d)}(x) = q(x)$. \end{center}\\
%\displaystyle \sum_{k=0}^d P_i(x) A^{(i)}(x) = 0$. \end{center}\\
\hline
\end{tabular}
\end{center}
\bigskip

%\begin{center}
%\begin{tabular}{| l | p{12cm} |}
%\hline
%$A(x)$ is \textbf{rational} & There exist polynomials $p(x)$ and $q(x) \neq 0$ such that \begin{center} $qA=p$. \end{center}\\
%\hline
%$A(x)$ is \textbf{algebraic} & There exist polynomials $p_0(x), \ldots, p_d(x)$ such that  \begin{center} $
%p_0+ p_1A + p_2 A^2 + \cdots + p_d A^d = 0$. \end{center}\\
%%\displaystyle \sum_{k=0}^d P_i(x) A(x)^i = 0$. \end{center}\\
%\hline
%$A(x)$ is \textbf{D-finite} & There exist polynomials $q_0(x), \ldots, q_d(x), q(x)$ such that  \begin{center} $
%q_0A + q_1 A'(x) + q_2A'' +  \cdots + q_d A^{(d)} = q$. \end{center}\\
%%\displaystyle \sum_{k=0}^d P_i(x) A^{(i)}(x) = 0$. \end{center}\\
%\hline
%\end{tabular}
%\end{center}
%\bigskip

Now consider the corresponding sequence $a_0, a_1, a_2 \ldots$ and make the following definitions.
\medskip

\begin{center}
\begin{tabular}{| p{5cm} | p{10.5cm} |}
\hline
$\{a_0, a_1, \ldots\}$ is 
 \textbf{c-recursive} & There are constants $c_0, \ldots, c_d \in {\mathbb{C}}$ such that 
\begin{center}
$c_0 a_n + c_1 a_{n-1} + \cdots + c_d a_{n-d} = 0 \qquad (n \geq d)$
\end{center} \\
\hline
$\{a_0, a_1, \ldots\}$ is \textbf{P-recursive} & There are complex  polynomials $c_0(x), \ldots, c_d(x)$ such that 
\begin{center}
$c_0(n) a_n + c_1(n) a_{n-1} + \cdots + c_d(n) a_{n-d} = 0 \qquad (n \geq d)$
\end{center} \\
\hline
\end{tabular}
\end{center}

\bigskip

%The converse is not true; these sequences correspond to the wider family of  ``D-finite" series. 
These families contain most (but certainly not all) series and sequences that we encounter in combinatorics. They are related as follows.

\begin{theorem}
Let $A(x) = a_0 + a_1x + a_2x^2 + \cdots$ be a formal power series. The following implications hold.

\begin{center}
\begin{tabular}{|ccccc|}
\hline
$A(x)$ is rational & $\Longrightarrow$ & $A(x)$ is algebraic & $\Longrightarrow$ & $A(x)$ is D-finite  \\
& & & &\\
$\Updownarrow$ & & & & $\Updownarrow$ \\
& & & &\\
$\{a_0, a_1, \ldots\}$ is c-recursive  & &  $\Longrightarrow$ & & $\{a_0, a_1, \ldots\}$ is P-recursive \\
%\\
%$\{a_n \, : \, n \in {\mathbb{N}}\}$ satisfies a  & &  & & $\{a_n \, : \, n \in {\mathbb{N}}\}$ satisfies a  \\
% linear recurrence with & & $\Longrightarrow$ & & linear recurrence with \\
%  constant coefficients & &  & &   polynomial coefficients  \\
\hline
 \end{tabular}
\end{center}

\end{theorem}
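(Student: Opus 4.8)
The plan is to establish the two vertical equivalences and the two top implications, after which the bottom implication is automatic. The equivalence ``$A(x)$ is rational $\Longleftrightarrow \{a_n\}$ is c-recursive'' is exactly the equivalence of parts 1 and 3 of Theorem \ref{f.th:rational}, so I would simply quote it; the only chores are routine normalizations (cancel powers of $x$ so the denominator has nonzero constant term, do polynomial division so the numerator has smaller degree, and pad a recurrence with zero leading coefficients so that it holds from its order onwards). The implication ``rational $\Rightarrow$ algebraic'' is trivial: $q(x)A(x)=p(x)$ rewrites as the degree-one algebraic relation $-p(x)+q(x)A(x)=0$. And ``$\{a_n\}$ c-recursive $\Rightarrow \{a_n\}$ P-recursive'' is trivial as well, since constant polynomials are polynomials (or one may chase the diagram once the other arrows are in place).

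The substance of the theorem is ``$A(x)$ algebraic $\Rightarrow A(x)$ D-finite''. I would argue as follows. Since $A(x)$ satisfies a nonzero polynomial relation over ${\mathbb{C}}(x)$, let $P(x,y)\in{\mathbb{C}}[x,y]$ be a minimal such polynomial and $d=\deg_y P$; in characteristic $0$ it is separable, so $P_y(x,A(x))\neq 0$. Working inside the fraction field ${\mathbb{C}}((x))$ of ${\mathbb{C}}[[x]]$, consider $K={\mathbb{C}}(x)(A(x))={\mathbb{C}}(x)[A(x)]$, a ${\mathbb{C}}(x)$-vector space of dimension $d$ with basis $1,A,\ldots,A^{d-1}$. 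Differentiating $P(x,A)=0$ gives $A'=-P_x(x,A)/P_y(x,A)\in K$, and since every element of $K$ is a polynomial in $A$ over ${\mathbb{C}}(x)$, the product and chain rules show that $K$ is closed under $\tfrac{d}{dx}$; hence all formal derivatives $A, A', A'', \ldots$ lie in $K$. The $d+1$ elements $A,A',\ldots,A^{(d)}$ of the $d$-dimensional space $K$ are therefore ${\mathbb{C}}(x)$-linearly dependent, and clearing denominators in a dependence relation produces polynomials $q_0,\ldots,q_d$, not all zero, with $\sum_{k=0}^{d}q_k(x)A^{(k)}(x)=0$ --- a D-finite relation.

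For ``$A(x)$ D-finite $\Longleftrightarrow \{a_n\}$ P-recursive'' the tool is the Euler operator $\vartheta:=x\tfrac{d}{dx}$, which is diagonal on monomials ($\vartheta(x^n)=nx^n$), so $r(\vartheta)$ sends $\sum a_nx^n$ to $\sum r(n)a_nx^n$ for any polynomial $r$, while multiplication by $x^i$ shifts degrees. Using $\tfrac{d^j}{dx^j}=x^{-j}\vartheta(\vartheta-1)\cdots(\vartheta-j+1)$, every polynomial-coefficient operator $\sum_j q_j(x)\tfrac{d^j}{dx^j}$ can be rewritten as $\sum_i x^i r_i(\vartheta)$ over a finite set of integers $i$, and conversely $\sum_{j=0}^{d}x^j c_j(\vartheta+j)$ is of this shape. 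Extracting $[x^m]$ from an identity $LA(x)=q(x)$ with $q$ a polynomial then turns it into $\sum_i r_i(m-i)\,a_{m-i}=0$ for all large $m$, which after reindexing and padding is a P-recurrence; reading the computation backwards sends a P-recurrence $\sum_{j=0}^{d}c_j(n)a_{n-j}=0$ $(n\ge d)$ to the operator $\hat L=\sum_{j=0}^{d}x^jc_j(\vartheta+j)$, for which $\hat L A(x)$ is a polynomial, i.e.\ a D-finite relation. Nontriviality is handled uniformly by noting that an operator $\sum_i x^i r_i(\vartheta)$ annihilates every $x^n$ only if each $r_i$ vanishes identically (the $x^{n+i}$ have distinct degrees), so nonzero operators correspond to nonzero recurrences in both directions.

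The step I expect to be the main obstacle --- or at least the one carrying the real idea --- is ``algebraic $\Rightarrow$ D-finite'': the crux is recognizing that the entire sequence of formal derivatives of $A$ is confined to the finite-dimensional ${\mathbb{C}}(x)$-algebra ${\mathbb{C}}(x)[A]$, which forces a linear dependence and hence the ODE. By comparison, the D-finite $\Longleftrightarrow$ P-recursive passage is mechanical, though it does demand some care with shifted indices and with checking that the translation between differential operators and recurrence operators is faithful (nonzero $\leftrightarrow$ nonzero).
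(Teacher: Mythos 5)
Your proposal is correct and follows essentially the same route as the paper: the heart in both is that all formal derivatives of $A$ lie in the finite-dimensional ${\mathbb{C}}(x)$-vector space ${\mathbb{C}}(x)(A)$, forcing a linear dependence among $A, A', \ldots, A^{(d)}$, while rational $\Rightarrow$ algebraic and the rational/c-recursive equivalence are quoted as in the text. Your D-finite $\Leftrightarrow$ P-recursive translation via the Euler operator $\vartheta = x\,d/dx$ is just a different bookkeeping for the paper's rewriting of the $c_i(x)$ in the falling-factorial basis, and your extra care about minimality/separability and nontriviality only tightens details the paper leaves implicit.
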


\begin{proof}
We already discussed the correspondence between rational series and c-recursive functions, and rational series are trivially algebraic. Let us prove the remaining  statements.
%
%\begin{proposition}
%An algebraic power series is $D$-finite.
%\end{proposition}
%
%\begin{proof}

\medskip
\noindent \emph{(Algebraic $\Rightarrow$ D-finite)} 
Suppose $A(x)$ satisfies an algebraic equation of degree $d$. Then $A$ is algebraic over the field ${\mathbb{C}}(x)$, and the field extension ${\mathbb{C}}(x, A)$ is a vector space over ${\mathbb{C}}(x)$ having dimension at most $d$.

Taking the derivative of the polynomial equation satisfied by $A$, we get an expression for $A'$ as a rational function of $A$ and $x$. Taking derivatives repeatedly, we find that all derivatives of $A$ are in ${\mathbb{C}}(x,A)$. It follows that $1, A, A', A'', \ldots, A^{(d)}$ are linearly dependent over ${\mathbb{C}}(x)$, and a linear relation between them is a certificate for the D-finiteness of $A$.

%
%\begin{proposition}
%A sequence $\{a_n \, : \, n \in {\mathbb{N}}\}$ is P-recursive if and only if its generating function $A(x) = \sum_n a_n x^n$ is D-finite.
%\end{proposition}
%\begin{proof}
\medskip
\noindent
\emph{(P-recursive $\Leftrightarrow$ D-finite):}  
If $q_0(x)A(x) + q_1(x) A'(x) + \cdots + q_d(x) A^{(d)}(x) = q(x)$, comparing the coefficients of $x^n$ gives a $P$-recursion for the $a_i$s. In the other direction, given a P-recursion for the $a_i$s of the form $c_0(n)a_n + \cdots + c_d(n)a_{n-d}=0$, it is easy to obtain the corresponding differential equation after writing $c_i(x)$ in terms of the basis $\{(x+i)_k \, : \, k \in {\mathbb{N}}\}$ of ${\mathbb{C}}[x]$, where $(y)_k = y(y-1)\cdots(y-k+1)$.
\end{proof}

The converses are not true. For instance, $\sqrt{1+x}$ is algebraic but not rational, and $e^x$ and $\log(1-x)$ are D-finite but not algebraic.

\begin{corollary}
The ordinary generating function $\sum_n a_n x^n$ is D-finite if and only if the exponential generating function $\sum_n a_n \frac{x^n}{n!}$ is D-finite.
\end{corollary}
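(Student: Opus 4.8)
The plan is to translate the statement about generating functions into a statement about sequences, and then settle the latter by a direct manipulation of recurrences. By the equivalence (P-recursive $\Leftrightarrow$ D-finite) proved just above, applied once to the sequence $\{a_n\}$ and once to the sequence $\{a_n/n!\}$, the ordinary generating function $\sum_n a_n x^n$ is D-finite if and only if $\{a_n\}$ is P-recursive, and the exponential generating function $\sum_n a_n x^n/n! = \sum_n (a_n/n!)\,x^n$ — which is simply the \emph{ordinary} generating function of the sequence $\{a_n/n!\}$ — is D-finite if and only if $\{a_n/n!\}$ is P-recursive. So it suffices to prove: writing $b_n := a_n/n!$, the sequence $\{a_n\}$ is P-recursive if and only if $\{b_n\}$ is P-recursive.

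For the forward direction I would start from a P-recursion $c_0(n) a_n + c_1(n) a_{n-1} + \cdots + c_d(n) a_{n-d} = 0$ valid for $n \geq d$, with the $c_i$ polynomials, substitute $a_{n-i} = (n-i)!\,b_{n-i}$, and divide through by $(n-d)!$ (nonzero for $n \geq d$); the coefficient of $b_{n-i}$ then becomes
\[
c_i(n) \cdot \frac{(n-i)!}{(n-d)!} = c_i(n)\,(n-d+1)(n-d+2)\cdots(n-i),
\]
which is again a polynomial in $n$, so $\{b_n\}$ is P-recursive. For the converse I would run the same computation in reverse: starting from a P-recursion for $\{b_n\}$, substitute $b_{n-i} = a_{n-i}/(n-i)!$ and multiply through by $n!$, so that the coefficient of $a_{n-i}$ becomes $c_i(n)\cdot \tfrac{n!}{(n-i)!} = c_i(n)\,n(n-1)\cdots(n-i+1)$, again a polynomial in $n$, whence $\{a_n\}$ is P-recursive.

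The argument is essentially bookkeeping, so I do not expect a genuine obstacle; the one point deserving care is that after dividing by $(n-d)!$ (or multiplying by $n!$) the falling-factorial ratios $\frac{(n-i)!}{(n-d)!}$ and $\frac{n!}{(n-i)!}$ are honest \emph{polynomials} in $n$ — products of consecutive integers — rather than mere rational functions, so each resulting relation is a legitimate P-recursion holding for all $n \geq d$. If one wants the recursions to stay nondegenerate, note that the coefficient of the highest-index term is multiplied only by a polynomial that is not identically zero (indeed, in the reverse pass it is left equal to $c_0(n)$), so nonvanishing of the leading coefficient is preserved.
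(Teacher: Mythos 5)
Your proof is correct and takes essentially the same route as the paper: the paper's proof consists precisely of invoking the (P-recursive $\Leftrightarrow$ D-finite) equivalence and observing that $\{a_n\}$ is P-recursive if and only if $\{a_n/n!\}$ is, which is exactly your reduction. The only difference is that the paper leaves that observation unjustified, while you carry out the falling-factorial bookkeeping explicitly — a welcome bit of extra detail, and it is done correctly.
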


\begin{proof}
This follows from the observation that $\{a_n \, : \, n \in {\mathbb{N}}\}$ is P-recursive if and only if $\{a_n/n! \, : \, n \in {\mathbb{N}}\}$ is P-recursive
\end{proof}

\noindent
\textbf{\textsf{A few examples}}. Before we discuss general tools, we collect some examples. We will prove all of the following statements later in this section.

The power series for subsets, Fibonacci numbers, and Stirling numbers are rational:
\[
\sum_{n \geq 0}2^nx^n = \frac{1}{1-2x}, \quad
\sum_{n \geq 0} F_nx^n = \frac{x}{1-x-x^2}, \quad \sum_{n \geq k}S(n,k)x^n = \frac{x}{1-x} \cdot \frac{x}{1-2x} \cdots \frac{x}{1-kx}. 
\]

The ``diagonal binomial", $k$-Catalan, and Motzkin series are algebraic but not rational:
\[
\sum_{n \geq 0} {2n \choose n} x^n = \frac1{\sqrt{1-4x}},
\quad
\sum_{n \geq 0} \frac1{(k-1)n+1}{kn \choose n}x^n,
\quad
\sum_{n \geq 0} M_nx^ n = \frac{1-x-\sqrt{1-2x-3x^2}}{2x^2}.
\]

The following series are D-finite but not algebraic:
\[
e^x,  \quad \log(1+x), \quad \sin x, \quad \cos x, \quad \arctan x, \quad 
\sum_{n \geq 0} {2n \choose n}^2 x^n, \quad
\sum_{n \geq 0} {3n \choose n,n,n} x^n
\]

The following series are not D-finite:
\[
\sqrt{1+\log(1+x^2)}, \quad \sec x, \quad  \tan x, \quad  \sum_{n \geq 0}p(n)x^n = \prod_{k \geq 0}\frac1{1-x^k}.
\]

\noindent
\textbf{\textsf{Recognizing algebraic and D-finite series}}. It is not always obvious whether a given power series is algebraic or D-finite, but there are some tools available. Fortunately, algebraic functions behave well under a few operations, and D-finite functions behave even better. This explains why these families contain most examples arising in combinatorics.

The following table summarizes the properties of formal power series that are preserved under various key operations. For example, the fifth entry on the bottom row says that if $A(x)$ and $B(x)$ are D-finite, then the composition $A(B(x))$ is not necessarily D-finite. 

\begin{center}
\begin{tabular}{|l|c|c|c|c|c|c|c|c|c|}
\hline
%& $cA(x)$ & $A(x)+B(x)$ & $A(x)B(x)$ & $1/A(x)$ & $A(B(x))$ & $A(x)\star B(x)$ & $A'(x)$ & $\int A(x)$ \\
& $cA$ & $A+B$ & $AB$ & $1/A$ & $A \circ B$ & $A \star B$ & $A'$ & $\int A$ & $A^{\left<-1\right>}$\\
\hline
rational & Y & Y& Y& Y& Y& Y & Y & N & N \\
algebraic  & Y & Y& Y& Y& Y & N & Y & N & Y \\  
D-finite  & Y & Y& Y& N& N& Y & Y & Y & N \\
\hline
\end{tabular}
\end{center}
\bigskip

Here $A \star B(x) := \sum_{n \geq 0} a_nb_nx^n$ denotes the \textbf{Hadamard product} of $A(x)$ and $B(x)$, $\int A(x):= \sum_{n \geq 1} \frac{a_{n-1}}n x^n % \frac{x^n}{n}
$ is the \textbf{formal integral} of $A(x)$, and $A^{\left<-1\right>}(x)$ is the \textbf{compositional inverse}  of $A(x)$. %, which will be discussed in more detail in Section \ref{f.sec:???}.

In the fourth column we are assuming that $A(0) \neq 0$ so that $1/A(x)$ is well-defined, in the fifth column we are assuming that $B(0) = 0$ so that $A(B(x))$ is well-defined, and in the last column we are assuming that $A(0)=0$ and $A'(0) \neq 0$, so that $A^{\left< -1 \right>}(x)$ is well-defined. 

For proofs of the ``Yes" entries, see \cite{f.StanleyD-finite}, \cite{f.EC2}, \cite{f.Flajolet}. For the ``No" entries, we momentarily assume the statements of the previous subsection. Then  we have the following counterexamples:

\medskip

\noindent $\bullet$ $\cos x$ is D-finite but $1/\cos x = \sec x$ is not. 

\noindent $\bullet$ $\sqrt{1+x}$ and $\log(1+x^2)$ are D-finite but their composition $\sqrt{1+\log(1+x^2)}$ is not. 

\noindent $\bullet$ $A(x) = \sum_{n \geq 0} {2n \choose n}x^n$ is algebraic but $A \star A(x) = \sum_{n \geq 0} {2n \choose n}^2 x^n$ is not.

\noindent $\bullet$ $1/(1+x)$ is rational and algebraic but its integral $\log(1+x)$ is neither.

\noindent $\bullet$ $x+x^2$ is rational but its compositional inverse $(-1+\sqrt{1+4x})/2$ is not. 

\noindent $\bullet$ $\arctan x$ is D-finite but its compositional inverse $\tan x$ is not.

\medskip

Some of these negative results have weaker positive counterparts:

\medskip

\noindent
$\bullet$ If $A(x)$ is algebraic and $B(x)$ is rational, then $A(x) \star B(x)$ is algebraic. 

\noindent
$\bullet$ If $A(x)$ is D-finite and $A(0) \neq 0$, $1/A(x)$ is D-finite if and only if $A'(x)/A(x)$ is algebraic. 

\noindent
$\bullet$ If $A(x)$ is D-finite and $B(x)$ is algebraic with $B(0)=0$, then $A(B(x))$ is D-finite. 

\medskip

\noindent See \cite[Proposition 6.1.11]{f.EC2},
%\cite{f.Jungen},
 \cite{f.HarrisSibuya}, and \cite[Theorem 6.4.10]{f.EC2} for the respective proofs.

\bigskip
The following result is also useful:

\begin{theorem} \cite[Section 6.3]{f.EC2}
Consider a multivariate formal power series $F(x_1, \ldots, x_d)$ which is rational in $x_1, \ldots, x_d$ and its \textbf{diagonal}:
\[
F(x_1, \ldots, x_d) = \sum_{n_1, \ldots, n_d \geq 0} a_{n_1,\ldots, n_d} x_1^{n_1} \cdots x_d^{n_d}, \qquad 
\textrm{diag } F(x) = \sum_{n \geq 0} a_{n, \ldots, n} x^n.
\]
\begin{enumerate}
\item
If $d=2$, then $\textrm{diag } F(x)$ is algebraic. 
\item
If $d>2$, then $\textrm{diag } F(x)$ is D-finite but not necessarily algebraic. 
\end{enumerate}
\end{theorem}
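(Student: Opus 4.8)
The plan is to handle both cases by the same contour-integral device, which goes back to Furstenberg, reducing coefficient extraction to residues of a rational (resp.\ algebraic) function of one auxiliary variable.

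For $d=2$, write $F=P/Q$ with $Q(0,0)\neq 0$ (automatic, since $F$ is a power series). The starting point is that the diagonal is a constant-term extraction, hence a contour integral: for $|x|$ small,
\[
\textrm{diag } F(x)=[t^0]\,F(t,x/t)=\frac1{2\pi i}\oint_{|t|=\epsilon}F(t,x/t)\,\frac{dt}{t},
\]
because $[t^0]\sum_{m,n}a_{m,n}x^n t^{m-n}=\sum_n a_{n,n}x^n$. After clearing denominators the integrand is a rational function of $t$ with coefficients in ${\mathbb{C}}[x]$, so by the residue theorem the integral is the sum of its residues inside the contour; for $|x|$ small these occur at the finitely many poles $\tau_1(x),\ldots,\tau_D(x)$ of the integrand that tend to $0$ as $x\to 0$. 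Each $\tau_i$ is a branch of an algebraic function of $x$ (a root of the denominator, viewed over $\overline{{\mathbb{C}}(x)}$), and each residue is a rational function of $\tau_i(x)$ and $x$. The remaining, and most delicate, point is to show that this \emph{partial} sum of residues (over the small poles only) is algebraic over ${\mathbb{C}}(x)$: the set of small poles is invariant under monodromy of $x$ around $0$, since analytic continuation cannot change a root's order of vanishing, so by Weierstrass preparation $\prod_i(t-\tau_i(x))$ is a distinguished polynomial in $t$ with coefficients holomorphic in $x$; being a factor, over the algebraically closed field of Puiseux series, of a polynomial in $t$, its coefficients lie in $\overline{{\mathbb{C}}(x)}$; hence the residue sum, a symmetric rational expression in the $\tau_i$, also lies in $\overline{{\mathbb{C}}(x)}$, i.e.\ $\textrm{diag } F$ is algebraic.

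For $d>2$ I would peel off one variable at a time. Applying the same identity in the last two variables, with $x_1,\ldots,x_{d-2}$ as spectator parameters, produces the partial diagonal
\[
G(x_1,\ldots,x_{d-2},y)=\sum a_{n_1,\ldots,n_{d-2},m,m}\,x_1^{n_1}\cdots x_{d-2}^{n_{d-2}}\,y^m=\frac1{2\pi i}\oint_{|t|=\epsilon}F(x_1,\ldots,x_{d-2},t,y/t)\,\frac{dt}{t},
\]
and the argument above, now carried out over ${\mathbb{C}}(x_1,\ldots,x_{d-2},y)$, shows that $G$ is algebraic, hence D-finite. Since $\textrm{diag } F$ is the full diagonal of the $(d-1)$-variable series $G$, it then remains to invoke that the diagonal of a D-finite multivariate power series is D-finite; I would cite \cite[Section 6.3]{f.EC2} and \cite{f.StanleyD-finite}, observing that this follows by iterating the same device, since a contour integral $\oint H(\ldots,t,\ldots/t)\,dt/t$ of a D-finite family $H$ over a fixed small cycle is D-finite in the remaining parameters. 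Finally, to see that D-finiteness cannot be upgraded to algebraicity once $d>2$, I would point to the example already recorded above: $\sum_{n\geq 0}{3n\choose n,n,n}x^n$ is the diagonal of the rational function $1/(1-x_1-x_2-x_3)$ and is D-finite but not algebraic.

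In summary, the main obstacle is the algebraicity bookkeeping in the $d=2$ step: one sums residues over \emph{some} of the roots of the denominator, not all of them, and must argue that this partial sum is still algebraic over ${\mathbb{C}}(x)$ — the monodromy-stability together with Weierstrass preparation is exactly what makes this work. The $d>2$ case then reduces, after a single application of this step, to the (citable) closure of D-finiteness under diagonals.
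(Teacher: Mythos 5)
The paper does not actually prove this statement---it is quoted from \cite[Section 6.3]{f.EC2}---so there is no internal proof to compare against; your argument is essentially the standard (Furstenberg-style) proof that the cited source gives for $d=2$, and your reduction plus closure-under-diagonals is the standard route for $d>2$. Two remarks on the details. First, in the $d=2$ step your monodromy-plus-Weierstrass bookkeeping is more machinery than you need: each residue at a small pole $\tau_i(x)$ is a rational expression in $x$ and $\tau_i(x)$ (also involving derivatives of the denominator when the pole is multiple), hence an algebraic function of $x$, and a \emph{finite sum} of algebraic functions is automatically algebraic over ${\mathbb{C}}(x)$; one does not need the partial sum over the small roots to be symmetric or single-valued, only to choose $\epsilon$ and the range of $x$ so that exactly the branches tending to $0$ lie inside the contour. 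Your argument is not wrong, just heavier than necessary.

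Second, the real load-bearing point for $d>2$ is the closure statement you invoke at the end: that the full diagonal of your $(d-1)$-variable algebraic (or D-finite) series $G$ is D-finite. This is Lipshitz's theorem on diagonals of D-finite series (equivalently, preservation of holonomy under integration over a cycle); it is genuinely nontrivial, it is not proved in \cite{f.StanleyD-finite} (which predates it), and it does not follow just by ``iterating the same device,'' since the residue-sum trick only yields algebraicity when the integrand is rational in the integration variable. Citing \cite[Section 6.3]{f.EC2} for it is also mildly circular, as that is the source of the theorem you are proving; the clean fix is to cite Lipshitz (or Gessel--Zeilberger/Christol) explicitly, or alternatively to represent $\mathrm{diag}\,F$ directly as a $(d-1)$-fold contour integral of a rational function and invoke the Picard--Fuchs/holonomy argument there. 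With that citation repaired, your proof is complete; the non-algebraicity witness $\sum_{n\geq 0}\binom{3n}{n,n,n}x^n = \mathrm{diag}\,\frac{1}{1-x_1-x_2-x_3}$ is exactly the example the paper itself uses.
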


%\cite{f.Jungen} Sur les s?eries de Taylor n?ayant que des singularit?es alg?ebrico-logarithmiques sur leur cercle de convergence,

%@.A. Harris Jr. and Y. Sibuya. Advances in Math 58 (1985)

Now we are ready to prove our positive claims about the series at the beginning of this section.
The first three expressions are visibly rational, and the diagonal binomial and Motzkin series are visibly algebraic. We proved that the $k$-Catalan series is algebraic in Example 15 of Section \ref{f.sec:ogfexamples}. The functions $e^x, \, \log(1+x),\, \sin x,\, \cos x,\arctan x$ respectively satisfy the differential equations $y'=y, \, (1+x)y'=1, \, y''=-y,\,  y''=-y,\,  (1+x^2)y'=1$. The series $\sum_{n \geq 0} {2n \choose n}^2 x^n$ is the Hadamard product of $(1-4x)^{-1/2}$ with itself, and hence D-finite. Finally, $\sum_{n \geq 0} {3n \choose n,n,n} x^n$ is the diagonal of the rational function $\frac{1}{1-x-y-z} = \sum_{a,b,c \geq 0} {a+b+c \choose a,b,c} x^a y^b z^c$, and hence D-finite. 

Proving the negative claims requires more effort and, often, a bit of analytic machinery. We briefly outline some key results.

\bigskip

\noindent
\textbf{\textsf{Recognizing series that are not algebraic}}.
There are a few methods available to prove that a series is \textbf{not} algebraic. The simplest algebraic and analytic criteria are the following.

\begin{theorem}[Eisenstein's Theorem]\label{f.th:eisenstein} \cite{f.PolyaSzego2}
If a series $A(x) = \sum_{n \geq 0} a_nx^n$ with rational coefficients is algebraic, then there exists a positive integer $m$ such that $a_nm^n$ is an integer for all $n>0$. 
\end{theorem}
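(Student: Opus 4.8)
The plan is to show that if $A(x) = \sum_{n \geq 0} a_n x^n$ with $a_n \in \mathbb{Q}$ satisfies a polynomial equation $\sum_{i=0}^d p_i(x) A(x)^i = 0$ with $p_i(x) \in \mathbb{Z}[x]$ (clearing denominators, we may assume integer coefficients) and $p_d(x) \not\equiv 0$, then some integer $m$ works. The standard route is to produce a differential equation for $A(x)$ and then argue about denominators inductively. First I would differentiate the algebraic relation to obtain, as in the proof that algebraic implies D-finite, a relation of the form $A'(x) = R(x, A(x))$ for a rational function $R$; clearing denominators gives $g(x) A'(x) = h(x, A(x))$ where $g \in \mathbb{Z}[x]$, $g(0) \neq 0$ after a suitable preparation, and $h$ has integer coefficients. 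Equivalently, extracting the coefficient of $x^n$ yields a P-recursion $c_0(n) a_n = c_1(n) a_{n-1} + \cdots + c_k(n) a_{n-k}$ with $c_i \in \mathbb{Z}[x]$ and leading term $c_0(n)$ a fixed nonzero integer polynomial in $n$.

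The core of the argument is then a denominator bound. Writing $a_n = u_n / v_n$ in lowest terms, I would show by induction on $n$ that $v_n$ divides $C \cdot m_0^n \cdot n!$ or, with more care, just $C \cdot m_0^n$, for suitable fixed integers $C, m_0$ — the factor $n!$ can be removed because the leading coefficient $c_0(n)$ of the recurrence, while it may introduce new prime factors, does so in a controlled geometric way once one notes that a prime $p$ dividing $c_0(n)$ for infinitely many $n$ must divide its content, and only finitely many "bad" primes arise from the finitely many roots of $c_0$. Concretely: let $S$ be the finite set of primes dividing the leading coefficient of $c_0(x)$ together with all primes at which the reduction of the original algebraic equation degenerates; for primes $p \notin S$ one gets a $p$-adically convergent recursion showing $|a_n|_p \leq 1$ for all $n$ (this is where one invokes that $A(x)$, viewed $p$-adically, is the unique solution of an integral algebraic equation with the given initial coefficient, hence has $p$-adically integral coefficients — a Hensel-type or Artin–Rees-type argument). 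For the finitely many $p \in S$, a cruder bound $|a_n|_p \leq p^{\alpha_p n + \beta_p}$ suffices, obtained by tracking the $p$-adic valuation through the recursion. Taking $m = \prod_{p \in S} p^{\alpha_p}$ and absorbing the constants then gives that $a_n m^n \cdot (\text{bounded constant})$ is a $p$-integer for every $p$, hence an integer after one final adjustment of $m$.

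The step I expect to be the main obstacle is the clean removal of the spurious $n!$ in the denominator — i.e., showing that the leading coefficient $c_0(n)$ of the P-recursion does not actually force factorial-type growth of denominators. The naive induction gives $v_n \mid C m_0^n \prod_{j \le n} c_0(j)$, and $\prod_{j\le n} c_0(j)$ grows like $(n!)^{\deg c_0}$; the real content of Eisenstein's theorem is that this is illusory, and the honest way to see it is the $p$-adic uniqueness argument: for all but finitely many primes $p$, reduce the algebraic equation mod $p$, check that $\partial_y(\text{equation})$ is a unit at the relevant point (possible after excluding the discriminant's prime divisors and the primes dividing relevant leading coefficients), and conclude by $p$-adic Hensel/Newton iteration that the unique power-series solution over $\mathbb{Z}_p$ with the correct initial term is $A(x)$ itself, so $a_n \in \mathbb{Z}_p$. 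Once this is in hand, only the finitely many exceptional primes contribute to $m$, and the bookkeeping for those is routine valuation-chasing. An alternative to the $p$-adic argument, which I would mention as a fallback, is Eisenstein's original elementary estimate: manipulate the algebraic equation directly to bound the denominator of $a_n$ by examining which primes can appear, using that $A(x)$ satisfies a monic-after-scaling equation — but the $p$-adic packaging is cleaner and is the approach I would write up.
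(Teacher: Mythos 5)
The paper itself does not prove this theorem: it is quoted with a citation to P\'olya--Szeg\H{o}, where the classical argument is Eisenstein's elementary manipulation of the algebraic equation to bound denominators directly --- the route you mention only as a fallback. Your $p$-adic packaging (Hensel/Newton at all but finitely many primes, plus valuation bounds at the finitely many bad primes via the P-recursion) is a legitimate and standard modern proof, and your two-tier structure correctly explains why the apparent factorial loss coming from the leading coefficient $c_0(n)$ of the recursion is illusory: for a single fixed prime $p$ the cumulative loss $\sum_{j\le n} v_p(c_0(j))$ is only $O(n)$, hence geometric, and after the Hensel step only finitely many primes contribute at all.

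The one step that would fail as written is the claim that, after discarding primes dividing the discriminant and the relevant leading coefficients, $\partial P/\partial y$ becomes a unit ``at the relevant point.'' The relevant point is $(0,a_0)$, and $\partial P/\partial y(0,a_0)$ can vanish as a rational number --- for instance $A(x)=x\sqrt{1+x}$ with minimal equation $P=y^2-x^2-x^3$, where $\partial P/\partial y = 2y$ vanishes at $(0,0)$ --- so no exclusion of primes makes it a unit, and unit-derivative Hensel does not apply directly. The standard repair is quantitative: since $P$ is separable, $k=\mathrm{ord}_x\,\partial P/\partial y\,(x,A(x))$ is finite; take as approximate root the truncation $A_0$ of $A$ to order greater than $2k$, exclude the finitely many primes dividing the denominators of those initial coefficients or dividing the value at $x=0$ of the unit part of $x^{-k}\,\partial P/\partial y\,(x,A_0(x))$, and run Newton iteration in $\mathbb{Z}_p[[x]]$ with respect to the $x$-adic valuation (equivalently, substitute $A=A_0+x^{N}B$ and apply unit-derivative Hensel to the resulting equation for the tail $B$). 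Uniqueness of the solution congruent to $A_0$ then forces $A\in\mathbb{Z}_p[[x]]$. With that adjustment your plan goes through; the bad-prime bookkeeping (noting that the finitely many $n$ with $c_0(n)=0$ are absorbed into constants) and the final assembly of $m$ are fine as you describe them.
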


This shows that $e^x, \log(1+x), \sin x, \cos x,$  and $\arctan x$ are not algebraic.

\begin{theorem}\cite{f.Jungen}
If the coefficients of an algebraic power series $A(x) = \sum_{n \geq 0} a_nx^n$ satisfy  $a_n \sim c n^r \alpha^n$ for nonzero $c, \alpha \in {\mathbb{C}}$ and $r<0$, then $r$ cannot be a negative integer.
\end{theorem}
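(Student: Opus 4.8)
The plan is to run the standard singularity analysis of algebraic power series. After a harmless rescaling, the hypothesis $a_n\sim c\,n^r\alpha^n$ forces $A$ to have a specific algebraic singularity at $x=1/\alpha$ whose leading Puiseux exponent equals $-r-1$; but an algebraic branch point with a \emph{nonnegative integer} exponent is not a singularity at all, and that is the contradiction. Concretely, first I would normalize: replacing $A(x)$ by $A(x/\alpha)=\sum_n(a_n\alpha^{-n})x^n$ — still algebraic, since algebraicity is preserved by $x\mapsto x/\alpha$ — we may assume $\alpha=1$, so $a_n\sim c\,n^r$ with $c\ne 0$ and $r<0$. Then $|a_n|^{1/n}\to 1$, so $A$ has radius of convergence exactly $1$; being algebraic, $A$ has only finitely many singularities and continues analytically to a $\Delta$-domain (a disk of radius $>1$ minus finitely many slits). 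Writing $c=|c|e^{i\phi}$, the coefficients of $e^{-i\phi}A(x)$ are $e^{-i\phi}a_n\sim|c|\,n^r$, hence eventually real and positive, so by Pringsheim's theorem $x=1$ is a singularity of $A$.

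Next I would analyze $A$ locally at $x=1$. By the Newton--Puiseux theorem,
\[
A(x)=g(x)+b\,(1-x)^{\nu}+\cdots,
\]
where $g$ is analytic at $1$, $\nu\in\mathbb{Q}$ is the smallest exponent occurring in the Puiseux expansion that is not a nonnegative integer, $b\ne 0$, and $\cdots$ collects terms of larger exponent. Using $[x^n](1-x)^{\nu}=\frac{n^{-\nu-1}}{\Gamma(-\nu)}\bigl(1+O(1/n)\bigr)$ (generalized binomial coefficients plus Stirling) together with the transfer theorem over the $\Delta$-domain, the term $b(1-x)^{\nu}$ dominates: $g$ and the higher Puiseux terms contribute strictly lower order, and any other dominant singularities $\zeta\ne 1$ on $|x|=1$ contribute terms carrying oscillating factors $\zeta^{-n}$, which — by a short Cesàro-averaging argument using that $e^{-i\phi}a_n$ is eventually real positive — cannot combine into anything asymptotic to the constant-argument sequence $c\,n^r$. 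Hence $a_n\sim\frac{b}{\Gamma(-\nu)}\,n^{-\nu-1}$.

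Finally, comparing with $a_n\sim c\,n^r$ forces $-\nu-1=r$ and $b/\Gamma(-\nu)=c\ne 0$; in particular $\Gamma(-\nu)$ is finite and nonzero, so $-\nu\notin\{0,-1,-2,\dots\}$, i.e.\ $\nu\notin\mathbb{Z}_{\ge 0}$. But if $r$ were a negative integer, then $\nu=-r-1$ would be a nonnegative integer — a contradiction. Therefore $r$ is not a negative integer.

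The hard part is making the middle step honest: one must know that the leading asymptotics of $a_n$ really is the transfer of the single Puiseux term $b(1-x)^{\nu}$ at $x=1$, with no interference from $g$, from the tail of the Puiseux series, or from the other dominant singularities. This is precisely the content of the classical singularity analysis / transfer theorems for algebraic functions (finitely many singularities, Puiseux expansions with rational exponents of bounded denominator, $\Delta$-domain continuation, term-by-term coefficient transfer), which I would cite from \cite{f.Flajolet} (or \cite[Chapter 6]{f.EC2}) rather than reprove; Jungen's original argument in \cite{f.Jungen} amounts to the same thing phrased through explicit binomial-coefficient asymptotics. Everything else — the rescaling, Pringsheim's theorem, and the fact that $1/\Gamma$ vanishes exactly at the nonpositive integers — is routine.
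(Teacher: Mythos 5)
The paper does not actually prove this statement: it quotes the result with a citation to Jungen, and uses it only as a black box to rule out algebraicity of $\sum_n \binom{2n}{n}^2x^n$ and $\sum_n\binom{3n}{n,n,n}x^n$. So there is no in-paper argument to compare against; the relevant benchmark is the standard singularity-analysis proof in the literature (e.g.\ \cite[Ch.~VII]{f.Flajolet}), and your sketch is exactly that route: rescale so $\alpha=1$, expand at a dominant singularity by Newton--Puiseux, transfer $(1-x)^{\nu}\mapsto n^{-\nu-1}/\Gamma(-\nu)$, and observe that $1/\Gamma(-\nu)=0$ precisely when $\nu$ is a nonnegative integer, so a genuine contribution of order $n^{r}$ with $r$ a negative integer can never arise.

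Two steps are softer than you present them. First, Pringsheim's theorem requires the coefficients to be real and nonnegative; after rotating by $e^{-i\phi}$ you only know $e^{-i\phi}a_n\sim |c|n^{r}$, i.e.\ the coefficients are \emph{asymptotically} positive real, which is not enough to quote Pringsheim as stated. Second, the claim that contributions from other singularities on $|x|=1$ ``cannot combine into anything asymptotic to $c\,n^{r}$'' is true but is not a one-line Ces\`aro remark; isolating the frequencies $\zeta^{-n}$ and showing no conspiracy of oscillating terms produces a pure power law takes an actual argument. Both issues disappear if you lean a bit harder on the machinery you are already citing: the full coefficient-asymptotics theorem for algebraic functions (Newton--Puiseux at \emph{all} dominant singularities plus transfer, \cite[Thm.~VII.8]{f.Flajolet}) gives an asymptotic expansion $a_n=\sum_j b_j\,\omega_j^{\,n}\,n^{s_j}+\text{(lower order)}$ in which every exponent $s_j$ lies in $\mathbb{Q}\setminus\{-1,-2,\dots\}$, and matching this against $a_n\sim c\,n^{r}\alpha^{n}$ immediately forbids $r$ from being a negative integer, with no need to first manufacture a positive-real singularity. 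As a sketch that explicitly delegates the hard analytic content to the cited transfer theorems your proposal is acceptable, but the Pringsheim step as written is the one concrete misstep to repair.
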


Stirling's approximation $n! \sim \sqrt{2\pi n} \left(n/e\right)^n$ gives ${2n \choose n}^2 \sim c \cdot 16^n/ n$ and ${3n \choose n,n,n} \sim c \cdot 27^n/n$, so the corresponding series are not algebraic.

%Using Stirling's approximation $n! \sim \sqrt{2\pi n} \left(n/e\right)^n$ we get the estimates ${2n \choose n}^2 \sim c \cdot 16^n/ n$ and ${3n \choose n,n,n} \sim c \cdot 27^n/n$. This shows that $\sum_{n \geq 0}{2n \choose n}^2$ and $\sum_{n \geq 0}{3n \choose n,n,n}$ are not algebraic.

Another useful analytic criterion is that an algebraic series $A(x)$ must have a Newton-Puiseux expansion at any of its singularities. See \cite[Theorem VII.7]{f.Flajolet} and \cite{f.FlajoletGerholdSalvy} for details.

\bigskip

\noindent
\textbf{\textsf{Recognizing series that are not D-finite}}. The most effective methods to show that a function is \textbf{not} D-finite are analytic. 

\begin{theorem}\cite[Theorem 9.1]{f.Henrici}
Suppose that $A(x)$ is analytic at $x=0$, and it is D-finite, satisfying the equation $q_0(x)A(x) + q_1(x) A'(x) + \cdots + q_d(x) A^{(d)}(x) = q(x)$ with $q_d(x) \neq 0$. Then $A(x)$ can be extended to an analytic function in any simply connected region of the complex plane not containing the (finitely many) zeroes of $q_d(x)$. 
\end{theorem}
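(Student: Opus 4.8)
The plan is to reduce the statement to a homogeneous equation and then run the classical argument for analytic continuation of solutions of linear ODEs along paths avoiding the singular points, closing with the monodromy theorem. First I would homogenize: if $q \neq 0$, say $\deg q = m$, then $\partial^{m+1} q = 0$ with $\partial = d/dx$, so applying $\partial^{m+1}$ to both sides of $q_0 A + q_1 A' + \cdots + q_d A^{(d)} = q$ produces a homogeneous equation $Q_0 A + Q_1 A' + \cdots + Q_D A^{(D)} = 0$ of order $D = d+m+1$; a short Leibniz computation shows its leading coefficient is exactly $Q_D = q_d$ (the top term of $\partial^{m+1}(q_d \partial^d A)$ is $q_d \partial^{D} A$, and every lower $q_j \partial^j$ contributes only terms of order $< D$). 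So we may assume $q = 0$. Call $x_0$ an \emph{ordinary point} if $q_d(x_0) \neq 0$; the ordinary points form $\mathbb{C} \setminus S$, where $S = q_d^{-1}(0)$ is finite.

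The analytic engine is the local theory at ordinary points. Near such $x_0$ one divides by $q_d$ and rewrites the equation as $A^{(D)} = b_{D-1}(x) A^{(D-1)} + \cdots + b_0(x) A$ with each $b_j$ holomorphic on the disc $\Delta(x_0, \rho)$, $\rho = \mathrm{dist}(x_0, S)$. The classical existence--uniqueness theorem for linear ODEs in the complex domain (provable by the method of majorants or by Picard iteration) then gives: for any initial data $(c_0, \ldots, c_{D-1}) \in \mathbb{C}^D$ there is a \emph{unique} holomorphic $y$ on $\Delta(x_0, \rho)$ solving the equation with $y^{(j)}(x_0) = c_j$. The decisive point, valid precisely because the equation is linear, is that this solution is holomorphic on the whole disc up to the nearest point of $S$: no movable singularities can appear.

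With this in hand, continuation along paths is routine. Since $A$ is holomorphic on a disc about $0$, I pick a point $x_0$ in that disc with $x_0 \notin S$ (possible since $S$ is finite); the germ of $A$ at $x_0$ is a solution germ at an ordinary point. Along any path $\gamma$ in $\mathbb{C} \setminus S$ starting at $x_0$, cover $\gamma$ by finitely many discs on which the local theory furnishes solutions; consecutive solutions agree on overlaps by uniqueness of the initial-value problem, and each continuation still satisfies the ODE, since the equation is an identity between holomorphic functions and hence persists under continuation. Thus the germ of $A$ continues along every path in $\mathbb{C} \setminus S$. Finally, if $\Omega \subseteq \mathbb{C} \setminus S$ is simply connected and contains $x_0$, the monodromy theorem makes the continuation single-valued on $\Omega$, yielding the desired holomorphic extension of $A$ to $\Omega$.

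The main obstacle — really the only nontrivial ingredient — is the local statement in the second paragraph: that a solution of a linear ODE extends holomorphically all the way to the boundary of the largest singularity-free disc, with no spontaneous (movable) singularities. This is exactly what prevents the path-continuation from breaking down partway, and it is the feature that distinguishes linear from general differential equations. Once it is established, the path-continuation and monodromy steps are standard complex analysis.
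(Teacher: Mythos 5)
Your proposal is correct. Note that the paper does not actually prove this statement: it is quoted from Henrici (Theorem 9.1) and used as a black box to rule out D-finiteness of $\sec x$, $\tan x$, and $\sum p(n)x^n$. What you have written is essentially the classical proof that the citation points to: reduce to the local existence–uniqueness theorem for linear ODEs with holomorphic coefficients at an ordinary point (solutions extend to the full disc reaching the nearest singularity of the coefficients, i.e.\ no movable singularities), continue along paths in $\mathbb{C}\setminus q_d^{-1}(0)$, and invoke the monodromy theorem. Your homogenization step is fine and the Leibniz bookkeeping for the leading coefficient is right, though it is dispensable: the local theorem holds equally for inhomogeneous linear equations, so one can simply divide by $q_d$ and continue the equation $A^{(d)} = -\sum_{j<d}(q_j/q_d)A^{(j)} + q/q_d$ directly, with the right-hand side holomorphic off the zero set of $q_d$. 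One small interpretive point: as stated, the theorem tacitly assumes the simply connected region contains the base point of the germ (a point near $0$ where $q_d$ does not vanish), which is exactly how you read it; otherwise ``extend'' would require a choice of path and need not be well defined.
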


%Peter Henrici, Applied and computational complex analysis, vol. 2, John Wiley, New York,1974.

Since $\sec x$ and $\tan x$ have a pole at every odd multiple of $\pi$, they are not D-finite. Similarly, $\sum_{n} p(n)x^n = \prod_{k=1}^\infty \frac1{1-x^k}$ is not D-finite because it has the circle $|x|=1$ as a natural boundary of analyticity.

There are other powerful analytic criteria to prove a series is not D-finite. See \cite[Theorem VII.7]{f.FlajoletGerholdSalvy} for details and further examples.

Sometimes it is possible to give \emph{ad hoc} proofs that series are D-finite. For instance, consider $y=\sqrt{1+\log(1+x^2)}$. By induction, for any $k \in {\mathbb{N}}$ there exist polynomials $r_1(x), \ldots, r_k(x)$ such that $y^{(k)} = r_1/y+ r_2/y^3 + \cdots + r_k/y^{2k-1}$. An equation of the form $\sum_{i=0}^d q_i(x)y^{(i)} = q(x)$ would then give rise to a polynomial equation satisfied by $y$. This would also make $y^2-1 = \log(1+x^2)$ algebraic; but this contradicts Theorem \ref{f.th:eisenstein}.

\newpage

\section{\textsf{Linear algebra methods}}\label{f.sec:linalg}

There are several important theorems in enumerative combinatorics which express a combinatorial quantity in terms of a determinant. Of course, evaluating a determinant is not always straightforward, but there is a wide array of tools at our disposal.

The goal of Section \ref{f.sec:dets1} is to reduce many combinatorial problems to ``just computing a determinant"; examples include walks in a graph, spanning trees, Eulerian cycles, matchings, and routings. In particular, we discuss the Transfer Matrix Method, which allows us to encode many combinatorial objects as walks in graphs, so that these linear algebraic tools apply. These problems lead us to many beautiful, mysterious, and highly non-trivial determinantal evaluations. We will postpone the proofs of the evaluations until Section \ref{f.sec:dets2}, which is an exposition of some of the main techniques in the subtle science of computing combinatorial determinants.

\subsection{\textsf{Determinants in combinatorics}} \label{f.sec:dets1}

\subsubsection{\textsf{Preliminaries: Graph matrices}}

An \textbf{undirected graph}, or simply a \textbf{graph} $G=(V,E)$ consists of a set $V$ of vertices and a set $E$ of edges $\{u,v\}$ where $u, v \in V$ and $u \neq v$. In an undirected graph, we write $uv$ for the edge $\{u,v\}$. The \textbf{degree} of a vertex is the number of edges incident to it. 
A \textbf{walk} is a set of edges of the form $v_1v_2, v_2v_3, \ldots, v_{k-1}v_k$. This walk is \textbf{closed} if $v_k = v_1$.

A \textbf{directed graph} or \textbf{digraph} $G=(V,E)$ consists of a set $V$ of vertices and a set $E$ of oriented edges $(u,v)$ where $u, v \in V$ and $u \neq v$. In an undirected graph, we write $uv$ for the directed edge $(u,v)$. The \textbf{outdegree} (resp. \textbf{indegree}) of a vertex is the number of edges coming out of it (resp. coming into it). A \textbf{walk} is a set of directed edges of the form $v_1v_2, v_2v_3, \ldots, v_{k-1}v_k$. This walk is \textbf{closed} if $v_k = v_1$.

We will see in this section that many graph theory problems can be solved using tools from linear algebra. There are several matrices associated to graphs which play a crucial role; we review them here.

\bigskip
\noindent \textsf{\textbf{Directed graphs.}}
Let $G=(V,E)$ be a directed graph. 

\begin{itemize}

\item
The  \textbf{adjacency matrix} $A=A(G)$ is the $V \times V$ matrix whose entries are
\[
a_{uv} = \textrm{number of edges from $u$ to $v$}.
\]

\item
The \textbf{incidence matrix} $M=M(G)$ is the $V \times E$ matrix with  
\[
m_{ve} =
\begin{cases}
1 & \textrm{ if $v$ is the final vertex of edge $e$}, \\
-1 & \textrm{ if $v$ is the initial vertex of edge $e$}, \\
0 & \textrm{ otherwise.}
\end{cases}
\]

\item
The  \textbf{directed Laplacian matrix} $\overrightarrow{L} = \overrightarrow{L}(G)$ is the $V \times V$ matrix whose entries are
\[
\overrightarrow{l}_{uv} =
\begin{cases}
-(\textrm{number of edges from $u$ to $v$)} & \textrm{if $u \neq v$}, \\
\textrm{outdeg}(u) & \textrm{if $u = v$}.
\end{cases}
\]
\end{itemize}

%\begin{figure}[ht]
% \begin{center}
%  \includegraphics[scale=.8]{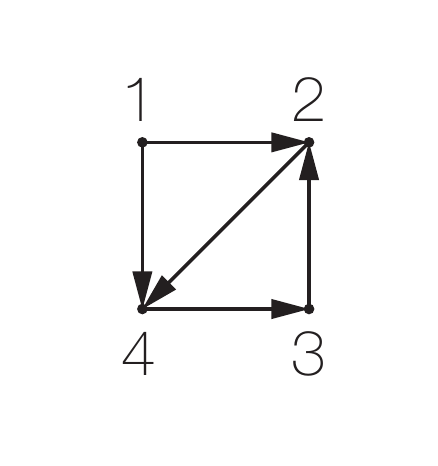}
%  \caption{ \label{f.fig:four matrices} 
%Four matrices associated to a directed graph.}
% \end{center}
%\end{figure}
%

\bigskip
\noindent \textsf{\textbf{Undirected graphs.}}
Let $G=(V,E)$ be an undirected graph. 

\begin{itemize}
\item
The (undirected) \textbf{adjacency matrix} $A=A(G)$ is the $V \times V$ matrix whose entries are
\[
a_{uv} = \textrm{number of edges connecting $u$ and $v$}. 
\]
This is the directed adjacency matrix of the directed graph on $V$ containing edges $u \rightarrow v$ and $v \rightarrow u$ for every edge $uv$ of $G$.

\item
The (undirected) \textbf{Laplacian matrix} $L=L(G)$ is the $V \times V$ matrix with entries 
\[
l_{uv} =
\begin{cases}
-(\textrm{number of edges connecting $u$ and $v$)} & \textrm{if $u \neq v$} \\
\deg u & \textrm{if $u = v$}
\end{cases}
\]
If $M$ is the incidence matrix of any orientation of the edges of $G$, then $L = MM^T$.

\end{itemize}

\subsubsection{\textsf{Counting walks: the Transfer Matrix Method}}\label{f.sec:transfermatrix}

Counting walks in a graph is a fundamental problem, which (often in disguise) includes many important enumerative problems. The Transfer Matrix Method addresses this problem by expressing the number of walks in a graph $G$ in terms of its adjacency matrix $A(G)$, and then uses linear algebra to count those walks.

\bigskip
\noindent \textsf{\textbf{Directed or undirected graphs.}} The Transfer Matrix Method is based on the following  simple, powerful observation, which applies to directed and undirected graphs:

\begin{theorem}\label{f.th:transfer}
Let $G=(V,E)$ be a graph and let $A=A(G)$ be the $V \times V$ adjacency matrix of $G$, where $a_{uv}$ is the number of edges from $u$ to $v$. Then
\[
(A^n)_{uv} = \textrm{ number of walks of length $n$ in $G$ from $u$ to $v$}.
\]
\end{theorem}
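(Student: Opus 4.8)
The plan is to prove this by induction on $n$, using the definition of matrix multiplication together with the basic fact that a walk of length $n$ decomposes uniquely as a walk of length $n-1$ followed by a single edge. This is the combinatorial heart of the Transfer Matrix Method, and it is elementary; the only care needed is to be precise about what a ``walk'' is, namely a sequence of vertices $v_0, v_1, \ldots, v_n$ together with, for each $i$, a choice of one of the (possibly several) edges from $v_{i-1}$ to $v_i$, so that the count genuinely matches the entries of powers of $A$ even in the presence of multiple edges.

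First I would establish the base case. For $n=1$, the entry $(A^1)_{uv} = a_{uv}$ is by definition the number of edges from $u$ to $v$, which is exactly the number of walks of length $1$ from $u$ to $v$. (One could start at $n=0$ instead, noting $A^0 = I$ and there is exactly one walk of length $0$ from $u$ to $u$, namely the trivial walk, and none from $u$ to $v$ when $u \neq v$; this makes the bookkeeping slightly cleaner.) Then, for the inductive step, I would write
\[
(A^n)_{uv} = (A^{n-1} A)_{uv} = \sum_{w \in V} (A^{n-1})_{uw} \, a_{wv}.
\]
By the induction hypothesis, $(A^{n-1})_{uw}$ counts walks of length $n-1$ from $u$ to $w$, and $a_{wv}$ counts edges from $w$ to $v$. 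The key observation is that every walk of length $n$ from $u$ to $v$ arises in exactly one way as the concatenation of a walk of length $n-1$ from $u$ to some vertex $w$ with a single edge from $w$ to $v$: the vertex $w$ is forced to be the penultimate vertex $v_{n-1}$ of the walk, and the final edge is forced to be the last edge of the walk. Hence the sum on the right counts precisely the walks of length $n$ from $u$ to $v$, which completes the induction.

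There is no real obstacle here — the result is essentially a restatement of associativity of matrix multiplication in combinatorial language. The one point deserving a sentence of care is the bijection in the inductive step: I would spell out explicitly that ``split off the last edge'' and ``concatenate an edge'' are mutually inverse operations between the set of length-$n$ walks from $u$ to $v$ and the disjoint union over $w$ of (length-$(n-1)$ walks from $u$ to $w$) $\times$ (edges $w \to v$), so that counting one set via the other is legitimate. This works verbatim for both directed and undirected graphs, since in the undirected case $A$ is symmetric and an edge $uv$ is traversable in either direction, which is already accounted for in the stated definition of the adjacency matrix. The proof can be packaged in a few lines.
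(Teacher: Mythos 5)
Your proof is correct and is essentially the same argument as the paper's, which simply unrolls your induction in one step by expanding $(A^n)_{uv} = \sum_{w_1, \ldots, w_{n-1}} a_{uw_1}a_{w_1w_2}\cdots a_{w_{n-1}v}$ and observing that each product counts the walks visiting $u, w_1, \ldots, w_{n-1}, v$ in that order. Your inductive packaging, with the care about multiple edges and the split-off-the-last-edge bijection, is a fine (if slightly longer) presentation of the same idea.
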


\begin{proof} Observe that 
\[
(A^n)_{uv} = 
\sum_{w_1, \ldots, w_{n-1} \in V}
a_{uw_1}a_{w_1w_2} \cdots a_{w_{n-1}v}
\]
and there are $a_{uw_1}a_{w_1w_2} \cdots a_{w_{n-1}v}$ walks visiting vertices $u, w_1, \ldots, w_{n-1}, v$ in that order. 
\end{proof}

\begin{corollary}
The generating function $\sum_{n \geq 0} (A^n)_{uv} x^n$ for walks of length $n$ from $u$ to $v$ in $G$ is a rational function.
\end{corollary}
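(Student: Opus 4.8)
The plan is to express the generating function $\sum_{n \geq 0} (A^n)_{uv} x^n$ as an entry of the matrix-valued power series $\sum_{n \geq 0} A^n x^n = (I - xA)^{-1}$, and then identify this entry as a rational function via Cramer's rule. First I would observe that the matrix $I - xA$ has entries in ${\mathbb{C}}[x]$ and is invertible over the field ${\mathbb{C}}((x))$ of formal Laurent series (indeed already over ${\mathbb{C}}[[x]]$), since its constant term is the identity matrix $I$, which is invertible; equivalently $\det(I - xA)$ is a polynomial with constant term $1$, hence a unit in ${\mathbb{C}}[[x]]$. The geometric series identity $(I - xA)^{-1} = \sum_{n \geq 0} x^n A^n$ then holds in the ring of matrices over ${\mathbb{C}}[[x]]$: one checks $(I - xA)\left(\sum_{n \geq 0} x^n A^n\right) = I$ by the usual telescoping, and convergence in ${\mathbb{C}}[[x]]$ is immediate since $\deg(x^n A^n) \to \infty$ entrywise.

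Next I would apply Cramer's rule (or the cofactor/adjugate formula) to $(I-xA)^{-1}$: its $(v,u)$ entry equals $\det(I - xA : u,v) / \det(I - xA)$, where $(I-xA:u,v)$ denotes the matrix obtained by deleting row $u$ and column $v$, up to a sign $(-1)^{u+v}$. Both the numerator and the denominator are determinants of matrices with polynomial entries, hence are themselves polynomials in $x$. Comparing the $(u,v)$ entries of the two expressions for $(I-xA)^{-1}$ gives
\[
\sum_{n \geq 0} (A^n)_{uv}\, x^n = \frac{(-1)^{u+v}\det\big((I - xA)_{\widehat{v},\widehat{u}}\big)}{\det(I - xA)},
\]
a ratio of two polynomials in $x$, which is exactly the assertion that this generating function is rational.

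There is essentially no obstacle here; the only point requiring a line of care is the justification that $(I-xA)^{-1} = \sum_n x^n A^n$ holds in the appropriate formal sense, which follows from the discussion of the ring ${\mathbb{C}}[[x]]$ and its units in Section~\ref{f.sec:fps} applied entrywise to matrices. One could alternatively bypass the matrix inverse entirely and argue directly from Theorem~\ref{f.th:rational}: the Cayley–Hamilton theorem shows the sequence $\{(A^n)_{uv}\}_{n \geq 0}$ satisfies the linear recurrence with constant coefficients given by the characteristic polynomial of $A$, and hence by Theorem~\ref{f.th:rational} its generating function is rational. Either route is short; I would present the $(I-xA)^{-1}$ version since it also yields the explicit denominator $\det(I - xA)$, whose reciprocal roots are the eigenvalues of $A$ — a fact that is useful for the asymptotic analysis alluded to elsewhere in this section.
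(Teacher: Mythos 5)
Your proof is correct and follows essentially the same route as the paper: identify $\sum_{n\geq 0}(A^n)_{uv}x^n$ with the $(u,v)$ entry of $(I-xA)^{-1}$ and apply Cramer's rule to exhibit it as a ratio of the cofactor $\det(I-xA:v,u)$ (up to sign) and $\det(I-xA)$, both polynomials in $x$. The extra remarks on formal invertibility in ${\mathbb{C}}[[x]]$ and the alternative Cayley--Hamilton argument are fine but not a different method in substance.
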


\begin{proof}
Using Cramer's formula, we have
\[
\sum_{n \geq 0} (A^n)_{uv} x^n = ((I-xA)^{-1})_{uv} = (-1)^{u+v} \frac{\det(I-xA \, : v,u)}{\det(I-xA)} 
\]
where $(M:v,u)$ denotes the cofactor of $M$ obtained by removing row $v$ and column $u$.
\end{proof}

\begin{corollary}\label{f.cor:closed}
If $C_G(n)$ is the number of closed walks of length $n$ in $G$, then
\[
C_G(n) = \lambda_1^n + \cdots + \lambda_k^n, \qquad 
\sum_{n \geq 1} C_G(n) x^n = \frac{-x  \, Q'(x)}{Q(x)}
\]
where $\lambda_1, \ldots, \lambda_k$ are the eigenvalues of the adjacency matrix $A$ and $Q(x) = \det(I-xA)$.
\end{corollary}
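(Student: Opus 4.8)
The plan is to derive both formulas from Theorem \ref{f.th:transfer} and Corollary \ref{f.cor:closed}'s predecessor, using the trace and the characteristic polynomial of the adjacency matrix $A$. First I would observe that a closed walk of length $n$ starting (and ending) at $v$ is counted by $(A^n)_{vv}$, so summing over all vertices gives
\[
C_G(n) = \sum_{v \in V} (A^n)_{vv} = \operatorname{tr}(A^n).
\]
Since the trace of a matrix is the sum of its eigenvalues, and the eigenvalues of $A^n$ are $\lambda_1^n, \ldots, \lambda_k^n$ (counted with multiplicity, where $\lambda_1, \ldots, \lambda_k$ are the eigenvalues of $A$), this immediately yields $C_G(n) = \lambda_1^n + \cdots + \lambda_k^n$. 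One should remark that this identity holds regardless of whether $A$ is diagonalizable: the trace equals the sum of eigenvalues with algebraic multiplicity for any square matrix (e.g. by passing to Jordan or Schur form over ${\mathbb{C}}$), and raising to the $n$-th power raises each eigenvalue to the $n$-th power.

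For the generating function, the natural route is to compute $\sum_{n \geq 1} \operatorname{tr}(A^n) x^n$ directly. Writing $Q(x) = \det(I - xA) = \prod_{i=1}^k (1 - \lambda_i x)$ (over ${\mathbb{C}}$, $A$ has $k = |V|$ eigenvalues with multiplicity, and this is exactly the reversed characteristic polynomial), I would take the logarithmic derivative:
\[
\frac{Q'(x)}{Q(x)} = \sum_{i=1}^k \frac{-\lambda_i}{1 - \lambda_i x}.
\]
Multiplying by $-x$ and expanding each term as a geometric series $\dfrac{\lambda_i x}{1 - \lambda_i x} = \sum_{n \geq 1} \lambda_i^n x^n$ gives
\[
\frac{-x\,Q'(x)}{Q(x)} = \sum_{i=1}^k \sum_{n \geq 1} \lambda_i^n x^n = \sum_{n \geq 1} \Big(\sum_{i=1}^k \lambda_i^n\Big) x^n = \sum_{n \geq 1} C_G(n)\, x^n,
\]
using the first part of the corollary in the last step. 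This is an identity of formal power series, so no convergence issue arises; alternatively one can note $Q(x) = \det(I - xA)$ has nonzero constant term $1$, so $Q'(x)/Q(x) \in {\mathbb{C}}[[x]]$ is well defined.

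I do not expect a genuine obstacle here — the argument is a short computation — but the one point that deserves care is the passage from ``eigenvalues'' to the factorization $Q(x) = \prod_i (1 - \lambda_i x)$ and the trace formula $\operatorname{tr}(A^n) = \sum_i \lambda_i^n$ when $A$ is not diagonalizable. The clean way to handle this uniformly is to work over ${\mathbb{C}}$, where $A$ is triangularizable: conjugating $A$ to an upper-triangular matrix $T$ with diagonal entries $\lambda_1, \ldots, \lambda_k$, we get $\operatorname{tr}(A^n) = \operatorname{tr}(T^n) = \sum_i \lambda_i^n$ since $T^n$ is upper-triangular with diagonal $\lambda_i^n$, and likewise $\det(I - xA) = \det(I - xT) = \prod_i(1-\lambda_i x)$. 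Everything else is the standard logarithmic-derivative manipulation, and the two displayed formulas of the corollary follow at once.
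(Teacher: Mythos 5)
Your proof is correct and follows the same route as the paper: identify $C_G(n)=\operatorname{tr}(A^n)=\sum_i\lambda_i^n$ via Theorem \ref{f.th:transfer}, then deduce the generating function from the factorization $Q(x)=\prod_i(1-\lambda_i x)$ by the logarithmic-derivative computation. You simply spell out the details (triangularization and the geometric-series expansion) that the paper leaves implicit.
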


\begin{proof}
Theorem \ref{f.th:transfer} implies that $C_G(n) = {\mathrm{tr}}(A^n) = \lambda_1^n + \cdots + \lambda_k^n$. The second equation then follows from $Q(x) = (1-\lambda_1x)\cdots (1-\lambda_k x)$.
\end{proof}

In view of Theorem \ref{f.th:transfer}, we want to be able to compute powers of the adjacency matrix $A$. As we learn in linear algebra, this is very easy to do if we are able to diagonalize $A$. This is not always possible, but we can do it when $A$ is undirected.

%\comment{Example of ice configurations?}

\bigskip
\noindent \textsf{\textbf{Undirected graphs.}} When our graph $G$ is undirected, the adjacency matrix $A(G)$ is symmetric, and hence diagonalizable. %This advantageous property has nice algebraic and combinatorial consequences. 

\begin{theorem}
Let $G=(V,E)$ be an \textbf{undirected} graph and let $\lambda_1, \ldots, \lambda_k$ be the eigenvalues of the adjacency matrix $A=A(G)$. Then for any vertices $u$ and $v$ there exist constants $c_1, \ldots, c_k$ such that 
\[
\textrm{ number of walks of length $n$ from $u$ to $v$} = c_1 \lambda_1^n + \cdots + c_k \lambda_k^n.
\]
\end{theorem}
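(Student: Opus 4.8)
The plan is to combine Theorem \ref{f.th:transfer} with the spectral theorem for real symmetric matrices. By Theorem \ref{f.th:transfer}, the number of walks of length $n$ from $u$ to $v$ equals $(A^n)_{uv}$, so it suffices to show that this entry, as a function of $n$, is a linear combination of the powers $\lambda_1^n, \ldots, \lambda_k^n$ with constant (i.e. $n$-independent) coefficients. First I would invoke that since $G$ is undirected, $A = A(G)$ is a real symmetric matrix, hence orthogonally diagonalizable: there is an orthogonal matrix $Q$ and a diagonal matrix $D = \mathrm{diag}(\mu_1, \ldots, \mu_{|V|})$, whose entries are the eigenvalues of $A$ listed with multiplicity (each $\mu_j$ equal to one of the $\lambda_i$), such that $A = QDQ^T$. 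Then $A^n = QD^nQ^T$, and extracting the $(u,v)$ entry gives $(A^n)_{uv} = \sum_j Q_{uj}Q_{vj}\mu_j^n$. Grouping the indices $j$ according to which distinct eigenvalue $\mu_j$ equals, this is exactly of the desired form with $c_i = \sum_{j\,:\,\mu_j = \lambda_i} Q_{uj}Q_{vj} = (E_i)_{uv}$, where $E_i$ denotes the orthogonal projection onto the $\lambda_i$-eigenspace. Equivalently, one writes the spectral decomposition $A = \sum_i \lambda_i E_i$ with $E_iE_j = \delta_{ij}E_i$ and $\sum_i E_i = I$, so that $A^n = \sum_i \lambda_i^n E_i$ immediately, and sets $c_i := (E_i)_{uv}$.

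An alternative argument, which makes transparent exactly where symmetry enters, goes through Theorem \ref{f.th:rational}. The Corollary to Theorem \ref{f.th:transfer} gives $\sum_{n \geq 0}(A^n)_{uv}x^n = p(x)/\det(I - xA)$, and $\det(I - xA) = \prod_i (1 - \lambda_i x)^{d_i}$ with $d_i$ the algebraic multiplicity of $\lambda_i$. In general, Theorem \ref{f.th:rational}.2 would only guarantee an expression $\sum_i f_i(n)\lambda_i^n$ with $\deg f_i < d_i$; the extra input is that a symmetric $A$ is diagonalizable, so in the partial fraction decomposition of $p(x)/\det(I-xA)$ every pole $x = 1/\lambda_i$ is simple (this is precisely the failure of nontrivial Jordan blocks), forcing each $f_i$ to be a constant. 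Either route yields the claim.

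The only real obstacle is bookkeeping: the statement lists $\lambda_1, \ldots, \lambda_k$ in a way that matches the $\det(I-xA) = \prod(1-\lambda_i x)$ of the preceding corollary (hence with multiplicity), whereas the spectral projections $E_i$ are naturally indexed by \emph{distinct} eigenvalues; but the two readings are interchangeable, since repeated $\lambda$'s can always be merged by summing the corresponding $c_i$'s (and, conversely, a $c_i$ can be split across equal eigenvalues). The genuinely substantive ingredient is the spectral theorem for real symmetric matrices — exactly the hypothesis that fails for general directed graphs, where $A$ need not be diagonalizable and polynomial-in-$n$ factors can indeed appear.
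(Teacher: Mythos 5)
Your proposal is correct and follows essentially the same route as the paper: invoke the transfer-matrix identity for $(A^n)_{uv}$, orthogonally diagonalize the symmetric matrix $A = QDQ^T$, and read off $(A^n)_{uv} = \sum_t Q_{ut}Q_{vt}\lambda_t^n$, which is exactly the paper's argument with $c_t = q_{ut}q_{vt}$. The alternative partial-fractions remark is a nice supplement but not needed.
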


\begin{proof}
The key fact is that a real symmetric $k \times k$ matrix $A$ has $k$ real orthonormal eigenvectors $q_1, \ldots, q_k$ with real eigenvalues $\lambda_1, \ldots, \lambda_k$. Equivalently, the $k \times k$ matrix $Q$ with columns $q_1, \ldots, q_k$ is orthogonal (so $Q^T = Q^{-1}$) and diagonalizes $A$:
\[
Q^{-1}AQ = D = \textrm{diag}(\lambda_1, \ldots, \lambda_k)
\]
where $D=\textrm{diag}(\lambda_1, \ldots, \lambda_k)$ is the diagonal matrix with diagonal entries $\lambda_1, \ldots, \lambda_k$. 
The result then follows from $A^n = QD^nQ^{-1} = Q \, \textrm{diag}(\lambda_1^n, \ldots, \lambda_k^n) \, Q^{T}$, with $c_t = q_{it}q_{jt}$. 
\end{proof}

\bigskip
\noindent \textsf{\textbf{Applications.}} Many families of combinatorial objects can be enumerated by first recasting the objects as walks in a ``transfer graph", and then applying the transfer matrix method. We illustrate this technique with a few examples.

\bigskip

\begin{enumerate}
\item (Colored necklaces)
%As a warmup example, l
Let $f(n,k)$ be the number of ways of coloring the beads of a necklace of length $n$ with $k$ colors so that no two adjacent beads have the same color. (Different rotations and reflections of a coloring are considered different.) There are several ways to compute this number, but a very efficient one is to notice that such a coloring is a graph walk in disguise. If we label the beads $1, \ldots, n$ in clockwise order and let $a_i$ be the color of the $i$th bead, then the coloring corresponds to the closed walk $a_1, a_2, \ldots, a_n, a_1$ in the complete graph $K_n$. The adjacency graph of $K_n$ is $A=J-I$ where $J$ is the matrix all of whose entries equal $1$, and $I$ is the identity. Since $J$ has rank $1$, it has $n-1$ eigenvalues equal to $0$. Since the trace is $n$, the last eigenvalue is $n$. It follows that the eigenvalues of $A=J-I$ are $-1, -1, \ldots, -1, n-1$. 
%In Section \ref{f.sec:dets2} we will see that the eigenvalues of $A$ are $n-1, -1, \ldots, -1$, so 
Then Corollary \ref{f.cor:closed} tells us that
\[
f(n,k) = (n-1)^k + (n-1)(-1)^k.
\]
It is possible to give a bijective proof of this formula, but this algebraic proof is  simpler.

\item (Words with forbidden subwords, 1.)
Let $h_n$ be the number of words of length $n$ in the alphabet $\{a,b\}$ which do not contain $aa$ as a consecutive subword. This is the same as a walk of length $n-1$ in the transfer graph with vertices $a$ and $b$ and edges $a \rightarrow b$, $b \rightarrow a$ and $b \rightarrow b$. The absence of the edge $a \rightarrow a$ guarantees that these walks produce only the valid words we wish to count. 
The adjacency matrix and its powers are 
\[
A=\begin{pmatrix} 0 & 1 \\ 1 & 1\end{pmatrix}, \qquad  A^{n} = \begin{pmatrix} F_{n-1} & F_{n} \\ F_{n} & F_{n+1}\end{pmatrix},
\]
where the Fibonacci numbers $F_0, F_1, \ldots$ are defined recursively by $F_0=0, F_1=1$, and $F_k = F_{k-1} + F_{k-2}$ for $k \geq 2$. 

Since $h_n$ is the sum of the entries of $A^{n-1}$, we get that $h_n = F_{n+2}$, and $g_n \sim c \cdot \alpha^n$ where $\alpha =\frac12(1+\sqrt{5}) \approx 1.6179\ldots$ is the golden ratio.
% we see that
%$
%f_n = \frac1{\sqrt5}\left(\left(\frac{1+\sqrt5}2\right)^{n+2} - \left(\frac{1-\sqrt5}2\right)^{n+2}\right),
%$
%the $(n+2)$nd Fibonacci number. 
Of course there are easier proofs of this fact, but this approach works for any problem of enumerating words in a given alphabet with given forbidden consecutive subwords. Let us study a slightly more intricate example, which should make it clear how to proceed in general.

\bigskip

\item (Words with forbidden subwords, 2.)
Let $g_n$ be the number of cyclic words of length $n$ in the alphabet $\{a,b\}$ which do not contain $aa$ or $abba$ as a consecutive subword. We wish to model these words as walks in a directed graph. At first this may seem impossible because, as we construct the word sequentially, the validity of a new letter depends on more than just the previous letter. However, a simple trick resolves this difficulty: we can introduce more memory into the vertices of the transfer graph. In this case, since the validity of a new letter depends on the previous three letters, we let the vertices of the transfer graph be $aba, abb, bab, bba, bbb$ (the allowable ``windows" of length $3$) and put an edge $wxy \rightarrow xyz$ in the graph if the window $wxy$ is allowed to precede the window $xyz$; that is, if $wxyz$ is an allowed subword. The result is the graph of Figure \ref{f.fig:transfergraph}, whose adjacency matrix $A$ satisfies 
\[
\det(I-xA) = 
\det 
\begin{pmatrix}
1 & 0 & \,-x & 0 & 0 \\
0 & 1 & 0 & 0 & -x \\
\,-x & \,-x & 1 & 0 & 0 \\
0 & 0 & \,-x & 1 & 0 \\
0 & 0 & 0 & \,-x & 1-x
\end{pmatrix}
=
-x^4+x^3-x^2-x+1.
\]

\begin{figure}[ht]
 \begin{center}
  \includegraphics[scale=.7]{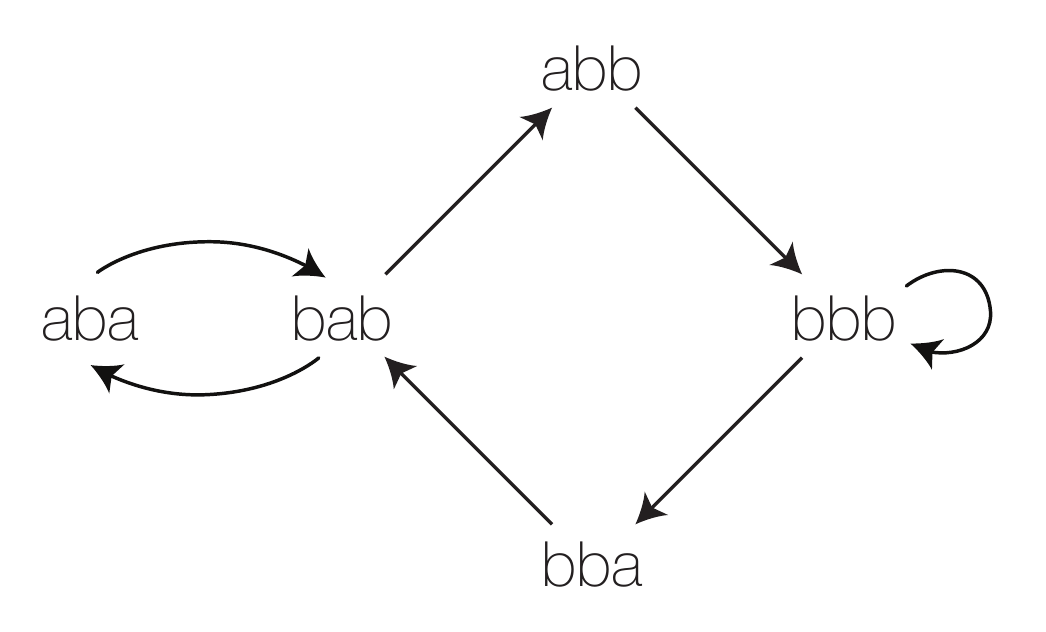}
  \caption{ \label{f.fig:transfergraph} 
The transfer graph for words on the alphabet $\{a,b\}$ avoiding $aa$ and $abba$ as consecutive subwords.}
 \end{center}
\end{figure}

The valid cyclic words of length $n$ correspond to the closed walks of length $n$ in the transfer graph, so Corollary \ref{f.cor:closed} tells us that the generating function for $g_n$ is
\[
\sum_{n \geq 0} g_nx^n =  \frac{x+2x^2-3x^3+4x^4}{1-x-x^2+x^3-x^4} = x+3x^2+x^3+7x^4+6x^5+15x^6+15x^7+31x^8+37x^9+\cdots.
\]
Theorem \ref{f.th:rational}.2 then tells us that $g_n \approx c \cdot \alpha^n$ where $\alpha \approx 1.5129$ is the inverse of the smallest positive root of $1-x-x^2+x^3-x^4=0$.
The values of $g_1, g_2, g_3$ may surprise us. Note that the generating function does something counterintuitive: it does not count the words $a$ (because $aa$ is forbidden), $aba$ (because $aa$ is forbidden), or $abb$ (because $abba$ is forbidden). 

\end{enumerate}

This example serves as a word of caution: when we use the transfer-matrix method to enumerate ``cyclic" objects using Corollary \ref{f.cor:closed}, the initial values of the generating function may not be the ones we expect. In a particular problem of interest, it will be straightforward to adjust those values accordingly.

To illustrate the wide applicability of this method, we conclude this section with a problem where the transfer graph is less apparent.

\begin{enumerate}
\item[4.] (Monomer-dimer problem) 
An important open problem in statistical mechanics is the \textbf{monomer-dimer problem} of  computing the number of tilings $T(m,n)$ of an $m \times n$ rectangle into dominoes ($2 \times 1$ rectangles) and unit squares. Equivalently, $T(m,n)$ is the number of partial matchings of an $m \times n$ grid, where each node is matched to at most one of its neighbors. 

There is experimental evidence, but no proof, that $T(n,n) \sim c \cdot \alpha^{n^2}$ where $\alpha \approx 1.9402...$ is a constant  for which no exact expression is known. The transfer-matrix method is able to solve this problem for any fixed value of $m$, proving that the generating function $\sum_{n \geq 0} T(m,n)x^n$ is rational. We carry this out for $m=3$.

Let $t(n)$ be the number of tilings of a $3 \times n$ rectangle into dominoes and unit squares. As with words, we can build our tilings sequentially from left to right by covering the first column, then the second column, and so on. The tiles that we can place on a new column depend only on the tiles from the previous column that are sticking out, and this can be modeled by a transfer graph.

\begin{figure}[ht]
 \begin{center}
  \includegraphics[scale=.7]{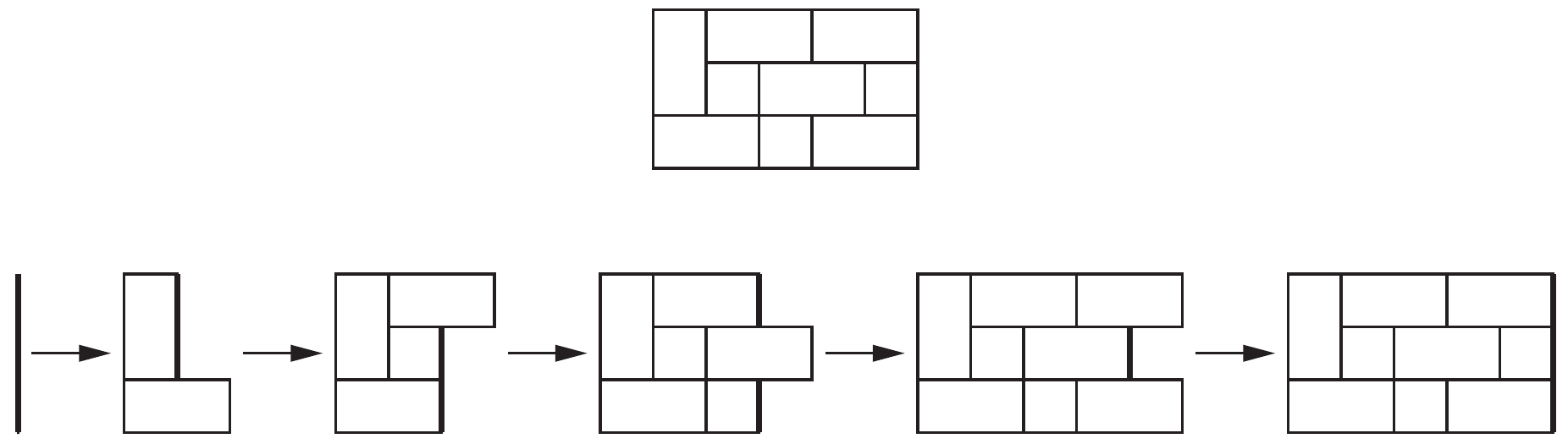}
  \caption{ \label{f.fig:dimermonomer} 
A tiling of a $3 \times 5$ rectangle into dominoes and unit squares.
}
 \end{center}
\end{figure}

More specifically, let $T$ be a tiling of a $3 \times n$ rectangle. We define $n+1$ triples $v_0, \ldots, v_n$ which record how $T$ interacts with the $n+1$ vertical grid lines of the rectangle. The $i$th grid line consists of three unit segments, and each coordinates of $v_i$ is $0$ or $1$ depending on whether these three segments are edges of the tiling or not. For example, Figure \ref{f.fig:dimermonomer} corresponds to the triples $111, 110, 011, 101, 010, 111$.

The choice of $v_i$ is restricted only by $v_{i-1}$. The only restriction is that $v_{i-1}$ and $v_i$ cannot both have a $0$ in the same position, because this would force us to put two overlapping horizontal dominoes in $T$. These compatibility conditions are recorded in the transfer graph of Figure \ref{f.fig:transfer2}.
When $v_{i-1}=v_i=111$, there are 3 ways of covering column $i$. If $v_{i-1}$ and $v_i$ share two $1$s in consecutive positions, there are 2 ways. In all other cases, there is a unique way. It follows that the tilings of a $3 \times n$ rectangle are in bijection with the walks of length $n$ from $111$ to $111$ in the transfer graph.

\begin{figure}[ht]
 \begin{center}
  \includegraphics[scale=.7]{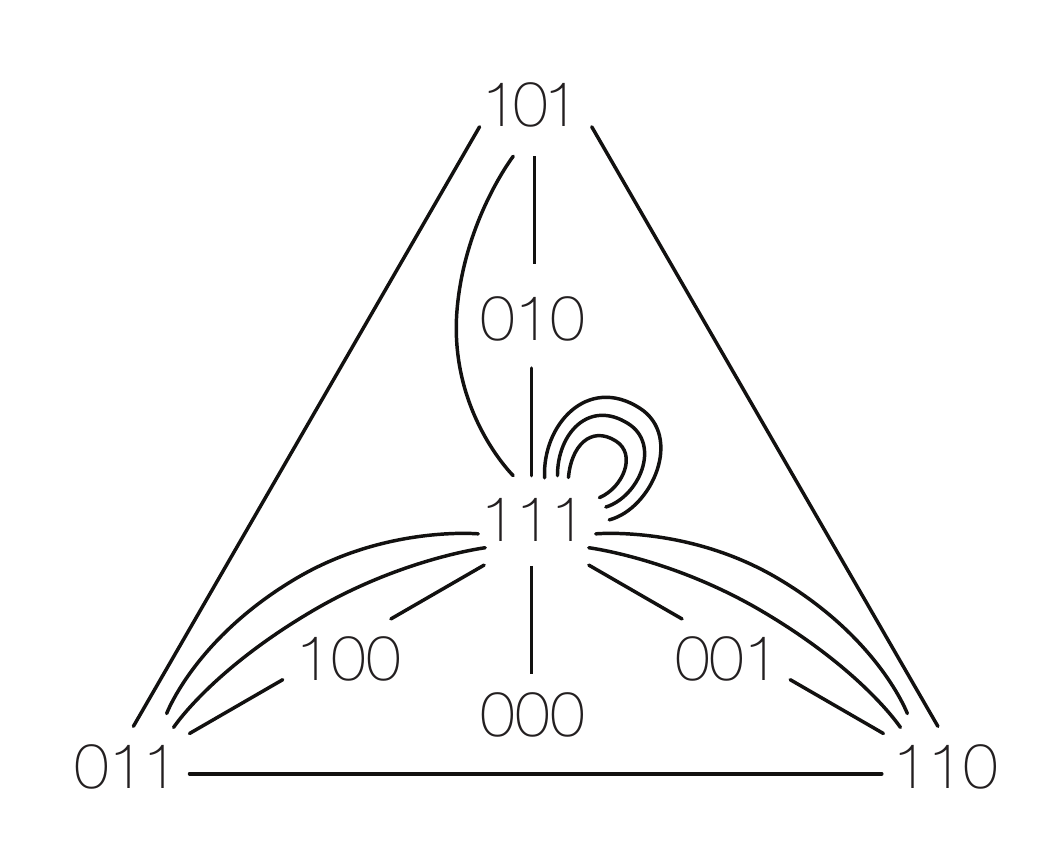}
  \caption{ \label{f.fig:transfer2} 
The transfer graph for tilings of $3 \times n$ rectangles into dominoes and unit squares.
}
 \end{center}
\end{figure}

Since the adjacency matrix is 

\[
A=
\bordermatrix{
       & 000 & 001 & 110 & 010 & 101 & 001 & 110 & 111 \cr
000 & 0 & 0 & 0 & 0 & 0 & 0 & 0 & 1 \cr
001 & 0 & 0 & 1 & 0 & 0 & 0 & 0 & 1 \cr
110 & 0 & 1 & 0 & 0  & 1 & 0 & 1 & 2 \cr
010 & 0 & 0 & 0 & 0 & 1 & 0 & 0 & 1 \cr
101 & 0 & 0 & 1 & 1 & 0 & 0 & 1 & 1 \cr
001 & 0 & 0 & 0 & 0 & 0 & 0 & 1 & 1 \cr
110 & 0 & 0 & 1 & 0 & 1 & 1 & 0 & 2 \cr
111 & 1 & 1 & 2 & 1 & 1 & 1 & 2 & 3
}
\]
Theorem \ref{f.th:transfer} tells us that
\begin{eqnarray*}
\sum_{n \geq 0}t(n)x^n &=& \frac{\det(I-xA : 111,111)}{\det(I-xA)} = \frac{(1+x-x^2)(1-2x-x^2)}{(1+x)(1-5x-9x^2+9x^3+x^4-x^5)} \\
&=& 1+3x+22x^2+131x^3+823x^4+5096x^5+31687x^6+196785x^7+%1222550x^8+
\cdots
\end{eqnarray*}
By Theorem \ref{f.th:rational}.2, $t_n \sim c \cdot \alpha^n$ where $\alpha \approx 6.21207\ldots$ is the inverse of the smallest positive root of the denominator $(1+x)(1-5x-9x^2+9x^3+x^4-x^5)$.

\end{enumerate}

\subsubsection{\textsf{Counting spanning trees: the Matrix-Tree theorem}}\label{f.sec:spanningtrees}

In this section we discuss two results: Kirkhoff's determinantal formula for the number of spanning trees of a graph, and Tutte's generalization to oriented spanning trees of directed graphs.

\bigskip
\noindent
\textsf{\textbf{Undirected Matrix-Tree theorem.}}
Let $G=(V,E)$ be a connected graph with no loops. 
A \textbf{spanning tree} $T$ of $G$ is a collection of edges such that for any two vertices $u$ and $v$, $T$ contains a unique path between $u$ and $v$. 
If $G$ has $n$ vertices, then:

$\bullet$ $T$ contains no cycles, 

$\bullet$ $T$ spans $G$; that is, there is a path from $u$ to $v$ in $T$ for any vertices $u \neq v$, and

$\bullet$ $T$ has $n-1$ edges. 

\noindent Furthermore, any two of these properties imply that $T$ is a spanning tree. Our goal in this section is to compute the number $c(G)$ of spanning trees of $G$. %The matrix-tree theorem reduces this problem to the computation of a determinant.

%\begin{itemize}
%\item $T$ contains no cycles
%\item $T$ spans $G$; that is, there is a path from $u$ to $v$ in $T$ for any vertices $u \neq v$
%\item $T$ has $n-1$ edges.
%\end{itemize}
%connects all vertices without forming any cycles. If $G$ has $n$ vertices, every spanning tree has $n-1$ edges. We are interested in computing the number $c(G)$ of spanning trees of $G$.

Orient the edges of $G$ arbitrarily. Recall that the incidence matrix $M$ of $G$ is the $V \times E$ matrix whose $e$th column is $\mathbf{e}_v-\mathbf{e}_u$ if $e=u \rightarrow v$, where $\mathbf{e}_i$ is the $i$th basis vector. The Laplacian $L=MM^T$ has entries 
\[
l_{uv} =
\begin{cases}
-(\textrm{number of edges connecting $u$ and $v$)} & \textrm{if $u \neq v$} \\
\deg u & \textrm{if $u = v$}
\end{cases}
\]
Note that $L(G)$ is singular because all its row sums are $0$. A \textbf{principal cofactor}  $L_v(G)$ is obtained from $L(G)$ by removing the $v$th row and $v$th column for some vertex $v$.

%
%\begin{figure}[ht]
% \begin{center}
%  \includegraphics[scale=.3]{./Pictures/vectorconfig2}
%  \caption{ \label{f.fig:Laplacian} 
%The Laplacian of a directed graph.}
% \end{center}
%\end{figure}

\begin{theorem}\label{f.th:Kirkhoff}(Kirkhoff's Matrix-Tree Theorem) The number $c(G)$ of spanning trees of a connected graph $G$ is
\[
c(G) = \det L_v(G) = \frac1n  \lambda_1 \cdots \lambda_{n-1}.
\]
where $L_v(G)$ is any principal cofactor of the Laplacian $L(G)$, and $\lambda_1, \ldots, \lambda_{n-1}, \lambda_n=0$ are the eigenvalues of $L(G)$.
\end{theorem}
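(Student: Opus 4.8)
The plan is to prove the determinantal formula via the Cauchy--Binet formula, and then to deduce the eigenvalue formula by inspecting the characteristic polynomial of the Laplacian.

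First I would fix an arbitrary orientation of $G$ and form the incidence matrix $M=M(G)$, an $n \times |E|$ matrix, so that $L(G)=MM^T$ as recorded above. Deleting the row indexed by $v$ from $M$ yields an $(n-1)\times|E|$ matrix $M_v$, and then $L_v(G)=M_vM_v^T$. Applying the Cauchy--Binet formula gives
\[
\det L_v(G) = \sum_{S} \left(\det M_v[S]\right)^2,
\]
where $S$ ranges over all $(n-1)$-element subsets of $E$ and $M_v[S]$ is the square submatrix of $M_v$ with column set $S$. So the theorem reduces to the combinatorial lemma: $\det M_v[S]=\pm 1$ when the edge set $S$ is a spanning tree of $G$, and $\det M_v[S]=0$ otherwise.

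To prove the lemma I would split into two cases. If $S$ contains a cycle, the corresponding columns of $M$ satisfy the linear dependence obtained by summing the signed edge vectors around that cycle, so $\det M_v[S]=0$. If $S$ has $n-1$ edges and no cycle, then $S$ is a spanning tree; here I would induct by locating a leaf $\ell \neq v$ (a tree on $\geq 2$ vertices has at least two leaves), noting that the row of $\ell$ in $M_v[S]$ has a single nonzero entry $\pm 1$ in the column of its unique incident edge, and expanding along that row. Each expansion peels off a $\pm 1$ factor and leaves an instance of the same shape for $G-\ell$, with the $0\times 0$ matrix (determinant $1$) as base case, so $\det M_v[S]=\pm 1$. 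Plugging the lemma into Cauchy--Binet turns the sum into $\det L_v(G)=\#\{\text{spanning trees of }G\}=c(G)$; in particular this is independent of the choice of $v$.

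For the eigenvalue formula I would use that $L(G)$ is real symmetric and singular, with $\mathbf{1}=(1,\dots,1)^T$ in its kernel since all row sums vanish; hence $\lambda_n=0$ and the characteristic polynomial factors as $\det(xI-L)=x\prod_{i=1}^{n-1}(x-\lambda_i)$. Comparing the coefficient of $x^1$ on the two sides: on the right it equals $(-1)^{n-1}\lambda_1\cdots\lambda_{n-1}$, while the standard expansion of a characteristic polynomial shows it equals $(-1)^{n-1}$ times the sum of the principal $(n-1)\times(n-1)$ minors of $L$, that is, $(-1)^{n-1}\sum_{v\in V}\det L_v(G)$. Since each summand is $c(G)$, we obtain $n\,c(G)=\lambda_1\cdots\lambda_{n-1}$, as claimed. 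I expect the main obstacle to be the airtight justification of the lemma --- in particular making the leaf-peeling induction and its sign bookkeeping precise, and checking that in the cyclic case the column dependence is exactly the cycle relation; the Cauchy--Binet step and the characteristic-polynomial comparison are routine once the lemma is established.
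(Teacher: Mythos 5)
Your proposal is correct and follows essentially the same route as the paper's proof: Binet--Cauchy applied to $L_v = M_vM_v^T$, the lemma that $\det M_v[S] = \pm 1$ for spanning trees (proved by the same leaf-pruning argument, which the paper phrases as making the matrix lower triangular) and $0$ otherwise via the cycle dependence, and finally the comparison of the coefficient of $x^1$ in the characteristic polynomial with the sum of the $n$ principal cofactors. No gaps.
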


\begin{proof}
We use the Binet-Cauchy formula, which states that if $A$ and $B$ are $m \times n$ and $n \times m$ matrices, respectively, with $m<n$, then
\[
\det AB = \sum_{S \subseteq [n] \, : \, |S| = m} \det A[S] \det B[S]
\]
where $A[S]$ (resp. $B[S]$) is the $n \times n$ matrix obtained by considering only the columns of $A$ (resp. the rows of $B$) indexed by $S$. 

We also use the following observation: if $M_v$ is the ``reduced" adjacency matrix $M$ with the $v$th row removed, and $S$ is a set of $n-1$ edges of $E$, then
\[
\det M_v[S] = 
\begin{cases}
\pm 1 & \textrm{if $S$ is a spanning tree}, \\
0 & \textrm{otherwise}. 
\end{cases}
\]
This observation is easily proved: if $S$ is not a spanning tree, then it contains a cycle $C$, which gives a linear dependence among the columns indexed by the edges of $C$. Otherwise, if $S$ is a spanning tree, think of $v$ as its root, and ``prune" it by repeatedly removing a leaf $v_i \neq v$ and its only incident edge $e_i$ for $1 \leq i \leq n-1$. Then if we list the rows and columns of $M[S]$ in the orders $v_1, \ldots, v_{n-1}$ and $e_1, \ldots, e_{n-1}$, respectively, the matrix will be lower triangular with $1$s and $-1$s in the diagonal. %(In Figure \ref{f.fig:Laplacian}, if $S=\{1,3,4\}$, then we could prune the vertices $a,d,b$ and edges $1,3,4$ in that order.)

%, proceed by induction. Consider an edge $e=uv$ incident to $v$; then the only non-zero entry in the $e$th column is the one in row $u$. By Laplace expansion, $\det M_v[S] = \pm \det M_u[S-e]$ where $M_u$ is the reduced incidence matrix for the graph $G$ with vertex $v$ and all adjacent edges removed. The result follows since $S-e$ is a spanning tree of that smaller graph.

Combining these two equations, we obtain the first statement:
\[
\det L_v(G) = \sum_{S \subseteq [n] \, : \, |S| = m} \det M[S] \det M^T[S] = \sum_{S \subseteq [n] \, : \, |S| = m} \det M[S]^2 = c(G).
\]

To prove the second one, observe that the coefficient of $-x^1$ in the characteristic polynomial $\det (L-xI) = (\lambda_1-x)\cdots (\lambda_{n-1}-x)(0-x)$ is the sum of the $n$ principal cofactors, which are all equal to $c(G)$.
\end{proof}

The matrix-tree theorem is a very powerful tool for computing the number of spanning trees of a graph. Let us state a few examples.

The \textbf{complete graph} $K_n$ has $n$ vertices and an edge joining each pair of vertices. The \textbf{complete bipartite graph} $K_{m,n}$ has $m$ ``top" vertices and $n$ ``bottom" vertices, and $mn$ edges joining each top vertex to each bottom vertex. The \textbf{hyperoctahedral graph} $\Diamond_n$ has vertices $\{1, 1', 2, 2', \ldots, n, n'\}$ and its only missing edges are $ii'$ for $1 \leq i \leq n$. The \textbf{$n$-cube graph} $C_n$ has vertices $(\epsilon_1, \ldots, \epsilon_n)$ where $\epsilon_i \in \{0, 1\}$, and an edge connecting any two vertices that differ in exactly one coordinate. 
The \textbf{$n$-dimensional grid of size $m$}, denoted $mC_n$, has vertices $(\epsilon_1, \ldots, \epsilon_n)$ where $\epsilon_i \in \{1, \ldots, m\}$, and an edge connecting any two vertices that differ in exactly one coordinate $i$, where they differ by $1$. 

\begin{theorem}\label{f.th:spanningtrees} The number of spanning trees of some interesting graphs are as follows.
\begin{enumerate}
\item (Complete graph): $c(K_n) = n^{n-2}$

\item (Complete bipartite graph): $c(K_{m,n}) = m^{n-1}n^{m-1}$

\item (Hyperoctahedral graph): $c(\Diamond_n) = 2^{2n-2}(n-1)^nn^{n-2}$.

\item ($n$-cube): $c(C_n) = 2^{2^n-n-1} \prod_{k=1}^n k^{n \choose k}$

\item ($n$-dimensional grid of size $m$): $c(mC_n) = m^{m^n-n-1} \prod_{k=1}^n k^{{n \choose k}(m-1)^k}$
%
%\item \comment{More? If I have time/space, I could mine Cvetkovic et. al. \emph{Spectral Graph Theory} for more results. Matroids? Root polytopes?}
\end{enumerate}
\end{theorem}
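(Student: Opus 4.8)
The plan is to apply Kirchhoff's Matrix-Tree Theorem (Theorem \ref{f.th:Kirkhoff}) in the form $c(G) = \frac1n\lambda_1\cdots\lambda_{n-1}$, where $n = |V(G)|$ and $\lambda_1, \ldots, \lambda_{n-1}, \lambda_n = 0$ are the eigenvalues of the Laplacian $L(G)$. So for each of the five graphs, the task reduces to diagonalizing the Laplacian and computing the product of its nonzero eigenvalues. Each of these graphs has enough symmetry (or enough product structure) that this is feasible.

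For (1), the complete graph $K_n$ has Laplacian $L = nI - J$, where $J$ is the all-ones matrix. Since $J$ has eigenvalue $n$ once (eigenvector $\mathbf{1}$) and $0$ with multiplicity $n-1$, $L$ has eigenvalue $0$ once and $n$ with multiplicity $n-1$, giving $c(K_n) = \frac1n n^{n-1} = n^{n-2}$. For (2), $K_{m,n}$: write its Laplacian in block form with diagonal blocks $nI_m$ and $mI_n$ and off-diagonal blocks $-J$; one checks directly that the eigenvalues are $0$ (once), $m+n$ (once), $n$ (with multiplicity $m-1$), and $m$ (with multiplicity $n-1$), so $c(K_{m,n}) = \frac1{m+n}(m+n)\,n^{m-1}m^{n-1} = m^{n-1}n^{m-1}$. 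For (3)--(5) I would exploit graph products. The $n$-cube $C_n$ is the Cartesian product of $n$ copies of $K_2$, and the grid $mC_n$ is the Cartesian product of $n$ copies of the path $P_m$; in general, the Laplacian eigenvalues of a Cartesian product $G \square H$ are all sums $\mu_i + \nu_j$ of a Laplacian eigenvalue of $G$ and one of $H$. Since $P_m$ has Laplacian eigenvalues $2 - 2\cos(\pi k/m)$ for $k = 0, \ldots, m-1$ (and $K_2$ has eigenvalues $0, 2$, the case $m=2$), the eigenvalues of $mC_n$ are the numbers $\sum_{i=1}^n \big(2 - 2\cos(\pi k_i/m)\big)$ over all tuples $(k_1, \ldots, k_n) \in \{0, \ldots, m-1\}^n$, with the all-zero tuple giving the eigenvalue $0$. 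Then $c(mC_n) = \frac1{m^n}\prod_{(k_i)\neq 0}\sum_i\big(2 - 2\cos(\pi k_i/m)\big)$, and evaluating this product is the crux. Similarly $\Diamond_n = K_2 \square K_2 \square \cdots$? — no; rather $\Diamond_n$ is the complete graph $K_{2n}$ with a perfect matching deleted, equivalently the cocktail-party graph, whose Laplacian is $(2n-2)I - (J - P)$ for $P$ the permutation matrix swapping $i \leftrightarrow i'$; its eigenvalues come from the common eigenbasis of $J$ and $P$, yielding $0$ once, $2n$ with multiplicity $n-1$, $2n-2$ with multiplicity $n$, and one more value, leading to $c(\Diamond_n) = \frac1{2n}(2n)^{n-1}(2n-2)^n = 2^{2n-2}(n-1)^n n^{n-2}$.

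The main obstacle is (4) and (5): extracting the closed-form products $2^{2^n-n-1}\prod_{k=1}^n k^{\binom nk}$ and $m^{m^n-n-1}\prod_{k=1}^n k^{\binom nk(m-1)^k}$ from the eigenvalue products. The bookkeeping requires grouping the tuples $(k_1, \ldots, k_n)$ by the number $j$ of nonzero coordinates (there are $\binom nj(m-1)^j$ such tuples for each $j \geq 1$) and recognizing a telescoping/recursive structure in the resulting product over partial sums of cosine terms; the identity $\prod_{k=1}^{m-1}\big(2 - 2\cos(\pi k/m)\big) = m$ (the number of spanning trees of the path... no, of the cycle divided appropriately — in fact this is $\det L_v(P_m) = m$, reflecting $c(P_m)=1$ times a normalization) and its refinements are the engine. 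I would present (1)--(3) in full, since they are short, and for (4)--(5) give the product formula for the eigenvalues explicitly, then carry out the combinatorial simplification, flagging that (4) is the special case $m=2$ of (5) so only (5) needs the general argument.
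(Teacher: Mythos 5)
Your overall strategy---Kirchhoff's theorem in the eigenvalue form of Theorem \ref{f.th:Kirkhoff}, plus diagonalization of the Laplacian: $L(K_n)=nI-J$ for (1), a block computation for (2), the common eigenbasis of $J$ and the matching involution for (3), and additivity of Laplacian eigenvalues under Cartesian products for (4)--(5)---is the same route the paper takes (the paper only carries out (1) and (4) in Section \ref{f.sec:dets2} and refers to \cite{f.Cvetkovic} for the rest). Your computations for (1)--(3) are correct; the only slip in (3) is the phrase ``and one more value'': the multiplicities $1+(n-1)+n$ already account for all $2n$ eigenvalues, and your product uses exactly $0$, $2n$ (with multiplicity $n-1$) and $2n-2$ (with multiplicity $n$), which is right.

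The genuine gap is in (5), precisely the part you defer as ``the crux''. Taking the factors to be paths $P_m$, with Laplacian eigenvalues $2-2\cos(\pi k/m)$, cannot lead to the stated formula: already for $n=1$ a path has a single spanning tree, whereas the formula gives $m^{m-2}$, and no telescoping of the cosine products will close that gap (spanning-tree counts of grids of paths are simply not of this product form). The formula in the theorem is the one for the Cartesian product of $n$ copies of the complete graph $K_m$ (two vertices adjacent if and only if they differ in exactly one coordinate), which specializes to the cube at $m=2$; that is the reading you must take. With $K_m$ factors the computation is immediate and exactly parallels the paper's cube argument: $L(K_m)$ has eigenvalues $0$ and $m$, the latter with multiplicity $m-1$, so the product graph has eigenvalue $mk$ with multiplicity ${n \choose k}(m-1)^k$ for $0\le k\le n$, and
\[
c(mC_n)=\frac1{m^n}\prod_{k=1}^n (mk)^{{n \choose k}(m-1)^k}
= m^{m^n-n-1}\prod_{k=1}^n k^{{n \choose k}(m-1)^k},
\]
using $\sum_{k\ge 1}{n \choose k}(m-1)^k=m^n-1$; the case $m=2$ is (4). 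So replace the cosine bookkeeping you promise for (4)--(5) with this two-line product; everything else in your plan stands.
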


We will see proofs of the first and third example in Section \ref{f.sec:dets2}. For the others, and many additional examples, see \cite{f.Cvetkovic}.
%\item \comment{More? If I have time/space, I could mine Cvetkovic et. al. \emph{Spectral Graph Theory} for more results.}

\bigskip
\noindent
\textsf{\textbf{Directed Matrix-Tree theorem.}}
Now let $G=(V,E)$ be a \textbf{directed} graph containing no loops. An \textbf{oriented spanning tree rooted at $v$} is a collection of edges $T$ such that for any vertex $u$ there is a unique path from $u$ to $v$. The underlying unoriented graph $\underline{T}$ is a spanning tree of the unoriented graph $\underline{G}$. 
Let $c(G,v)$ be the number of spanning trees rooted at $G$. 

Recall that the  \textbf{directed Laplacian matrix} $\overrightarrow{L}$ has entries
\[
\overrightarrow{l}_{uv} =
\begin{cases}
-(\textrm{number of edges from $u$ to $v$)} & \textrm{if $u \neq v$} \\
\textrm{outdeg } u & \textrm{if $u = v$}
\end{cases}
\]
Now the matrix $\overrightarrow{L}(G)$ is not necessarily symmetric, but it is still singular.  

%\begin{figure}[ht]
% \begin{center}
%  \includegraphics[scale=.3]{./Pictures/vectorconfig2}
%  \caption{ \label{f.fig:descents} 
%The directed Laplacian of a directed graph.}
% \end{center}
%\end{figure}

\begin{theorem} (Tutte's Directed Matrix-Tree Theorem) Let $G$ be a directed graph and $v$ be a vertex. The number $c(G,v)$ of oriented spanning trees rooted at $v$
\[
c(G,v) = \det \overrightarrow{L}_v(G) 
\]
where $\overrightarrow{L}_v(G)$ is obtained from $L(G)$ by removing the $v$th row and column. 
Furthermore, if $G$ is \textbf{balanced}, so $\textrm{indeg } v = \textrm{outdeg } v$ for all vertices $v$, then
\[
c(G,v) = \frac1n  \lambda_1 \cdots \lambda_{n-1}
\]
where $\lambda_1, \ldots, \lambda_{n-1}, \lambda_n=0$ are the eigenvalues of $L(G)$.
\end{theorem}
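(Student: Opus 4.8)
The plan is to follow the same blueprint as the proof of Kirkhoff's Matrix–Tree Theorem above: find a factorization of the directed Laplacian into a product of two rectangular matrices, apply the Binet–Cauchy formula, identify which terms survive, and track the signs; then deduce the eigenvalue statement from a characteristic-polynomial computation and a short linear-algebra lemma. First I would record the factorization. Let $M$ be the $V\times E$ incidence matrix from the preliminaries ($m_{ve}=1$ if $v$ is the head of $e$, $-1$ if $v$ is the tail, $0$ otherwise), and let $N$ be the $V\times E$ ``tail'' matrix with $n_{ve}=1$ if $v$ is the tail of $e$ and $0$ otherwise. A direct check of entries (using that $G$ has no loops) gives $\overrightarrow{L}(G)=-NM^T$. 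Deleting row $v$ and column $v$ from $\overrightarrow{L}$ amounts to deleting row $v$ from each of $N$ and $M$, so $\overrightarrow{L}_v(G)=-N_vM_v^T$ with $N_v,M_v$ of size $(n-1)\times|E|$.

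Next, apply Binet–Cauchy exactly as in the proof of Theorem \ref{f.th:Kirkhoff}:
\[
\det \overrightarrow{L}_v(G) = (-1)^{n-1}\sum_{S\subseteq E,\ |S|=n-1}\det N_v[S]\,\det M_v[S].
\]
Here $\det M_v[S]=\pm1$ when the underlying undirected edge set of $S$ is a spanning tree of $\underline{G}$, and $0$ otherwise — this is precisely the lemma used in the undirected proof. The matrix $N_v[S]$ is a $0$–$1$ matrix with at most one $1$ per column (the $1$ in column $e$ sits in row $\mathrm{tail}(e)$, and is absent exactly when $\mathrm{tail}(e)=v$), so $\det N_v[S]=\pm1$ iff $e\mapsto\mathrm{tail}(e)$ is a bijection $S\to V\setminus\{v\}$, and $0$ otherwise. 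The key combinatorial observation is that both conditions hold simultaneously iff $S$ is an oriented spanning tree rooted at $v$: if the underlying edges form a tree, every vertex $\neq v$ has out-degree $1$ in $S$, and $v$ has out-degree $0$, then following out-edges from any vertex cannot repeat a vertex (the underlying tree is acyclic) and so must terminate at the unique sink $v$.

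To pin the sign, fix such a tree $S$ and order $V\setminus\{v\}$ as $v_1,\dots,v_{n-1}$ by distance to $v$, letting $e_i$ be the out-edge of $v_i$; then $N_v[S]$ is the identity, while $M_v[S]$ is upper triangular with $-1$'s on the diagonal (an off-diagonal $1$ records ``parent of $v_j$'', which has smaller index), so $\det N_v[S]\det M_v[S]=(-1)^{n-1}$. This product is independent of the orderings used in the Binet–Cauchy sum, since permuting the common row set $V\setminus\{v\}$ and the common column set $S$ multiplies both determinants by the same sign; hence each oriented spanning tree rooted at $v$ contributes $(-1)^{n-1}(-1)^{n-1}=1$ and $\det\overrightarrow{L}_v(G)=c(G,v)$. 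For the balanced case, observe that $\overrightarrow{L}\mathbf{1}=0$ always (row sums vanish), and balancedness gives also $\mathbf{1}^T\overrightarrow{L}=0$ (the $v$-th column sum is $\mathrm{outdeg}(v)-\mathrm{indeg}(v)=0$). Comparing the coefficient of $x^1$ in $\det(\overrightarrow{L}-xI)=(-x)\prod_{i=1}^{n-1}(\lambda_i-x)$ — which equals $-\lambda_1\cdots\lambda_{n-1}$ on one side and $-\sum_v\det\overrightarrow{L}_v(G)$ on the other — yields $\sum_v c(G,v)=\lambda_1\cdots\lambda_{n-1}$, as in the undirected proof. It remains to see the $c(G,v)$ are all equal: if an $n\times n$ matrix $\overrightarrow{L}$ satisfies $\overrightarrow{L}\mathbf{1}=0$ and $\mathbf{1}^T\overrightarrow{L}=0$, then $\overrightarrow{L}\cdot\mathrm{adj}(\overrightarrow{L})=(\det\overrightarrow{L})I=0$ forces every column of $\mathrm{adj}(\overrightarrow{L})$ into $\ker\overrightarrow{L}$ and dually every row into the left kernel, so when $\mathrm{rank}\,\overrightarrow{L}=n-1$ both kernels are lines spanned by $\mathbf{1}$ (resp. $\mathbf{1}^T$), $\mathrm{adj}(\overrightarrow{L})$ is a constant multiple of the all-ones matrix, and its diagonal entries — the $\det\overrightarrow{L}_v(G)=c(G,v)$ — coincide; dividing by $n$ gives $c(G,v)=\tfrac1n\lambda_1\cdots\lambda_{n-1}$. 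If instead $\mathrm{rank}\,\overrightarrow{L}<n-1$, there are no oriented spanning trees and at least two eigenvalues vanish, so both sides are $0$.

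I expect the main obstacle to be the sign bookkeeping in the Binet–Cauchy step: matching $\det N_v[S]\,\det M_v[S]$ to $+1$ for every oriented spanning tree, and checking carefully that this product does not depend on the single global ordering of rows and columns that the formula uses. Identifying the surviving terms with oriented spanning trees and the adjugate argument in the balanced case are routine once this is set up.
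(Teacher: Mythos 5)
Your argument is correct, but it follows a different road than the paper on both halves of the statement. For the cofactor formula, the paper does not repeat the Binet--Cauchy computation: it argues by induction on the edges, picking an edge $e$ with tail $w\neq v$ and using multilinearity of the determinant in row $w$ to split $\det\overrightarrow{L}_v(G)$ as $\det\overrightarrow{L}_v(G\setminus e)+\det\overrightarrow{L}_v(G'')$, matching the split of oriented spanning trees into those avoiding and those containing $e$; this sidesteps all sign bookkeeping. You instead transplant the proof of Theorem \ref{f.th:Kirkhoff} via the asymmetric factorization $\overrightarrow{L}=-NM^{T}$ (tail matrix times transposed incidence matrix), identify the surviving $S$ with oriented spanning trees rooted at $v$ through the condition that $e\mapsto\mathrm{tail}(e)$ bijects $S$ onto $V\setminus\{v\}$, and verify each tree contributes $+1$; your observation that the product $\det N_v[S]\det M_v[S]$ is invariant under simultaneous reorderings is exactly what makes the sign check legitimate, and your triangularity computation is right. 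For the eigenvalue statement the divergence is larger: the paper postpones it to Corollary \ref{f.cor:trees}, deducing the independence of $c(G,v)$ from $v$ via the bijection in the BEST theorem (Theorem \ref{f.th:BEST}) and only then running the characteristic-polynomial coefficient argument, whereas you get the independence purely linear-algebraically, noting that balancedness makes both $\overrightarrow{L}\mathbf{1}=0$ and $\mathbf{1}^{T}\overrightarrow{L}=0$, so when $\mathrm{rank}\,\overrightarrow{L}=n-1$ the adjugate is a scalar multiple of the all-ones matrix and all principal cofactors agree, with the low-rank case giving $0=0$. Your route is more self-contained (no BEST theorem needed, and it covers balanced graphs that are not Eulerian, where both sides vanish), while the paper's route is shorter given that the BEST theorem is proved anyway and yields the stronger combinatorial fact that the tree counts themselves coincide for structural reasons.
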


\begin{proof} Proceed by induction. 
Consider an edge $e$ starting at a vertex $w \neq v$. Let $G'=G-e$ be obtained from $G$ by removing $e$. If $e$ is the only edge starting at $w$,  every spanning tree  uses it and 
\[
c(G,v) = c(G',v) =\det  \overrightarrow{L}_v(G') = \det \overrightarrow{L}_v(G).
\]
%
%
%so $c(G,v) = c(G',v)$ where $G'$ is obtained from $G$ by removing $e$ and $w$, and one easily checks that $\overrightarrow{L}_v(G) = \overrightarrow{L}_v(G')$
Otherwise, obtain $G''$ from $G$ by removing all edges starting at $w$ other than $e$. There are $c(G',v)$ oriented spanning trees rooted at $v$ which do not contain $e$, and $c(G'',v)$ which do contain $e$, so % we have
\[
c(G,v) = c(G',v) + c(G'',v) = \det \overrightarrow{L}_v(G') + \det \overrightarrow{L}_v(G'') = \det \overrightarrow{L}_v(G)
\]
where the last equality holds since determinants are multilinear. 

We postpone the proof of the second statement until Corollary \ref{f.cor:trees}. %, where it will be an immediate consequence of Theorem \ref{f.th:BEST}.
\end{proof}

\subsubsection{\textsf{Counting Eulerian cycles: the BEST theorem}} One of the earliest combinatorial questions is the problem of the Seven Bridges of K\"onigsberg. In the early 1700s, the Prussian city of K\"onigsberg was separated by the Pregel river into four regions, connected to each other by seven bridges. In the map of Figure \ref{f.fig:Konigsberg} we have labeled the regions $N,S,E$ and $I$ (north, south, east, and island);  there are two bridges between $N$ and $I$, two between $S$ and $I$, and three bridges connecting $E$ to each of $N, I$, and $S$. The problem was to find a walk through the city that crossed each bridge exactly once. Euler proved in 1735 that it is impossible to find such a walk; this is considered to be the first paper in graph theory.

\begin{figure}[ht]
 \begin{center}
  \includegraphics[scale=.333]{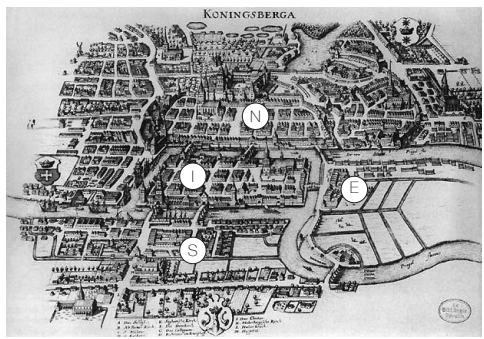}
  \caption{ \label{f.fig:Konigsberg} 
The seven bridges of K\"onigsberg. (Public domain map by Merian-Erben, 1652.)}
 \end{center}
\end{figure}

Euler's argument is simple, and relies on the fact that every region of K\"onigsberg is adjacent to an odd number of bridges. Suppose there existed such a walk, starting at region $A$ and ending at region $B$.  Now consider a region $C$ other than $A$ and $B$. Then our path would enter and leave $C$ the same number of times; but then it would not use all the bridges adjacent to $C$, because there is an odd number of such bridges.

In modern terminology, each region of the city is represented by a vertex, and each bridge is represented by an edge connecting two vertices. We will be more interested in the directed case, where every edge has an assigned direction. An \textbf{Eulerian path} is a path in the graph which visits every edge exactly once. If the path starts and ends at the same vertex, then it is called a \textbf{Eulerian cycle}. We say $G$ is an \textbf{Eulerian graph} if it has an Eulerian cycle.

\begin{theorem}
%A connected graph has an Eulerian cycle if and only if every vertex has even degree. It has an Eulerian path if and only if it has at most two edges of odd degree. 
A directed graph is Eulerian if and only if it is connected and every vertex $v$ satisfies $\textrm{indeg}(v) = \textrm{outdeg}(v)$.
\end{theorem}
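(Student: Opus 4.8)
The plan is to prove both directions of the equivalence. For the easy direction, suppose $G$ is Eulerian, say with an Eulerian cycle $W$. Connectivity is immediate since $W$ visits every edge and hence (as $G$ has no isolated vertices worth considering) touches every vertex. For the degree condition, fix a vertex $v$ and traverse $W$; each time $W$ enters $v$ along an edge it must leave $v$ along a distinct edge, and since $W$ uses every edge exactly once, the incoming edges at $v$ are paired bijectively with the outgoing edges at $v$. Hence $\textrm{indeg}(v) = \textrm{outdeg}(v)$. (A small bookkeeping point: if $W$ starts and ends at $v$, the final return and the initial departure are what make the pairing come out even; this is why we need a \emph{cycle} rather than merely a path.)

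The substantive direction is the converse: assume $G$ is connected with $\textrm{indeg}(v) = \textrm{outdeg}(v)$ for every $v$, and build an Eulerian cycle. The approach I would take is the standard greedy-splicing argument. Start at any vertex $v_0$ and keep walking along unused edges; because every vertex has equal in- and out-degree, whenever the walk arrives at a vertex $u \neq v_0$ along a fresh edge there is still an unused outgoing edge at $u$ (the unused in-degree and out-degree at $u$ stay equal after each full passage, so we can only get stuck back at $v_0$). Hence the walk necessarily closes up into a cycle $C_1$. If $C_1$ uses all edges, we are done. Otherwise, using connectivity, there is a vertex $w$ lying on $C_1$ that has an unused outgoing edge; restricting to the unused edges, this residual graph still has equal in- and out-degrees at every vertex, so the same argument produces a closed walk $C_2$ starting and ending at $w$. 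Splice $C_2$ into $C_1$ at $w$. Repeat until no edges remain; since the number of unused edges strictly decreases, the process terminates and yields an Eulerian cycle.

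The main obstacle — really the only place that needs care — is justifying that the residual graph at each splicing step still satisfies the in-degree $=$ out-degree condition, and that connectivity guarantees we can always find a splice point on the cycle built so far. For the first point, note that a closed walk removes the same number of incoming and outgoing edges at each vertex it passes through, so the balance condition is preserved in the residual graph; this is what lets us iterate the "you can only get stuck where you started" lemma. For the second point, if the residual graph were nonempty but disjoint from the vertex set of the current cycle $C$, then $G$ would be disconnected, contradicting the hypothesis — so some vertex of $C$ must be incident to a residual edge. With these two observations in place the induction on the number of edges goes through cleanly.
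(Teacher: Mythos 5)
Your proof is correct and follows essentially the same route as the paper: the pairing of incoming and outgoing edges along the cycle for the easy direction, and the greedy walk that can only get stuck at its starting vertex, followed by splicing cycles found at vertices of the current cycle with unused edges, for the converse. Your extra remarks on why the residual graph stays balanced and why connectivity yields a splice point just make explicit two steps the paper leaves implicit.
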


\begin{proof}
%We focus on the last statement; the others are similar. 
If a graph has an Eulerian cycle $C$, then $C$ enters and leaves each vertex $v$ the same number of times. Therefore $\textrm{indeg}(v) = \textrm{outdeg}(v)$. 

To prove the converse, let us start by arbitrarily ``walking around $G$ until we get stuck". More specifically, we start at any vertex $v_0$, and at each step, we exit the current vertex by walking along any outgoing edge which we have not used yet. If there is no available outgoing edge, we stop.
%, and at each new vertex, 
%we use any available edge to walk out of it; we do this until we enter a vertex where every outgoing edge has already been used. 

Whenever we enter a vertex $v \neq v_0$, we will also be able to exit it since $\textrm{indeg}(v) = \textrm{outdeg}(v)$; so the walk can only get stuck at $v_0$. Hence the resulting walk $C$ is a cycle. If $C$ uses all edges of the graph, we are done. If not, then since $G$ is connected we can find a vertex $v'$ of $C$ with an unused outgoing edge, and we use this edge to start walking around the graph $G-C$ until we get stuck, necessarily at $v'$. The result will be a cycle $C'$. Starting at $v'$ we can traverse $C$ and then $C'$, thus obtaining 
a cycle $C \cup C'$ which is longer than $C$. Repeating this procedure, we will eventually construct an Eulerian cycle.
\end{proof}

There is a remarkable formula for the number of Eulerian cycles, due to de \textbf{B}ruijn, van Ardenne-\textbf{E}hrenfest, \textbf{S}mith, and \textbf{T}utte:  

\begin{theorem}\label{f.th:BEST}(BEST Theorem) If $G$ is an Eulerian directed graph, then the number of Eulerian cycles of $G$ is
\[
c(G,v) \cdot \prod_{w \in V} (\textrm{outdeg}(w)-1)!
\]
for any vertex $v$, where $c(G,v)$ is the number of oriented spanning trees rooted at $v$.
\end{theorem}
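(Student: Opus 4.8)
The plan is to establish a bijection between Eulerian cycles of $G$ and pairs $(T, \sigma)$ consisting of an oriented spanning tree $T$ rooted at $v$ together with, for each vertex $w$, a linear ordering of the outgoing edges at $w$ that are \emph{not} in $T$ (there are $\textrm{outdeg}(w)-1$ such edges at each $w \neq v$, and $\textrm{outdeg}(v)$ such edges at $v$, but we fix a convention at $v$ to remove one degree of freedom). Counting such pairs gives exactly $c(G,v) \cdot \prod_{w} (\textrm{outdeg}(w)-1)!$, matching the claimed formula, and then Tutte's Directed Matrix-Tree Theorem identifies $c(G,v)$ with the determinant $\det \overrightarrow{L}_v(G)$.

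First I would go from a cycle to a tree. Given an Eulerian cycle $C$ starting and ending at $v$, for each vertex $w \neq v$ let $e_w$ be the \emph{last} edge of $C$ that leaves $w$; I claim $T = \{e_w : w \neq v\}$ is an oriented spanning tree rooted at $v$. It has the right number of edges and exactly one outgoing edge at each $w \neq v$, so it suffices to show $T$ has no directed cycle: if $w_1 \to w_2 \to \cdots \to w_1$ were a cycle in $T$, consider which of these ``last exit'' edges is traversed earliest in $C$; say it is the one out of $w_i$. When $C$ later arrives at $w_i$ via the last-exit edge of $w_{i-1}$ — which happens since that edge is used after the edge out of $w_i$ — it must leave $w_i$ again, contradicting that we already used $w_i$'s last exit. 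Hence $T$ is acyclic, so it is an oriented spanning tree rooted at $v$. Recording, in addition, the order in which $C$ traverses the non-tree outgoing edges at each vertex produces the orderings $\sigma$.

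For the reverse map, given $(T,\sigma)$ I would reconstruct the cycle by the standard ``follow your nose, saving the tree edge for last'' rule: at each vertex maintain a queue of outgoing edges in which all non-tree edges come first (in the order prescribed by $\sigma$) and the tree edge $e_w$ comes last; starting from $v$, repeatedly leave the current vertex along the next unused edge in its queue. The content to verify is (i) the walk never gets stuck before using every edge — this uses the in/out-degree balance exactly as in the proof that Eulerian graphs have Eulerian cycles, together with the fact that the walk can only terminate at $v$; and (ii) it does in fact exhaust all edges — here the tree structure is essential: if some edge out of $w$ were never used, then $e_w$ is unused, so walking backward along tree edges from $w$ toward the root, every tree edge on that path is unused, in particular the tree edge entering... one argues that the walk never reaches $w$ at all, but connectivity via $T$ forces it to, a contradiction. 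These two checks show the map $(T,\sigma) \mapsto C$ is well-defined, and it is visibly inverse to the construction in the previous paragraph.

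The main obstacle is the ``no premature stopping / full exhaustion'' verification in the reverse direction — proving that the greedy walk with tree edges reserved for last genuinely produces an Eulerian cycle rather than a shorter closed walk. This is where the rootedness of $T$ does the real work, and it is the one place where a careless argument can go wrong; everything else (degree bookkeeping, the bijection being inverse to itself, and the final substitution of $\det \overrightarrow{L}_v(G)$ for $c(G,v)$ via Tutte's theorem) is routine. I would present that step carefully and leave the rest as straightforward checks.
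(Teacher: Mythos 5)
Your proposal is exactly the paper's argument: fix the starting edge at $v$, send an Eulerian cycle to the spanning tree of last-exit edges together with the traversal order of the remaining $\textrm{outdeg}(w)-1$ outgoing edges at each vertex, and invert by the "use the tree edge last" touring rule; the paper likewise leaves the inverse-map verification as a check ("it is not hard to check that this is a bijection"). Your sketch of why the tour exhausts all edges (unused out-edge at $w$ forces the tree edge $e_w$ unused, then propagate along the tree path to the root and contradict degree balance at $v$) is the standard way to fill in that step, so no substantive difference from the paper's proof.
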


\begin{proof}
We fix an edge $e$ starting at $v$, and let each Eulerian cycle start at $e$. For each vertex $w$ let $E_w$ be the set of outgoing edges from $w$.

Consider an Eulerian cycle $C$. For each vertex $w \neq v$, let $e_w$ be the last outgoing edge from $w$ that $C$ visits, and let $\pi_w$ (resp. $\pi_v$) be the ordered set $E_w - e_w$ (resp. $E_v - e$) of the other outgoing edges from $w$ (resp. $v$), listed in the order that $C$ traverses them. 
It is easy to see that $T=\{e_w \, : \, w \neq v\}$ is an oriented spanning tree rooted at $v$. 

Conversely, an oriented tree $T$ and permutations $\{\pi_w \, : \, w \in V\}$ serve as directions to tour $G$. We start with edge $e$. Each time we arrive at vertex $w$, we exit it by using the first unused edge according to $\pi_w$. If we have used all the edges $E_w - e_w$ of $\pi_w$, then we use $e_w \in T$. It is not hard to check that this is a bijection. This completes the proof.
\end{proof}

\begin{corollary}\label{f.cor:trees}
In an Eulerian directed graph, the number of oriented spanning trees rooted at $v$ is the same for all vertices $v$; it equals
\[
c(G, v) = \frac1n  \lambda_1 \cdots \lambda_{n-1}
\]
where $\lambda_1, \ldots, \lambda_{n-1}, \lambda_n=0$ are the eigenvalues of $\overrightarrow{L}(G)$.
\end{corollary}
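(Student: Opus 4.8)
The plan is to combine the BEST Theorem with the first (determinantal) part of Tutte's Directed Matrix-Tree Theorem, which we have already proved, to show first that $c(G,v)$ is independent of $v$, and then to evaluate this common quantity via the eigenvalues of $\overrightarrow{L}(G)$. The independence of $v$ is almost immediate from the BEST Theorem: the number of Eulerian cycles of $G$ (as a combinatorial quantity intrinsic to $G$, once we agree to count cycles as cyclic sequences of edges) does not depend on any choice of base vertex, yet for \emph{every} vertex $v$ the BEST Theorem expresses it as $c(G,v)\cdot\prod_{w\in V}(\mathrm{outdeg}(w)-1)!$. Since the product factor is the same for all $v$, we conclude $c(G,v_1)=c(G,v_2)$ for all vertices $v_1,v_2$. (One should be slightly careful that the BEST Theorem as stated counts Eulerian cycles with a distinguished starting edge $e$ at $v$; but dividing by the number of edges out of $v$ that could serve as that start, or equivalently observing that fixing a global edge to start at makes the count manifestly $v$-independent, removes the ambiguity. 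In any case the ratio of the two expressions for different $v$ forces equality.)

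Next I would identify this common value with $\frac1n\lambda_1\cdots\lambda_{n-1}$. By Tutte's theorem (first part, already proved in the excerpt), $c(G,v)=\det\overrightarrow{L}_v(G)$, the principal cofactor obtained by deleting row and column $v$. Having just shown all these principal cofactors are equal, their common value equals the \emph{average} of all $n$ principal cofactors, i.e. $\frac1n\sum_{v}\det\overrightarrow{L}_v(G)$. Now I invoke the standard fact that the sum of the principal $(n-1)\times(n-1)$ cofactors of an $n\times n$ matrix $M$ equals the coefficient of $x^1$ in $\det(xI - M)$ up to sign; concretely, if the eigenvalues of $\overrightarrow{L}(G)$ are $\lambda_1,\dots,\lambda_{n-1},\lambda_n=0$, then expanding $\det(xI-\overrightarrow{L}(G)) = x(x-\lambda_1)\cdots(x-\lambda_{n-1})$ shows the coefficient of $x^1$ is $(-1)^{n-1}\lambda_1\cdots\lambda_{n-1}$, and this same coefficient equals $(-1)^{n-1}$ times the sum of the principal $(n-1)$-minors of $\overrightarrow{L}(G)$. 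Hence $\sum_v\det\overrightarrow{L}_v(G)=\lambda_1\cdots\lambda_{n-1}$, and dividing by $n$ gives the claimed formula. This is exactly the same bookkeeping already used at the end of the proof of Kirkhoff's theorem, now applied to the (not necessarily symmetric) directed Laplacian; the argument goes through verbatim since it only uses the characteristic polynomial and the cofactor expansion, not symmetry.

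The one point that genuinely needs the Eulerian hypothesis — and which I expect to be the main obstacle to state cleanly — is that $\overrightarrow{L}(G)$ must have $0$ as an eigenvalue with the all-ones vector (or rather, that $0$ is an eigenvalue at all, and that the remaining eigenvalues are the ones appearing in Tutte's formula). For a balanced graph, $\mathrm{indeg}(v)=\mathrm{outdeg}(v)$ for all $v$, so the column sums of $\overrightarrow{L}(G)$ vanish (the off-diagonal entries in column $v$ sum to $-\mathrm{indeg}(v)$ and the diagonal entry is $\mathrm{outdeg}(v)$), hence $\mathbf 1^{T}\overrightarrow{L}(G)=0$ and $0$ is an eigenvalue, justifying writing the characteristic polynomial as $x\prod_{i=1}^{n-1}(x-\lambda_i)$. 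An Eulerian graph is in particular balanced, so this applies. I would flag this explicitly, since without balancedness the eigenvalue interpretation fails even though the cofactor $\det\overrightarrow{L}_v(G)$ still counts rooted spanning trees. Modulo that remark, the corollary follows formally from BEST plus the already-established parts of the two Matrix-Tree theorems, with no new computation required.
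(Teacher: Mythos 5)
Your proposal is correct and follows essentially the same route as the paper: the BEST theorem forces $c(G,v)$ to be independent of $v$, and then the cofactor/characteristic-polynomial bookkeeping from the proof of Kirkhoff's theorem (sum of the $n$ principal cofactors $=$ coefficient of $\pm x^1$ in $\det(xI-\overrightarrow{L})$ $=$ $\lambda_1\cdots\lambda_{n-1}$) gives the formula, exactly as you argue. The only small quibble is your closing remark: $0$ is an eigenvalue of $\overrightarrow{L}(G)$ for \emph{any} directed graph since the row sums vanish, so balancedness is not needed for that; the Eulerian hypothesis enters only through the BEST theorem, which makes the principal cofactors equal.
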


\begin{proof}
The BEST theorem implies that $c(G,v)$ is independent of $v$, and then the argument in the proof of Theorem \ref{f.th:Kirkhoff} applies to give the desired formula.
\end{proof}

The BEST theorem can be used beautifully to enumerate a very classical, and highly nontrivial, family of objects.
A $k$-ary \textbf{de Bruijn sequence} of order $n$ is a cyclic word $W$ of length $k^n$ in the alphabet $\{1,\ldots,k\}$ such that the $k^n$ consecutive subwords of $W$ of length $n$ are the $k^n$ distinct words of length $n$. For example, the $2$-ary deBruijn sequences of order $3$ are $11121222$ and $22212111$; these ``memory wheels" were described in Sanskrit poetry several centuries ago \cite{f.Kak}. 
Their existence and enumeration was proved by Flye Saint-Marie in 1894 for $k=2$ and by van Aardenne-Ehrenfest and de Bruijn in 1951 in general.

\begin{theorem}\cite{f.Flye, f.deBruijn} The number of $k$-ary de Bruijn sequences of order $n$ is $(k!)^{k^{n-1}}/k^n$.
\end{theorem}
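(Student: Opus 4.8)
The plan is to realize $k$-ary de Bruijn sequences of order $n$ as Eulerian cycles in a suitable directed graph and then apply the BEST Theorem (Theorem \ref{f.th:BEST}) together with Corollary \ref{f.cor:trees}. The natural graph is the \textbf{de Bruijn graph} $B(k,n-1)$: its vertices are the $k^{n-1}$ words of length $n-1$ in the alphabet $\{1,\ldots,k\}$, and there is a directed edge from $a_1a_2\cdots a_{n-1}$ to $a_2a_3\cdots a_{n-1}a_n$ for each choice of $a_n \in \{1,\ldots,k\}$, labeled by the word $a_1\cdots a_n$ of length $n$. Thus $B(k,n-1)$ has $k^{n-1}$ vertices and $k^n$ edges, each vertex has indegree $k$ and outdegree $k$, and the graph is easily seen to be connected (one can walk from any word to any other by shifting in the desired letters one at a time). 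Hence $B(k,n-1)$ is an Eulerian directed graph by the characterization theorem proved above.

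The first key step is the bijection between de Bruijn sequences and Eulerian cycles of $B(k,n-1)$. An Eulerian cycle traverses each of the $k^n$ edges exactly once; reading off the edge labels (words of length $n$) as we go, and overlapping them in the natural way — consecutive edges share an $(n-1)$-letter word, namely the vertex between them — produces a cyclic word of length $k^n$ in which every length-$n$ window occurs exactly once, i.e.\ a de Bruijn sequence. Conversely a de Bruijn sequence $W$ determines the cyclic sequence of its length-$(n-1)$ windows, which is a closed walk in $B(k,n-1)$ using each length-$n$ window — hence each edge — exactly once, i.e.\ an Eulerian cycle. One should check this correspondence is a genuine bijection; the main subtlety is bookkeeping about where the cycle ``starts,'' but since both objects are cyclic this is harmless.

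The second step is to compute the count via the BEST Theorem. It gives the number of Eulerian cycles of $B(k,n-1)$ as $c(B(k,n-1),v)\cdot \prod_{w\in V}(\textrm{outdeg}(w)-1)!$. Every vertex has outdegree $k$, and there are $k^{n-1}$ vertices, so the product equals $((k-1)!)^{k^{n-1}}$. It remains to compute $c(B(k,n-1),v)$, the number of oriented spanning trees rooted at any vertex $v$; by Corollary \ref{f.cor:trees} this is independent of $v$ and equals $\frac{1}{k^{n-1}}\lambda_1\cdots\lambda_{k^{n-1}-1}$, the product of the nonzero eigenvalues of the directed Laplacian $\overrightarrow{L}(B(k,n-1)) = kI - A$, divided by the number of vertices. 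So I need the eigenvalues of the adjacency matrix $A$ of the de Bruijn graph.

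The adjacency-spectrum computation is the heart of the argument, and it is the step I expect to be the main obstacle. The de Bruijn graph $B(k,n-1)$ is the $(n-1)$-fold iterated line-graph-type construction on the complete digraph with loops on $k$ vertices, and it is a classical fact that $A$ is nilpotent: $A^{n-1}$ has all entries equal to $1$ (since there is exactly one walk of length $n-1$ from any word to any word — just shift), so $A^n = k A^{n-1}$... more precisely one shows $A$ has a single nonzero eigenvalue $k$ and all other eigenvalues $0$. (One clean way: $A = \mathbf{1}$-type rank considerations on $A^{n-1}$, or observe $A$ restricted to the appropriate subspace is nilpotent.) Consequently $\overrightarrow{L} = kI - A$ has eigenvalue $0$ once and eigenvalue $k$ with multiplicity $k^{n-1}-1$, so $\lambda_1\cdots\lambda_{k^{n-1}-1} = k^{\,k^{n-1}-1}$ and $c(B(k,n-1),v) = \frac{1}{k^{n-1}}\cdot k^{\,k^{n-1}-1} = k^{\,k^{n-1}-1-(n-1)} = k^{\,k^{n-1}-n}$. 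Multiplying, the number of de Bruijn sequences is $k^{\,k^{n-1}-n}\cdot((k-1)!)^{k^{n-1}} = (k!)^{k^{n-1}}/k^n$, using $k\cdot(k-1)! = k!$ on each of the $k^{n-1}$ factors. I would present the eigenvalue claim carefully — either via the explicit formula $A^{n-1}=J$ plus a dimension count of generalized eigenspaces, or by exhibiting the full set of eigenvectors — since this is where a hasty argument could go wrong.
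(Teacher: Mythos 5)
Your proposal is correct and follows essentially the same route as the paper: de Bruijn sequences correspond to Eulerian cycles of the de Bruijn graph, the BEST theorem reduces the count to $c(G,v)\cdot((k-1)!)^{k^{n-1}}$, and $c(G,v)=k^{k^{n-1}-n}$ comes from the eigenvalues of the Laplacian, obtained exactly as in the paper via $A^{n-1}=J$ together with $\mathrm{tr}(A)=k$ (so the spectrum of $A$ is $0,\ldots,0,k$, not literal nilpotency as you loosely say at one point). The only care needed is precisely where you flag it — pinning down the nonzero eigenvalue via the trace — and your sketch already contains that argument.
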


\begin{proof} Consider the \textbf{de Bruijn graph} whose vertices are the $k^{n-1}$ sequences of length $n-1$ in the alphabet $\{1, \ldots, k\}$, and where there is an edge from $a_1a_2 \ldots a_{n-1}$ to the word $a_2a_3\ldots a_n$ for all $a_1, \ldots, a_n$. It is natural to label this edge $a_1a_2 \ldots a_n$. It then becomes apparent that $k$-ary de Bruijn sequence are in bijection with the Eulerian cycles of the de Bruijn graph. Since $\textrm{indeg}(v) = \textrm{outdeg}(v) = k$ for all vertices $v$, this graph is indeed Eulerian, and we proceed to count its Eulerian cycles.

Notice that for any vertices $u$ and $v$ there is a unique path of length $n$ from $u$ to $v$. Therefore the $k^{n-1} \times k^{n-1}$ adjacency matrix $A$ satisfies $A^n = J$, where $J$ is the matrix whose entries are all equal to $1$. We already saw that the eigenvalues of $J$ are $0, \ldots, 0, k^n$. Since the trace of $A$ is $k$, the eigenvalues of $A$ must be $0, \ldots, 0, k$. Therefore the Laplacian $L=kI-A$ has eigenvalues $k, \ldots, k, 0$. It follows from Corollary \ref{f.cor:trees} that the de Bruijn graph has 
$
c(G,v) = \frac1{k^{n-1}} k^{(k^{n-1}-1)} = k^{k^{n-1}-n}
$
oriented spanning trees rooted at any vertex $v$, and
\[
c(G,v)  \cdot \prod_{w \in V} (\textrm{outdeg}(w)-1)! = k^{k^{n-1}-n} \cdot (k-1)!^{k^{n-1}}
\]
Eulerian cycles, as desired.
\end{proof}

%\comment{Finite field method? See Fomin's notes.}

\subsubsection{\textsf{Counting perfect matchings: the Pfaffian method}} \label{f.sec:Pfaffian}

A \textbf{perfect matching} of a graph $G=(V,E)$ is a set $M$ of edges such that every vertex of $G$ is on exactly one edge from $M$. We are interested in computing the number $m(G)$ of perfect matchings of a graph $G$. We cannot expect to be able to do this in general; in fact, even for bipartite graphs $G$, the problem of computing $m(G)$ is \#P-complete. % \cite{f.Vallant}. 
However, for many graphs of interest, including all planar graphs, there is a beautiful technique which produces a determinantal formula for $m(G)$.
%reduces this problem to the computation of a %Pfaffian, which in turn relies on computing a related determinant.

\bigskip
\noindent \textsf{\textbf{Determinants and Pfaffians.}} Let $A$ be a \textbf{skew-symmetric} matrix of size $2m \times 2m$, so $A^T = -A$. The Pfaffian is a polynomial encoding the matchings of the complete graph $K_{2m}$.
A perfect matching $M$ of the complete graph $K_{2m}$ is a partition $M$ of $[2m]$ into disjoint pairs $\{i_1, j_1\}, \ldots, \{i_m, j_m\}$, where $i_k<j_k$ for $1 \leq k \leq m$. Draw the points $1, \ldots, 2m$ in order on a line and connect each $i_k$ to $j_k$ by a semicircle above the line. Let $\textrm{cr}(M)$ be the number of crossings in this drawing, and let $\textrm{sign}(M) = (-1)^{\textrm{cr}(M)}$. 
%\textbf{skew-symmetric} matrix, so $A^T = -A$. 
Let $a_M = a_{i_1j_1} \cdots a_{i_mj_m}$. 
%\[
%\textrm{Pf} = \sum_\pi (-1)^{\textrm{cr}(\pi)} \prod_{\{i,j\} \in \pi} a_{ij}.
%\]
The \textbf{Pfaffian} of $A$ is
\[
\textrm{Pf}(A) = \sum_{M} \textrm{sign}(M) a_M
\]
summing over all perfect matchings $M$ of the complete graph $K_{2m}$.

%Let us begin by reviewing the relevant concepts in linear algebra. The \textbf{determinant} of a matrix $A = (a_{ij})_{1 \leq i, j \leq n}$ is
%\[
%\det A = \sum_{\pi \in S_n} \textrm{sign}(\pi) a_\pi
%\]
%summing over all permutations $\pi \in S_n$, where  $a_\pi = a_{1\pi(1)} a_{2\pi(2)} \cdots a_{n \pi(n)}$. 
%If $A$ is a \textbf{skew-symmetric} matrix, so $A^T = -A$, then
%\begin{equation}\label{f.eq:detPf}
%\det(A) = \textrm{Pf}(A)^2
%\end{equation}
%for a polynomial $\textrm{Pf}(A)$ in the $a_{ij}$s, called the Pfaffian of $A$. \cite{f.Cayley} If $n$ is odd, it is easy to see that $\det A = \textrm{Pf}(A)=0$. Let us describe the Pfaffian for $n = 2m$.

\begin{theorem}\label{f.th:Pfaffian}
If $A$ is a skew-symmetric matrix, so $A^T = -A$, then
\begin{equation}\label{f.eq:detPf}
\det(A) = \mathrm{Pf}(A)^2.
\end{equation}
\end{theorem}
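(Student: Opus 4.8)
The classical identity $\det(A) = \mathrm{Pf}(A)^2$ for skew-symmetric $A$ can be proved in several ways; I would choose the combinatorial route, since it fits the spirit of this chapter and reuses the crossing-sign bookkeeping just introduced. The plan is to expand both sides as sums over combinatorial objects and exhibit a sign-reversing involution that kills all the unwanted terms. On the right, $\mathrm{Pf}(A)^2 = \sum_{M_1, M_2} \mathrm{sign}(M_1)\mathrm{sign}(M_2)\, a_{M_1} a_{M_2}$, a sum over ordered pairs of perfect matchings of $K_{2m}$. The superposition $M_1 \cup M_2$ is a disjoint union of even cycles and doubled edges (cycles of length $2$), so this sum is naturally indexed by such ``cycle-and-doubled-edge'' configurations together with a choice of which edges came from $M_1$. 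On the left, $\det(A) = \sum_{\sigma \in S_{2m}} \mathrm{sign}(\sigma) \prod_i a_{i, \sigma(i)}$, a sum over permutations, which decompose into cycles; using skew-symmetry one first argues that permutations with a fixed point contribute $0$ (since $a_{ii}=0$) and that cycles of odd length $\geq 3$ cancel in pairs (reversing the cycle orientation flips the product by $(-1)^{\text{odd}} = -1$ while preserving $\mathrm{sign}(\sigma)$), so only permutations all of whose cycles have even length survive.

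The core of the argument is then to match up these two surviving families. A permutation whose cycles all have even length corresponds to an unordered partition of $[2m]$ into even cycles; for each even cycle of length $2\ell$ there are two cyclic orientations, and a $2$-cycle $(i\,j)$ contributes $a_{ij}a_{ji} = -a_{ij}^2$. I would compare this, term by term, with the cycle structure of $M_1 \cup M_2$: an honest cycle of length $2\ell$ in the superposition, with its two ways of designating $M_1$-edges, matches the two orientations of the corresponding cycle in the determinant expansion, and a doubled edge matches a $2$-cycle. The remaining task is the sign reconciliation: one must check that, after accounting for $\mathrm{sign}(\sigma)$ on the determinant side and $\mathrm{sign}(M_1)\mathrm{sign}(M_2)$ on the Pfaffian side, the contributions agree cycle-by-cycle. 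The crossing-number signs are not individually invariant, but their product over $M_1$ and $M_2$ reorganizes (using that $\mathrm{sign}(M)$ can equivalently be computed as the sign of the permutation sending $1,2,\dots,2m$ to $i_1,j_1,\dots,i_m,j_m$) into exactly the permutation signs appearing on the left.

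An alternative, cleaner to write if the combinatorics gets unwieldy, is the standard reduction argument: both $\det$ and $\mathrm{Pf}^2$ are polynomials in the entries $a_{ij}$ ($i<j$), so it suffices to prove the identity generically, where $A$ is invertible. Every invertible skew-symmetric $A$ can be brought to the block-diagonal form $J = \bigoplus_{k=1}^m \begin{pmatrix} 0 & 1 \\ -1 & 0 \end{pmatrix}$ by a congruence $A = B^T J B$ for some $B \in GL_{2m}$; then $\det(A) = \det(B)^2 \det(J) = \det(B)^2$, and one checks directly that $\mathrm{Pf}(J) = 1$ and that $\mathrm{Pf}(B^T J B) = \det(B)\,\mathrm{Pf}(J)$ — the latter being the key lemma, provable by noting both sides are polynomials in $B$ agreeing on a dense set, or by the Pfaffian's own expansion. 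Squaring gives the result, and continuity (or the fact that $\mathbb{C}[a_{ij}]$ is an integral domain) extends it to all skew-symmetric $A$.

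The main obstacle in either approach is the sign bookkeeping. In the combinatorial proof it is the verification that the product of crossing-parity signs of the two matchings equals the permutation sign of the superposition permutation, which requires a careful but elementary lemma relating $\mathrm{sign}(M)$ to an explicit permutation; in the congruence proof it is establishing $\mathrm{Pf}(B^T A B) = \det(B)\,\mathrm{Pf}(A)$, which is the crux and cannot be skipped. I would present the congruence argument as the main line, since it is shorter and the one nontrivial lemma is clean, and relegate the combinatorial interpretation to a remark.
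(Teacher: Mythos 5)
Your plan is correct, but the main line you chose is a genuinely different route from the paper's. The paper proves the identity exactly as in your first paragraph: it shows that skew-symmetry cancels all permutations except those whose cycles all have even length, so $\det A = \sum_{\pi \in ECS_n} \mathrm{sign}(\pi)\, a_\pi$, and then exhibits a bijection between ordered pairs of matchings $(M_1,M_2)$ and such permutations (draw $M_1$ above and $M_2$ below the points $1,\dots,n$, orient each cycle of the superposition following $M_1$ at its smallest element), with the sign verification $\mathrm{sign}(M_1)\mathrm{sign}(M_2)=\mathrm{sign}(\pi)$ being precisely the "careful but elementary" step you flag; so what you relegate to a remark is the paper's actual argument. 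Your preferred congruence proof ($A=B^TJB$, $\det A=\det(B)^2$, $\mathrm{Pf}(B^TAB)=\det(B)\,\mathrm{Pf}(A)$, then extend by genericity/integral-domain reasoning) is a perfectly valid alternative and is shorter, but be aware of two points: the justification you sketch for the key lemma is thin — "both sides are polynomials in $B$ agreeing on a dense set" is circular unless you say which set and why; the standard fixes are to verify it for elementary matrices $B$ (which generate $GL_{2m}$, and the identity is multiplicative in $B$) or to use the exterior-algebra characterization $\omega^m = m!\,\mathrm{Pf}(A)\, e_1\wedge\cdots\wedge e_{2m}$ with $\omega=\sum_{i<j}a_{ij}e_i\wedge e_j$. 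The trade-off is that the paper's combinatorial route stays inside the chapter's toolkit, works verbatim over any commutative ring of entries, and produces the sign bijection that is reused in the Pfaffian-orientation method for counting matchings, whereas your congruence route buys brevity at the cost of importing the normal-form and transformation-law machinery.
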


\begin{proof}[Sketch of Proof.]
The first step is to show that the skew symmetry of $A$ causes many cancellations in the determinant, and
\[
\det A = \sum_{\pi \in ECS_n} \textrm{sign}(\pi) a_\pi
\]
where $ECS_n \subset S_n$ is the set of permutations of $[n]$ having only  cycles of even length. Then, to prove that this equals $(\sum_{M} \textrm{sign}(M) a_M)^2$, we need a bijection between ordered pairs $(M_1, M_2)$ of matchings and permutations $\pi$ in $ECS_n$ such that $a_{M_1}a_{M_2} = a_\pi$ and $\textrm{sign}(M_1)\textrm{sign}(M_2) = \textrm{sign}(\pi)$. We now describe such a bijection.

\begin{figure}[ht]
 \begin{center}
  \includegraphics[scale=.8]{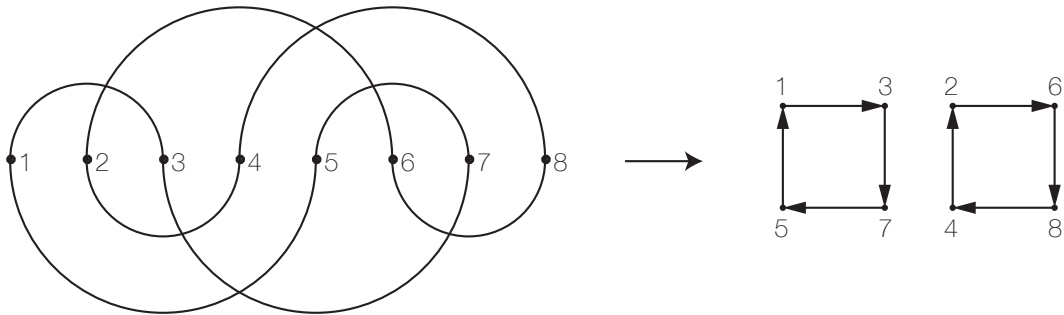}
  \caption{ \label{f.fig:matchings} 
The matchings $\{1,3\},\{2,6\}, \{4,8\}, \{5,7\}$ and $\{1,5\}, \{2,4\}, \{3,7\}, \{6,8\}$ give the permutation $(1375)(2684)$ with
$
%\textrm{sign}(M_1) a_{M_1} \textrm{sign}(M_2) a_{M_2} = 
(-a_{13}a_{26}a_{48}a_{57})(-a_{15}a_{24}a_{37}a_{68}) = 
a_{13}a_{37}a_{75}a_{51}a_{26}a_{68}a_{84}a_{42} 
%= \textrm{sign}(\pi) a_\pi
.$}
 \end{center}
\end{figure}

Draw the matchings $M_1$ and $M_2$ above and below the points $1, \ldots, n$ on a line, respectively. Let $\pi$ be the permutation given by the cycles of the resulting graph, where each cycle is oriented following the direction of $M_1$ at its smallest element. This is illustrated in Figure \ref{f.fig:matchings}. It is clear that $a_{M_1}a_{M_2} = a_\pi$, while some care is required to show that $\textrm{sign}(M_1)\textrm{sign}(M_2) = \textrm{sign}(\pi)$. For details, see for example \cite{f.Aigner}.
\end{proof}

\bigskip
\noindent
\textsf{\textbf{Counting perfect matchings via Pfaffians.}} Suppose we wish to compute the number $m(G)$ of perfect matchings of a graph $G=(V,E)$ with no loops. After choosing an orientation of the edges, we define the $V \times V$ \textbf{signed adjacency matrix} $S(G)$ whose entries are
\[
s_{ij} =
\begin{cases}
1 & \textrm{ if $i \rightarrow j$ is an edge of $G$} \\
-1 & \textrm{ if $j \rightarrow i$ is an edge of $G$} \\
0 & \textrm{ otherwise.}
\end{cases}
\]
Then $s_M = s_{i_1j_1} \cdots s_{i_mj_m}$ is nonzero if and only if $\{i_1, j_1\}, \ldots, \{i_m,j_m\}$ is a perfect matching of $G$. 

We say that our edge orientation is \textbf{Pfaffian} if all the perfect matchings of $G$ have the same sign.  At the moment there is no efficient test to determine whether a graph admits a Pfaffian orientation. There is a simple combinatorial restatement: an orientation is Pfaffian if and only if every even cycle $C$ such that $G \backslash V(C)$ has a perfect matching has an odd number of edges in each direction.

Fortunately, we have the following result of Kasteleyn \cite{f.Kasteleyn, f.LovaszPlummer}: 
\[
\textrm{Every planar graph has a Pfaffian orientation.}
\]
%It is easy to construct such an orientation inductively; %, by making sure that all internal cycles have an odd number of edges oriented clockwise. 
%%For details, 
%see \cite{f.Thomas} and the references therein. 

This is very desirable, because Theorem \ref{f.th:Pfaffian} implies:
\[
\textrm{For a Pfaffian orientation of $G$,} \,\,  m(G) = \sqrt{\det S(G)}.
\]

Therefore the number of matchings of a planar graph is reduced to the evaluation of a combinatorial determinant. We will see in Section \ref{f.sec:dets2} that there are many techniques at our disposal to carry out this evaluation.

\bigskip

Let us illustrate this method with an important example, due to Kasteleyn \cite{f.Kasteleyn} and Temperley--Fisher \cite{f.TemperleyFisher}.

\begin{theorem}\label{f.th:Kasteleyn}
The number $m(R_{a,b})$ of matchings of the $a \times b$ rectangular grid $R_{a,b}$ (where we assume $b$ is even) is
\[
m(R_{a,b}) = 4^{\lfloor a/2 \rfloor (b/2)}
%{\lfloor \frac{a}2 \rfloor \frac{b}2} 
\prod_{j=1}^{\lfloor a/2 \rfloor} \prod_{k=1}^{b/2} \left(\cos^2 \frac{\pi j}{a+1} + \cos^2 \frac{\pi k}{b+1}\right) \sim c \cdot e^{\frac{G}{\pi}ab} \sim c \cdot 1.3385^{ab},
\]
where $G=1 - \frac19 + \frac1{25} - \frac1{49} + \cdots$ is Catalan's constant. 
\end{theorem}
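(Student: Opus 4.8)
The plan is to apply the Pfaffian method developed earlier in this section, following Kasteleyn. First I would construct a Pfaffian orientation of the grid $R_{a,b}$. The standard ``brick-wall'' orientation works: orient all horizontal edges consistently (say, left to right in every row), and orient the vertical edges alternately (up in odd columns, down in even columns). One checks that every face of the grid has an odd number of clockwise-oriented edges on its boundary; a short induction on the number of faces enclosed then shows every even cycle $C$ for which $G\setminus V(C)$ has a perfect matching has an odd number of edges in each direction, so this orientation is Pfaffian. (Since the grid is planar, the existence of such an orientation is already guaranteed by Kasteleyn's theorem quoted above; the point is to exhibit one explicitly so that the signed adjacency matrix is concrete.) With a Pfaffian orientation in hand, $m(R_{a,b}) = \sqrt{\det S(R_{a,b})}$ by Theorem \ref{f.th:Pfaffian}.

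The next step is to diagonalize $S = S(R_{a,b})$. The vertex set is $\{1,\dots,a\}\times\{1,\dots,b\}$, and after absorbing the alternating signs of the vertical edges into a diagonal change of basis, $S$ becomes (up to a unitary conjugation that does not affect the determinant) of the tensor-product form $i\,(T_a\otimes I_b + I_a\otimes T_b)$ or a close variant, where $T_n$ is the $n\times n$ tridiagonal matrix with $0$ on the diagonal and $1$ just off it. The eigenvalues of $T_n$ are classical: they are $2\cos\frac{\pi j}{n+1}$ for $j=1,\dots,n$, with the obvious sine-vector eigenbasis. Hence the eigenvalues of $S$ are, up to the factor $i$, the numbers $2\cos\frac{\pi j}{a+1} + 2\cos\frac{\pi k}{b+1}$ (with a $\pm$ sign on one of the two cosines coming from the orientation bookkeeping), as $(j,k)$ ranges over $\{1,\dots,a\}\times\{1,\dots,b\}$. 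Taking the product of these eigenvalues, pairing $j\leftrightarrow a+1-j$ and $k\leftrightarrow b+1-k$ so that cross terms cancel and squares survive, gives
\[
\det S = \prod_{j=1}^{a}\prod_{k=1}^{b}\left(4\cos^2\frac{\pi j}{a+1} + 4\cos^2\frac{\pi k}{b+1}\right)^{1/2}\!\!,
\]
and extracting the square root, together with the observation that the factors for $j$ and $a+1-j$ coincide (and likewise in $k$), collapses the product to the stated range $1\le j\le\lfloor a/2\rfloor$, $1\le k\le b/2$, pulling out the global power of $4$. The parity hypothesis that $b$ is even is exactly what makes this folding clean and guarantees $R_{a,b}$ has a perfect matching at all.

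Finally, for the asymptotics I would take logarithms: $\frac{1}{ab}\log m(R_{a,b})$ is a Riemann sum for $\frac{1}{\pi^2}\int_0^{\pi/2}\!\!\int_0^{\pi/2}\log\!\big(4\cos^2 x + 4\cos^2 y\big)\,dx\,dy$ (after rescaling, and absorbing the $4^{\lfloor a/2\rfloor(b/2)}$ prefactor). Evaluating the inner integral in closed form and then the outer one is a standard but delicate computation that produces Catalan's constant $G$; one arrives at $\lim \frac{1}{ab}\log m(R_{a,b}) = \frac{G}{\pi}$, hence $m(R_{a,b})\sim c\cdot e^{\frac{G}{\pi}ab}$ with $e^{G/\pi}\approx 1.3385$. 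I expect the main obstacle to be the bookkeeping in the second step — keeping careful track of the interaction between the alternating edge signs of the Pfaffian orientation and the tensor-product structure, so that the eigenvalues come out with the correct signs and the square root in $m(G)=\sqrt{\det S}$ is extracted unambiguously — together with the honest evaluation of the double integral; the Riemann-sum convergence itself is routine since the integrand is integrable despite the mild logarithmic singularity where both cosines vanish.
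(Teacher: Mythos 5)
Your proposal follows essentially the same route as the paper's: a Kasteleyn-type Pfaffian orientation of the grid, $m(R_{a,b})=\sqrt{\det S}$, a diagonal rescaling turning $S$ into the matrix with entries $1$ on horizontal and $i$ on vertical adjacencies, diagonalization by products of sine vectors (equivalently, eigenvalues $2\cos\frac{\pi j}{a+1}+2i\cos\frac{\pi k}{b+1}$ built from the tridiagonal pieces), folding of the product, and the Riemann-sum asymptotics producing Catalan's constant. The only point to nail down — which you flag yourself — is that the factor $i$ must sit on exactly one of the two tensor summands (the vertical edges), since the pairing $j\leftrightarrow a+1-j$ then yields $-\bigl(4\cos^2\frac{\pi j}{a+1}+4\cos^2\frac{\pi k}{b+1}\bigr)$ rather than a difference of squares; with that bookkeeping settled, your sketch matches the paper's.
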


Clearly this is also the number of domino tilings of an $a \times b$ rectangle.

\begin{proof}[Sketch of Proof.]
Orient all columns of $R_{a,b}$ going up, and let the rows alternate between going right or left, assigning the same direction to all edges of the same row. The resulting orientation is Pfaffian because every square has an odd number of edges in each direction. The adjacency matrix $S$ satisfies 
$m(R_{a,b}) = \sqrt{\det S}$. To compute this determinant, it is slightly easier\footnote{In fact, this is the matrix that Kasteleyn uses in his computation.} to consider the following $mn \times mn$ matrix $B$:
\[
b_{ij} =
\begin{cases}
1 & \textrm{ if $i$ and $j$ are horizontal neighbors} \\
i & \textrm{ if $i$ and $j$ are vertical neighbors} \\
0 & \textrm{ otherwise.}
\end{cases}
\]
We can obtain $B$ from the $S$ by scaling the rows and columns by suitable powers of $i$, so we still have $m(R_{a,b}) = \sqrt{|\det B|}$. We will prove the product formula for this determinant in Section \ref{f.sec:dets2}.
%\[
%m(R_{a,b}) = 4^{\lfloor a/2 \rfloor (b/2)}
%%{\lfloor \frac{a}2 \rfloor \frac{b}2} 
%\prod_{j=1}^{\lfloor a/2 \rfloor} \prod_{k=1}^{b/2} \left(\cos^2 \frac{\pi j}{a+1} + \cos^2 \frac{\pi k}{b+1}\right).
%\]

We then use this product formula to give an asymptotic formula for $m(R_{a,b})$. Note that $\log m(R_{a,b})/ab$ may be regarded as a Riemann sum; as $m,n \rightarrow \infty$ it converges to 
\[
c = \frac1{\pi^2} \int_0^{\pi/2} \int_0^{\pi/2} \log (4\cos^2 x + 4 \cos^2 y) \, dx \, dy = \frac{G}{\pi}
\]
where $G$ is Catalan's constant. Therefore 
\[
m(R_{a,b}) \approx e^{\frac{G}{\pi}ab} \approx 1.3385^{ab}.
\]
Loosely speaking, this means that in a matching of the rectangular grid there are about $1.3385$ degrees of freedom per vertex.
\end{proof}

Obviously, this beautiful formula is not an efficient method of computing the exact value of $m(R_{a,b})$ for particular values of $a$ and $b$; it is not even clear why it gives an integer! There are alternative determinantal formulas for this quantity that are more tractable; see for example \cite[Section 10.1]{f.Aigner}. 

%Naturally, in enumerative combinatorics we are mostly interested in Pfaffians and/or determinants with elegant combinatorial formulas. Pfaffians and determinants are quite different in nature; computing the permanent of a matrix is \#P-complete problem \cite{f.Vallant}, while there are several algorithms for computing determinants in polynomial time, and numerous techniques for computing ``combinatorial" determinants of interest. However, for a relatively small but important class of matrices, $\textrm{Pf } M$ can be computed in terms of $\det M$. This will allow us to compute $m(G)$ for many graphs.
%
%It is clear from the definitions that the number of matchings of $G$ is 
%\[
%m(G) = \textrm{Pf }\underline{A}
%\]
%where $\underline{A}$ is  the $V \times V$ (unsigned) adjacency matrix, whose entries are
%\[
%\underline{a}_{ij} =
%\begin{cases}
%1 & \textrm{ if $ij$ is an edge of $G$} \\
%0 & \textrm{ otherwise.}
%\end{cases}
%\]

\subsubsection{\textsf{Counting routings: the Lindstr\"om--Gessel--Viennot lemma}}\label{f.sec:GesselViennot}

Let $G$ be a directed graph with no directed cycles, which has a weight $\textrm{wt}(e)$ on each edge $e$. We are most often interested in the unweighted case, where all weights are $1$. Let $S=\{s_1, \ldots, s_n\}$ and $T=\{t_1, \ldots, t_n\}$ be two (not necessarily disjoint) sets of vertices, which we call sources and sinks, respectively. A \textbf{routing} from $S$ to $T$ is a set of paths $P_1, \ldots, P_n$ from the $n$ sources $s_1, \ldots, s_n$ to the $n$ sinks $t_1, \ldots, t_n$ such that no two paths share a vertex. Let $\pi$ be the permutaiton of $[n]$ such that $P_i$ starts at source $s_i$ and ends at sink $t_{\pi(i)}$, and 
% where $\pi$ is a permutation of $[n]$. 
define $\textrm{sign}(R) = \textrm{sign}(\pi)$.

Let the weight of a path or a routing be the product of the weights of the edges it contains.
Consider the $n \times n$ \textbf{path matrix} $Q$ whose $(i,j)$ entry is 
\[
q_{ij} = \sum_{P \textrm{ path from $s_i$ to $t_j$}} \textrm{wt}(P).
\]

%\footnote{A source/sink in a graph is a vertex with no incoming/outgoing edges, respectively. A routing from $S$ to $T$ cannot use incoming edges to $S$ or outgoing edges from $T$, so we may assume that these are indeed vertices and sinks.}

\begin{theorem}\label{f.th:Lindstrom}(Lindstr\"om--Gessel--Viennot Lemma) 
Let $G$ be a directed acyclic graph with edge weights, and let $S=\{s_1, \ldots, s_n\}$ and $T=\{t_1, \ldots, t_n\}$ be sets of vertices in $G$. Then the determinant of the  $n \times n$ path matrix $Q$ is
\[
\det Q = \sum_{R \textrm{ routing  from $S$ to $T$}} \mathrm{sign}(R) \mathrm{wt}(R).
\]
In particular, if all edge weights are $1$ and if every routing takes $s_i$ to $t_i$ for all $i$, then 
\[
\det Q = \textrm{ number of routings from $S$ to $T$}.
\]
\end{theorem}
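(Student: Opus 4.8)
The plan is to expand $\det Q$ into a signed sum over families of (not necessarily disjoint) paths, and then kill the non-disjoint families by a sign-reversing, weight-preserving involution, much as in the proof sketch of Theorem~\ref{f.th:Pfaffian}.

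\emph{Expanding the determinant.} Writing $\det Q = \sum_{\sigma \in S_n}\textrm{sign}(\sigma)\prod_{i=1}^n q_{i,\sigma(i)}$ and substituting $q_{i,\sigma(i)} = \sum_{P\colon s_i \to t_{\sigma(i)}}\textrm{wt}(P)$, I would distribute the product to get
\[
\det Q = \sum_{\mathcal{P}}\textrm{sign}(\sigma_{\mathcal{P}})\,\textrm{wt}(\mathcal{P}),
\]
the sum ranging over all \emph{path families} $\mathcal{P} = (P_1, \ldots, P_n)$ in which $P_i$ is a path from $s_i$ to some sink, where $\sigma_{\mathcal{P}}$ is the resulting permutation (well defined since the $t_j$ are distinct) and $\textrm{wt}(\mathcal{P}) = \prod_i \textrm{wt}(P_i)$. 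Acyclicity is used twice here: it makes each $q_{ij}$ a finite sum, and it guarantees that a directed walk never repeats a vertex, so that the tail-swaps below always produce honest paths. The path families that happen to be routings are exactly the vertex-disjoint ones, and each such family contributes $\textrm{sign}(R)\,\textrm{wt}(R)$; so everything reduces to showing that the non-disjoint families cancel.

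\emph{The tail-swapping involution.} Given a path family $\mathcal{P}$ that is not a routing, let $i_0$ be the smallest index for which $P_{i_0}$ meets another path, let $x$ be the first vertex along $P_{i_0}$ (starting at $s_{i_0}$) that lies on some $P_k$ with $k \ne i_0$, and let $j_0$ be the smallest index $\ne i_0$ with $x \in P_{j_0}$; since every path of index $< i_0$ is disjoint from $P_{i_0}$, one has $j_0 > i_0$. Define $\Phi(\mathcal{P})$ to be $\mathcal{P}$ with $P_{i_0}$ and $P_{j_0}$ replaced by $\hat P_{i_0}$ (follow $P_{i_0}$ up to $x$, then $P_{j_0}$ from $x$ onward) and $\hat P_{j_0}$ (follow $P_{j_0}$ up to $x$, then $P_{i_0}$ from $x$ onward), leaving the other paths alone. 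Then $\sigma_{\Phi(\mathcal{P})} = \sigma_{\mathcal{P}}\circ(i_0\ j_0)$, so the sign flips, while the multiset of edges used by $\hat P_{i_0}$ and $\hat P_{j_0}$ together equals that used by $P_{i_0}$ and $P_{j_0}$ together, so $\textrm{wt}$ is unchanged.

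\emph{Conclusion, and the hard point.} What is left is to check that $\Phi\circ\Phi$ is the identity on non-routings: the paths of index $< i_0$ are untouched and stay disjoint from everything (the vertex set of $\hat P_{i_0}\cup\hat P_{j_0}$ equals that of $P_{i_0}\cup P_{j_0}$); the portion of $\hat P_{i_0}$ up to $x$ is unchanged and still meets no other path strictly before $x$ (minimality of $x$); and no path of index strictly between $i_0$ and $j_0$ passes through $x$ (minimality of $j_0$). Hence $\Phi(\mathcal{P})$ selects the same $i_0$, $x$, $j_0$, and swapping again restores $\mathcal{P}$. As $\Phi$ has no fixed point among the non-routings, those terms cancel in pairs, leaving $\det Q = \sum_{R}\textrm{sign}(R)\,\textrm{wt}(R)$ over routings $R$ from $S$ to $T$. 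For the final assertion: if all edge weights are $1$ and every routing sends $s_i$ to $t_i$ for all $i$, then $\sigma_R = \textrm{id}$ and hence $\textrm{sign}(R)\,\textrm{wt}(R) = 1$ for every routing, so $\det Q$ equals the number of routings. I expect the genuinely delicate step to be exactly this verification that $\Phi$ is an involution — that the three choices $i_0$, $x$, $j_0$ are faithfully recovered after the swap — together with the bookkeeping that $\sigma_{\mathcal{P}}$ changes by precisely the transposition $(i_0\ j_0)$.
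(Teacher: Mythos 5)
Your proof is correct and follows essentially the same route as the paper: expand $\det Q$ over all path systems and cancel the non-disjoint ones with a sign-reversing, weight-preserving involution that exchanges the two offending paths at their first common vertex. The only differences are cosmetic — the paper swaps initial segments at the first intersection of the lexicographically first intersecting pair and simply asserts $\varphi(\varphi(P))=P$, whereas you swap tails under a slightly different selection rule and spell out the verification that the choices $i_0$, $x$, $j_0$ are recovered after the swap, which is indeed the delicate point.
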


\begin{proof}
We have $\det A =  \sum_P \textrm{sign}(P) \textrm{wt}(P)$ summing over \textbf{all} path systems $P=\{P_1, \ldots, P_n\}$ from $S$ to $T$; we need to cancel out the path systems  which are not routings. For each such $P$, consider  the lexicographically first pair of paths $P_i$ and $P_j$ which intersect, and let $v$ be their first vertex of intersection. Now exchange the subpath of $P_i$ from $s_i$ to $v$ and the subpath of $P_j$ from $s_j$ to $v$, to obtain new paths $P_i'$ and $P_j'$. Replacing $\{P_i, P_j\}$ with $\{P_i', P_j'\}$, we obtain a new path system $\varphi(P)$ from $S$ to $T$. Notice that $\varphi(\varphi(P)) = P$, and $\textrm{sign}(\varphi(P))\textrm{wt}(\varphi(P)) +\textrm{sign}(P) \textrm{wt}(P) = 0$; so for all non-routings $P$, the path systems $P$ and $\varphi(P)$ cancel each other out.
\end{proof}

This theorem was also anticipated by Karlin and McGregor \cite{f.KarlinMcGregor} in the context of birth-and-death Markov processes.

%\comment{
%\bigskip
%\noindent \textsf{\textbf{Routings and tilings via determinants.}} 
%
%Lindstr\"om discovered Theorem \ref{f.th:Lindstrom} in the context of matroid theory. An acyclic directed graph $G=(V,E)$ and a set $S$ of $n$ vertices give rise to a matroid, whose \textbf{bases} are the sets $T$ of $n$ vertices such tath there exists a routing from $S$ to $T$ in $G$. Lindstr\"om's observation is there exist vectors $\{a_v \, : \, v \in V\} \in {\mathbb{R}}^S$ such that a set of vertices $T$ is a basis if and only if $\{a_t \, : \, t \in T\}$ is a basis for ${\mathbb{R}}^S$. Assign generic weights to the edges, and define the $S \times V$ path matrix where $a_{sv}$ is the sum of the weights of the paths from source $s \in S$ to $v \in V$. Then, by the Linstro\"om-Gessel-Viennot lemma, the columns of this matrix are the desired vectors.
%}

\bigskip
\noindent \textsf{\textbf{Determinants via routings.}}
The Lindstr\"om-Gessel-Viennot Lemma is also a useful combinatorial tool for computing determinants of interest, usually by enumerating routings in a lattice. We illustrate this with several examples.

\begin{enumerate}

\item (Binomial determinants)
Consider the \textbf{binomial determinant}
\[
{a_1, \ldots, a_n \choose b_1, \ldots, b_n} = 
\det \left[ {a_i \choose b_j} \right]_{1 \leq i, j \leq n}
\]
where $0 \leq a_1 < \cdots < a_n$ and $0 \leq b_1 < \cdots < b_n$ are integers. These determinants arise as coefficients of the Chern class of the tensor product of two vector bundles. \cite{f.Lascoux} This algebro-geometric interpretation implies these numbers are positive integers; as combinatorialists, we would like to know what they count.

A \textbf{SE path} is a lattice path in the square lattice ${\mathbb{N}}^2$ consisting of unit steps south and east.  Consider the sets of points $A=\{A_1, \ldots, A_n\}$ and $B=\{B_1, \ldots, B_n\}$ where $A_i = (0, a_i)$ and $B_i = (b_i,b_i)$ for $1 \leq i \leq n$. Since there are ${a_i \choose b_j}$ SE paths from $A_i$ to $B_j$, and since every SE routing from $A$ to $B$ takes $A_i$ to $B_i$ for all $i$, we have
\[
%\det \left( {a_i \choose b_j} \right)_{1 \leq i, j \leq n} = 
{a_1, \ldots, a_n \choose b_1, \ldots, b_n} = 
\textrm{ number of SE routings from $A$ to $B$}.
\]
This is the setting in which Gessel and Viennot discovered Theorem \ref{f.th:Lindstrom}; they also evaluated these determinants in several special cases. \cite{f.GesselViennot}
We  discuss a particularly interesting  case.

\item (Counting permutations by descent set)
The \textbf{descent set} of a permutation $\pi$ is the set of indices $i$ such that $\pi_i > \pi_{i+1}$. We now prove that
\[
{c_1, \ldots, c_k, n \choose 0, c_1, \ldots, c_k} = \textrm{ number of permutations of $[n]$ with descent set } \{c_1, \ldots, c_k\},
\]
for any $0< c_1 < \cdots < c_k <n$. It is useful to define $c_0=0, c_{k+1}=n$.

Encode such a permutation $\pi$ by a routing as follows. 
%Write $\pi$ as a word. Below each number, write the amount of numbers to the left of it and smaller than it in $\pi$. It is easy to recover $\pi$ from $f(\pi)$ and, furthermore, the descents of $\pi$ are the positions where $f(\pi)$ does not decrease.
% 
% 
%For each $i$ let $f_i$ be the number of indices $j<i$ such that $\pi_j < \pi_i$. It is easy to recover $\pi$ from the word $f(\pi) = f_1\ldots f_n$. Note that the descents of 
%
%Now split $f(\pi)$ into its increasing consecutive subsequences $f^1, \ldots, f^k$. Note that 
%
%Furthermore, $d$ is a descent of $\pi$ if and only if $f_d \geq f_{d+1}$. 
%
For each $i$ let $f_i$ be the number of indices $j \leq i$ such that $\pi_j \leq \pi_i$. %It is easy to recover $\pi$ from the word $f=f_1\ldots f_n$. 
Note that the descents $c_1, \ldots, c_k$ of $\pi$ are the positions where $f$ does not increase. Splitting $f$ at these positions, we are left with $k+1$ increasing subwords $f^1, \ldots, f^{k+1}$. Now, for $1 \leq i \leq n+1$ let $P_i$ be the NW path from $(c_{i-1}, c_{i-1})$ to $(0, c_i)$ taking steps north precisely at the steps listed in $f^i$. These paths give one of the routings enumerated by the binomial determinant in question, and this is a bijection. \cite{f.GesselViennot}

\begin{figure}[ht]
 \begin{center}
  \includegraphics[scale=.8]{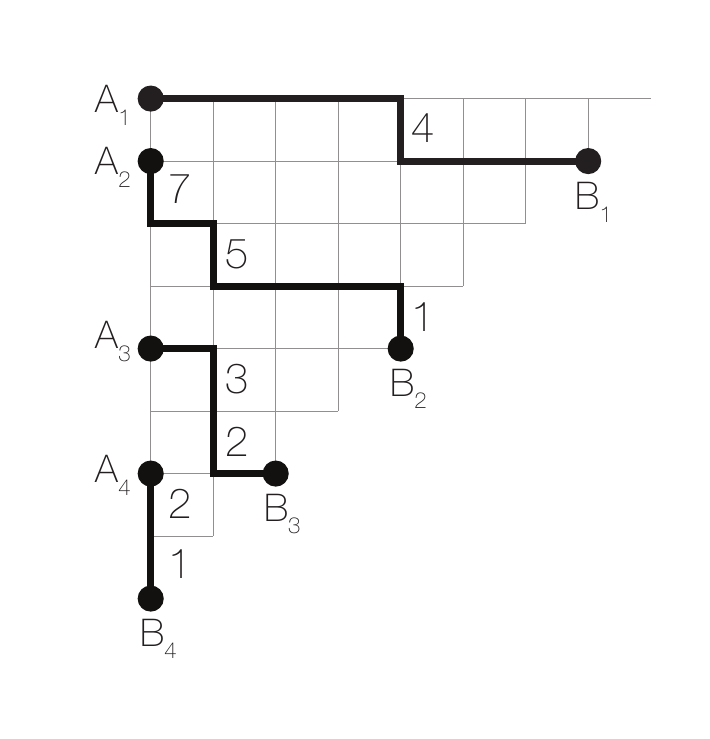}
  \caption{ \label{f.fig:descents} 
The routing corresponding to $\pi=28351674$ and $f(\pi) = 12.23.157.4$.}
 \end{center}
\end{figure}

\item (Rhombus tilings and plane partitions) Let $R_n$ be the number of tilings of a regular hexagon of side length $n$ using unit rhombi with angles $60^\circ$ and $120^\circ$. Their enumeration is due to MacMahon \cite{f.MacMahon}. There are several equivalent combinatorial models for this problem, illustrated in Figure \ref{f.fig:rhombi}, which we now discuss.

\begin{figure}[ht]
 \begin{center}
  \includegraphics[scale=.5]{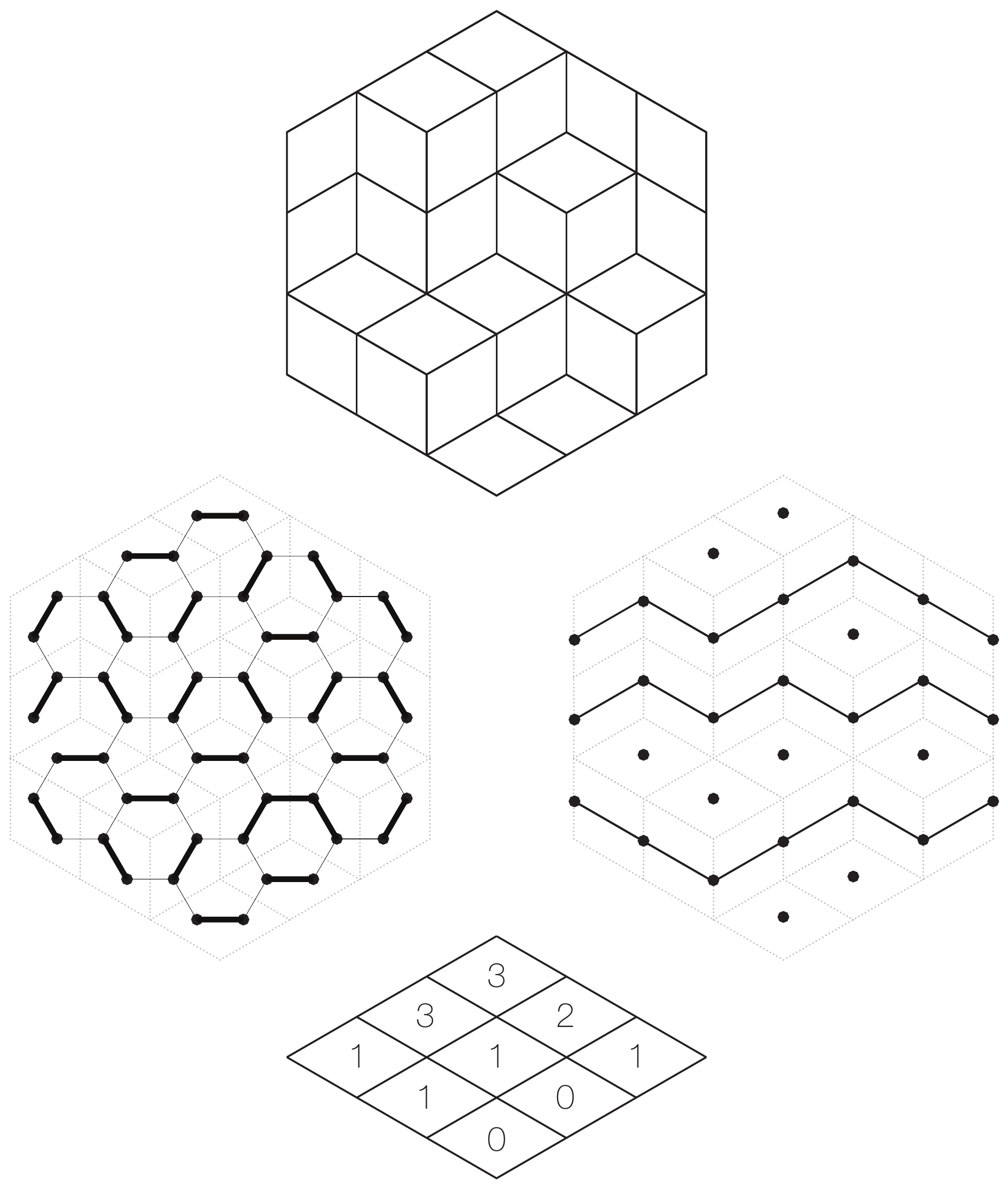}
  \caption{ \label{f.fig:rhombi} 
Four models for the rhombus tilings of a hexagon.}
 \end{center}
\end{figure}

Firstly, it is almost inevitable to view these tilings as three-dimensional pictures. This shows that $R_n$ is also the number of ways of stacking unit cubes into the corner of a cubical box of side length $n$. Incidentally, this three-dimensional view makes it apparent that there are exactly $n^2$ rhombi of each one of the 3 possible orientations.

Secondly, we may consider the triangular grid inside our hexagon, and place a dot on the center of each triangle. These dots form a hexagonal grid, where two dots are neighbors if they are at distance $1$ from each other. 
Finally, join two neighboring dots when the corresponding triangles are covered by a tile. The result is a perfect matching of the hexagonal grid.

Next, on each one of the $n^2$ squares of the floor of the box, write down the number of cubes above it. The result is a \textbf{plane partition}: an array of non-negative integers (finitely many of which are non-zero) which is weakly decreasing in each row and column. We conclude that $R_n$ is also the number of plane partitions whose non-zero entries are at most $n$, and fit inside an $n \times n$ square.

Finally, given such a rhombus tiling, construct $n$ paths as follows. Each path starts at the center of one of the vertical edges on the western border of the hexagon, and successively crosses each tile splitting it into equal halves.
It eventually comes out at the southeast side of the diamond, at the same height where it started (as is apparent from the 3-D picture). The final result is a routing from the $n$ sources $S_1, \ldots, S_n$ on the left to the sinks $T_1, \ldots, T_n$ on the right in the ``rhombus" graph shown below. It is clear how to recover the tiling from the routing. Since there are ${2n \choose n+i-j}$ paths from $S_i$ to $T_j$, the Lindstr\"om--Gessel-Viennot lemma tells us that $R_n$ is given by the determinant
\[
R_n = \det \left[ {2n \choose n+i-j} \right]_{1 \leq i, j \leq n} = \prod_{i,j,k=1}^n \frac{i+j+k-1}{i+j+k-2}.
\]
We will prove this product formula in Section \ref{f.sec:dets2}.

%\comment{There are many remarkable results about the enumeration of plane partitions with symmetry; see \cite{f.bakersdozen}.}

\item (Catalan determinants, multitriangulations, and Pfaffian rings) 
The \textbf{Hankel matrices} of a sequence $A=(a_0, a_1, a_2, \ldots)$ are
\[
H_n(A) = \begin{pmatrix}
a_0 & a_1 & \cdots & a_n \\
a_1 & a_2 & \cdots & a_{n+1} \\
\vdots & \vdots & \ddots & \vdots \\
a_n & a_{n+1} & \cdots & a_{2n} 
\end{pmatrix}, 
\qquad
H'_n(A) = \begin{pmatrix}
a_1 & a_2 & \cdots & a_{n+1} \\
a_2 & a_3 & \cdots & a_{n+2} \\
\vdots & \vdots & \ddots & \vdots \\
a_{n+1} & a_{n+2} & \cdots & a_{2n+1} 
\end{pmatrix}.
\]
Note that if we know the Hankel determinants $\det H_n(A)$ and $\det H'_n(A)$ and they are  nonzero for all $n$, then we can use them as a recurrence relation to recover each $a_k$ from $a_0, \ldots, a_{k-1}$.

There is a natural interpretation of the Hankel matrices of the Catalan sequence $C=(C_0, C_1, C_2, \ldots)$. Consider the ``diagonal" grid on the upper half plane with steps $(1,1)$ and $(1,-1)$. Let $A_i=(-2i,0)$ and $B_i = (2i,0)$. Then there are $C_{i+j}$ paths from $A_i$ to $B_j$, and there is clearly a unique routing from $(A_0, \ldots, A_n)$ to $(B_0, \ldots, B_n)$. This proves that $\det H_n(C) = 1$, and an analogous argument proves that $\det H'_n(C) = 1$. Therefore
\[
\det H_n(A) = \det H'_n(A) = 1 \textrm{ for all } n \geq 0 \quad  \Longleftrightarrow \quad
A \textrm{ is the Catalan sequence.}
\]

\begin{figure}[ht]
 \begin{center}
  \includegraphics[width=1.5in]{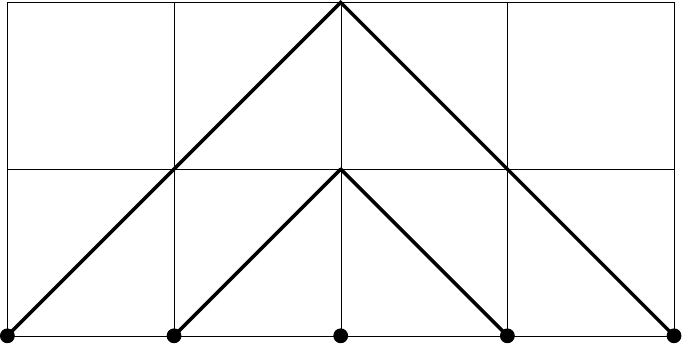}
  \caption{ \label{f.fig:descents} 
Routing interpretation of the Hankel determinant $H_n(C)$.
}
 \end{center}
\end{figure}

The Hankel determinants of the shifted Catalan sequences also arise naturally in several contexts; they are given by:
%\begin{equation}\label{f.eq:detCatalan}
%\begin{pmatrix}
%C_{n-2k} & C_{n-2k+1} & \cdots & C_{n-k} \\
%C_{n-2k+1} & C_{n-2k+2} & \cdots & C_{n-k+1} \\
%\vdots & \vdots & \ddots & \vdots \\
%C_{n-k} & C_{n-k+1} & \cdots & C_{n} 
%\end{pmatrix} = 
%\prod_{i+j \leq n-2k-1} \frac{i+j+2k}{i+j}.
%\end{equation}
\begin{equation}\label{f.eq:detCatalan}
\det
\begin{pmatrix}
C_{n-2k} & C_{n-2k+1} & \cdots & C_{n-k-1} \\
C_{n-2k+1} & C_{n-2k+2} & \cdots & C_{n-k} \\
\vdots & \vdots & \ddots & \vdots \\
C_{n-k-1} & C_{n-k} & \cdots & C_{n-2} 
\end{pmatrix} = 
\prod_{i+j \leq n-2k-1} \frac{i+j+2(k-1)}{i+j}.
\end{equation}
There are several ways of proving (\ref{f.eq:detCatalan}); for instance, it is a consequence of \cite[Theorem 26]{f.Krattenthaler}. We describe three appearances of this determinant.

(a) A \textbf{$k$-fan of Dyck paths of length $2n$} is a collection of $k$ Dyck paths from $(-n,0)$ to $(n,0)$ which do not cross (although they necessarily share some edges). Shifting the $(i+1)$st path $i$ units up and adding $i$ upsteps at the beginning and $i$ downsteps at the end, we obtain a routing of $k$ Dyck paths starting at the points $A=\{-(n+k-1), \ldots, -(n+1), -n\}$ and ending at the points $B=\{n, n+1, \ldots, n+k-1\}$ on the $x$ axis. It follows that the number of $k$-fans of Dyck paths of length $2(n-2k)$ is given by (\ref{f.eq:detCatalan}).

\begin{figure}[ht]
 \begin{center}
  \includegraphics[scale=.8]{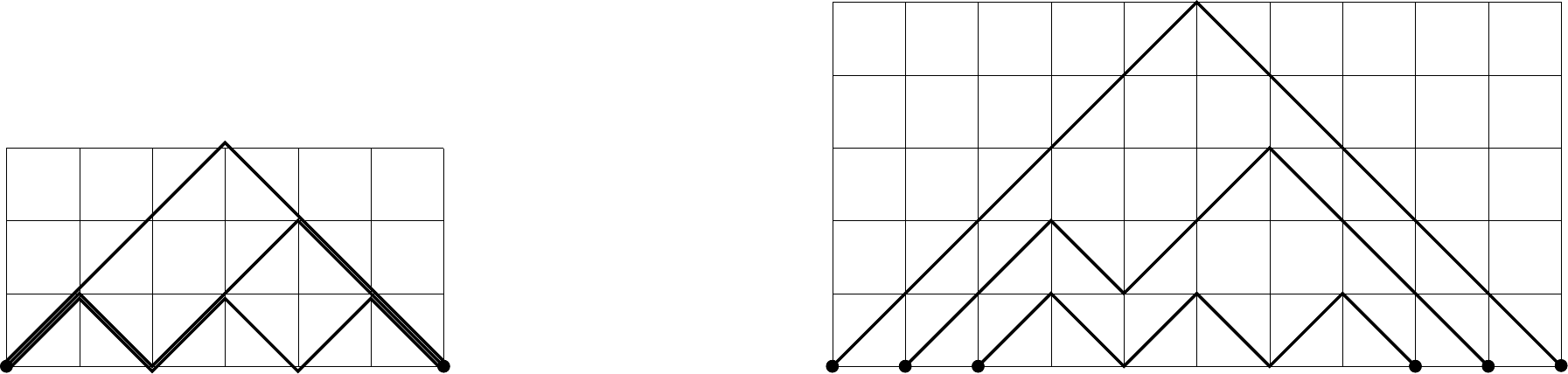}
  \caption{ \label{f.fig:descents} 
A $k$-fan of Dyck paths.
}
 \end{center}
\end{figure}

% \textbf{Dyck routings} of $k+1$ non-intersecting Dyck paths from $\{(-2n,0), \ldots, (-2(n-2k),0)\}$ to $\{(2(n-2k),0), \ldots, (2n,0)\}$. 

(b) There is also an extension of the classical one-to-one correspondence between Dyck paths and triangulations of a polygon. Define a \textbf{$k$-crossing} in an $n$-gon to be a set of $k$ diagonals which cross pairwise. A \textbf{$k$-triangulation} is a maximal set of diagonals with no $(k+1)$-crossings. The main enumerative result, due to Jonsson \cite{f.Jonsson}, is that the number of $k$-triangulations of an $n$-gon is also given by  (\ref{f.eq:detCatalan}). A subtle bijection with fans of Dyck paths is given in \cite{f.SerranoStump}. 

Several properties of triangulations extend non-trivially to this context. For example, every $k$-triangulation has exactly $k(2n-2k-1)$ diagonals. 
\cite{f.Nakamigawa, f.DressKoolenMoulton}. The $k$-triangulations are naturally the facets of a simplicial complex called the \textbf{multiassociahedron}, which is topologically a sphere \cite{f.Jonssonunpublished}; it is not currently  known whether it is a polytope. There is a further generalization in the context of Coxeter groups, with very interesting connections to cluster algebras \cite{f.CeballosLabbeStump}.

(c) These determinants also arise naturally in the commutative algebraic properties of Pfaffians, defined earlier in this section. Let $A$ be a skew-symmetric $n \times n$ matrix whose entries above the diagonal are indeterminates $\{a_{ij} \, : \, 1 \leq i<j \leq n\}$ over a field ${\mathbbm{k}}$.
%Recall that a matrix $A$ is \textbf{skew-symmetric} if $A^T = -A$. 
%It is known that 
%\[
%\det(A) = \textrm{Pf}(A)^2
%\]
%where the \textbf{Pfaffian} $\textrm{Pf}(A)$ is a polynomial in the $a_{ij}$s, which is $0$ if $n$ is odd. To describe the Pfaffian when $n = 2m$, consider the \textbf{perfect matchings} $\pi$ of %the complete graph $K_{2m}$ is a partition 
%$[2m]$ into disjoint pairs $\{i_1, j_1\}, \ldots, \{i_m, j_m\}$. We draw the points $1, \ldots, 2m$ in order on a line and connect each $i_k$ to $j_k$ by a semicircle above these points. The crossing number  $\textrm{cr}(\pi)$ is the number of crossings in this drawing. For $n=2m$, the Pfaffian is a signed generating function for perfect matchings:
%\[
%\textrm{Pf} = \sum_\pi (-1)^{\textrm{cr}(\pi)} \prod_{\{i,j\} \in \pi} a_{ij}.
%\]
%Now c
Consider the \textbf{Pfaffian ideal} $I_k(A)$  generated by the ${n \choose 2k}$  Pfaffian minors of $A$ of size $2k\times 2k$, and the \textbf{Pfaffian ring} $R_k(A) = {\mathbbm{k}}[a_{ij}]/I_k(A)$. Then the multiplicity of the Pfaffian ring $R_k(X)$ is also given by (\ref{f.eq:detCatalan}). \cite{f.HerzogTrung, f.GhorpadeKrattenthaler}

%\comment{What about determinantal ideals?}

\item (Schr\"oder determinants and Aztec diamonds.)
Recall from Section \ref{f.sec:ogfexamples} that a \textbf{Schr\"oder path} of length $n$ is a path from $(0,0)$ to $(2n,0)$ using steps $NE=(1,1), SE=(1, -1),$ and $E=(2,0)$ which stays above the $x$ axis. %The generating function enumerating Schr\"oder paths is 
%(large) Schr\"oder number $r_n$ is the number of such paths. Let $R(x) = \sum_{n \geq 0} r_nx^n$. From the techniques of Section \ref{f.sec:GF} we see that $R(x) = 1/(1-x-xR(x))$, and therefore
%\[
%R(x) = \frac{1-x-\sqrt{1-6x+x^2}}{2x}
%\]
The Hankel determinant $\det H_n(R)$ counts the routings of Schr\"oder paths from the points $A=\{0, -2, \ldots, -(2n)\}$ to the points $B=\{0, 2,\ldots, 2n\}$ on the $x$-axis. 

These Hankel determinants have a natural interpretation in terms of tilings. Consider the \textbf{Aztec diamond}\footnote{This shape is called the \textbf{Aztec diamond} because it is reminiscent of designs of several Native American groups. Perhaps the closest similarity is with Mayan pyramids, such as the Temple of Kukulc\'an in Chich\'en Itz\'a; the name \textbf{Mayan diamond} may have been more appropriate.} $AD_n$ consisting of $2n$ rows centered horizontally, consisting successively of $2, 4, \ldots, 2n, 2n, \ldots, 4, 2$ squares. We are interested in counting the tilings of the Aztec diamond into dominoes.

\begin{figure}[ht]
 \begin{center}
  \includegraphics[scale=.45]{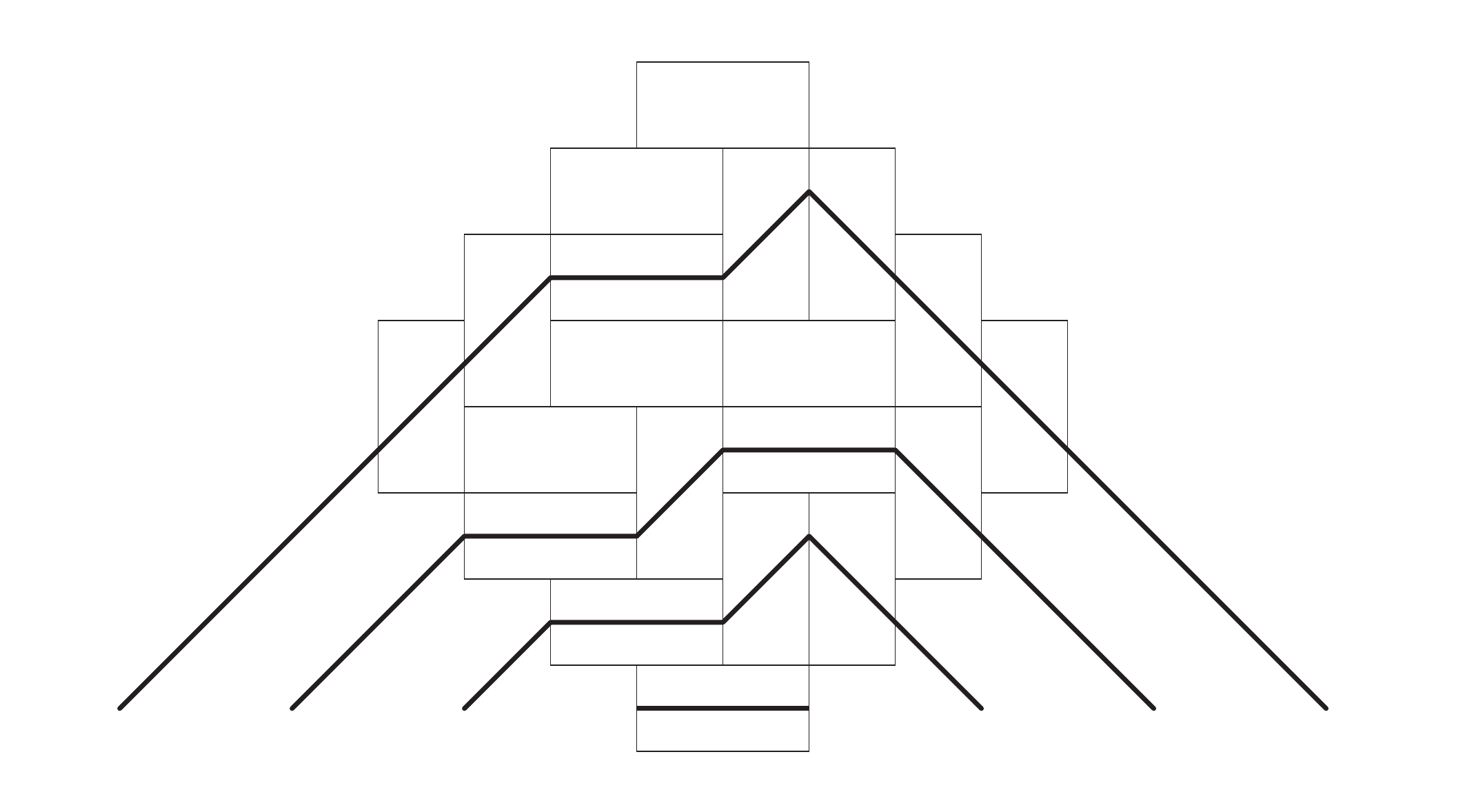}
  \caption{ \label{f.fig:descents} 
A tiling of the Aztec diamond and the corresponding routing.
}
 \end{center}
\end{figure}

Given a domino tiling of $AD_n$, construct $n$ paths as follows. Each path starts at the center of one of the vertical unit edges on the southwest border of the diamond, and successively crosses each tile that it encounters following a straight line through the center of the tile. It eventually comes out at the southeast side of the diamond, at the same height where it started. If we add $i$ initial NE steps $NE$ steps and $i$ final $SE$ steps to the $(i+1)$st path for each $i$, the result will be a routing of Schr\"oder paths from $A=\{-(2n), \ldots, -2, 0\}$ to $B=\{0, 2,\ldots, 2n\}$. In fact this correspondence is a bijection \cite{f.EuFu}.

We will prove in Section \ref{f.sec:dets2} that 
\[
\begin{cases}
&\det H_n(A) = 2^{n(n-1)/2} \\
&\det H'_n(A) = 2^{n(n+1)/2} 
\end{cases}
\textrm{ for all } n \geq 0 \quad  \Longleftrightarrow \quad
A \textrm{ is the Schr\"oder sequence.}
\]
It will follow that 
\[
\textrm{number of domino tilings of the Aztec diamond } AD_n = 2^{n(n+1)/2}.
\]

This elegant result is originally due to Elkies, Kuperberg, Larsen, and Propp. For several other proofs, see \cite{f.EKLP1, f.EKLP2}.
 
%
%
%The $(i+1)$st path in a Schr\"oder routing must start with $i$ upsteps and ends with $i$ downsteps; let us remove these. Now place a $2 \times 1$ domino on top of each step, so that their centers coincide; place the domino horizontally if the step is horizontal, and vertically otherwise. 
%
%Consider the ``diagonal" grid on the upper half plane with steps $(1,1)$ and $(1,-1)$. Let $A_i=(-2i,0)$ and $B_i = (2i,0)$. Then there are $C_{i+j}$ paths from $A_i$ to $B_j$, and there is clearly a unique routing from $(A_0, \ldots, A_n)$ to $(B_0, \ldots, B_n)$. This proves that $\det H_n(C) = 1$, 

\end{enumerate}

\subsection{\textsf{Computing determinants}}\label{f.sec:dets2}

In light of Section \ref{f.sec:dets1}, it is no surprise that combinatorialists have become talented at computing determinants.
%The previous section naturally
%%In the previous section we saw one natural occurrence of determinants in combinatorics. This 
%raises the question: if we encounter a determinant in a problem of interest, how should we try to evaluate it? 
Fortunately, this is a very classical topic with connections to many branches of mathematics and physics, and by now there are numerous general techniques and guiding examples available to us. Krattenthaler's surveys \cite{f.Krattenthaler} and \cite{f.Krattenthaler2} are excellent references which have clearly influenced the exposition in this section. We now highlight some of the key tools and examples.

\subsubsection{\textsf{{Is it known?}}}

Of course, when we wish to evaluate a new determinant, one first step is to check whether it is a special case of some known determinantal evaluation. Starting with classical evaluations such as the Vandermonde determinant
\begin{equation}\label{f.eq:Vandermonde}
\det (x_i^{j-1}) _{1 \leq i, j \leq n} = \prod_{1 \leq i < j \leq n} (x_j-x_i),
\end{equation}
%and the Cauchy double alternant
%\begin{equation}\label{f.eq:doublealternant}
%\det_{1 \leq i, j \leq n} \frac1{x_i+y_j} = 
%\frac{\displaystyle\prod_{1 \leq i < j \leq n} (x_j-x_i)(y_j-y_i)}{\displaystyle\prod_{1 \leq i , j \leq n} (x_i+y_j)},
%\end{equation}
there is now a wide collection of powerful results at our disposal.  A particularly useful one \cite[Lemma 3]{f.Krattenthaler} states that for any $x_1, \ldots, x_n, a_2, \ldots, a_n, b_2, \ldots, b_n$ we have:
\begin{equation}\label{f.eq:generaldet}
\det 
\left[(x_i+b_2)\cdots (x_i+b_j)(x_i+a_{j+1}) \cdots (x_i+a_n)\right]_{1 \leq i, j \leq n}
= \prod_{1 \leq i < j \leq n} (x_j-x_i) \prod_{1 \leq i < j \leq n} (b_i-a_j).
 \end{equation}
For instance, as pointed out in \cite{f.GhorpadeKrattenthaler} and \cite{f.Krattenthaler}, the Catalan determinant (\ref{f.eq:detCatalan}) is a special case of this formula. Recognizing it as such is not immediate, but the product formula for Catalan numbers gives an indication of why this is feasible.

In fact, here is a counterintuitive principle: often the easiest way to prove a determinantal identity is to generalize it. It is very useful to introduce as many parameters as possible into a determinant, while making sure that the more general determinant still evaluates nicely. We will see this principle in action several times in what follows.

%\comment{Other Weyl groups?}

\subsubsection{\textsf{{Row and column operations}}}
A second step is to check whether the standard methods of computing determinants are useful: Laplace expansion by minors, or performing row and column operations until we get a matrix whose determinant we can compute easily. For example, recall  the determinant $L_0(K_n)$ of the $(n-1) \times (n-1)$ reduced Laplacian of the complete graph $K_n$, discussed in Section \ref{f.sec:spanningtrees}. We can compute it by first adding all rows to the first row, and then adding the first row to all rows:
\[
\det L_0(K_n) = 
\begin{vmatrix}
n-1 & -1 & \cdots & -1 \\
-1 & n-1 & \cdots & -1 \\
\vdots &\vdots & \ddots & \vdots \\
-1 & -1 & \cdots & n-1 
\end{vmatrix}
=
\begin{vmatrix}
1 & 1 & \cdots & 1 \\
-1 & n-1 & \cdots & -1 \\
\vdots &\vdots & \ddots & \vdots \\
-1 & -1 & \cdots & n-1 
\end{vmatrix}
=
\begin{vmatrix}
1 & 1 & \cdots & 1 \\
0 & n & \cdots & 0 \\
\vdots &\vdots & \ddots & \vdots \\
0 & 0 & \cdots & n
\end{vmatrix}
= n^{n-2},
\]
reproving Theorem \ref{f.th:spanningtrees}.1.

\subsubsection{\textsf{{Identifying linear factors}}}
 Many $n \times n$ determinants of interest have formulas of the form $\det M({\mathbf{x}}) = c L_1({\mathbf{x}}) \cdots L_n({\mathbf{x}})$ where $c$ is a constant and the $L_i({\mathbf{x}})$ are linear functions in the variables ${\mathbf{x}} = (x_1, \ldots, x_k)$. We may prove such a formula by first checking that each $L_i({\mathbf{x}})$ is indeed a factor of $M$, and then computing the constant $c$.

The best known application of this technique is the proof of the formula (\ref{f.eq:Vandermonde}) for Vandermonde's determinant $V(x_1, \ldots, x_n)$. If $x_i = x_j$ for $i \neq j$, then rows $i$ and $j$ are equal, and the determinant is $0$. It follows that $x_i-x_j$ must be a factor of the polynomial $\det V(x_1, \ldots, x_n)$. Since this polynomial is homogeneous of degree ${n \choose 2}$, it must equal a constant times $\prod_{i<j} (x_i-x_j)$. Comparing the coefficients of $x_1^0 x_2^1 \cdots x_n^{n-1}$ we see that the constant equals $1$.

A similar argument may be used to prove the more general formula (\ref{f.eq:generaldet}). %r or the Weyl denominator factorizations. \comment{blue}

To use this technique, it is sometimes necessary to introduce new variables into our determinant. For example, the formula $\det (i^{\, j-1})_{1 \leq i, j \leq n} = 1^{n-1}2^{n-2}\cdots (n-1)^1$ cannot immediately be treated with this technique. However, the factorization of the answer suggests that this may be a special case of a more general result where this method does apply; in this case, Vandermonde's determinant.

\subsubsection{\textsf{{Computing the eigenvalues}}}
 Sometimes we can compute explicitly the eigenvalues of our matrix, and multiply them to get the determinant. One common technique is to produce a complete set of  eigenvectors. 

\begin{enumerate}
\item (The Laplacian of the complete graph $K_n$)
Revisiting the example above, the Laplacian of the complete graph is $L(K_n) = nI-J$ where $I$ is the identity matrix and $J$ is the matrix all of whose entries equal $1$. We first find the eigenvalues of $J$: $0$ is an eigenvalue of multiplicity $n-1$, as evidenced by the linearly independent eigenvectors
$\mathbf{e}_1-\mathbf{e}_2, \ldots, \mathbf{e}_{n-1}-\mathbf{e}_{n}$. Since the sum of the eigenvalues is $\textrm{tr}(J) = n$, the last eigenvalue is $n$; an eigenvector is $\mathbf{e}_1 + \cdots + \mathbf{e}_{n-1}$. Now, if $v$ is an eigenvector for $J$ with eigenvalue $\lambda$, then it is an eigenvector for $nI-J$ with eigenvalue $n - \lambda$. Therefore the eigenvalues of $nI-J$ are $n, n, \ldots, n, 0$. Using Theorem \ref{f.th:Kirkhoff}, we have reproved yet again that $\det L_0(K_n) = \frac1n(n^{n-1}) =  n^{n-2}$.

\item (The Laplacian of the $n$-cube $C_n$)
A more interesting is  the reduced Laplacian $L_0(C_n)$ of the graph of the $n$-dimensional cube, from Theorem \ref{f.th:spanningtrees}.4. By producing explicit eigenvectors, one may prove that if the Laplacians $L(G)$ and $L(H)$ have eigenvalues $\{\lambda_i \, : \, 1 \leq i \leq a\}$ and $\{\mu_j \, : \, i \leq j \leq b\}$ then the Laplacian of the product graph $L(G \times H)$ has eigenvalues $\{\lambda_i + \mu_j \, : \, 1 \leq i \leq a, 1 \leq j \leq b\}$. Since $C_1$ has eigenvalues $0$ and $2$, this implies that $C_n = C_1 \times \cdots \times C_1$ has eigenvalues $0, 2, 4, \ldots, 2n$ with multiplicities ${n \choose 0}, {n \choose 1}, \ldots, {n \choose n}$, respectively. Therefore the number of spanning trees of the cube $C_n$ is
\[
\det L_0(C_n) = \frac1{2^n} 2^{n \choose 1} 4^{n \choose 2} \cdots (2n)^{n \choose n} = 
2^{2^n-n-1} 1^{n \choose 1} 2^{n \choose 2} \cdots n^{n \choose n}.
\]

\item (The perfect matchings of a rectangle)
An even more interesting example comes from the perfect matchings of the $a  \times b$ rectangle, which we discussed in Section \ref{f.sec:Pfaffian}. %Here the eigenvalues are $2\cos\frac{\pi k}{2n+1} + 2 i \cos \frac{\pi l}{2m+1}$ so
Let $V$ be the $4mn$-dimensional vector space of functions $f:[2m] \times [2n] \rightarrow {\mathbb{C}}$, and consider the linear transformation $L: V \rightarrow V$ given by
\[
(Lf)(x,y) = f(x-1,y) + f(x+1,y) + i f(x, y-1) + i f(x,y+1),
\]
where $f(x,y)=0$ when $x \in \{0, a+1\}$ or $y \in \{0, b+1\}$. The matrix of this linear transformation is precisely the one we are interested in. %It is useful to extend $f:[2m] \times [2n] \rightarrow {\mathbb{C}}$ to the unique function $\hat{f}: {\mathbb{Z}} \times {\mathbb{Z}} \rightarrow {\mathbb{C}}$ which agrees with $f$ on $f:[2m] \times [2n]$, is an odd function in $x$ and in $y$, and is periodic in $x$ and $y$ with periods $4m+2$ and $4n+2$, respectively.  
% such that $\hat{f}(-x,y) = \hat{f}(x,-y) = -\hat{f}(x,y)$, and $\hat{f}(x+4m+2,y) = \hat{f}(x,y+4n+2) = \hat{f}(x,y)$. 
A straightforward computation shows that the following are eigenfunctions and eigenvalues of $L$:
\[
g_{k,l}(x,y) = \sin\frac{k\pi x}{a+1} \sin \frac{l\pi y}{b+1}, \qquad \lambda_{k,l} = 2\cos \frac{k\pi}{a+1} + 2i \cos \frac{l\pi}{b+1}
\]
for $1 \leq k \leq a$ and $1 \leq l \leq b$. (Note that $g_{k,l}(x,y)=0$ for $x \in \{0, a\}$ or $y \in \{0, b\}$.) 
This is then the complete list of eigenvalues for $L$, so
\[
\det L = 2^{ab}\prod_{k=1}^{a} \prod_{l=1}^{b} \left( \cos \frac{k\pi}{a+1} + i \cos \frac{l\pi}{b+1}\right)
%^{\lfloor a/2 \rfloor (b/2)}
%{\lfloor \frac{a}2 \rfloor \frac{b}2} 
%\prod_{j=1}^{\lfloor a/2 \rfloor} \prod_{k=1}^{b/2} \left(\cos^2 \frac{\pi j}{a+1} + \cos^2 \frac{\pi k}{b+1}\right).
%
%
%= 2^{ab} \prod_{k=1}^{n} \prod_{l=1}^{m} \left( \cos^2 \frac{k\pi}{2n+1} + \cos^2 \frac{l\pi}{2m+1}\right)^2
\]
which is easily seen to equal the expression in Theorem \ref{f.th:Kasteleyn}.  
  \end{enumerate}

\subsubsection{\textsf{{LU factorizations}}}
 A classic result in linear algebra states that, under mild hypotheses, a square matrix $M$ has a unique factorization 
\[
M = LU
\]
where $L$ is a lower triangular matrix and $U$ is an upper triangular matrix with all diagonal entries equal to $1$. Computer algebra systems can compute the LU-factorization of a matrix, and if we can guess and prove such a factorization 
it will follow immediately that $\det M$ equals the product of the diagonal entries of $L$. 

An interesting application of this technique is the determinant
\begin{equation}\label{f.eq:gcd}
\det (\gcd(i,j))_{1 \leq i, j \leq n} = \prod_{i=1}^n \varphi(i), 
\end{equation}
where $\varphi(k) = \{i \in {\mathbb{N}} \, : \, (\gcd(i,k) = 1 \textrm{ and } 1 \leq i \leq k\}$ is Euler's totient function. This is a special case of a more general formula for semilattices which is easier to prove. For this brief computation, we assume familiarity with the M\"obius function $\mu$ and the zeta function $\zeta$ of a poset; these will be treated in detail in Section \ref{f.sec:incidence}.

Let $P$ be a finite meet semilattice and consider any function $F:P \times P \rightarrow {\mathbbm{k}}$.
 %be a function in the incidence algebra ${\mathcal{I}}nc(P)$. 
We will prove the \textbf{Lindstr\"om--Wilf determinantal formula}: 
\begin{equation}\label{f.LindstromWilf}
\det F(p \vee q,p)_{p, q \in P} = \prod_{p \in P} \left(\sum_{r \geq p} \mu(p,r) F(r,p) \right)
\end{equation}
%\[
%\det F(p \wedge q,p)_{p, q \in P} = \prod_{p \in P} \left(\sum_{r \leq p} F(r,p) \mu(r,p)\right)
%\]
%One way to discover this formula is to compute the LU factorization of the $P \times P$ matrix $F_{pq} = F(p \wedge q,p)_{p, q \in P}$ in a few examples. 
%%To do this, w
%We need to decide what order to list the rows and columns in; \emph{i.e.}, a linear order for $P$. We choose any order where
% %The most natural choice is a linear extension, so 
%$p$ precedes $q$ whenever $p \leq q$. Then it is not difficult to guess 
Computing some examples will suggest that the LU factorization of $F$ is $F=M Z$ where
\[
%F = MZ \qquad 
M_{pq} = \begin{cases}
 \sum_{r \geq q} \mu(q,r) F(r,p) & \textrm{ if } p \leq q, \\
0 & \textrm{ otherwise},
\end{cases}
\qquad
Z_{pq} = \begin{cases}
1 & \textrm{ if } p \geq q, \\
0 & \textrm{ otherwise}.
\end{cases}
\]
%
%\[
%F = H^TZ \qquad H(p,q) = \begin{cases}
% \sum_{r \leq p} \mu(r,p) F(r,q) & \textrm{ if } p \leq q, \\
%0 & \textrm{ otherwise},
%\end{cases}
%\qquad
%Z(p,q) = \begin{cases}
%1 & \textrm{ if } p \leq q, \\
%0 & \textrm{ otherwise}.
%\end{cases}
%\]
This guess is easy to prove, and it immediately implies 
(\ref{f.LindstromWilf}). In turn, applying the Lindstr\"om--Wilf to the poset of integers $\{1, \ldots, n\}$ ordered by reverse divisibility and the function $F(x,y)=x$, we obtain (\ref{f.eq:gcd}). 
 
Another interesting special case is the determinant:
\[
\det(x^{{\mathrm{rank}}(p \vee q)})_{p, q \in P} = \prod_{p \in P} \left( x^{{\mathrm{rank}}(p)} \,  \chi_{[p, {\widehat{1}}]}(1/x).
%\left(\frac1x\right)
\right)
\]
where $\chi_{[p, \hat{1}]}(x)$ is the characteristic polynomial of the interval $[x, \hat{1}]$. 
When $P$ is the partition lattice $\Pi_n$, this determinant arises in Tutte's work on the Birkhoff-Lewis equations \cite{f.TutteBL}.

\subsubsection{\textsf{{Hankel determinants and continued fractions}}}
For Hankel determinants, the following connection with continued fractions \cite{f.Wall} 
is extremely useful. If the expansion of the generating function for a sequence $f_0, f_1, \ldots$ as a \textbf{J-fraction} is 
\[
\sum_{n=0}^\infty f_n x^n = \frac{f_0}{1+a_0x - \frac{b_1x^2}{1+a_1x- \frac{b_2x^2}{1+a_2x - \cdots}}},
\]
then the Hankel determinants of $f_0, f_1, \ldots$ equal
\[
\det H_n(A) = f_0^n b_1^{n-1} b_2^{n-2} \cdots b_{n-2}^2 b_{n-1}
\]
For instance, using the generating function for the Schr\"order numbers $r_n$, it is easy to prove that 
\[ 
\sum_{n=0}^\infty r_{n}x^n = \frac{1}{1-2x - \frac{2x^2}{1-3x- \frac{2x^2}{1-3x - \cdots}}},
\qquad
\sum_{n=0}^\infty r_{n+1}x^n = 
%2+6x+22x^2+90x^3+394x^4+\cdots 
\frac{2}{1-3x - \frac{2x^2}{1-3x- \frac{2x^2}{1-3x - \cdots}}}.
\]
Therefore
\[
\det H_n(R) = 2^{n(n-1)/2}, \qquad \det H_n'(R) = 2^{n(n+1)/2},
\]
as stated in Example 5 of  Section \ref{f.sec:GesselViennot}.

By computer calculation, it is often easy to guess J-fractions experimentally. With a good guess in place, there is an established procedure for proving their correctness, rooted in the theory of orthogonal polynomials; see \cite[Section 2.7]{f.Krattenthaler}.

%
%\[
%\frac{1-x-\sqrt{1-6x+x^2}}{2x} = \sum_{n=0}^\infty r_nx^n = 
%%2+6x+22x^2+90x^3+394x^4+\cdots 
%\frac{2}{1-3x - \frac{2x^2}{1-3x- \frac{2x^2}{1-3x - \cdots}}}
%\]
%
%\[
%\frac{1-3x-\sqrt{1-6x+x^2}}{2x^2} = \sum_{n=0}^\infty r_{n+1}x^n = \frac{1}{1-2x - \frac{2x^2}{1-3x- \frac{2x^2}{1-4x - \cdots}}}
%\]

%\comment{Varchenko's determinant?}

\bigskip

\noindent \textsf{\textbf{Dodgson condensation.}} It is often repeated that Lewis Carroll, author of \emph{Alice in Wonderland}, was also an Anglican deacon and a mathematician, publishing under his real name, Rev. Charles L. Dodgson. His contributions to mathematics are discussed less often, and one of them is an elegant method for computing determinants.

To compute an $n \times n$ determinant $A$, we create a square pyramid of numbers, consisting of $n+1$ levels of size $n+1, n \ldots, 1$, respectively.
On the bottom level we place an $(n+1) \times (n+1)$ array of $1$s, and on the next level we place the $n \times n$ matrix $A$. Each subsequent floor is obtained from the previous two by the following rule: each new entry is given by $f=(ad-bc)/e$ where $f$ is directly above the entries $\begin{pmatrix} a & b \\ c & d \end{pmatrix}$ and two floors above the entry $e$.\footnote{Special care is required when $0$s appear in the interior of the pyramid.} The top entry of the pyramid is the determinant. For example, the computation

\[
\begin{pmatrix}
1 & 1 & 1 & 1 & 1 \\
1 & 1 & 1 & 1 & 1 \\
1 & 1 & 1 & 1 & 1 \\
1 & 1 & 1 & 1 & 1 \\
1 & 1 & 1 & 1 & 1
\end{pmatrix}
\rightarrow \begin{pmatrix}
2 & 7 & 5 & 4 \\
1 & 9 & 7 & 7 \\
2 & 3 & 2 & 1 \\
5 & 7 & 6 & 3 
\end{pmatrix}
\rightarrow
\begin{pmatrix}
11 & 4 & 7  \\
-15 & -3 & -7  \\
-1 & 4 & 0 
\end{pmatrix}
\rightarrow
\begin{pmatrix}
3 &  -1  \\
-21 & 28  
\end{pmatrix}
\rightarrow
\begin{pmatrix}
21  
\end{pmatrix}
\]
shows that the determinant of the $4 \times 4$ determinant is 21.

\medskip

Dodgson's condensation method relies on the following fact, due to Jacobi. If $A$ is an $n \times n$ matrix and  $A_{i_1, \ldots, i_k\,;\, j_1, \ldots, j_k}$ denotes the matrix $A$ with rows $i_1, \ldots, i_k$ and columns $j_1, \ldots, j_k$ removed, then
\begin{equation}\label{f.eq:Dodgson}
\det A \cdot \det A_{1,n\, ;\, 1,n} = \det A_{1\,;\,1} \cdot \det A_{n\,;\,n} - \det A_{1\,;\,n} \cdot \det A_{n\,;\,1}. 
\end{equation}
This proves that the numbers appearing in the pyramid are precisely the determinants of the ``contiguous" submatrices of $A$, consisting of consecutive rows and columns.

If we have a guess for the determinant of $A$, as well as the determinants of its contiguous submatrices, Dodgson condensation is an extremely efficient method to prove it. All we need to do is to verify that our guess satisfies (\ref{f.eq:Dodgson}).

\bigskip

To see how this works in an example, let us use Dodgson condensation to prove the formula in Section \ref{f.sec:GesselViennot} for $R_n = \det  {2n \choose n+i-j}_{1 \leq i, j \leq n}$, the number of stacks of unit cubes in the corner of an $n \times n \times n$ box. The first step is to guess the determinant of the matrix in question, as well as all its contiguous submatrices; they are all of the form $R(a,b,c) = \det {a+b \choose a+i-j}_{1 \leq i, j \leq c}$, where $a+b=2n$. This more general determinant is equally interesting combinatorially: it counts the stacks of unit cubes in the corner of an $a \times b \times c$ box. By computer experimentation, it is not too difficult to arrive at the following guess:
\[
R(a,b,c) = \det \left[ {a+b \choose a+i-j} \right]_{1 \leq i, j \leq c} = \prod_{i=1}^a \prod_{j=1}^b \prod_{k=1}^c \frac{i+j+k-1}{i+j+k-2}.
\]
Proving this formula by Dodgson condensation is then straightforward; we just need to check that our conjectural product formula holds for $c=0,1$ and that it satisfies (\ref{f.eq:Dodgson}); that is,
\[
R(a,b,c+1) R(a,b,c-1) = R(a,b,c)^2 - R(a+1,b-1,c) R(a-1,b+1,c).
\]

For more applications of Dodgson condensation, see for example \cite{f.AmdeberhanZeilberger}.
%{Cite Amdeberhan - Zeilberger: Dets through the looking glass}

\bigskip

There is a wonderful connection between Dodgson condensation, Aztec diamonds, and \textbf{alternating sign matrices}, which we now describe. Let us construct a square pyramid of numbers where levels $n+2$ and $n+1$ are given by two matrices $\mathbf{y}=(y_{ij})_{1 \leq i, j \leq n+2}$ and $\mathbf{x} = (x_{ij})_{1 \leq i, j \leq n+1}$, respectively, and levels $n-1, \ldots, 2, 1$ are computed in terms of the lower rows using Dodgson's recurrence $f=(ad-bc)/e$. Let $f_n(\mathbf{x}, \mathbf{y})$ be the entry at the top of the pyramid.

Remarkably, all the entries of the resulting pyramid will be Laurent monomials in the $x_{ij}$s and $y_{ij}$s; that is, their denominators are always monomials. This is obvious for the first few levels, but it becomes more and more surprising as we divide by more and more intricate expressions. 

The combinatorial explanation for this fact is that each entry in the $(n-k)$th level of the pyramid encodes the domino tilings of a Aztec diamond $AD_{k}$. For instance, if $n=2$, the entry at the top of the pyramid is
%\begin{eqnarray*}
%&&
%\frac{x_{11}x_{22}x_{33}}{y_{22}y_{33} }
%\, - \, \frac{x_{11}x_{22}x_{23}x_{32}}{x_{22}y_{22}y_{33}}
%\,-\, \frac{x_{12}x_{21}x_{22}x_{33}}{x_{22}y_{22}y_{33}}
%\,+\, \frac{x_{12}x_{21}x_{23}x_{32}}{x_{22}y_{22}y_{33}} \\
%&-& \frac{x_{12}x_{21}x_{23}x_{32}}{x_{22}y_{23}y_{32}}
%\,+\, \frac{x_{12}x_{22}x_{23}x_{31}}{x_{22}y_{23}y_{32}}
%\,+\, \frac{x_{13}x_{21}x_{22}x_{32}}{x_{22}y_{23}y_{32}}
%\,- \,\frac{x_{13}x_{31}x_{32}}{y_{23}y_{32}}
%\end{eqnarray*}

%\[
%f_2(\mathbf{x}, \mathbf{y}) = \frac{x_{11}x_{22}x_{33}}{y_{22}y_{33} }
%\, - \, \frac{x_{11}x_{22}x_{23}x_{32}}{x_{22}y_{22}y_{33}}
%\,-\, \frac{x_{12}x_{21}x_{22}x_{33}}{x_{22}y_{22}y_{33}}
%\,+\, \frac{x_{12}x_{21}x_{23}x_{32}}{x_{22}y_{22}y_{33}} 
%\]
%\[
%\qquad \qquad
%- \, \frac{x_{12}x_{21}x_{23}x_{32}}{x_{22}y_{23}y_{32}}
%\,+\, \frac{x_{12}x_{22}x_{23}x_{31}}{x_{22}y_{23}y_{32}}
%\,+\, \frac{x_{13}x_{21}x_{22}x_{32}}{x_{22}y_{23}y_{32}}
%\,- \,\frac{x_{13}x_{31}x_{32}}{y_{23}y_{32}}
%\]

\[
f_2(\mathbf{x}, \mathbf{y}) = \frac{x_{11}x_{22}x_{33}}{y_{22}y_{33} }
\, - \, \frac{x_{11}x_{23}x_{32}}{y_{22}y_{33}}
\,-\, \frac{x_{12}x_{21}x_{33}}{y_{22}y_{33}}
\,+\, \frac{x_{12}x_{21}x_{23}x_{32}}{x_{22}y_{22}y_{33}} 
\]
\[
\qquad \qquad
- \, \frac{x_{12}x_{21}x_{23}x_{32}}{x_{22}y_{23}y_{32}}
\,+\, \frac{x_{12}x_{23}x_{31}}{y_{23}y_{32}}
\,+\, \frac{x_{13}x_{21}x_{32}}{y_{23}y_{32}}
\,- \,\frac{x_{13}x_{31}x_{32}}{y_{23}y_{32}}.
\]

There is a simple bijection between the 8 terms of $f_2$ and the 8 domino tilings of $AD_2$, illustrated in Figure \ref{f.fig:MayanDodgson}. Regard a tiling of $AD_2$ as a graph with vertices on the underlying lattice, and add a vertical edge above and below the tiling, and a horizontal edge to the left and to the right of it. Now rotate the tiling $45^\circ$ clockwise. Record the degree of each vertex, ignoring the outside corners on the boundary of the diamond, and subtract 3 from each vertex. This leaves us with an $n \times n$ grid of integers within an $(n+1) \times (n+1)$ grid of integers. Assign to it the monomial whose $x$ exponents are given by the outer grid and whose $y$ exponents are given by the inner grid. For example, the tiling in Figure \ref{f.fig:MayanDodgson} corresponds to the monomial 
$({x_{12}x_{23}x_{31}})/({y_{23}y_{32}})$.
%$(x_{12}x_{21}x_{23}x_{32})/(x_{22}y_{23}y_{32})$. 
%$\frac{x_{12}x_{21}x_{23}x_{32}}{x_{22}y_{23}y_{32}}$. 

\begin{figure}[ht]
 \begin{center}
  \includegraphics[scale=.9]{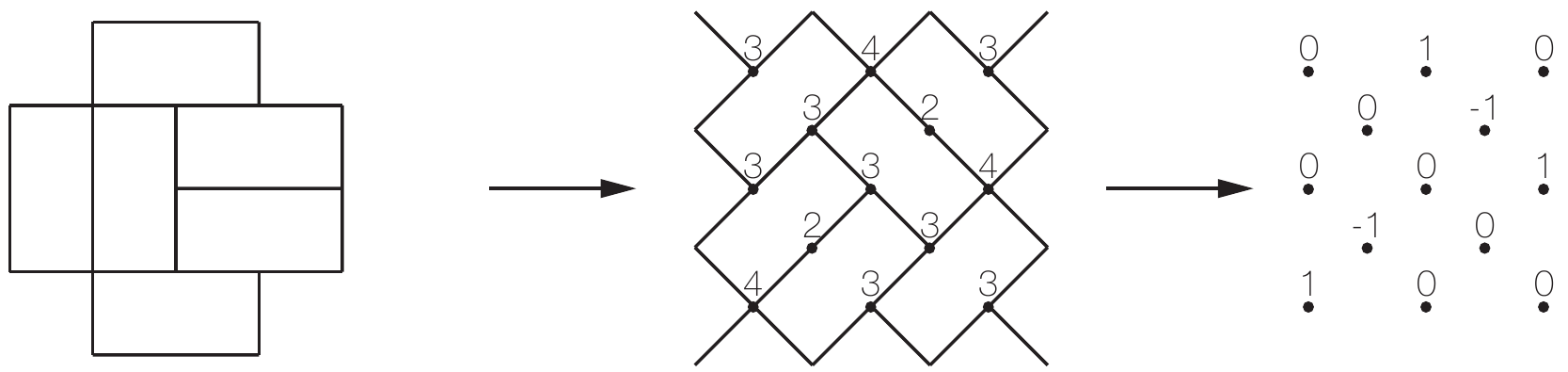}
  \caption{ \label{f.fig:MayanDodgson} 
A domino tiling of $AD_2$ and the corresponding monomial in $f_2(\mathbf{x}, \mathbf{y})$.}
 \end{center}
\end{figure}

In general, this gives a bijection between the terms of $f_n(\mathbf{x}, \mathbf{y})$ and the domino tilings of the Aztec diamond $AD_n$. One may also check that there are no cancellations, so Dodgson condensation tell us that the number $m_n$ of terms in $f_n$ satisfies $m_{n-1}m_{n+1} = 2m_n^2$. This gives an alternative proof that the Aztec diamond $AD_n$ has $2^{n(n+1)/2}$ domino tilings.

\bigskip

We may also consider the patterns formed by the $x_{ij}$s by themselves (or of the $y_{ij}$s by themselves). In each individual monomial of $f_n(\mathbf{x}, \mathbf{y})$, the exponents of the $x_{ij}$s form an $n \times n$ \textbf{alternating sign matrix} (ASM): a matrix of $1$s, $0$s, and $-1$s such that the nonzero entries in any row or column alternate $1, -1, \ldots, -1, 1$. Similarly, the negatives of the exponents of the $y_{ij}$s form an ASM of size $n-1$. 

Alternating sign matrices are fascinating objects in their own right, with connections to representation theory, statistical mechanics, and other fields. 
The number of alternating sign matrices of size $n$ is
\[
\frac{1!\, 4!\, 7! \cdots (3n-2)!}{n!\, (n+1)! \, (n+2)! \cdots (2n-1)!}.
\]
For details on the history and solution of this difficult enumeration problem see \cite{f.BressoudPropp, f.RobbinsASM, f.ZeilbergerASM}.

%D.P.Robbins,Thestoryof1,2,7,42,429,7436,...,Math.Intelligencer13no.2(1991),12?19.
%[30] D. Zeilberger, Proof of the alternating sign matrix conjecture, Electronic J. Comb. 3 (1996), R13;

\newpage

\section{\textsf{Posets}}\label{f.sec:posets}

This section is devoted to the enumerative aspects of the theory of partially ordered sets (posets). Section \ref{f.sec:posetdefs}  introduces key definitions and examples.  Section \ref{f.sec:lattices} discusses some families of lattices that are of special importance. In Section \ref{f.sec:orderpoly} we count chains and linear extensions of posets. 

The remaining sections are centered around the  \emph{M\"obius Inversion Formula}, which is perhaps the most useful enumerative tool in the theory of posets. This formula helps us count sets which have an underlying poset structure; it applies to many combinatorial settings of interest. 

In Section \ref{f.sec:inclusionexclusion} we discuss the Inclusion-Exclusion Principle, a special case of great importance. In Section \ref{f.sec:Mobius} we introduce M\"obius functions and the M\"obius Inversion Formula. In particular, we catalog the M\"obius functions of many important posets. The incidence algebra, a nice algebraic framework for understanding and working with the M\"obius function, is discussed in Section \ref{f.sec:incidence}. In Section \ref{f.sec:computingMobius} we discuss methods for computing M\"obius functions of posets, and sketch proofs for the posets of Section \ref{f.sec:Mobius}. Finally, in Section \ref{f.sec:Eulerian}, we discuss Eulerian posets and the enumeration of their flags, which gives rise to the ${\mathbf{a}}{\mathbf{b}}$-index and ${\mathbf{c}}{\mathbf{d}}$-index.

\subsection{\textsf{Basic definitions and examples}}\label{f.sec:posetdefs}

A \textbf{partially ordered set} or \textbf{poset} $(P, \leq)$ is a set $P$ together with a binary relation $\leq$, called a \emph{partial order}, such that 
\begin{itemize}
\item
For all $p \in P$, we have $p \leq p$.
\item
For all $p,q \in P$, if $p \leq q$ and $q \leq p$ then $p=q$.
\item
For all $p,q,r \in P$, if $p \leq q$ and $q \leq r$ then $p \leq r$.
\end{itemize}
We say that $p<q$ if $p \leq q$ and $p \neq q$. We say that $p$ and $q$ are \textbf{comparable} if $p<q$ or $p>q$, and they are \textbf{incomparable} otherwise. We say that $q$ \textbf{covers} $p$  if $q>p$ and there is no $r \in P$ such that $q>r>p$. When $q$ covers $p$ we write $q \gtrdot p$.

\begin{example}\label{f.ex:posets}
Many sets in combinatorics come with a natural partial order, and often the resulting poset structure is very useful for enumerative purposes. Some of the most important examples are the following:
\begin{enumerate}
\item \emph{(Chain)}
The poset $\mathbf{n} = \{1,2,\ldots, n\}$ with the usual total order. ($n \geq 1$)
\item \emph{(Boolean lattice)}
The poset $2^{A}$ of subsets of a set $A$, where $S \leq T$ if $S \subseteq T$. 
\item \emph{(Divisor lattice)}
The poset $D_n$ of divisors of $n$, where $c \leq d$ if $c$ divides $d$. ($n \geq 1$)
\item \emph{(Young's lattice)}
The poset $Y$ of integer partitions, where $\lambda \leq \mu$ if $\lambda_i \leq \mu_i$ for all $i$. 
\item \emph{(Partition lattice)}
The poset $\Pi_n$ of set partitions of $[n]$, where $\pi \leq \rho$ if $\pi$ \emph{refines} $\rho$; that is, if every block of $\rho$ is a union of blocks of $\pi$. ($n \geq 1$)
\item \emph{(Non-crossing partition lattice)}
The subposet $NC_n$ of $\Pi_n$ consisting of the non-crossing set partitions of $[n]$, where there are no elements  $a<b<c<d$ such that $a,c$ are together in one block and $b,d$ are together in a different block. ($n \geq 1$)
\item \emph{(Bruhat order on permutations)}
The poset $S_n$ of permutations of $[n]$, where $\pi$ covers $\rho$ if $\pi$ is obtained from $\rho$ by choosing two adjacent numbers $\rho_i = a < b = \rho_{i+1}$ in $\rho$  and exchanging their positions.  ($n \geq 1$)
\item \emph{(Subspace lattice)}
The poset $L({\mathbb{F}}_q^n)$ of subspaces of a finite dimensional vector space ${\mathbb{F}}_q^n$, where $U \leq V$ if $U$ is a subspace of $V$.  ($n \geq 1$, $q$ a prime power)
\item \emph{(Distributive lattice)}
The poset $J(P)$ of order ideals of a poset $P$ (subsets $I \subseteq P$ such that $j \in P$ and $i < j$ imply $i \in P$) ordered by containment.
\item \emph{(Face poset of a polytope)}
The poset $F(P)$ of faces of a polytope $P$, ordered by inclusion.
\item \emph{(Face poset of a subdivision of a polytope)}
The poset $\widehat{{\mathcal{T}}}$ of faces of a subdivision ${\mathcal{T}}$ of a polytope $P$ ordered by inclusion, with an additional maximum element.
\item \emph{(Subgroup lattice of a group)}
The poset $L(G)$ of subgroups of a group $G$, ordered by containment.
%\comment{
%\item Tamari?}
\end{enumerate}
\end{example}

The \textbf{Hasse diagram} of a finite poset $P$ is obtained by drawing a dot for each element of $P$ and an edge going down from $p$ to $q$ if $p$ covers $q$. Figure \ref{f.fig:posets} below shows the Hasse diagrams of some of the posets above. In particular, the Hasse diagram of $2^{[n]}$ is the 1-skeleton of the $n$-dimensional cube. %Figure \ref{f.fig:posets} shows some examples. 

\begin{figure}[ht]
 \begin{center}
  \includegraphics[scale=1.4]{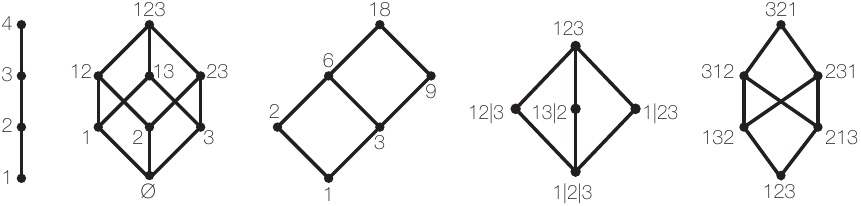}
  \caption{ \label{f.fig:posets} 
The Hasse diagrams of the chain $\mathbf{4}$, Boolean lattice $2^{[3]}$, divisor lattice $D_{18}$, partition lattice $\Pi_3$, and Bruhat order $S_3$.}
 \end{center}
\end{figure}

A subset $Q$ of $P$ is a \textbf{chain} if every pair of elements is comparable, and it is an \textbf{antichain} if every pair of elements is incomparable. The length of a chain $C$ is $|C|-1$. If there is a \textbf{rank function} $r: P \rightarrow {\mathbb{N}}$ such that $r(x) = 0$ for any minimal element $x$ and $r(y) = r(x)+1$ whenever $y \gtrdot x$, then  $P$ is called \textbf{graded} or \textbf{ranked}. The largest rank is called the \textbf{rank} or \textbf{height} of $P$. 
The \textbf{rank-generating function} of a finite graded poset is
\[
R(P ; x) = \sum_{p \in P} x^{r(p)}
\]
All the posets of Example \ref{f.ex:posets} are graded except for subgroup lattices.
%; we have $r_{\textbf{n}}(m) = m-1$,\, $r_{2^{A}}(S) = |S|$,\, $r_{D_n}(d)$ equals the number of (possibly repeated) prime factors of $d$,\, $r_Y(\lambda) = |\lambda|$,\,  $r_{\Pi_n}(\pi) = n - |\pi|$, \,$r_{S_n}(\pi) = \textrm{inv}(\pi)$, and $r_{L(W)}(V) = \dim V$. From these descriptions, in view of the results in Section \ref{f.sec:basic}, it is easy to write down the rank generating functions of these posets. 

A poset $P$ induces a poset structure on any subset $Q \subseteq P$; a special case of interest is the \textbf{interval} $[p,q] = \{r \in P \, : \, p \leq r \leq q\}$. We call a poset \textbf{locally finite} if all its intervals are finite.
 
Given posets $P$ and $Q$ on disjoint sets, the \textbf{direct sum} $P+Q$ is the poset on $P \cup Q$ inheriting the order relations from $P$ and $Q$, and containing no additional order relations between elements of $P$ and $Q$. The \textbf{direct product} $P \times Q$ is the poset on $P \times Q$ where $(p,q) \leq (p', q')$ if $p \leq p'$ and $q \leq q'$.

We have already seen examples of product posets. The Boolean lattice is $2^{A} \cong \mathbf{2} \times \cdots \times \mathbf{2}$. Also, if $n = p_1^{t_1} \cdots p_k^{t_k}$ is the prime factorization of $n$, then $D_n \cong \mathbf{(t_1 + 1)} \times \cdots \times \mathbf{(t_k+1)}$.

\subsection{\textsf{Lattices}}\label{f.sec:lattices}

A poset is a \textbf{lattice} if every two elements $p$ and $q$ have a least upper bound $p \vee q$ and a greatest lower bound $p \wedge q$, called their \textbf{meet} and  \textbf{join}, respectively. We will see this additional algebraic structure can be quite beneficial for enumerative purposes. 

\begin{example}
All the posets in Example \ref{f.ex:posets} are lattices, except for the Bruhat order. In most cases, the meet and join have easy descriptions. In $\mathbf{n}$, the meet and join are the minimum and maximum, respectively. In $2^{A}$ they are the intersection and union. In $D_n$ they are the greatest common divisor and least common multiple. In $Y$ they are the componentwise minimum and maximum. In $\Pi_n$ and in $NC_n$ the meet of two partitions $\pi$ and $\rho$ is the collection of intersections of a block of $\pi$ and a block of $\rho$.
 In $L({\mathbb{F}}_q^n)$ the meet and join are the intersection and the span. In $J(P)$ they are the intersection and the union. In $F(P)$ the meet is the intersection. In $L(G)$ the meet is the intersection.
\end{example}

Any lattice must have a unique minimum element ${\widehat{0}}$ and maximum element ${\widehat{1}}$. An element covering ${\widehat{0}}$ is called an \textbf{atom}; an element covered by ${\widehat{1}}$ is called a \textbf{coatom}.
To prove that a finite poset $P$ is a lattice, it is sufficient to check that it has a ${\widehat{1}}$ and that any $x,y \in P$ have a meet; then the join of $x$ and $y$ will be the (necessarily non-empty) meet of their common upper bounds. Similarly, it suffices to check that $P$ has a ${\widehat{0}}$ and that any $x,y \in P$ have a join.

\bigskip

\noindent
\textsf{\textbf{Distributive Lattices.}}  A lattice $L$ is \textbf{distributive} if the join and meet operations satisfy the distributive properties:
\begin{equation}\label{f.e:distrib}
x \vee (y \wedge z) = (x \vee y) \wedge (x \vee z), \qquad 
x \wedge (y \vee z) = (x \wedge y) \vee (x \wedge z) 
\end{equation}
for all $x,y,z \in L$. To prove that $L$ is distributive, it is sufficient to check that to verify that one of the equations in (\ref{f.e:distrib}) holds for all $x,y,z \in L$.

\begin{example}\label{f.ex:distrib}
There are several distributive lattices in Example \ref{f.ex:posets}: the chains $\mathbf{n}$, the Boolean lattices $2^{A}$, the divisor lattices $D_n$, and Young's lattice $Y$. This follows from the fact that the pairs of operations $(\min, \max)$, $(\gcd, {\mathrm{lcm}})$ and $(\cap, \cup)$ satisfy the distributive laws. The others are not necessarily distributive; for example, $\Pi_3$ and $S_3$.
\end{example}

The most important -- and in fact, the only -- source of finite distributive lattices is the construction of Example \ref{f.ex:posets}.9: Given a poset $P$, a \textbf{downset} or \textbf{order ideal} $I$ is a subset of $P$ such that if $i \in I$ and $j < i$ then $j \in I$. A \textbf{principal} order ideal is one of the form $P_{\leq p} = \{q \in P \, : \, q \leq p\}$. Let $J(P)$ be the \textbf{poset of order ideals} of $P$, ordered by inclusion. 

\begin{theorem}\label{f.th:Birkhoff} \emph{(Fundamental Theorem for Finite Distributive Lattices.)}
A poset $L$ is a distributive lattice if and only if there exists a poset $P$ such that $L \cong J(P)$.
\end{theorem}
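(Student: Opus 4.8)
The plan is to prove both directions. The easy direction is that $J(P)$ is always a distributive lattice: order ideals are closed under unions and intersections, so $I \vee I' = I \cup I'$ and $I \wedge I' = I \cap I'$, and the distributive laws for $\vee,\wedge$ are inherited directly from the distributive laws for $\cup,\cap$ on sets. The empty ideal and $P$ itself serve as ${\widehat{0}}$ and ${\widehat{1}}$. This is routine and will take only a couple of sentences.

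The substantive direction is that every finite distributive lattice $L$ arises this way. First I would identify the poset $P$ that should do the job: let $P$ be the subposet of \textbf{join-irreducible} elements of $L$, i.e. those $x \neq {\widehat{0}}$ that cannot be written as $y \vee z$ with $y,z < x$ (equivalently, in a finite lattice, those covering exactly one element). I would then construct the candidate isomorphism $\varphi : L \to J(P)$ by sending $x \mapsto \{ p \in P : p \leq x\}$, the set of join-irreducibles below $x$; this is visibly an order ideal of $P$, and it is clearly order-preserving. The main work is to show $\varphi$ is a bijection with order-preserving inverse.

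The key lemma, and the step I expect to be the main obstacle, is that in a finite distributive lattice \emph{every element is the join of the join-irreducibles below it}: $x = \bigvee \{ p \in P : p \leq x\}$. Existence of \emph{some} representation of $x$ as a join of join-irreducibles follows by a straightforward induction on $L$ (if $x$ is not itself join-irreducible and is not ${\widehat{0}}$, write $x = y \vee z$ with $y,z<x$ and recurse), and one checks the join-irreducibles appearing can be taken $\leq x$. This gives surjectivity of $\varphi$ up to knowing that every order ideal $I \in J(P)$ equals $\{p : p \leq \bigvee I\}$, which is where distributivity is essential: if $p$ is join-irreducible and $p \leq q_1 \vee \cdots \vee q_k$, then $p = p \wedge (q_1 \vee \cdots \vee q_k) = (p \wedge q_1) \vee \cdots \vee (p \wedge q_k)$ by distributivity, so join-irreducibility of $p$ forces $p = p \wedge q_i \leq q_i$ for some $i$; hence $p \in I$ already. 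For injectivity, if $\varphi(x) = \varphi(y)$ then $x$ and $y$ are joins of exactly the same set of join-irreducibles, so $x = y$. Finally, $\varphi^{-1}$ is order-preserving because $I \subseteq I'$ gives $\bigvee I \leq \bigvee I'$, so $\varphi$ is a lattice isomorphism. I would remark that the three uses of distributivity — in the easy direction, in the "$p \leq \bigvee$" argument, and implicitly throughout — are exactly why the non-distributive examples such as $\Pi_3$ and $S_3$ fail to be of the form $J(P)$, and that this construction sets up the rank-generating function identity $R(J(P);x)$ counting order ideals of $P$ by size.
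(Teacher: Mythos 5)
Your proof is correct and follows essentially the same route as the paper's (sketched) proof: both directions use the closure of order ideals under union/intersection for the easy half, and for the converse both take $P$ to be the join-irreducibles of $L$ with the bijection $x \mapsto \{p \in P : p \leq x\}$ (inverse $I \mapsto \bigvee I$). You have simply filled in the details the paper's sketch omits, namely the lemma that every element is the join of the join-irreducibles below it and the distributivity argument showing $p \leq \bigvee I$ forces $p \in I$, and these are exactly the right details.
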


\begin{proof}[Sketch of Proof.] 
Since the collection of order ideals of a poset $P$ is closed under union and intersection, $J(P)$ is a sublattice of $2^P$. The distributivity of $2^P$ then implies that $J(P)$ is a distributive lattice.

For the converse, let $L$ be a distributive lattice, and let $P$ be the set of join-irreducible elements of $L$;  that is, the elements $p > {\widehat{0}}$ which cannot be written as $p = q \vee r$ for $q,r < p$.  These are precisely the elements of $L$ that cover exactly one element. The set $P$ inherits a partial order from $L$, and this is the poset such that $L \cong J(P)$. The isomorphism is given by
\begin{eqnarray*}
\phi: J(P) & \longrightarrow & L \\
I & \longmapsto & \bigvee_{p \in I} p
\end{eqnarray*}
and the inverse map is given by $\phi^{-1}(l) =  \{p \in P \, : \, p \leq l\}$.
\end{proof}

\begin{figure}[ht]
 \begin{center}
  \includegraphics[scale=.6]{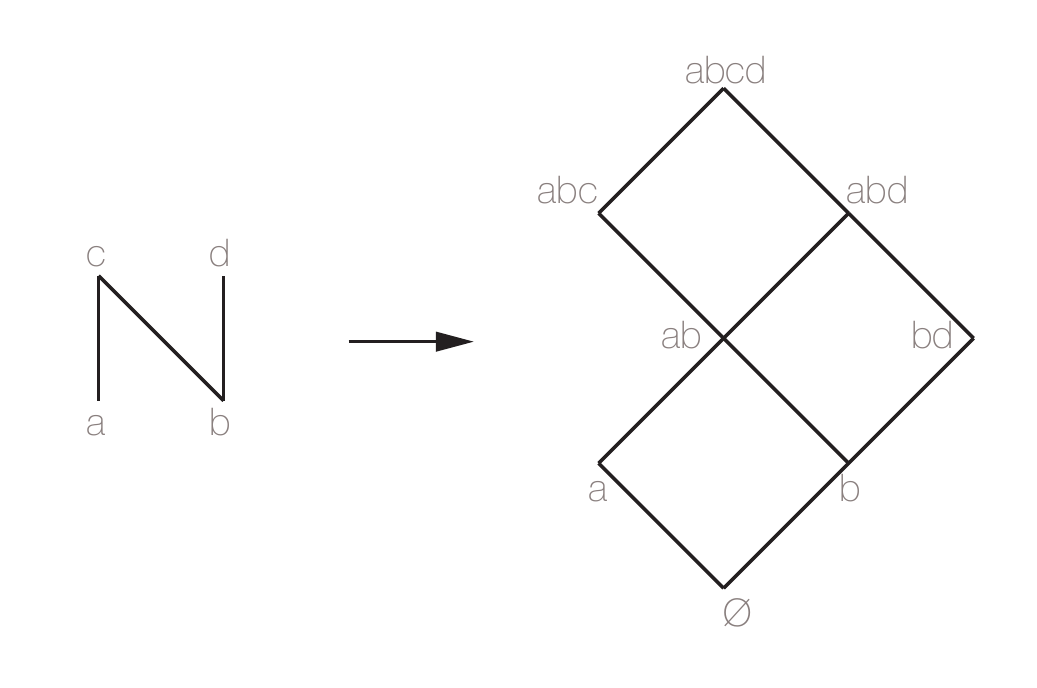}
  \caption{ \label{f.fig:distrib} 
A poset and the corresponding distributive lattice.}
 \end{center}
\end{figure}

Theorem \ref{f.th:Birkhoff} extends to some infinite posets with minor modifications. Let $J_f(P)$ be the set of finite order ideals of a poset $P$. %Say that a distributive lattice $L$ is \textbf{locally finite} if it has a ${\widehat{0}}$ and every interval in $L$ is finite. 
Then the map $P \mapsto J_f(P)$ is a bijection between the posets whose principal order ideals are finite and the locally finite distributive lattices with ${\widehat{0}}$.

\begin{example}
The posets $P$ of join-irreducibles of the distributive lattices $L \cong J(P)$ of Example \ref{f.ex:distrib} are as follows. For $L=\mathbf{n}$, $P=\mathbf{n-1}$ is a chain. For $L = 2^{A}$, $P=\mathbf{1} + \cdots + \mathbf{1}$ is an antichain. For $L=D_n$, where $n = p_1^{t_1} \cdots p_k^{t_k}$, $P=\mathbf{t_1}+ \cdots + \mathbf{t_k}$ is the disjoint sum of $k$ chains. For $L=Y$, $P={\mathbb{N}} \times {\mathbb{N}}$ is a ``quadrant".
\end{example}

Theorem \ref{f.th:Birkhoff} explains the abundance of cubes in the Hasse diagram of a distributive lattice $L$. For any element $l \in L$ covered by $n$ elements $l_1, \ldots, l_n$ of $L$, the joins of the $2^n$ subsets of $\{l_1, \ldots, l_n\}$ are distinct, and form a copy of the Boolean lattice $2^{[n]}$ inside $L$. The dual result holds as well.

The \textbf{width} of a poset $P$ is the size of the largest antichain of $P$. \textbf{Dilworth's Theorem} \cite{f.Dilworth} states that this is the smallest integer $w$ such that $P$ can be written as a disjoint union of $w$ chains.

\begin{theorem}
The distributive lattice $J(P)$ can be embedded as an induced subposet of the poset ${\mathbb{N}}^w$, where $w$ is the width of $P$.
\end{theorem}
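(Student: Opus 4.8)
The plan is to invoke Dilworth's Theorem and then record, for each order ideal, how much of each chain in the Dilworth decomposition it uses. So first I would write $P = C_1 \sqcup \cdots \sqcup C_w$ as a disjoint union of $w$ chains, which is exactly what Dilworth's Theorem provides since $w$ is the width of $P$. (I will take $P$ finite, so that all the cardinalities below lie in $\mathbb{N}$; for an infinite $P$ of finite width one replaces $\mathbb{N}$ by $\mathbb{N}\cup\{\infty\}$, or works with $J_f(P)$ instead.) Then I would define the candidate embedding
\[
\phi : J(P) \longrightarrow \mathbb{N}^w, \qquad \phi(I) = \bigl(\,|I \cap C_1|,\ |I \cap C_2|,\ \ldots,\ |I \cap C_w|\,\bigr),
\]
and the goal reduces to showing that $\phi$ is an order embedding, i.e. that $\phi(I) \le \phi(I')$ in $\mathbb{N}^w$ holds if and only if $I \subseteq I'$ in $J(P)$; this exhibits $J(P)$ as the induced subposet $\phi(J(P))$ of $\mathbb{N}^w$.

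The one observation that makes everything work is this: for each $j$, the set $I \cap C_j$ is an order ideal of the chain $C_j$, since $x \in I \cap C_j$ and $y < x$ in $C_j$ force $y \in I$ because $I$ is an order ideal of $P$. An order ideal of a finite chain is just its set of $k$ smallest elements for some $k$, so $I \cap C_j$ is \emph{determined} by the single number $|I \cap C_j|$. In particular $|I \cap C_j| \le |I' \cap C_j|$ already implies $I \cap C_j \subseteq I' \cap C_j$. Once this is in hand, the "only if" direction of the embedding claim is immediate from monotonicity of cardinality, and for the "if" direction I would argue that $\phi(I) \le \phi(I')$ gives $I \cap C_j \subseteq I' \cap C_j$ for every $j$, whence
\[
I = \bigcup_{j=1}^{w} (I \cap C_j) \subseteq \bigcup_{j=1}^{w} (I' \cap C_j) = I'
\]
because the $C_j$ partition $P$; taking $I'=I$ also yields injectivity of $\phi$.

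There is no substantive obstacle here — the argument is short. The only step needing a moment of care is the key observation that intersecting an order ideal of $P$ with a chain from the Dilworth decomposition produces a downset of that chain, which a single integer then records faithfully; everything else is bookkeeping. I would also quickly check the degenerate cases $P = \emptyset$ and $I = \emptyset$, which are handled trivially, and note that the componentwise order on $\mathbb{N}^w$ is precisely the direct-product poset order, so the conclusion is literally an embedding into $\mathbb{N}^w$ as defined.
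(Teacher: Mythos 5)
Your proof is correct and is essentially the paper's argument: decompose $P$ into $w$ chains via Dilworth's Theorem and map each ideal $I$ to $(|I \cap C_1|, \ldots, |I \cap C_w|)$. The paper states this map and asserts it is the desired inclusion; you simply supply the (correct) verification that it is an order embedding, via the observation that $I \cap C_j$ is a downset of the chain $C_j$ determined by its cardinality.
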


\begin{proof}
Decompose $P$ as the disjoint union of $w$ chains $C_1, \ldots, C_c$. The map
\begin{eqnarray*}
\phi: J(P) & \longrightarrow & {\mathbb{N}}^w \\
I & \longmapsto & (|I \cap C_1|, \ldots, |I \cap C_w|)
\end{eqnarray*}
gives the desired inclusion.
\end{proof}

\bigskip

\noindent
\textsf{\textbf{Geometric Lattices.}}  Now we introduce another family of lattices of great importance in combinatorics. 
We say that a lattice $L$ is:

 $\bullet$ \textbf{semimodular} if the following two equivalent conditions hold:

\qquad o $L$ is graded and $r(p) + r(q) \geq r(p \wedge q) + r(p \vee q)$ for all $p, q \in L$.

\qquad o  If $p$ and $q$ both cover $p \wedge q$, then $p \vee q$ covers both $p$ and $q$.

$\bullet$ \textbf{atomic} if every element is a join of atoms. 

$\bullet$ \textbf{geometric} if it is semimodular and atomic.

\begin{example}\label{f.ex:geometric}
In Figure \ref{f.fig:posets}, the posets $2^{[3]}$ and $\Pi_3$ are geometric lattices, while the posets $\mathbf{4}$, $D_{18}$, and $S_3$ are not.
\end{example}

Not surprisingly, the prototypical example of a geometric lattice comes from a natural geometric construction, illustrated in Figure \ref{f.fig:points}. 
%There are three equivalent models.
%\noindent $\bullet$ 
Let $A=\{v_1, \ldots, v_n\}$ be a set of vectors in a vector space $V$. A \textbf{flat} is an subspace of $V$ generated by a subset of $A$. We identify a flat with the set of $v_i$s that it contains. 
Let $L_A$ be the set of flats of $A$, ordered by inclusion. %(In the third model, $F \leq G$ if $F \subseteq G$ as subsets of $A$, or if $F \supseteq G$ geometrically.) 
Then $L_A$ is a geometric lattice.

\begin{figure}[ht]
 \begin{center}
  \includegraphics[scale=.8]{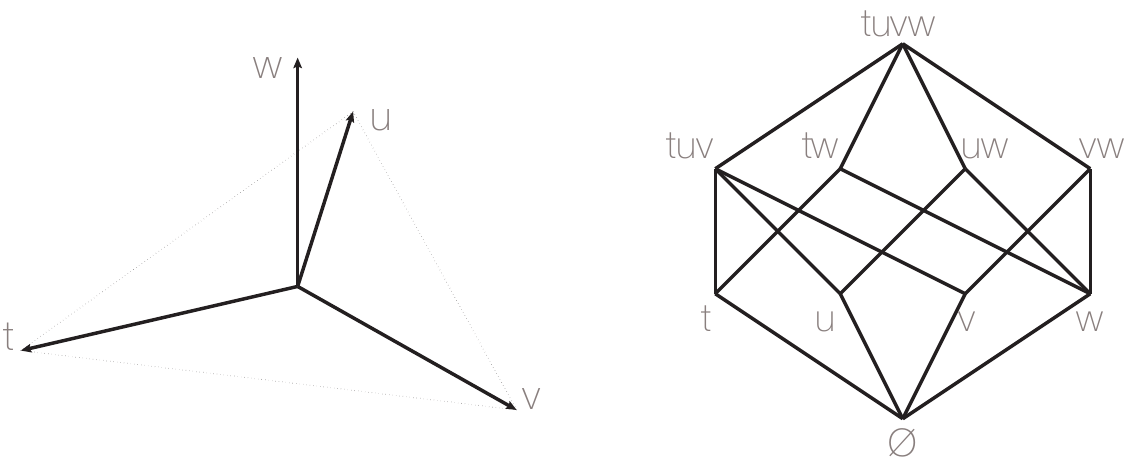}
  \caption{ \label{f.fig:points} 
A vector configuration ${\mathbf{t}},{\mathbf{u}},{\mathbf{v}},{\mathbf{w}}$ (where ${\mathbf{t}},{\mathbf{u}},{\mathbf{v}}$ are coplanar in $\mathbb{R}^3$) and the corresponding geometric lattice.}
 \end{center}
\end{figure}

Geometric lattices are essentially equivalent to \emph{matroids}, which are the subject of Section \ref{f.sec:matroids}.

%
%\noindent $\bullet$ Let $A=\{v_1, \ldots, v_n\}$ be a set of vectors in a vector space $V$. A {\mathbf{k}}extbf{flat} is a subspace of $V$ spanned by a subset of $A$. We identify a flat with the set of $v_i$s that it contains. 
%
%\noindent $\bullet$ Let $A=\{H_1, \ldots, H_n\}$ be a set of hyperplanes (codimension 1 subspaces) in a vector space $V$. A \textbf{flat} is a subspace obtained by intersecting some of the hyperplanes in $A$. We identify a flat with the set of $H_i$s that contain it. 

%\comment{
%Almost every geometric lattice that arises ``in nature" comes from this geometric construction.
%Geometric lattices are essentially equivalent to \textbf{matroids}, which we will study in depth in Section \ref{f.sec:matroids}.
%}

\bigskip

\noindent
\textsf{\textbf{Supersolvable Lattices.}} A lattice $L$ is \textbf{supersolvable} if there exists a maximal chain, called an M-chain, such that the sublattice generated by $C$ and any other chain of $L$ is distributive. \cite{f.Stanleysupersolvable}

Again unsurprisingly, an important example comes from supersolvable groups, but there are  other interesting examples. Here is a list of supersolvable lattices, and an M-chain in each case.

\begin{enumerate}

\item Distributive lattices: every maximal chain is an M-chain. 

\item Partition lattice $\Pi_n$:  $1|2|\cdots|n < 12|3|\cdots |n < 123|4|\cdots|n < \cdots < 123\cdots n$ 

\item Noncrossing partition lattice $NC_n$: the same chain as above.

\item Lattice of subspaces $L({\mathbb{F}}_q^n)$ of the vector space ${\mathbb{F}}_q^n$ over a finite field ${\mathbb{F}}_q$: every maximal chain is an M-chain.

\item Subgroup lattices of finite supersolvable groups $G$: an M-chain is given by any normal series ${1} = H_0 \triangleleft H_1 \triangleleft \cdots \triangleleft H_k=G$ where each $H_i$ is normal and $H_i/H_{i-1}$ is cyclic of prime order.

\end{enumerate}
%
%\noindent 1. Distributive lattices. Every chain is an M-chain. 
%
%\noindent 2. Partition lattice $\Pi_n$:  $1|2|\cdots|n < 12|3|\cdots |n < 123|4|\cdots|n < \cdots < 123\cdots n$ 
%
%\noindent 3. Noncrossing partition lattice $NC_n$: the same chain as above.
%
%\noindent 4. The subgroup lattice of a finite supersolvable group $G$. An M-chain is given by any normal series ${1} = H_0 \triangleleft H_1 \triangleleft \cdots \triangleleft H_k=G$ where each $H_i$ is normal and $H_i/H_{i-1}$ is cyclic of prime order.

Fortunately, there is a simple criterion to verify semimodularity. 
An \textbf{$R$-labeling} of a poset $P$ is a labeling of the edges of the Hasse diagram of $P$ with integers such that for any $s \leq t$ there exists a unique maximal chain $C$ from $s$ to $t$.

\begin{theorem} \cite{f.Mcnamara} A finite graded lattice of rank $n$ is supersolvable if and only if it has an $R$-labeling for which the labels on every maximal chain are a permutation of $\{1, \ldots, n\}$.
\end{theorem}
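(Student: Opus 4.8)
The plan is to prove both directions by passing through an explicit edge‑labeling attached to a maximal chain, and through the auxiliary notion of a \emph{left‑modular} chain: call $m\in L$ left‑modular if $x\vee(m\wedge y)=(x\vee m)\wedge y$ whenever $x\le y$, and call a maximal chain left‑modular if each of its elements is. Recall that an $R$-labeling is an edge labeling such that every interval $[s,t]$ has a unique \emph{rising} maximal chain, i.e.\ one whose sequence of labels is weakly increasing. First, a cheap reduction: if $\lambda$ is an $R$-labeling in which every maximal chain of $L$ carries the labels $\{1,\dots,n\}$ bijectively, then the rising $\widehat 0$–$\widehat 1$ chain must have labels $1,2,\dots,n$ in order; call it $\mathbf m\colon \widehat0=m_0\lessdot m_1\lessdot\cdots\lessdot m_n=\widehat1$. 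Conversely, given an M-chain $\mathbf m$, define $\lambda(x\lessdot y):=\min\{i:\ y\le x\vee m_i\}$, which is well defined since $x\vee m_n=\widehat1\ge y$. The whole theorem will follow once we show: (i) this $\lambda$ is an $R$-labeling with the bijective‑label property; and (ii) the chain $\mathbf m$ produced from such a $\lambda$ is an M-chain.

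For (i), the bijective‑label property is the clean part. Given any maximal chain $\mathbf c\colon \widehat0=x_0\lessdot\cdots\lessdot x_n=\widehat1$, the M-chain hypothesis says $L':=\langle\mathbf m\cup\mathbf c\rangle$ is distributive; since $L'$ contains a length-$n$ chain and is a sublattice of the rank-$n$ lattice $L$, it has rank exactly $n$, so by the Fundamental Theorem for Finite Distributive Lattices $L'\cong J(P)$ with $|P|=n$. Identifying elements of $L'$ with order ideals of $P$, the chains $\mathbf m$ and $\mathbf c$ realize two linear extensions $p_1,\dots,p_n$ and $q_1,\dots,q_n$ of $P$, with $m_i=\{p_1,\dots,p_i\}$ and $x_j=\{q_1,\dots,q_j\}$; since joins in $J(P)$ are unions of ideals, $\lambda(x_{j-1}\lessdot x_j)$ equals the position of $q_j$ in the list $(p_1,\dots,p_n)$. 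As $j$ runs over $1,\dots,n$ these positions run over $\{1,\dots,n\}$, and (because $\lambda$ is intrinsic to $\mathbf m$) this is consistent for all choices of $\mathbf c$.

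The remaining content of (i) — that $\lambda$ really is an $R$-labeling on \emph{every} interval, not just the spanning one — together with (ii), is the main obstacle, and I would route both through left‑modularity. On one side, an M-chain $\mathbf m$ is automatically left‑modular: for $x\le y$ the sublattice $\langle\mathbf m\cup\{x,y\}\rangle$ is distributive, hence modular, and its meets and joins agree with those of $L$, giving $x\vee(m_i\wedge y)=(x\vee m_i)\wedge y$. Conversely, if $\mathbf m$ is a left‑modular maximal chain then $\lambda_{\mathbf m}$ is an $R$-labeling: the unique rising chain of $[s,t]$ is built greedily by $y_0=s$ and $y_j=(y_{j-1}\vee m_{i_j})\wedge t$, where $i_j$ is the least index exceeding the previous label for which this element properly contains $y_{j-1}$; left‑modularity is exactly what guarantees that each greedy step is a cover and that no other maximal chain of $[s,t]$ has weakly increasing labels (an exchange argument comparing a putative second rising chain edge‑by‑edge with the greedy one). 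I would reprove this — it is the technical heart, essentially the Liu / Blass–Sagan / McNamara–Thomas input — since this is where the full strength of the hypothesis is consumed.

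Finally, to close direction (ii): given an $R$-labeling with bijective labels and its rising chain $\mathbf m$, I would show directly from the labeling that each $m_i$ is left‑modular, by comparing the (unique) rising chains of the intervals assembled from $x$, $y=x\vee(\text{something})$, and $m_i$, and using bijectivity of labels along maximal chains to pin $(x\vee m_i)\wedge y$ and $x\vee(m_i\wedge y)$ to the same vertex. Then it remains to prove that a left‑modular maximal chain $\mathbf m$ is an M-chain, i.e.\ that $\langle\mathbf m\cup\mathbf c\rangle$ is distributive for every chain $\mathbf c$; I would do this by induction on the rank, peeling off $m_1$ and using left‑modularity of $m_1$ to control joins and meets of $\mathbf c$ with the rest of $\mathbf m$. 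Assembling: supersolvable $\Rightarrow$ $\mathbf m$ left‑modular $\Rightarrow$ $\lambda_{\mathbf m}$ is an $R$-labeling with bijective labels; and an $R$-labeling with bijective labels $\Rightarrow$ rising chain $\mathbf m$ is left‑modular $\Rightarrow$ $\mathbf m$ is an M-chain $\Rightarrow$ $L$ supersolvable.
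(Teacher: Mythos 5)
The survey itself does not prove this statement — it is quoted from McNamara — so there is no in-paper argument to measure you against; I am judging your proposal on its own terms. Your architecture is the standard one from the McNamara / McNamara–Thomas circle of ideas, and the parts you actually carry out are correct: the labeling $\lambda(x\lessdot y)=\min\{i \,:\, y\le x\vee m_i\}$; the bijectivity of the labels along any maximal chain $\mathbf c$ via $\langle \mathbf m\cup\mathbf c\rangle\cong J(P)$ with $|P|=n$ and the two linear extensions; and the fact that an M-chain is left-modular, since $\langle\mathbf m\cup\{x,y\}\rangle$ is distributive and its joins and meets agree with those of $L$. You also correctly restore the ``unique \emph{rising} chain in every interval'' clause in the definition of $R$-labeling, which the survey's wording omits.

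As written, though, the proposal defers exactly the three statements that carry the theorem, so it is a plan rather than a proof. (a) That $\lambda_{\mathbf m}$ is an $R$-labeling when $\mathbf m$ is left-modular: the greedy construction plus ``an exchange argument'' is precisely the technical content of Liu and McNamara–Thomas, and you say yourself you would have to reprove it; nothing in the sketch shows why each greedy step is a cover or why a second rising chain is impossible. (b) That an $S_n$ $R$-labeling forces the rising chain to be left-modular: ``comparing rising chains to pin the two sides to the same vertex'' is not yet an argument; the inequality $x\vee(m_i\wedge y)\le(x\vee m_i)\wedge y$ is automatic, and the whole content is the reverse inequality, which in the literature comes from a rank/label count that you do not supply. (c) That a left-modular maximal chain in a \emph{graded} lattice is an M-chain (i.e.\ $\langle\mathbf m\cup\mathbf c\rangle$ is distributive for every chain $\mathbf c$): this is as hard as the converse direction of the theorem itself, and ``induction on rank, peeling off $m_1$'' gives no hint of the key idea (in the literature one either goes back through the labeling or builds the distributive sublattice from label sets). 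Note also that any correct proof of (c) must use gradedness essentially, since non-graded lattices possessing left-modular maximal chains exist, and your sketch never indicates where gradedness enters. Until (a)–(c) are actually carried out, the theorem has not been established.
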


% EL-labelings, supersolvability and 0-Hecke algebra actions on posets

\subsection{\textsf{Zeta polynomials and order polynomials}}\label{f.sec:orderpoly}

The \textbf{zeta polynomial} of a finite poset $P$ counts the multichains of various lengths in $P$. 
A \textbf{multichain} of length $k$ in $P$ is a sequence of possibly repeated elements $t_0, t_1, \ldots, t_k \in P$ such that $t_0 \leq t_1 \leq \cdots \leq t_k$. Let
\begin{equation}\label{f.e:zeta}
Z_P(k) = \textrm{number of multichains of length $k-2$ in $P$} \qquad 
(k \geq 2). 
\end{equation}
There is a unique polynomial $Z_P(k)$ satisfying (\ref{f.e:zeta}) for all integers $k \geq 2$; it is given by
\begin{equation}\label{f.e:Z}
Z_P(k) = \sum_{i \geq 2} b_i {k-2 \choose i-2},
\end{equation}
where $b_i$ is the number of chains of length $i-2$ in $P$. This polynomial is called the \textbf{zeta polynomial} of $P$.

\begin{example}\label{f.ex:zeta} The following posets have particularly nice zeta polynomials:
\begin{enumerate}
\item $P=\mathbf{n}$:
\[
Z(k) = {n+k-2 \choose n-1}
\]
\item $P=B_n$:
\[
Z(k) = k^n
\]
\item $P=NC_n$: (Kreweras, \cite{f.Kreweras})
\[
Z(k) = \frac1n {kn \choose n-1}
\]
\end{enumerate}
\end{example}

The \textbf{order polynomial} of $P$ counts the order-preserving labelings of $P$; it is defined by 
\[
\Omega_P(k) = \textrm{number of maps $f: P \rightarrow [k]$ such that $p<q$ implies $f(p) \leq f(q)$} 
\]
for $k \in {\mathbb{N}}$. The next proposition shows that, once again, there is a unique polynomial taking these values at the natural numbers.
\begin{proposition}\label{f.p:OZ}
For any poset $P$, $\Omega_P(k) = Z_{J(P)}(k)$.
\end{proposition}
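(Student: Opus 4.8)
The plan is to exhibit a bijection between the two sets of objects being counted, for each fixed $k \in {\mathbb{N}}$. On the one side, $\Omega_P(k)$ counts order-preserving maps $f: P \to [k]$, meaning $p < q$ implies $f(p) \leq f(q)$. On the other side, by definition $Z_{J(P)}(k)$ counts multichains of length $k-2$ in the distributive lattice $J(P)$, that is, chains of order ideals $I_0 \subseteq I_1 \subseteq \cdots \subseteq I_{k-2}$ in $J(P)$. First I would note that it is more convenient to think of a multichain of length $k-2$ in a poset $L$ with ${\widehat{0}}$ and ${\widehat{1}}$ as a sequence ${\widehat{0}} = J_0 \subseteq J_1 \subseteq \cdots \subseteq J_{k-1} = {\widehat{1}}$ of $k$ elements (padding with ${\widehat{0}}$ at the bottom and ${\widehat{1}}$ at the top accounts for the shift by $2$ in the definition \eqref{f.e:zeta}); here ${\widehat{0}} = \emptyset$ and ${\widehat{1}} = P$ in $J(P)$. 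So the task reduces to biject order-preserving maps $f: P \to [k]$ with chains $\emptyset = I_0 \subseteq I_1 \subseteq \cdots \subseteq I_{k-1} \subseteq I_k = P$ of order ideals (where I allow $I_k=P$ as the forced top).

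The key step is the correspondence $f \mapsto (I_0, I_1, \ldots, I_k)$ where $I_j := f^{-1}(\{1, 2, \ldots, j\}) = \{p \in P : f(p) \leq j\}$. I would check: (i) each $I_j$ is an order ideal, since if $p \in I_j$ and $q < p$ then $f(q) \leq f(p) \leq j$ so $q \in I_j$; (ii) the containments $I_0 \subseteq I_1 \subseteq \cdots$ are clear since the sublevel sets grow with $j$, and $I_0 = \emptyset$ (as $f$ takes values in $[k]$, no element maps to something $\leq 0$) while $I_k = P$; (iii) the map is injective, because $f(p) = \min\{j : p \in I_j\}$ recovers $f$ from the chain. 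Conversely, given a chain $\emptyset = I_0 \subseteq I_1 \subseteq \cdots \subseteq I_k = P$ of order ideals, define $f(p) = \min\{j : p \in I_j\}$; I would verify $f$ is order-preserving: if $p < q$ and $f(q) = j$ then $q \in I_j$, and since $I_j$ is an order ideal, $p \in I_j$, so $f(p) \leq j = f(q)$. These two constructions are mutually inverse, giving the bijection, hence $\Omega_P(k) = Z_{J(P)}(k)$ for all $k \geq 2$; since both sides are polynomials in $k$ (the left by the stated proposition, the right by \eqref{f.e:Z}) agreeing on infinitely many values, they are equal as polynomials.

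I expect the main obstacle to be purely bookkeeping: matching the index shift in the definition \eqref{f.e:zeta} of the zeta polynomial (multichains of length $k-2$) with the $k$ "levels" of the sublevel-set filtration, and being careful that the endpoints $I_0 = \emptyset$, $I_k = P$ are forced and should not be double-counted. There is no real conceptual difficulty — the content is exactly that order ideals of $P$ are the same data as monotone $\{0,1\}$-labelings, iterated $k$ times. One could alternatively phrase the whole argument via \eqref{f.e:Z} and \eqref{f.e:zeta} directly by a chain-counting identity, but the bijective route is cleaner and makes the role of the distributive-lattice structure ($J(P)$ being exactly the order ideals, by Theorem \ref{f.th:Birkhoff}) transparent.
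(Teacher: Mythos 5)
Your argument is exactly the paper's proof: an order-preserving map $f:P\to[k]$ corresponds to the multichain of order ideals $f^{-1}(\{1,\dots,j\})$ in $J(P)$, and the sublevel-set/minimum constructions are mutually inverse, so the two counts agree. (The only blemish is the sentence describing the padded multichain as having $k$ elements $J_0,\dots,J_{k-1}$ — with the forced endpoints $\emptyset$ and $P$ it has $k+1$ elements, as you in fact use in the very next sentence — which does not affect the correctness of the bijection.)
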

\begin{proof}
An order-preserving map $f: P \rightarrow [k]$ gives rise to a sequence of order ideals $f^{-1}(\{1\}) \subseteq f^{-1}(\{1,2\}) \subseteq \cdots f^{-1}(\{1, \ldots, k\})$, which is a multichain in $J(P)$. Conversely, every sequence arises uniquely in this way.
\end{proof}

A \textbf{linear extension} of $P$ is an order-preserving labeling of the elements of $P$ with the labels $1, \ldots, n = |P|$ which extends the order of $P$; that is, a bijection $f: P \rightarrow [n]$ such that $p<q$ implies $f(p)<f(q)$. Let
\[
e(P) = \textrm{ number of linear extensions of } P.
\]
It follows from Proposition \ref{f.p:OZ} and (\ref{f.e:Z}) that the order polynomial $\Omega_P$ has degree $|P|$, and leading coefficient $e(P)/|P|!$.

The following is a method for computing $e(P)$ recursively. 

\begin{proposition}\label{f.prop:linext}
Define $e: J(P) \rightarrow {\mathbb{N}}$ recursively by
\[
e(I) = \begin{cases}
1 & \textrm{ if } I = {\widehat{0}}, \\
\displaystyle \sum_{J \lessdot I} e(J) & \textrm{ otherwise}.
\end{cases}
\]
Then $e({\widehat{1}})$ is the number $e(P)$ of linear extensions of $P$.
\end{proposition}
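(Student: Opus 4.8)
The plan is to prove a slightly stronger statement by induction: for every order ideal $I \in J(P)$, the value $e(I)$ equals the number of maximal chains of $J(P)$ from $\widehat{0}$ to $I$, which in turn equals the number of linear extensions of the induced subposet $I \subseteq P$. Specializing to $I = \widehat{1} = P$ then gives the proposition, once we know that linear extensions of $P$ correspond bijectively to maximal chains of $J(P)$ from $\widehat{0}$ to $\widehat{1}$.

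First I would set up that dictionary. Given a linear extension $f \colon P \to [n]$, put $I_k = f^{-1}(\{1,\dots,k\})$; each $I_k$ is an order ideal, $I_0 = \emptyset = \widehat{0}$, $I_n = P = \widehat{1}$, and $I_{k-1} \lessdot I_k$ in $J(P)$ since $I_k \setminus I_{k-1} = \{f^{-1}(k)\}$ is a single element. This yields a maximal chain in $J(P)$, and conversely from a maximal chain $\emptyset = I_0 \lessdot I_1 \lessdot \cdots \lessdot I_n = P$ one recovers $f$ by declaring $f(p) = k$ whenever $p \in I_k \setminus I_{k-1}$. The one point needing an argument is that a cover relation $J \lessdot I$ in $J(P)$ is exactly the removal of a single maximal element of $I$: if $m$ is maximal in $I$ then $I \setminus \{m\}$ is again an order ideal (if $i \in I\setminus\{m\}$ and $j < i$ then $j \in I$, and $j \neq m$ lest $m < i$) and is covered by $I$; conversely, if $J \subsetneq I$ are both order ideals, choosing $m$ maximal in the subposet $I \setminus J$ forces $m$ to be maximal in $I$ (any $i \in I$ with $i > m$ would lie in $I \setminus J$, contradicting maximality of $m$ there), so $J \subseteq I \setminus \{m\} \subsetneq I$, and $J \lessdot I$ gives $J = I \setminus \{m\}$.

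Then I would run the induction on $|I|$. For $I = \widehat{0} = \emptyset$ there is a unique, empty maximal chain, matching $e(\emptyset) = 1$. For $I \neq \emptyset$, every maximal chain of $J(P)$ from $\widehat{0}$ to $I$ is uniquely a maximal chain from $\widehat{0}$ to some $J$ with $J \lessdot I$, followed by the final step $J \lessdot I$; summing over the choices of $J$ and invoking the induction hypothesis, the number of such chains is $\sum_{J \lessdot I} e(J)$, which equals $e(I)$ by the recursive definition. Taking $I = \widehat{1}$ finishes the proof.

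The argument is essentially bookkeeping, and I expect no real obstacle; the only mildly delicate point — and the one I would state carefully — is the characterization of covers in $J(P)$ as single-element removals of maximal elements, which is precisely what makes maximal chains in $J(P)$ coincide with linear extensions of $P$. Nothing heavier is needed; in particular one does not even invoke the full Fundamental Theorem for Finite Distributive Lattices (Theorem~\ref{f.th:Birkhoff}).
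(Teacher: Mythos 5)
Your proof is correct and follows essentially the same route as the paper: the paper also decomposes a linear extension according to which maximal element receives the largest label, giving $e(P) = \sum_i e(P\setminus\{p_i\})$, which is exactly your last-cover-step decomposition of maximal chains in $J(P)$. Your write-up just makes explicit the dictionary between linear extensions and maximal chains of $J(P)$, including the (correct) characterization of covers $J \lessdot I$ as removal of a single maximal element, which the paper leaves implicit.
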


\begin{proof}
Let $p_1, \ldots, p_k$ be the maximal elements of $P$. In a linear extension of $P$,  one of $p_1, \ldots, p_k$ has to be labelled $n$, and therefore
\[
e(P) = e(P \backslash \{p_1\}) + \cdots + e(P \backslash \{p_k\}).
\]
This is equivalent to the desired recurrence.
\end{proof}

It is useful to keep in mind that $J(P)$ is a subposet of ${\mathbb{N}}^w$ for $w=w(P)$. The recurrence of Proposition \ref{f.prop:linext} generalizes Pascal's triangle, which corresponds to the case $P = {\mathbb{N}} + {\mathbb{N}}$. When we apply it to $P={\mathbb{N}} + \cdots + {\mathbb{N}}$, we get the recursive formula for multinomial coefficients.

\begin{example} 
In some special cases, the problem of enumerating linear extensions is of fundamental importance.
\begin{itemize} 
\item
$e(\mathbf{n}_1 + \cdots + \mathbf{n}_k) = {n_1 + \cdots + n_k \choose n_1, \ldots, n_k}$
\item
$e(\mathbf{2} \times \mathbf{n}) = C_n = \frac1{n+1}{2n \choose n}$.
\item
Let $T$ be a \emph{tree poset} of $n$ elements, such that the Hasse diagram is a tree rooted at ${\widehat{0}}$. For each vertex $v$ let $t_v = |T_{\geq v}| = |\{w \in T \, : \, w \geq v\}|$. Then
\[
e(T) = \frac{n!}{\prod_{v \in T} t_v}.
\]
\item
Let $\lambda$ be a Ferrers diagram of $n$ cells, partially ordered by decreeing that each cell is covered by the cell directly below and the cell directly to the right, if they are in $\lambda$. The \textbf{hook} $H_c$ of a cell $c$ consists of cells on the same row and to the right of $c$, those on the same column and below $c$, and $c$ itself. Let $h_c = |H_c|$.  
Then
\[
e(\lambda) = \frac{n!}{\prod_{c \in D} h_c}.
\]
This is the dimension of the irreducible representation of the symmetric group $S_n$ corresponding to $\lambda$. \cite{f.Sagan}
\end{itemize}
\end{example}

%\comment{Include connection with P-partitions. Also include shifted Ferrers, Selberg poset, Selberg book, Proctor's D-complete posets, zig zag poset, root poset. Also, what is the maximum $e(P)$? The poset $P_w$ of a Rothe diagram?}

%
%\subsection{\textsf{Sieves}}
%
%Sieve methods count the elements of a set by starting with a very rough estimate, and  obtaining successively better approximations by adding or subtracting to that estimate. 

\subsection{\textsf{The Inclusion-Exclusion Formula}}\label{f.sec:inclusionexclusion}

Our next goal is to discuss one of the most useful enumerative tools for posets: 
M\"obius functions and the M\"obius inversion theorem. Before we do that, we devote this section to a special case that preceded and motivated them: the inclusion-exclusion formula.
%
%The inclusion-exclusion formula is a simple but fundamental tool which allows us to count the size of a union or an intersection of finite sets. In practice, we usually have a set $S$ of objects, and $n$ properties that each object in $S$ may or may not satisfy. We are interested in counting the elements that satisfy at least one of the given properties; or equivalently, count the elements that satisfy none of the given properties. If we let $A_i$ be the set of elements in $X$ having the $i$th property, then the answer is given by the following formula:

\begin{theorem} (Inclusion-Exclusion Principle)
For any finite sets $A_1, \ldots, A_n \subseteq X$, we have
\begin{enumerate}
\item
$\displaystyle |A_1 \cup \cdots \cup A_n| = \sum_i |A_i| - \sum_{i < j} |A_i \cap A_j| +  \sum_{i<j<k} |A_i \cap A_j \cap A_k|  - \cdots \pm |A_1 \cap \cdots \cap A_n|$.
\item
$\displaystyle |\overline{A_1} \cap \cdots \cap \overline{A_n}| = |X| -  \sum_i |A_i| + \sum_{i < j} |A_i \cap A_j| - \cdots \pm |A_1 \cap \cdots \cap A_n|.$
\end{enumerate}

%\begin{eqnarray*}
%|A_1 \cup \cdots \cup A_n| &=& \sum_i |A_i| - \sum_{i < j} |A_i \cap A_j| +  \sum_{i<j<k} |A_i \cap A_j \cap A_k|  - \cdots \pm |A_1 \cap \cdots \cap A_n|.\\
%|\overline{A_1} \cap \cdots \cap \overline{A_n}| &=& |X| -  \sum_i |A_i| + \sum_{i < j} |A_i \cap A_j| - \cdots \pm |A_1 \cap \cdots \cap A_n|.
%\end{eqnarray*}
\end{theorem}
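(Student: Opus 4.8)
The plan is to prove part (1) first and then deduce part (2) by a short complementation argument. For part (1), the cleanest approach I would take is the indicator-function (characteristic function) method: for each subset $S \subseteq X$ let $\mathbbm{1}_S : X \to \{0,1\}$ be its indicator, and observe that $\mathbbm{1}_{\overline{A}} = 1 - \mathbbm{1}_A$, $\mathbbm{1}_{A \cap B} = \mathbbm{1}_A \mathbbm{1}_B$, and $\sum_{x \in X} \mathbbm{1}_S(x) = |S|$. Then I would start from the identity
\[
\mathbbm{1}_{\overline{A_1} \cap \cdots \cap \overline{A_n}} = \prod_{i=1}^n (1 - \mathbbm{1}_{A_i}),
\]
expand the product on the right by distributing, and group terms by the size of the index set: the coefficient of the term $\mathbbm{1}_{A_{i_1}} \cdots \mathbbm{1}_{A_{i_k}} = \mathbbm{1}_{A_{i_1} \cap \cdots \cap A_{i_k}}$ is $(-1)^k$. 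Summing this identity over all $x \in X$ converts every indicator into a cardinality and yields part (2) directly; part (1) then follows by subtracting both sides from $|X|$ and using $A_1 \cup \cdots \cup A_n = X \setminus (\overline{A_1} \cap \cdots \cap \overline{A_n})$ together with $|X \setminus Y| = |X| - |Y|$ for $Y \subseteq X$.

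An alternative I would mention, if a more ``combinatorial'' proof is wanted, is the double-counting argument: fix an element $x$ and count its net contribution to the right-hand side of (1). If $x$ lies in exactly $m \geq 1$ of the sets $A_i$, then $x$ is counted with sign $(-1)^{k+1}$ in exactly $\binom{m}{k}$ of the $k$-fold intersection terms, so its total contribution is $\sum_{k=1}^m (-1)^{k+1}\binom{m}{k} = 1 - \sum_{k=0}^m (-1)^k \binom{m}{k} = 1 - (1-1)^m = 1$, which matches its contribution of $1$ to $|A_1 \cup \cdots \cup A_n|$; and if $x$ lies in no $A_i$, it contributes $0$ to both sides. Summing over $x$ gives (1). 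Both proofs are short; I would present the indicator-function version as the main one since it handles (1) and (2) uniformly and generalizes transparently to the weighted M\"obius-function setting discussed in the next section.

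There is essentially no serious obstacle here — the only point requiring a sentence of care is the passage from the polynomial identity of indicator functions to the statement about cardinalities, namely the bookkeeping that expanding $\prod_{i=1}^n(1 - \mathbbm{1}_{A_i})$ produces exactly one term $(-1)^{|I|}\mathbbm{1}_{\bigcap_{i \in I} A_i}$ for each subset $I \subseteq \{1,\dots,n\}$ (with the empty set contributing the constant term $1 = \mathbbm{1}_X$), and that grouping these by $|I| = k$ reproduces the alternating sum of $k$-fold intersections. I would also note explicitly that all sets are finite so that every cardinality appearing is a well-defined nonnegative integer and the alternating sum makes sense termwise. Aside from that, the proof is a one-line identity followed by summation over $X$.
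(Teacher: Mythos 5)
Your proposal is correct. Note that the paper's own proof is precisely your "alternative" argument: it observes that the two equations are equivalent and then proves (1) by fixing an element $x$ lying in $k \geq 1$ of the sets and computing its net contribution to the right-hand side as $k - \binom{k}{2} + \cdots \pm \binom{k}{k} = 1$, i.e.\ your binomial-theorem cancellation. Your primary route via indicator functions, expanding $\prod_{i=1}^n(1-\mathbbm{1}_{A_i})$ and summing over $X$, is a genuinely different (if closely related) packaging: it proves (2) first and gets (1) by complementation, it handles both parts in one stroke, and it makes the generalization to weighted versions and to the M\"obius-inversion framework of the following sections transparent — indeed the paper itself later reinterprets inclusion-exclusion as the statement that two triangular $2^A \times 2^A$ matrices are mutually inverse, which is morally the same algebraic identity you are exploiting. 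What the paper's element-counting proof buys is brevity and the explicit remark that (1) and (2) are equivalent so only one needs proof; what your indicator proof buys is uniformity and extensibility. Either is a complete and acceptable proof, and your bookkeeping caveat (one term $(-1)^{|I|}\mathbbm{1}_{\bigcap_{i\in I}A_i}$ per subset $I$, empty set giving $\mathbbm{1}_X$) is exactly the right point to spell out.
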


\begin{proof} It suffices to prove one of these two equivalent equations. To prove the first one, consider an element $x$ appearing in $k \geq 1$ of the given sets. The number of times that $x$ is counted in the right hand side is $k - {k \choose 2} + \cdots \pm {k \choose k} = 1$. 
\end{proof}

%We often think of the inclusion-exclusion principle as a sequence of successive estimates for $N=|A_1 \cup \cdots \cup A_n|$. A first estimate $N_1$ for $N$ is the sum of the sizes of the $A_i$s. This is an overestimate; a second estimate $N_2$ is obtained by subtracting the sizes of the double intersections $A_i \cap A_j$, which are counted more than once in the first estimate. Now this is an underestimate, and a third estimate $N_3$ is obtained by adding back the sizes of the triple intersections $A_i \cap A_j \cap A_k|$, which are undercounted in $N_2$. We repeat this process successively until we obtain the correct answer.

We now present a slightly more general formulation. 
\begin{theorem} \label{f.th:IE} (Inclusion-Exclusion Principle)
Let $A$ be a set and consider two functions $f_=, f_\geq: 2^A \longrightarrow {\mathbbm{k}}$ be functions from $2^A$ to a field ${\mathbbm{k}}$. Then
\begin{enumerate}
\item
$\displaystyle f_\geq(S) = \sum_{T \supseteq S} f_=(T) \,\, \textrm{ for all } S \subseteq A 
%\,\, \textrm{ if and only if }  \,\, 
\quad	\Longleftrightarrow \quad
f_=(S) = \sum_{T \supseteq S} (-1)^{|T-S|} f_\geq(T) \,\, \textrm{ for all  } S \subseteq A$.
\item
$\displaystyle f_\leq(S) = \sum_{T \subseteq S} f_=(T) \,\, \textrm{ for all  } S \subseteq A  
%\,\, \textrm{ if and only if }  \,\, 
\quad	\Longleftrightarrow \quad
f_=(S) = \sum_{T \subseteq S} (-1)^{|S-T|} f_\leq(T) \,\, \textrm{ for all  } S \subseteq A$.
\end{enumerate}

%\begin{eqnarray*}
%f_\geq(S) = \sum_{T \supseteq S} f_=(T) \,\, \textrm{ for  } S \subseteq A \quad &\textrm{ if and only if }& \quad  \textsf{\textbf{
%f_=(S) = \sum_{T \supseteq S} (-1)^{|T-S|} f_\geq(T) \,\, \textrm{ for  } S \subseteq A, \\
%f_\leq(S) = \sum_{T \subseteq S} f_=(T) \,\, \textrm{ for  } S \subseteq A \quad &\textrm{ if and only if }& \quad 
%f_=(S) = \sum_{T \subseteq S} (-1)^{|S-T|} f_\leq(T) \,\, \textrm{ for  } S \subseteq A.
%\end{eqnarray*}
\end{theorem}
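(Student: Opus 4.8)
The plan is to prove statement 1; statement 2 follows by the same argument after replacing each subset by its complement in $A$ (equivalently, by reversing the inclusion order on $2^A$), so it suffices to handle one direction of one equivalence and then note the symmetry. I would first establish the implication $(\Rightarrow)$: assuming $f_\geq(S) = \sum_{T \supseteq S} f_=(T)$ for all $S$, substitute this into the right-hand side of the claimed inversion formula and check that everything collapses to $f_=(S)$. Explicitly, $\sum_{T \supseteq S}(-1)^{|T-S|} f_\geq(T) = \sum_{T \supseteq S}(-1)^{|T-S|}\sum_{U \supseteq T} f_=(U) = \sum_{U \supseteq S} f_=(U)\sum_{T \,:\, S \subseteq T \subseteq U}(-1)^{|T-S|}$. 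The inner sum runs over subsets $T$ with $S \subseteq T \subseteq U$, i.e. over subsets of $U - S$; it equals $\sum_{V \subseteq U-S}(-1)^{|V|}$, which is $(1-1)^{|U-S|} = [U = S]$ by the binomial theorem. Hence the whole expression equals $f_=(S)$, as desired.

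For the reverse implication $(\Leftarrow)$ I would run the identical computation with the roles of $f_=$ and $f_\geq$ interchanged: starting from $f_=(S) = \sum_{T\supseteq S}(-1)^{|T-S|}f_\geq(T)$, plug into $\sum_{T \supseteq S} f_=(T)$, swap the order of summation, and again isolate the sign sum $\sum_{V \subseteq U - S}(-1)^{|V|}$, which vanishes unless $U = S$. This recovers $f_\geq(S) = \sum_{T \supseteq S} f_=(T)$. In fact, a cleaner way to organize both directions at once is to observe that the maps $\Phi, \Psi : {\mathbbm{k}}^{2^A} \to {\mathbbm{k}}^{2^A}$ defined by $(\Phi g)(S) = \sum_{T \supseteq S} g(T)$ and $(\Psi g)(S) = \sum_{T \supseteq S}(-1)^{|T-S|} g(T)$ are linear, and the Vandermonde-type cancellation computed above shows $\Psi \circ \Phi = \Phi \circ \Psi = \mathrm{id}$; then both equivalences in statement 1 are just the assertion that $f_\geq = \Phi f_=$ iff $f_= = \Psi f_\geq$.

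The only genuine content is the sign-cancellation lemma $\sum_{V \subseteq W}(-1)^{|V|} = [W = \emptyset]$, which is immediate from $(1-1)^{|W|}$ via the binomial theorem (or by noting that if $W$ is nonempty, fixing $w \in W$ pairs $V \leftrightarrow V \triangle \{w\}$ with opposite signs). There is no real obstacle here; the main thing to be careful about is the bookkeeping in the double-sum interchange — making sure the index set $\{(T,U) : S \subseteq T \subseteq U\}$ is correctly re-summed as $\{U : S \subseteq U\}$ paired with $\{T : S \subseteq T \subseteq U\}$ — and noting that all sums are finite since $2^A$ is finite, so no convergence issues arise. Finally, for statement 2, the order-reversing bijection $S \mapsto A \setminus S$ on $2^A$ sends $\supseteq$ to $\subseteq$ and $|T - S|$ to $|(A\setminus S) - (A\setminus T)| = |S - T|$ when $T \subseteq S$, so statement 2 is literally statement 1 transported through this bijection; I would state this explicitly rather than repeat the computation.
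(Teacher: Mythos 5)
Your proof is correct, but it takes the route the paper explicitly mentions and then declines to follow. After stating the theorem, the paper observes that the statement is equivalent to saying that the two $2^A \times 2^A$ matrices $C_{S,T}=1$ and $D_{S,T}=(-1)^{|T-S|}$ (for $S \subseteq T$) are inverse to each other, remarks that "this can be proved directly," and instead deduces the theorem later as the special case $P=2^A$ of the M\"obius inversion formula (Theorem \ref{f.th:Mobiusinv}), whose proof goes through the incidence algebra ($\mu = \zeta^{-1}$) together with the computation $\mu_{2^A}(S,T)=(-1)^{|T-S|}$ from the product formula for M\"obius functions. Your argument is exactly that direct verification: substituting one formula into the other, interchanging the (finite) double sum, and applying the cancellation $\sum_{V\subseteq W}(-1)^{|V|}=[W=\emptyset]$; your operators $\Phi,\Psi$ with $\Psi\circ\Phi=\Phi\circ\Psi=\mathrm{id}$ are precisely the matrices $C,D$ in disguise, and your complementation trick for statement 2 is clean and valid. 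What you gain is a short, self-contained, elementary proof; what the paper's route buys is that the same machinery simultaneously yields the other instances of M\"obius inversion it needs (partial sums on $\mathbb{N}$, the number-theoretic formula on $D_n$, the partition lattice computations), so the Boolean case comes for free rather than by a separate calculation. Two tiny remarks: the cancellation lemma is just the binomial theorem, not really "Vandermonde-type," and you should note (as you implicitly do) that $A$ is finite so all sums make sense.
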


The most common interpretation is the following. Suppose we have a set $U$ of objects and a set $A$ of properties that each object in $U$ may or may not satisfy. If we know, for each $S \subseteq A$, the number $f_\geq(S)$ of elements having \emph{at least} the properties in $S$ (or the number $f_\leq(S)$ of elements having \emph{at most} the properties in $S$), then we obtain, for each $S \subseteq A$, the number $f_=(S)$ of elements having \emph{exactly} the properties in $S$. We are often interested in the number $f_=(\emptyset)$ or $f_=(A)$ of elements satisfying none or all of the given properties.

\medskip

Theorem \ref{f.th:IE} has a simple linear algebraic interpretation. Consider the two $2^A \times 2^A$ matrices $C,D$ whose non-zero entries are $C_{S,T} = 1$ for $S \subseteq T$, and $D_{S,T} = (-1)^{|T-S|}$ for $S \subseteq T$. Then the inclusion-exclusion formula is equivalent to the assertion that $C$ and $D$ are inverse matrices. This can be proved directly, but we prefer to deduce it as a special case of the M\"obius inversion formula (Theorem \ref{f.th:Mobiusinv}). We now present two applications.

\bigskip
\noindent \textsf{\textbf{Derangements}}. One of the classic applications of inclusion-exclusion principle is the enumeration of the \textbf{derangements} of $[n]$, discussed in Example 8 of Section \ref{f.sec:operationsegfs}. These are the permutations $\pi \in S_n$ such that $\pi(i) \neq i$ for all $i$. Let $A=\{A_1, \ldots, A_n\}$ where $A_i$ is the property that $\pi(i) = i$. Then $f_\geq(T) = (n-|T|)!$, so the number $D_n$ of derangements of $[n]$ is 
\[
D_n = f_=(\emptyset) = \sum_T (-1)^{|T|} f_\geq(T) =  \sum_{k=0}^n {n \choose k} (-1)^k (n-k)! = n! \left(\frac1{0!} - \frac1{1!} +  \frac1{2!} - \cdots \pm  \frac1{n!}\right).
\]
It follows that $D_n$ is the integer closest to $n!/e$.

\bigskip
\noindent \textsf{\textbf{Discrete derivatives}}. Consider the ${\mathbbm{k}}$-vector space $\Gamma$ of functions $f: {\mathbb{Z}} \rightarrow {\mathbbm{k}}$. The \textbf{discrete derivative} of $f$ is the function $\Delta f$ given by $\Delta f (n) = f(n+1) - f(n)$. We now wish to show that, just as with ordinary derivatives,
\[
\Delta^{d+1} f = 0 \textrm{ if and only if $f$ is a polynomial of degree at most $d$}.
\] 
Recall that this was part of Theorem \ref{f.th:polynomial}.
Regarding $\Delta$ as a linear operator on $\Gamma$, we have $\Delta = E - 1$ where $Ef(n) = f(n+1)$ and $1$ is the identity. Then $\Delta^k = (E-1)^k = \sum_{i=0}^k {k \choose i} E^i (-1)^{k-i}$, so the $k$-th discrete derivative is
$
\Delta^k f(n) = \sum_{i=0}^k (-1)^{k-i}{k \choose i} f(n+i).
$

The functions $f_\leq(S) = f(n+|S|)$ and $f_=(S)=\Delta^kf(|S|)$ satisfy Theorem \ref{f.th:IE}.2, so we have $f(n+k) = \sum_{i=0}^k {k \choose i} \Delta^i f(n)$. (This is equivalent to $E^k = (\Delta + 1)^k$.)
%We can invert this formula using inclusion-exclusion or observing that $E^k = (\Delta+1)^k$, to obtain $f(n+k) = \sum_{i=0}^k {k \choose i} \Delta^i f(n)$. 
If $\Delta^{d+1} f = 0$, this gives 
$
f(k) = \sum_{i=0}^{d} {k \choose i} \Delta^i f(0)
$
which is a polynomial in $k$ of degree at most $d$. The converse follows from the observation that $\Delta$ lowers the degree of a polynomial by $1$.

%\bigskip
%\noindent \textsf{\textbf{An algebraic version}}. Let 
%\[
%0 \rightarrow V_n \rightarrow V_{n-1} \rightarrow \cdots \rightarrow V_0 \rightarrow W \rightarrow 0
%\]
%be an exact sequence of vector spaces over some field; that is, 
%Then
%\[
%\dim W = \dim V_0 - \dim V_1 + \dim V_2 - \cdots \pm \dim V_n
%\]
%
%\comment{Does this actually have to do with inclusion-exclusion?}

\subsection{\textsf{M\"obius functions and M\"obius inversion}}\label{f.sec:Mobius}

\subsubsection{{\textsf{The M\"obius function}}}
Given a locally finite poset $P$, let ${\mathrm{Int}}(P) = \{[x,y] \, : \, x, y \in P, \, x \leq y\}$ be the set of intervals of $P$. The (two-variable) \textbf{M\"obius function} of a poset $P$ is the function $\mu: {\mathrm{Int}}(P) \rightarrow {\mathbb{Z}}$ defined by
\begin{equation}\label{f.e:Mobius1}
\sum_{p \leq r \leq q} \mu(p,r) = \begin{cases}
1 & \textrm{ if } p=q, \\
0 & \textrm{ otherwise}.
\end{cases}
\end{equation}
Here we are denoting $\mu(p,q) = \mu([p,q])$. We will later see that the M\"obius function can be defined equivalently by the equations:
\begin{equation}\label{f.e:Mobius2}
\sum_{p \leq r \leq q} \mu(r,q) = \begin{cases}
1 & \textrm{ if } p=q, \\
0 & \textrm{ otherwise}.
\end{cases}
\end{equation}

When $P$ has a minimum element ${\widehat{0}}$, the (one-variable) \textbf{M\"obius function} $\mu: P \rightarrow {\mathbb{Z}}$ is $\mu(x) = \mu({\widehat{0}}, x)$. If $P$ also has a ${\widehat{1}}$,  the \textbf{M\"obius number} of $P$ is $\mu(P) = \mu({\widehat{0}},{\widehat{1}})$. %Notice that $\mu_P(p,q) = \mu([p,q])$.

Computing the M\"obius function is a very important problem, because the M\"obius function is the poset analog of a derivative; and as such, it is a fundamental invariant of a poset. This problem often leads to very interesting enumerative combinatorics, as can be gleaned from the following gallery of M\"obius functions.

\begin{theorem}\label{f.th:Mobiusformulas}
The M\"obius functions of some key posets are as follows. 
\begin{enumerate}

\item (Chain) $P = \mathbf{n}$: %One verifies directly that 
\[
\mu_{\mathbf{n}}(i,j) = \begin{cases}
1 & \textrm{if } j=i\\
-1 & \textrm{if } j=i+1\\
0 & \textrm{otherwise.}
\end{cases}
\]

\item (Boolean lattice) $P=2^A$: %Since $2^A \cong {\mathbf{2}} \times \cdots \times {\mathbf{2}}$, we get
\[
\mu_{2^A}(S,T) = (-1)^{T-S}.
\]

\item (Divisor lattice) $P=D_n$: %Here $D_n \cong \mathbf{(t_1 + 1)} \times \cdots \times \mathbf{(t_k+1)}$ where $n = p_1^{t_1} \cdots p_k^{t_k}$ is the prime factorization of $n$, so w
We have $\mu_{D_n}(k,l) = \mu(l/k)$ where
\[
\mu(m) = \begin{cases}
(-1)^t & \textrm{if $m$ is a product of $t$ distinct primes, and}\\
0 & \textrm{otherwise}.
\end{cases}
\]
is the classical \textbf{M\"obius function} from number theory.

\item (Young's lattice) $P=Y$: 
\[
\mu(\lambda, \mu) = \begin{cases}
(-1)^{|\mu - \lambda|} & \textrm{if $\mu - \lambda$ has no two adjacent squares, and}\\
0 & \textrm{otherwise}.
\end{cases}
\]

\item (Partition lattice) $P=\Pi_n$: The M\"obius number of $\Pi_n$ is
\[
\mu(\Pi_n) = (-1)^{n-1}(n-1)!,
\]
from which a (less elegant) formula for the complete M\"obius function can be derived.

\item (Non-crossing partition lattice) $P=NC_n$: The M\"obius number of $NC_n$ is
\[
\mu(NC_n) = (-1)^{n-1}C_{n-1},
\]
where $C_{n-1}$ is the $(n-1)$-st Catalan number. This gives a (less elegant) formula for the complete M\"obius function.

\item (Bruhat order) $P=S_n$:
\[
\mu(u, v) = (-1)^{\ell(v)-\ell(u)},
\]
where the \textbf{length} $\ell(w)$ of a permutation $w \in S_n$ is the number of \textbf{inversions} $(i,j)$ where $1 \leq i < j \leq n$ and $w_i>w_j$. (There is a generalization of this result to the Bruhat order on any Coxeter group $W$, or even on a parabolic subgroup $W^J$; see Section \ref{f.sec:computingMobius}.)

\item (Subspace lattice) $P=L({\mathbb{F}}_q^n)$: %For the subspace lattice of a finite vector space ${\mathbb{F}}_q^n$, 
\[
\mu (U,V) = (-1)^d q^{d \choose 2},
\]
where $d = \dim V - \dim U$.

\item (Distributive lattice) $L=J(P)$:
\[
\mu(I, J) = \begin{cases}
(-1)^{|J-I|} & \textrm{if $J-I$ is an antichain in $P$, and}\\
0 & \textrm{otherwise}.
\end{cases}
\]

\item (Face poset of a polytope) $L=F(P)$:
\[
\mu(F,G) = (-1)^{\dim G - \dim F}.
\]

\item (Face poset of a subdivision ${\mathcal{T}}$ of a polytope $P$) $L=\widehat{T}$:
\[
\mu(F, G) = \begin{cases}
(-1)^{\dim G - \dim F} & \textrm{if } G < {\widehat{1}} \\
(-1)^{\dim P - \dim F+1} & \textrm{if } G = {\widehat{1}} \textrm{ and $F$ is not on the boundary of $P$} \\
0 & \textrm{if } G = {\widehat{1}} \textrm{ and $F$ is on the boundary of $P$} 
\end{cases}
\]

\item (Subgroup lattice of a finite $p$-group) If $|G| = p^n$ for $p$ prime, $n \in {\mathbb{N}}$, then in $L=L(G)$:
\[
\mu(A,B) = \begin{cases}
(-1)^k p^{k \choose 2} & \textrm{if $A$ is a normal subgroup of $B$ and $B/A \cong {\mathbb{Z}}_p^k$, and}\\
0 & \textrm{otherwise}.
\end{cases}
\]

%\comment{See EC2, p. 501 and EC2, Problem 5.29.}
\end{enumerate}
\end{theorem}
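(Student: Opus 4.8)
The plan is to prove all twelve M\"obius function formulas using a handful of general tools, grouping the posets by which tool applies. The main structural observation is that the M\"obius function behaves well under taking products (the M\"obius function of $P \times Q$ is the product of those of $P$ and $Q$), and that for many of these posets one reduces to computing $\mu({\widehat 0},{\widehat 1})$ of the whole interval and then invokes a product decomposition for general intervals. So the first step is to record and prove these two facts: (i) $\mu_{P \times Q}((p,q),(p',q')) = \mu_P(p,p')\,\mu_Q(q,q')$, which follows immediately from the defining recursion \eqref{f.e:Mobius1} since multichains in an interval of $P \times Q$ factor as pairs of multichains; and (ii) for the lattices that appear here, every interval $[x,y]$ is itself isomorphic to a poset of the same family on a smaller parameter (e.g. an interval in $2^A$ is a Boolean lattice $2^{T\setminus S}$, an interval in $L({\mathbb F}_q^n)$ is $L({\mathbb F}_q^{d})$ with $d=\dim V-\dim U$, etc.), so it suffices to compute $\mu(P)$ in each case.

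With those reductions in place, the individual computations split into three bundles. First, the \emph{product/chain bundle}: the chain $\mathbf n$ is a trivial direct check of \eqref{f.e:Mobius1}; then $2^A \cong \mathbf 2^{|A|}$, $D_n \cong \mathbf{(t_1+1)}\times\cdots$, and Young's lattice computations all follow from the chain case plus multiplicativity, with the ``no two adjacent squares'' and ``antichain'' conditions arising exactly because a skew shape or ideal-difference decomposes into a product of chains precisely when it has that shape. The distributive lattice formula (item 9) is the cleanest conceptual statement here: an interval $[I,J]$ in $J(P)$ is isomorphic to $J(Q)$ where $Q=J\setminus I$ with the induced order, the join-irreducibles of $J(Q)$ are the minimal elements, and one shows by induction on $|Q|$ that $\mu(J(Q))$ is $(-1)^{|Q|}$ if $Q$ is an antichain and $0$ otherwise — the vanishing coming from a sign-reversing cancellation when $Q$ has a non-minimal element. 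Items 3 and 4 then follow as special cases.

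Second, the \emph{geometric/EL-labeling bundle}: for $\Pi_n$, $L({\mathbb F}_q^n)$, $S_n$ under Bruhat order, the polytope and subdivision face posets, and the $p$-group subgroup lattice, I would use the techniques promised for Section~\ref{f.sec:computingMobius} — either an explicit $R$-labeling (the posets $\Pi_n$, $NC_n$, $L({\mathbb F}_q^n)$ are supersolvable, with $M$-chains listed in Section~\ref{f.sec:lattices}, so the M\"obius number counts maximal chains with a descent pattern) or a direct recursive evaluation. For $\Pi_n$ one can also run the classical argument: $\mu(\Pi_n)$ satisfies a recursion coming from grouping partitions by the block containing $n$, yielding $(-1)^{n-1}(n-1)!$; for $NC_n$ the analogous recursion, restricted to noncrossing partitions, produces the Catalan recursion and hence $(-1)^{n-1}C_{n-1}$. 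For $L({\mathbb F}_q^n)$ one sums $\mu$ over subspaces using the $q$-binomial theorem $\sum_{d}(-1)^d q^{\binom d2}\genfrac[]{0pt}{}{n}{d}_q=0$. The polytope cases (items 10, 11) use the Euler relation: $\mu(F,{\widehat 1})=(-1)^{\dim P-\dim F+1}$ reduces to the fact that the reduced Euler characteristic of a sphere (the boundary of the face figure) is $\pm1$, and the boundary faces of a subdivision are exactly where the link is a ball, killing $\mu$. For Bruhat order one uses the lifting property of Bruhat intervals to set up an induction showing $\mu(u,v)=(-1)^{\ell(v)-\ell(u)}$ — the parity-purity of the complement of a Bruhat interval being the crux.

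The step I expect to be the main obstacle is the uniform treatment of the \emph{geometric bundle}, specifically Bruhat order (item 7) and the subdivision face poset (item 11), because neither reduces to a clean product decomposition: the Bruhat case genuinely needs the combinatorics of reduced words and the lifting/exchange properties of Coxeter groups, and item 11 requires a careful separation of interior versus boundary faces and a topological input (Euler characteristic of a ball versus a sphere) rather than a purely order-theoretic recursion. Since the excerpt defers the serious machinery to Section~\ref{f.sec:computingMobius}, I would at this point state these two formulas and signpost that their proofs are given there via $EL$-shellability and the topological interpretation of $\mu$ as reduced Euler characteristic of the order complex of the open interval, rather than grinding through them inline.
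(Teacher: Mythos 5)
Your proposal is correct in outline and overlaps with the paper on several points (chain checked by hand; Boolean and divisor lattices via multiplicativity over products of chains; Young's lattice as a special case of the distributive-lattice formula; the polytope and subdivision face posets via the reduced Euler characteristic of a sphere; Bruhat order deferred to the literature, exactly as the paper itself does), but it takes genuinely different routes on the remaining items. The paper computes $\mu(\Pi_n)$ first by M\"obius inversion applied to counting words in a $q$-letter alphabet classified by their coincidence partition (equating coefficients of $q^1$ in $q(q-1)\cdots(q-n+1)=\sum_\rho \mu({\widehat{0}},\rho)q^{|\rho|}$), and only secondarily by the dual Weisner recursion with the coatom $12\cdots(n-1)|n$, which is essentially your ``group by the block containing $n$'' argument; it computes $\mu(L({\mathbb{F}}_q^n))$ by Weisner's theorem (hyperplanes not containing a fixed line give $\mu_n+q^{n-1}\mu_{n-1}=0$) rather than your $q$-binomial identity; it handles $J(P)$ by the crosscut corollary (if the join of the atoms of $[I,J]$ is not $J$ then $\mu=0$) rather than your inductive sign-reversing cancellation; and, most notably, it gets $\mu(NC_n)=(-1)^{n-1}C_{n-1}$ not from a recursion but from combinatorial reciprocity, $Z_P(-1)=\mu_P({\widehat{0}},{\widehat{1}})$, applied to Kreweras' zeta polynomial $Z_{NC_n}(k)=\frac1n\binom{kn}{n-1}$ — a cleaner derivation that also illustrates the multichain method the paper is advertising. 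Your alternatives are all standard and workable; what the paper's choices buy is that each example showcases a different general technique (inversion, Weisner, crosscut, reciprocity, topology), which is the pedagogical point of that section.

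One caveat on the only genuinely underspecified step: for $NC_n$ your ``analogous recursion, restricted to noncrossing partitions'' is not automatic, because $NC_n$ is not a sublattice of $\Pi_n$ (meets agree but joins differ), so the Weisner-type argument needs the additional fact that upper intervals $[\pi,{\widehat{1}}]$ in $NC_n$ decompose as products of smaller noncrossing partition lattices (e.g.\ via the Kreweras complement); with that ingredient the Catalan convolution does come out, but you should either supply it or switch to the zeta-polynomial route.
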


Some of the formulas above follow easily from the definitions, while others require more sophisticated methods. In the following sections we will develop some of the basic theory of M\"obius functions and discuss the most common methods for computing them. Along the way, we will sketch proofs of all the formulas above.

It is worth remarking that a version of 11. holds more generally for the face poset of any finite regular cell complexes $\Gamma$ such that the underlying space $|\Gamma|$ is a manifold with or without boundary; see \cite[Prop. 3.8.9]{f.EC1} for details.

%
%\begin{example} Let us discuss two motivating examples.
%\begin{enumerate}
%\item
%If $P=2^{S}$ then $\mu_{2^A}(T) = (-1)^{|T|}$ for $T \subseteq S$.
%\item
%If $P = D_n$ is the poset of divisors of $n$, then
%\[
%\mu_{D_n}(m) = \begin{cases}
%0 & \textrm{if } p^2 | m \textrm{ for some prime } p, \\
%(-1)^k & \textrm{if } m=p_1 \cdots p_k \textrm{ for distinct primes } p_1, \ldots, p_k
%\end{cases}
%\]
%is the classical M\"obius function from number theory.
%\end{enumerate}
%One can prove these formulas by induction, but we will soon give more conceptual proofs.
%\end{example}
%

%
%
%\begin{example} 
%If $P=2^{S}$ then $[T,U] \cong 2^{U-T}$, and hence $\mu_P(T,U) = (-1)^{|U|-|T|}$ for $T \subseteq U$.
%If $P=D_n$ then $[l,m] \cong D_{m/l}$, so $\mu_{D_n}(l,m) = \mu(m/l)$ for $ l | m$.\end{example}
%
%Two comments are in order. Firstly, notice that by holding $p$ fixed and letting $q$ vary, equation (\ref{f.e:Mobius2}) may be understood as a recursive formula for $\mu(p, *)$,  analogous to (\ref{f.e:Mobius1}). In fact we have $\mu_P(p,r) = \mu_{P_{\geq p}}(r)$, where $P_{\geq p}$ is the subposet of $P$ of elements greater than or equal to $p$ in $P$. Secondly, the M\"obius function can be defined equivalently by the equations:
%
%\begin{equation}\label{f.e:Mobius2}
%\sum_{p \leq r \leq q} \mu(r,q) = \begin{cases}
%1 & \textrm{ if } p=q, \\
%0 & \textrm{ otherwise}.
%\end{cases}
%\end{equation}
%
%
%

\subsubsection{{\textsf{M\"obius inversion}}}
In enumerative combinatorics, there are many situations where have a set $U$ of objects, and a natural way of assigning to each object $u$ of $U$ an element $f(u)$ of a poset $P$. We are interested in counting the objects in $U$ that map to a particular element $p \in P$. Often we find that it is much easier to count the objects in $s$ that map to an element \textbf{less than or equal to}\footnote{or greater than or equal to} $p$ in $P$. 
The following theorem tells us that this easier enumeration is sufficient for our purposes, as long as we can compute the M\"obius function of $P$.

\begin{theorem}\label{f.th:Mobiusinv} \emph{(M\"obius Inversion formula)}
Let $P$ be a poset and let $f,g: P \rightarrow {\mathbbm{k}}$ be functions from $P$ to a field ${\mathbbm{k}}$. Then
\begin{eqnarray*}
1. \quad g(p) = \sum_{q \geq p} f(q) \,\, \textrm{ for all } p \in P 
\quad &\Longleftrightarrow& \quad
 \,\, \displaystyle f(p) = \sum_{q \geq p} \mu(p,q)g(q) \,\, \textrm{ for all } p \in P,  \textrm{ and}\\
2. \quad g(p) = \sum_{q \leq p} f(q) \,\, \textrm{ for all } p \in P 
\quad &\Longleftrightarrow& \quad
\displaystyle f(p) = \sum_{q \leq p} \mu(q,p)g(q) \,\, \textrm{ for all } p \in P.
\end{eqnarray*}
%
%
%
%\[
%g(p) = \sum_{q \geq p} f(q) \,\, \textrm{ for all } p \in P
%\]
%if and only if 
%\[
%\displaystyle f(p) = \sum_{q \geq p} \mu(p,q)g(q) \,\, \textrm{ for all } p \in P.
%\]
%
%\begin{enumerate}
%\item For all $p \in P$, \, $\displaystyle g(p) = \sum_{q \geq p} f(q)$. 
%\item For all $p \in P$, \,  $ \displaystyle f(p) = \sum_{q \geq p} \mu(p,q)g(q)$.
%\end{enumerate}
\end{theorem}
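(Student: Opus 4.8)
The plan is to prove the two equivalences in Theorem~\ref{f.th:Mobiusinv} directly from the defining relations (\ref{f.e:Mobius1}) and (\ref{f.e:Mobius2}) for the M\"obius function, treating them as statements about inverse matrices in the incidence algebra (even though that algebra has not been formally introduced yet, the underlying bookkeeping is elementary). By symmetry it suffices to prove statement~1; statement~2 follows by the identical argument with all order relations reversed, using (\ref{f.e:Mobius2}) in place of (\ref{f.e:Mobius1}). For the argument to make sense we should assume, as is standard, that $P$ is locally finite and that all the sums appearing are finite (e.g. $P$ has a $\widehat 1$, or the sums are over finite order filters); I will note this hypothesis explicitly at the start.

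First I would prove the forward direction of statement~1. Assume $g(p) = \sum_{q \geq p} f(q)$ for all $p$. Then for any fixed $p$,
\[
\sum_{q \geq p} \mu(p,q) g(q) = \sum_{q \geq p} \mu(p,q) \sum_{r \geq q} f(r) = \sum_{r \geq p} f(r) \sum_{p \leq q \leq r} \mu(p,q),
\]
where in the last step I interchange the order of summation and regroup: the pairs $(q,r)$ with $q \geq p$ and $r \geq q$ are exactly the pairs with $r \geq p$ and $p \leq q \leq r$. By the defining relation (\ref{f.e:Mobius1}), the inner sum $\sum_{p \leq q \leq r} \mu(p,q)$ equals $1$ when $r = p$ and $0$ otherwise, so the whole expression collapses to $f(p)$, as desired.

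Next I would prove the reverse direction of statement~1 — that is, assume $f(p) = \sum_{q \geq p} \mu(p,q) g(q)$ for all $p$ and deduce $g(p) = \sum_{q \geq p} f(q)$. The computation is the mirror image:
\[
\sum_{q \geq p} f(q) = \sum_{q \geq p} \sum_{r \geq q} \mu(q,r) g(r) = \sum_{r \geq p} g(r) \sum_{p \leq q \leq r} \mu(q,r),
\]
and now the inner sum $\sum_{p \leq q \leq r} \mu(q,r)$ is precisely the quantity appearing in the alternative characterization (\ref{f.e:Mobius2}), which equals $1$ if $p = r$ and $0$ otherwise; hence the right-hand side is $g(p)$. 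This step is the one place where it is genuinely convenient to have both (\ref{f.e:Mobius1}) and (\ref{f.e:Mobius2}) available, so a clean write-up would either invoke (\ref{f.e:Mobius2}) directly or first prove that (\ref{f.e:Mobius1}) and (\ref{f.e:Mobius2}) are equivalent — the latter being a short induction on the length of the interval $[p,r]$, or equivalently the observation that a lower-triangular matrix (the zeta-type matrix $\zeta$) with $1$'s on the diagonal has a unique two-sided inverse. I expect this equivalence of the two defining relations to be the only mildly subtle point; everything else is a routine interchange of summation. Finally, statement~2 is obtained verbatim by replacing $P$ with its dual poset $P^{\mathrm{op}}$, under which $\mu(p,q)$ becomes $\mu(q,p)$ and $\geq$ becomes $\leq$, so no separate computation is needed.
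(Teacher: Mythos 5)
Your proof is correct, but it takes a different route from the paper's. You argue directly: interchange the order of summation in the double sum and collapse the inner sum using the defining recursion (\ref{f.e:Mobius1}) for one direction and (\ref{f.e:Mobius2}) for the other, noting that the equivalence of these two recursions needs its own short argument (induction on intervals, or uniqueness of the inverse of a triangular matrix with $1$'s on the diagonal). The paper instead defers the proof until after the incidence algebra $I(P)$ is introduced: there the two defining recursions become $\mu\zeta = \mathbf{1}$ and $\zeta\mu = \mathbf{1}$, i.e.\ $\mu = \zeta^{-1}$, and the theorem is the one-line statement that for the left action of $I(P)$ on ${\mathbbm{k}}^P$ one has $g = \zeta \cdot f$ if and only if $f = \mu \cdot g$. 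The content is the same bookkeeping, but the trade-off is real: your computation is self-contained and needs no machinery beyond the definition of $\mu$, while the paper's framing makes both directions, the equivalence of (\ref{f.e:Mobius1}) and (\ref{f.e:Mobius2}), and the analogous dual statement all fall out at once from associativity and invertibility in $I(P)$ (indeed the paper explicitly remarks that a direct proof is not difficult but the algebraic framework ``really explains it''). Your explicit caveat about local finiteness and finiteness of the sums (e.g.\ finite order filters) is appropriate and is implicitly assumed in the paper; your reduction of statement~2 to statement~1 via the dual poset is also fine.
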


%In the situation of the previous paragraph, $f(p)$ counts the objects mapping to $p \in P$, and $g(p)$ counts the objects mapping to elements greater than or equal to $p \in P$. The M\"obius inversion formula expresses $f$ in terms of $g$ and $\mu_P$.
%
%A slightly different formulation is sometimes more useful. 

%\begin{theorem}\label{f.th:Mobiusinv2} \emph{(M\"obius inversion formula, version 2)}
%Let $P$ be a poset and let $f,g: P \rightarrow {\mathbbm{k}}$ be functions from $P$ to a field ${\mathbbm{k}}$. Then
%\[
%g(p) = \sum_{q \leq p} f(q) \,\, \textrm{ for all } p \in P
%\]
%if and only if 
%\[
%\displaystyle f(p) = \sum_{q \leq p} \mu(q,p)g(q) \,\, \textrm{ for all } p \in P.
%\]
%%
%%\begin{enumerate}
%%\item For all $p \in P$, \, $\displaystyle g(p) = \sum_{q \geq p} f(q)$. 
%%\item For all $p \in P$, \,  $ \displaystyle f(p) = \sum_{q \geq p} \mu(p,q)g(q)$.
%%\end{enumerate}
%\end{theorem}

In his paper \cite{f.Rota1}, which pioneered the use of the M\"obius inversion formula as a tool for counting in combinatorics, Rota described this enumerative philosophy as follows:

\begin{quote}
It often happens that a set of objects to be counted possesses a natural ordering, in general only a partial order. It may be unnatural to fit the enumeration of such a set into a linear order such as the integers: instead, it turns out in a great many cases, that a more effective technique is to work with the natural order of the set. One is led in this way to set up a ``difference calculus" relative to an arbitrary partially ordered set. 
\end{quote}

Indeed, one may think of the M\"obius function as a poset-theoretic analog of the Fundamental Theorem of Calculus: $g$ is analogous to the integral of $f$, as it stores the cumulative value of this function. Like the Fundamental Theorem of Calculus, the M\"obius inversion formula tells us how to recover the function $f$ from its cumulative values.

It is not difficult to prove Theorem \ref{f.th:Mobiusinv} %and \ref{f.th:Mobiusinv2} 
directly, but we will soon discuss an algebraic framework that really explains it. In the meantime, we discuss two key applications. We will see several other applications later on.

\begin{example} M\"obius inversion is particularly important for chains, Boolean and divisor lattices.

 \begin{enumerate}
\item For $P={\mathbb{N}}$, Theorem \ref{f.th:Mobiusinv}.1 is a simple but important result for partial sums:
\[
g(n) = \sum_{i=0}^n f(i)  \,\, \textrm{ for all } n \in {\mathbb{N}} \quad \textrm{ if and only if } \quad 
f(n) = g(n)-g(n-1) \,\, \textrm{ for all  } n \in {\mathbb{N}}
\]
where $g(-1)=0$.

\item For $P=2^A$, Theorem \ref{f.th:Mobiusinv}.1 is the inclusion-exclusion formula:
\[
g(S) = \sum_{T \supseteq S} f(T) \,\, \textrm{ for all } S \subseteq A \quad \textrm{ if and only if } \quad 
f(S) = \sum_{T \supseteq S} (-1)^{|T-S|} g(T) \,\, \textrm{ for all } S \subseteq A.
\]
%If we have a set $U$ of objects and a set $A$ of properties, then we associate to each object the subset of properties that it satisfies. If we know, for each $S \subseteq A$, the number $g(S)$ of elements having \emph{at least} the properties in $S$, then we obtain, for each $S \subseteq A$, the number $f(S)$ of elements having \emph{exactly} the properties in $S$.

\item For $P=D_n$, Theorem \ref{f.th:Mobiusinv}.2 is  the M\"obius inversion formula from number theory:
\[
g(m) = \sum_{m|d|n} f(d) \,\, \textrm{ for all } m|n \quad \textrm{ if and only if } \quad 
f(m) = \sum_{m|d|n} \mu(d/m) g(d) \,\, \textrm{ for all  } m|n.
\]
%We have a set $U$ of numbers, and assign a divisor of $n$ to each of them.
A typical application is the computation of Euler's totient function
\[
\varphi(n) = |\{u \, : \, 1 \leq u \leq n, \,\, \gcd(u,n)=1\}|,
\]
or more generally $f(m) = |\{u \, : \, 1 \leq u \leq n, \,\, \gcd(u,n)=m\}|$ for $m|n$.
Here $U=[n]$, and we assign to each $u \in U$ the divisor $\gcd(n,u)$ of $n$. 
There are $g(m) = n/m$ multiples of $m$ in $U$, so
\[
\varphi(n) = f(1) = \sum_{d | n} \mu(d)\frac{n}{d} = n\left(1-\frac1{p_1}\right) \cdots \left(1-\frac1{p_k}\right)
\]
\end{enumerate}

\end{example}

We will be interested in applying the M\"obius inversion formula in many other contexts, and for that reason it is important that we gain a deeper understanding of M\"obius functions;
%, and develop techniques to compute them. T
this is one of the main goals of the following sections.

%\comment{\subsubsection{\textsf{\textbf{Applications of M\"obius Inversion}}}}

\subsubsection{\textsf{The incidence algebra}}\label{f.sec:incidence}

The M\"obius function has a very natural algebraic interpretation, which we discuss in this section. Given a field ${\mathbbm{k}}$, recall that a ${\mathbbm{k}}$-algebra $A$ is a vector space over ${\mathbbm{k}}$ equipped with a bilinear product. 

The \textbf{incidence algebra} $I(P)$ of a locally finite poset $P$ is the ${\mathbbm{k}}$-algebra of functions $f:{\mathrm{Int}}(P) \rightarrow {\mathbbm{k}}$ from the intervals of $P$ to ${\mathbbm{k}}$, equipped with the \textbf{convolution product} $f \cdot g$ given by
\[
f \cdot g\, (p,r) = \sum_{p \leq q \leq r} f(p,q) \, g(q,r) \qquad \textrm{for } p \leq r.
\]

Alternatively, let $\mathrm{Mat}(P)$ be the set of $P \times P$ matrices $A$ with entries in ${\mathbbm{k}}$ whose only nonzero entries $a_{pq} \neq 0$ occur in positions where $p \leq q$ in $P$. There is no canonical way of listing the rows and columns of  the matrix $P$ in a linear order. We normally list them in the order given by a linear extension of $P$, so that the matrices in $\mathrm{Mat}(P)$ will be upper triangular. Then $\mathrm{Mat}(P)$ is a ${\mathbbm{k}}$-algebra under matrix multiplication, and it is clear from the definitions that
\[
I(P) \cong \mathrm{Mat}(P).
\]

\noindent \textbf{\textsf{The unit and inverses.}}
The product in $I(P)$ is clearly associative, and has a \textbf{unit}
\[
{\mathbf{1}}(p,q) = \begin{cases}
1 & \textrm{ if } p=q, \\ 
0 & \textrm{ if } p<q. 
\end{cases}
\]
which is a (two-sided) multiplicative identity.
An element  $f \in I(P)$ has a (necessarily unique) left and right multiplicative inverse $f^{-1}$ if and only if $f(p,p) \neq 0$ for all $p \in P$.

\bigskip

\noindent \textbf{\textsf{The zeta function and counting chains.}}
An important element of $I(P)$ is the \textbf{zeta function}
\[
\zeta(p,q) = 1 \qquad \textrm{ for all } p \leq q.
\]
Notice that $\zeta^k(p,q)$ is the number of multichains of length $k$ from $p$ to $q$. 

If $P$ has a ${\widehat{0}}$ and ${\widehat{1}}$, then $\zeta_P^{\,k}({\widehat{0}},{\widehat{1}})$ counts all multichains of length $k-2$ in $P$, so the zeta polynomial and zeta function are related by 
\begin{equation}\label{f.e:zetas}
Z_P(k) = \zeta_P^{\,k}({\widehat{0}},{\widehat{1}}) \qquad \textrm{for } k \geq 2.
\end{equation}

Similarly, $(\zeta - {\mathbf{1}})^k(p,q)$ is the number of chains of length $k$ from $p$ to $q$.
The sum ${\mathbf{1}} + (\zeta - {\mathbf{1}}) + (\zeta - {\mathbf{1}})^2 + \cdots$ has finitely many non-zero terms, and it equals $({\mathbf{2}} - \zeta)^{-1}$, where ${\mathbf{2}} = 2 \cdot {\mathbf{1}}$; so
\[
({\mathbf{2}} - \zeta)^{-1}(p,q) = \textrm{ total number of chains in $P$ from $p$ to $q$}.
\]

\bigskip

\noindent \textbf{\textsf{The M\"obius function and M\"obius inversion.}} The (equivalent)  equations (\ref{f.e:Mobius1}) and (\ref{f.e:Mobius2}) defining the M\"obius function can be rewritten as $\mu \zeta = \textbf{1}$ and $\zeta \mu = \textbf{1}$, respectively. This explains why these two equations are equivalent: they say that 
\[
\mu = \zeta^{-1}.
\]

\begin{proof}[Proof of the M\"obius inversion formula.] Consider the left action of the incidence algebra $I(P)$ on the vector space ${\mathbbm{k}}^P$ of functions $f: P \rightarrow {\mathbbm{k}}$, given by
\[
a \cdot f \, (p) = \sum_{q \geq p} a(p,q) f(q)
\]
for $a \in I(P)$ and $f: P \rightarrow {\mathbbm{k}}$. The M\"obius inversion formula then states that $g = \zeta \cdot f $
if and only if $\mu \cdot g = f$; this follows immediately from $\mu = \zeta^{-1}$.
\end{proof}

%\comment{Counting maximal chains? In $NC_n$ there are $n^{n-2}$. (Edelman)}

\subsubsection{\textsf{Computing M\"obius functions}}\label{f.sec:computingMobius}

%To apply the M\"obius inversion Formula successfully, it is often necessary to compute the M\"obius functions of the posets involved. 
In this section, 
 we discuss some of the main tools for computing M\"obius functions. Along the way, we prove the formulas for the M\"obius functions of the posets of Theorem \ref{f.th:Mobiusformulas}.

\begin{enumerate}
\item (Chain $\mathbf{n}$) 
We can check the formula for $\mu_{\mathbf{n}}$ manually.
\end{enumerate}

\bigskip
\noindent \textsf{\textbf{M\"obius functions of products.}} A simple but important fact is that M\"obius functions behave well under poset multiplication:
\[
\mu_{P \times Q}((p,q), (p',q')) = \mu_P(p, p') \mu_Q(q, q')
\]
This is easily verified directly, and also follows from the fact that $I(P \times Q) \cong I(P) \otimes_{\mathbbm{k}} I(Q)$. 

\begin{enumerate}
\item[2.] (Boolean lattice $2^{A}$)
Since $2^A \cong {\mathbf{2}} \times \cdots \times {\mathbf{2}}$, the M\"obius function of ${\mathbf{2}}$ tells us that the M\"obius function of $2^A$ is $\mu(S,T) = (-1)^{|T-S|}$. 

\item[3.] (Divisor lattice $D_n$) 
Since $D_n \cong \mathbf{(t_1 + 1)} \times \cdots \times \mathbf{(t_k+1)}$ for $n = p_1^{t_1} \cdots p_k^{t_k}$, we get the formula for $\mu_{D_n}$ from the formula for the M\"obius function of a chain.

\end{enumerate}

\bigskip

\noindent \textsf{\textbf{M\"obius functions through M\"obius inversion.}} So far we have thought of the M\"obius function as a tool to apply M\"obius inversion. Somewhat  counterintuitively, it is possible to use M\"obius inversion in the other direction, as a tool to compute M\"obius functions. We carry out this approach to compute the M\"obius function of the partition lattice $\Pi_n$.

\begin{enumerate}
\item[5.] (Partition lattice $\Pi_n$) 
Let $\Pi_A$ denote the lattice of set partitions of a set $A$ ordered by refinement. Every interval $[\pi, \rho]$ in $\Pi_A$ is a product of partition lattices, as illustrated by the following example: in $\Pi_9$ we have $[18|2|37|4|569, 13478|2569] \cong \Pi_{\{18, 37, 4\}} \times \Pi_{\{2, 569\}} \cong \Pi_3 \times \Pi_2$. Since the M\"obius function is multiplicative, to compute the M\"obius function of the partition lattices it suffices to show that $\mu_{\Pi_n}({\widehat{0}}, {\widehat{1}}) = (-1)^{n-1}(n-1)!$.

Let $W$ be the set of words $w=w_1\ldots w_n$ of length $n$ in the alphabet $\{0, 1, \ldots, q-1\}$. 
Classify the words according to the equalities among their coordinates; namely, to each word $w$, associate the partition of $[n]$ where $i$ and $j$ are in the same block when $w_i=w_j$. 

Let $f(\pi)$ be the number of words whose partition is $\pi$, and let $g(\pi)$ be the number of words whose partition is a coarsening of $\pi$. This is a situation where both $f(\pi)$ and $g(\pi)$ are easily computed: if $\pi$ has $b$ blocks then we have
\[
f(\pi) = q(q-1)\cdots (q-b+1), \qquad g(\pi) = q^b.
\]
Since $g(\pi) = \displaystyle \sum_{\rho \geq \pi} f(\rho)$ we get $f(\pi) = \displaystyle  \sum_{\rho \geq \pi} \mu(\pi, \rho) g(\rho)$. For $\pi={\widehat{0}}$ this says that
\[
q(q-1) \cdots (q-n+1)  =  \sum_{\rho \in \Pi_n} \mu({\widehat{0}}, \rho) q^{|\rho|}.
\]
Equating the coefficients of $q^1$ we get the desired result.

\end{enumerate}

\bigskip
\noindent \textsf{\textbf{M\"obius functions through closures.}} A function $\overline{\,\cdot\,}: P \rightarrow P$ is a \textbf{closure operator} if 
\begin{itemize}
\item $p \leq \overline{p}$ for all $p \in P$,
\item $p \leq q$ implies $\overline{p} \leq \overline{q}$ for all $p,q \in P$, and
\item $\overline{\overline{p}} = \overline{p}$ for all $p \in P$.
\end{itemize}
An element of $p$ is \textbf{closed} if $\overline{p} = p$; let $\textrm{Cl}(P)$ be the subposet of closed elements of $P$.

\begin{proposition}\label{f.prop:closure}
If $\overline{\,\cdot\,}: P \rightarrow P$ is a closure operator, then for any $p \leq q$ in $P$,
\[
\sum_{r\, : \,  \overline{r} = q} \mu(p,r) = 
\begin{cases}
\mu_{\textrm{Cl(P)}}(p,q) & \textrm{ if $p$ and $q$ are closed} \\
0 & \textrm{ otherwise}
\end{cases}
\]
\end{proposition}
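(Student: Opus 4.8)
The plan is to prove this by induction on the length of the interval $[p,q]$ in $P$, using the defining recursion (\ref{f.e:Mobius1}) for $\mu$, together with a clean reorganization of the sum $\sum_{p \le r} \mu(p,r)$ according to the closures of the elements $r$. The key observation is that the closure operator partitions any principal interval $[p, s]$ with $s$ closed into fibers $\{r : \overline r = t\}$ indexed by the closed elements $t$ with $p \le t \le s$, because $r \le s$ and $s$ closed force $\overline r \le \overline s = s$. So for a closed element $s \ge p$ we have the identity
\[
\sum_{r \,:\, p \le r \le s} \mu(p,r) = \sum_{\substack{t \in \mathrm{Cl}(P) \\ p \le t \le s}} \,\sum_{\substack{r \,:\, \overline r = t}} \mu(p,r).
\]
Call the inner sum $F(p,t) := \sum_{r : \overline r = t} \mu(p,r)$; note that since $\overline r = t$ requires $r \le t$ (as $r \le \overline r = t$) and $r \ge p$, this is a finite sum, and when $t = p$ we get $F(p,p) = \mu(p,p) = 1$ (the only $r$ with $\overline r = p$ and $r \ge p$ is $r = p$, using $p \le \overline p$). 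The left-hand side above is, by (\ref{f.e:Mobius1}) applied in $P$, equal to $[p = s]$.

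First I would dispatch the case where $p$ is not closed: then there is no closed $t$ with $p \le t \le s$ for which $t$ could equal $p$, but more to the point I claim $F(p,q) = 0$ whenever $p$ is not closed — this will actually drop out of the main computation below, or can be seen directly since for $p$ not closed the interval $[p,\overline p]$ has more than one element and one runs the recursion there. The substantive case is $p$ closed. Then I would induct on the length of $[p,q]$ within $\mathrm{Cl}(P)$. For $q = p$: the displayed identity with $s = p$ gives $F(p,p) = 1 = \mu_{\mathrm{Cl}(P)}(p,p)$, as required. For $q > p$ closed, take $s = q$ in the displayed identity; the left side is $0$, so
\[
0 = \sum_{\substack{t \in \mathrm{Cl}(P) \\ p \le t \le q}} F(p,t) = F(p,q) + \sum_{\substack{t \in \mathrm{Cl}(P) \\ p \le t < q}} F(p,t).
\]
By the inductive hypothesis each $F(p,t)$ with $p \le t < q$ and $t$ closed equals $\mu_{\mathrm{Cl}(P)}(p,t)$, so $F(p,q) = -\sum_{p \le t < q,\, t \in \mathrm{Cl}(P)} \mu_{\mathrm{Cl}(P)}(p,t)$, which by the defining recursion (\ref{f.e:Mobius1}) of $\mu$ in the poset $\mathrm{Cl}(P)$ is exactly $\mu_{\mathrm{Cl}(P)}(p,q)$. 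This closes the induction.

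I expect the main technical care to be in handling the "otherwise" clause — i.e.\ confirming that $F(p,q) = 0$ when $p$ or $q$ fails to be closed — and in making sure all the sums in play are genuinely finite so the incidence-algebra manipulations are legitimate (this uses local finiteness of $P$ and the fact that $\overline r = q$ pins $r$ to lie in $[p,q]$). If $q$ is not closed, there is no $r$ with $\overline r = q$ at all (since $\overline r$ is always closed), so $F(p,q)$ is an empty sum and vanishes trivially; the case $p$ not closed is the one needing the remark above. An alternative, possibly slicker, route that I would mention is purely algebraic: in the incidence algebra $I(P)$, the closure operator is encoded by a matrix, and the statement becomes the assertion that a certain product of matrices telescopes — but the inductive argument above is self-contained and matches the level of the surrounding exposition, so I would present that as the main proof.
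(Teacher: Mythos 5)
Your argument is correct, but it takes a genuinely different route from the paper. The paper works entirely in the incidence algebra: it embeds $I(\mathrm{Cl}(P))$ into $I(P)$ via $f \mapsto \overline{f}$, introduces the closure element $c$ with $c(p,\overline{p})=1$, observes $c\,\overline{\zeta} = \zeta\,\overline{\mathbf{1}}$ (since for $q$ closed, $\overline{p}\le q$ iff $p\le q$), and then the identity is the one-line telescoping computation $\mu\, c\, \overline{\mathbf{1}} = \mu\, c\, \overline{\zeta}\,\overline{\mu} = \mu\,\zeta\,\overline{\mathbf{1}}\,\overline{\mu} = \overline{\mu}$ --- exactly the ``slicker algebraic route'' you mention at the end but do not carry out. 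Your main proof instead runs the defining recursion (\ref{f.e:Mobius1}) directly, using the fiber decomposition $[p,s] = \bigsqcup_{t \in \mathrm{Cl}(P),\, p\le t\le s} \{r \ge p : \overline{r}=t\}$ for closed $s$ (valid because $r\le s$ forces $\overline{r}\le \overline{s}=s$, and conversely $\overline{r}=t\le s$ forces $r\le s$), and then inducting on the interval so that $F(p,q)$ is forced to coincide with $\mu_{\mathrm{Cl}(P)}(p,q)$ by the recursion in $\mathrm{Cl}(P)$. This is more elementary and self-contained; the paper's version is shorter and makes clear that the statement is a structural fact about the three elements $\mu$, $c$, $\overline{\zeta}$ of $I(P)$. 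One small point you should not leave implicit: the case ``$p$ not closed, $q$ closed'' does not quite ``drop out'' as stated, but it is handled by the very same induction with a different base case --- the minimal closed element above $p$ is $\overline{p}$, every $r\in[p,\overline{p}]$ has $\overline{r}=\overline{p}$, so $F(p,\overline{p}) = \sum_{p\le r\le\overline{p}}\mu(p,r) = 0$ because $p<\overline{p}$, and then the step $0=\sum_{t\,\mathrm{closed},\, p\le t\le q}F(p,t)$ propagates $F(p,t)=0$ upward. With that spelled out, the proof is complete.
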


\begin{proof}
We have an inclusion of incidence algebras $I(Cl(P)) \rightarrow I(P)$ given by $f \mapsto \overline{f}$ where
% the only possibly non-zero values of $\overline{f}$ are $\overline{f}(p,q) = f(p,q)$ when $p$ and $q$ are closed.
\[
\overline{f}(p,q) = \begin{cases}
f(p,q) & \textrm{if $p$ and $q$ are closed}\\
0 & \textrm{otherwise}
\end{cases}
\]
Let $\overline{1}, \overline{\zeta},$ and $\overline{\mu}$ be the image of the unit, zeta, and M\"obius functions of $Cl(P)$ in $I(P)$. Also consider the ``closure function" $c \in I(P)$ whose non-zero values are $c(p, \overline{p}) = 1$ for all $p$. Note that $c \, \overline{\zeta} = \zeta \, \overline{1}$ because when $q$ is closed,  $\overline{p} \leq q$ if and only if $p \leq q$. Now we compute in $I(P)$:
\[
\mu \, c \, \overline{1} = \mu c \, \overline{\zeta} \, \overline{\mu} = \mu \, \zeta \, \overline{1} \, \overline{\mu} = 1 \, \overline{\mu} = \overline{\mu}.
\]
This is equivalent to the desired equality.
\end{proof}

\bigskip
\noindent \textsf{\textbf{M\"obius functions of lattices.}} When $L$ is a lattice, there are two other methods for computing M\"obius functions. The first one gives an alternative to the defining recursion for $\mu(p,q)$. This new recurrence is usually much shorter, at the expense of requiring some understanding of the join operation.

\begin{proposition} (Weisner's Theorem) 
For any $p < a \leq q$ in a lattice $L$ we have
\[
\sum_{p \leq r \leq q \, : \, r \vee a = q} \mu(p,r) = 0
\]
\end{proposition}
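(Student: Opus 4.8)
The plan is to work in the incidence algebra $I(L)$ and exploit the fact that the M\"obius function is the convolution inverse of $\zeta$. Fix $p < a \le q$. The identity $\mu * \zeta = {\mathbf{1}}$ already gives us $\sum_{p \le r \le q} \mu(p,r) = 0$ (since $p \ne q$); Weisner's theorem is a refinement that restricts the sum to those $r$ with $r \vee a = q$, and the key tool is to organize the sum $\sum_{p \le r \le q} \mu(p,r)$ according to the value of $r \vee a$.

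First I would write, for each element $s$ with $a \le s \le q$,
\[
N(s) := \sum_{\substack{p \le r \le q \\ r \vee a = s}} \mu(p,r),
\]
so that $\sum_{a \le s \le q} N(s) = \sum_{p \le r \le q} \mu(p,r) = 0$, using that every $r$ in the range $[p,q]$ satisfies $a \le r \vee a \le q$ because $p < a$ forces $r \vee a \ge a$, and $r,a \le q$ forces $r \vee a \le q$. The goal is then to show $N(s) = 0$ for every $s$ with $a \le s < q$; summing the vanishing identity over such $s$ and subtracting from the total then isolates $N(q) = 0$, which is exactly the claim.

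The main step — and the place where the real work lies — is the vanishing $N(s) = 0$ for $a \le s < q$. Here I would use the sublattice/interval structure: the condition $r \vee a = s$ together with $p \le r$ says that $r$ ranges over elements of $[p,s]$ whose join with $a$ is exactly $s$. I would rewrite
\[
N(s) = \sum_{\substack{p \le r \le s \\ r \vee a = s}} \mu(p,r)
\]
and then compare with the unrestricted sum $\sum_{p \le r \le s} \mu(p,r)$, which is $0$ since $p < a \le s$ means $p \ne s$. The difference between the two sums is $\sum_{p \le r < s,\ r \vee a < s} \mu(p,r)$; grouping these remaining $r$ by the value of $r \vee a =: s' \in [a,s)$ and inducting on the length of the interval $[a,q]$ (or on $|[p,q]|$), the inner sums vanish by the inductive hypothesis applied to the smaller interval $[p, s']$ with the same $a$. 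The delicate point to get right is the base case and the bookkeeping: when $s = a$ the condition $r \vee a = a$ with $p \le r$ forces $r \le a$, and one checks directly that $\sum_{p \le r \le a} \mu(p,r) = 0$ splits correctly, while the inductive reorganization must verify that no element $r$ is counted with the wrong join value. An alternative, cleaner route I would also consider is a direct convolution argument: define $c_a \in I(L)$ supported on pairs $(r, r\vee a)$ where this makes sense, and show $\mu * (\text{something involving } a)$ evaluated at $(p,q)$ gives the desired restricted sum, deducing vanishing from associativity and $\mu * \zeta = {\mathbf{1}}$. Either way, the crux is the same: isolating the join condition and running an induction on interval length.
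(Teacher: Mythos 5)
Your main argument is correct, and it is a genuinely different route from the paper's. You stratify the vanishing sum $\sum_{p \le r \le q} \mu(p,r) = 0$ by the value $s = r \vee a \in [a,q]$, observe that each stratum $N(s)$ with $a \le s < q$ is itself the Weisner sum for the triple $(p,a,s)$ on the strictly smaller interval $[p,s]$, and induct on interval length, with the base case $s=a$ being just the defining recurrence for $\mu$ (since $p<a$). This is a clean, self-contained induction; the only cosmetic issue is that your "main step" re-derives inside $[p,s]$ the same stratification you already set up at the top level, when you could simply cite the inductive hypothesis for $(p,a,s)$ directly. The paper instead deduces Weisner's theorem in one line from its Proposition on closure operators: $r \mapsto r \vee a$ is a closure operator whose closed elements are those $\ge a$, and since $p < a$ is not closed, the restricted sum vanishes. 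That proposition is proved by a short incidence-algebra computation ($\mu\, c\, \overline{1} = \overline{\mu}$), which is essentially the "alternative, cleaner route" you sketch but leave vague. What each approach buys: yours is elementary and needs nothing beyond the definition of $\mu$; the paper's buys a more general statement (valid for any closure operator, and also giving the value $\mu_{\mathrm{Cl}(P)}(p,q)$ when $p$ and $q$ are both closed), from which Weisner and its dual are immediate specializations.
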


Weisner's Theorem follows from Proposition \ref{f.prop:closure} because $r \mapsto r \vee a$ is a closure operation, whose closed sets are the elements greater than or equal to $a$. There is also a dual version, obtained by applying the result to the reverse lattice $L^{\textrm{op}}$ obtained from $L$ by reversing all order relations.

Let us apply Weisner's Theorem to three examples.
%We can use it to compute the M\"obius function of $L({\mathbb{F}}_q^n)$, the subspace lattice of a finite vector space ${\mathbb{F}}_q^n$, as follows. 

\begin{enumerate}
\item[8.] (Subspace lattice $L({\mathbb{F}}_q^n)$) 
Since an interval of height $r$ in $L({\mathbb{F}}_q^n)$ is isomorphic to $L({\mathbb{F}}_q^r)$, it suffices to compute $\mu(L({\mathbb{F}}_q^n)) =: \mu_n$.
 %\mu_{\Pi_n}({\widehat{0}}, {\widehat{1}})$. 
Let $p={\widehat{0}}$, $q={\widehat{1}}$, and let $a$ be any line. The only subspaces $r \neq {\widehat{1}}$ with $r \vee a = {\widehat{1}}$ are the $q^{n-1}$ hyperplanes not containing $a$. 
Each one of these hyperplanes satisfies $[{\widehat{0}},r] \cong L({\mathbb{F}}_q^{n-1})$, so Weisner's Theorem gives $\mu_n +q^{n-1}\mu_{n-1} = 0$, from which $\mu_n = (-1)^nq^{n(n-1)/2}$.

\item[5.] (Partition lattice $\Pi_n$, revisited)
A similar (and easier) argument may be used to compute $\mu_n = \mu({\Pi_n})$, though it is now easier to use the dual to Weisner's Theorem. Let $p={\widehat{0}}$, $q={\widehat{1}}$, and let $a$ be the coatom $12\ldots{n-1}|n$. The only partitions $r \neq {\widehat{0}}$ with $r \wedge a = {\widehat{0}}$ are those with only one non-singleton block of the form $\{i,n\}$ for $i \neq n$. Each such partition $\pi$ has $[\pi, {\widehat{1}}] \cong \Pi_{n-1}$. The dual to Weisner's Theorem tells us that $\mu_n + (n-1)\mu_{n-1}=0$, from which $\mu_n = (-1)^n(n-1)!$.

\item[11.] (Subgroup lattice $L(G)$ of a finite $p$-group $G$)
A similar argument may be used for the subgroup lattice $L(G)$ of a $p$-group $G$. However, now one needs to invoke some facts about $p$-groups; see \cite{f.Weisner}. 
\end{enumerate}

%
%We can use Weisner's Theorem to compute the M\"obius function of $L({\mathbb{F}}_q^n)$, the subspace lattice of a finite vector space ${\mathbb{F}}_q^n$. Since an interval of height $r$ in$L({\mathbb{F}}_q^n)$ is isomorphic to $L({\mathbb{F}}_q^r)$, it suffices to compute $\mu(L({\mathbb{F}}_q^n)) =: \mu_n$.
% %\mu_{\Pi_n}({\widehat{0}}, {\widehat{1}})$. 
%Let $p={\widehat{0}}$, $q={\widehat{1}}$, and $a$ be the hyperplane $x_n=0$. The only subspaces $r \neq {\widehat{0}}$ with $r \wedge a = {\widehat{0}}$ are the $q^{n-1}$ lines going through each of the points $(x_1, \ldots, x_{n-1},1)$ for $x_i \in {\mathbb{F}}_q$. Each one of these lines satisfies $[r,{\widehat{1}}] \cong L({\mathbb{F}}_q^{n-1})$, so Weisner's Theorem gives $\mu_n +q^{n-1}\mu_{n-1} = 0$, from which $\mu_n = (-1)^nq^{n \choose 2}$. A similar argument works for the M\"obius function of the partition lattice.

%We can use Weisner's Theorem to compute $\mu(\Pi_n) = \mu_{\Pi_n}({\widehat{0}}, {\widehat{1}})$. Let $p={\widehat{0}}$, $q={\widehat{1}}$, and $a=12\cdots(n-1) | n$. The only partitions $r \neq {\widehat{0}}$ with $r \wedge a = {\widehat{0}}$ are $r=1|2|\cdots|\widehat{i}|\cdots|(n-1)|in$ for $1 \leq i \leq n-1$, and they all satisfy $[r,{\widehat{1}}] \cong \Pi_{n-1}$. Therefore $\mu(\Pi_n) +(n-1)\mu(\Pi_{n-1}) = 0$, from which $\mu(\Pi_n) = (-1)^{n-1}(n-1)!$.

\begin{theorem}\label{f.th:crosscut} (Crosscut Theorem) 
Let $L$ be a lattice and let $X$ be the set of atoms of $L$. Then
\[
\mu({\widehat{0}},{\widehat{1}}) = \sum_{k} (-1)^k N_k
\]
where $N_k$ is the number of $k$-subsets of $X$ whose join is ${\widehat{1}}$.
\end{theorem}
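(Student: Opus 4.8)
\textbf{Proof proposal for the Crosscut Theorem (Theorem \ref{f.th:crosscut}).}

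The plan is to work inside the incidence algebra $I(L)$ and exploit the closure-operator machinery already developed, specifically Weisner's Theorem and Proposition \ref{f.prop:closure}, rather than arguing combinatorially from scratch. First I would set up the main computation: for a subset $S$ of the atom set $X$, write $\bigvee S$ for the join of the elements of $S$ (with $\bigvee \emptyset = {\widehat{0}}$). The claim to prove is $\mu({\widehat{0}},{\widehat{1}}) = \sum_{S \subseteq X} (-1)^{|S|} [\bigvee S = {\widehat{1}}]$. The natural way to get the alternating sum of $(-1)^{|S|}$ is to recognize it as an evaluation of $(\zeta - {\mathbf{1}})$-type expressions, or more directly to use the identity $\sum_{S \subseteq X} (-1)^{|S|} [\bigvee S \leq q]$, which by inclusion--exclusion over the Boolean lattice $2^X$ (Theorem \ref{f.th:IE}) collapses to $[q = {\widehat{0}}]$ when every atom lies below $q$, i.e. when... — and here is the subtlety — one must be careful because not every atom need lie below a given $q$.

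The cleaner route: define $g(q) = \sum_{S \subseteq X \,:\, \bigvee S = q} (-1)^{|S|}$ for $q \in L$, so that the right-hand side of the theorem is $g({\widehat{1}})$. Then for any $q$, summing over all $r \leq q$ gives $\sum_{r \leq q} g(r) = \sum_{S \,:\, \bigvee S \leq q} (-1)^{|S|} = \sum_{S \subseteq X_q} (-1)^{|S|}$ where $X_q = \{x \in X : x \leq q\}$; this equals $[X_q = \emptyset] = [q = {\widehat{0}}]$, since $X_q$ is empty exactly when $q$ is the bottom (no atom below $q$ forces $q = {\widehat{0}}$ in a finite lattice). By M\"obius inversion (Theorem \ref{f.th:Mobiusinv}.2) applied to the relation $\sum_{r \leq q} g(r) = [q = {\widehat{0}}]$, we recover $g(q) = \sum_{r \leq q} \mu(r,q) [r = {\widehat{0}}] = \mu({\widehat{0}}, q)$ for every $q \geq {\widehat{0}}$. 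Taking $q = {\widehat{1}}$ yields $g({\widehat{1}}) = \mu({\widehat{0}},{\widehat{1}})$, which is exactly the statement, after regrouping $g({\widehat{1}}) = \sum_k (-1)^k N_k$ with $N_k = |\{S \subseteq X : |S| = k, \bigvee S = {\widehat{1}}\}|$.

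The main obstacle — really the only place requiring care — is the step $[X_q = \emptyset] \iff [q = {\widehat{0}}]$, i.e. that in a finite lattice every element $q > {\widehat{0}}$ dominates at least one atom. This follows because the interval $[{\widehat{0}}, q]$ is finite, so any element just above ${\widehat{0}}$ within it is an atom of $L$ below $q$; one should state this explicitly. A secondary point is ensuring that $\sum_{S \subseteq X_q}(-1)^{|S|}$ genuinely vanishes when $X_q \neq \emptyset$, which is the trivial binomial identity $\sum_{j}(-1)^j \binom{|X_q|}{j} = 0$ for $|X_q| \geq 1$ — a direct application of the Binomial Theorem. With these two elementary observations in place, the M\"obius inversion does all the remaining work, and no recursion on the structure of $L$ is needed.
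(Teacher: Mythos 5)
Your argument is correct: defining $g(q) = \sum_{S \subseteq X,\, \bigvee S = q} (-1)^{|S|}$, noting that $\sum_{r \leq q} g(r) = \sum_{S \subseteq X_q} (-1)^{|S|} = [\,X_q = \emptyset\,] = [\,q = {\widehat{0}}\,]$ (every $q > {\widehat{0}}$ in a finite lattice lies above an atom, which you rightly single out as the one point needing a sentence), and then inverting via Theorem \ref{f.th:Mobiusinv}.2 to get $g(q) = \mu({\widehat{0}},q)$, does prove the theorem; your check that the binomial-theorem cancellation and the direction of the inversion are the right ones is also fine, and the preliminary musings about Weisner's Theorem in your first paragraph are harmless since the second paragraph is self-contained. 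This is, however, a different route from the one the paper gestures at: the text defers its sketch to the end of the section, and the machinery it builds there points either to Proposition \ref{f.prop:closure}, applied to the closure operator $S \mapsto \{x \in X : x \leq \bigvee S\}$ on the Boolean lattice $2^X$ (whose M\"obius function $(-1)^{|T-S|}$ then produces exactly your alternating sum), or to the topological method via Theorem \ref{f.th:muEuler}. Your direct M\"obius inversion over $L$ itself is more elementary — it needs no incidence algebra, no closure operators, and no topology — while the closure-operator route fits the paper's general framework and extends verbatim to crosscuts other than the set of atoms, and the topological route gives finer (homotopy-level) information than the Euler-characteristic identity; all three arrive at the same formula. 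The only caveat worth recording explicitly is the implicit finiteness of $L$, which both your inversion step and the atom-dominance claim use.
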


We will sketch a proof at the end of this section. Meanwhile, we point out a simple corollary of the Crosscut Theorem: 

\begin{equation}\label{f.eq:joinnot1}
\textrm{If the join of the atoms is not ${\widehat{1}}$ then $\mu({\widehat{0}}, {\widehat{1}})=0$.}
\end{equation}

\begin{enumerate}
\item[9.] (Distributive lattice $L=J(P)$) 
%This result suffices to compute the M\"obius function of a distributive lattice $L=J(P)$. 
If $J-I$ is an antichain of $P$, then $[I,J]$ is a Boolean lattice in $L$ and $\mu_L(I,J) = (-1)^{J-I}$. Otherwise,
% if $J-I$ is not an antichain, then 
the join of the atoms of $[I,J]$ is $I \cup \min(J-I) \neq J$, and hence $\mu_L(I,J) = 0$.

\item[4.] (Young's lattice $Y$) We obtain this M\"obius function for free since $Y$ is distributive.
\end{enumerate}

Naturally, there are dual formulations to the previous two propositions, obtained by reversing the order of $L$. Also, there are many different versions of the crosscut theorem; see for example \cite{f.Rota1}.

%
%\bigskip
%
%
%\comment{Galois connections?}

\bigskip
\noindent \textsf{\textbf{M\"obius functions through multichains.}} If we know the zeta polynomial of a poset $P$ with ${\widehat{0}}$ and ${\widehat{1}}$, we can obtain its M\"obius number $\mu(P)$ immediately.

\begin{proposition} If $P$ is a poset with ${\widehat{0}}$ and ${\widehat{1}}$,
\begin{equation} \label{f.e:Z(-1)}
Z_P(-1) = \mu_P({\widehat{0}},{\widehat{1}}).
\end{equation}
\end{proposition}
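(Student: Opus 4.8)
The plan is to exploit the identity $Z_P(k) = \zeta_P^{\,k}({\widehat{0}},{\widehat{1}})$ for $k \geq 2$ together with the fact that $\mu = \zeta^{-1}$ in the incidence algebra $I(P)$. Since $Z_P$ is a polynomial, it is determined by its values at $k = 2, 3, 4, \ldots$, and the content of the proposition is that extrapolating this polynomial to $k = -1$ produces exactly $\mu_P({\widehat{0}},{\widehat{1}})$. So the whole thing amounts to: write down a closed form for $Z_P(k)$ valid for all integers $k$ (not just $k \geq 2$), plug in $k = -1$, and recognize the answer.

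Concretely, I would work with $\eta := \zeta - {\mathbf{1}}$, which is nilpotent in $I(P)$ since $P$ is finite: say $\eta^{m} = 0$ where $m$ exceeds the length of the longest chain in $P$. Then $\zeta = {\mathbf{1}} + \eta$, and for every integer $k$ we may \emph{define} $\zeta^{\,k} := ({\mathbf{1}} + \eta)^k = \sum_{i=0}^{m-1} \binom{k}{i} \eta^i$, a finite sum whose entries are polynomials in $k$; for $k \geq 0$ this agrees with the ordinary power, and for $k \geq 2$ evaluating at $({\widehat{0}},{\widehat{1}})$ gives $Z_P(k)$ by \eqref{f.e:zetas}. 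Because two polynomials agreeing at infinitely many integers are equal, the polynomial $Z_P(k)$ must equal $\zeta^{\,k}({\widehat{0}},{\widehat{1}}) = \sum_{i=0}^{m-1}\binom{k}{i}\eta^i({\widehat{0}},{\widehat{1}})$ for \emph{all} integers $k$. In particular, setting $k = -1$ and using $\binom{-1}{i} = (-1)^i$,
\[
Z_P(-1) = \sum_{i \geq 0} (-1)^i \eta^i({\widehat{0}},{\widehat{1}}) = \left(\sum_{i \geq 0} (-\eta)^i\right)({\widehat{0}},{\widehat{1}}) = ({\mathbf{1}} + \eta)^{-1}({\widehat{0}},{\widehat{1}}) = \zeta^{-1}({\widehat{0}},{\widehat{1}}) = \mu_P({\widehat{0}},{\widehat{1}}),
\]
where the geometric series is finite by nilpotency of $\eta$, and $({\mathbf{1}}+\eta)^{-1} = \zeta^{-1} = \mu$ since $\zeta$ is invertible in $I(P)$ with inverse $\mu$.

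The only genuinely delicate point — and the one I would state carefully rather than wave at — is the interchange of "the polynomial $Z_P$" with "the formal expression $\zeta^k$ computed termwise". The clean way is to observe that $\eta^i({\widehat{0}},{\widehat{1}})$ is a fixed constant (the number of chains of length $i$ from ${\widehat{0}}$ to ${\widehat{1}}$), so $\sum_i \binom{k}{i}\eta^i({\widehat{0}},{\widehat{1}})$ is manifestly a polynomial in $k$; it agrees with $Z_P(k)$ for all $k \geq 2$ by \eqref{f.e:zetas}; hence the two polynomials coincide identically, and one is free to evaluate the right-hand expression at $k=-1$. This also explains the appearance of $\binom{k-2}{i-2}$ versus $\binom{k}{i}$ in the two displayed formulas for $Z_P$: the shift by $2$ is just the difference between counting multichains ${\widehat{0}} = t_0 \leq \cdots \leq t_{k-2} = {\widehat{1}}$ with fixed endpoints and counting length-$k$ multichains in the interval, and it does not affect the argument. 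I do not anticipate any further obstacle; once the polynomial identity is in hand the evaluation at $-1$ is immediate.
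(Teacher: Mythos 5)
Your proof is correct and follows essentially the same route as the paper: identify $Z_P(k)$ with $\zeta^k({\widehat{0}},{\widehat{1}})$ for $k\geq 2$, note both are polynomials in $k$, and conclude they agree at $k=-1$ where $\zeta^{-1}=\mu$. Your binomial expansion of $({\mathbf{1}}+\eta)^k$ with $\eta=\zeta-{\mathbf{1}}$ nilpotent simply fills in, cleanly, the polynomiality step the paper leaves as an exercise and makes explicit why the value at $k=-1$ is $\zeta^{-1}({\widehat{0}},{\widehat{1}})$.
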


\begin{proof}
We saw in (\ref{f.e:zetas}) that $Z_P(k) = \zeta_P^k({\widehat{0}},{\widehat{1}})$ for all integers $k \geq 2$. It would be irresponsible to just set $k=-1$, but it is very tempting, since $\zeta^{-1} = \mu$.

In the spirit of \textbf{combinatorial reciprocity}, this is an instance where such irresponsible behavior pays off, with a bit of extra care. We know that $Z_P(k)$ is polynomial for $k \in {\mathbb{Z}}$, and we leave it as an exercise to show that $\zeta^k({\widehat{0}},{\widehat{1}})$ is also polynomial for $k \in {\mathbb{Z}}$. Since these two polynomials agree on infinitely many values, they also agree for $k=-1$, and the result follows.
%\comment{The polynomiality of $\zeta^k$ can be proved through an ad hoc argument, but we will derive it as a consequence of more general facts about Hopf algebras.}
\end{proof}

In light of (\ref{f.e:Z(-1)}), the zeta polynomial of $P$ will give us the M\"obius number $\mu(P)$ automatically. This is advantageous because sometimes the zeta polynomial is easier to compute than the M\"obius function, as it is the answer to an explicit enumerative question.

\begin{enumerate}
\item[6.] (Non-crossing partition lattice $NC_n$) From the zeta polynomial of $NC_n$ in Example \ref{f.ex:zeta} we immediately obtain that $\mu(NC_n) = (-1)^{n-1}C_{n-1}$ where $C_{n-1}$ is the $(n-1)$st Catalan number. This gives a formula for the full M\"obius function, since every interval in $NC_n$ is a product of smaller non-crossing partition lattices.
\end{enumerate}

%As another application of (\ref{f.e:Z(-1)}), we sketch a simple proof of the Crosscut Theorem of the previous section. Let $A$ be the set of atoms of $P$. For each subset $S \subseteq A$ let $s = \bigvee S$ and let $P_S = P_{\geq s} \cup {\widehat{0}}$. The inclusion-exclusion formula gives that $\sum_{S \subseteq A} (-1)^{|S|} Z_{P_S}(n) = 0$. Plugging in $n=-1$, the only surviving terms are $\mu({\widehat{0}}, {\widehat{1}})$ and the terms where $s={\widehat{1}}$. (But to know this, I need a different proof of (\ref{f.eq:joinnot1}).)
%

\bigskip
\noindent \textsf{\textbf{M\"obius functions through topology.}} Equation (\ref{f.e:Z(-1)}) has a topological interpretation which is an extremely powerful method for computing M\"obius functions. 

\begin{proposition} (Phillip Hall's Theorem) \label{f.p:PhillipHall} Let $P$ be a finite poset with a ${\widehat{0}}$ and ${\widehat{1}}$, and let $c_i$ be the number of chains ${\widehat{0}} = p_0 < p_1 < \cdots < p_i = {\widehat{1}}$ of length $i$ from ${\widehat{0}}$ to ${\widehat{1}}$ in $P$. Then
\[
\mu_P({\widehat{0}}, {\widehat{1}}) = c_0 - c_1 + c_2 - \cdots 
\]
\end{proposition}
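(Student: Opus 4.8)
The plan is to recognize Phillip Hall's Theorem as a direct consequence of the incidence-algebra identity $\mu = \zeta^{-1}$ combined with the chain-counting observation already recorded in Section \ref{f.sec:incidence}. Recall that we computed there that $(\zeta - {\mathbf{1}})^k(p,q)$ is the number of chains of length $k$ from $p$ to $q$. Applying this with $p = {\widehat{0}}$ and $q = {\widehat{1}}$, we have $c_i = (\zeta - {\mathbf{1}})^i({\widehat{0}}, {\widehat{1}})$. So the claimed formula $\mu_P({\widehat{0}},{\widehat{1}}) = c_0 - c_1 + c_2 - \cdots$ is exactly the statement that $\mu = \sum_{i \geq 0} (-1)^i (\zeta - {\mathbf{1}})^i$, evaluated on the interval $[{\widehat{0}},{\widehat{1}}]$.

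First I would observe that the sum $\sum_{i \geq 0} (-1)^i (\zeta - {\mathbf{1}})^i$ is a finite sum when evaluated on any fixed interval $[p,q]$ of a locally finite poset: since $P$ is finite (or at least $[{\widehat{0}},{\widehat{1}}]$ is finite), a chain of length $i$ from ${\widehat{0}}$ to ${\widehat{1}}$ can exist only for $i$ at most the length of the longest chain in $[{\widehat{0}},{\widehat{1}}]$, so $(\zeta - {\mathbf{1}})^i({\widehat{0}},{\widehat{1}}) = 0$ for $i$ large. Hence $g := \sum_{i \geq 0} (-1)^i (\zeta - {\mathbf{1}})^i$ is a well-defined element of the incidence algebra $I(P)$ on this interval. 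Then I would verify that $g$ is the two-sided inverse of $\zeta$: formally,
\[
\zeta \cdot g = ({\mathbf{1}} + (\zeta - {\mathbf{1}})) \cdot \sum_{i \geq 0} (-1)^i (\zeta - {\mathbf{1}})^i = \sum_{i \geq 0} (-1)^i (\zeta - {\mathbf{1}})^i + \sum_{i \geq 0} (-1)^i (\zeta-{\mathbf{1}})^{i+1} = {\mathbf{1}},
\]
the sum telescoping. Since inverses in $I(P)$ are unique (as noted in the discussion of the incidence algebra, an element $f$ with $f(p,p) \neq 0$ for all $p$ has a unique two-sided inverse), we conclude $g = \zeta^{-1} = \mu$. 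Evaluating at $({\widehat{0}},{\widehat{1}})$ and using $c_i = (\zeta - {\mathbf{1}})^i({\widehat{0}},{\widehat{1}})$ gives $\mu_P({\widehat{0}},{\widehat{1}}) = c_0 - c_1 + c_2 - \cdots$, as desired.

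The only subtlety to handle carefully — and the step I expect to be the main (minor) obstacle — is making the telescoping argument rigorous in the incidence algebra: one must check that the bilinear product commutes with the (locally finite) infinite sum defining $g$, i.e. that for each interval only finitely many terms contribute, so that rearranging and telescoping is legitimate. This is immediate once we fix attention on a single interval $[{\widehat{0}},{\widehat{1}}]$ of finite length and note that $(\zeta - {\mathbf{1}})$ is "strictly upper triangular" (its $(p,p)$ entries vanish), so its powers eventually vanish on any fixed interval; this is precisely the nilpotence already implicitly used in Section \ref{f.sec:incidence} when forming $({\mathbf{2}} - \zeta)^{-1} = {\mathbf{1}} + (\zeta - {\mathbf{1}}) + (\zeta - {\mathbf{1}})^2 + \cdots$. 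With that observation in hand the computation is purely formal, and the proof is short.
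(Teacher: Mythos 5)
Your proof is correct, and it takes the route that the paper only gestures at rather than the one it actually records. The paper deduces Phillip Hall's Theorem from the reciprocity statement $Z_P(-1)=\mu_P({\widehat{0}},{\widehat{1}})$ of (\ref{f.e:Z(-1)}), together with the expansion (\ref{f.e:Z}) of the zeta polynomial in terms of chain numbers and the bookkeeping identity $b_i = c_i + 2c_{i-1} + c_{i-2}$ relating chains of $P$ to chains from ${\widehat{0}}$ to ${\widehat{1}}$; it then adds, without details, that the result ``may also be proved directly in the incidence algebra of $P$.'' You carry out exactly that direct argument: $c_i = (\zeta-{\mathbf{1}})^i({\widehat{0}},{\widehat{1}})$, the element $g=\sum_{i\geq 0}(-1)^i(\zeta-{\mathbf{1}})^i$ is well defined because $(\zeta-{\mathbf{1}})$ has vanishing diagonal and is therefore nilpotent on the finite interval, the telescoping computation gives $\zeta\cdot g={\mathbf{1}}$, and uniqueness of inverses in $I(P)$ forces $g=\mu$. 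What your approach buys is self-containment: it needs nothing beyond the definition $\mu=\zeta^{-1}$ and the nilpotence already used implicitly when the paper forms $({\mathbf{2}}-\zeta)^{-1}$, and in particular it does not rest on the polynomiality/combinatorial-reciprocity argument behind (\ref{f.e:Z(-1)}). What the paper's route buys is economy in context: having already established $Z_P(-1)=\mu_P({\widehat{0}},{\widehat{1}})$, it gets Hall's formula as an immediate bookkeeping consequence, and the same comparison of $b_i$ with $c_i$ is what connects chain counts in $P$ to chain counts in the closed interval. Your one flagged subtlety (justifying the manipulation of the formally infinite sum) is handled correctly by restricting to the finite interval, so there is no gap.
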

This formula is equivalent to (\ref{f.e:Z(-1)}) in light of (\ref{f.e:Z}) and the relations $b_i = c_i + 2c_{i-1} + c_{i-2}$; it may also be proved directly in the incidence algebra of $P$. 

Let us now interpret this result topologically. The \textbf{order complex} $\Delta(P)$ of a poset $P$ is the simplicial complex whose vertices are the elements of $P$, and whose faces are the chains of $P$. 

\begin{theorem}\label{f.th:muEuler} Let $P$ be a finite poset with a ${\widehat{0}}$ and ${\widehat{1}}$, and let $\overline{P} = P - \{{\widehat{0}}, {\widehat{1}}\}$. Then
\[
\mu_P({\widehat{0}}, {\widehat{1}}) = {\widetilde{\chi}}(\Delta(\overline{P}))
\]
is the reduced Euler characteristic of $\Delta(\overline{P})$.
\end{theorem}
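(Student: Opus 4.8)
The plan is to connect Phillip Hall's Theorem (Proposition \ref{f.p:PhillipHall}) to the combinatorial definition of the reduced Euler characteristic of the order complex. First I would recall the relevant definitions: the order complex $\Delta(\overline{P})$ has the elements of $\overline{P} = P - \{{\widehat{0}}, {\widehat{1}}\}$ as vertices, and its $i$-dimensional faces are the chains $p_0 < p_1 < \cdots < p_i$ of $i+1$ elements in $\overline{P}$. If we let $f_i$ denote the number of $i$-dimensional faces of $\Delta(\overline{P})$ (with $f_{-1} = 1$ for the empty face), then by definition
\[
{\widetilde{\chi}}(\Delta(\overline{P})) = \sum_{i \geq -1} (-1)^i f_i = -1 + f_0 - f_1 + f_2 - \cdots.
\]

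The key observation is the bijection between chains from ${\widehat{0}}$ to ${\widehat{1}}$ in $P$ and chains in $\overline{P}$. A chain ${\widehat{0}} = p_0 < p_1 < \cdots < p_i = {\widehat{1}}$ of length $i$ in $P$ (that is, with $i+1$ elements, two of which are ${\widehat{0}}$ and ${\widehat{1}}$) corresponds exactly to the chain $p_1 < \cdots < p_{i-1}$ of $i-1$ elements in $\overline{P}$, which is a face of dimension $i-2$ in $\Delta(\overline{P})$. Thus $c_i = f_{i-2}$, where $c_i$ is the number of chains of length $i$ from ${\widehat{0}}$ to ${\widehat{1}}$ as in Proposition \ref{f.p:PhillipHall}; note this includes the degenerate cases $c_1 = 1 = f_{-1}$ (the single chain ${\widehat{0}} < {\widehat{1}}$, corresponding to the empty face) when ${\widehat{0}} \neq {\widehat{1}}$, and $c_0 = 0$ unless $P$ is a single point. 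Substituting into Hall's formula,
\[
\mu_P({\widehat{0}},{\widehat{1}}) = c_0 - c_1 + c_2 - c_3 + \cdots = -f_{-1} + f_0 - f_1 + \cdots = \sum_{i \geq -1} (-1)^i f_i = {\widetilde{\chi}}(\Delta(\overline{P})),
\]
where the sign works out because $c_i = f_{i-2}$ shifts the index by $2$, preserving parity. The edge case $P = \{{\widehat{0}}\} = \{{\widehat{1}}\}$ (so $\overline{P} = \emptyset$) should be checked separately: there $\mu_P = 1$ and $\Delta(\emptyset)$ is the void complex or consists only of the empty face, giving ${\widetilde{\chi}} = -(-1) = 1$ or by convention $1$, consistent with the claim.

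I do not expect any serious obstacle here; the main subtlety is bookkeeping the index shift and the boundary conventions for the empty face and the reduced Euler characteristic, making sure that $f_{-1} = 1$ is handled consistently and that the parity of $(-1)^i$ matches up correctly after the substitution $i \mapsto i-2$. A secondary point worth stating cleanly is why Proposition \ref{f.p:PhillipHall} itself holds, but since it is asserted earlier in the excerpt I would simply cite it. So the proof is essentially: invoke Phillip Hall's Theorem, observe that $c_i$ counts $(i-2)$-faces of $\Delta(\overline{P})$, and recognize the resulting alternating sum as the reduced Euler characteristic.
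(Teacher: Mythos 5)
Your proof is correct and is essentially the paper's own argument: the paper likewise deduces the theorem directly from Phillip Hall's Theorem by identifying chains of length $i$ from ${\widehat{0}}$ to ${\widehat{1}}$ with the $(i-2)$-dimensional faces of $\Delta(\overline{P})$ (including the empty face) and recognizing the alternating sum as ${\widetilde{\chi}}$. The only quibble is your degenerate case ${\widehat{0}}={\widehat{1}}$, where under the usual convention ${\widetilde{\chi}}(\{\emptyset\})=-1$ while $\mu=1$; this case is implicitly excluded and does not affect the argument.
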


\begin{proof} In light of Proposition \ref{f.p:PhillipHall}, this follows immediately from the combinatorial formula
\[
{\widetilde{\chi}}(\Delta) = \sum_{k=-1}^d (-1)^k f_k
\]
for the Euler characteristic of $|\Delta|$, where $f_k$ is the number of $k$-dimensional faces of $\Delta$.
\end{proof}

Let us use this topological description to sketch proofs of the remaining M\"obius functions of Theorem \ref{f.th:Mobiusformulas}.

\begin{enumerate}
\item[10.] (Face lattice of a polytope $L(P)$)
The \textbf{barycentric subdivision} $\textrm{sd}(P)$ is a simplicial complex with a vertex $v(F)$ at the barycenter of each proper face $F$. It has a simplex connecting vertices $v(F_1), \ldots, v(F_k)$ whenever $F_1 \subset \cdots \subset F_k$. As abstract simplicial complexes, $\textrm{sd}(P)$ equals $\Delta(\overline{F(P)})$. Geometrically, $\textrm{sd}(P)$ is a subdivision of the boundary of the polytope $P$, and hence is homeomorphic to the sphere $\mathbb{S}^{\dim P -1}$. Therefore 
\[
\mu({\widehat{0}}, {\widehat{1}}) = {\widetilde{\chi}}(\Delta(\overline{P})) = {\widetilde{\chi}}(\mathbb{S}^{\dim P - 1}) = (-1)^{\dim P}.
\]
We will see in Section \ref{f.sec:polytopes} that every interval of $F(P)$ is itself the face poset of a polytope. It then follows that $\mu(F,G) = (-1)^{\dim G - \dim F}.$

\item [11.] (Face lattice of a subdivision ${\mathcal{T}}$ of a polytope $P$) A similar argument holds, though the details are slightly more subtle; see \cite[Prop. 3.8.9]{f.EC1}.

\item[7.] (Bruhat order in permutations $S_n$)
There are several known proofs of the fact that $\mu(u,v) = (-1)^{l(u)-l(v)}$, none of which is easy. The first proof was an ad hoc combinatorial argument due to Verma \cite{f.Verma}. Later Kazhdan and Lusztig \cite{f.KazhdanLusztig} and Stembridge \cite{f.Stembridge} gave algebraic proofs using Kazhdan-Lusztig polynomials and Hecke algebras, respectively. Bj\"orner and Wachs \cite{f.BjornerWachsBruhat} gave a topological proof, based on Theorem \ref{f.th:muEuler} and further tools from topological combinatorics.
As we remarked earlier, there are similar formulas for the M\"obius function of an arbitrary parabolic quotient of an arbitrary Coxeter group; see \cite{f.BjornerBrenti} or \cite{f.Stembridge}.
\end{enumerate}

Theorem \ref{f.th:muEuler} tells us that in order to compute M\"obius functions of posets of interest, it can be very useful to understand the topology of their underlying order complexes. Conversely, combinatorial facts about M\"obius functions often lead to the discovery of topological properties of these complexes. This is the motivation for the very rich study of \textbf{poset topology}. We refer the reader to the survey \cite{f.Wachs} for further information on this topic.

%\comment{
%
%\subsection{\textsf{Poset topology}}\label{f.sec:posettopology}
%
%A \textbf{simplicial complex} on a vertex set $V$ is a set $\Delta$ of subsets of $V$, called \textbf{faces}, which contains all singletons, and is such that any subset of a face of $\Delta$ is a face of $\Delta$. The \textbf{reduced Euler characteristic} is 
%\[
%{\widetilde{\chi}}(\Delta) = \sum_{k=-1}^d (-1)^k \textrm{rank } \widetilde{H}_k(|\Delta|; {\mathbb{Z}})
%\]
%where $\widetilde{H}_k(|\Delta|; {\mathbb{Z}})$ is the $k$-th reduced homology group of $|\Delta|$. The Euler characteristic it is a topological invariant, which only depends on the underlying space $|\Delta|$. In fact, it is determined entirely by the homotopy type of $|\Delta|$. However, it can also be computed combinatorially thanks to the formula
%\[
%{\widetilde{\chi}}(\Delta) = \sum_{k=-1}^d (-1)^k f_k
%\]
%where $f_k$ is the number of faces of dimension $k$.
%
%Shellability, discrete Morse theory, Cohen-Macaulay complexes, Gorenstein* posets
%
%Multitriangulations?
%
%\subsubsection{\textsf{Eulerian posets}}
%
%cd-index
%}
%

\subsection{{\textsf{Eulerian posets, flag $f$-vectors, and flag $h$-vectors}}} \label{f.sec:Eulerian}

Let $P$ be a graded poset of height $r$. The \textbf{flag $f$-vector} $(f_S \, : \, S \subseteq [0,r])$ and the \textbf{flag $h$-vector} $(h_S \, : \, S \subseteq [0,r])$ are defined by 
\[
f_S = \textrm{number of chains } p_1 < p_2 < \cdots < p_k \textrm{ with } \{r(p_1), \ldots r(p_k)\} = S, \quad \textrm{and}
\]
%for each subset $S \subseteq [0,n]$. The \textbf{flag $h$-vector} $(h_S \, : \, S \subseteq [0,n])$ is defined by 
\[
h_S = \sum_{T \subseteq S} (-1)^{|S|-|T|}f_T, \qquad  \qquad 
f_S = \sum_{T \subseteq S} h_T
\]
for $S \subseteq [0,r]$.
%
%\[
%h_S(P) = \sum_{T \subseteq S} (-1)^{|S| - |T|} f_T(P)
%\]
%We call $(f_S \, : \, S \subseteq [d])$ and $(h_S \, : \, S \subseteq [d])$ the \textbf{flag $f$-vector} and \textbf{flag $h$-vector} of $P$, respectively.

%For each $S$ let $f_S$ be the number of $S$-flags, and let the \textbf{flag $f$-vector} of $P$ be $(f_S\, : \, S \subseteq [d])$. Let the \textbf{flag $h$-vector} of $P$ be $(h_S \, : \, S \subseteq [d])$ where
%\[
%h_S = \sum_{T \subseteq S} (-1)^{|S|-|T|}f_T, \qquad 
%f_S = \sum_{T \subseteq S} h_T
%\]
%for $S \subseteq [d]$. 

\begin{example}\label{f.ex:prism}
Let $H$ be a hexagonal prism of Figure \ref{f.fig:hexagonalprism}, and let $P=L(H) - \{\widehat{0}, \widehat{1}\}$ be its face lattice, with the top and bottom element (the empty face and the full face $H$) removed. It is not so enlightening to draw the poset, but we can still compute its flag $f$ and $h$-vectors.
For example, $f_{\{0,2\}}=36$ counts the pairs $(v,f)$ of a vertex $v$ contained in a 2-face $f$. We get:
%The flag $f$-vector and flag $h$-vector are:
\begin{center}
\begin{tabular}{|c|cccccccc|}
\hline
 & $\emptyset$ & 0 & 1 & 2 & 01 & 02 & 12 & 012 \\
 \hline
$f_S$ &  1 & 12 & 18 & 8 & 36 & 36 & 36 & 72 \\
$h_S$ & 1 & 11 & 17 & 7 & 7 & 17 & 11 & 1 \\
\hline
\end{tabular}
\end{center}
\end{example}

\begin{figure}[ht]
 \begin{center}
  \includegraphics[height=3cm]{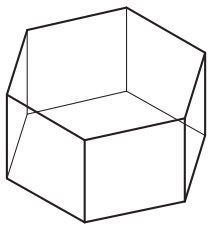}
  \caption{ \label{f.fig:hexagonalprism}
A hexagonal prism.}
  \end{center}
\end{figure}

This example suggests that there may be additional structure in the flag $h$-vector; for instance in this example we have $h_S = h_{[0,d]-S}$ for all $S$. This equation is not true in general, but it does hold for an important class of posets, which we now discuss.

\bigskip

\noindent
\textsf{\textbf{Eulerian posets.}} Say a graded poset $P$ is \textbf{Eulerian} if $\mu(x,y) = (-1)^{r(y)-r(x)}$ for all $x \leq y$. 

In Theorem \ref{f.th:Mobiusformulas} we saw three important families of Eulerian posets: Boolean lattices, the Bruhat order, and face posets of polytopes. In particular, the poset $P$ of Example \ref{f.ex:prism} is Eulerian.

For an Eulerian poset, there are many linear relations among the $f_S$s, which are  easier to describe in terms of the $h_S$s; for example, $h_S = h_{[0,d]-S}$ for all $S$. To describe them all, we further encode the flag $f$-vector in a polynomial in non-commuting variables ${\mathbf{a}}$ and ${\mathbf{b}}$ called the \textbf{${\mathbf{a}}{\mathbf{b}}$-index}, which is defined to be
\[
\Phi_P[{\mathbf{a}},{\mathbf{b}}] = \sum_{S \subseteq [d]} h_Su_S
\]
where $u_S=u_1\ldots u_{d}$ and $u_i={\mathbf{a}}$ if $i \notin S$ and $u_i={\mathbf{b}}$ if $i \in S$. This is an element of the ring $\mathbb{Z}\langle{\mathbf{a}},{\mathbf{b}}\rangle$ of integer polynomials in the non-commutative variables ${\mathbf{a}}$ and ${\mathbf{b}}$.

\begin{theorem} \label{f.th:cd-index} \cite{f.BayerBillera, f.BayerKlapper, f.Stanley.cdindex}
The ${\mathbf{a}}{\mathbf{b}}$-index of an Eulerian poset $P$ can be expressed uniquely as a polynomial in
\[
{\mathbf{c}}={\mathbf{a}}+{\mathbf{b}}, \qquad {\mathbf{d}}={\mathbf{a}}{\mathbf{b}}+{\mathbf{b}}{\mathbf{a}}
\]
called the ${\mathbf{c}}{\mathbf{d}}$-index $\Psi_P({\mathbf{c}},{\mathbf{d}})$ of $P$. Furthermore, if $P$ is the face poset of a polytope, then the coefficients of the ${\mathbf{c}}{\mathbf{d}}$-index are non-negative.
\end{theorem}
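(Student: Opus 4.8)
The statement has two distinct parts, and I would treat them separately. The first part --- that the ${\mathbf{a}}{\mathbf{b}}$-index $\Phi_P[{\mathbf{a}},{\mathbf{b}}]$ of an Eulerian poset can be rewritten as a polynomial $\Psi_P({\mathbf{c}},{\mathbf{d}})$ in ${\mathbf{c}}={\mathbf{a}}+{\mathbf{b}}$ and ${\mathbf{d}}={\mathbf{a}}{\mathbf{b}}+{\mathbf{b}}{\mathbf{a}}$ --- is a purely linear-algebraic statement about the flag $h$-vector. The plan is to first identify the linear relations among the $h_S$ forced by the Eulerian condition. The key input is the Dehn--Sommerville-type relations, which I would derive from $\mu(x,y)=(-1)^{r(y)-r(x)}$ applied inside every interval: summing the defining recursion (\ref{f.e:Mobius1}) over the interval $[x,y]$ and using the known value of $\mu$ gives, for any chain with ranks $S$ and any ``gap'' in $S$ of length $\geq 2$, a relation expressing the alternating sum of $h_T$ over intermediate rank sets. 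The cleanest packaging is: a monomial $u_S$ in ${\mathbf{a}},{\mathbf{b}}$ satisfies, whenever it contains a factor $\cdots {\mathbf{a}}{\mathbf{a}} \cdots$ or is flanked appropriately, the substitution rule forcing $\Phi_P$ to lie in the subalgebra generated by ${\mathbf{c}}$ and the single ``even'' corrector ${\mathbf{d}}$. Concretely, I would show the Eulerian relations are exactly equivalent to: $\Phi_P$ is fixed by the involution ${\mathbf{a}}\leftrightarrow{\mathbf{b}}$ composed with word-reversal, together with the stronger local relations, and then invoke the fact (a short induction on $d=r$, splitting a ${\mathbf{c}}{\mathbf{d}}$-word by its first letter) that the span of ${\mathbf{c}}{\mathbf{d}}$-monomials of degree $d$ has dimension equal to the Fibonacci number $F_{d+1}$, which matches the dimension of the space of flag $h$-vectors of Eulerian posets of rank $d$. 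Uniqueness then follows because the ${\mathbf{c}}{\mathbf{d}}$-monomials are linearly independent in $\mathbb{Z}\langle{\mathbf{a}},{\mathbf{b}}\rangle$.

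For the existence/uniqueness half, the main obstacle is bookkeeping: correctly stating the generalized Dehn--Sommerville relations and checking the dimension count. I would organize this by inducting on the rank, peeling off the top or bottom rank and using that every interval $[{\widehat{0}},y]$ and $[x,{\widehat{1}}]$ of an Eulerian poset is again Eulerian, so the flag data of $P$ is built from flag data of lower-rank Eulerian posets; this reduces the relation-checking to the rank-$2$ and rank-$3$ base cases, which are immediate.

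The second part --- nonnegativity of the coefficients of $\Psi_P$ when $P$ is the face poset of a (convex) polytope --- is the genuinely hard step, and I would not attempt a self-contained proof; I would follow Stanley's approach via the theory of shellings, or cite it. The idea is: a polytope boundary admits a shelling, i.e. an ordering of its facets such that each facet, when added, meets the union of the previous ones in a nonempty union of its own facets forming an initial segment of a shelling of that facet's boundary. One then shows the ${\mathbf{c}}{\mathbf{d}}$-index is additive over such a shelling in a way that contributes only nonnegative terms --- more precisely, one introduces a ``local'' $\mathbf{a}\mathbf{b}$-index associated to each shelling step and proves by induction on dimension that its ${\mathbf{c}}{\mathbf{d}}$-expansion has nonnegative coefficients, using the Bruggesser--Mani theorem that polytope boundaries are shellable and that links of faces in the boundary are again polytopal spheres with the inherited shelling structure. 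The key obstacle here is that nonnegativity is genuinely a geometric (not merely combinatorial-Eulerian) phenomenon: there exist Eulerian posets with negative ${\mathbf{c}}{\mathbf{d}}$-coefficients, so any proof must use convexity, and the shelling argument is the standard device that does. I would present the statement, sketch the shelling-additivity strategy in a sentence or two, and refer to \cite{f.BayerBillera, f.BayerKlapper, f.Stanley.cdindex} for the complete argument, since a full treatment would require developing shellability of polytopes from scratch.
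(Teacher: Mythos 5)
The paper does not actually prove this theorem: it is quoted from the literature with citations to Bayer--Billera, Bayer--Klapper, and Stanley, and the surrounding text only illustrates it (hexagonal prism) and draws the Fibonacci-dimension corollary. So there is no in-paper argument to compare against; what you have written is an outline of the standard proofs in exactly those references, and as such it is the right plan -- generalized Dehn--Sommerville relations from the Eulerian condition for existence/uniqueness, and Stanley's shelling argument (Bruggesser--Mani shellability plus nonnegativity of the contribution of each shelling step, proved by induction on dimension) for the polytopal nonnegativity, which is indeed the genuinely hard part and is reasonable to cite rather than reprove. Your remark that nonnegativity cannot be a purely Eulerian phenomenon is correct: there are Eulerian posets whose $\mathbf{c}\mathbf{d}$-index has negative coefficients, so convexity (or, in later work of Karu, the Gorenstein* condition) must enter.

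One step in your existence argument needs tightening. You argue by matching dimensions: the $\mathbf{c}\mathbf{d}$-monomials of degree $d$ span a space of dimension $F_{d+1}$, and the flag $h$-vectors of Eulerian posets of rank $d$ span a space of the same dimension -- but equality of dimensions of two subspaces of $\mathbb{Z}\langle \mathbf{a},\mathbf{b}\rangle_d$ does not by itself give the containment you need (that every Eulerian $\mathbf{a}\mathbf{b}$-index lies in the $\mathbf{c}\mathbf{d}$-span). The correct chain is: (i) the Eulerian condition, applied to every interval, forces the $\mathbf{a}\mathbf{b}$-index into the solution space $W$ of the generalized Dehn--Sommerville relations; (ii) a linear-algebra count shows $\dim W \le F_{d+1}$; (iii) the $F_{d+1}$ $\mathbf{c}\mathbf{d}$-monomials lie in $W$ and are linearly independent, hence span $W$; therefore every Eulerian $\mathbf{a}\mathbf{b}$-index is a $\mathbf{c}\mathbf{d}$-polynomial, with uniqueness from the independence in (iii). (The fact that polytope face lattices realize the full dimension $F_{d+1}$ is what makes the Corollary about flag $f$-vectors sharp, but it is not needed for the containment.) With that adjustment, and your induction on rank using that intervals of Eulerian posets are Eulerian, the first half is sound; the second half is appropriately deferred to \cite{f.Stanley.cdindex}.
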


\begin{example} The ${\mathbf{a}}{\mathbf{b}}$-index of the hexagonal prism in Example \ref{f.ex:prism}
is
\begin{eqnarray*}
\Phi({\mathbf{a}},{\mathbf{b}}) &=& {\mathbf{a}}{\mathbf{a}}{\mathbf{a}}+11{\mathbf{b}}{\mathbf{a}}{\mathbf{a}}+17{\mathbf{a}}{\mathbf{b}}{\mathbf{a}}+7{\mathbf{a}}{\mathbf{a}}{\mathbf{b}}+7{\mathbf{b}}{\mathbf{b}}{\mathbf{a}}+17{\mathbf{b}}{\mathbf{a}}{\mathbf{b}}+11{\mathbf{a}}{\mathbf{b}}{\mathbf{b}}+{\mathbf{b}}{\mathbf{b}}{\mathbf{b}} \\
&=& ({\mathbf{a}}+{\mathbf{b}})^3 + 6({\mathbf{a}}+{\mathbf{b}})({\mathbf{a}}{\mathbf{b}}+{\mathbf{b}}{\mathbf{a}}) + 10({\mathbf{a}}{\mathbf{b}}+{\mathbf{b}}{\mathbf{a}})({\mathbf{a}}+{\mathbf{b}})
\end{eqnarray*}
so the ${\mathbf{c}}{\mathbf{d}}$-index is
\[
\Psi({\mathbf{c}},{\mathbf{d}}) = {\mathbf{c}}^3+6{\mathbf{c}}{\mathbf{d}}+10{\mathbf{d}}{\mathbf{c}}.
\]
In general, the 8 entries of the flag $f$-vector of a $3$-polytope (or any other Eulerian poset of rank $3$) are determined completely by only 3 numbers, namely, the coefficients of ${\mathbf{c}}^3, {\mathbf{c}}{\mathbf{d}},$ and ${\mathbf{d}}{\mathbf{c}}$ in the ${\mathbf{c}}{\mathbf{d}}$-index.
\end{example}

The ${\mathbf{c}}{\mathbf{d}}$-index encodes optimally the linear relations among the entries of flag $f$-vectors of polytopes. Since the number of monomials in ${\mathbf{c}}$ and ${\mathbf{d}}$ of ${\mathbf{a}}{\mathbf{b}}$-degree $d$ is the Fibonacci number $F_{d+1}$, this is the smallest number of entries in $(f_S)$ from which we can recover the whole flag $f$-vector -- this is much smaller than $2^d$.

\begin{corollary}\cite{f.BayerBillera, f.BayerKlapper}
The subspace of ${\mathbb{R}}^{2^d}$ spanned by the flag $f$-vectors of Eulerian posets of rank $d$ (or by the flag $f$-vectors of $d$-polytopes) has dimension equal to the Fibonacci number $F_{d+1}$.
\end{corollary}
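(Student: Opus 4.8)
The plan is to prove the two inequalities $\dim \le F_{d+1}$ and $\dim \ge F_{d+1}$ separately. For the upper bound, first observe that the flag $f$-vector and the flag $h$-vector of a graded poset in the relevant class determine each other through the invertible (unipotent, triangular) linear transformation displayed in Section \ref{f.sec:Eulerian}; hence the $\mathbb{R}$-linear span of the flag $f$-vectors of a family of such posets equals, up to a fixed linear automorphism of $\mathbb{R}^{2^d}$, the span of their flag $h$-vectors. Next, repackage the flag $h$-vector as the ${\mathbf{a}}{\mathbf{b}}$-index: the map $(h_S)_{S} \mapsto \sum_S h_S u_S$ is a linear isomorphism from $\mathbb{R}^{2^d}$ onto the degree-$d$ homogeneous part of $\mathbb{R}\langle {\mathbf{a}},{\mathbf{b}}\rangle$, whose monomials are precisely the $2^d$ words of length $d$ in ${\mathbf{a}}$ and ${\mathbf{b}}$. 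By Theorem \ref{f.th:cd-index}, when $P$ is Eulerian its ${\mathbf{a}}{\mathbf{b}}$-index lies in the subspace spanned by the ${\mathbf{c}}{\mathbf{d}}$-monomials of ${\mathbf{a}}{\mathbf{b}}$-degree $d$. Since ${\mathbf{c}}$ has degree $1$ and ${\mathbf{d}}$ has degree $2$, such monomials are in bijection with the compositions of $d$ into parts $1$ and $2$, of which there are $F_{d+1}$. Therefore the span of the flag $f$-vectors of rank-$d$ Eulerian posets (and a fortiori of $d$-polytopes) has dimension at most $F_{d+1}$.

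For the lower bound I would exhibit $d$-polytopes whose ${\mathbf{c}}{\mathbf{d}}$-indices span the full $F_{d+1}$-dimensional space of ${\mathbf{c}}{\mathbf{d}}$-polynomials of degree $d$; since every polytope is Eulerian, this settles both versions of the statement at once. The key tool is that the ${\mathbf{c}}{\mathbf{d}}$-index behaves well under the pyramid and prism (equivalently bipyramid) operations: if $Q$ is a $(d-1)$-polytope, then $\Psi_{\mathrm{Pyr}(Q)}$ and $\Psi_{\mathrm{Prism}(Q)}$ are obtained from $\Psi_Q$ by applying explicit $\mathbb{R}$-linear operators on $\mathbb{R}\langle {\mathbf{c}},{\mathbf{d}}\rangle$ (certain derivations), which raise degree appropriately — the pyramid operator roughly appending a ${\mathbf{c}}$ and the prism operator introducing a ${\mathbf{d}}$, modulo lower-order corrections. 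Starting from the point (${\mathbf{c}}{\mathbf{d}}$-index $1$) and iterating these two operations, one obtains a family $\{P_\alpha\}$ of polytopes indexed by the compositions $\alpha$ of $d$ into $1$'s and $2$'s; an induction on $d$, exploiting that the correction terms are smaller in a suitable monomial order, shows that the ${\mathbf{c}}{\mathbf{d}}$-indices $\{\Psi_{P_\alpha}\}$ are unitriangular with respect to the ``leading'' ${\mathbf{c}}{\mathbf{d}}$-monomial of $\alpha$, hence linearly independent, hence a basis. Combined with the upper bound this gives $\dim = F_{d+1}$.

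The main obstacle is precisely this lower bound: once Theorem \ref{f.th:cd-index} is granted, the upper bound is essentially formal, but realizing the entire $F_{d+1}$-dimensional space by actual polytopes requires the combinatorial bookkeeping of how pyramids and prisms act on the ${\mathbf{c}}{\mathbf{d}}$-index together with the triangularity argument that forces independence. I would organize it as: (i) record Stanley's pyramid/prism formulas for the ${\mathbf{c}}{\mathbf{d}}$-index; (ii) define the iterated family $\{P_\alpha\}$ above; (iii) prove the unitriangularity and conclude. Alternatively, one can simply invoke the explicit construction of Bayer and Billera. See \cite{f.BayerBillera, f.BayerKlapper, f.Stanley.cdindex} for the operator formulas and the original argument.
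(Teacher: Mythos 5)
Your proposal is correct and, as far as the paper goes, it is the same argument: the upper bound you give (flag $f$- and flag $h$-vectors related by an invertible triangular map, Theorem \ref{f.th:cd-index} placing the ${\mathbf{a}}{\mathbf{b}}$-index of an Eulerian poset in the span of the ${\mathbf{c}}{\mathbf{d}}$-monomials of degree $d$, and the count of such monomials by compositions of $d$ into $1$'s and $2$'s, i.e.\ $F_{d+1}$) is exactly the reasoning in the paragraph preceding the corollary. For the lower bound the paper gives no proof at all; it defers entirely to \cite{f.BayerBillera, f.BayerKlapper}, so your pyramid/prism construction is supplying the part the paper outsources, and your fallback of ``invoke Bayer--Billera'' is literally what the paper does.

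Two cautions about your sketch of that lower bound. First, the heuristic ``pyramid appends a ${\mathbf{c}}$, prism introduces a ${\mathbf{d}}$'' is not accurate as a leading-term statement: in the Ehrenborg--Readdy operator formulas both $\Psi_{\mathrm{Pyr}(Q)}$ and $\Psi_{\mathrm{Prism}(Q)}$ have the shape $\Psi_Q\,{\mathbf{c}} + (\text{derivation terms})$, so they share the same ``append ${\mathbf{c}}$'' part and differ only in the correction terms (e.g.\ $\Psi_{\mathrm{Prism}(Q)}-\Psi_{\mathrm{Pyr}(Q)}$ is a derivation applied to $\Psi_Q$ that sends ${\mathbf{c}}\mapsto{\mathbf{d}}$). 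The unitriangularity therefore needs a more careful ordering or an induction on these differences -- this is essentially what Bayer and Billera do, with bipyramids in place of prisms. Second, both operations raise the dimension by exactly $1$, so ``one operation per part of the composition'' would produce a polytope of dimension equal to the number of parts, not $d$; the correct bookkeeping is $d$ operations in total with the prism/bipyramid steps non-adjacent (equivalently, a part $2$ corresponds to a pyramid--bipyramid pair), and such sequences are again counted by $F_{d+1}$. With those repairs, or by citing the construction as you and the paper both allow, the argument is complete.
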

%
%\noindent \textbf{Remark.} More generally, say a graded poset $Q$ is \textbf{Eulerian} if $\mu(p,q) = (-1)^{r(q)-r(p)}$ for all $p \leq q$. The face poset of a polytope is Eulerian by Theorem \ref{f.th:Mobiusformulas}. For $S \subseteq \{1, 2, \ldots, r(Q)-1\}$ let $f_S$ be the number of chains having elements in the ranks specified by $S$. The flag $f$-vector of \textbf{any} Eulerian poset can still be expressed in the same way as a $\c{\mathbf{d}}$-index.
%%Define the flag $h$-vector and ${\mathbf{a}}\b$-index as above. Then the ${\mathbf{a}}\b$-index can still be expressed as a $\c{\mathbf{d}}$-index. 
%However, the coefficients no longer have to be non-negative.

\newpage
\begin{Large}
\noindent {\textbf{\textsf{PART 2. DISCRETE GEOMETRIC METHODS}}}
\end{Large}

\bigskip

Part 2 is devoted to discrete geometry, which studies the connections between combinatorics and the geometry of subspaces (points, lines, planes, $\ldots$, hyperplanes) in a Euclidean space. 
Configurations in the plane and 3-space  have received great attention, and feature many interesting results and open questions; however, very few of them involve exact enumeration in a meaningful way. Instead, we will focus on the discrete geometry of higher dimensions, where:
%
%\begin{itemize}
%\item Studying ``general" geometric configurations leads to interesting enumerative questions.
%\item we have enough room to construct ``special" geometric configurations that model various combinatorial structures of interest. 
%\end{itemize}

\smallskip
\noindent $\bullet$ studying ``general" geometric configurations leads to interesting enumerative questions, and

\smallskip
\noindent $\bullet$ we have enough room to construct ``special" geometric configurations that model various combinatorial structures of interest. 

\bigskip

This second part is divided into three sections on closely interrelated topics. In Section \ref{f.sec:polytopes} we discuss polytopes, which are the higher dimensional generalization of polygons. Section \ref{f.sec:hyparrs} discusses 
arrangements of hyperplanes in a vector space.
Finally Section \ref{f.sec:matroids} is devoted to matroids, which are  combinatorial objects that simultaneously abstract arrangements of vectors, graphs, and matching problems, among others.

\section{\textsf{Polytopes}}\label{f.sec:polytopes}

The theory of polytopes is a vast area of study, with deep connections to pure (algebraic geometry, commutative algebra, representation theory) and applied mathematics (optimization). Again, we focus on aspects related to enumeration. For a general introduction to polytopes, see \cite{f.Grunbaum, f.Ziegler}.

After discussing the basic definitions and facts in Section \ref{f.sec:polytopefacts} and some important examples in Section \ref{f.sec:polytopeexamples}, we turn to enumerative questions. Section \ref{f.sec:countingfaces} is devoted to the enumeration of faces of various dimensions. Section \ref{f.sec:Ehrhart} is on \emph{Ehrhart theory}, which measures polytopes by counting the lattice points that they (and their dilations) contain.

\subsection{\textsf{Basic definitions and constructions}}\label{f.sec:polytopefacts}

Recall that a set $S$ in Euclidean space ${\mathbb{R}}^d$ is \textbf{convex} if for every pair of points $u, v$ in $S$, the line segment $uv$ is in $S$. The \textbf{convex hull} ${\mathrm{conv }}(S)$ of a set $S \subseteq {\mathbb{R}}^d$ is the minimal convex set containing $S$. If $S = \{{\mathbf{v}}_1, \ldots, {\mathbf{v}}_n\}$ then
\begin{eqnarray*}
{\mathrm{conv }} \{{\mathbf{v}}_1, \ldots, {\mathbf{v}}_n\} &=& \left\{\lambda_1 {\mathbf{v}}_1 + \cdots + \lambda_n {\mathbf{v}}_n \, : \, \lambda_1, \ldots, \lambda_n \geq 0,  \lambda_1 + \cdots + \lambda_n = 1\right\}\\
&=& \textrm{ intersection of all convex sets containing } {\mathbf{v}}_1, \ldots, {\mathbf{v}}_n. 
\end{eqnarray*}
We will only be interested in convex polytopes, and when we talk about polytopes, it will be assumed that they are convex.

A \textbf{hyperplane} $H$ in Euclidean space ${\mathbb{R}}^d$ is an affine subspace of dimension $d-1$; it is given by a  linear equation 
\[
H = \{{\mathbf{x}} \in {\mathbb{R}}^d \, : \, {\mathbf{a}} \cdot {\mathbf{x}} = b\} \qquad \textrm{ for some ${\mathbf{a}} \in {\mathbb{R}}^d - \{\mathbf{0}\}$ and $b \in {\mathbb{R}}$}.
\]
It separates ${\mathbb{R}}^d$ into the two halfspaces given by the inequalities ${\mathbf{a}} \cdot {\mathbf{x}} \leq b$ and ${\mathbf{a}} \cdot {\mathbf{x}} \geq b$, respectively.

There are two equivalent ways of defining convex polytopes: the \emph{V-description} gives a polytope in terms of its vertices, and the \emph{H-description} gives it in terms of its defining inequalities. 

\medskip

\noindent \textbf{\textsf{The V-description}}. A \textbf{convex polytope} $P$ is the convex hull of finitely many points ${\mathbf{v}}_1, \ldots, {\mathbf{v}}_n$ in a Euclidean space ${\mathbb{R}}^d$:
\[
P = {\mathrm{conv }} \{{\mathbf{v}}_1, \ldots, {\mathbf{v}}_n\} 
\]

\noindent \textbf{\textsf{The H-description}}. A \textbf{convex polytope} $P$ is a bounded intersection of finitely many halfspaces in a Euclidean space ${\mathbb{R}}^d$:
%subset of a Euclidean space ${\mathbb{R}}^d$ which may be described by a finite system of linear inequalities:
\begin{eqnarray*}
P & = & \left\{{\mathbf{x}} \in {\mathbb{R}}^d \, : \, {\mathbf{a}}_1 \cdot {\mathbf{x}} \leq b_1, \ldots, {\mathbf{a}}_m \cdot {\mathbf{x}} \leq b_m \right\} \\
&=& \left\{x \in {\mathbb{R}}^d \, : \, A {\mathbf{x}} \leq {\mathbf{b}}\right\}
\end{eqnarray*}
where ${\mathbf{a}}_1, \ldots, {\mathbf{a}}_m \in {\mathbb{R}}^d$ and $b_1, \ldots, b_m \in {\mathbb{R}}$. In the second expression we think of ${\mathbf{x}}$ as a column vector, $A$ is the $m \times d$ matrix with rows ${\mathbf{a}}_1, \ldots {\mathbf{a}}_m$ and ${\mathbf{b}}$ is the column vector with entries $b_1, \ldots, b_m$.

\begin{theorem}
A subset $P \subseteq {\mathbb{R}}^d$ is the convex hull of a finite set of points if and only if it is a bounded intersection of finitely many halfspaces. 
\end{theorem}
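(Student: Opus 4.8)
The plan is to prove the two implications together by \emph{homogenization}: lift $P$ into one more dimension as a cone, reduce everything to the corresponding statement for polyhedral cones, and prove that statement by Fourier--Motzkin elimination.

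\emph{Reduction to cones.} Given $P={\mathrm{conv }}\{{\mathbf{v}}_1,\dots,{\mathbf{v}}_n\}\subseteq{\mathbb{R}}^d$, I would introduce the cone $C=\{\sum_i\lambda_i({\mathbf{v}}_i,1):\lambda_i\ge 0\}\subseteq{\mathbb{R}}^{d+1}$ generated by the lifted points, so that $P=\{{\mathbf{x}}\in{\mathbb{R}}^d:({\mathbf{x}},1)\in C\}$. Conversely, given $P=\{{\mathbf{x}}:A{\mathbf{x}}\le{\mathbf{b}}\}$ nonempty and bounded, I would form the cone $\widetilde C=\{({\mathbf{x}},t):A{\mathbf{x}}\le t{\mathbf{b}},\ t\ge 0\}\subseteq{\mathbb{R}}^{d+1}$, whose slice at $t=1$ is $P$. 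It then suffices to prove the single fact: a cone in ${\mathbb{R}}^m$ is finitely generated (the nonnegative hull of finitely many vectors) if and only if it is an intersection of finitely many linear halfspaces $\{{\mathbf{y}}:{\mathbf{a}}\cdot{\mathbf{y}}\le 0\}$ --- together with a short bookkeeping argument about boundedness to translate back to polytopes.

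\emph{The cone equivalence.} For ``finitely generated $\Rightarrow$ polyhedral'', I would write $C={\mathrm{cone }}\{{\mathbf{u}}_1,\dots,{\mathbf{u}}_k\}$ as the image of the polyhedron $Q=\{({\mathbf{x}},\lambda)\in{\mathbb{R}}^{m+k}:{\mathbf{x}}=\sum_i\lambda_i{\mathbf{u}}_i,\ \lambda\ge 0\}$ under the coordinate projection forgetting $\lambda$, and invoke the key lemma, \textbf{Fourier--Motzkin elimination}: the image of a polyhedron under the projection deleting one coordinate is again a polyhedron. Proving this lemma is the real content and the step that will require the most care; the argument is the explicit one where one keeps all inequalities not involving the eliminated variable $z$, and for every pair of inequalities $z\le g({\mathbf{y}})$ and $z\ge h({\mathbf{y}})$ one adjoins $h({\mathbf{y}})\le g({\mathbf{y}})$, then checks directly that the resulting system in ${\mathbf{y}}$ cuts out exactly the projection. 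Iterating $k$ times eliminates all the $\lambda_i$ and exhibits $C$ as polyhedral. For the reverse direction I would use polar duality: with $C^\circ=\{{\mathbf{y}}:{\mathbf{y}}\cdot{\mathbf{x}}\le 0 \text{ for all }{\mathbf{x}}\in C\}$, a polyhedral cone $C=\{{\mathbf{x}}:{\mathbf{a}}_i\cdot{\mathbf{x}}\le 0,\ i=1,\dots,r\}$ is exactly $({\mathrm{cone }}\{{\mathbf{a}}_1,\dots,{\mathbf{a}}_r\})^\circ$; by the direction already proved, ${\mathrm{cone }}\{{\mathbf{a}}_i\}$ is polyhedral, and its polar $C$ is then finitely generated once one knows $C^{\circ\circ}=C$ for closed convex cones --- which requires the small auxiliary fact that finitely generated cones are closed.

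\emph{Back to polytopes.} Finally I would assemble the pieces. If $P={\mathrm{conv }}\{{\mathbf{v}}_i\}$, then $C$ above is polyhedral, so $P=\{{\mathbf{x}}:({\mathbf{x}},1)\in C\}$ is cut out by finitely many affine inequalities, and it is bounded since it lies in any ball containing the ${\mathbf{v}}_i$. If instead $P=\{{\mathbf{x}}:A{\mathbf{x}}\le{\mathbf{b}}\}$ is bounded and nonempty, then $\widetilde C$ is polyhedral, hence finitely generated, say $\widetilde C={\mathrm{cone }}\{({\mathbf{w}}_1,s_1),\dots,({\mathbf{w}}_r,s_r)\}$; boundedness of $P$ forces $s_j>0$ for every $j$, since a generator with $s_j=0$ would yield an unbounded ray inside $P$, and rescaling so that $s_j=1$ gives $P={\mathrm{conv }}\{{\mathbf{w}}_1,\dots,{\mathbf{w}}_r\}$. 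An alternative route by induction on $d$ --- projecting $P$ along a coordinate, applying the inductive hypothesis, and lifting facets back --- is also available, but the homogenization argument is cleanest, with Fourier--Motzkin elimination as its only nontrivial ingredient.
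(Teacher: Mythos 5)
Your proposal is correct, but note that the paper does not actually prove this theorem: it states the equivalence of the V-description and the H-description and immediately defers to the notes in Ziegler's \emph{Lectures on Polytopes}, remarking only that translating between the two descriptions is nontrivial. Your argument --- homogenize to a cone in ${\mathbb{R}}^{d+1}$, prove that a cone is finitely generated if and only if it is an intersection of finitely many linear halfspaces (Fourier--Motzkin elimination for one direction, polar duality plus the bipolar theorem for the other), then de-homogenize --- is exactly the standard Minkowski--Weyl proof, essentially the one the paper's reference gives, so you are supplying the proof the survey chose to omit. The sketch is sound: the elimination lemma is stated with the right pairing construction, and you correctly flag the two auxiliary facts the duality step leans on (finitely generated cones are closed, and $C^{\circ\circ}=C$ for closed convex cones, which in turn needs a separating-hyperplane argument). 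Two microscopic points to tidy in a full write-up: when you argue that boundedness of a nonempty $P$ forces every generator $({\mathbf{w}}_j,s_j)$ of $\widetilde C$ to have $s_j>0$, you should first discard the zero vector as a generator (a generator with $s_j=0$ gives an unbounded ray only if ${\mathbf{w}}_j\neq\mathbf{0}$), and you should dispose of the empty polytope separately in both directions; neither affects the substance of the argument.
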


For theoretical and practical purposes, it is useful to have \textbf{both} the V-description and the H-description of a polytope. For example, it is clear from the H-description (but not at all from the V-description) that the intersection of two polytopes is a polytope, and it is clear from the V-description (but not at all from the H-description) that a projection of a polytope is a polytope. It is a nontrivial task to translate one description into the other; see \cite[Notes to Chapter 1]{f.Ziegler} for a discussion and references on this problem. Here are some simple examples.

\begin{example}\label{f.ex:polytopes}
The \textbf{standard simplex} $\Delta_{d-1}$, the \textbf{cube} $\square_d$, and the \textbf{crosspolytope} $\Diamond_d$ are:
\begin{eqnarray*}
1. \quad \Delta_{d-1} &=& {\mathrm{conv }}\{{\mathbf{e}}_1, \ldots, {\mathbf{e}}_d\}\\
&=& \{{\mathbf{x}} \in {\mathbb{R}}^d \, : \, x_1 + \cdots + x_d = 1 \textrm{ and } x_i \geq 0 \textrm{ for } i=1, \ldots, d\},
\\
2. \quad \quad \square_{d} &=& {\mathrm{conv }}\{
\pm {\mathbf{e}}_1 \pm \cdots \pm {\mathbf{e}}_d \textrm{ for any choice of signs}\} \\
%(\epsilon_1, \ldots, \epsilon_d) \, : \, \epsilon_1, \ldots \epsilon_d \in \{-1,1\} \} 
&=& \{{\mathbf{x}} \in {\mathbb{R}}^d \, : \, -1 \leq x_i \leq 1 \textrm{ for } i=1, \ldots, d\}, \qquad  \\
3. \quad \quad \Diamond_{d} &=& {\mathrm{conv }}\{-{\mathbf{e}}_1, {\mathbf{e}}_1, \ldots, -{\mathbf{e}}_d, {\mathbf{e}}_d\} \\
&=& \{{\mathbf{x}} \in {\mathbb{R}}^d \, : \,  \pm x_1 \pm \cdots \pm x_d \leq 1 \textrm{ for any choice of signs}%whenever } \epsilon_i \in \{-1,1\} \textrm{ for } i=1, \ldots, d
\},
\end{eqnarray*}
respectively, where ${\mathbf{e}}_1, \ldots, {\mathbf{e}}_d$ are the standard basis in ${\mathbb{R}}^d$. These polytopes are illustrated in Figure \ref{f.fig:simplexetc} for $d=3$.
\end{example}

\begin{figure}[ht]
 \begin{center}
  \includegraphics[scale=.8]{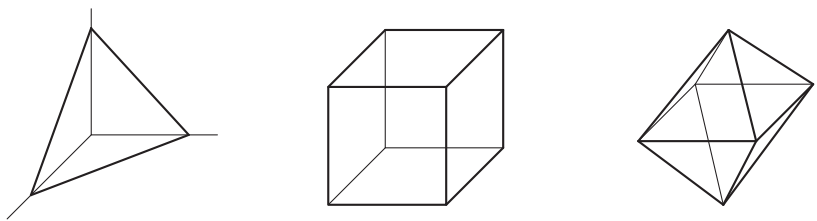}
  \caption{ \label{f.fig:simplexetc}
The triangle $\Delta_2$, the cube $\square_3$, and the octahedron $\Diamond_3$.}
  \end{center}
\end{figure}

The \textbf{dimension} of a polytope $P$ is the dimension of the affine subspace spanned by $P$:
\[
{\mathrm{aff }} (P) = \left\{\lambda_1 {\mathbf{v}}_1 + \cdots + \lambda_k {\mathbf{v}}_k \, : \, {\mathbf{v}}_1, \ldots, {\mathbf{v}}_k \in {\mathbb{R}}^d, \lambda_1, \ldots, \lambda_k \in {\mathbb{R}}, \lambda_1 + \cdots + \lambda_k = 0\right \}.
\]
For example, $\dim (\Delta_{d-1}) = d-1$ because $\Delta_{d-1}$ 
%even though $\Delta_{d-1}$ lives in ${\mathbb{R}}^d$, its dimension is $d-1$ because it 
lies on the hyperplane $x_1 + \cdots + x_d = 1$ in ${\mathbb{R}}^d$.

The \textbf{interior} $\mathrm{int}(P)$ of $P \subseteq {\mathbb{R}}^d$ is its topological interior. It is often more useful to consider the \textbf{relative interior} $\mathrm{relint}(P)$, which is its topological interior as a subset of the affine space ${\mathrm{aff }}(P)$.

\bigskip

\noindent \textbf{\textsf{Polar polytopes}}.
The similarity of our descriptions of $\square_d$ and $\Diamond_d$ is a manifestation of a  general notion of duality between V-descriptions and H-descriptions of polytopes.

Let $P$ be a polytope in ${\mathbb{R}}^d$ such that $\mathbf{0} \in {int}(P)$.\footnote{We can apply an affine transformation to any full-dimensional polytope to make its interior  contain the origin.}
The \textbf{polar polytope} $P^{\triangle}$ of  $P$ is 
\[
P^\triangle = \left\{{\mathbf{a}} \in {\mathbb{R}}^d \, : \, {\mathbf{a}} \cdot {\mathbf{x}} \leq 1 \textrm{ for all } {\mathbf{x}} \in P\right\}.
\]
If $P$ has vertices ${\mathbf{v}}_1, \ldots {\mathbf{v}}_n$ and facets ${\mathbf{a}}_1 \cdot {\mathbf{x}} \leq 1, \ldots, {\mathbf{a}}_m \cdot {\mathbf{x}} \leq 1$ (and hence contains $\mathbf{0}$), then the polar $P^\triangle$ has vertices ${\mathbf{a}}_1, \ldots {\mathbf{a}}_m$ and facets ${\mathbf{v}}_1 \cdot {\mathbf{x}} \leq 1, \ldots, {\mathbf{v}}_n \cdot {\mathbf{x}} \leq 1$. It follows that if $\mathbf{0} \in P$, then
\[
(P^\triangle)^\triangle = P.
\]

\medskip

\noindent \textbf{\textsf{Faces}}.
For each vector ${\mathbf{a}} \in {\mathbb{R}}^d$, let  
\[
P_{\mathbf{a}} = \left \{{\mathbf{x}} \in P \, : \, {\mathbf{a}} \cdot {\mathbf{x}}  \geq {\mathbf{a}} \cdot {\mathbf{y}} \textrm{ for all } {\mathbf{y}} \in P\right\}
\]
be the subset of $P$ where the linear function ${\mathbf{a}} \cdot {\mathbf{x}}$ is maximized. Such a set is called a \textbf{face} of $P$. %It is customary to allow 
We consider the empty set to be a face of $P$; faces other than $\emptyset$ and $P$ are called \textbf{proper faces}.
If $\dim(P) = d$, the faces of dimension $0, 1, d-2, d-1$ are called \textbf{vertices}, \textbf{edges}, \textbf{ridges}, and \textbf{facets}, respectively. We let $V(P)$ be the set of vertices of $P$. We collect a few basic facts  about faces:

\smallskip

 $\bullet$
A polytope is the convex hull of its vertices: $P = {\mathrm{conv }}(V(P))$.

 $\bullet$
A polytope is the intersection of the halfspaces determined by its facets.

 $\bullet$
The vertices of a face $F$ of $P$ are the vertices of $P$ contained in $F$: $V(F) = V(P) \cap F$.

 $\bullet$
 A face $F$ of $P$ equals the intersection of the facets of $P$ containing $F$. 

 $\bullet$
If $F$ is a face of $P$, then any face of $F$ is a face of $P$.

 $\bullet$
The intersection of two faces of $P$ is a face of $P$.

$\bullet$
A polytope is the disjoint union of the relative interiors of its faces: $\displaystyle P=\bigcup_{F \textrm{ face }} \mathrm{relint}(F)$.

\medskip

\noindent \textbf{\textsf{The face lattice}}.
The \textbf{face lattice} $L(P)$ is the poset of faces of $P$, ordered by containment. It is indeed a lattice with $F \wedge G = F \cap G$ and $F \vee G = {\mathrm{aff }}(F \cup G) \cap P$. It is graded with $\mathbf{rk}(F) = \dim F + 1$. We say that polytopes $P$ and $Q$ are \textbf{combinatorially isomorphic} if $L(P) \cong L(Q)$. We collect some basic properties of face lattices:

\smallskip

$\bullet$ 
For each face $F$ of $P$, the interval $[{\widehat{0}}, F]$ of $L(P)$ is isomorphic to the face lattice of $F$.

$\bullet$ 
For each face $F$ of $P$, the interval $[F, {\widehat{1}}]$ of $L(P)$ is isomorphic to the face lattice of a polytope, called the \textbf{face figure} $P/F$ of $F$.%\footnote{There is no canonical choice face figure is only determined  

$\bullet$
Every interval $[F,G]$ of $L(P)$ is isomorphic to the face lattice of some polytope.

$\bullet$
The face lattice $L(P^\triangle)$ of the polar polytope $P^\triangle$ is isomorphic to the opposite poset $L(P)^{\textrm{op}}$, obtained by reversing the order relations of $L(P)$.

\medskip

There are different constructions of the face figure $P/F$, giving rise to combinatorially isomorphic polytopes. We discuss one such construction. Let $F^\Diamond$ be the face of $P^\triangle$ corresponding to the face $F$ of $P$ under the isomorphism $L(P^\triangle) \cong L(P)^{\mathrm{op}}$. Then we can define the face figure to be the polar polytope of $F^\Diamond$; that is, $P/F = (F^\Diamond)^\triangle$. When $F={\mathbf{v}}$ is a vertex, there is a more direct construction:  let $P/{\mathbf{v}} = P \cap H$, where $H$ is a hyperplane separating $v$ from all other vertices of $P$. 

\smallskip

We say a $d$-polytope $P$ is \textbf{simplicial} if every face is a simplex. We say $P$ is \textbf{simple} if every vertex is on exactly $d$ facets (or, equivalently, on $d$ edges). Note that the convex hull of generically chosen points is a simplicial polytope. Similarly, a bounded intersection of generically chosen half-spaces is a simple polytope. Also note that $P$ is simplicial if and only if its polar $P^\triangle$ is simple.

\bigskip

\noindent \textbf{\textsf{Triangulations and subdivisions}}. In many contexts, it is useful to subdivide a polytope into simpler polytopes (most often simplices). It is most convenient to do it in such a way that the pieces of the subdivision meet face to face:

A \textbf{subdivision} of a polytope $P$ is a finite collection ${\mathcal{T}}$ of polytopes such that 

$\bullet$ $P$ is the union of the polytopes in ${\mathcal{T}}$,

$\bullet$ if $P \in {\mathcal{T}}$ then every face of $P$ is in ${\mathcal{T}}$, and

$\bullet$ if $P, Q \in {\mathcal{T}}$ then $P \cap Q \in {\mathcal{T}}$.

\noindent The elements of ${\mathcal{T}}$ are called the \textbf{faces} of ${\mathcal{T}}$; the full-dimensional ones are called \textbf{facets}. If all the faces are simplices, then ${\mathcal{T}}$ is called a \textbf{triangulation} of $P$.

In many situations, it is useful to assume that a subdivision does not introduce new vertices; that is, that the only points in ${\mathcal{T}}$ are the vertices of $P$. We will assume that throughout the rest of this section.

\begin{theorem}
Every convex polytope has a triangulation.
\end{theorem}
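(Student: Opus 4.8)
The plan is to prove the statement by a simple but robust construction: the \emph{pulling triangulation} (also called a pushing/placing triangulation), built by induction on the number of vertices of $P$, and on the dimension. The key idea is that one can triangulate $P$ by coning from a single vertex over a triangulation of the facets not containing that vertex; since the statement makes no claim about introducing new vertices, we are free to use only the vertices of $P$.

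First I would set up the induction. If $\dim P = 0$ the polytope is a point and the claim is trivial; if $\dim P \le 1$, $P$ is a point or a segment and is already a simplex. For the inductive step, assume every polytope of dimension less than $d$, and every $d$-polytope with fewer vertices, has a triangulation. Given a $d$-polytope $P$, pick a vertex ${\mathbf{v}} \in V(P)$. Using the facts collected in Section~\ref{f.sec:polytopefacts} — in particular that a polytope is the intersection of the halfspaces determined by its facets, that each facet is a polytope of dimension $d-1$, and that the faces of a facet are faces of $P$ — I would consider the set $\mathcal{F}$ of facets of $P$ that do \emph{not} contain ${\mathbf{v}}$. By the inductive hypothesis on dimension, each facet $F \in \mathcal{F}$ admits a triangulation ${\mathcal{T}}_F$ into simplices on the vertices of $F$. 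Then I would define
\[
{\mathcal{T}} = \{ {\mathrm{conv }}(\{{\mathbf{v}}\} \cup \sigma) \, : \, \sigma \in {\mathcal{T}}_F \textrm{ for some } F \in \mathcal{F}\} \cup \bigcup_{F \in \mathcal{F}} {\mathcal{T}}_F \cup \{\textrm{faces of these}\},
\]
i.e.\ the cone from ${\mathbf{v}}$ over the triangulated ``far'' boundary, together with all faces of the resulting simplices.

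The substance of the proof is then checking that ${\mathcal{T}}$ is genuinely a subdivision in the sense defined above, and that all its faces are simplices. The simplex condition is easy: a cone ${\mathrm{conv }}(\{{\mathbf{v}}\} \cup \sigma)$ over a simplex $\sigma$ whose affine span misses ${\mathbf{v}}$ is again a simplex (here one needs that ${\mathbf{v}} \notin {\mathrm{aff }}(F)$ for $F \in \mathcal{F}$, which holds because ${\mathbf{v}}$ is a vertex not on the facet $F$). For the covering property, every point ${\mathbf{x}} \in P$ other than ${\mathbf{v}}$ lies on a ray from ${\mathbf{v}}$ which exits $P$ through a point ${\mathbf{y}}$ on the boundary, and that exit point lies on some facet $F$; one must argue $F$ can be taken in $\mathcal{F}$ (a ray from an interior-directed vertex meets the boundary in a facet not containing ${\mathbf{v}}$, away from degenerate directions), so ${\mathbf{x}} \in {\mathrm{conv }}(\{{\mathbf{v}}\} \cup F) $ and hence in some cone over a simplex of ${\mathcal{T}}_F$. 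For the intersection property, one checks that two cones ${\mathrm{conv }}(\{{\mathbf{v}}\}\cup\sigma)$ and ${\mathrm{conv }}(\{{\mathbf{v}}\}\cup\tau)$ meet in the cone over $\sigma \cap \tau$, which is a common face because $\sigma\cap\tau$ is a common face of $\sigma$ and $\tau$ in the triangulated boundary; and cones meet facet-triangulation simplices correctly because the bases $\sigma$ lie in distinct facets meeting face-to-face (using that the intersection of two faces of $P$ is a face of $P$).

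The main obstacle I expect is the compatibility of the triangulations ${\mathcal{T}}_F$ on the various far facets \emph{along their shared lower-dimensional faces}: if two facets $F, F' \in \mathcal{F}$ meet in a ridge $R$, the triangulations ${\mathcal{T}}_F$ and ${\mathcal{T}}_{F'}$ must induce the \emph{same} triangulation on $R$, or the global collection ${\mathcal{T}}$ will not be face-to-face. The clean way around this is to not triangulate the facets independently but to use a \emph{global} recursive recipe — fix once and for all a linear order (a ``pulling order'') on $V(P)$, and let the triangulation of every face be the pulling triangulation with respect to the induced order, coning always from the $\le$-minimal vertex of that face. Then the induced triangulation on any common face $R = F \cap F'$ is, in both ${\mathcal{T}}_F$ and ${\mathcal{T}}_{F'}$, simply the pulling triangulation of $R$ in the fixed order, so they agree automatically. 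With this refinement the induction goes through cleanly, and I would present the argument in that form: fix a total order on $V(P)$, define the pulling triangulation of every polytope-face of $P$ recursively by coning from its minimal vertex over the (already defined, hence compatible) pulling triangulations of its far facets, and verify the three subdivision axioms by the dimension/vertex-count induction sketched above.
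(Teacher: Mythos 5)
Your proof is correct, but it takes a genuinely different route from the paper. The paper proves the theorem via the lifting construction: choose a (generic) height function on the vertices, take the convex hull of the lifted points in ${\mathbb{R}}^{d+1}$, and project the lower facets back down; this produces a subdivision, and genericity of the heights forces the cells to be simplices. You instead build a pulling triangulation by induction on dimension, coning from a chosen vertex over triangulations of the far facets, and you correctly identify the one real subtlety of that route -- that independently chosen facet triangulations need not agree on shared ridges -- and fix it the standard way, by fixing a single total order on $V(P)$ and coning every face from its minimal vertex, so the induced triangulations of common faces coincide automatically. What each approach buys: the paper's argument is shorter once one accepts the genericity step, and it yields exactly the regular (coherent) triangulations that the surrounding discussion needs (Delaunay triangulations, secondary polytopes), whereas your argument is more elementary and self-contained -- no perturbation or genericity is needed, only convexity and induction -- at the cost of the face-to-face bookkeeping you describe. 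One small point worth tightening in your covering step: rather than waving at ``degenerate directions,'' argue directly that if $y$ is the last point of $P$ on the ray from ${\mathbf{v}}$ through ${\mathbf{x}}$ and every facet tight at $y$ contained ${\mathbf{v}}$, then $y + \epsilon(y-{\mathbf{v}})$ would still satisfy all facet inequalities for small $\epsilon>0$, contradicting maximality; hence $y$ lies on a facet in $\mathcal{F}$ and ${\mathbf{x}} \in {\mathrm{conv }}(\{{\mathbf{v}}\} \cup F)$.
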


\begin{proof}[Sketch of Proof.]
Let $V$ be the set of vertices of our polytope $P \in {\mathbb{R}}^d$. For each \textbf{height function} $h:V \rightarrow {\mathbb{R}}$ consider the set of lifted points $P^h = \{(v, h(v)), v \in V\}$ in ${\mathbb{R}}^{d+1}$. Let $Q$ be the convex hull of $P^h$, and consider the set $\mathcal{F}$ of ``lower facets" of $Q$ that are visible from below, that is, the facets maximizing some linear function ${\mathbf{a}} \cdot {\mathbf{x}}$ with $a_{d+1} = -1$. For each such facet $F$, let $\pi(F)$ be its projection back down to ${\mathbb{R}}^d$. One may check that $\{\pi(F) \, : \, F \textrm{ is a lower facet of }Q\}$ is a subdivision of $P$. If the height function is chosen generically, this subdivision is actually a triangulation.
\end{proof}

\begin{figure}[ht]
 \begin{center}
  \includegraphics[scale=1.1]{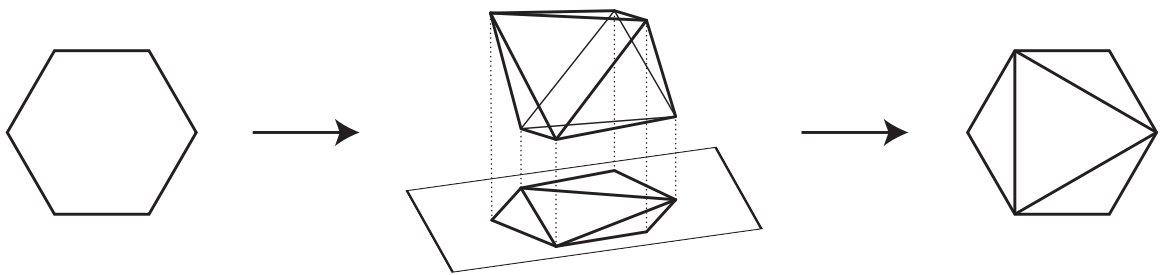}
  \caption{ \label{f.fig:regsubdiv}
A regular subdivision of a hexagon.}
  \end{center}
\end{figure}

The subdivisions that may be obtained from the lifting construction above are called \textbf{regular} or \textbf{coherent}. In general, not every subdivision is regular, and the question of distinguishing regular and non-regular triangulations is subtle. In the simplest case, when $P$ is a convex polygon, every subdivision is regular, and the number of subdivisions is a Catalan number, as we saw in Example 14(b). of Section \ref{f.sec:ogfexamples}. Few other exact enumerative results of this sort are known.

In any case, regular subdivisions are of great importance for several reasons. One reason is that they are easy to define and construct; for example, choosing the heights $h(v) = v_1^2 + \cdots v_d^2$ leads to the \textbf{Delaunay triangulation}, which is very easy to compute and has several desirable properties. Secondly, they have a very elegant structure. Every polytope $P$ gives rise to a \textbf{secondary polytope}, whose faces are in bijection with the regular subdivisions of $P$; and 
faces $F$ and $G$ satisfy that $F \subset G$ if and only if the subdivision of $F$ refines the subdivision of $G$. Thirdly, they are widely applicable, in particular, because they play a key role in the theory of Gr\"obner bases in commutative algebra. For our enumerative purposes the regularity question will not be too important. For readers interested in this and other aspects of triangulations, we recommend \cite{f.triangulations, f.GKZ, f.Sturmfelspolytopes}.

\subsection{\textsf{Examples}}\label{f.sec:polytopeexamples}

The following polytopes have particularly nice enumerative properties. We give references to some relevant results at the end of Section \ref{f.sec:Ehrhart}.

\begin{enumerate}

\item (Product of two simplices) $\Delta_{c-1} \times \Delta_{d-1} = {\mathrm{conv }}\{({\mathbf{e}}_i,{\mathbf{f}}_j)  \, : \, 1 \leq i \leq c, 1 \leq j \leq d\}$ in ${\mathbb{R}}^c \times {\mathbb{R}}^d$
where $\{{\mathbf{e}}_1, \ldots, {\mathbf{e}}_c\}$ and $\{{\mathbf{f}}_1, \ldots, {\mathbf{f}}_d\}$ are the standard bases for ${\mathbb{R}}^c$ and ${\mathbb{R}}^d$, respectively. 

%\item (Second hypersimplex) $\Delta(2, d) = {\mathrm{conv }}\{{\mathbf{e}}_i  + {\mathbf{e}}_j  \, : \, 1 \leq i < j \leq d\}$ in ${\mathbb{R}}^d$

\item (Hypersimplex) $\Delta(r, d) = {\mathrm{conv }}\{{\mathbf{e}}_{i_1} + \cdots + {\mathbf{e}}_{i_r}  \, : \, 1 \leq i_1 < \cdots < i_r \leq d\}$ in ${\mathbb{R}}^d$.

\item (Permutahedron) $\Pi_{d-1} = {\mathrm{conv }}\{(a_1, \ldots, a_d) \, : \, \{a_1, \ldots, a_d\} \textrm{ is a permutation of } [d]\}$.  

\item (Zonotopes) The zonotope of a vector configuration $A = \{{\mathbf{a}}_1, \ldots, {\mathbf{a}}_k\} \subset {\mathbb{R}}^d$ is the \emph{Minkowski sum} $Z(A) = [\mathbf{0},{\mathbf{a}}_1] + \cdots + [\mathbf{0},{\mathbf{a}}_k] :=  \{\lambda_1 {\mathbf{a}}_1 + \cdots + \lambda_k {\mathbf{a}}_k \, : \, 0 \leq \lambda_1, \ldots, \lambda_k \leq 1\}$. The permutahedron $\Pi_{d-1}$ is the zonotope of the \emph{root system} $\{{\mathbf{e}}_i  - {\mathbf{e}}_j  \, : \, 1 \leq i < j \leq d\}$.

\item (Cyclic polytope) $C_d(m) = {\mathrm{conv }}\{(1, t_i, t_i^2, \ldots, t_i^{d-1}) \, | \, 1 \leq i \leq m 
\}$ for $t_1 < \cdots < t_m$.

\item (Order polytope of a poset $P$) $\mathcal{O}(P) = \{{\mathbf{x}} \in {\mathbb{R}}^P \, : \, x_i \geq 0 \textrm{ for all $i$ and } x_i \leq x_j \textrm{ if $i<j$ in $P$}\}$.

\item (Chain polytope of a poset $P$) 
$\mathcal{C}(P) = \{{\mathbf{x}} \in {\mathbb{R}}^P \, : \, x_i \geq 0 \textrm{ for all  $i$ and } {x_{i_1} + \cdots + x_{i_k} \leq 1} \allowbreak \textrm{ for each chain  }  i_1< \cdots < i_k \textrm{ in } P\}$.

\item (Type A root polytope) $A_{d-1} = {\mathrm{conv }}\{{\mathbf{e}}_i  - {\mathbf{e}}_j  \, : \, 1 \leq i \neq j \leq d\}$ in ${\mathbb{R}}^d$.

\item (Type A positive root polytope) $A^+_{d-1} = {\mathrm{conv }}\{{\mathbf{e}}_i  - {\mathbf{e}}_j  \, : \, 1 \leq i < j \leq d\}$ in ${\mathbb{R}}^d$.

\item (Flow polytope) Given a directed graph $G=(V,E)$ and a vector ${\mathbf{b}} \in {\mathbb{R}}^V$, the \emph{flow polytope} is 
$F_G({\mathbf{b}}) = \{{\mathbf{f}} \in {\mathbb{R}}^E \, : \, f_e \geq 0 \textrm{ for all $e \in E$ and } \sum_{vw \in E} f_{vw} - \sum_{uv \in E} f_{uv} = b_v \textrm{ for all } v \in V\}$.
We think of $f_e$ as a \emph{flow} on edge $e$, so that the \emph{excess flow} or \emph{leak} at each vertex $v$ equals $b_v$.

\item (CRY polytope) $\textrm{CRY}_n = F_{K_{n+1}}(1, 0, \ldots, 0, -1)$ where $K_{n+1}$ is the complete graph on $[n+1]$ with edges directed $i \rightarrow j$ for $i<j$.

\item (Associahedron) $\textrm{Assoc}_{d-1}$ is a polytope whose faces are in bijection with the ways of subdividing a convex $(d+3)$-gon into polygons without introducing new vertices. Faces $F$ and $G$ satisfy that $F \subset G$ if and only if the subdivision of $F$ refines the subdivision of $G$. There are several different polytopal realizations of $\textrm{Assoc}_{d-1}$; see \cite{f.CeballosZiegler} for a survey. 

\item (Matroid polytope) If $M=\{v_1, \ldots, v_d\}$ is a set of vectors spanning a vector space ${{\mathbbm{k}}}^r$, the \emph{matroid (basis) polytope} is $P_M = {\mathrm{conv }}\{{\mathbf{e}}_{i_1} + \cdots + {\mathbf{e}}_{i_r}  \, : \, \{v_{i_1}, \ldots , v_{i_r}\} \textrm{ is a basis of } {{\mathbbm{k}}}^r \}$ in ${\mathbb{R}}^d$. When $M$ is generic we get $P_M = \Delta(r,d)$.
This construction is better understood in the context of matroids; see Section \ref{f.sec:matroids}.

\item (Generalized permutahedra / polymatroids) Many interesting polytopes are deformations of the permutahedron, obtained by moving the vertices of $\Pi_{d-1}$ while respecting all the edge directions. Examples include the polytopes $\Delta_{d-1}, \,
\Delta_{e-1} \times \Delta_{d-e-1}, \, \Delta(r,d), \, \Pi_{d-1}, \, A_{d-1}, \, P_M,$ and $\textrm{Assoc}_{d-1}$ above.
Such polytopes are called \emph{polymatroid base polytopes} or \emph{generalized permutahedra}; see \cite{f.Fujishige, f.Postnikovgenperm}.
\end{enumerate}

Many of these polytopes are related to the permutation group $S_d$ and the corresponding \emph{type A root system} ${\mathrm{conv }}\{{\mathbf{e}}_i  - {\mathbf{e}}_j  \, : \, 1 \leq i \neq j \leq d\}$. Many have generalizations to the wider context of finite Coxeter groups and root systems; see for example \cite{f.ArdilaBecketal, f.Coxetermatroids, f.FominReading, f.FultonHarris, f.MeszarosMorales} and the references therein. 

\medskip

The faces of most of these polytopes can be described combinatorially. This should help us appreciate these polytopes, because there are not many interesting families of polytopes for which we can do this. We describe the most interesting ones.

\begin{enumerate}
\item[3.] {(Permutahedron)} The H-description of $\Pi_{d-1}$ is 
\[
%\Pi_{d-1} = \{{\mathbf{x}} \in {\mathbb{R}}^d \, : \,
 x_1 + \cdots + x_d = d(d+1)/2 , \qquad  x_{i_1} + \cdots + x_{i_k} \geq k(k+1)/2 \,\, \textrm{ for  } \emptyset \subsetneq  \{i_1, \ldots,  i_k\} \subsetneq [d]
\]
There is a bijection $\mathcal{S} \leftrightarrow F_{\mathcal{S}}$ between the ordered set partitions of $[d]$ and the faces of the permutahedron $\Pi_{d-1}$. %If $\mathcal{S} = S_1 | \ldots | S_k$, the vertices of $F_\mathcal{S}$ correspond to the permutations $

\item[4.] {(Zonotope)} The face enumeration of $Z(A)$ is the subject of the upcoming Theorem \ref{f.th:cd-indexzonotope}.

\item[5.] {(Cyclic polytope)} Let $\mathbf{t}_i = (1, t_i, \ldots, t_i^{d-1})$. The facets of the cyclic polytope $C_d(m)$ are the simplices ${\mathrm{conv }}\{\mathbf{t}_{s}\, : \, s \in S\}$ for the $d$-subsets $S \subseteq [m]$ satisfying \textbf{Gale's evenness condition}: between any $i<j$ not in $S$ there is an even number of elements of $S$. There is a similar description for all faces of the cyclic polytope. In particular, the combinatorics of $C_d(m)$ is independent of $t_1, \ldots, t_m$. Two other remarkable facts are the following.

$\bullet$ Every subset of at most $d/2$ vertices forms a face of $C_d(m)$. 

$\bullet$ (McMullen's Upper Bound Theorem \cite{f.McMullenShephard}) Among all $d$-polytopes with $m$ vertices, $C_d(m)$ maximizes the number of faces of dimension $k$ for all $2 \leq k \leq d-1$.

The rich theory of \emph{positroids} can be seen as a generalization of the study of cyclic polytopes; see Section \ref{f.sec:matroids} and \cite{f.ArdilaRinconWilliams2, f.PostnikovTNN} for this connection.

\item[6.] {(Order polytope)} The vertices of $\mathcal{O}(P)$ are $\sum_{i \notin I} {\mathbf{e}}_i$ for the order ideals $I \subseteq P$.

\item[7.] {(Chain polytope)} The vertices of $\mathcal{C}(P)$ are $\sum_{i \in A} {\mathbf{e}}_i$ for the antichains $A \subseteq P$.

\item[13.] {(Matroid polytope)} The matroid polytope is cut out (non-minimally) by the inequalities
\[
\sum_{e \in E} x_e = r(E), \qquad \sum_{e \in S} x_e \leq r(S) \,\, \textrm{ for } E \subset S.
\]
The facets are characterized in \cite{f.FeichtnerSturmfels}.

\item[14.] (Generalized permutahedra) There are several interesting result on various classes of generalized permutahedra; see \cite{f.PostnikovReinerWilliams, f.Postnikovgenperm}.

\end{enumerate}

%\bigskip

\subsection{{\textsf{Counting faces}}} \label{f.sec:countingfaces}

The \textbf{$f$-vector} of a $d$-polytope $P$ is 
\[
f_P=(f_0, f_1, \ldots, f_{d-1}, f_d)
\]
where $f_i$ is the number of $(i-1)$-dimensional faces of $P$ for $0 \leq i \leq d$.\footnote{Note that we include the empty face and omit the full-dimensional face $P$ in this enumeration.} The problem of characterizing the $f$-vectors of various kinds of polytopes (or more general polyhedral complexes) is a central one in combinatorics. We offer a very brief discussion; for more detailed accounts, see \cite{f.BilleraBjorner, f.MuraiNevo, f.Ziegler}. A one sentence summary is this: we completely understand the $f$-vectors of simplicial (or equivalently, of simple) polytopes, but we are far from understanding the $f$-vectors of general polytopes. There are many interesting results and (mostly) open questions in between.
\medskip

\noindent \textbf{\textsf{$f$-vectors.}} The most important result about $f$-vectors of arbitrary polytopes is the following.

\begin{theorem} \cite{f.McMullenShephard} (McMullen's Upper Bound Theorem)
For any polytope $P$ of dimension $d$  with $m$ vertices, we have
\[
f_i(P) \leq f_i(C_d(m)) \qquad \textrm{ for }i=0,1, \ldots, d-1,
\]
where $C_d(m)$ is the cyclic polytope.
\end{theorem}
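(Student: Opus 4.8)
The plan is to run the classical argument through $h$-vectors, in three moves: reduce to simplicial polytopes, transfer the problem from the $f$-vector to the $h$-vector, and then bound the $h$-vector. For the first move, recall that one can perturb any $d$-polytope with $m$ vertices into a \emph{simplicial} $d$-polytope with the same $m$ vertices by repeatedly ``pulling'' each vertex into sufficiently general position; at each such step the number of faces of every dimension does not decrease (see \cite{f.Grunbaum, f.Ziegler}). Since $C_d(m)$ is already simplicial, it suffices to prove $f_i(P)\le f_i(C_d(m))$ when $P$ is simplicial.

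Now work with the boundary complex $\partial P$, a simplicial $(d-1)$-sphere, and its $h$-vector $(h_0,\dots,h_d)$, defined by $\sum_{i=0}^d f_{i-1}(P)(t-1)^{d-i}=\sum_{i=0}^d h_i t^{d-i}$. Inverting this linear change of variables expresses each $f_{j-1}(P)$ as a \emph{nonnegative} integer combination $\sum_{i=0}^{j} \binom{d-i}{d-j} h_i$ of the $h_i$, so it is enough to prove $h_i(P)\le h_i(C_d(m))$ for all $i$. Here two standard facts about simplicial polytopes help: $h_0=1$, $h_1=m-d$, and the Dehn--Sommerville relations $h_i=h_{d-i}$, which reduce the task to bounding $h_i$ for $0\le i\le \lfloor d/2\rfloor$.

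The heart of the matter is the inequality
\[
h_i(P)\ \le\ \binom{m-d+i-1}{i}\qquad (0\le i\le \lfloor d/2\rfloor).
\]
To obtain it, I would use that the Stanley--Reisner ring ${\mathbbm{k}}[\partial P]$ is Cohen--Macaulay --- because $\partial P$ is a sphere (Reisner's criterion), or because it is shellable via a Bruggesser--Mani line shelling. Cohen--Macaulayness lets us kill a linear system of parameters and pass to an Artinian quotient $A$ whose Hilbert function is precisely the $h$-vector of $\partial P$. But $A$ is a standard graded ${\mathbbm{k}}$-algebra generated by its degree-one part, which has dimension $h_1=m-d$, hence a quotient of a polynomial ring in $m-d$ variables; therefore $\dim_{\mathbbm{k}} A_i\le\binom{m-d+i-1}{i}$, which is the claim. (A purely combinatorial alternative is McMullen's original argument \cite{f.McMullenShephard}: fix a line shelling of $\partial P$ and bound the number of facets whose restriction has prescribed cardinality.) Together with Dehn--Sommerville, this bounds every $h_i(P)$.

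Finally, $C_d(m)$ attains all these bounds. It is $\lfloor d/2\rfloor$-neighborly --- every set of at most $\lfloor d/2\rfloor$ vertices is a face, as recorded in the cyclic-polytope example above --- so $f_{j-1}(C_d(m))=\binom{m}{j}$ for $j\le\lfloor d/2\rfloor$, and a short Vandermonde-type computation turns this into $h_i(C_d(m))=\binom{m-d+i-1}{i}$ for $i\le\lfloor d/2\rfloor$; Dehn--Sommerville then forces the remaining $h_i(C_d(m))$ to be maximal too. Thus $C_d(m)$ has the coordinatewise-largest possible $h$-vector, hence (by nonnegativity of the $h\to f$ transition coefficients) the coordinatewise-largest $f$-vector, among simplicial $d$-polytopes with $m$ vertices, and by the first reduction among all $d$-polytopes with $m$ vertices. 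The one genuinely hard input is the $h_i$-bound, i.e. the Cohen--Macaulay (equivalently, shellability) property of simplicial spheres; everything else is bookkeeping with the two linear maps relating $f$ and $h$.
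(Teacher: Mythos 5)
Your outline is correct, but note that the paper itself does not prove this theorem: it is stated as a cited result of McMullen (with reference \cite{f.McMullenShephard}), so there is no in-paper argument to compare against. What you have written is the standard proof, correctly organized: (i) the reduction to simplicial polytopes by pulling vertices, which keeps the vertex set and does not decrease any $f_i$; (ii) the passage from $f$- to $h$-vectors, where the key point you rightly emphasize is that the transition coefficients $\binom{d-i}{d-j}=\binom{d-i}{j-i}$ are nonnegative, so coordinatewise domination of $h$-vectors implies domination of $f$-vectors; (iii) the Dehn--Sommerville relations to halve the range of indices; (iv) the bound $h_i\le\binom{m-d+i-1}{i}$, which is indeed the only hard input, obtained either algebraically (Cohen--Macaulayness of the Stanley--Reisner ring of $\partial P$ via Reisner or via a Bruggesser--Mani line shelling, then an Artinian reduction whose Hilbert function is the $h$-vector and which is generated in degree one by $h_1=m-d$ elements) or combinatorially as in McMullen's original shelling argument; and (v) the computation that the $\lfloor d/2\rfloor$-neighborliness of $C_d(m)$ forces $h_i(C_d(m))=\binom{m-d+i-1}{i}$ for $i\le\lfloor d/2\rfloor$, with Dehn--Sommerville supplying the upper half, so that the cyclic polytope's $h$-vector is coordinatewise maximal among simplicial $d$-polytopes with $m$ vertices. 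Two minor points worth recording if you write this up in full: the l.s.o.p. argument needs the ground field to be infinite (harmless, pass to an extension), and the pulling step should be stated so that the vertex set is literally preserved, since the comparison is against $C_d(m)$ with exactly $m$ vertices.
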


\noindent \textbf{\textsf{$h$-vectors.}} 
If $P$ is a simplicial $d$-polytope (or more generally, any simplicial complex of dimension $d-1$), we define
%it is often better to encode the $f$-vector in 
the \textbf{$h$-vector} $h_P=(h_0, h_1, \ldots, h_d)$  by the equivalent equations:
\[
h_0x^d+h_1x^{d-1}+ \cdots + h_dx^0 = f_0(x-1)^d + f_1(x-1)^{d-1}+\cdots + f_d(x-1)^0, \quad \textrm{or}
\]
\[
h_i=\sum_{j=0}^i (-1)^{i-j}{d-j \choose i-j} f_{j} \qquad \textrm{ and }  \qquad f_{i} = \sum_{j=0}^i {d-j \choose i-j} h_j \qquad \textrm{ for } 0 \leq i \leq d.
\]
The $h$-vector is a more economical way of storing the $f$-vector, due to the \emph{Dehn-Sommerville relations}: $h_i = h_{d-i}$ for $0 \leq i \leq d$.

%This set of equations is best possible in the sense that it generates \textbf{all} the linear relations among the $f$-vectors of simplicial polytopes. 
In fact, the \textbf{$g$-theorem} characterizes completely the $f$-vectors of simplicial polytopes \cite{f.BilleraLee, f.Stanley.g-theorem}! This spectacular result (which McMullen conjectured, and described as \emph{``even more intriguing, if rather less plausible"} \cite{f.McMullenShephard}) is one of the most important achievements of algebraic and geometric combinatorics to date. To state it, we need some definitions.

%%It is not difficult to show that for 
%For any positive integers $a$ and $i$, there is a unique representation $a={a_i \choose i} + {a_{i-1} \choose i-1} + \cdots + {a_j\choose j}$ for some $a_i>a_{i-1}>\cdots>a_j \geq j \geq 1$. We then define $a^{\langle i \rangle} ={a_i \choose i+1} + {a_{i-1} \choose i} + \cdots + {a_j\choose j+1}$. 

We say a sequence of nonnegative integers $(m_0, m_1, \ldots, m_d)$ is an \textbf{M-sequence} if there exists a set $S$ of monomials in $x_1, \ldots, x_n$, containing exactly $m_i$ monomials of degree $i$ for $i=0,1, \ldots, d$, such that $m' \in S$ and $m|m'$ implies $m \in S$. Macaulay gave a numerical characterization, as follows.

For any positive integers $a$ and $i$, there is a unique representation $a={a_i \choose i} + {a_{i-1} \choose i-1} + \cdots + {a_j\choose j}$ for some $a_i>a_{i-1}>\cdots>a_j \geq j \geq 1$. We then define $a^{\langle i \rangle} ={a_i \choose i+1} + {a_{i-1} \choose i} + \cdots + {a_j\choose j+1}$. Then $(m_0, m_1, \ldots, m_d)$ is an M-sequence if and only if $m_0=1$ and $m_{i+1} \leq m_i^{\langle i \rangle}$ for $i=1,2, \ldots, d-1$. 

\begin{theorem} \cite{f.BilleraLee, f.Stanley.g-theorem} (Billera-Lee-Stanley's $g$-theorem) A sequence $(h_0, \ldots, h_d)$ of positive integers is the $h$-vector of a simplicial $d$-polytope if and only if
\begin{enumerate}
\item
$h_i = h_{d-i}$ for $i=0,1,\ldots,\lfloor d/2 \rfloor$ (Dehn-Somerville equations), and
\item
$(g_0, g_1, \ldots, g_{\lfloor d/2 \rfloor})$ is an $M$-sequence, where $g_0=h_0$ and $g_i=h_i-h_{i-1}$ for $1 \leq i \leq \lfloor d/2 \rfloor$.
\end{enumerate}
\end{theorem}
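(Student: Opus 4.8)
The $g$-theorem is one of the deepest results in the area, and a complete proof would take many pages; here I sketch the standard approach, which splits cleanly into a necessity half and a sufficiency half. For necessity, the plan is to attach to a simplicial $d$-polytope $P$ the Stanley--Reisner ring ${\mathbbm{k}}[\Delta_P] = {\mathbbm{k}}[x_v : v \in V(P)]/I_{\Delta_P}$, where $I_{\Delta_P}$ is generated by the monomials $x_{v_1}\cdots x_{v_k}$ corresponding to non-faces. One shows that the $h$-vector is the Hilbert function of the Artinian reduction ${\mathbbm{k}}[\Delta_P]/\Theta$, where $\Theta$ is a linear system of parameters (which exists since the face ring is Cohen--Macaulay by Reisner's criterion, the order complex of a polytope boundary being a sphere). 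The Dehn--Sommerville equations $h_i = h_{d-i}$ then come from the fact that ${\mathbbm{k}}[\Delta_P]/\Theta$ is a Poincar\'e duality algebra (Gorenstein), which in turn follows from $\Delta_P$ being a homology sphere. The $M$-sequence condition on the $g_i$ is the subtle point: one needs the existence of a linear element $\omega$ such that multiplication $\cdot\,\omega: ({\mathbbm{k}}[\Delta_P]/\Theta)_{i-1} \to ({\mathbbm{k}}[\Delta_P]/\Theta)_i$ is injective for $i \le \lfloor d/2\rfloor$ --- the \emph{hard Lefschetz property}. Then $g_i = \dim (({\mathbbm{k}}[\Delta_P]/(\Theta,\omega))_i)$ is the Hilbert function of a standard graded algebra, and Macaulay's theorem gives the $M$-sequence bound.

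For sufficiency, the plan is the Billera--Lee construction: given a sequence $(h_0,\ldots,h_d)$ satisfying conditions 1 and 2, realize it as the $h$-vector of an explicit simplicial polytope. One starts from a cyclic polytope $C_{d+1}(m)$ for suitable $m$, takes its boundary complex, and selects a carefully chosen subcomplex (a shelling of an appropriate initial segment of a shelling order of $\partial C_{d+1}(m)$) whose boundary is a simplicial $d$-sphere with the prescribed $g$-vector; a theorem then guarantees this sphere is polytopal. The combinatorial core is that the number of shelling steps contributing a given "restriction set" size is controlled precisely by the $M$-sequence, so Macaulay's characterization of $M$-sequences matches the count of faces one can build.

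The main obstacle --- and historically the reason this was open for a decade after McMullen's conjecture --- is establishing the hard Lefschetz property for $\Theta$-reductions of Stanley--Reisner rings of simplicial polytopes. Stanley's original argument invoked the Hard Lefschetz Theorem for the rational cohomology of the projective toric variety $X_P$ associated to a rational simplicial polytope $P$ (reducing the general case to rational $P$ by a perturbation argument, since the $f$-vector is a combinatorial invariant and rational polytopes are dense in the appropriate sense among combinatorial types of simplicial polytopes --- though this density step itself requires care). An alternative, later route avoids algebraic geometry via McMullen's weight algebra / polytope algebra, or via the more recent combinatorial Hodge-theoretic methods; any of these would serve, but each is substantial. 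I would present the toric-variety argument as the main line, since it is the most economical to state given the tools assumed here, flag the density/rationality reduction as a technical lemma, and then treat the Billera--Lee realization for sufficiency as a largely self-contained combinatorial construction. A fully detailed writeup of either half is beyond a sketch; I would cite \cite{f.BilleraLee, f.Stanley.g-theorem} for the complete arguments.
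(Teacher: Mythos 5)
Your sketch is accurate and follows exactly the route of the cited sources: Stanley's necessity argument via the Stanley--Reisner ring and hard Lefschetz for the associated toric variety, and the Billera--Lee shelling construction from a cyclic polytope for sufficiency; the paper itself offers no proof, deferring entirely to \cite{f.BilleraLee, f.Stanley.g-theorem}, so there is nothing to diverge from. One small correction: Reisner's criterion is applied to the boundary complex of $P$ itself (a simplicial sphere), not to its order complex, and the rationality reduction is easy for simplicial polytopes since small perturbations of the vertices preserve the combinatorial type.
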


For general polytopes, the situation is much less clear, even in dimension $4$, a case which has been studied extensively. \cite{f.Ziegler4polytopes} Stanley \cite{f.Stanley.generalizedh-vectors} defined a more subtle \textbf{toric $h$-vector}. It coincides with the $h$-vector when $P$ is simplicial, and it also satisfies the Dehn-Sommerville equations. His definition may be seen as a combinatorial formula for the dimensions of the intersection cohomology groups of the corresponding projective toric variety (if $P$ is a rational polytope). In a different direction, for ``cubical" polytopes $P$ whose proper faces are all cubes, Adin \cite{f.Adin} defined the \textbf{cubical $h$-vector}, which also satisfies the Dehn-Sommerville relations. Characterizing these toric and cubical $h$-vectors is an important open problem.

\bigskip

\noindent \textbf{\textsf{Flag $f$-vectors and $h$-vectors.}} The \emph{flag $f$-vector} $(f_D)_{D \subseteq [0,d-1]}$ of a $d$-polytope enumerates the flags $F_1 \subset \cdots \subset F_k$ of given dimensions $D=\{d_1<\cdots < d_k\}$ for all $D \subseteq [0,d-1]$. As we described in Section \ref{f.sec:Eulerian} (in the wider context of Eulerian posets), this information can be more economically stored in the ${\mathbf{c}}{\mathbf{d}}$-index of $P$. This encoding incorporates all linear relations among the flag $f$-vector.

As we saw in Theorem \ref{f.th:cd-index}, the ${\mathbf{c}}{\mathbf{d}}$-index of a polytope $P$ is non-negative; this is not true for general Eulerian posets. Another very interesting question, which is wide open, is to classify the ${\mathbf{c}}{\mathbf{d}}$-indices of polytopes.

\bigskip

\subsection{{\textsf{Counting lattice points: Ehrhart theory}}}\label{f.sec:Ehrhart} In this section we are interested in ``measuring" a polytope $P$ by counting the lattice points in its integer dilations $P, 2P, 3P, \ldots$. We limit our attention to \textbf{lattice polytopes}, whose vertices are lattice points, and to \textbf{rational polytopes}, whose vertices have rational coordinates; at the moment there is no good theory for general polytopes.

\begin{theorem}\label{f.th:Ehrhart} (Ehrhart's Theorem) Let $P \subset {\mathbb{R}}^d$ be a lattice polytope.
There are polynomials $L_P(x)$ and $L_{P^o}(x)$ of degree $\dim P$, called the \textbf{Ehrhart polynomial} and \textbf{interior Ehrhart polynomial} of $P$, such that: 
\begin{enumerate}
\item
The number of lattice points in the $n$th dilation of $P$ and its interior $P^o$ are
\[
L_P(n) = | nP \cap {\mathbb{Z}}^d| \qquad L_{P^o}(n) = | nP^o \cap {\mathbb{Z}}^d|   \qquad \textrm{ for all } n \in {\mathbb{N}}.
\]
\item 
The \textbf{Ehrhart reciprocity law} holds:
\[
(-1)^{\dim P} L_P(-x) = L_{P^o}(x).
\]

%There exists a polynomial $L_P(x)$ of degree $\dim P$, called the \textbf{Ehrhart polynomial} of $P$, such that 
%\begin{enumerate}
%\item
%the number of lattice points in the $n$th dilation of $P$ is
%\[
%L_P(n) = | nP \cap {\mathbb{Z}}^d| \qquad \textrm{ for all } n \in {\mathbb{N}}.
%\]
%\item
%the number of lattice points in the interior of the $n$th dilate of $P$ is 
%\[
%(-1)^{\dim P} L_P(-n) = |\mathrm{int}(nP) \cap {\mathbb{Z}}^d| \qquad \textrm{ for all } n \in {\mathbb{N}}.
%\]
\end{enumerate}
\end{theorem}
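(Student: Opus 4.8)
\textbf{Proof proposal for Ehrhart's Theorem (Theorem \ref{f.th:Ehrhart}).}

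The plan is to prove both parts simultaneously by passing to the cone over $P$ and working with the algebra of lattice points, following the standard approach of Ehrhart, Stanley, and Macdonald. First I would set up the $(d+1)$-dimensional construction: place $P \subset {\mathbb{R}}^d$ at height $1$ in ${\mathbb{R}}^{d+1}$, i.e. consider $P \times \{1\}$, and let ${\mathrm{cone}}(P)$ be the set of nonnegative real combinations of the points $({\mathbf{v}}_i, 1)$ where ${\mathbf{v}}_i$ are the vertices of $P$. Then the lattice points of ${\mathrm{cone}}(P)$ at height $n$ are exactly the lattice points of $nP$, so the generating function $\sigma_P(t) = \sum_{n \geq 0} L_P(n) t^n$ is the ``height-graded'' generating function for lattice points in ${\mathrm{cone}}(P)$. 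The whole theorem reduces to showing this series is rational of the form $h^*(t)/(1-t)^{\dim P + 1}$ with $h^*$ a polynomial of degree at most $\dim P$, since extracting coefficients of such a rational function yields a polynomial in $n$ of degree $\dim P$ (this is exactly Theorem \ref{f.th:polynomial}, or rather its refinement: denominator $(1-t)^{d'+1}$ forces a degree-$d'$ polynomial once one checks the numerator degree).

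The key computational step is a triangulation argument. Using the theorem that every polytope has a triangulation (stated in Section \ref{f.sec:polytopefacts}) into simplices using no new vertices, I would first handle the case where $P$ is a lattice simplex. For a simplex with vertices ${\mathbf{v}}_0, \ldots, {\mathbf{v}}_{d'}$, the cone ${\mathrm{cone}}(P)$ is simplicial and one has the half-open decomposition: every lattice point of ${\mathrm{cone}}(P)$ is uniquely $\sum \lambda_i ({\mathbf{v}}_i,1)$ with $\lambda_i \geq 0$, which can be written uniquely as (a lattice point in the half-open fundamental parallelepiped $\Pi = \{\sum \mu_i ({\mathbf{v}}_i,1) : 0 \leq \mu_i < 1\}$) plus (a nonnegative integer combination of the $({\mathbf{v}}_i,1)$). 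Since $\Pi$ is bounded it contains finitely many lattice points, and grouping them by height gives
\[
\sigma_P(t) = \frac{\sum_{p \in \Pi \cap {\mathbb{Z}}^{d+1}} t^{{\mathrm{ht}}(p)}}{(1-t)^{d'+1}},
\]
a rational function with numerator a polynomial (heights of points in $\Pi$ range in $\{0, 1, \ldots, d'\}$, giving the degree bound). For a general lattice polytope, triangulate it and use inclusion–exclusion over the faces of the triangulation (which are themselves lattice polytopes of smaller dimension, handled inductively), being careful with the overcounting on shared faces — this is where the half-open/shelling-style bookkeeping is needed to keep everything a genuine polynomial. This establishes part 1.

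For part 2, the reciprocity law, the cleanest route is to compute the generating function for \emph{interior} lattice points the same way and observe a duality. In the simplex (hence simplicial cone) case, the interior lattice points of ${\mathrm{cone}}(P)$ correspond to points $\sum \lambda_i({\mathbf{v}}_i,1)$ with all $\lambda_i > 0$, and the interior of the fundamental parallelepiped $\Pi^o = \{\sum \mu_i ({\mathbf{v}}_i,1): 0 < \mu_i \leq 1\}$ is related to $\Pi$ by the involution $p \mapsto \sum({\mathbf{v}}_i,1) - p$, which sends height $k$ to height $d'+1-k$. This yields $\sum_{n \geq 1} L_{P^o}(n) t^n = (-1)^{d'+1}\sigma_P(1/t)$ as rational functions, which upon extracting coefficients is precisely $(-1)^{\dim P} L_P(-x) = L_{P^o}(x)$. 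One then lifts reciprocity from simplices to general $P$ by the same inclusion–exclusion over the triangulation, now using the Euler-characteristic-type identity $\mu(F, \widehat{1}) = (-1)^{\dim P - \dim F}$ for faces of a polytope (Theorem \ref{f.th:Mobiusformulas}.10) to match up boundary contributions — this is the step where the combinatorial topology of the subdivision does the real work.

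The main obstacle I anticipate is not the simplex case (which is essentially a bookkeeping exercise once the cone picture is in place) but rather making the inclusion–exclusion over a triangulation rigorous without circular reasoning: one needs that the alternating sum of Ehrhart(-type) functions of the simplices in a triangulation, corrected on all lower-dimensional faces, reproduces $L_P$ and $L_{P^o}$ with the correct signs. The slick way around this is to use \emph{half-open} decompositions: every triangulation admits a partition of $P$ into half-open simplices (each simplex with some of its facets removed) that tile $P$ exactly, so the generating functions simply add with no inclusion–exclusion at all, and the degree and reciprocity statements pass through transparently. Setting up that half-open partition carefully — choosing a generic point or a shelling order to decide which facets to remove — is the one place where I would need to be genuinely careful rather than routine.
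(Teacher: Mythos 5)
Your simplex step is exactly the paper's: cone over $P$ at height one, tile the cone by translates of the half-open fundamental parallelepiped to get ${\mathrm{Ehr}}_P(z)$ as a rational function with denominator $(1-z)^{\dim P+1}$ (hence polynomiality via Theorem \ref{f.th:polynomial}), and get reciprocity from the involution exchanging the lower and upper half-open parallelepipeds. The only place you diverge is in assembling the general case from simplices. The paper writes $L_P = \sum_{F \in {\mathcal{T}}} L_{F^o}$ and $L_{P^o} = \sum_{F \in {\mathcal{T}}^o} L_{F^o}$ over the faces of a triangulation, and then converts open-face counts to closed-face counts by M\"obius inversion on the face poset $\widehat{{\mathcal{T}}}$ of the \emph{subdivision}, using Theorem \ref{f.th:Mobiusformulas}.11 — not Theorem \ref{f.th:Mobiusformulas}.10 as you cite. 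This matters if you go the M\"obius route: the intermediate faces are faces of the triangulation, not of $P$, and the needed M\"obius function must vanish on boundary faces (and carry the extra sign on interior ones) so that exactly the interior faces survive in $L_P = \sum_{G \in {\mathcal{T}}^o}(-1)^{\dim P - \dim G} L_G$; the face lattice of $P$ itself does not see this distinction. Your preferred alternative — a half-open (shelling-style) partition of $P$ into half-open simplices so that Ehrhart series simply add — is a correct and standard way to sidestep the M\"obius bookkeeping entirely, and it buys a cleaner proof of polynomiality, though for reciprocity you would still need to track how the half-open decomposition of $P$ relates to one of $P^o$, which is essentially the same boundary accounting the paper delegates to Theorem \ref{f.th:Mobiusformulas}.11. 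So: correct in substance, same core argument, with a fixable mis-citation in the one step where your inclusion–exclusion variant needs care.
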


\noindent 
\textbf{Note.} If $P$ is a rational polytope, then $L_P(n)$ is instead given by a \textbf{quasipolynomial}; that is, there exist an integer $m$ and polynomials $L_1(x), \ldots, L_m(x)$ such that $|nP \cap {\mathbb{Z}}^d| = L_k(n)$ whenever $n \equiv k \, (\textrm{mod } m)$.

\begin{proof}[Sketch of Proof of Theorem \ref{f.th:Ehrhart}] By working in one dimension higher, we can consider the various dilations of $P$ all at once. We embed ${\mathbb{R}}^d$ into ${\mathbb{R}}^{d+1}$ by mapping ${\mathbf{v}}$ to $({\mathbf{v}}, 1)$, and consider the cone
\[
{\mathrm{cone }}(P) = \{\lambda_1 ({\mathbf{v}}_1, 1) + \cdots + \lambda_n ({\mathbf{v}}_n,1) \, : \, \lambda_1, \ldots, \lambda_n \geq 0\}
\]

\begin{figure}[ht]
 \begin{center}
  \includegraphics[scale=1]{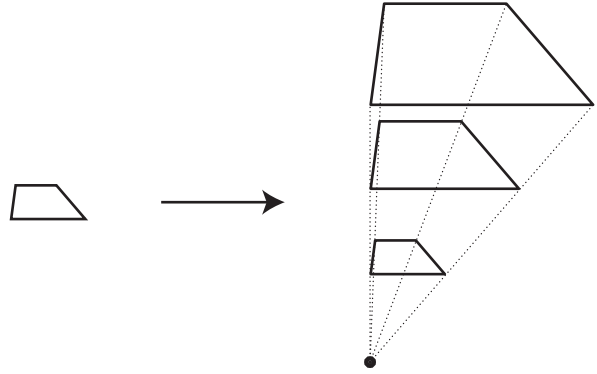}
  \caption{ \label{f.fig:Ehrhart}
The cone of a polytope and its slices at height $0,1,2,3$.}
  \end{center}
\end{figure}

\noindent 
Then for each $n \in {\mathbb{N}}$, the slice $x_{d+1}=n$ of ${\mathrm{cone }}(P)$ is a copy of the dilation $nP$, as shown in Figure \ref{f.fig:Ehrhart}. Two key ingredients of the proof will be the \textbf{lattice point enumerator} of ${\mathrm{cone }}(P)$ and the \textbf{Ehrhart series} of $P$:
\begin{equation}\label{f.eq:Ehr}
\sigma_{{\mathrm{cone }}(P)}({\mathbf{z}}) = \sum_{{\mathbf{p}} \in {\mathrm{cone }}(P)} {\mathbf{z}}^{\mathbf{p}}, \qquad \qquad {\mathrm{Ehr}}_P(z) = \sum_{n \geq 0} L_P(n) z^n = \sigma_{{\mathrm{cone }}(P)}(1, \ldots, 1, z),
\end{equation}
where ${\mathbf{z}}^{\mathbf{p}} = z_1^{p_1} \cdots z_{d+1}^{p_{d+1}}$.
%, and the \textbf{Ehrhart series}
%\begin{equation}\label{f.eq:Ehr}
%{\mathrm{Ehr}}_P(z) = \sum_{n \geq 0} L_P(n) z^n = \sigma_{{\mathrm{cone }}(P)}(1, \ldots, 1, z). 
%\end{equation}

We first prove the theorem for simplices, and then use triangulations to prove the general case.

\medskip

\noindent \emph{Step 1. (Simplices)} 
First we prove the result when $P$ is a simplex, which we may assume is full-dimensional. 
%%By Theorem ???, (1) is equivalent to proving that the \textbf{Ehrhart series}
%\[
%{\mathrm{Ehr}}_P(z) = \sum_{n \geq 0} L_P(n) z^n 
%\]
%is a rational function with denominator $(1-z)^{\dim P+1}$. Notice that
%\[
%{\mathrm{Ehr}}_P(z) = \sigma_{{\mathrm{cone }}(P)}(1, \ldots, 1, z). 
%\]
In this case ${\mathrm{cone }}(P)$ is an orthant generated by $d+1$ linearly independent vectors ${\mathbf{w}}_1, \ldots, {\mathbf{w}}_{d+1}$, where ${\mathbf{w}}_i = ({\mathbf{v}}_i, 1)$. 

\smallskip

1. Define the \textbf{lower} and \textbf{upper fundamental (half-open) parallelepipeds}
\begin{eqnarray*}
\Pi &=& \{ \lambda_1 {\mathbf{w}}_1 + \cdots + \lambda_{d+1} {\mathbf{w}}_{d+1} \, : \, 0 \leq \lambda_1, \ldots, \lambda_{d+1} < 1\} \\
\Pi^+ &=& \{ \lambda_1 {\mathbf{w}}_1 + \cdots + \lambda_{d+1} {\mathbf{w}}_{d+1} \, : \, 0 < \lambda_1, \ldots, \lambda_{d+1} \leq 1\}.
\end{eqnarray*}
Then ${\mathrm{cone }}(P)$ is tiled by the various non-negative ${\mathbf{w}}$-integer translates of $\Pi$, namely, the parallelepipeds %$\Pi_\k = 
$\Pi + k_1{\mathbf{w}}_1 + \cdots + k_{d+1}{\mathbf{w}}_{d+1}$ for ${\mathbf{k}}=(k_1, \ldots, k_{d+1}) \in {\mathbb{N}}^{d+1}$. Similarly, the interior $\textrm{int}({\mathrm{cone }}(P))$ is tiled by the various non-negative ${\mathbf{w}}$-integer translates of $\Pi^+$. 
Therefore
\begin{eqnarray*}
\sigma_{{\mathrm{cone }}(P)}({\mathbf{z}}) %&=& \sum_{{\mathbf{k}} \in {\mathbb{N}}^{d+1}} \sigma_{\Pi_{\mathbf{k}}}(\z) {\mathbf{z}}
&=& \sigma_\Pi({\mathbf{z}})\left(\frac{1}{1-{\mathbf{z}}^{{\mathbf{w}}_1}}\right) \cdots \left(\frac{1}{1-{\mathbf{z}}^{{\mathbf{w}}_{d+1}}}\right) \\
\sigma_{\textrm{int}({\mathrm{cone }}(P))}({\mathbf{z}}) %&=& \sum_{{\mathbf{k}} \in {\mathbb{N}}^{d+1}} \sigma_{\Pi^+_{\mathbf{k}}}({\mathbf{z}}) = 
&=& \sigma_{\Pi^+}({\mathbf{z}})\left(\frac{1}{1-{\mathbf{z}}^{{\mathbf{w}}_1}}\right) \cdots \left(\frac{1}{1-{\mathbf{z}}^{{\mathbf{w}}_{d+1}}}\right).
\end{eqnarray*}
Using (\ref{f.eq:Ehr}) we get
\[
{\mathrm{Ehr}}_P(z) = \frac{\sigma_\Pi(1, \ldots, 1, z)}{(1-z)^{d+1}}, \qquad {\mathrm{Ehr}}_{P^o}(z) = \frac{\sigma_{\Pi^+}(1, \ldots, 1, z)}{(1-z)^{d+1}},
\]
which are rational functions. 
Then, by Theorem \ref{f.th:polynomial}, $L_P(n)$ and $L_{P^o}(n)$ are polynomial functions of $n$ of degree $d$, as desired.

\smallskip

2. Now observe that $\sigma_{\Pi^+}({\mathbf{z}}) = {\mathbf{z}}^{{\mathbf{w}}_1 + \cdots + {\mathbf{w}}_{d+1}} \sigma_{\Pi}(1/z_1, \ldots, 1/z_{d+1})$, because $\Pi^+$ is the translation of $-\Pi$ by  the vector ${\mathbf{w}}_1 + \cdots + {\mathbf{w}}_{d+1}$. This gives
\[
{\mathrm{Ehr}}_{P^o}(z) + (-1)^d {\mathrm{Ehr}}_P(1/z) = 0.
\]
It remains to invoke the fact that if $f$ is a polynomial, then $F^+(z) = \sum_{n \geq 0} f(n)z^n$ (which is a rational function of $z$) and $F^-(z) = \sum_{n < 0} f(n)z^n$ (which is a rational function of $1/z$, and hence of $z$) satisfy $F^+(z)+F^-(z) = 0$ as rational functions. This implies that $(-1)^{\dim P} L_P(-n) = L_{P^o}(n)$ for every positive integer $n$, and hence these two polynomials are equal.

\medskip

\noindent \emph{Step 2. (The general case)} Now let $P$ be a general lattice polytope, and let ${\mathcal{T}}$ be a triangulation of $P$. Recall that ${\mathcal{T}}^o$ is the set of non-boundary faces of ${\mathcal{T}}$. 

\smallskip

1. We have 
\[
L_P(n) = \sum_{F \in {\mathcal{T}}} L_{F^o}(n), \qquad L_{P^o}(n) = \sum_{F \in {\mathcal{T}}^o} L_{F^o}(n) 
\]
which implies that $L_P$ and $L_{P^o}$ are polynomials of degree $\dim P$, by Step 1.

\smallskip

2. Let $\widehat{{\mathcal{T}}}$ be the face poset of ${\mathcal{T}}$, with an additional maximum element ${\widehat{1}}$. We have the M\"obius dual relations 
\[
L_F = \sum_{G \leq F} L_{G^o}, \qquad L_{F^o} = \sum_{G \leq F} \mu(G,F)L_{G} \qquad \textrm{ for all } F \in {\mathcal{T}}
\]
omitting the arguments of the polynomials in question. This gives us  
\[
L_P = \sum_{F \in {\mathcal{T}}} L_{F^o} = \sum_{F \in {\mathcal{T}}} \sum_{G \leq F} \mu(G,F)L_{G} =  \sum_{G \in {\mathcal{T}}} -\mu(G, {\widehat{1}}) L_{G} = \sum_{G \in {\mathcal{T}}^o} (-1)^{\dim P - \dim G} L_G.
\]
where in the last step we are invoking Theorem \ref{f.th:Mobiusformulas}.11. Now, using that 
the simplices $G \in {\mathcal{T}}^o$ satisfy Ehrhart reciprocity: we obtain
\[
L_P(-n) = \sum_{G \in {\mathcal{T}}^o} (-1)^{\dim P - \dim G} L_G(-n) = (-1)^{\dim P} \sum_{G \in {\mathcal{T}}^o} L_{G^o}(n) = (-1)^{\dim P} L_{P^o}(n),
\]
as desired.
\end{proof}

Next we observe that the volume of a lattice polytope can be recovered from its Ehrhart polynomial. If $\dim P < d$, there is a small subtlety: Let $V$ be the affine span of $P$, and $\Lambda = V \cup {\mathbb{Z}}^d$. Then we need to normalize the volumes along $V$, so that any ``unit" cube, generated by a ${\mathbb{Z}}$-basis of $\Lambda$, has normalized volume $1$. We let ${\mathrm{vol}}(P)$ denote the \textbf{normalized volume} or \textbf{lattice volume} of $P$. When $P$ is full-dimensional, this is the usual volume.

For example, the segment from $(1,1)$ to $(4,7)$ has normalized volume (lattice length) $3$, because in this case the lattice $\Lambda$  is generated by the primitive vector $(1,2)$, and $(4,7) - (1,1) = \textbf{3}(1,2)$.

\begin{proposition} If $L_P(t) = c_dt^d + \cdots + c_1t+c_0$ is the Ehrhart polynomial of a lattice polytope $P$, then $c_d = {\mathrm{vol}}(P)$ and $c_0=1$.
\end{proposition}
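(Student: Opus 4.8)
The claim has two parts: the leading coefficient of $L_P(t)$ is the normalized volume of $P$, and the constant coefficient is $1$. I would handle the constant term first, since it is the quicker of the two. By Ehrhart reciprocity (Theorem \ref{f.th:Ehrhart}.2) we have $L_{P^o}(t) = (-1)^{\dim P} L_P(-t)$; evaluating at $t=0$ gives $L_{P^o}(0) = (-1)^{\dim P} L_P(0)$, so it would suffice to pin down $L_P(0)$ and $L_{P^o}(0)$ independently. The cleanest route is the triangulation decomposition used in the proof of Ehrhart's theorem: $L_P = \sum_{F \in {\mathcal{T}}} L_{F^o}$ and $L_{P^o} = \sum_{F \in {\mathcal{T}}^o} L_{F^o}$, where ${\mathcal{T}}$ is a triangulation of $P$ with no new vertices. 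For a (half-open relatively open) simplex $F^o$ of dimension $k \geq 1$, the Ehrhart series computation in Step 1 of that proof shows $L_{F^o}(t)$ has no constant term when $k \geq 1$ (the numerator $\sigma_{\Pi^+}(1,\dots,1,z)$ is divisible by $z$), while $L_{F^o}(0) = 1$ for a $0$-dimensional face. Hence $L_P(0) = \#\{\text{vertices of } {\mathcal{T}}\} - (\text{contributions cancel})$... actually more carefully: $L_P(0) = \sum_{F \in {\mathcal{T}}} L_{F^o}(0) = \sum_{F \in {\mathcal{T}}} [\dim F = 0] = $ the reduced Euler characteristic computation; the honest statement is that $\sum_{F \in {\mathcal{T}}} (-1)^{\dim F}\cdot(\text{stuff})$ collapses, so I would instead argue $L_P(0) = \chi(P) = 1$ directly, using that $L_P(0) = \sum_{F} L_{F^o}(0) = \sum_F [\dim F = 0]$ overcounts and must be corrected — the correct bookkeeping is that $L_{F^o}(0) = (-1)^{\dim F}$ for every simplex $F$ (this follows from reciprocity applied to the simplex: $L_{F^o}(0) = (-1)^{\dim F} L_F(0)$ and induction with $L_F(0) = 1$ at the vertex), so $L_P(0) = \sum_{F \in {\mathcal{T}}}(-1)^{\dim F} = \chi(P) = 1$ since $P$ is contractible.

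For the leading coefficient, the idea is that $L_P(n) = |nP \cap {\mathbb{Z}}^d|$ is a Riemann-sum approximation to the normalized volume: as $n \to \infty$, the number of lattice points of $nP$ (equivalently, the number of points of $\frac{1}{n}{\mathbb{Z}}^d$ in $P$, which subdivide the affine span into fundamental cells of normalized volume $n^{-\dim P}$) satisfies $|nP \cap {\mathbb{Z}}^d| / n^{\dim P} \to {\mathrm{vol}}(P)$, because each lattice point of $nP$ corresponds to a translate of a fundamental domain of volume $n^{-\dim P}$, and the union of these domains fills out $P$ up to a boundary layer of relative volume $O(1/n)$. Since $L_P$ is a polynomial of degree $d' := \dim P$ (by Theorem \ref{f.th:Ehrhart}.1), its leading coefficient is exactly $\lim_{n\to\infty} L_P(n)/n^{d'}$, which equals ${\mathrm{vol}}(P)$. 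I would make the boundary-layer estimate precise by the usual sandwich: the half-open fundamental cells $\frac1n(\mathbf{p} + \square)$ over $\mathbf{p} \in nP \cap {\mathbb{Z}}^d$ that lie entirely inside $P$ have total volume at most ${\mathrm{vol}}(P)$ and at least ${\mathrm{vol}}(P) - C/n$ for a constant $C$ depending on the $(d'-1)$-volume of $\partial P$, using that $P$ is bounded.

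The main obstacle is the normalization subtlety when $\dim P < d$: one must work inside the affine span $V$ of $P$ with the induced lattice $\Lambda = V \cap {\mathbb{Z}}^d$ (note: the excerpt writes $V \cup {\mathbb{Z}}^d$, evidently a typo for $\cap$), choose a ${\mathbb{Z}}$-basis of $\Lambda$, and verify that the Riemann-sum argument goes through with respect to this basis so that the ``unit cell'' has normalized volume $1$ by definition. Concretely, I would push $P$ forward under the unimodular-with-respect-to-$\Lambda$ identification $V \cong {\mathbb{R}}^{d'}$ sending $\Lambda$ to ${\mathbb{Z}}^{d'}$; this identification preserves both the lattice-point count $L_P(n)$ and the normalized volume, reducing everything to the full-dimensional case, where the argument above applies verbatim. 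The only care needed is to confirm that $nP \cap {\mathbb{Z}}^d = nP \cap \Lambda$ (true because every lattice point of ${\mathbb{Z}}^d$ lying in the affine span $nV = V$ of $nP$ is automatically in $\Lambda$), so that the count is genuinely intrinsic to $(V,\Lambda)$. Once that reduction is in place, the leading-coefficient claim is the standard fact that the Ehrhart polynomial's top coefficient is the volume, and the constant-coefficient claim is the Euler-characteristic computation above.
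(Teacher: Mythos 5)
Your proof is correct. For the leading coefficient you argue exactly as the paper does: $L_P(n)/n^{\dim P}$ is a Riemann-sum approximation of the normalized volume, so the limit of this ratio, which is the top coefficient of the polynomial, equals ${\mathrm{vol}}(P)$; your extra step of reducing the lower-dimensional case to the full-dimensional one via a lattice-preserving identification of $({\mathrm{aff }}(P),\Lambda)$ with $({\mathbb{R}}^{\dim P},{\mathbb{Z}}^{\dim P})$ is a more careful treatment of a point the paper's sketch (which speaks of cubes of volume $1/n^d$) glosses over, and you are right that $V \cup {\mathbb{Z}}^d$ in the text is a typo for $V \cap {\mathbb{Z}}^d$. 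Where you genuinely diverge is the constant term: the paper disposes of it in one line, $c_0 = L_P(0) = |0P \cap {\mathbb{Z}}^d| = 1$, since its statement of Ehrhart's theorem asserts $L_P(n) = |nP \cap {\mathbb{Z}}^d|$ for all $n \in {\mathbb{N}}$ including $n=0$. Your route through the triangulation identity $L_P = \sum_{F \in {\mathcal{T}}} L_{F^o}$, reciprocity for simplices giving $L_{F^o}(0) = (-1)^{\dim F}$, and $\sum_{F \in {\mathcal{T}}} (-1)^{\dim F} = \chi(P) = 1$ is longer, but it buys something: for a non-simplex the counting identity behind Step 2 of the paper's proof is only verified at $n \geq 1$, so the fact that the polynomial's value at $0$ equals the lattice count of the point $0P$ is exactly the Euler-characteristic computation you carry out. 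In other words, your argument justifies the $n=0$ case that the paper's one-liner presupposes; just tighten the exposition (drop the false starts) and note that $L_F(0)=1$ for a lattice simplex $F$ follows directly from the cone/Ehrhart-series construction in Step 1 rather than from an induction.
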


\begin{proof}[Sketch of Proof.] We obtain better and better approximations of the volume of $P$ by choosing finer and finer grids $(\frac1n {\mathbb{Z}})^d$, and placing a cube of volume $\frac1{n^d}$ centered at each lattice point in $P \cap (\frac1n {\mathbb{Z}})^d$. Therefore
\[
{\mathrm{vol}}(P) = \lim_{n \rightarrow \infty} \frac1{n^d} \left| P \cap \left(\frac1n {\mathbb{Z}}\right)^d \right|= 
 \lim_{n \rightarrow \infty} \frac1{n^d} \left| nP \cap {\mathbb{Z}}^d \right|= 
\lim_{n \rightarrow \infty} \frac{L_P(n)}{n^d} = c_d.
\]
Since the only lattice point in $0P$ is the origin, $c_0 = L_P(0) = 1$.
\end{proof}

A lattice simplex $S$ in $\mathbb{R}^d$ with vertices $\mathbf{v}_1, \ldots, \mathbf{v}_k$ is \textbf{unimodular} if the vectors $\mathbf{v}_2 - \mathbf{v}_1, \ldots, \mathbf{v}_d - \mathbf{v}_1$ are a lattice basis for the lattice $\textrm{aff } S \cap \mathbb{Z}^d$. A triangulation is \textbf{unimodular} if all its simplices are unimodular. The following result shows that unimodular triangulations are particularly useful for enumerative purposes.

\begin{proposition}\label{f.prop:h*} 1. For every lattice polytope $P \subset {\mathbb{R}}^d$ the \textbf{Ehrhart $h^*$-polynomial}  $h^*_P(x) = h^*_0 + h^*_1 x + \cdots + h_d^* x^d$, which is defined by 
\[
{\mathrm{Ehr}}_P(z) = \frac{h^*_P(z)}{(1-z)^{d+1}}
\]
has nonnegative coefficients: $h^*_k \geq 0$ for all $k$. \\
2. If $P$ has a unimodular triangulation ${\mathcal{T}}$, then 
\[
h^*_P(z) = h_{\mathcal{T}}(z),
\]
where the \textbf{$h$-polynomial}  of ${\mathcal{T}}$ is the generating function $h_{\mathcal{T}}(z) = h_0 + h_1z + \cdots + h_dz^d$ for the \emph{$h$-vector} $(h_0, \ldots, h_d)$ of ${\mathcal{T}}$, 
defined in terms of the $f$-vector $(f_0, \ldots, f_d)$ of ${\mathcal{T}}$ as in Section 1.6.3.
%as defined in Section \ref{f.sec:countingfaces}. 
\end{proposition}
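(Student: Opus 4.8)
The plan is to prove the two parts of Proposition~\ref{f.prop:h*} in reverse logical order: first establish part~2, the identity $h^*_P(z) = h_{\mathcal{T}}(z)$ for a unimodular triangulation, and then deduce part~1 as a corollary by noting that every lattice polytope admits \emph{some} triangulation into unimodular pieces after passing to a sufficiently fine lattice refinement---or, more cleanly, by a direct half-open decomposition argument that does not need unimodularity at all. Let me describe each.

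For part~2, the key observation is that a unimodular simplex $S$ of dimension $k$ has the simplest possible Ehrhart series: since its generating cone ${\mathrm{cone}}(S) \subset {\mathbb{R}}^{d+1}$ is generated by a lattice basis of the relevant sublattice, the fundamental half-open parallelepiped $\Pi$ from Step~1 of the proof of Theorem~\ref{f.th:Ehrhart} contains exactly one lattice point (the origin), so $\sigma_\Pi(1,\ldots,1,z) = 1$ and ${\mathrm{Ehr}}_S(z) = 1/(1-z)^{k+1}$. The plan is then to triangulate $P$ by ${\mathcal{T}}$ and decompose $P$ as a \emph{disjoint} union of relatively open faces, exactly as in Step~2 of the proof of Ehrhart's theorem: $L_P(n) = \sum_{F \in {\mathcal{T}}} L_{F^o}(n)$, so ${\mathrm{Ehr}}_P(z) = \sum_{F \in {\mathcal{T}}} {\mathrm{Ehr}}_{F^o}(z)$. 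Using Ehrhart reciprocity for the unimodular simplex $F$ (or directly computing $\sigma_{\Pi^+}$), we get ${\mathrm{Ehr}}_{F^o}(z) = z^{\dim F + 1}/(1-z)^{\dim F + 1}$. Summing over all faces $F$ of ${\mathcal{T}}$ with $f_{j+1}$ faces of dimension $j$ (using the $f$-vector conventions of Section~1.6.3, where $f_i$ counts $(i-1)$-faces), and putting everything over the common denominator $(1-z)^{d+1}$, yields
\[
{\mathrm{Ehr}}_P(z) = \sum_{j=-1}^{d-1} f_{j+1} \frac{z^{j+1}(1-z)^{d-j}}{(1-z)^{d+1}} = \frac{\sum_{i=0}^d f_i\, z^i (1-z)^{d-i}}{(1-z)^{d+1}}.
\]
Comparing with the defining relation ${\mathrm{Ehr}}_P(z) = h^*_P(z)/(1-z)^{d+1}$ and with the definition $h_{\mathcal{T}}(z) = \sum h_i z^i$ where $\sum h_i z^i = \sum f_i z^i(1-z)^{d-i}$ (this is exactly the $f$-to-$h$ transformation from Section~1.6.3, written in generating-function form after the substitution $x \mapsto 1/z$ and clearing), gives $h^*_P = h_{\mathcal{T}}$. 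The bookkeeping with the empty face and the index shift is the fiddly part but is routine.

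For part~1, the cleanest route is \emph{not} to invoke part~2 (since not every lattice polytope has a unimodular triangulation in its own lattice), but rather to use the half-open decomposition for an arbitrary lattice triangulation ${\mathcal{T}}$ of $P$: each simplex $S \in {\mathcal{T}}$ contributes ${\mathrm{Ehr}}_S(z) = \sigma_{\Pi_S}(1,\ldots,1,z)/(1-z)^{\dim S + 1}$ where $\sigma_{\Pi_S}(1,\ldots,1,z)$ is a polynomial in $z$ with \emph{nonnegative integer} coefficients (it literally counts lattice points in $\Pi_S$ by last coordinate). One then builds a shelling-type or half-open ``brick'' decomposition of $P$ relative to a generic line---following Stanley's classical argument---so that $P$ is written as a disjoint union of half-open simplices, one closed face at a time, each contributing a nonnegative polynomial numerator over $(1-z)^{d+1}$ after clearing denominators; summing these gives $h^*_P(z)$ with nonnegative coefficients. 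Alternatively, one may cite that this is Stanley's nonnegativity theorem and give the short half-open-parallelepiped proof for simplices plus the additivity over a triangulation with a consistent choice of which facets to remove.

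The main obstacle I anticipate is part~1: the disjointness bookkeeping in the half-open decomposition must be done with care so that the numerators genuinely add without cancellation, and one must choose the half-open structure on the simplices of ${\mathcal{T}}$ coherently (e.g. via a generic linear functional, removing from each simplex the facets ``visible'' from a fixed generic direction) to guarantee that $P$ is tiled exactly once. Part~2, by contrast, is essentially a direct computation once the single-lattice-point fact about unimodular simplices is in hand; the only subtlety there is matching the index conventions for $f$-vectors (empty face included, $P$ itself excluded) between Ehrhart theory and Section~1.6.3.
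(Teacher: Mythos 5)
Your part 2 is essentially the paper's own argument: decompose $P$ as the disjoint union of the relative interiors of the faces of ${\mathcal{T}}$, note that unimodularity forces the upper fundamental parallelepiped $\Pi^+_F$ to contain a single lattice point, at height $\dim F + 1$, so that ${\mathrm{Ehr}}_{F^o}(z) = \bigl(z/(1-z)\bigr)^{\dim F+1}$, and then convert the resulting $f$-vector sum into the $h$-polynomial; the only difference is cosmetic (the paper computes $\sigma_{\Pi^+_F}$ directly rather than phrasing it via reciprocity). One slip to fix: your displayed sum runs over $j=-1,\dots,d-1$, so as written it omits the full-dimensional simplices of ${\mathcal{T}}$. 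It should run over $j=-1,\dots,d$, giving numerator $\sum_{i=0}^{d+1} f_i\, z^i(1-z)^{d+1-i}$, which is the paper's $\sum_{k=0}^{d+1}f_k\bigl(z/(1-z)\bigr)^k$ put over the common denominator $(1-z)^{d+1}$; this is the $f$-to-$h$ transformation for the $d$-dimensional complex ${\mathcal{T}}$, and the apparent degree-$(d+1)$ term causes no trouble because $h_{d+1}=0$ for a triangulated ball, which is what justifies the statement's truncation at $h_d$.

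For part 1 the paper gives no proof at all -- it cites Stanley and Beck--Robins -- so your sketch supplies more than the text does. Your key structural observation is correct: part 1 does not follow from part 2, since a lattice polytope of dimension at least $3$ need not admit any unimodular triangulation, and your opening fallback of ``passing to a sufficiently fine lattice refinement'' would not help anyway, because replacing ${\mathbb{Z}}^d$ by a finer lattice changes $L_P$ and hence $h^*_P$; you rightly discard it in favor of the half-open decomposition. That argument (take any lattice triangulation, fix a generic direction, remove from each full-dimensional simplex the facets visible from that direction so that the resulting half-open simplices tile $P$ exactly once, and observe that each piece contributes a numerator that literally counts lattice points of a half-open parallelepiped by height, hence has nonnegative integer coefficients) is a correct and standard proof of Stanley's nonnegativity theorem, essentially the one the paper points to in Beck--Robins. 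Just note that in this step only the full-dimensional simplices are used, with the lower-dimensional faces absorbed into the half-open pieces, so the face-by-face sum from part 2 plays no role there.
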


\begin{proof}
1. is due to Stanley \cite{f.Stanleyh*}; for a short proof see \cite[Theorem 3.12]{f.BeckRobins}. For 2., we have
\[
{\mathrm{Ehr}}_P(z) = \sum_{F \in {\mathcal{T}}} {\mathrm{Ehr}}_{F^o}(z) = \sum_{F \in {\mathcal{T}}} \frac{\sigma_{\Pi^+_F(1, \ldots, 1, z)}}{(1-z)^{\dim F + 1}}
\]
where $\Pi_+(F)$ is the upper fundamental parallelepiped of $F$. Since $F$ is unimodular, the only lattice point in $\Pi^+_F$ is the sum of its generators, which is at height $\dim F + 1$, so
\[
{\mathrm{Ehr}}_P(z) = \sum_{F \in {\mathcal{T}}} \left(\frac{z}{1-z}\right)^{\dim F + 1} = \sum_{k=0}^{d+1}f_{k}\left(\frac{z}{1-z}\right)^k = \frac{\sum_{k=0}^{d+1} h_kz^k}{(1-z)^{d+1}}
\]
as desired.
\end{proof}

\noindent {\textbf{\textsf{Examples}}.}
The following polytopes have particularly nice Ehrhart polynomials and Ehrhart series. %We give references for the more involved results.

\begin{enumerate}
\item (Polygon) $P$ a lattice polygon in ${\mathbb{R}}^2$:
\[
L_P(n) = \left(I + \frac{B}2 - 1\right) n^2 + \frac{B}2 n + 1, \qquad {\mathrm{Ehr}}_P(z) = \frac{Iz^2 + (I + B - 3)z + 1}{(1-z)^3},
\]
where $P$ contains $I$ lattice points in its interior and $B$ lattice points on its boundary. This is equivalent to Pick's formula for the area of a lattice polygon.

\item (Simplex) $\Delta_{d-1}$:
\[
L_{\Delta_{d-1}}(n) = {n+d-1 \choose d-1}, \qquad {\mathrm{Ehr}}_{\Delta_{d-1}}(z) = \frac1{(1-z)^d}. 
\]

\item (Cube) $\square_d$:
\[
L_{\square_d}(n) = (n+1)^d, \qquad {\mathrm{Ehr}}_{\square_d}(z) = \frac{\sum_{k=0}^d A(d,k) z^k}{(1-z)^{d+1}},
\]
where the \textbf{Eulerian number} $A(d, k)$ is the number of permutations $\pi$ of $[d]$ with $k-1$ descents; that is, positions $i$ with $\pi(i) > \pi(i+1)$.

\item (Crosspolytope) $\Diamond_d$:
\[
L_{\Diamond_d}(n) = \sum_{k=0}^d 2^k{d \choose k}{n \choose k}, %{n + d - k \choose n-k, d-k, k}, 
\qquad {\mathrm{Ehr}}_{\Diamond_d}(z) = \frac{(1+z)^d}{(1-z)^{d+1}}.
\]

\item (Product of two simplices) $P=\Delta_{c-1} \times \Delta_{d-1}$:
\[
L_{P}(n) = {n+c-1 \choose c-1}{n+d-1 \choose d-1}, \qquad {\mathrm{Ehr}}_{P}(z) = \frac{\sum_{k=0}^{\min(c-1, d-1)} {c-1 \choose k} {d-1 \choose k} z^k}{(1-z)^{c+d-1}}. 
\]
Every triangulation of $ \Delta_{c-1} \times \Delta_{d-1}$ is unimodular, with $h$-vector given by the $h^*$-vector above. These triangulations are very interesting combinatorially, and play an important role in tropical geometry and other contexts; see \cite{f.ArdilaBilley, f.ArdilaDevelin, f.DevelinSturmfels, f.Santos} and the references therein.

%\item (Second hypersimplex) $\Delta(2, d)$:
%\[
%L_{\Delta_{2,d}}(n) = {2n+ d-1 \choose d-1} - d{n+d-2 \choose d-1}, \qquad 
%{\mathrm{Ehr}}_{\Delta_{2,d}}(n) = \frac{1+ \frac12 d(d-3)z + \sum_{i=2}^{\lfloor d/2 \rfloor} {d \choose 2i} z^i}{(1-z)^d}
%%\sum_{s=0}^{r-1} (-1)^s{d \choose s}{(r-s)n+d-1-s \choose d-1}
%\]

\item (Hypersimplex) $\Delta(r, d)$:
\[
L_{\Delta({r,d})}(n) = [z^{rn}]\left(\frac{1-z^{n+1}}{1-z}\right)^d, \qquad {\mathrm{vol}} (\Delta_{r,d}) = \frac{A(d-1, r)}{(d-1)!}
\]
where, again, $A(d-1, r)$ denotes the Eulerian numbers.
%number of permutations of $[d-1]$ with $k-1$ descents.
No simple formula is known for the Ehrhart series of $\Delta({r,d})$. There is a nice formula for the ``half-open hypersimplex"; see \cite{f.Lihypersimplex}. An elegant triangulation of $\Delta({r,d})$ was given (using four different descriptions) by Lam-Postnikov, Stanley, Sturmfels, and Ziegler; see \cite{f.LamPostnikov}. 

\item (Permutahedron) $P=\Pi_{d-1}$:
\[
L_{\Pi_{d-1}}(n) = \sum_{i=0}^{d-1} f_i n^i, \qquad {\mathrm{vol}}(\Pi_{d-1}) = d^{d-2}
\]
where $f_i$ is the number of forests on $[d]$ with $i$ vertices. \cite{f.Stanleyzonotope} 

\item (Zonotope) $P=Z(A)$:
\[
L_{Z(A)}(n) = n^rM\left(1+\frac1n, 1\right)
\]
where $r$ is the rank of $A$ and $M(x,y)$ is the \emph{arithmetic Tutte polynomial} of Section \ref{f.sec:arithmeticTutte}. \cite{f.Stanleyzonotope} This polynomial is difficult to compute in general; when $A$ is a root system, explicit formulas are given in \cite{f.ArdilaCastilloHenley}, based on the computation of Example 15 in Section \ref{f.sec:egfs}.

\item (Cyclic polytope) $P=C_d(t_1, \ldots, t_m)$:
\[
L_P(n) = \sum_{k=0}^d {\mathrm{vol}} \, C_k(t_1, \ldots, t_m) n^k.
\]
See \cite{f.Liucyclic}. The triangulations of the cyclic polytope are unusually well behaved; see \cite{f.triangulations, f.Rambau} and the references therein. In particular, when $m=d+4$, this is one of the few polytopes whose triangulations have been enumerated exactly (and non-trivially) \cite{f.AzaolaSantos}.

\item (Order polytope and chain polytope): $\mathcal{O}(P), \mathcal{C}(P)$:
\[
L_{\mathcal{O}(P)} = L_{\mathcal{C}(P)} = \Omega_P(n+1), \qquad {\mathrm{vol}}(\mathcal{O}(P)) = {\mathrm{vol}}(\mathcal{C}(P)) = e(P)/|P|!,
\] 
where $\Omega_P$ is the order polynomial of $P$ and $e(P)$ is the number of linear extensions, as discussed in Section \ref{f.sec:orderpoly}. \cite{f.Stanleyposetpolytopes} Remarkably, $\mathcal{O}(P)$ and $\mathcal{C}(P)$ have the same Ehrhart polynomial, even though they are not metrically, or even combinatorially equivalent in general. There is a nice characterization of the posets $P$ such that  $\mathcal{O}(P)$ and $\mathcal{C}(P)$ may be obtained from one another by a unimodular change of basis \cite{f.HibiLi}.

\item (Root polytope): $A_{d-1}$:
\[
{\mathrm{Ehr}}_{A_d}(z) = \frac{\sum_{k=0}^d {d \choose k}^2 z^k}{(1-x)^{d}}, \qquad {\mathrm{vol}}(A_d) = \frac{{2d \choose d}}{d!}.
\]
There are similar formulas for the other classical root polytopes $B_d, C_d, D_d$, as well as for the positive root polytopes. For example, ${\mathrm{vol}}(A^+_{d}) = C_d/d!$ where $C_d$ is the $d$th Catalan number. Explicit unimodular triangulations were constructed in \cite{f.ArdilaBecketal}.

\item (CRY polytope / Flow polytope) $\textrm{CRY}_n = F_{K_{n+1}}(1, 0, \ldots, 0, -1)$:
\[
{\mathrm{vol}} (\textrm{CRY}_n) = C_0C_1C_2 \cdots C_{n-2},
\]
a product of Catalan numbers. \cite{f.ZeilbergerCRY}. No combinatorial proof of this fact is known. Flow polytopes are of great importance due to their close connection with the Kostant partition function, which Gelfand described as \emph{``the transcendental element
which accounts for many of the subtleties of the Cartan-Weyl theory"} of representations  of semisimple Lie algebras. \cite{f.Retakh}. There are many other interesting combinatorial results; see for example \cite{f.BaldoniVergne, f.MeszarosMorales}

\item (Matroid polytopes) A combinatorial formula for the volume of a matroid polytope is given in \cite{f.ArdilaBenedettiDoker}.

\item (Generalized permutahedra / polymatroids)
There are many interesting results about the volumes and lattice points of various families of generalized permutahedra; see \cite{f.Postnikovgenperm}.

\item (Cayley polytope) $\mathbf{C}_n = \{{\mathbf{x}} \in {\mathbb{R}}^n \, : \, 1 \leq x_1 \leq 2 \textrm{ and }1 \leq x_i \leq 2x_{i-1} \textrm{ for } 2 \leq i \leq n\}$:
\[
{\mathrm{vol}} (\mathbf{C}_n) = \frac{c_{n+1}}{n!}
\]
where $c_n$ is the number of connected graphs on $[n]$. The related \emph{Tutte polytope} has volume given by an evaluation of the Tutte polynomial (see Section \ref{f.sec:Tutte}) of the complete graph. \cite{f.KonvalinkaPak}
\end{enumerate}

%\comment{Maybe talk about Brion? Constant term identities? What else?}

\newpage

\section{\textsf{Hyperplane arrangements}}\label{f.sec:hyparrs}

We now discuss arrangements of hyperplanes in a vector space. The questions that we ask depend on whether the underlying field is $\mathbb{R}, \mathbb{C}$, or a finite field $\mathbb{F}_q$; but in every case, the underlying combinatorics plays an important role. The presentation of this section is heavily influenced by \cite{f.Stanleyhyparr}. 
See \cite{f.OrlikTerao} for a great introduction to more algebraic and topological aspects of the theory of hyperplane arrangements.

After developing the basics in Section \ref{f.sec:hyparrbasics}, in Section \ref{f.sec:charpoly} we introduce the \emph{characteristic polynomial}, which plays a crucial role in this theory. In Section \ref{f.sec:charpolyproperties} we discuss some of its important properties, and in Section \ref{f.sec:computechar} we develop several techniques for computing it. We illustrate these techniques by computing the characteristic polynomials of many arrangements of interest. Finally, in Section \ref{f.sec:arrcdindex}, we give a remarkable formula for the ${\mathbf{c}}{\mathbf{d}}$-index of an arrangement, which enumerates the flags of faces of given dimensions.

\subsection{\textsf{Basic definitions}}\label{f.sec:hyparrbasics}

Let ${\mathbbm{k}}$ be a field and $V = {\mathbbm{k}}^d$. A \textbf{hyperplane arrangement} ${\mathcal{A}}=\{H_1, \ldots, H_n\}$ is a collection of affine hyperplanes in $V$, say,
\[
H_i = \{ x \in V \, : \, v_i \cdot x = b_i\}
\]
for nonzero normal vectors $v_1, \ldots, v_n \in V$ and constants $b_1, \ldots, b_n \in {\mathbbm{k}}$. We say ${\mathcal{A}}$ is \textbf{central} if all hyperplanes have a common point -- in the most natural examples, the origin is a common point. Figure \ref{f.fig:arr} shows a central arrangement of 4 hyperplanes in ${\mathbb{R}}^3$.

\begin{figure}[ht]
 \begin{center}
  \includegraphics[scale=.45]{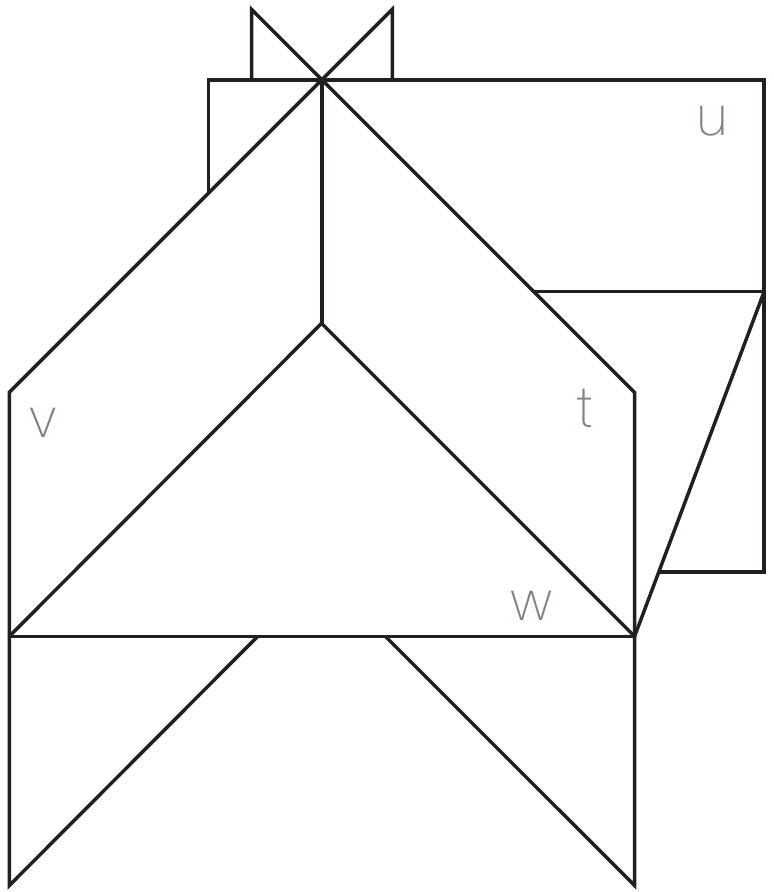}
  \caption{ \label{f.fig:arr}
A hyperplane arrangement.}
  \end{center}
\end{figure}

In some ways, central arrangements are slightly better behaved than affine arrangements. We can \textbf{centralize} an affine arrangement ${\mathcal{A}}$ in ${\mathbbm{k}}^n$ to obtain the \textbf{cone} of ${\mathcal{A}}$, an arrangement $c{\mathcal{A}}$ in ${\mathbbm{k}}^{n+1}$, by turning the hyperplane $a_1x_1 + \cdots + a_nx_n = a$ in ${\mathbbm{k}}^n$ into the hyperplane $a_1x_1 + \cdots + a_nx_n = ax_{n+1}$ in ${\mathbbm{k}}^{n+1}$, and adding the hyperplane $x_{n+1}=0$

Sometimes arrangements are ``too central", in the sense that their intersection is a subspace $L$ of positive dimension. In that case, there is little harm in intersecting our arrangement with the orthogonal complement $L^\perp$. We define the \textbf{essentialization} of ${\mathcal{A}}$ to be the arrangement ess$({\mathcal{A}}) = \{H \cap L^\perp  \, : \, H \in {\mathcal{A}}\}$ in $L^\perp$. The result is an \textbf{essential} arrangement, where the intersection of the hyperplanes is the origin. 
In most problems of interest, there is no important difference between ${\mathcal{A}}$ and ess$({\mathcal{A}})$.

%
%\comment{HERE}
%There is one technical subtlety that we must mention. 
%An arrangement ${\mathcal{A}}$ in $V$ is \textbf{essential} if $r({\mathcal{A}}) = {\mathbf{d}}im V$. When ${\mathcal{A}}$ is not essential, all hyperplanes $H$ contain a common lineality space $L$, that is, $H + L \subseteq H$. The essentialization of ${\mathcal{A}}$ is the arrangement ess$({\mathcal{A}}) = \{A / L \, : \, A \in H\}$ in $V/L$ obtained by modding out by this common lineality space. Clearly ${\mathcal{A}}$ and ess$({\mathcal{A}})$ have the same intersection poset. 

\bigskip

A key object is the \textbf{complement} 
\[
V({\mathcal{A}}) = V \, \backslash \, \left(\bigcup_{H \in {\mathcal{A}}} H\right).
\]
and we now introduce a polynomial which is a fundamental tool in the study of $V({\mathcal{A}})$.

\subsection{\textsf{The characteristic polynomial}} \label{f.sec:charpoly} There is a combinatorial polynomial which knows a tremendous amount about the complement $V({\mathcal{A}})$ of an arrangement ${\mathcal{A}}$. The kinds of questions that we ask about $V({\mathcal{A}})$ depend on the underlying field. 

\smallskip

$\bullet$
If ${\mathbbm{k}} = {\mathbb{R}}$ then every hyperplane $v_i \cdot x = b_i$ divides $V$ into two half-spaces, where $v_i \cdot x < b_i$ and $v_i \cdot x > b_i$, respectively. Therefore an arrangement ${\mathcal{A}}$ divides ${\mathbb{R}}^d$ into $a({\mathcal{A}})$ \textbf{regions}, which are the connected components of the complement ${\mathbb{R}}^d \backslash {\mathcal{A}}$. Let $b({\mathcal{A}})$ be the number of those regions which are bounded.\footnote{If ${\mathcal{A}}$ is not essential, we let $b({\mathcal{A}})$ be the number of \textbf{relatively bounded} regions. These are the regions which become bounded in the essentialization $\textrm{ess}({\mathcal{A}}).$}
 A central question about real hyperplane arrangements is to compute the numbers $a({\mathcal{A}})$ and $b({\mathcal{A}})$ of regions and bounded regions.

$\bullet$
If ${\mathbbm{k}} = {\mathbb{C}}$, then it is possible to walk around a hyperplane without crossing it; this produces a loop in the complement $V({\mathcal{A}})$. Therefore $V({\mathcal{A}})$ has nontrivial topology, and it is natural to ask for its Betti numbers.

$\bullet$
If ${\mathbbm{k}} = {\mathbb{F}}_q$ is a finite field, where $q$ is a prime power, then $V({\mathcal{A}})$ is a finite set, and the simplest question we can ask is how many points it contains.

Amazingly, the \textbf{characteristic polynomial} $\chi_{\mathcal{A}}(q)$ can answer these  questions immediately. Let us define it.

\bigskip

Define a \textbf{flat} of ${\mathcal{A}}$ to be an affine subspace obtained as an intersection of hyperplanes in ${\mathcal{A}}$. The \textbf{intersection poset} $L_{\mathcal{A}}$ is the set of flats partially ordered by reverse inclusion. If ${\mathcal{A}}$ is central, then $L_{\mathcal{A}}$ is a \textbf{geometric lattice}, as discussed in Section \ref{f.sec:lattices}. If ${\mathcal{A}}$ is not central, then $L_{\mathcal{A}}$ is only a \textbf{geometric meet semilattice} \cite{f.WachsWalker}. The \textbf{rank} $r=r({\mathcal{A}})$ of ${\mathcal{A}}$ is the height of $L_{\mathcal{A}}$.

\begin{figure}[ht]
 \begin{center}
  \includegraphics[scale=1.1]{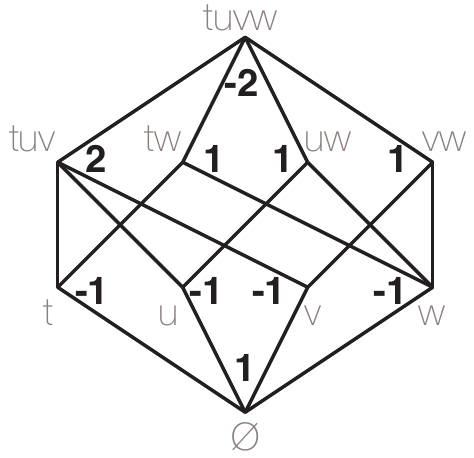}
  \caption{ \label{f.fig:intposet}
  The intersection poset of ${\mathcal{A}}$ and its M\"obius function.}
  \end{center}
\end{figure}

The \textbf{characteristic polynomial} of ${\mathcal{A}}$ is 
\[
\chi_A(q) = \sum_{F \in L_{\mathcal{A}}} \mu(\widehat{0}, F) q^{\dim F}.
\]
Figure \ref{f.fig:intposet} shows the intersection poset of the arrangement ${\mathcal{A}}$ in Figure \ref{f.fig:arr}; its characteristic polynomial $\chi_{\mathcal{A}}(q) = q^3 - 4q^2 + 5q - 2$ is easily computed by adding the M\"obius numbers on each level of $L_{\mathcal{A}}$.

%The reader may wish to compute some examples, such as the characteristic polynomial of an arrangement of lines in ${\mathbb{R}}^2$, or the arrangement of $n$ coordinate hyperplanes in ${\mathbbm{k}}^n$. 
We will see some general techniques to compute characteristic polynomials in Section \ref{f.sec:computechar}.

\begin{theorem}\label{f.th:charpoly} The characteristic polynomial $\chi_{\mathcal{A}}(x)$ contains the following information about the complement $V({\mathcal{A}})$ of a hyperplane arrangement ${\mathcal{A}}$.
\begin{enumerate}
\item $({\mathbbm{k}} = {\mathbb{R}})$
\cite{f.Zaslavsky} (Zaslavsky's Theorem)
Let ${\mathcal{A}}(A)$ be a \textbf{real} hyperplane arrangement in ${\mathbb{R}}^n$. The number of regions and relatively bounded regions of the complement $V({\mathcal{A}})$ are
\[
a({\mathcal{A}}) = (-1)^n\chi_{\mathcal{A}}(-1), \qquad b({\mathcal{A}}) = (-1)^{r({\mathcal{A}})}\chi_{\mathcal{A}}(1).
\]

\item $({\mathbbm{k}} = {\mathbb{C}})$
\cite{f.GoreskyMacPherson, f.OrlikSolomon}
Let ${\mathcal{A}}(A)$ be a \textbf{complex} hyperplane arrangement in ${\mathbb{C}}^n$. The complement $V({\mathcal{A}})$ has Poincar\'e polynomial
\[
\sum_{k \geq 0} \mathrm{rank } \, H^k(V(A), {\mathbb{Z}}) q^k = (-q)^n\chi_{\mathcal{A}}\left(\frac{-1}{q} \right).
\]

\item $({\mathbbm{k}} = {\mathbb{F}}_q)$
\cite{f.Athanasiadis, f.CrapoRota}
Let ${\mathcal{A}}(A)$ be a hyperplane arrangement in ${\mathbb{F}}_q^n$ where ${\mathbb{F}}_q$ is the finite field of $q$ elements for a prime power $q$. The complement $V({\mathcal{A}})$ has size
\[
| V(A) | = \chi_{\mathcal{A}}(q).
\]
\end{enumerate}
\end{theorem}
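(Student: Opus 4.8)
The plan is to use the \emph{finite field method}: I classify the points of ${\mathbb{F}}_q^n$ according to which flat of ${\mathcal{A}}$ they ``belong to'', and then recover the number of points in the complement $V({\mathcal{A}})$ by M\"obius inversion over the intersection poset $L_{\mathcal{A}}$. For a point $x \in {\mathbb{F}}_q^n$, let $F(x) = \bigcap_{H \in {\mathcal{A}}, \, x \in H} H$ be the intersection of all the hyperplanes of ${\mathcal{A}}$ passing through $x$, with the convention that the empty intersection is all of ${\mathbb{F}}_q^n = {\widehat{0}}$. Since ${\mathcal{A}}$ is finite, $L_{\mathcal{A}}$ is finite and $F(x) \in L_{\mathcal{A}}$; moreover $x \in F(x)$, since $x$ lies on each hyperplane used to form $F(x)$.

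First I would establish the key incidence lemma: for $x \in {\mathbb{F}}_q^n$ and a flat $F = H_{i_1} \cap \cdots \cap H_{i_k} \in L_{\mathcal{A}}$, one has $x \in F$ if and only if $F(x) \leq F$ in $L_{\mathcal{A}}$, i.e. $F(x) \supseteq F$. Indeed, if $x \in F$ then $x \in H_{i_j}$ for each $j$, so $F(x) \subseteq H_{i_1} \cap \cdots \cap H_{i_k} = F$; conversely, if $F(x) \subseteq F$ then $x \in F(x) \subseteq F$. Next I define two functions on $L_{\mathcal{A}}$:
\[
g(F) = q^{\dim F} = |\{x \in {\mathbb{F}}_q^n : x \in F\}|, \qquad f(F) = |\{x \in {\mathbb{F}}_q^n : F(x) = F\}|.
\]
The incidence lemma gives the disjoint decomposition $\{x : x \in F\} = \bigsqcup_{G \geq F} \{x : F(x) = G\}$, hence
\[
g(F) = \sum_{G \geq F} f(G) \qquad \textrm{for all } F \in L_{\mathcal{A}}.
\]

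Then I would apply the M\"obius Inversion Formula (Theorem \ref{f.th:Mobiusinv}, part 1) on the poset $L_{\mathcal{A}}$ to obtain
\[
f(F) = \sum_{G \geq F} \mu(F, G)\, g(G) = \sum_{G \geq F} \mu(F,G)\, q^{\dim G}.
\]
Specializing to $F = {\widehat{0}}$, the right-hand side is by definition $\chi_{\mathcal{A}}(q)$, while the left-hand side counts the points $x$ with $F(x) = {\widehat{0}}$, i.e. the points lying on \emph{no} hyperplane of ${\mathcal{A}}$, which is exactly $|V({\mathcal{A}})|$. This yields $|V({\mathcal{A}})| = \chi_{\mathcal{A}}(q)$.

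There is no deep obstacle here; once the right statistic $x \mapsto F(x)$ is identified, the argument is a textbook-clean application of M\"obius inversion, and the incidence lemma is where the main conceptual work lies. The points requiring a little care are: that ${\widehat{0}} = {\mathbb{F}}_q^n$ is genuinely the minimum of $L_{\mathcal{A}}$ (true by the empty-intersection convention, and exactly what makes $\chi_{\mathcal{A}}$ well defined — note this works even when ${\mathcal{A}}$ is non-central, where $L_{\mathcal{A}}$ is only a meet-semilattice); that $\dim {\widehat{0}} = n$, so the leading term of $\chi_{\mathcal{A}}$ is $q^n$; and the verification of the incidence lemma, which is precisely where the hypothesis that a flat is an \emph{intersection of hyperplanes} (rather than an arbitrary affine subspace) gets used. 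A useful sanity check to include is the case ${\mathcal{A}} = \emptyset$, where both sides equal $q^n$.
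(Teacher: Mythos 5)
Your argument for part 3 is correct and is essentially the paper's own proof: stratify the points of ${\mathbb{F}}_q^n$ by the smallest flat containing them, note that $f_{\geq}(F)=q^{\dim F}$, and apply M\"obius inversion on $L_{\mathcal{A}}$; your incidence lemma is exactly the (implicit) step in the paper's computation. (One small slip: you state the lemma as ``$x\in F$ iff $F(x)\leq F$, i.e.\ $F(x)\supseteq F$,'' but what your proof establishes, and what your decomposition $\{x: x\in F\}=\bigsqcup_{G\geq F}\{x:F(x)=G\}$ actually uses, is $F(x)\subseteq F$, i.e.\ $F(x)\geq F$ in the reverse-inclusion order. The inversion step is unaffected, but fix the direction.)

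The genuine gap is that the statement you were asked to prove has three parts, and your proposal addresses only the third. Part 1 (Zaslavsky's theorem over ${\mathbb{R}}$) is proved in the paper by a parallel M\"obius-inversion argument in which the role of $q^{\dim F}$ is played by the Euler characteristic: for each flat $F$ one has $(-1)^{\dim F}=\sum_{G\geq F}(-1)^{\dim G}a({\mathcal{A}}/G)$, since the faces of the arrangement partition $F\cong{\mathbb{R}}^{\dim F}$, and inverting at $F=\widehat{0}$ gives $a({\mathcal{A}})=(-1)^n\chi_{\mathcal{A}}(-1)$; the bounded-regions formula $b({\mathcal{A}})=(-1)^{r({\mathcal{A}})}\chi_{\mathcal{A}}(1)$ needs the additional (nontrivial) fact that the union of the bounded faces is contractible, so its Euler characteristic is $1$. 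None of this follows from your finite-field count, so as written your proposal does not prove part 1. Part 2 (the Poincar\'e polynomial over ${\mathbb{C}}$) is not proved in the paper either --- it is cited to Goresky--MacPherson and Orlik--Solomon --- so omitting a proof there is defensible, but you should at least say so rather than leave it unmentioned.
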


\begin{proof}
3. This is a typical enumerative problem where our set of objects (the points of ${\mathbb{F}}_q^n$) are stratified by a partial order, according to how special their position is with respect to ${\mathcal{A}}$. This is a natural setting to apply M\"obius inversion. 

For each flat $F$, let $f_=(F)$ be the number of points of ${\mathbb{F}}_q^n$ which are on $F$, and on no smaller flat. Since there are  $f_{\geq}(F) = q^{\dim F}$ points on $F$, we have
\[
q^{\dim F} = \sum_{G \geq F} f_=(G)
\]
which inverts to
\[
f_=(F) = \sum_{G \geq F} \mu(F,G) q^{\dim G}.
\]
Setting $F=\widehat{0}$ gives the desired result.

1. For each flat $F$ let ${\mathcal{A}}/F$ be the arrangement inside $F$ obtained by intersecting the hyperplanes ${\mathcal{A}}-F$ with $F$. The arrangement subdivides each flat $F$ into (relatively) open faces, namely, the $(\dim G)$-dimensional regions of ${\mathcal{A}}/G$ for each flat $G \geq F$. Since the Euler characteristic of $F \cong {\mathbb{R}}^{\dim F}$ is $(-1)^{\dim F}$, we get
\[
(-1)^{\dim F} = \sum_{G \geq F} (-1)^{\dim G} a({\mathcal{A}}/G),
\]
which inverts to 
\[
(-1)^{\dim F} a({\mathcal{A}}/F)= \sum_{G \geq F} \mu(F,G)(-1)^{\dim G}. 
\]
Setting $F=\widehat{0}$ gives the desired result. The same strategy works for $b({\mathcal{A}})$, using the result that the union of the bounded faces of ${\mathcal{A}}$ is contractible, and hence has Euler characteristic equal to $1$. \cite[Theorem 4.5.7(b)]{f.OMs}

2. is beyond the scope of this writeup; see \cite{f.GoreskyMacPherson, f.OrlikTerao}.
\end{proof}

\subsection{{\textsf{Properties of the characteristic polynomial}}}  \label{f.sec:charpolyproperties}

\noindent 
\textbf{\textsf{Whitney's formula and the Tutte polynomial.}} 
Since the intersection poset ${\mathcal{A}}$ is constructed from its atoms (the hyperplanes of ${\mathcal{A}}$), it is natural to invoke the Crosscut Theorem \ref{f.th:crosscut} and obtain the following result.

\begin{theorem} \label{f.th:Whitney} (Whitney's Theorem) The characteristic polynomial of an arrangement ${\mathcal{A}}$ is given by
\[
\chi_{{\mathcal{A}}}(q) = \sum_{\stackrel{{\mathcal{B}} \subseteq {\mathcal{A}}}{{\mathcal{B}} \textrm{ central}}} (-1)^{|{\mathcal{B}}|} \, q^{n - r({\mathcal{B}})} % = (-1)^r q^{n-r} T_{\mathcal{A}}(1-q,0).
\]
where $\displaystyle r({\mathcal{B}}) = n - \dim \cap_{H \in {\mathcal{B}}} H$.
\end{theorem}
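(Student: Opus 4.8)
The plan is to expand the M\"obius function appearing in the definition of $\chi_{\mathcal{A}}(q)$ using the Crosscut Theorem \ref{f.th:crosscut}, summing over the intersection poset $L_{\mathcal{A}}$ level by level. Recall that $\chi_{\mathcal{A}}(q) = \sum_{F \in L_{\mathcal{A}}} \mu(\widehat{0},F) q^{\dim F}$, where $\widehat{0} = V$ is the ambient space (the empty intersection). For each fixed flat $F$, the interval $[\widehat{0}, F]$ in $L_{\mathcal{A}}$ is a geometric (meet semi)lattice whose atoms are exactly the hyperplanes $H \in {\mathcal{A}}$ with $H \supseteq F$; these are the hyperplanes whose intersection is a subspace containing $F$. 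By the Crosscut Theorem applied to this interval, $\mu(\widehat{0},F) = \sum_k (-1)^k N_k(F)$, where $N_k(F)$ counts the $k$-element subsets ${\mathcal{B}}$ of hyperplanes through $F$ whose intersection (i.e.\ their join in $L_{\mathcal{A}}$, the \emph{finest} common refinement) equals $F$ exactly.

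The key step is then to reorganize the resulting double sum. We have
\[
\chi_{\mathcal{A}}(q) = \sum_{F \in L_{\mathcal{A}}} \left( \sum_{\substack{{\mathcal{B}} \subseteq {\mathcal{A}} \\ \cap {\mathcal{B}} = F}} (-1)^{|{\mathcal{B}}|}\right) q^{\dim F}.
\]
In this sum, every subset ${\mathcal{B}} \subseteq {\mathcal{A}}$ with a nonempty common intersection — that is, every \emph{central} subarrangement ${\mathcal{B}}$ — appears exactly once, contributing $(-1)^{|{\mathcal{B}}|} q^{\dim(\cap {\mathcal{B}})}$, with $F = \cap_{H \in {\mathcal{B}}} H$ its associated flat. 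Conversely, a non-central ${\mathcal{B}}$ has empty intersection and corresponds to no flat, so it does not appear. Writing $r({\mathcal{B}}) = n - \dim \cap_{H \in {\mathcal{B}}} H$ for $V = {\mathbbm{k}}^n$, we have $\dim(\cap {\mathcal{B}}) = n - r({\mathcal{B}})$, and collecting terms gives precisely
\[
\chi_{\mathcal{A}}(q) = \sum_{\substack{{\mathcal{B}} \subseteq {\mathcal{A}} \\ {\mathcal{B}} \textrm{ central}}} (-1)^{|{\mathcal{B}}|}\, q^{n - r({\mathcal{B}})},
\]
as desired. (The empty subarrangement is central with intersection $V$, contributing $q^n$, matching the $\widehat{0}$ term.)

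The main point requiring care is the application of the Crosscut Theorem to the interval $[\widehat{0},F]$: one must verify that this interval is genuinely a lattice (so that ``join'' makes sense and the theorem applies), which holds because $[\widehat{0},F]$ is an interval in the geometric meet semilattice $L_{\mathcal{A}}$, hence itself a geometric lattice with top element $F$; and that its atoms correspond exactly to hyperplanes containing $F$, with the join of a set of such atoms being their common intersection. One should also double-check the non-central case: if ${\mathcal{B}}$ is non-central then $\cap{\mathcal{B}} = \emptyset$, which is not a flat (flats are by definition nonempty intersections), so such ${\mathcal{B}}$ legitimately drop out — this is consistent with $L_{\mathcal{A}}$ being only a meet semilattice when ${\mathcal{A}}$ is affine. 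I expect no serious obstacle here; it is essentially a bookkeeping argument once the Crosscut Theorem is in hand.
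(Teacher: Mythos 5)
Your proof is correct and follows exactly the route the paper intends: the paper derives Whitney's formula by "invoking the Crosscut Theorem," which is precisely your flat-by-flat application of Theorem \ref{f.th:crosscut} to the intervals $[\widehat{0},F]$ (geometric lattices whose atoms are the hyperplanes containing $F$, with join given by intersection), followed by reorganizing the double sum over central subarrangements. The care you take with the lattice property of the intervals and with non-central subsets dropping out is exactly the bookkeeping the paper leaves implicit.
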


The \textbf{Tutte polynomial} is another important polynomial associated to an arrangement:
\[
T_{\mathcal{A}}(x,y) = \sum_{\stackrel{{\mathcal{B}} \subseteq {\mathcal{A}}}{{\mathcal{B}} \textrm{ central}}} (x-1)^{r-r({\mathcal{B}})} \, (y-1)^{|{\mathcal{B}}|-r({\mathcal{B}})}.
\]
Whitney's Theorem can then be rephrased as:
\[
\chi_{{\mathcal{A}}}(q)  = (-1)^r q^{n-r} T_{\mathcal{A}}(1-q,0).
\]
%\comment{We will discuss the Tutte polynomial at length in Section \ref{f.sec:Tutte}. The results that we will present there will apply to the characteristic polynomial of an arrangement as well. (For affine arrangements some small adjustments are necessary; see \cite{f.yosemimatroids, f.yoTutte, f.WachsWalker}.)
%}
%
%\bigskip

\subsubsection{{\textsf{Deletion and contraction}}} A common technique for inductive arguments in hyperplane arrangement ${\mathcal{A}}$ is to choose a hyperplane $H$ and study how ${\mathcal{A}}$ behaves without $H$ (in the deletion ${\mathcal{A}} \backslash H$) and how $H$ interacts with the rest of ${\mathcal{A}}$ (in the contraction ${\mathcal{A}} / H$).

For a hyperplane $H$ of an arrangement ${\mathcal{A}}$ in $V$, the
 \textbf{deletion} 
 %and \textbf{contraction} of $H$ to be
%\begin{eqnarray*}
\[
{\mathcal{A}} \backslash H = \{A \in {\mathcal{A}} \, : \, A \neq H\} \\
\]
%{\mathcal{A}} / H &=& \{A \cap H \, : \, A \in {\mathcal{A}}, A \neq H\}
%\end{eqnarray*}
is the arrangement in $V$ consisting of the hyperplanes other than $H$, and the \textbf{contraction}
\[
{\mathcal{A}} / H = \{A \cap H \, : \, A \in {\mathcal{A}}, A \neq H\}
\]
is the arrangement in $H$ consisting of the intersections of the other hyperplanes with $H$. Figure \ref{f.fig:deletioncontraction} shows a hyperplane arrangement ${\mathcal{A}} = \{t,u,v,w\}$ in ${\mathbb{R}}^3$ and the deletion ${\mathcal{A}} \backslash w$ and contraction ${\mathcal{A}}/w$.

\begin{figure}[ht]
 \begin{center}
  \includegraphics[scale=.45]{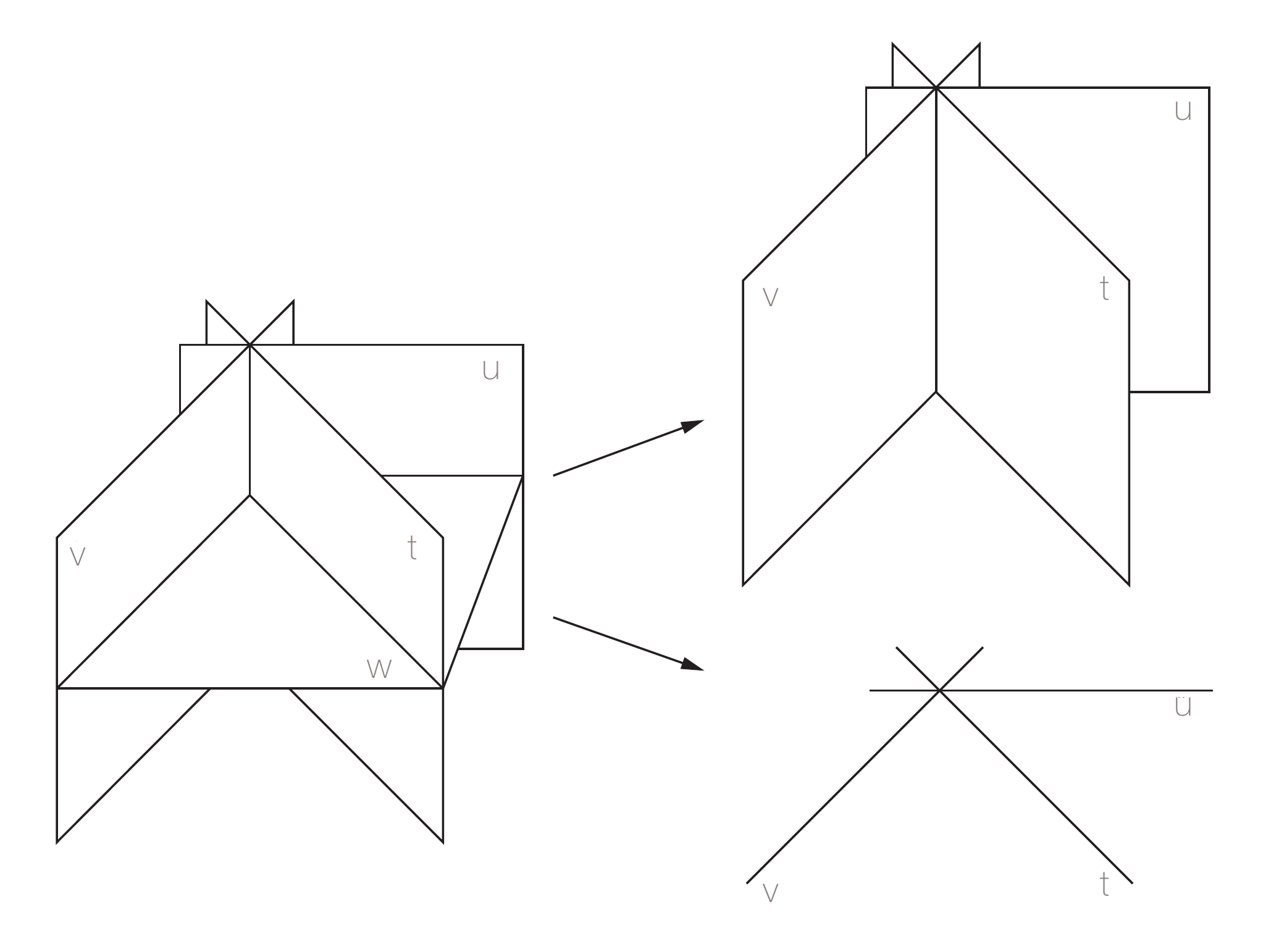}
  \caption{ \label{f.fig:deletioncontraction}
  An arrangement ${\mathcal{A}}$ and its deletion ${\mathcal{A}} \backslash w$ and contraction ${\mathcal{A}}/w$.}
 \end{center}
\end{figure}

\begin{proposition}(Deletion-Contraction)
If ${\mathcal{A}}$ is an arrangement and $H$ is a hyperplane in ${\mathcal{A}}$ then
\[
\chi_{\mathcal{A}}(q)= \chi_{{\mathcal{A}} \backslash H}(q) - \chi_{{\mathcal{A}} / H}(q)
\] 
\end{proposition}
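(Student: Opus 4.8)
The plan is to deduce this from Whitney's Theorem \ref{f.th:Whitney}, which gives $\chi_{\mathcal{A}}(q) = \sum_{\mathcal{B}} (-1)^{|\mathcal{B}|}\,q^{n-r(\mathcal{B})}$, summed over the central subarrangements $\mathcal{B}\subseteq\mathcal{A}$ of an arrangement $\mathcal{A}$ in ${\mathbbm{k}}^n$, where $r(\mathcal{B}) = n - \dim\bigcap_{A\in\mathcal{B}} A$. First I would split this sum according to whether the distinguished hyperplane $H$ lies in $\mathcal{B}$. The terms with $H\notin\mathcal{B}$ range over exactly the central subarrangements of $\mathcal{A}\backslash H$, which still lives in ${\mathbbm{k}}^n$, so their total contribution is precisely $\chi_{\mathcal{A}\backslash H}(q)$.

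For the remaining terms, write $\mathcal{B} = \{H\}\cup\mathcal{C}$ with $\mathcal{C}\subseteq\mathcal{A}\backslash H$, and associate to $\mathcal{C}$ the collection $\mathcal{C}_H = \{A\cap H : A\in\mathcal{C}\}$ of hyperplanes of the $(n-1)$-dimensional space $H$, i.e. a subarrangement of $\mathcal{A}/H$. Two routine bookkeeping facts need checking: (i) $\mathcal{B}$ is central iff $\mathcal{C}_H$ is central in $\mathcal{A}/H$, since $\bigcap_{K\in\mathcal{C}_H} K = H\cap\bigcap_{A\in\mathcal{C}} A = \bigcap_{A\in\mathcal{B}} A$ (in particular any $A$ parallel to $H$ never occurs in a central $\mathcal{B}\ni H$, so such $A$ are harmless); and (ii) that common flat has the same dimension whether we regard it inside ${\mathbbm{k}}^n$ or inside $H$, so $r_{\mathcal{A}}(\mathcal{B}) = r_{\mathcal{A}/H}(\mathcal{C}_H) + 1$, the $+1$ reflecting that the ambient dimension dropped by one. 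Consequently the contribution of $\mathcal{B} = \{H\}\cup\mathcal{C}$ equals $-(-1)^{|\mathcal{C}|}\,q^{(n-1)-r_{\mathcal{A}/H}(\mathcal{C}_H)}$.

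It then remains to sum over all such $\mathcal{C}$ and match it against $\chi_{\mathcal{A}/H}(q) = \sum_{\mathcal{D}}(-1)^{|\mathcal{D}|}q^{(n-1)-r_{\mathcal{A}/H}(\mathcal{D})}$ over central $\mathcal{D}\subseteq\mathcal{A}/H$. The subtlety — and the one genuinely delicate step — is that the map $\mathcal{C}\mapsto\mathcal{C}_H$ is surjective onto the central subsets of $\mathcal{A}/H$ but generally not injective, because several hyperplanes of $\mathcal{A}$ may cut $H$ in the same hyperplane. For fixed central $\mathcal{D}$, the fiber consists of those $\mathcal{C}$ that, for each $K\in\mathcal{D}$, contain a nonempty subset of $S_K := \{A\in\mathcal{A}\backslash H : A\cap H = K\}$ (which is nonempty by definition of $\mathcal{A}/H$) and contain nothing from $S_K$ for $K\notin\mathcal{D}$. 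Writing $\mathcal{C}$ as the disjoint union of its intersections with the $S_K$ and using the elementary identity $\sum_{\emptyset\neq T\subseteq S}(-1)^{|T|} = -1$ for any finite nonempty $S$, the signed count over the fiber collapses: $\sum_{\mathcal{C}_H = \mathcal{D}}(-1)^{|\mathcal{C}|} = \prod_{K\in\mathcal{D}}(-1) = (-1)^{|\mathcal{D}|}$. Since $q^{(n-1)-r_{\mathcal{A}/H}(\mathcal{C}_H)}$ is constant on the fiber, the sum over $\mathcal{B}\ni H$ telescopes to $-\chi_{\mathcal{A}/H}(q)$, and adding the two parts yields $\chi_{\mathcal{A}}(q) = \chi_{\mathcal{A}\backslash H}(q) - \chi_{\mathcal{A}/H}(q)$.

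As an alternative worth noting, one can give a one-line proof over a finite field: by Theorem \ref{f.th:charpoly}.3, for an arrangement over $\mathbb{F}_q$ we have $\chi_{\mathcal{A}}(q) = |V(\mathcal{A})|$, and the partition $V(\mathcal{A}\backslash H) = V(\mathcal{A}) \sqcup \bigl(H\cap V(\mathcal{A}\backslash H)\bigr) = V(\mathcal{A}) \sqcup V(\mathcal{A}/H)$ gives the identity at every prime power $q$, hence as polynomials. This is slicker but requires the standard reduction-mod-$p$ argument to pass from an arbitrary arrangement to one over $\mathbb{F}_q$ with the same intersection poset, whereas the Whitney-formula argument works uniformly over any field. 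I expect the fiber-collapsing/multiplicity step to be the only part needing care; the dimension and rank bookkeeping is entirely routine.
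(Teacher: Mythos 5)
Your proof is correct and follows essentially the same route as the paper: both apply Whitney's Theorem and split the sum according to whether $H\in\mathcal{B}$, using $r(\mathcal{B}) = r_{\mathcal{A}/H}(\mathcal{B}\backslash H)+1$ to identify the two pieces with $\chi_{\mathcal{A}\backslash H}(q)$ and $-\chi_{\mathcal{A}/H}(q)$. The only difference is that you spell out the fiber-collapsing step (several hyperplanes of $\mathcal{A}$ cutting $H$ in the same hyperplane of $\mathcal{A}/H$, handled via $\sum_{\emptyset\neq T\subseteq S}(-1)^{|T|}=-1$), a multiplicity subtlety the paper's one-line proof glosses over, and you add a valid finite-field alternative.
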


\begin{proof} Whitney's formula gives
\[
\chi_{{\mathcal{A}}}(q) = \sum_{\stackrel{H \notin {\mathcal{B}} \subseteq {\mathcal{A}}}{{\mathcal{B}} \textrm{ central}}} (-1)^{|{\mathcal{B}}|} \, q^{n - r({\mathcal{B}})}  +  \sum_{\stackrel{H \in {\mathcal{B}} \subseteq {\mathcal{A}}}{{\mathcal{B}} \textrm{ central}}} (-1)^{|{\mathcal{B}}|} \, q^{n - r({\mathcal{B}})} =  \chi_{{\mathcal{A}} \backslash H}(q) - \chi_{{\mathcal{A}} / H}(q),
\]
where we use the fact that if $H \in {\mathcal{B}}$ then $r({\mathcal{B}}) = r_{{\mathcal{A}}/H}({\mathcal{B}}\backslash H)+1$.
\end{proof}

Unfortunately hyperplane arrangements are not closed under contraction; in the example of Figure \ref{f.fig:deletioncontraction}, the image of
$t$ in $({\mathcal{A}}/u)/v$
%the image of 3 in $({\mathcal{A}}/1)/2$ 
is not a hyperplane. 
Strictly speaking, this deletion-contraction formula applies only when there is only one copy of $H$ in ${\mathcal{A}}$.
%In particular, we only apply the deletion-contraction formula when there is only one copy of $H$ in ${\mathcal{A}}$. 
These small but annoying difficulties are elegantly solved by working in the wider class of \emph{matroids} (Section \ref{f.sec:matroids}) for central arrangements, or \emph{semimatroids} \cite{f.Ardilasemimatroids} for affine arrangements.
%\comment{We will say more about matroids in Section \ref{f.sec:matroids}; see \cite{f.semimatroids, f.WachsWalker} for the affine case.}

%We leave it to the reader to check that t
The characteristic polynomial of the cone $c{\mathcal{A}}$ can be expressed as follows. 
\begin{proposition} If $c{\mathcal{A}}$ is the cone of arrangement ${\mathcal{A}}$ then
\[
\chi_{c{\mathcal{A}}}(q) = (q-1)\chi_{\mathcal{A}}(q).
\]
\end{proposition}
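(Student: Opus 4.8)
The plan is to give two arguments and include the second, since it avoids any realizability hypothesis.

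\emph{A quick argument via the finite field method.} Suppose first that $\mathcal{A}$ lies in $\mathbbm{k}^n$ with $\mathbbm{k}=\mathbb{Q}$, and reduce modulo a large prime $q$ so that the intersection posets of $\mathcal{A}$ and $c\mathcal{A}$ are unchanged; then by Theorem \ref{f.th:charpoly}.3 it suffices to prove $|V(c\mathcal{A})| = (q-1)\,|V(\mathcal{A})|$ over $\mathbb{F}_q$. The hyperplanes of $c\mathcal{A}$ are $H_0 = \{x_{n+1}=0\}$ together with the hyperplanes $cH=\{a\cdot x = b\,x_{n+1}\}$ for $H=\{a\cdot x=b\}\in\mathcal{A}$. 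A point $(x,x_{n+1})$ in the complement $V(c\mathcal{A})$ must have $x_{n+1}=\lambda\neq 0$, or it would lie on $H_0$; there are $q-1$ choices of $\lambda$, and for fixed $\lambda\neq 0$ the point $(x,\lambda)$ avoids $cH$ exactly when $a\cdot(\lambda^{-1}x)\neq b$, i.e.\ when $\lambda^{-1}x\in V(\mathcal{A})$. Since $x\mapsto\lambda^{-1}x$ is a bijection of $\mathbb{F}_q^n$, each slice $x_{n+1}=\lambda$ contributes exactly $|V(\mathcal{A})|$ points, so $|V(c\mathcal{A})| = (q-1)|V(\mathcal{A})|$. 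As both sides of the claimed identity are polynomials in $q$ that now agree on infinitely many prime powers, they agree identically. The one soft spot is the reduction step, which is what forces the rationality assumption; hence the second argument.

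\emph{An argument via Whitney's formula.} Here I would work directly from Theorem \ref{f.th:Whitney}. Since $c\mathcal{A}$ is central, \emph{every} subcollection $\mathcal{B}\subseteq c\mathcal{A}$ is central, so $\chi_{c\mathcal{A}}(q)=\sum_{\mathcal{B}\subseteq c\mathcal{A}}(-1)^{|\mathcal{B}|}q^{(n+1)-r_{c\mathcal{A}}(\mathcal{B})}$. Split the sum according to whether $H_0\in\mathcal{B}$; thus $\mathcal{B}$ is $c\mathcal{C}$ or $c\mathcal{C}\cup\{H_0\}$ for a unique $\mathcal{C}\subseteq\mathcal{A}$, where $c$ is injective on $\mathcal{A}$ and no $cH$ equals $H_0$ because each $H\in\mathcal{A}$ has nonzero normal vector. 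The key linear-algebra computation: writing the hyperplanes of $\mathcal{C}$ as the rows of $[M\mid b]$, we have $\bigcap c\mathcal{C}=\ker[M\mid -b]$ and $\bigcap(c\mathcal{C}\cup\{H_0\})=\ker$ of the matrix obtained from $[M\mid -b]$ by appending the row $(0,\dots,0,1)$, whence $r_{c\mathcal{A}}(c\mathcal{C})=\operatorname{rank}[M\mid b]$ and $r_{c\mathcal{A}}(c\mathcal{C}\cup\{H_0\})=\operatorname{rank}(M)+1$. Now $\operatorname{rank}[M\mid b]$ equals $\operatorname{rank}(M)=r_{\mathcal{A}}(\mathcal{C})$ when $\mathcal{C}$ is central (the system is consistent) and $\operatorname{rank}(M)+1$ when $\mathcal{C}$ is not. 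Feeding this in: for non-central $\mathcal{C}$ the two terms $c\mathcal{C}$ and $c\mathcal{C}\cup\{H_0\}$ both carry the exponent $q^{n-\operatorname{rank}(M)}$ with opposite signs, so they cancel; for central $\mathcal{C}$ the pair contributes $(-1)^{|\mathcal{C}|}(q-1)q^{n-r_{\mathcal{A}}(\mathcal{C})}$, and summing over central $\mathcal{C}$ and applying Whitney's formula to $\mathcal{A}$ itself yields exactly $(q-1)\chi_{\mathcal{A}}(q)$.

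\emph{Main obstacle.} There is no conceptual difficulty here; the work is entirely bookkeeping inside the Whitney sum — verifying the two rank identities, checking that the non-central contributions cancel exactly (including the degenerate case $\mathcal{C}=\emptyset$), and confirming that $\mathcal{B}\mapsto\mathcal{C}$ is the stated bijection. In the finite-field route the only thing requiring care is the reduction-modulo-$p$ justification, which is why I would present the Whitney computation as the actual proof.
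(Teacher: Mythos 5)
Your proposal is correct. Note, however, that the paper states this proposition without any proof at all, so there is no in-text argument to compare against; both of your routes are legitimate ways to fill that gap. The finite-field argument is fine as stated, with exactly the caveat you flag: it needs the equations to be defined over $\mathbb{Q}$ (or an integer form of them) and a large-prime reduction preserving the intersection poset, after which the polynomial identity follows from agreement at infinitely many values. Your Whitney computation is complete and works over any field: the rank bookkeeping is right ($r_{c\mathcal{A}}(c\mathcal{C}) = \operatorname{rank}[M\mid b]$, $r_{c\mathcal{A}}(c\mathcal{C}\cup\{H_0\}) = \operatorname{rank}(M)+1$, and $\operatorname{rank}[M\mid b]=\operatorname{rank}(M)$ precisely when $\mathcal{C}$ is central), the cancellation for non-central $\mathcal{C}$ and the factor $(q-1)$ for central $\mathcal{C}$ come out as you say, and the empty subarrangement is handled by the same case. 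It is worth observing that this is exactly the technique the paper itself uses to prove the deletion--contraction formula immediately above this proposition: split the Whitney sum over subarrangements of the larger arrangement according to whether a distinguished hyperplane (there $H$, here $H_0$) belongs to $\mathcal{B}$, and compare ranks. So your second argument is not only valid but sits naturally alongside the surrounding text; the finite-field argument buys a more transparent "why" at the cost of a realizability hypothesis the statement does not need.
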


%\comment{Prove using Whitney?}

\subsubsection{{\textsf{Sign alternation and unimodality}}}
 More can be said about the individual coefficients of the characteristic polynomial.

\begin{theorem}
The characteristic polynomial of an arrangement
\[
\chi_{\mathcal{A}}(q) = q^n - a_{n-1}q^{n-1} + a_{n-2}q^{n-2} - \cdots + (-1)^n a_n q^0
\]
has coefficients alternating signs, so $a_i \geq 0$. Furthermore, the coefficients are unimodal and even log-concave, that is,
\[
a_1 \leq a_2 \leq \cdots \leq a_{i-1} \leq a_i \geq a_{i+1} \geq \cdots \geq a_n \qquad \textrm{for some  $i$, and}
\]
\[
a_{j-1}a_{j+1} \leq a_j^2 \qquad \textrm{for all }j.
\]

\end{theorem}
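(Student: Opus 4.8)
The plan is to treat the three assertions in order of increasing difficulty: sign alternation ($a_i\ge 0$) is classical and combinatorial; unimodality will follow formally from log-concavity once internal zeros are ruled out; and log-concavity is the genuinely hard input, for which I would quote the Hodge-theoretic results of Huh and Huh--Katz rather than attempt an elementary argument.

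First I would establish $a_i\ge 0$ by sharpening Whitney's formula (Theorem~\ref{f.th:Whitney}) into the \emph{no-broken-circuit} (NBC) theorem of Whitney and Rota. Fix an arbitrary linear order on the hyperplanes of ${\mathcal{A}}$; a \emph{circuit} is a minimal dependent subset, and a \emph{broken circuit} is a circuit with its least element removed. A sign-reversing involution on the central subsets appearing in Whitney's sum — pairing ${\mathcal{B}}$ with ${\mathcal{B}}\,\triangle\,\{h\}$, where $h$ is attached to the lexicographically first broken circuit contained in ${\mathcal{B}}$ — cancels all terms indexed by subsets containing a broken circuit, leaving
\[
\chi_{\mathcal{A}}(q)=\sum_{S}(-1)^{|S|}\,q^{\,n-|S|},
\]
the sum over all NBC sets $S\subseteq{\mathcal{A}}$. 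Hence $a_i$ is the number of NBC sets of size $i$, which is nonnegative. (An alternative route: by Theorem~\ref{f.th:muEuler}, $\mu(\widehat0,F)$ is the reduced Euler characteristic of the order complex of the open interval $(\widehat0,F)$ of the geometric lattice $L_{\mathcal{A}}$; by Bj\"orner's theorem that geometric lattices are shellable, this complex is a wedge of spheres of dimension $\operatorname{rank}(F)-2$, so $\mu(\widehat0,F)$ has sign $(-1)^{\operatorname{rank}(F)}$, and the coefficient of $q^{\,n-i}$ — a sum of such numbers over flats of rank $i$ — has sign $(-1)^i$.)

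Next I would reduce unimodality to log-concavity: passing to the essentialization, the unsigned coefficients form a sequence $a_0=1,a_1,\dots,a_r$ (with $r=r({\mathcal{A}})$) of positive reals whose first and last entries are nonzero, and any log-concave sequence of nonnegative reals with no internal zero is unimodal — the inequalities $a_{j-1}a_{j+1}\le a_j^2$ together with $a_0,a_r>0$ preclude an interior zero. To prove log-concavity itself I would invoke the theorem of Huh (in characteristic $0$) and Huh--Katz (over an arbitrary field): the arrangement ${\mathcal{A}}\subseteq{\mathbbm{k}}^n$ represents a matroid over ${\mathbbm{k}}$, and the (reduced) characteristic polynomial of a representable matroid has log-concave coefficients; the passage between the reduced polynomial and $\chi_{\mathcal{A}}$ via $\chi_{c{\mathcal{A}}}(q)=(q-1)\chi_{\mathcal{A}}(q)$ is a routine convolution argument. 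The mechanism I would sketch is geometric: form the De~Concini--Procesi wonderful compactification $X$ associated with ${\mathcal{A}}$, a smooth projective variety obtained from $\mathbb{P}^{\,n}$ by iterated blow-ups along the flats of ${\mathcal{A}}$; one produces two nef divisor classes $\alpha,\beta$ on $X$ for which $a_i=\deg(\alpha^{\,i}\beta^{\,r-i})$, and then log-concavity of this sequence of mixed degrees of nef classes is the Khovanskii--Teissier inequality, a manifestation of the Hodge index theorem. The log-concavity step is the main obstacle and genuinely requires this algebro-geometric machinery; I would emphasize that this is precisely where the hypothesis of being a \emph{hyperplane arrangement} is used, since arrangements always yield representable matroids, whereas Rota's log-concavity conjecture for arbitrary matroids was still open at the time of writing and would only later be settled by Adiprasito--Huh--Katz via combinatorial Hodge theory.
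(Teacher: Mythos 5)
Your proposal is correct, and for the substantive part it coincides with the paper: the paper also offers no proof of log-concavity beyond attributing it to Huh (characteristic $0$) and Huh--Katz (arbitrary fields), exactly the results you invoke, so your sketch of the wonderful-compactification/Khovanskii--Teissier mechanism is extra exposition rather than a divergence. Where you genuinely differ is the elementary half: the paper disposes of sign alternation in one line by induction using deletion--contraction ($\chi_{\mathcal{A}} = \chi_{{\mathcal{A}} \backslash H} - \chi_{{\mathcal{A}}/H}$, with both terms of the correct sign by induction), whereas you sharpen Whitney's formula to the broken-circuit (NBC) theorem, or alternatively use shellability of geometric lattices to pin down the sign of $\mu(\widehat{0},F)$. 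Both routes are valid; the paper's is shorter (modulo its own caveat that contraction can leave the class of hyperplane arrangements, which is why it points to matroids/semimatroids), while yours buys more, namely a combinatorial interpretation of $a_i$ as the number of NBC sets of size $i$, which makes nonnegativity transparent. You also spell out the reduction ``log-concave $+$ positive endpoints $\Rightarrow$ no internal zeros $\Rightarrow$ unimodal'' and the bookkeeping between $\chi_{\mathcal{A}}$ and the reduced characteristic polynomial of the (matroid of the) cone, both of which the paper leaves implicit; your convolution remark is stated a bit loosely but is routine and correct for essential arrangements, with only a harmless shift by a power of $q$ in general.
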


\begin{proof}
The sign alternation is easily proved by induction using deletion-contraction. The second result is much deeper. It was conjectured by Rota in 1970 \cite{f.RotaICM}, and recently proved by Huh \cite{f.Huh} for fields of characteristic $0$ and by Huh and Katz \cite{f.HuhKatz} for arbitrary fields, drawing from toric geometry, tropical geometry, and matroid theory. This result is conjectured to be true for any geometric lattice; this is still open.
\end{proof}

%Rota, G.-C.: Combinatorial theory, old and new, Actes du Congrs International des Mathmaticiens (Nice, 1970), Tome 3, pp. 229?233. Gauthier-Villars, Paris (1971) (58 #21703)

\subsubsection{{\textsf{Graphs and proper colorings}}} There is a special case of interest, corresponding to arrangements coming from graphs. To each graph $G$ on vertex set $[n]$ we associate the \textbf{graphical arrangement} in ${\mathbbm{k}}^n$, consisting of the hyperplanes $x_i = x_j$ for all edges $ij$ of $G$. 

Given $q$ colors, a \textbf{proper $q$-coloring} of $G$ assigns a color to each vertex of $G$ so that two vertices $i$ and $j$ which share an edge $ij$ in $G$ must have different colors. The number of proper $q$-colorings of $G$ is given by the \textbf{chromatic polynomial} $\chi_G(q)$. 

Say an orientation of the edges of $G$ is \textbf{acyclic} if it creates no directed cycles.

\begin{theorem}
The chromatic polynomial of a graph $G$ equals the characteristic polynomial of its graphical arrangement ${\mathcal{A}}_G$
\[
\chi_{{\mathcal{A}}_G(q)} = \chi_G(q).
\]
and the number of acyclic orientations of $G$ equals the number of regions of ${\mathcal{A}}_G$ in ${\mathbb{R}}^n$. 
\end{theorem}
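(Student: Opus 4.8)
The plan is to establish the two assertions separately: the identity $\chi_{{\mathcal{A}}_G}(q) = \chi_G(q)$ via the finite-field interpretation of the characteristic polynomial (Theorem \ref{f.th:charpoly}.3), and the count of regions via an explicit bijection with acyclic orientations. For the identity, note first that both $\chi_{{\mathcal{A}}_G}(q)$ and $\chi_G(q)$ are polynomials in $q$ — the former by its definition as $\sum_{F \in L_{{\mathcal{A}}_G}} \mu(\widehat{0}, F) q^{\dim F}$, the latter by the standard fact that the chromatic polynomial is a polynomial — so it suffices to show that they agree at infinitely many values, and I would do this at every prime power $q$. Fix such a $q$ and take ${\mathbbm{k}} = {\mathbb{F}}_q$. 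A point ${\mathbf{x}} = (x_1, \ldots, x_n) \in {\mathbb{F}}_q^n$ lies in the complement $V({\mathcal{A}}_G)$ exactly when $x_i \neq x_j$ for every edge $ij$ of $G$, i.e., exactly when $i \mapsto x_i$ is a proper $q$-coloring of $G$. Hence $|V({\mathcal{A}}_G)| = \chi_G(q)$, and Theorem \ref{f.th:charpoly}.3 gives $|V({\mathcal{A}}_G)| = \chi_{{\mathcal{A}}_G}(q)$. Since this holds for all prime powers, the two polynomials are equal.

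For the regions, I would introduce, for each orientation $\mathcal{O}$ of $G$, the set
\[
C_{\mathcal{O}} = \left\{{\mathbf{x}} \in {\mathbb{R}}^n \, : \, x_i < x_j \textrm{ whenever } i \to j \textrm{ in } \mathcal{O}\right\}.
\]
Every point of the complement ${\mathbb{R}}^n \setminus \bigcup_{H \in {\mathcal{A}}_G} H$ determines an orientation $\mathcal{O}({\mathbf{x}})$ by orienting $i \to j$ when $x_i < x_j$, and this orientation is acyclic, since a directed cycle $i_1 \to \cdots \to i_k \to i_1$ would force $x_{i_1} < \cdots < x_{i_k} < x_{i_1}$. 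So the complement is the disjoint union of the sets $C_{\mathcal{O}}$ over all acyclic orientations $\mathcal{O}$. Each $C_{\mathcal{O}}$ is an intersection of open halfspaces, hence open and convex, hence connected; and it is nonempty, because an acyclic orientation has a topological order $v_1, \ldots, v_n$ and the point with $x_{v_k} = k$ lies in $C_{\mathcal{O}}$. A partition of the complement into nonempty open connected pieces must coincide with its partition into connected components, so the regions of ${\mathcal{A}}_G$ are precisely the sets $C_{\mathcal{O}}$, and $R \mapsto \mathcal{O}$ (with $R = C_{\mathcal{O}}$) is the desired bijection.

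The first part is bookkeeping once Theorem \ref{f.th:charpoly}.3 is available. In the second part, the only point requiring genuine care is the claim that the $C_{\mathcal{O}}$ are \emph{exactly} the regions: one must check nonemptiness — precisely where acyclicity is used, via the existence of a topological order — and that the complement really is the disjoint union of the $C_{\mathcal{O}}$, which follows from the observation that every complement point induces an acyclic orientation. As a remark, combining the polynomial identity with Zaslavsky's Theorem (Theorem \ref{f.th:charpoly}.1) recovers Stanley's formula: the number of acyclic orientations of $G$ equals $(-1)^n \chi_G(-1)$.
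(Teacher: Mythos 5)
Your proof is correct and follows essentially the same route as the paper's: the first identity via the finite-field count of Theorem \ref{f.th:charpoly}.3 (proper $q$-colorings are exactly the points of the complement over ${\mathbb{F}}_q$), and the second via the map sending a region to the orientation obtained by comparing coordinates along each edge. You in fact supply more detail than the paper on the second half, checking that the cells $C_{\mathcal{O}}$ for acyclic $\mathcal{O}$ are nonempty (via a topological order), open, convex, and partition the complement, whereas the paper simply asserts the bijection.

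One small point you elide, which the paper states explicitly because it is not automatic: the arrangement ${\mathcal{A}}_G$ has the same intersection lattice --- hence the same characteristic polynomial --- over every field. Your argument applies Theorem \ref{f.th:charpoly}.3 to the arrangement over ${\mathbb{F}}_q$ and then treats the resulting value as $\chi_{{\mathcal{A}}_G}(q)$ for the arrangement in the statement (which, in the second half, lives in ${\mathbb{R}}^n$). For graphical arrangements this identification is legitimate: a subset of the hyperplanes $x_i=x_j$ intersects in the subspace where coordinates agree on each connected component of the corresponding subgraph, so the flats and their dimensions are governed by set partitions of the vertices, independently of the field ${\mathbbm{k}}$. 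Without that remark, though, the step conflates characteristic polynomials of arrangements over different fields, and the paper explicitly warns that this coincidence fails for general arrangements; one added sentence closes the argument.
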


\begin{proof}
A proper $q$-coloring of $G$ is equivalent to a point $\mathbf{a} \in {\mathbb{F}}_q^n$ which is on none of the hyperplanes $x_i = x_j$ for $ij \in G$. Also, the arrangement ${\mathcal{A}}_G$ in ${\mathbbm{k}}^n$ has the same intersection lattice for any field ${\mathbbm{k}}$. (This is not true for every arrangement.) The first equality then follows from Theorem \ref{f.th:charpoly}.3.

Given a region $R$ of ${\mathcal{A}}_G$ we give each edge $ij$ the orientation $i \rightarrow j$ if $x_i > x_j$ in $R$, and $i \leftarrow j$ if $x_i < x_j$ in $R$. This is a bijection between the regions of ${\mathcal{A}}_G$ and the acyclic orientations of $G$.
\end{proof}

In particular, this theorem proves that the chromatic polynomial $\chi_G(q)$ is indeed given by a polynomial in $q$ when $q$ is a positive integer. It also gives us a reciprocity theorem, telling us what happens when we irreverently substitute the negative integer $q=-1$ into this polynomial:

\begin{corollary}
The graph $G$ has $|\chi_G(-1)|$ acyclic orientations of $G$.
\end{corollary}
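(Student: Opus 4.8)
The plan is to combine the two statements in the theorem just proved. From the theorem we know that $\chi_{\mathcal A_G}(q) = \chi_G(q)$, so it suffices to evaluate the characteristic polynomial of the graphical arrangement at $q=-1$ and compare with the number of regions. First I would invoke Zaslavsky's Theorem (Theorem~\ref{f.th:charpoly}.1), which says that for a real hyperplane arrangement $\mathcal A$ in $\mathbb{R}^n$ the number of regions is $a(\mathcal A) = (-1)^n \chi_{\mathcal A}(-1)$. Applying this to the graphical arrangement $\mathcal A_G$ in $\mathbb{R}^n$ gives $a(\mathcal A_G) = (-1)^n \chi_{\mathcal A_G}(-1) = (-1)^n \chi_G(-1)$.

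Next I would use the second part of the theorem just proved: the regions of $\mathcal A_G$ are in bijection with the acyclic orientations of $G$, so the number of acyclic orientations equals $a(\mathcal A_G)$. Combining the two displays, the number of acyclic orientations of $G$ is $(-1)^n\chi_G(-1)$, which is $|\chi_G(-1)|$ provided $(-1)^n\chi_G(-1) \geq 0$. This sign bookkeeping is the one small point that needs care: since $\chi_G(q)$ is the characteristic polynomial of an arrangement, Theorem on sign alternation tells us $\chi_G(q) = q^n - a_{n-1}q^{n-1} + \cdots$ with all $a_i \geq 0$, so $(-1)^n\chi_G(-1) = 1 + a_{n-1} + a_{n-2} + \cdots + a_n \geq 0$; hence the quantity is automatically nonnegative and equals its own absolute value. (Alternatively one can simply note that a count of combinatorial objects is nonnegative, so $(-1)^n\chi_G(-1) = |\chi_G(-1)|$ follows for free.)

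The proof is essentially a two-line deduction, so the "main obstacle" is not really an obstacle at all — it is just making sure the exponent $n$ matches in Zaslavsky's formula (the arrangement lives in $\mathbb{R}^n$ where $n = |V(G)|$, and the formula uses $(-1)^n$ with that same $n$) and that the bijection with acyclic orientations is applied in the real picture. I would present this as a short corollary proof: cite $\chi_{\mathcal A_G} = \chi_G$ and the region–acyclic-orientation bijection from the theorem, then apply Zaslavsky's Theorem and read off $|\chi_G(-1)| = (-1)^n\chi_G(-1) = a(\mathcal A_G) = \#\{\text{acyclic orientations of } G\}$.

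This reciprocity result is originally due to Stanley; one could remark that it can also be proved purely combinatorially by a deletion–contraction induction on the chromatic polynomial, mirroring the deletion–contraction recursion for $\chi_{\mathcal A}$, but the arrangement-theoretic argument above is the cleanest given the machinery already developed.
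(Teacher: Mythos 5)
Your proof is correct and is exactly the argument the paper intends: combine $\chi_{\mathcal{A}_G}=\chi_G$ and the region--acyclic-orientation bijection from the preceding theorem with Zaslavsky's formula $a(\mathcal{A})=(-1)^n\chi_{\mathcal{A}}(-1)$, and note the sign works out so the count equals $|\chi_G(-1)|$. Nothing further is needed.
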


\subsubsection{{\textsf{Free arrangements}} 
}
Far more often than we might expect, characteristic polynomials of hyperplane arrangements factor as products of linear forms. \cite{f.Saganfactors} 
%Sagan: Why the characteristic polynomial factors http://www.mth.msu.edu/users/sagan/Papers/Old/cpf-ams.pdf
Such factorizations are often a manifestation of the underlying algebraic structure. The theory of free arrangements gives one possible explanation for this phenomenon.

Let ${\mathcal{A}}$ be a real central arrangement and let $R = {\mathbb{R}}[x_1, \ldots, x_n]$ be the polynomial ring in $n$ variables, graded by total degree. A \textbf{derivation} is a linear map $D$ satisfying Leibniz's law
\[
D(fg) = f (Dg) + (Df) g \qquad \textrm{for all } f, g \in R
\] 
The set Der of derivations is an $R$-module; that is, if $p \in R$ and $D \in \textrm{Der}$ then $pD \in \textrm{Der}$. It is a graded module, where $D$ is homogeneous of degree $d$ if it takes polynomials of degree $k$ to polynomials of degree $k+d$. It is a free module with basis $\frac{\partial}{\partial x_1}, \ldots \frac{\partial}{\partial x_n}$; that is,
\[
\textrm{Der} = \left\{p_1\frac{\partial}{\partial x_1} + \cdots + p_n \frac{\partial}{\partial x_n} \, : \, p_i \in R\right\}
\]
Indeed, if $D$ is a derivation with $Dx_i = p_i \in R$ for $1 \leq i \leq n$, then $D =  p_1\frac{\partial}{\partial x_1} + \cdots + p_n \frac{\partial}{\partial x_n}$ by linearity.

Now consider the submodule of \textbf{${\mathcal{A}}$-derivations}:
\[
\textrm{Der}({\mathcal{A}}) = \left\{D \in \textrm{Der} \, : \, \alpha_H \, \textrm{ divides } \, D(\alpha_H) \textrm{ for all } H \in A
\right\}
\]
where $\alpha_H$ is a linear form defining hyperplane $H$, so $H = \{v \in {\mathbbm{k}}^n \, : \, \alpha_H(v)=0\}$. Let the \textbf{defining polynomial} of ${\mathcal{A}}$ be $Q_{\mathcal{A}} = \prod_{H \in {\mathcal{A}}} \alpha_H$.
 
For example, if $\mathcal{H}$ is the arrangement of coordinate hyperplanes, whose defining polynomial is $Q_{\mathcal{H}} = x_1\cdots x_n$, then $E_i = x_i \frac{\partial}{\partial x_i}$ is an $\mathcal{H}$-derivation for $1 \leq i \leq n$ since $E_i(x_i) = x_i$ and $E_i(x_j) = 0$ for $j \neq i$.

We say the arrangement ${\mathcal{A}}$ is \textbf{free} if Der$({\mathcal{A}})$ is a free $R$-module. This notion is not well understood at the moment; it is not even known if it is a combinatorial condition. For example, Ziegler gave an example of a free arrangement over $\mathbb{F}_2$ and a non-free arrangement over $\mathbb{F}_3$ with isomorphic intersection posets.

\begin{conjecture} \label{f.conj:Terao} \cite{f.Terao}
If two real arrangements ${\mathcal{A}}_1$ and ${\mathcal{A}}_2$ have isomorphic intersection posets and ${\mathcal{A}}_1$ is free, then ${\mathcal{A}}_2$ is free.
\end{conjecture}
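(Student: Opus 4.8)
The statement is Terao's celebrated \emph{freeness conjecture}, and it remains open; what follows is therefore not a proof but a sketch of the natural lines of attack and an honest account of where each one stalls. The first instinct is to hope that freeness is detected by some combinatorial refinement of $\chi_{\mathcal{A}}(q)$. By Terao's Factorization Theorem, a free arrangement with exponents $(d_1,\dots,d_n)$ has $\chi_{\mathcal{A}}(q)=\prod_i(q-d_i)$, and the multiset $\{d_i\}$ is recovered from $L_{\mathcal{A}}$ alone. Thus $\mathcal{A}_2$ comes equipped with a combinatorially forced candidate exponent sequence, and the goal becomes to produce a basis of $\textrm{Der}(\mathcal{A}_2)$ realizing it. The obstruction is that factorization of $\chi_{\mathcal{A}}(q)$ into nonnegative integer linear forms is necessary but far from sufficient for freeness: Ziegler's pair (free over $\mathbb{F}_2$, non-free over $\mathbb{F}_3$, with isomorphic intersection posets) already shows freeness is not a consequence of $L_{\mathcal{A}}$ in positive characteristic, so any proof must use the real, or at least characteristic-zero, hypothesis essentially.

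The second approach is an inductive one built on Terao's Addition--Deletion Theorem: for $H\in\mathcal{A}$, any two of ``$\mathcal{A}$ is free with exponents $\{d_1,\dots,d_{n-1},d_n\}$'', ``$\mathcal{A}\setminus H$ is free with exponents $\{d_1,\dots,d_{n-1},d_n-1\}$'', ``$\mathcal{A}/H$ is free with exponents $\{d_1,\dots,d_{n-1}\}$'' imply the third. Since exponents are combinatorial, one would try to transport freeness from $\mathcal{A}_1$ to $\mathcal{A}_2$ hyperplane by hyperplane, mimicking the deletion--contraction induction used above for the characteristic polynomial. Two things break. First, the contraction $\mathcal{A}/H$ is naturally a multiarrangement, not a simple arrangement (exactly the annoyance flagged after the Deletion--Contraction proposition), forcing one into Ziegler's theory of free multiarrangements, where exponents are no longer combinatorial invariants. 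Second, even among simple arrangements the ``numerically free'' ones --- those whose characteristic polynomial factors correctly at every flat --- need not be free, so the inductive hypothesis is too weak to carry the divisibility condition the Addition--Deletion criterion demands.

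A more promising modern route is Yoshinaga's criterion: in rank $n\ge 4$ (with a sharper rank-$3$ version), $\mathcal{A}$ is free if and only if (i) $\chi_{\mathcal{A}}(q)$ has the expected factorization and (ii) the Ziegler multirestriction of $\mathcal{A}$ to a fixed hyperplane $H$ is free with matching exponents. Condition (i) is purely combinatorial, so the plan would be to prove that condition (ii) is as well --- i.e.\ that Terao's conjecture holds one dimension down for precisely the multiarrangements arising as Ziegler restrictions. This is where I expect the real difficulty to lie, and where it is essentially circular: one would need to control the Ziegler multiplicities from $L_{\mathcal{A}}$, and the available tools (local cohomology of the logarithmic derivation sheaf, the rank-$3$ analyses via second syzygies, specialization and degeneration arguments) do not close the loop.

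In short, I would attempt the Yoshinaga reduction, invest the bulk of the effort in trying to show that freeness of the relevant Ziegler multirestriction depends only on the intersection poset, and anticipate that step to be the decisive --- and, to date, insurmountable --- obstacle. The underlying issue is structural: all the combinatorially visible data of $\mathcal{A}$ (the poset $L_{\mathcal{A}}$, hence $\chi_{\mathcal{A}}$ and every restriction $\chi_{\mathcal{A}/F}$, and the full Tutte polynomial) lives on one side, while freeness is a syzygetic property of the module $\textrm{Der}(\mathcal{A})$, and no known method bridges that gap. Terao's conjecture is exactly the assertion that the bridge exists.
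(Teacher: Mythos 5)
You have correctly identified that this statement is Terao's conjecture, which the paper states as an open problem (citing only Ziegler's positive-characteristic example as evidence that freeness is subtle) and for which it offers no proof; declining to manufacture an argument is exactly the right call. Your survey of the standard attack routes --- the combinatorially forced exponents from Terao's Factorization Theorem, the Addition--Deletion induction and its multiarrangement/numerical-freeness obstructions, and the Yoshinaga-style reduction to Ziegler multirestrictions --- is accurate and consistent with the current state of the problem.
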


%MATROID REPRESENTATIONS AND FREE ARRANGEMENTS GÜNTER M. ZIEGLER

Mysterious as it is, freeness is a very useful property for us, thanks to the following theorem.

\begin{theorem} \label{f.th:Terao}(Terao's Factorization Theorem) \cite{f.Terao}
If ${\mathcal{A}}$ is free then Der$({\mathcal{A}})$ has a homogeneous basis $D_1, \ldots, D_n$ whose degrees $d_1, \ldots, d_n$ only depend on ${\mathcal{A}}$, and the characteristic polynomial of ${\mathcal{A}}$ is
%. Furthermore, the Poincar\'e polynomial of $V({\mathcal{A}})$ is $q^n \chi_{\mathcal{A}}(-1/q)$ where
\[
\chi_{\mathcal{A}}(q) = (q-d_1-1) \cdots (q-d_n-1).
\]
\end{theorem}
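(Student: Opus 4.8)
The plan is to work entirely with the derivation module $\mathrm{Der}({\mathcal{A}})$ and to make \emph{Saito's criterion} the workhorse. First the routine part: if $\mathrm{Der}({\mathcal{A}})$ is a free graded $R$-module, then as a graded module it is isomorphic to $\bigoplus_{i=1}^n R(-d_i)$, where I use the grading convention of the text (so $\frac{\partial}{\partial x_i}$ is homogeneous of degree $-1$, and the Euler derivation $E=\sum_i x_i\frac{\partial}{\partial x_i}$, which always lies in $\mathrm{Der}({\mathcal{A}})$, has degree $0$, forcing some $d_i=0$). Comparing Hilbert series, $\mathrm{Hilb}(\mathrm{Der}({\mathcal{A}}),t)=\sum_i t^{d_i}/(1-t)^n$, shows the multiset $\{d_1,\dots,d_n\}$ is an invariant of ${\mathcal{A}}$, which is the first assertion.

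For the factorization I would first establish Saito's criterion: $n$ homogeneous derivations $D_1,\dots,D_n\in\mathrm{Der}({\mathcal{A}})$, written $D_j=\sum_i p_{ij}\frac{\partial}{\partial x_i}$, form an $R$-basis of $\mathrm{Der}({\mathcal{A}})$ if and only if $\det(p_{ij})$ is a nonzero scalar multiple of the defining polynomial $Q_{\mathcal{A}}=\prod_{H\in{\mathcal{A}}}\alpha_H$. The divisibility $Q_{\mathcal{A}}\mid\det(p_{ij})$ is easy: for each $H$, since $D_j(\alpha_H)\in\alpha_H R$, the matrix $(p_{ij})$ kills the gradient vector $\big(\frac{\partial\alpha_H}{\partial x_i}\big)$ modulo $\alpha_H$, hence is singular there, so $\alpha_H\mid\det(p_{ij})$; the $\alpha_H$ are pairwise non-proportional, so their product divides. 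A degree count (each $p_{ij}$ has degree $d_j+1$, so $\det(p_{ij})$ has degree $\sum_j(d_j+1)$, which must be $\deg Q_{\mathcal{A}}=|{\mathcal{A}}|$ when the $D_j$ are a basis) gives equality up to scalar and yields $\sum_i d_i=|{\mathcal{A}}|-n$. Since the coefficient of $q^{n-1}$ in both $\chi_{\mathcal{A}}(q)$ and $\prod_i(q-d_i-1)$ equals $-|{\mathcal{A}}|$, the top two coefficients already agree; the work is in matching the rest.

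To get the full identity $\chi_{\mathcal{A}}(q)=\prod_{i=1}^n(q-d_i-1)$ I would induct on $|{\mathcal{A}}|$ using the deletion–restriction recursion $\chi_{\mathcal{A}}(q)=\chi_{{\mathcal{A}}\backslash H}(q)-\chi_{{\mathcal{A}}/H}(q)$ recorded earlier (in that notation ${\mathcal{A}}/H$ is the restriction to $H$, living in dimension $n-1$). The engine is the Addition–Deletion theorem: for a hyperplane $H\in{\mathcal{A}}$ adapted to a homogeneous basis, freeness of ${\mathcal{A}}$ with exponents $(d_1,\dots,d_{n-1},d_n)$ forces ${\mathcal{A}}\backslash H$ to be free with exponents $(d_1,\dots,d_{n-1},d_n-1)$ and ${\mathcal{A}}/H$ to be free with exponents $(d_1,\dots,d_{n-1})$. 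Granting this, the inductive hypothesis gives $\chi_{{\mathcal{A}}\backslash H}(q)=(q-d_n)\prod_{i<n}(q-d_i-1)$ and $\chi_{{\mathcal{A}}/H}(q)=\prod_{i<n}(q-d_i-1)$, and subtracting yields $\chi_{\mathcal{A}}(q)=\big[(q-d_n)-1\big]\prod_{i<n}(q-d_i-1)=\prod_{i=1}^n(q-d_i-1)$. The base case is the empty arrangement in ${\mathbb{R}}^n$, where $\mathrm{Der}({\mathcal{A}})$ is free on $\frac{\partial}{\partial x_1},\dots,\frac{\partial}{\partial x_n}$ (all $d_i=-1$) and $\chi_{\mathcal{A}}(q)=q^n=\prod_i(q-(-1)-1)$.

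The hard part — and the reason freeness is mysterious — is exactly the Addition–Deletion step, since freeness is \emph{not} inherited by arbitrary deletions. What makes it work is a careful analysis of the restriction map $\rho:\mathrm{Der}({\mathcal{A}})\to\mathrm{Der}({\mathcal{A}}/H)$ given by reduction modulo $\alpha_H$: when ${\mathcal{A}}$ is free one shows $\rho$ is surjective with kernel $\alpha_H\cdot\mathrm{Der}({\mathcal{A}})$ together with a rank-one summand, so that a homogeneous basis of $\mathrm{Der}({\mathcal{A}})$ adapted to $H$ simultaneously produces a basis of $\mathrm{Der}({\mathcal{A}}/H)$ and, via Saito's criterion applied with $Q_{{\mathcal{A}}\backslash H}=Q_{\mathcal{A}}/\alpha_H$, a basis of $\mathrm{Der}({\mathcal{A}}\backslash H)$, with exponents shifted exactly as claimed. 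This module bookkeeping with short exact sequences and Saito's determinant is where all the real work lies; I would carry it out following Terao's original argument, or, to keep the exposition short, invoke the Solomon–Terao formula expressing $\chi_{\mathcal{A}}(q)$ through the Hilbert series of the derivation modules indexed by the flats $F\in L_{\mathcal{A}}$, which collapses at once to $\prod_i(q-d_i-1)$ once ${\mathcal{A}}$ is free.
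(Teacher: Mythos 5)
The paper itself offers no proof of this theorem---it is quoted from Terao's paper---so your proposal can only be judged on its own terms, and there it has a genuine gap at its central step. Your induction needs, for every free arrangement ${\mathcal{A}}$, some hyperplane $H$ such that ${\mathcal{A}} \backslash H$ and ${\mathcal{A}}/H$ are again free with exponents $(d_1,\ldots,d_{n-1},d_n-1)$ and $(d_1,\ldots,d_{n-1})$; that is exactly the definition of an \emph{inductively free} arrangement, and inductively free is a strictly smaller class than free. Freeness of ${\mathcal{A}}$ alone does not make the restriction map $\rho:\mathrm{Der}({\mathcal{A}})\to \mathrm{Der}({\mathcal{A}}/H)$ surjective, and the claim that a basis of $\mathrm{Der}({\mathcal{A}})$ ``adapted to $H$'' automatically yields bases of both $\mathrm{Der}({\mathcal{A}}/H)$ and $\mathrm{Der}({\mathcal{A}}\backslash H)$ is false: there are free arrangements none of whose deletions are free, and (Edelman--Reiner's counterexample to Orlik's conjecture) free arrangements with a non-free restriction ${\mathcal{A}}/H$. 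The correct exact sequence is $0\to \alpha_H\,\mathrm{Der}({\mathcal{A}}\backslash H)\to \mathrm{Der}({\mathcal{A}})\to \mathrm{Der}({\mathcal{A}}/H)$, and the whole content of the Addition--Deletion theorem (Theorem \ref{f.th:freeind}) is that \emph{two} of the three freeness statements are needed to force the third; you are trying to use it with only one. So your argument, as written, proves the factorization only for inductively free arrangements (which does cover the paper's examples: coordinate, braid, Shi), not Terao's theorem in full generality.

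The pieces that are fine: the well-definedness of the exponents from the Hilbert series of the free module, the divisibility $Q_{\mathcal{A}}\mid\det(p_{ij})$, and the degree count $\sum_i(d_i+1)=|{\mathcal{A}}|$ via Saito's criterion (Theorem \ref{f.th:Saito}) are all correct and standard. Your closing fallback---``invoke the Solomon--Terao formula''---is indeed where an actual proof lives, but as stated it is a citation rather than an argument, and it is misdescribed: the formula expresses $\chi_{\mathcal{A}}(q)$ through the Hilbert series of the modules $D^p({\mathcal{A}})$ of $p$-derivations (not modules indexed by flats), and the ``collapse'' requires showing that freeness of $\mathrm{Der}({\mathcal{A}})$ forces $D^p({\mathcal{A}})\cong\Lambda^p\mathrm{Der}({\mathcal{A}})$ to be free and then carrying out a nontrivial limit computation. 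To repair the write-up you should either restrict the claim to inductively free arrangements, or genuinely develop the Solomon--Terao (or Terao's original localization) argument rather than the deletion--contraction induction.
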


%H. Terao, Generalized exponents of a free arrangement of hyperplanes and the Shepherd-Todd-Brieskorn formula, Invent. Math. 63 (1981), 159?179.

Freeness is made more tractable thanks to the following two useful criteria.

\begin{theorem}\label{f.th:Saito} (Saito Criterion) \cite{f.Saito}
Let $D_1, \ldots, D_n$ be ${\mathcal{A}}$-derivations and let $Q_{\mathcal{A}}$ be the defining polynomial of ${\mathcal{A}}$. Then Der$({\mathcal{A}})$ is free with basis $D_1, \ldots, D_n$ if and only if
\[
\det (D_i(x_j))_{1 \leq i, j \leq n} = c \cdot Q_{\mathcal{A}}
\]
for some constant $c$.
\end{theorem}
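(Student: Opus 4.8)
The plan is to route everything through the \textbf{coefficient matrix} $M=M(D_1,\dots,D_n):=(D_i(x_j))_{1\le i,j\le n}$, which captures all the information, since $D_i=\sum_j D_i(x_j)\,\partial/\partial x_j$. The backbone is a single divisibility lemma: \emph{if $\theta_1,\dots,\theta_n\in\mathrm{Der}(\mathcal{A})$ then $Q_{\mathcal{A}}\mid\det M(\theta_1,\dots,\theta_n)$ in $R$, and $\det M\neq 0$ precisely when the $\theta_i$ are $R$-linearly independent.} I would prove the divisibility by noting that the $\alpha_H$ ($H\in\mathcal{A}$) are pairwise non-associate irreducibles in the UFD $R$, so it suffices to show $\alpha_H\mid\det M$ for each $H$; an invertible linear change of coordinates sending $\alpha_H$ to a new coordinate $y_1$ multiplies $\det M$ only by a nonzero scalar, and in the new coordinates the $y_1$-column of $M$ has entries $\theta_i(\alpha_H)$, each divisible by $\alpha_H$ because $\theta_i\in\mathrm{Der}(\mathcal{A})$. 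The nonvanishing criterion is the usual passage between $R$-linear relations among the $\theta_i$ and $\mathrm{Frac}(R)$-linear relations among the rows of $M$. Along the way I would also record that $Q_{\mathcal{A}}\partial_j\in\mathrm{Der}(\mathcal{A})$ for all $j$, so $\mathrm{Der}(\mathcal{A})$ has rank $n$ (it is sandwiched between $Q_{\mathcal A}\,\mathrm{Der}(R)$ and $\mathrm{Der}(R)$).

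For the implication ``$\det M=c\,Q_{\mathcal{A}}$ with $c\neq 0$ $\Rightarrow$ $\{D_i\}$ is a basis'': $R$-linear independence is immediate from $\det M\neq 0$. For generation, given $\theta\in\mathrm{Der}(\mathcal{A})$, write $\theta=\sum_i g_i D_i$ over $\mathrm{Frac}(R)$ (possible because the $D_i$ span the $n$-dimensional space $\mathrm{Der}(R)\otimes\mathrm{Frac}(R)$) and solve by Cramer's rule: $g_i=\det M_i/\det M$, where $M_i$ is $M$ with its $i$-th row replaced by $(\theta(x_1),\dots,\theta(x_n))$. But $M_i$ is exactly the coefficient matrix of the tuple $(D_1,\dots,D_{i-1},\theta,D_{i+1},\dots,D_n)$, all of whose entries lie in $\mathrm{Der}(\mathcal{A})$, so the lemma gives $Q_{\mathcal{A}}\mid\det M_i$ and hence $g_i=\det M_i/(c\,Q_{\mathcal{A}})\in R$. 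Thus the $D_i$ generate $\mathrm{Der}(\mathcal{A})$, form an $R$-basis, and $\mathrm{Der}(\mathcal{A})$ is free.

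For the converse, ``$\{D_i\}$ a basis of $\mathrm{Der}(\mathcal{A})$ $\Rightarrow$ $\det M=c\,Q_{\mathcal{A}}$'': the lemma already yields $\det M=Q_{\mathcal{A}}\,h$ with $0\neq h\in R$, so the entire content is that $h$ is a nonzero constant. I would first treat a \emph{homogeneous} basis: since $\mathrm{Der}(\mathcal{A})$ is a graded submodule of the graded free module $\mathrm{Der}(R)$ and is free, graded Nakayama produces a homogeneous basis $\eta_1,\dots,\eta_n$, with $\eta_i$ of degree $d_i$ (convention: $\eta_i$ shifts polynomial degrees by $d_i$, so each $\eta_i(x_j)$ is homogeneous of degree $d_i+1$). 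Then $\det M(\eta_1,\dots,\eta_n)$ is nonzero, divisible by $Q_{\mathcal{A}}$, and homogeneous of degree $\sum_i(d_i+1)$; and by Terao's Factorization Theorem (Theorem~\ref{f.th:Terao}), $\chi_{\mathcal A}(q)=\prod_i(q-d_i-1)$, whose coefficient of $q^{n-1}$ is $-\sum_i(d_i+1)$, while from Whitney's Theorem~\ref{f.th:Whitney} this same coefficient equals $-|\mathcal{A}|=-\deg Q_{\mathcal{A}}$. Hence $\det M(\eta_1,\dots,\eta_n)$ and $Q_{\mathcal{A}}$ have the same degree, so their quotient is a nonzero constant. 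Finally, since $\{D_i\}$ and $\{\eta_i\}$ are both $R$-bases of $\mathrm{Der}(\mathcal{A})$, the change-of-basis matrix $U$ lies in $GL_n(R)$, so $\det U\in R^{\times}=\mathbb{R}^{\times}$; from $M(D_1,\dots,D_n)=U\,M(\eta_1,\dots,\eta_n)$ we conclude $\det M(D_1,\dots,D_n)=(\det U)\,c\,Q_{\mathcal{A}}$, a nonzero constant times $Q_{\mathcal{A}}$.

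The main obstacle is precisely that last step — pinning down $h=\det M/Q_{\mathcal{A}}$ in the converse direction. Using Terao's theorem (legitimate here, as it appears earlier in the text) reduces it to a one-line degree count, but in the usual development Saito's criterion \emph{precedes} Terao's theorem; a self-contained proof would instead have to show directly that any homogeneous basis of $\mathrm{Der}(\mathcal{A})$ has its degrees summing (after adding $1$ to each) to $|\mathcal{A}|$, equivalently that $\det M(\eta_1,\dots,\eta_n)$ has degree $\le|\mathcal{A}|$. That bound is the real work and would require an honest argument — for instance via the reflexivity of $\mathrm{Der}(\mathcal{A})$ together with a local analysis at the generic points of the hyperplanes, or a careful Hilbert-series comparison inside the sandwich $Q_{\mathcal{A}}\,\mathrm{Der}(R)\subseteq\mathrm{Der}(\mathcal{A})\subseteq\mathrm{Der}(R)$. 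Everything else in the proof is elementary linear algebra over the UFD $R$.
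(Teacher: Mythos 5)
The paper itself gives no proof of this theorem --- it is quoted directly from \cite{f.Saito} --- so your argument can only be judged on its own terms. Your divisibility lemma and the implication ``$\det(D_i(x_j))=cQ_{\mathcal{A}}$ with $c\neq0$ $\Rightarrow$ basis'' are complete and are exactly the standard argument: a linear change of coordinates turning $\alpha_H$ into a coordinate shows $\alpha_H\mid\det M$ for each $H$, and Cramer's rule combined with the lemma applied to $(D_1,\dots,D_{i-1},\theta,D_{i+1},\dots,D_n)$ shows the coefficients $g_i$ lie in $R$. One small reading point: the criterion needs $c\neq 0$ (if $c=0$ the $D_i$ are dependent), so the paper's ``for some constant $c$'' must be read as ``nonzero constant,'' which your proof does implicitly.

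The genuine gap is in the forward direction, and you have named it yourself. Deriving $\sum_i(d_i+1)=|\mathcal{A}|$ from Terao's Factorization Theorem (Theorem~\ref{f.th:Terao}) is formally available because the survey states that theorem earlier, but it is circular in substance: in the standard development (e.g.\ \cite{f.OrlikTerao}) Saito's criterion is proved first and the machinery leading to Terao's factorization is built on it, so invoking Terao here turns your forward direction into a consistency check rather than a proof. The self-contained completion you gesture at is the right one, but it is not carried out, and it is where the actual content lies: localize at the height-one prime $(\alpha_H)$; since every $\alpha_{H'}$ with $H'\neq H$ becomes a unit there, $\mathrm{Der}(\mathcal{A})_{(\alpha_H)}=\mathrm{Der}(\{H\})_{(\alpha_H)}$, which is free with coefficient determinant $\alpha_H$ times a local unit, so $\mathrm{ord}_{\alpha_H}(\det M)=1$; at any height-one prime containing no $\alpha_H$, $Q_{\mathcal{A}}$ is a unit and the sandwich $Q_{\mathcal{A}}\,\mathrm{Der}(R)\subseteq\mathrm{Der}(\mathcal{A})\subseteq\mathrm{Der}(R)$ forces $\mathrm{Der}(\mathcal{A})$ to localize to all of $\mathrm{Der}(R)$, so $\det M$ is a unit there. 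Unique factorization then gives $\det M=cQ_{\mathcal{A}}$ with $c\neq0$ outright, with no need for a homogeneous basis, graded Nakayama, or any degree count. By contrast, your other suggested fallback --- a Hilbert-series comparison inside the sandwich --- only bounds the individual degrees and does not pin down their sum, so it would not suffice as stated. In short: the backward direction is correct and complete; the forward direction as written rests on an unproved-here theorem whose usual proof depends on the present one, and the missing local analysis is the real work.
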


\begin{theorem}\label{f.th:freeind}\cite{f.Terao}
%Arrangements of hyperplanes and their freeness
Let ${\mathcal{A}}$ be an arrangement and $H$ be a hyperplane of ${\mathcal{A}}$. Any two of the following statements imply the third:

$\bullet$ ${\mathcal{A}} / H$ is free with exponents $b_1, \ldots, b_{n-1}$.

$\bullet$ ${\mathcal{A}} \backslash H$ is free with exponents $b_1, \ldots, b_{n-1}, b_n - 1$.

$\bullet$ ${\mathcal{A}}$ is free with exponents $b_1, \ldots, b_{n-1}, b_n$
\end{theorem}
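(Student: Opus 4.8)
The plan is to prove all three implications within a single framework built on the modules of ${\mathcal{A}}$-derivations, with Saito's criterion (Theorem~\ref{f.th:Saito}) playing the role of the device that converts "$n$ homogeneous derivations of the right degrees" into a proof of freeness. Write ${\mathcal{A}}'={\mathcal{A}}\backslash H$, ${\mathcal{A}}''={\mathcal{A}}/H$, and choose coordinates so that $\alpha_H=x_n$; then ${\mathcal{A}}''$ is an arrangement in the ring $R'=R/(x_n)={\mathbb{R}}[x_1,\ldots,x_{n-1}]$. First I would set up three natural maps: the degree-preserving inclusion $\textrm{Der}({\mathcal{A}})\hookrightarrow\textrm{Der}({\mathcal{A}}')$ (dropping the defining condition at $H$ only enlarges the module); the multiplication map $\textrm{Der}({\mathcal{A}}')\xrightarrow{\,\cdot x_n\,}\textrm{Der}({\mathcal{A}})$ (if $D$ satisfies $\alpha_A\mid D(\alpha_A)$ for every $A\neq H$, then $x_nD$ in addition satisfies the trivial condition $x_n\mid x_nD(x_n)$); and the restriction map $\rho\colon\textrm{Der}({\mathcal{A}})\to\textrm{Der}({\mathcal{A}}'')$, $D\mapsto\bar D$, reducing coefficients modulo $x_n$ — well defined because $x_n\mid D(x_n)$, and landing in $\textrm{Der}({\mathcal{A}}'')$ once one checks the restricted arrangement's defining conditions. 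A short computation gives $\ker\rho=x_n\,\textrm{Der}({\mathcal{A}}')$, yielding the graded left-exact sequence
\[
0\longrightarrow\textrm{Der}({\mathcal{A}}')(-1)\xrightarrow{\ \cdot x_n\ }\textrm{Der}({\mathcal{A}})\xrightarrow{\ \rho\ }\textrm{Der}({\mathcal{A}}'').
\]

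Next I would record the bookkeeping I will lean on: the standard fact that each $\textrm{Der}$ here is a graded reflexive $R$-module whose rank equals the ambient dimension, so that it is free with a prescribed list of exponents precisely when it has a homogeneous basis of those degrees, in which case the exponents sum to $|{\mathcal{A}}|$ and $\det(\theta_i(x_j))\doteq Q_{\mathcal{A}}$ by Saito's criterion; and the fact that the Euler derivation $\theta_E=\sum_i x_i\partial_i$ always lies in $\textrm{Der}({\mathcal{A}})$ and restricts to that of ${\mathcal{A}}''$, so $\textrm{im}\,\rho\neq 0$. The common argument then runs: assuming two of the modules are free with the claimed exponents, use the exact sequence together with a Hilbert-series comparison to force $\rho$ to be surjective, so that the sequence becomes short exact; pull back a homogeneous basis $\bar\theta_1,\ldots,\bar\theta_{n-1}$ of $\textrm{Der}({\mathcal{A}}'')$ of degrees $b_1,\ldots,b_{n-1}$ to $\theta_1,\ldots,\theta_{n-1}\in\textrm{Der}({\mathcal{A}})$ of the same degrees, adjoin $\theta_n=x_n\eta$ for a degree-$(b_n-1)$ basis element $\eta$ of $\textrm{Der}({\mathcal{A}}')$ whose restriction escapes the span of the $\bar\theta_i$, and verify Saito's determinantal identity for ${\mathcal{A}}$ by a column expansion that factors out $x_n$ (using $x_n\mid\theta_i(x_n)$ for every $\theta_i\in\textrm{Der}({\mathcal{A}})$) and reduces modulo $x_n$ to the already-established Saito identity for ${\mathcal{A}}''$, matching degrees and zero sets with $Q_{\mathcal{A}}$. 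The deletion direction ($\mathcal{A},\mathcal{A}''$ free $\Rightarrow$ $\mathcal{A}'$ free) then follows because the kernel of a surjection of free graded modules onto a free graded module splits off freely; the restriction direction ($\mathcal{A},\mathcal{A}'$ free $\Rightarrow$ $\mathcal{A}''$ free) additionally invokes that $\textrm{Der}({\mathcal{A}}'')$ is reflexive, which together with its Hilbert series pins down freeness.

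The step that takes real work, rather than formal manipulation, is controlling the cokernel of $\rho$: it is \emph{not} automatically surjective, and the whole theorem hinges on showing that any two of the freeness hypotheses force $\textrm{coker}\,\rho$ to vanish — equivalently, force the Hilbert series of $\textrm{Der}({\mathcal{A}})$ to equal $t\cdot H\big(\textrm{Der}({\mathcal{A}}'),t\big)+H\big(\textrm{Der}({\mathcal{A}}''),t\big)$. I would isolate as a preliminary lemma that $\textrm{coker}\,\rho$, as an $R'$-module, is supported in codimension at least two (this is where reflexivity of the derivation modules and a localization argument at height-one primes enter), so that once two of the three Hilbert series are known to be those of free modules, a pure dimension count kills the cokernel. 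With that lemma in place, the degree-shift accounting — why ${\mathcal{A}}$ carries the exponent $b_n$, ${\mathcal{A}}'$ carries $b_n-1$, and ${\mathcal{A}}''$ simply omits it — is forced by the exact sequence once the Euler-derivation normalization is fixed, and the remaining verifications are the routine column-expansion computations sketched above.
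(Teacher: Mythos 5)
The paper does not actually prove this theorem --- it is quoted from Terao with a citation --- so there is no in-paper argument to compare against; for what it is worth, your framework (the graded left-exact sequence $0\to\textrm{Der}({\mathcal{A}}')(-1)\to\textrm{Der}({\mathcal{A}})\to\textrm{Der}({\mathcal{A}}'')$, with the first map multiplication by $x_n$ and the second the restriction $\rho$, together with Saito's criterion, the Euler derivation, and the exponent bookkeeping) is exactly the skeleton of the standard Orlik--Terao proof, and your identification $\ker\rho=x_n\,\textrm{Der}({\mathcal{A}}')$ is correct.

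The genuine gap is in the step you yourself flag as the crux: forcing $\mathrm{coker}\,\rho=0$. Your mechanism --- cokernel supported in codimension at least two plus ``a pure dimension count'' once two of the three modules are free --- does not close the loop for the addition and deletion implications, because in those two cases the Hilbert series you would need for the comparison (that of $\textrm{Der}({\mathcal{A}})$, respectively of $\textrm{Der}({\mathcal{A}}')$) is precisely the unknown quantity, and small support of the cokernel by itself never forces vanishing: a proper submodule of a free module can perfectly well have quotient supported in codimension two or more, with a nonzero Hilbert series. The codimension-two lemma is true (localize at height-one primes of $R'={\mathbb{R}}[x_1,\ldots,x_{n-1}]$, where the triple has rank at most two and surjectivity can be checked by hand), and combined with your other tools it does give the restriction implication: if ${\mathcal{A}}$ and ${\mathcal{A}}\backslash H$ are free, then $Q=\textrm{Der}({\mathcal{A}})/x_n\textrm{Der}({\mathcal{A}}\backslash H)$ has projective dimension at most one over $R$, is killed by $x_n$, hence is free over $R'$ by Auslander--Buchsbaum, and reflexivity plus the lemma identifies it with $\textrm{Der}({\mathcal{A}}/H)$. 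But for the addition implication (and similarly deletion), ``pull back a homogeneous basis of $\textrm{Der}({\mathcal{A}}/H)$'' presupposes exactly the surjectivity you have not established; the standard proof avoids this by working inside $\textrm{Der}({\mathcal{A}}\backslash H)$: one adjusts a homogeneous basis of it, using freeness of ${\mathcal{A}}/H$ and the degree identity $b_1+\cdots+b_n=|{\mathcal{A}}|$, so that $n-1$ of its members already lie in $\textrm{Der}({\mathcal{A}})$, and only then applies Saito to these together with $x_n$ times the remaining one. A secondary slip in your Saito verification: reducing the matrix modulo $x_n$ does not produce the Saito identity for ${\mathcal{A}}/H$ on the nose, since distinct hyperplanes of ${\mathcal{A}}\backslash H$ may have the same restriction to $H$, so $Q_{{\mathcal{A}}\backslash H}$ restricted to $x_n=0$ agrees with $Q_{{\mathcal{A}}/H}$ only up to multiplicities; the degree count must be run against $\deg Q_{\mathcal{A}}=|{\mathcal{A}}|=b_1+\cdots+b_n$ rather than against $Q_{{\mathcal{A}}/H}$.
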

Theorem \ref{f.th:Saito} can be very easy to use if we have the right candidate for a basis. For example, we saw that $E_i = x_i \frac{\partial}{\partial x_i}$ is an $\mathcal{H}_n$-derivation for $1 \leq i \leq n$. The matrix $E_i(x_j)$ is diagonal with determinant $x_1\cdots x_n$, so this must in fact be a basis for Der($\mathcal{H}$) with exponents $0, \ldots, 0$, and 
\[
\chi_{{\mathcal H}_n}(q) = (q-1)^n.
\]

To use Theorem \ref{f.th:freeind} to prove that an arrangement ${\mathcal{A}}$ is free, we need ${\mathcal{A}}$ to belong to a larger family of free arrangements with predictable exponents, which behaves well under deletion and contraction. For example, to prove inductively that $\mathcal{H}_n$ is free, we need the stronger statement that an arrangement of $k$ coordinate hyperplanes in ${\mathbbm{k}}^n$ (where $k \leq n$) has exponents $0$ ($k$ times) and $-1$ ($n-k$ times). A more interesting example is given in Section \ref{f.sec:computechar}.

%K. Saito, Theory of logarithmic differential forms and logarithmic vector fields, J. Fac. Sci. Univ. Tokyo Sec.1A Math. 27 (1980), 265?291.

\subsubsection{{\textsf{Supersolvability}}
}

 There is a combinatorial counterpart to the notion of freeness that produces similar results. Recall that a lattice $L$ is \textbf{supersolvable} if it has an \textbf{M-chain} $C$ such that the sublattice generated by $C$ and any other chain is distributive. 

In this section we are interested in arrangements whose intersection poset is  supersolvable. Since this poset is a geometric lattice, it is semimodular, so the following theorem applies.

\begin{theorem} \cite{f.Stanleysupersolvable}
Let $L$ be a finite supersolvable semimodular lattice, and suppose that  $\widehat{0} = t_0 < t_1 < \cdots < t_n = \widehat{1}$ is an M-chain. Let $a_i$ be the number of atoms $s$ such that $s \leq t_i$ but $s \nleq t_{i-1}$. Then
\[
\chi_L(q) = (q-a_1) \cdots (q-a_n).
\]
\end{theorem}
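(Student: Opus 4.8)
The plan is to prove the factorization $\chi_L(q) = (q-a_1)\cdots(q-a_n)$ by combining two ingredients: Whitney's Theorem (Theorem \ref{f.th:Whitney} in its lattice form, i.e. the broken-circuit / NBC interpretation of the characteristic polynomial of a geometric lattice) together with the structure theory of supersolvable lattices, in particular the $R$-labeling coming from the M-chain. First I would recall that for a geometric lattice $L$ of rank $n$ with atom set $X$, Whitney's Theorem gives $\chi_L(q) = \sum_{\mathcal{B}} (-1)^{|\mathcal{B}|} q^{\,n - r(\mathcal{B})}$, the sum ranging over subsets $\mathcal{B} \subseteq X$; by M\"obius-function manipulations this refines to $\chi_L(q) = \sum_{k} (-1)^k \, \mathrm{nbc}_k \, q^{\,n-k}$, where $\mathrm{nbc}_k$ counts the $k$-element ``no broken circuit'' subsets of atoms with respect to any fixed linear order on $X$. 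So the whole problem reduces to choosing the linear order on $X$ cleverly and counting NBC sets.

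Next I would exploit the supersolvable structure. Given the M-chain $\widehat{0} = t_0 < t_1 < \cdots < t_n = \widehat{1}$, partition the atom set $X$ into blocks $X_1, \ldots, X_n$ where $X_i = \{s \in X : s \leq t_i, \ s \not\leq t_{i-1}\}$, so $a_i = |X_i|$ by definition. I would then order the atoms so that all of $X_1$ comes first, then all of $X_2$, and so on (the internal order within each block being irrelevant). The key structural claim — which is where the M-chain hypothesis does its work — is that with this order, a subset $S \subseteq X$ is an NBC set if and only if $S$ contains \emph{at most one} atom from each block $X_i$. The ``only if'' direction uses the semimodularity together with the defining property of an M-chain: if $S$ contained two atoms $s, s'$ from the same $X_i$, then $s \vee s' \leq t_i$ has rank $2$, so $\{s, s', c\}$ is a circuit for some atom $c \leq t_{i-1} \vee (s\vee s')$; one checks $c$ precedes $s,s'$ in the order, exhibiting a broken circuit inside $S$. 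The ``if'' direction requires showing that a ``transversal-type'' set, with $\le 1$ atom per block, is independent and contains no broken circuit; here the distributivity of the sublattice generated by $C$ and any chain (equivalently, the Jordan--H\"older / $R$-labeling property from Theorem \ref{f.Mcnamara}) guarantees that joining such atoms produces a rank-increasing chain refining the M-chain, which rules out broken circuits.

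Granting this claim, the count is immediate: $\mathrm{nbc}_k = \sum_{|T|=k,\ T\subseteq[n]} \prod_{i \in T} a_i = e_k(a_1,\ldots,a_n)$, the $k$-th elementary symmetric polynomial, because an NBC set of size $k$ is obtained by choosing a $k$-subset $T$ of blocks and one atom from each chosen block. Therefore
\[
\chi_L(q) = \sum_{k=0}^n (-1)^k e_k(a_1,\ldots,a_n) \, q^{\,n-k} = \prod_{i=1}^n (q - a_i),
\]
which is the desired formula. (I should note $a_1 = 1$ always, since exactly one atom lies below $t_1$ and below nothing smaller, consistent with $\chi_L(q)$ having $q-1$ as a factor for any central arrangement.)

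The main obstacle I expect is establishing the structural claim cleanly — specifically the ``if'' direction, that every transversal with at most one atom per block is an NBC set. This is the heart of why supersolvability (not just semimodularity) is needed, and it requires carefully using the M-chain to build, from the atoms in $S$, a saturated chain whose rank-$i$ element lies below $t_i$; the distributivity hypothesis is what makes these joins behave predictably and prevents any ``early'' broken circuit from appearing. An alternative, possibly smoother, route to the same claim is to induct on $n$ using deletion-contraction: contract the arrangement (or lattice) at the modular coatom $t_{n-1}$, observe that $L/t_{n-1}$ is supersolvable of rank $n-1$ with M-chain $t_0 < \cdots < t_{n-1}$ (so by induction $\chi_{L/t_{n-1}}(q) = \prod_{i=1}^{n-1}(q-a_i)$), and show directly that adding back the atoms of $X_n$ multiplies the characteristic polynomial by $(q - a_n)$ — this last step again uses that $t_{n-1}$ is a modular element of the geometric lattice $L$, so that the ``new'' atoms interact with the old flats in a controlled, product-like way. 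I would present whichever of these two arguments turns out shorter once the details are written out; the inductive modular-coatom version is likely the more robust.
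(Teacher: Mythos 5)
The paper does not actually prove this theorem; it states it with a citation to Stanley's \emph{Supersolvable lattices} paper, so there is no in-text argument to compare against. Judged on its own merits, your NBC route is essentially the classical proof and the overall plan is sound: with the atoms ordered block by block, the NBC sets are exactly the partial transversals of the blocks $X_1,\ldots,X_n$, so $\mathrm{nbc}_k = e_k(a_1,\ldots,a_n)$ and the product formula follows. But two points need fixing. First, in the ``only if'' direction the atom you need satisfies $c \leq (s\vee s')\wedge t_{i-1}$ (not $c \leq t_{i-1}\vee(s\vee s')$, which is vacuous), and its existence is precisely the assertion that $t_{i-1}$ is a (left-)modular element: applying $r(x)+r(t_{i-1})=r(x\vee t_{i-1})+r(x\wedge t_{i-1})$ to $x=s\vee s'$ gives $r\bigl((s\vee s')\wedge t_{i-1}\bigr)=1$, hence an atom $c$ in an earlier block making $\{s,s',c\}$ a circuit. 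That the members of an M-chain of a semimodular supersolvable lattice are modular is the one substantive ingredient you assume without proof; it is exactly where supersolvability enters, and your argument is incomplete until you prove it or cite it. Second, you have the difficulty backwards: the ``if'' direction needs no distributivity or $R$-labelings. If $S$ meets each block at most once and contained a broken circuit $C\setminus\{\min C\}$, let $s^*$ be its element in the highest block $X_{i^*}$; since $|C|\geq 3$ and $\min C$ precedes everything else in the block order, every element of $C$ other than $s^*$ lies in a block of index $<i^*$, hence below $t_{i^*-1}$, and $s^*\leq\bigvee(C\setminus\{s^*\})\leq t_{i^*-1}$ contradicts $s^*\in X_{i^*}$. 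You should also say a word about why broken-circuit theory applies here at all, since $L$ is only assumed semimodular, not geometric: semimodularity makes $B\mapsto r(\bigvee B)$ a matroid rank function on the atoms, and the crosscut theorem yields $\chi_L(q)=\sum_{B}(-1)^{|B|}q^{\,n-r(\vee B)}$ even when $L$ is not atomic (empty blocks, $a_i=0$, just contribute factors of $q$).

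Your alternative inductive route is also a standard one, but note two things: the object carrying the M-chain $t_0<\cdots<t_{n-1}$ is the lower interval $[\widehat{0},t_{n-1}]$ (the localization), not the contraction $L/t_{n-1}=[t_{n-1},\widehat{1}]$, which has rank $1$; and the step ``adding back $X_n$ multiplies $\chi$ by $(q-a_n)$'' is exactly Stanley's modular-element factorization theorem, which is not in this paper and again rests on the modularity of $t_{n-1}$ (modularity also gives the needed fact that the only $y$ with $y\wedge t_{n-1}=\widehat{0}$ are $\widehat{0}$ and the $a_n$ atoms outside $t_{n-1}$). So either route works, but in both cases the load-bearing, currently unproven statement is the modularity of the M-chain elements; supply that lemma and the NBC version of your proof closes up cleanly.
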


Graphical arrangements are an interesting special case. Say a graph $G$ is \textbf{chordal} if there exists an ordering of the vertices $v_1, \ldots, v_n$ such that for each $i$, the vertices among $\{v_1, \ldots, v_{i-1}\}$ which are connected to $v_i$ form a complete subgraph. An equivalent characterization is that $G$ has no induced cycles of length greater than $3$. 

It is very easy to compute the chromatic polynomial of a chordal graph. Suppose we wish to assign colors from $[q]$ to $v_1, \ldots, v_n$ in order, to get a proper $q$-coloring. If $v_i$ is connected to $a_i$ vertices among $\{v_1, \ldots, v_{i-1}\}$, since these are all connected pairwise, they must have different colors, so there are exactly $q-a_i$ colors available for $b_i$. It follows that $\chi_G(q) = (q-a_1) \cdots (q-a_n)$. The similarity in these formulas is not a coincidence. 
  
\begin{theorem} \cite{f.Stanleysupersolvable}
The intersection lattice of the graphical arrangement ${\mathcal{A}}_G$ is supersolvable if and only if the graph $G$ is chordal. 
\end{theorem}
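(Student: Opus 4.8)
The plan is to prove both directions of the equivalence between supersolvability of the intersection lattice $L_{{\mathcal{A}}_G}$ and chordality of $G$ by maintaining throughout the identification of flats of ${\mathcal{A}}_G$ with set partitions of $[n]$ that are \emph{connected on $G$}: a flat corresponds to the partition of the vertex set into blocks $B_1, \ldots, B_k$ such that each $B_i$ induces a connected subgraph of $G$ and is ``closed'' (no edge of $G$ joins two different blocks into a still-connected set). Under this identification the partial order is refinement, atoms are the partitions with a single non-singleton block $\{i,j\}$ for $ij \in E(G)$, and the join is ``merge blocks and take the connected closure.'' This is just the restriction of the partition lattice $\Pi_n$ to $G$-connected partitions, and it recovers the graphical-arrangement intersection lattice because two coordinate points lie on the same flat exactly when the edges forcing equalities connect them.

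For the direction ``chordal $\Rightarrow$ supersolvable,'' I would argue by induction on $n$ using a \emph{perfect elimination ordering} $v_1, \ldots, v_n$ (reversed from the usual convention, so that each $v_i$ together with its later neighbors forms a clique), or more conveniently add a simplicial vertex $v_n$ last. Let $G'=G \setminus v_n$, which is again chordal, and by induction pick an M-chain $\widehat 0 = t_0 < t_1 < \cdots < t_{n-1} = \widehat 1'$ in $L_{{\mathcal{A}}_{G'}}$. I would then show that appending the flat $\widehat 1 = \widehat 1'$-merged-with-the-clique-neighborhood-of-$v_n$ (i.e.\ the top of $L_{{\mathcal{A}}_G}$) extends this to a maximal chain of $L_{{\mathcal{A}}_G}$, and that this chain is an M-chain: one uses the criterion of Theorem~\ref{f.th:Mcnamara} (the $R$-labeling whose labels on every maximal chain are a permutation of $\{1,\dots,n\}$), with the edge labels read off from the elimination ordering — the label of the covering step that ``captures'' vertex $v_i$ is $i$. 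Verifying the unique-maximal-chain condition for intervals $[s,t]$ reduces, via the block structure, to the fact that within each $G$-connected block the induced graph is still chordal, so the induction applies blockwise. Equivalently, one can invoke Stanley's theorem \cite{f.Stanleysupersolvable} (the theorem immediately preceding this one in the text) in reverse: the chromatic polynomial $\chi_G(q) = (q-a_1)\cdots(q-a_n)$ with $a_i$ the number of earlier neighbors of $v_i$ matches the supersolvable factorization with respect to the candidate M-chain, and one checks the chain is genuinely an M-chain using that each $a_i$-neighborhood is a clique.

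For the direction ``supersolvable $\Rightarrow$ chordal,'' I would prove the contrapositive: if $G$ has an induced cycle $C$ of length $m \ge 4$, I would locate inside $L_{{\mathcal{A}}_G}$ a sublattice isomorphic to the intersection lattice of the graphical arrangement of $C_m$ (the cycle), sitting as $[\widehat 0, F]$ or as an interval between two flats, and show that $L_{{\mathcal{A}}_{C_m}}$ is not supersolvable. The latter is a finite check: $L_{{\mathcal{A}}_{C_m}}$ is a rank-$(m-1)$ geometric lattice with $m$ atoms, and one shows no maximal chain can serve as an M-chain because, by Theorem~\ref{f.th:Mcnamara}, a supersolvable M-chain would force an $R$-labeling with labels a permutation of $\{1, \ldots, m-1\}$ on every maximal chain; counting atoms below the first element $t_1$ (which must be a single atom since $r(t_1)=1$) against the $m-4 \geq 0$ induced non-edges of $C_m$ produces a contradiction with semimodularity — equivalently, supersolvability of the intersection lattice would make $C_m$ chordal by the ``chordal $\Rightarrow$ supersolvable'' converse applied to $C_m$ itself, since for graphs the two properties would then have to agree on induced subgraphs. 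Here I am using that supersolvability of a geometric lattice passes to intervals, and that intervals of $L_{{\mathcal{A}}_G}$ below a flat $F$ are products of graphical intersection lattices of the induced subgraphs on the blocks of $F$, so an induced cycle contributes a non-supersolvable tensor factor.

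The main obstacle I anticipate is the ``supersolvable $\Rightarrow$ chordal'' direction — specifically, cleanly extracting the intersection lattice of an induced cycle as an interval of $L_{{\mathcal{A}}_G}$ and then proving $L_{{\mathcal{A}}_{C_m}}$ is not supersolvable without circular reasoning. One must be careful that an induced cycle yields a \emph{connected} flat whose interval downstairs is exactly $L_{{\mathcal{A}}_{C_m}}$ and not something larger (other edges of $G$ do not interfere precisely because the cycle is induced). The cleanest route is probably to prove directly, by a short combinatorial argument using Theorem~\ref{f.th:Mcnamara}, that $L_{{\mathcal{A}}_{C_4}}$ (four lines in general position-ish configuration, rank $3$) already fails the M-chain condition, and then bootstrap: any induced $C_m$ with $m\ge 4$ contains, after one elimination step on a non-simplicial vertex, a structure forcing an induced $C_{m-1}$ or a $C_4$ obstruction in a contraction, contradicting supersolvability's stability under the relevant poset operations. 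Everything else is bookkeeping with connected partitions and the $R$-labeling criterion already available in the text.
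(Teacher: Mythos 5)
The paper itself does not prove this theorem: it is quoted from Stanley's paper on supersolvable lattices, so there is no internal argument to compare yours against, and your proposal must stand on its own. Judged that way, it has a genuine gap, concentrated in the direction ``supersolvable $\Rightarrow$ chordal.'' Your skeleton is right: an induced cycle $C$ of length $m \geq 4$ gives a flat $F$ whose only non-singleton block is $V(C)$, the interval $[\widehat{0},F]$ is the bond lattice of $C_m$ precisely because $C$ is induced, and supersolvability passes to intervals. But the one fact that makes this a proof --- that the bond lattice of $C_m$, $m \geq 4$, is \emph{not} supersolvable --- is never established: your first suggestion (``supersolvability of the intersection lattice would make $C_m$ chordal by the converse applied to $C_m$ itself'') is exactly the statement being proved, i.e.\ circular, as you half-admit, and the ``atom-count against the $m-4$ non-edges'' and the ``bootstrap through contractions down to $C_4$'' are sketches whose base case is still the missing fact. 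The cleanest repair uses a tool stated immediately before this theorem in the text: for a supersolvable (semimodular) lattice the characteristic polynomial factors as $(q-a_1)\cdots(q-a_r)$ with nonnegative integers $a_i$, whereas the bond lattice of $C_m$ has characteristic polynomial $\left((q-1)^m+(-1)^m(q-1)\right)/q$, which for every $m\geq 4$ has non-real roots (for $m=4$ it is $(q-1)(q^2-3q+3)$). With that, plus interval-closure of supersolvability and the product decomposition $[\widehat{0},F]\cong \prod_i L_{G[B_i]}$ you already invoke, the contrapositive goes through.

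The forward direction is a reasonable sketch of the standard induction on a simplicial vertex, but its crux is asserted rather than verified: you must actually show that the coatom $\{V\setminus v_n \mid v_n\}$ is modular when $v_n$ is simplicial (equivalently, that your candidate chain is genuinely an M-chain, or that your edge-labeling satisfies the quoted $R$-labeling criterion on \emph{every} interval); ``the induction applies blockwise'' is not that verification, and invoking the chromatic-polynomial factorization ``in reverse'' cannot close it, since the factorization is a necessary consequence of supersolvability, not a sufficient criterion. Two smaller slips: the flats of ${\mathcal{A}}_G$ are exactly the partitions of $V$ all of whose blocks induce connected subgraphs --- the extra ``closed'' condition you impose would wrongly exclude most flats (harmless later, but the stated identification is incorrect) --- and you cite a theorem by a label that does not exist in the text; refer to the $R$-labeling criterion of McNamara as stated there instead.
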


\subsection{\textsf{Computing the characteristic polynomial}}\label{f.sec:computechar} 
The results of the previous section, and Theorem \ref{f.th:charpoly} in particular, show  the importance of computing $\chi_{\mathcal{A}}(q)$ for arrangements of interest. In this section we discuss the most common techniques for doing this.

\bigskip

\noindent \textbf{\textsf{Computing the M\"obius function directly.}} In Section \ref{f.sec:computingMobius} we saw many techniques for computing M\"obius functions, and we can use them to compute $\chi_{\mathcal{A}}(x)$.

\begin{enumerate}
\item (Generic arrangement) ${\mathcal{A}}_{n,r}$: \quad $n$ generic hyperplanes in ${\mathbbm{k}}^r$.

Consider a \textbf{generic} arrangement of $n$ hyperplanes in ${\mathbbm{k}}^r$, where any $k \leq r$ hyperplanes have an intersection of codimension $k$. There are ${n \choose m}$ flats of rank $m$, and for each flat $F$ we have $[\widehat{0}, F] \cong 2^{[m]}$, so $\mu(\widehat{0},F) = (-1)^m$. Therefore the characteristic polynomial is
\[
\chi_{{\mathcal{A}}_{n,r}} (x) = \sum_{m=0}^r (-1)^m {n \choose m} x^m
\]
and the number of regions and bounded regions are
\begin{eqnarray*}
a({\mathcal{A}}_{n,r}) &=& {n \choose r} + {n \choose r-1} + \cdots + {n \choose 0}, \\
b({\mathcal{A}}_{n,r}) &=& {n \choose r} - {n \choose r-1} + \cdots \pm {n \choose 0} = {n-1 \choose r}.
%
%\[
%a({\mathcal{A}}_{n,r}) = {n \choose r} + {n \choose r-1} + \cdots + {n \choose 0}, \quad 
%b({\mathcal{A}}_{n,r}) = {n \choose r} - {n \choose r-1} + \cdots \pm {n \choose 0} = {n-1 \choose r}.
%\]
\end{eqnarray*}

\end{enumerate}

This method works in some examples, but when there is a nice formula for the characteristic polynomial, this is usually not  the most efficient technique.

\bigskip

\noindent \textbf{\textsf{The finite field method.}} Theorem \ref{f.th:charpoly}.3 is about arrangements over finite fields, but it may also be used as a powerful technique for computing $\chi_{\mathcal{A}}(x)$ for real or complex arrangements. The idea is simple: most arrangements ${\mathcal{A}}$ we encounter ``in nature" (that is, in mathematics) are given by equations with integer coefficients.

We can use the equations of ${\mathcal{A}}$ to determine an arrangement ${\mathcal{A}}_q$ over a prime $q$. For large enough $q$, the arrangements ${\mathcal{A}}$ and ${\mathcal{A}}_q$ will have the same intersection poset, so
\[
\chi_{\mathcal{A}}(q) = | {\mathbb{F}}_q^n \backslash  \bigcup_{H \in {\mathcal{A}}_q}H|.
\]
We have thus reduced the computation of $\chi_{\mathcal{A}}$ to an enumerative problem in a finite field. By now we are pretty good at counting, and we can solve these problems for many arrangements of interest. 

\bigskip

\begin{enumerate}
\setcounter{enumi}{1}
\item (Coordinate arrangement) $\mathcal{H}_n: \quad x_i = 0  \qquad (1 \leq i \leq n)$

Here $\chi_{\mathcal{H}_n}(q)$ is the number of $n$-tuples $(a_1, \ldots, a_n) \in {\mathbb{F}}_q^n$ with $a_i \neq 0$ for all $i$, so
\[
\chi_{\mathcal{H}_n}(q) = (q-1)^n, \qquad a(\mathcal{H}_n) = 2^n
\]
There is an easy bijective proof for the number of regions: each region $R$ of $\mathcal{H}_n$ is determined by whether $x_i<0$ or $x_i >0$ in $R$.

\item (Braid arrangement) ${\mathcal{A}}_{n-1}: \quad x_i = x_j  \qquad (1 \leq i < j \leq n).$

Here $\chi_{{\mathcal{A}}_{n-1}}(q)$ is the number of $n$-tuples $(a_1, \ldots, a_n) \in {\mathbb{F}}_q^n$ such that $a_i \neq a_j$ for $i \neq j$. Selecting them in order, $a_i$ can be any element of ${\mathbb{F}}_q$ other than $a_1, \ldots, a_{i-1}$, so
\[
\chi_{{\mathcal{A}}_{n-1}}(q) = q(q-1)(q-2)\cdots (q-n+1), \qquad 
a({\mathcal{A}}_{n-1}) = n!
\]
A region is determined by whether $x_i > x_j$ or $x_i < x_j$ for all $i \neq j$; that is, by the relative linear order of $x_1, \ldots, x_n$. This explains why the braid arrangement has $n!$ regions.

%
%
%\item (Graphical arrangement) ${\mathcal{A}}_{G}: \quad x_i = x_j  \qquad (1 \leq i < j \leq n, \quad ij \in G).$
%
%Given a graph $G$ with vertex set $[n]$, and $q$ colors, a \textbf{proper $q$-coloring} assigns a color to each vertex of $G$ so that two vertices $i$ and $j$ which share an edge $ij$ in $G$ must have different colors. The number of proper $q$-colorings of $G$ is given by the \textbf{chromatic polynomial} $\chi_G(q)$.
%
%Note that a proper $q$-coloring is equivalent to a point $\mathbf{a} \in {\mathbb{F}}_q^n$ which is on none of the hyperplanes $x_i = x_j$ for $ij \in G$. 
%Therefore
%\[
%\chi_{{\mathcal{A}}_G(q)} = \chi_G(q).
%\]
%In particular, this proves that $\chi_G(q)$ is indeed given by a polynomial in $q$. Also
%
%Say an orientation of the edges of $G$ is \textbf{acyclic} if it creates no directed cycles. Then
%\[
%a({\mathcal{A}}_G) = a(G)
%\]
%where $a(G)$ is the number of acyclic orientations of $G$. Indeed, given a region $R$ of ${\mathcal{A}}_G$ we give each edge $ij$ the orientation $i \rightarrow j$ if $x_i > x_j$ in $R$, and $i \leftarrow j$ if $x_i < x_j$ in $R$; this is a bijection.
%

\item (Threshold arrangement) $\mathcal{T}_n: \quad x_i + x_j = 0 , \qquad (1 \leq i < j \leq n).$

Let $q$ be an odd prime. We need to count the points $\mathbf{a} \in {\mathbb{F}}_q^n$ on none of the hyperplanes, that is, those satisfying $a_i + a_j \neq 0$ for all $i \neq j$. To specify one such point, we may first choose the partition $[n]=S_0 \sqcup S_1 \sqcup \cdots \sqcup S_{(q-1)/2}$ where $S_i$ is the set of positions $j$ such that $a_j \in \{i, -i\}$. Note that $S_0$ can have at most one element. Then, for each non-empty block $S_i$ with $i>0$ we need to decide whether $a_j = i$ or $a_j = -i$ for all $j \in S_i$. The techniques of Section \ref{f.sec:egfs} then give
\[
\sum_{n \geq 0} \chi_{\mathcal{T}_n}(x) \frac{z^n}{n!} = (1+z)(2e^z-1)^{(x-1)/2}.
\]
The regions of $\mathcal{T}_n$ are in bijection with the \textbf{threshold graphs} on $[n]$. These are the graphs for which there exist vertex weights $w(i)$ for $1 \leq i \leq n$ and a ``threshold" $w$ such that edge $ij$ is present in the graph if and only if $w(i) + w(j) > w.$ Threshold graphs have many interesting properties and applications; see \cite{f.MahadevPeled}. 

%Mahadev, N. V. R.; Peled, Uri N. (1995), Threshold Graphs and Related Topics, Elsevier.
\end{enumerate}

There are many interesting deformations of the braid arrangement, obtained by considering hyperplanes of the form $x_i-x_j=a$ for various constants $a$. 
The left panel of Figure \ref{f.fig:arrangements} shows the braid arrangement ${\mathcal{A}}_2$. This is really an arrangement in ${\mathbb{R}}^3$, but since all hyperplanes contain the line $x=y=z$, we draw its essentialization by intersecting it with the plane $x+y+z=0$. Similarly, the other panels show the Catalan, Shi, Ish, and Linial arrangements, which we now discuss.

\begin{figure}[ht]
 \begin{center}
  \includegraphics[scale=.8]{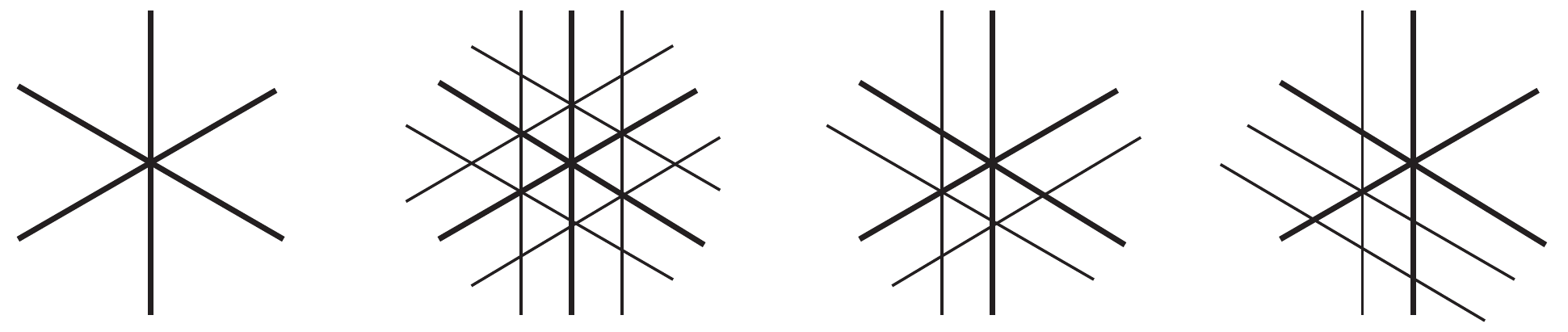}
  \caption{ \label{f.fig:arrangements}
  The arrangements ${\mathcal{A}}_{2}, \mathrm{Cat}_2, \mathrm{Shi}_2,$ and $\mathrm{Ish}_2.$}
  \end{center}
\end{figure}

\begin{enumerate}
\setcounter{enumi}{5}
\item (Catalan arrangement) $\mathrm{Cat}_{n-1}: \quad x_i - x_j = -1, 0, 1  \qquad (1 \leq i < j \leq n).$

To compute $\chi_{\mathrm{Cat}_{n-1}}(q)$, we need to count the $n$-tuples $\mathbf{a}=(a_1, \ldots, a_n) \in {\mathbb{F}}_q^n$ where $a_i$ and $a_j$ are never equal or adjacent modulo $q$. There are $q$ choices for $a_1$, and once we have chosen $a_1$ we can ``unwrap" ${\mathbb{F}}_q - \{a_1\}$ into a linear sequence of $q-1$ dots. The set $A=\{a_2, \ldots, a_n\}$ consists of $n-1$ non-adjacent dots, and choosing them is equivalent to choosing a partition of $q-n$ into $n$ parts, corresponding to the gaps between the dots; there are ${q-n-1 \choose n-1}$ choices. Finally there are $(n-1)!$ ways to place $A$ in a linear order in $\mathbf{a}$. Therefore 
\[
\chi_{\mathrm{Cat}_{n-1}}(q) = q(q-n-1)(q-n-2)\cdots (q-2n+1)
\]
\[
a(\mathrm{Cat}_{n-1}) = n! C_n, \qquad b(\mathrm{Cat}_{n-1}) = n! C_{n-1},
\]
where $C_n$ is the $n$th Catalan number. It is not too difficult to show bijectively that each region of the braid arrangement ${\mathcal{A}}_{n-1}$ contains $C_n$ regions of the Catalan arrangement, $C_{n-1}$ of which are bounded. \cite[Section 5.4]{f.Stanleyhyparr}

\item (Shi arrangement) $\textrm{Shi}_{n-1}: \quad x_i - x_j  =0, 1  \qquad (1 \leq i < j \leq n).$

Consider a point $\mathbf{a}=(a_1, \ldots, a_n) \in {\mathbb{F}}_q^n$ not on any of the Shi hyperplanes. Consider dots $0, 1, \ldots, q-1$ around a circle, and mark dot $a_i$ with the number $i$ for $1 \leq i \leq n$. Mark the remaining dots $\bullet$. Now let $w$ be the word of length $q$ obtained by reading the labels clockwise, starting at the label $1$. 
 %Now write a word $w$ of length $q$ by starting at the dot marked $1$ and writing the marking of each dot, or $\bullet$ if the dot is unmarked. 
 Note that each block of consecutive numbers must be listed in increasing order. 
By recording the sets between adjacent $\bullet$s, and dropping the initial $1$, we obtained an ordered partition $\Pi$ of $\{2, 3, \ldots, n\}$ into $q-n$ parts. There are $(q-n)^{n-1}$ such partitions. 
Figure \ref{f.fig:Shi} shows an example of this construction.

\begin{figure}[ht]
 \begin{center}
  \includegraphics[scale=.8]{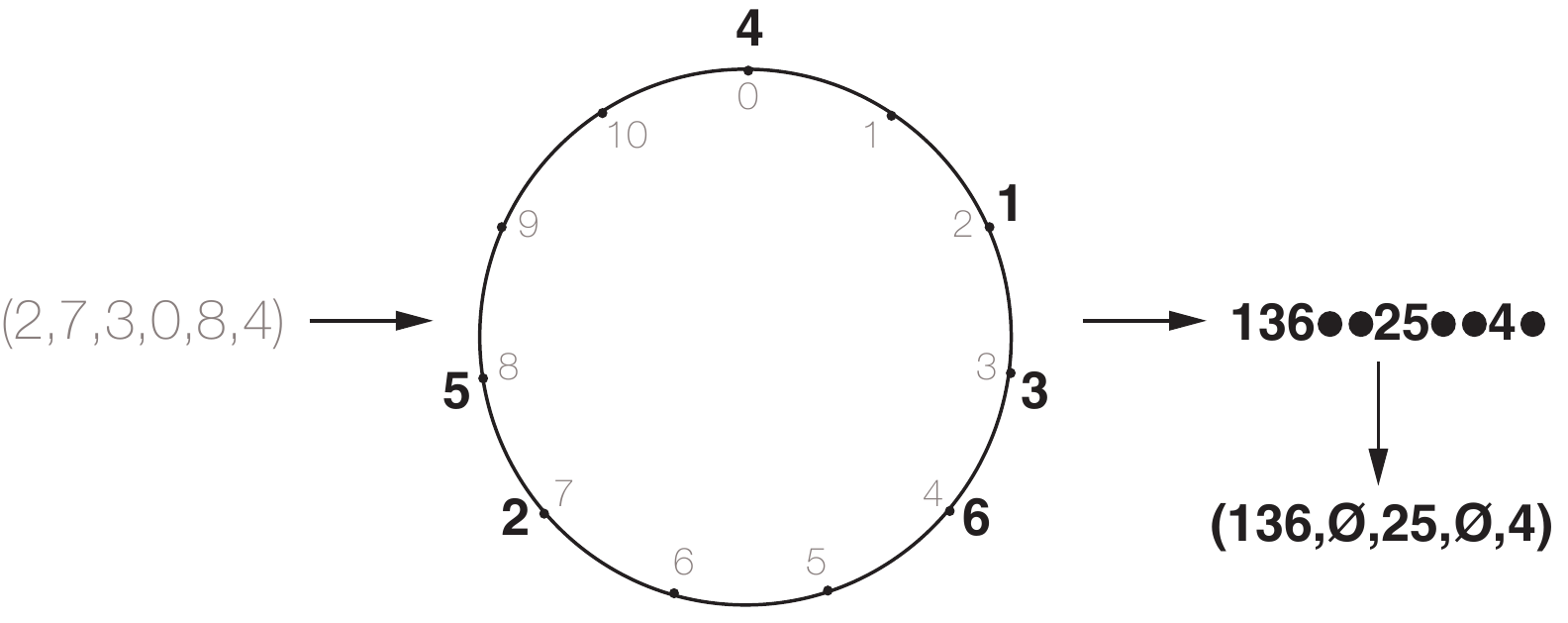}
  \caption{ \label{f.fig:Shi}
$\mathbf{a} = (2,7,3,0,8,4) \in {\mathbb{F}}_{11}^6$,\,\,  $w=136 \bullet \bullet  \,25 \bullet\bullet \,4 \,\bullet$,  \,\,$\Pi=(\{3,6\}, \emptyset, \{2,5\}, \emptyset, \{4\})$.}
  \end{center}
\end{figure}

To recover $a$ from this partition, we only need to know where to put marking $1$ on the circle; that is, we need to know $a_1$. It follows that
\[
\chi_{\mathrm{Shi}_{n-1}}(q) = q(q-n)^{n-1}
\]
\[
a(\mathrm{Shi}_{n-1}) = (n+1)^{n-1}, \qquad b(\mathrm{Shi}_{n-1}) = (n-1)^{n-1}
\]

\item (Ish arrangement) $\mathrm{Ish}_{n-1}: \quad x_i = x_j , \quad  x_1-x_j = i\qquad (1 \leq i < j \leq n).$

To choose a point $\mathbf{a}$ in the complement of the Ish arrangement over ${\mathbb{F}}_q$, choose $a_1 \in {\mathbb{F}}_q$, and then choose $a_n, a_{n-1}, \ldots, a_2$ subsequently. At each step we need to choose $a_i \notin \{a_1, a_1-1, \ldots, a_1-i+1, a_n, a_{n-1}, \ldots, a_{i+1}\}$. These $n$ forbidden values are distinct, so
\[
\chi_{\mathrm{Ish}_{n-1}}(q) = q(q-n)^{n-1},
\]
\[
a(\mathrm{Ish}_{n-1}) = (n+1)^{n-1}, \qquad b(\mathrm{Ish}_{n-1}) = (n-1)^{n-1}.
\]

The Shi and Ish arrangements share several features, which are generally easier to verify for the Ish arrangement. Together, they give a nice description of the ``$q,t$-Catalan numbers". \cite{f.Armstrong}

\item (Linial arrangement) $\mathcal{L}_{n-1}: \quad x_i - x_j = 1 , \qquad (1 \leq i < j \leq n).$

The Linial arrangement has characteristic polynomial
\[
\chi_{\mathcal{L}_{n-1}}(q) = \frac{q}{2^{n}} \sum_{k=0}^n {n \choose k} (q-k)^{n-1}
\]
The number of regions of $\mathcal{L}_{n-1}$ equals the number of \textbf{alternating trees} with vertex set $[n+1]$, where each vertex is either smaller than all its neighbors or greater than all its neighbors. There are two different proofs, using Whitney's Theorem \cite{f.PostnikovStanley} and the finite field method \cite{f.AthanasiadisLinial} respectively. Both are somewhat indirect, combining  combinatorial tricks and algebraic manipulations. To date, there is no known bijection between the regions of the Linial arrangement and alternating trees.

%Deformations of Coxeter Hyperplane Arrangements1 Alexander Postnikov2 and Richard P. Stanley3

%ATHANASIADIS
%EXTENDED LINIAL HYPERPLANE ARRANGEMENTS FOR ROOT SYSTEMS AND A CONJECTURE OF POSTNIKOV AND STANLEY

\end{enumerate}

Here are a few other nice examples.

\begin{enumerate}

\item[10.] (Coxeter arrangement) $\mathcal{BC}_{n}: \quad x_i \pm x_j = 0, \quad x_i = 0  \qquad (1 \leq i < j \leq n).$

To specify an $n$-tuple $(a_1, \ldots, a_n) \in {\mathbb{F}}_q^n \backslash \mathcal{BC}_{n}$, we can
%We need to count the $n$-tuples $(a_1, \ldots, a_n) \in {\mathbb{F}}_q^n$ with $a_i \neq a_j$ for all $i \neq j$. If we 
choose $a_1, \ldots, a_n$ successively. We find there are $q-2i+1$ choices for $a_i$, namely, any number other than $0, \pm a_1, \ldots, \pm a_{i-1}$ (which are all distinct). Therefore
\[
\chi_{\mathcal{BC}_{n}}(q) = (q-1)(q-3)\cdots (q-2n+3)(q-2n+1), \qquad 
r(\mathcal{BC}_{n}) = 2^n \cdot n!.
\]

\item[11.] (Coxeter arrangement) $\mathcal{D}_{n}: \quad x_i \pm x_j = 0 \qquad (1 \leq i < j \leq n).$

Here there are $(q-1)(q-3)(q-5) \cdots (1-2n+3)(q-2n+1)$ $n$-tuples in ${\mathbb{F}}_q^n \backslash \mathcal{D}_{n}$
with no $a_i$ equal to $0$, and $n(q-1)(q-3) \cdots (q-2n+3))$ $n$-tuples with one (and necessarily only one) $a_i$ equal to $0$. Therefore
\[
\chi_{\mathcal{D}_{n}}(q) = (q-1)(q-3)\cdots (q-2n+3)(q-n+1), \qquad 
r(\mathcal{D}_{n}) = 2^{n-1} \cdot n!
\]

\item[12.] (Finite projective space) ${\mathcal{A}}(p,n)$: all linear hyperplanes in ${\mathbb{F}}_p^n$

The equations $\sum_i c_i x_i=0$ (where $c_i \in {\mathbb{F}}_p$) define an arrangement over ${\mathbb{F}}_q$ where $q=p^k$, which has the same intersection poset. Recall that ${\mathbb{F}}_q$ is a $k$-dimensional vector space over ${\mathbb{F}}_p$. Now let us count the points $(a_1, \ldots, a_n)$ in ${\mathbb{F}}_q$ which are not on any hyperplane.We choose $a_1, \ldots, a_n$ subsequently and at each step we need $a_i \notin \textrm{span}_{{\mathbb{F}}_p}(a_1, \ldots, a_{n-1})$. Therefore
\[
\chi_{{\mathcal{A}}(p,n)}(q) = (q-1)(q-p)(q-p^2) \cdots (q-p^{n-1}).
\]
See \cite{f.BaranyReiner, f.Mphako}.

\item[13.] (All-subset arrangement) $\textrm{All}_{n}: \quad \sum_{i \in A} x_i = 0 \qquad (A \subseteq [n], \,\, A \neq \emptyset, [n]).$

This arrangement appears naturally in combinatorics, representation theory, algebraic geometry, and physics, among many other contexts. 
% \cite{f.Billeraetal} %, f.Cavalierietal, f.Evans, f.Kamiyaetal}
To date, we do not know a simple formula for the characteristic polynomial, though we do have a nice bound. \cite{f.Billeraetal}. 
 
 The reduction of  
$\textrm{All}_{n}$ modulo $2$ is the arrangement ${\mathcal{A}}(2,n)$ in ${\mathbb{F}}_2^n$ considered above. The map from $\textrm{All}_{n}$ to ${\mathcal{A}}(2,n)$ changes the combinatorics; for example, the subset of hyperplanes $x_1+x_2=0, x_2+x_3=0, x_3+x_1=0$ decreases from rank $3$ to rank $2$. However, this map is a rank-preserving \emph{weak map}, in the sense that the rank of a subset never increases, and the total rank stays the same. This implies \cite[Cor. 9.3.7]{f.KungNguyen}
 %Joseph P.S. Kung and Hien Q. Nguyen. Weak maps. Chapter 9 in Neil White, editor, Theory of Matroids, pages 254?271. Encyclopedia of Mathematics and it Applications, Vol. 26, Cambridge University Press, Cambridge, 1986.
that the coefficients of $\chi_{\textrm{All}_{n}}(q)$ are greater than the respective coefficients of $\chi_{{\mathcal{A}}(2,n)}(q)$ in absolute value, so
\[
r(\textrm{All}_{n}) = |\chi_{\textrm{All}_{n}}(-1)| >  |\chi_{{\mathcal{A}}(2,n)}(-1)| = \prod_{i=0}^{n-2}(2^i + 1) > 2^{n-1 \choose 2}
\]
It is also known that $r(\textrm{All}_{n}) < 2^{(n-1)^2}$; see \cite{f.Billeraetal}.

%Hidehiko Kamiya, Akimichi Takemura, and Hiroaki Terao. Ranking patterns of unfolding models of codimension one. Adv. in Appl. Math., 47(2), 379?400, 2011.

% Wall Crossings for Double Hurwitz Numbers (with Hannah Markwig and Paul Johnson) In ?Advances in Mathematics?

%T. S. Evans. What is being calculated with Thermal Field Theory? In A. Astbury, B. A. Campbell, W. Israel, F. C. Khanna, D. Page, and J. L. Pinfold, editors, Particle Physics and Cosmology - Proceedings of the Ninth Lake Louise Winter Institute, pages 343?352. World Scientific, 1995.

\end{enumerate}

\bigskip

\noindent \textbf{\textsf{Whitney's formula.}} We can sometimes identify combinatorially the terms in Whitney's Theorem \ref{f.th:Whitney} to obtain a useful formula for the characteristic polynomial. 

\begin{enumerate}
\setcounter{enumi}{13}
\item (Coordinate arrangement) $\mathcal{H}_n: \quad x_i = 0  \qquad (1 \leq i \leq n)$

Whitney's formula gives
\[
\chi_{\mathcal{H}_n}(q) = \sum_{A \subseteq [n]} (-1)^{|A|} q^{n-{|A|}} = (q-1)^n.
\]

\item (Generic deformation of ${\mathcal{A}}_n$) $\mathcal{G}_n: \quad x_i -x_j = a_{ij}  \qquad (1 \leq i<j  \leq n, \quad a_{ij} \textrm{ generic })$

Let $H_{ij}$ represent the hyperplane $x_i-x_j = a_{ij}$. By the genericity of the  $a_{ij}$s, a subarrangement $H_{i_1j_1}, \ldots, H_{i_kj_k}$ is central if and only if the graph with edges $i_1j_1, \ldots, i_nj_n$ has no cycles; that is, it is a forest. Therefore
\[
\chi_{\mathcal{G}_n}(q) = \sum_{F \textrm{ forest on } [n]} (-1)^{|F|} q^{n-{|F|}}, \qquad 
r(\mathcal{G}_n) = \textrm{forests}(n)
\]
where forests$(n)$ is the number of forests on vertex set $[n]$. There is no simple formula for this number, though we can use the techniques of Section \ref{f.sec:egfs} to compute the exponential generating function for $\chi_{\mathcal{G}_n}(q)$.

\item (Other deformations of ${\mathcal{A}}_n$) The same approach can work for any arrangement consisting of hyperplanes of the form $x_i -x_j = a_{ij}$, which correspond to edges marked $a_{ij}$. If we have enough control over the $a_{ij}$s to describe combinatorially which subarrangements are central, we will obtain a combinatorial formula for the characteristic polynomial. 

For simple arrangements like the Shi and Catalan arrangement, the finite field method gives slicker proofs than Whitney's formula. However, this unified approach is also very powerful; for many interesting examples, see \cite{f.PostnikovStanley, f.ArdilaTutte}.

\end{enumerate}

\bigskip
\noindent \textbf{\textsf{Freeness.}} Terao's Factorization Theorem \ref{f.th:Terao} is a powerful algebraic technique for computing characteristic polynomials which factor completely into linear factors.

\begin{enumerate}
\setcounter{enumi}{16}
\item (Braid arrangement, revisited) It is not difficult to describe the ${\mathcal{A}}_{n-1}$-derivations for the braid arrangement ${\mathcal{A}}_{n-1}$. 
Note that $F_d=x_1^d\frac{\partial}{\partial x_1} + \cdots + x_n^d \frac{\partial}{\partial x_n}$ is an ${\mathcal{A}}_{n-1}$-derivation for $d=0, 1, \ldots, n-1$ because $F_d(x_i-x_j) = x_i^d - x_j^d$. By Saito's criterion, since
\[
\det (F_i(x_j))_{0 \leq i, j \leq n-1} = \det (x_j^i)_{1 \leq i, j \leq n} = \prod_{1 \leq i < j \leq n} (x_i-x_j) =  Q_{\mathcal{A}}, 
\]
Der$({\mathcal{A}})$ is free with basis $F_0, \ldots, F_{n-1}$ of degrees $-1, 0, 1, \ldots, n-2$. Therefore
\[
\chi_{{\mathcal{A}}_n}(q) = q(q-1)\cdots (q-n+1).
\]

\item (Coxeter arrangements $\mathcal{BC}_{n}$ and $\mathcal{D}_{n}$, revisited) 

%For $\mathcal{D}_{n}$, the function $F_d$ above satisfies $F_d(x_i+x_j) = x_i^d + x_j^d$, so it is a $\mathcal{D}_n$-derivation only for $n$ odd. 

For $\mathcal{D}_{n}$, the function $F_d$ above satisfies $F_d(x_i+x_j) = x_i^d + x_j^d$, so it is a $\mathcal{D}_{n}$-derivation only for $d$ odd. Fortunately,
\[
\det(F_{2i-1}(x_j))_{1 \leq i, j, \leq n} = \prod_{1 \leq i < j \leq n}(x_i-x_j)(x_i+x_j),
\]
as can be seen by identifying linear factors as in Section \ref{f.sec:dets2}. It follows that  Der$(\mathcal{D}_n)$ is free with basis $F_1, F_3, F_5, \ldots, F_{2n-1}$ and degrees $0, 2, 4, \ldots, 2n-2$.

For $\mathcal{BC}_n$ we need the additional derivation $F = x_1 \cdots x_n \sum_{i} \frac1{x_i} \frac{\partial}{\partial x_i}$. This is indeed a $\mathcal{BC}_n$-derivation because $F(x_i\pm x_j) = (x_j \pm x_i) x_1 \cdots x_n/x_ix_j$. Once again it is easy to check Saito's criterion to see that Der$(\mathcal{BC}_n)$ is free with basis $F_1, F_3, \ldots, F_{2n-3}, F$ and degrees $0, 2, 4, \cdots, 2n-4, n-2$.

\item (Coxeter arrangements, in general) A \textbf{finite reflection group} is a finite group generated by reflections through a family of hyperplanes. For example the reflections across the hyperplanes $x_i=x_j$ of the braid arrangement ${\mathcal{A}}_{n-1}$ correspond to the transpositions $(ij)$, and generate the symmetric group $S_n$. Every finite reflection group is a direct product of irreducible ones. Here we focus on the \textbf{crystallographic} ones, which can be written with integer coordinates. The irreducible crystallographic finite reflection groups $\Phi$ come in three infinite families $A_n, BC_n, D_n$ and five exceptional groups $G_2, F_4, E_6, E_7, E_8$.  
The corresponding hyperplane arrangements ${\mathcal{A}}_\Phi$  are the following.

\begin{center}
\begin{tabular}{|c|c|l|}
\hline
$\Phi$ & ${\mathbbm{k}}^n$ & equations \\
\hline
$\mathcal{A}_{n-1}$ & ${\mathbbm{k}}^n$ & $x_i = x_j $ \\
$\mathcal{BC}_n$ & ${\mathbbm{k}}^n$ & $x_i = \pm x_j , \quad x_i = 0$ \\
$\mathcal{D}_n$ & ${\mathbbm{k}}^n$  & $x_i = \pm x_j $ \\
$\mathcal{G}_2$ & ${\mathbbm{k}}^3$ & $x_i = \pm x_j,  \quad  2x_i = x_j+x_k$ \\
$\mathcal{F}_4$ & ${\mathbbm{k}}^4$ & $x_i = \pm x_j,  \quad  x_i = 0,  \quad \displaystyle  \pm x_1 \pm x_2 \pm x_3 \pm x_4 = 0$.\\% \sum_{i=1}^4 \pm x_i = 0$. \\
$\mathcal{E}_6$ & ${\mathbbm{k}}^9$ & $x_i = x_j, \quad \displaystyle \sum_{i \in \{a,b,c\}} 2x_i = \sum_{i \notin \{a,b,c\}} x_i \quad  % (1 \leq a \leq 3,\,\, 4 \leq b \leq 6,\,\, 7 \leq c \leq 9) $. \\
\left( \substack{1 \leq a \leq 3 \\ 4 \leq b \leq 6 \\ 7 \leq c \leq 9}\right)$. \\
$\mathcal{E}_7$ & ${\mathbbm{k}}^8$ & $x_i = x_j ,\quad \displaystyle \sum_{i \in A} x_i = \sum_{i \notin A} x_i \quad ( \substack{A \subseteq [8] \\ |A|=4})$ \\
$\mathcal{E}_8$ & ${\mathbbm{k}}^8$ & $x_i = \pm x_j, \quad \displaystyle  \sum_{i=1}^8  \epsilon_i x_i = 0 \quad \left( \substack{\epsilon_i = \pm1, \\ \epsilon_1 \cdots \epsilon_8 = 1} \right)$\\
\hline
\end{tabular}
\end{center}

Note that arrangement $\Phi_r$ has rank $r$, but we have embedded some of them them in ${\mathbbm{k}}^d$ for higher $d$ in order to obtain nicer equations. Here the indices range over the corresponding ${\mathbbm{k}}^n$; for instance, $\mathcal{G}_2$ includes the hyperplanes $2x_1 = x_2+x_3, \, 2x_2 = x_1 + x_3, \, 2x_3 = x_1 + x_2$.
We now list the characteristic polynomials and number of regions for these arrangements.

\bigskip

\begin{tabular}{|c|l|l|}
\hline
$\Phi$ & characteristic polynomial & regions \\
\hline
$\mathcal{BC}_n$ & $(q-1)(q-3)(q-5) \cdots (q-2n+3)(q-2n+1$) &  $2^nn!$ \\
$\mathcal{D}_n$ & $(q-1)(q-3)(q-5) \cdots (q-2n+3)(q-n+1)$ & $2^{n-1}n!$ \\
$\mathcal{G}_2$ & $(q-1)(q-5)$ & $12$ \\
$\mathcal{F}_4$ & $(q-1)(q-5)(q-7)(q-11)$ & $1,152$ \\
$\mathcal{E}_6$ & $(q-1)(q-4)(q-5)(q-7)(q-8)(q-11)$ & $51,840$ \\
$\mathcal{E}_7$ & $(q-1)(q-5)(q-7)(q-9)(q-11)(q-13)(q-17)$ & $2,903,040$ \\
$\mathcal{E}_8$ & $(q-1)(q-7)(q-11)(q-13)(q-17)(q-19)(q-23)(q-29)$ &  $696,729,600$ \\
\hline
\end{tabular}

\bigskip

It follows from the theory of reflection groups that the number of regions equals the order of the reflection group:
\[
r({\mathcal{A}}_\Phi) = |\Phi|.
\]

We already computed in two ways the characteristic polynomials of the classical Coxeter groups $\mathcal{A}_n, \mathcal{BC}_n,$ and $\mathcal{D}_n$.
We first gave simple finite field proofs; it would be interesting to do this for the other (``exceptional") groups. We also computed explicit bases for the modules of ${\mathcal{A}}$-derivations; doing this for the other Coxeter groups is not straightforward, but it can be done. \cite[Appendix B]{f.OrlikTerao} 

However, these beautiful formulas clearly illustrate that there are deeper things at play, and they make it very desirable to have a unified explanation. Indeed, invariant theory produces an elegant case-free proof that all Coxeter arrangement are free; for details, see \cite[Theorem 6.60]{f.OrlikTerao}.

\item (Shi arrangement) Given its characteristic polynomial, it is natural to guess that the Shi arrangement $\mathcal{S}_{n-1}$ is free. Since we only defined freeness for central arrangements, we consider the cone $c\mathcal{S}_{n-1}$ instead. Following \cite{f.Athanasiadisfree}, we may prove that this is a free arrangement inductively using Theorem \ref{f.th:freeind}. The strategy is to remove the hyperplanes of $\mathcal{S}_{n-1}$ one at a time, by finding in each step a hyperplane $H$ such that ${\mathcal{A}} \backslash H$ and $A / H$ are free with predictable exponents. This leads us to consider a more general family of arrangements.

We claim that for any $m \geq 0$ and $2 \leq k \leq n+1$, the arrangement $\mathcal{S}_{n-1}^{m,k}$
\begin{eqnarray*}
x_i - x_j &=& 0, 1 \qquad \textrm{ for }  2 \leq i < j \leq n \\
x_1 - x_j &=& 0, 1, \ldots, m \qquad \textrm{ for }  2 \leq j < k \\ 
x_1 - x_j &=& 0, 1, \ldots, m, m+1  \qquad \textrm{ for }  k \leq j \leq n
\end{eqnarray*}
is free with exponents $n+m-1, \ldots, n+m-1$ ($n-k+1$ times)
, $n+m-2, \ldots, n+m -2$ ($k-2$ times), and $-1$.

Obviously it takes some care to assemble this family of free arrangements and their exponents. However, once we have done this, it is straightforward to prove this more general statement by induction using Theorem \ref{f.th:freeind}, by removing one hyperplane at a time while staying within the family of arrangements $\mathcal{S}_{n-1}^{m,k}$. Setting $k=2$ and $m=0$ we get that the Shi arrangement is free. 

Incidentally, Athanasiadis also characterized all the arrangements ${\mathcal{A}}$ with ${\mathcal{A}}_{n-1} \subseteq {\mathcal{A}} \subseteq \mathcal{S}_{n-1}$ such that $c{\mathcal{A}}$ is free \cite{f.Athanasiadisfree}. These arrangements have Ish counterparts which have the same characteristic polynomial, although their cones are not free.
\cite[Corollary 3.3]{f.ArmstrongRhoades}

\end{enumerate}

\subsection{\textsf{The ${\mathbf{c}}{\mathbf{d}}$-index of an arrangement}} 
\label{f.sec:arrcdindex} 

We now discuss a vast strengthening of Zaslavsky's Theorem \ref{f.th:charpoly}.1
due to Billera, Ehrenborg, and Readdy. \cite{f.BilleraEhrenborgReaddy} They showed that the enumeration of chains of faces of a real hyperplane arrangement ${\mathcal{A}}$ (or, equivalently, of a zonotope) depends only on the intersection poset of $L_{\mathcal{A}}$. The equivalence between real arrangements and zonotopes is illustrated in Figure \ref{f.fig:vectorshypszonotope} and explained by the following result.

\begin{proposition}
Let $A$ be an arrangement of non-zero real vectors, and let ${\mathcal{A}}$ be the arrangement of hyperplanes perpendicular to the vectors of $A$. Then there is an order-reversing bijection between the faces of the zonotope $Z(A)$ and the faces of the real arrangement ${\mathcal{A}}$.
\end{proposition}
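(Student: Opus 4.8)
The plan is to index the faces on both sides by the same combinatorial object — the covectors (realizable sign vectors) of the vector configuration $A=\{a_1,\dots,a_k\}$ — and then show that the two face-poset orders agree up to reversal. Equivalently, this is the statement that the normal fan of $Z(A)$ is the fan cut out by ${\mathcal{A}}$, combined with the standard order-reversing bijection between faces of a polytope and cones of its normal fan; I would, however, argue it directly with sign vectors so that the write-up is self-contained.

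First I would parametrize the faces of the zonotope. For a linear functional $c\in(\mathbb{R}^n)^*$, the face $Z(A)_c$ on which $c\cdot x$ is maximized over $Z(A)=[0,a_1]+\cdots+[0,a_k]$ decomposes summand by summand: over $[0,a_i]$ the functional is maximized at $a_i$, at $0$, or on the whole segment according as $c\cdot a_i$ is positive, negative, or zero, so
\[
Z(A)_c=\sum_{c\cdot a_i>0}a_i+\sum_{c\cdot a_i=0}[0,a_i].
\]
Hence $Z(A)_c$ depends only on the sign vector $\sigma(c):=(\mathrm{sign}(c\cdot a_i))_{i=1}^k\in\{+,-,0\}^k$, distinct sign vectors give distinct faces, and every face of $Z(A)$ arises this way; so the faces of $Z(A)$ are in bijection with the set $\mathcal{V}(A)$ of sign vectors realized by some $c$. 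On the arrangement side, identifying $\mathbb{R}^n$ with its dual, the hyperplane perpendicular to $a_i$ is $H_i=\{x:a_i\cdot x=0\}$, and the relatively open faces of ${\mathcal{A}}$ are exactly the nonempty sets $F_\sigma:=\{x:\mathrm{sign}(a_i\cdot x)=\sigma_i\text{ for all }i\}$ for $\sigma$ ranging over the same $\mathcal{V}(A)$ — this is precisely the stratification of $\mathbb{R}^n$ into arrangement faces used in the proof of Theorem~\ref{f.th:charpoly}. Composing the two bijections gives the desired bijection $\Phi\colon Z(A)_c\mapsto F_{\sigma(c)}$.

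It then remains to check that $\Phi$ reverses order. On the arrangement side, $F_\sigma$ lies in the closure of $F_\tau$ exactly when $\sigma_i\in\{0,\tau_i\}$ for all $i$, i.e. when $\sigma\preceq\tau$ in the conformal order (zeroing out coordinates moves you down); this is the standard description of incidences among the cells of a real arrangement. On the zonotope side, reading off the summand formula above, $Z(A)_c$ is a face of $Z(A)_{c'}$ exactly when the zeros of $\sigma(c')$ are among the zeros of $\sigma(c)$ and $\sigma(c)$ agrees with $\sigma(c')$ wherever $\sigma(c')\ne 0$, i.e. when $\sigma(c')\preceq\sigma(c)$. Therefore $Z(A)_c\le Z(A)_{c'}$ in the face lattice of $Z(A)$ iff $\sigma(c')\preceq\sigma(c)$ iff $F_{\sigma(c)}\le F_{\sigma(c')}$ in the face poset of ${\mathcal{A}}$, which is exactly the claimed order-reversal.

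The main obstacle is this order comparison, and specifically getting the closure relations right: on the arrangement side one must invoke (and perhaps briefly justify) that incidences among cells are governed by the conformal order on covectors, and on the zonotope side one must carefully verify the "face of a face" computation from the summand formula. A minor bookkeeping point is the case where $A$ fails to span $\mathbb{R}^n$: then the zero covector $\sigma\equiv 0$ corresponds to the full zonotope $Z(A)$ on one side and to the minimal flat $\bigcap_i H_i$ on the other, and one should either pass to $\mathrm{span}(A)$ and the essentialization $\mathrm{ess}({\mathcal{A}})$ or simply allow these as the respective top/bottom elements; in either case the bijection and its order-reversal are unaffected.
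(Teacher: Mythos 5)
Your overall strategy is correct and is the standard (and surely intended) argument: the paper states this proposition without proof, and the usual justification is exactly yours — index both posets by the realizable sign vectors of $A$, using the summand-by-summand formula $Z(A)_c=\sum_{c\cdot a_i>0}a_i+\sum_{c\cdot a_i=0}[0,a_i]$ on the zonotope side and the stratification of ${\mathbb{R}}^d$ into cells $F_\sigma$ on the arrangement side, and then observe that one order is conformal refinement while the other is conformal coarsening. The ingredients you list are all right (the injectivity of $\sigma\mapsto Z(A)_\sigma$, which you assert, does follow easily: the zero set of $\sigma$ is recovered from the affine hull of the face, and then $\sum_{\sigma_i=+}a_i$ is determined modulo that span), and your handling of the non-spanning case is fine; with the paper's convention that $\emptyset$ is a face of a polytope you should also either exclude it or adjoin a formal top element to the face poset of ${\mathcal{A}}$.

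However, fix two directional slips in your final paragraph, which as written undercut the conclusion. First, the verbal criterion for $Z(A)_c$ to be a face of $Z(A)_{c'}$ should say that the zeros of $\sigma(c)$ are among the zeros of $\sigma(c')$ (the smaller face has fewer zero coordinates), not the reverse; as stated, your two conditions taken together would force $\sigma(c)=\sigma(c')$. The formal condition you then write, $\sigma(c')\preceq\sigma(c)$, is the correct one. Second, in the concluding chain the last equivalence must read $\sigma(c')\preceq\sigma(c)$ if and only if $F_{\sigma(c')}\le F_{\sigma(c)}$ — this is what your own closure criterion ($F_\sigma\subseteq\overline{F_\tau}$ iff $\sigma\preceq\tau$) gives — so that $Z(A)_c\le Z(A)_{c'}$ holds iff $\Phi(Z(A)_{c'})\le \Phi(Z(A)_c)$, which is order reversal. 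As literally written, with $F_{\sigma(c)}\le F_{\sigma(c')}$, your displayed biconditional asserts that $\Phi$ is order \emph{preserving} (and is inconsistent with the closure criterion you stated two sentences earlier). With these two swaps corrected, the proof is complete.
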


\begin{figure}[ht]
 \begin{center}
   \includegraphics[height=2.5cm]{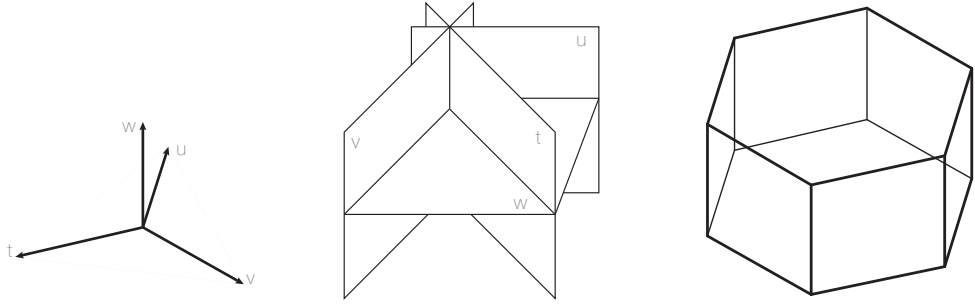} \qquad  
  \includegraphics[height=4cm]{./Pictures/arrangement} \qquad   \quad 
  \includegraphics[height=4cm]{./Pictures/hexagonalprism}
  \caption{ \label{f.fig:vectorshypszonotope}
A real vector arrangement, the normal hyperplane arrangement, and its dual zonotope.}
  \end{center}
\end{figure}

From this result, it follows easily that the ${\mathbf{c}}{\mathbf{d}}$-indices of the zonotope $Z(A)$ and of the face poset $F({\mathcal{A}})$ of ${\mathcal{A}}$ are reverses of each other; that is, they are related by the linear map $^*: \mathbb{Z}\langle{\mathbf{c}},{\mathbf{d}}\rangle \rightarrow \mathbb{Z}\langle{\mathbf{c}},{\mathbf{d}}\rangle$ that reverses any word in ${\mathbf{c}}$ and ${\mathbf{d}}$, so that $(v_1\ldots v_k)^* = v_k \ldots v_1$. 
To describe these ${\mathbf{c}}{\mathbf{d}}$-indices, we introduce the linear map  $\omega: \mathbb{Z}\langle{\mathbf{a}},{\mathbf{b}}\rangle \rightarrow \mathbb{Z}\langle{\mathbf{c}},{\mathbf{d}}\rangle$ obtained by first replacing each occurrence of ${\mathbf{a}}{\mathbf{b}}$ with $2{\mathbf{d}}$, and then replacing every remaining letter with a ${\mathbf{c}}$. 

\begin{theorem} \label{f.th:cd-indexzonotope} \cite{f.BilleraEhrenborgReaddy}
The flag $f$-vector of a real hyperplane arrangement (or equivalently, of the dual zonotope) depends only on its intersection poset. 

More explicitly, let ${\mathcal{A}}$ be a real hyperplane arrangement. Let $F({\mathcal{A}})$ be its face poset and $L_{{\mathcal{A}}}$ be its intersection poset. The ${\mathbf{c}}{\mathbf{d}}$-indices of the 
face poset $F({\mathcal{A}})$ and the dual zonotope $Z(A)$ are given in terms of the ${\mathbf{a}}{\mathbf{b}}$-index of the intersection lattice $L_{\mathcal{A}}$ by the formula:
\[
\Psi_{F({\mathcal{A}})}({\mathbf{c}}, {\mathbf{d}}) = \left[\omega({\mathbf{a}}  \, \Phi_{L_{\mathcal{A}}}({\mathbf{a}},{\mathbf{b}}))\right]^*, \qquad 
\Psi_{Z(A)}({\mathbf{c}}, {\mathbf{d}}) = \omega({\mathbf{a}}  \, \Phi_{L_{\mathcal{A}}}({\mathbf{a}},{\mathbf{b}})), \qquad 
\]
\end{theorem}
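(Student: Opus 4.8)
The plan is to establish the two formulas in Theorem \ref{f.th:cd-indexzonotope} by reducing everything to a statement about the flag $f$-vector (equivalently the ${\mathbf{a}}{\mathbf{b}}$-index) of the face poset $F({\mathcal{A}})$, and then showing that this quantity is determined by, and can be extracted explicitly from, the ${\mathbf{a}}{\mathbf{b}}$-index of the intersection poset $L_{\mathcal{A}}$. The preceding proposition gives an order-reversing bijection between the faces of the zonotope $Z(A)$ and the faces of ${\mathcal{A}}$, so the two displayed identities are equivalent under the reversal map $^*$ on $\mathbb{Z}\langle{\mathbf{c}},{\mathbf{d}}\rangle$; hence it suffices to prove the one for $\Psi_{F({\mathcal{A}})}$. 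Since the ${\mathbf{c}}{\mathbf{d}}$-index (Theorem \ref{f.th:cd-index}, applied to the Eulerian poset $\widehat{F({\mathcal{A}})}$ obtained by adjoining a top element, which is the face lattice of the zonotope) is a repackaging of the flag $f$-vector, the heart of the matter is a chain-counting identity: I want to express the number of flags of faces of ${\mathcal{A}}$ with prescribed dimensions in terms of chain data in $L_{\mathcal{A}}$.

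First I would set up the combinatorial dictionary between faces of ${\mathcal{A}}$ and the intersection poset. A face of ${\mathcal{A}}$ of dimension $k$ lies in the relative interior of a unique flat $F \in L_{\mathcal{A}}$ with $\dim F = k$; conversely, a flat $F$ is subdivided into (relatively open) faces by the induced arrangement ${\mathcal{A}}/F$ inside $F$, and each such face is a region of ${\mathcal{A}}/F$. So a chain of faces $G_1 \subset G_2 \subset \cdots \subset G_j$ of ${\mathcal{A}}$ projects to a chain of flats $F_1 \geq F_2 \geq \cdots \geq F_j$ in $L_{\mathcal{A}}$ (with $F_i$ the support of $G_i$, reverse inclusion), together with, for each $i$, the data of which region of the localized arrangement $({\mathcal{A}}/F_i)$ the face $G_i$ is — more precisely, which region $G_i$ determines inside the smaller flat, compatible with the containments. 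The key quantitative input is Zaslavsky's theorem (Theorem \ref{f.th:charpoly}.1): the number of faces of dimension $k$ supported on a flat $F$ is governed by the characteristic polynomial of the interval $[{\widehat{0}},F]^{\mathrm{op}}$-localization, and the number of chains refines this. The plan is to assemble these local region counts into a generating-function identity, and recognize that after the substitution ${\mathbf{a}} \mapsto {\mathbf{c}}$, ${\mathbf{a}}{\mathbf{b}} \mapsto 2{\mathbf{d}}$ (the map $\omega$), the contribution of a flat of corank $r$ to the ${\mathbf{a}}{\mathbf{b}}$-index of $L_{\mathcal{A}}$ produces exactly the local region factor, the factor of $2$ in $\omega({\mathbf{a}}{\mathbf{b}}) = 2{\mathbf{d}}$ accounting for the two regions on either side of a hyperplane, and the prepended ${\mathbf{a}}$ accounting for the extra bottom rank that distinguishes the rank function of $F({\mathcal{A}})$ from that of $L_{\mathcal{A}}$.

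Concretely, the main computational step would be to verify the identity at the level of the ${\mathbf{a}}{\mathbf{b}}$-indices directly: write $\Phi_{L_{\mathcal{A}}}({\mathbf{a}},{\mathbf{b}}) = \sum_S h_S^{L} u_S$ and $\Phi_{F({\mathcal{A}})}({\mathbf{a}},{\mathbf{b}}) = \sum_T h_T^{F} u_T$, translate the flat-counting decomposition above into a linear relation expressing each $f_T$ (flag $f$-vector entry of $F({\mathcal{A}})$) as a sum over compatible chains in $L_{\mathcal{A}}$ of products of local region numbers, pass from flag $f$-vectors to flag $h$-vectors via Möbius inversion on the subset lattice, and check that this matches the coefficient extraction prescribed by $\omega({\mathbf{a}} \, \Phi_{L_{\mathcal{A}}})$. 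Then I invoke Theorem \ref{f.th:cd-index} to know that $\Phi_{F({\mathcal{A}})}$, being the ${\mathbf{a}}{\mathbf{b}}$-index of an Eulerian poset (the face lattice of the zonotope), collapses to a polynomial in ${\mathbf{c}}$ and ${\mathbf{d}}$; the formula then says that this ${\mathbf{c}}{\mathbf{d}}$-index is obtained by applying $\omega$ to ${\mathbf{a}}\,\Phi_{L_{\mathcal{A}}}$, and the dual statement for $Z(A)$ follows by applying $^*$.

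The main obstacle, I expect, is the bookkeeping in the passage from flag $f$-vectors to the ${\mathbf{c}}{\mathbf{d}}$-index — specifically, proving that the map $\omega$ (which is only a linear map on $\mathbb{Z}\langle{\mathbf{a}},{\mathbf{b}}\rangle$, not a ring map, and whose definition "replace ${\mathbf{a}}{\mathbf{b}}$ by $2{\mathbf{d}}$ greedily, then ${\mathbf{a}}\mapsto{\mathbf{c}}$" is genuinely combinatorial) intertwines the semisuspension/localization operation on posets with the adjunction of a bottom element on the face poset side. The cleanest route is probably the one in \cite{f.BilleraEhrenborgReaddy}: work with coalgebra techniques — the coproduct on the ${\mathbf{a}}{\mathbf{b}}$-index that corresponds to splitting chains at an intermediate rank — and show that both sides satisfy the same recursion under this coproduct together with the same initial conditions coming from rank-$1$ and rank-$2$ arrangements (a single hyperplane: two regions, $\omega({\mathbf{a}}{\mathbf{a}}) = {\mathbf{c}}^2$ versus the boundary of a segment; and a pencil of $n$ hyperplanes, where the $2n$ regions must be reproduced). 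Verifying these base cases and the compatibility of the coproduct with $\omega$ is where the real work lies; once that machinery is in place, the induction on the rank of ${\mathcal{A}}$ is routine, and Zaslavsky's theorem falls out as the special case recording only the top-degree coefficient.
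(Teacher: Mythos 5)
The paper does not prove this theorem; it is quoted from Billera--Ehrenborg--Readdy \cite{f.BilleraEhrenborgReaddy} and only illustrated afterwards on the hexagonal prism, so there is no in-paper argument to compare yours against. Judged on its own terms, your proposal correctly sets up the reductions that genuinely are easy: the order-reversing bijection between faces of $Z(A)$ and faces of ${\mathcal{A}}$ makes the two displayed formulas equivalent under $^*$, and the dictionary ``face of ${\mathcal{A}}$ $\mapsto$ its supporting flat, with the faces of a given support counted by Zaslavsky applied to the restricted arrangement'' does show, with some care, that the flag $f$-vector of $F({\mathcal{A}})$ depends only on $L_{\mathcal{A}}$ (each flag count is a sum over chains of flats of products of region numbers of localized arrangements, and each factor is an evaluation of a characteristic polynomial of an interval of $L_{\mathcal{A}}$). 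That is the first sentence of the theorem.

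The gap is that the second, quantitative half --- the identity $\Psi_{Z(A)} = \omega({\mathbf{a}}\,\Phi_{L_{\mathcal{A}}})$ --- is never actually established. Your sketch says ``pass from flag $f$-vectors to flag $h$-vectors via M\"obius inversion \ldots and check that this matches the coefficient extraction prescribed by $\omega$,'' but that check is the entire theorem: $\omega$ is only a linear map defined by a greedy rewriting rule on ${\mathbf{a}}{\mathbf{b}}$-words, and there is no a priori reason the alternating-sum bookkeeping of flag $h$-vectors of $F({\mathcal{A}})$, expressed through products of Zaslavsky evaluations attached to chains of flats, reorganizes itself into exactly that rewriting with the factor $2$ appearing only at ${\mathbf{a}}{\mathbf{b}}$ occurrences and a single prepended ${\mathbf{a}}$. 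Your heuristic (``$2{\mathbf{d}}$ accounts for the two sides of a hyperplane'') is a plausibility argument, not a computation. Likewise, the fallback route you name --- the coalgebra/coproduct machinery of \cite{f.BilleraEhrenborgReaddy}, with compatibility of $\omega$ with the coproduct plus base cases in ranks $1$ and $2$ --- is precisely where the proof lives, and you explicitly defer it (``where the real work lies \ldots the induction is routine''). As written, then, the proposal is an outline of the known strategy rather than a proof: the reductions you do carry out are sound, but the central identity, in either the direct flag-counting form or the coalgebraic form, is asserted rather than proved.
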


\bigskip

Billera, Ehrenborg, and Readdy \cite{f.BilleraEhrenborgReaddy} 
extended this result to \emph{orientable matroids}. They also asked for an interesting interpretation of $\omega({\mathbf{a}}  \, \Phi_{L}({\mathbf{a}},{\mathbf{b}}))$ for an arbitrary geometric lattice $L$; this is still open.

\bigskip

To illustrate this result we revisit Example \ref{f.ex:prism}, where we computed the ${\mathbf{c}}{\mathbf{d}}$-index of a hexagonal prism. This is the dual zonotope to the arrangement of Figure \ref{f.fig:vectorshypszonotope}, whose intersection poset $L_{{\mathcal{A}}}$ is Figure \ref{f.fig:intposet}. The flag $f$ and $h$-vectors of $L_{{\mathcal{A}}}$ are $(f_\emptyset, f_{\{1\}}, f_{\{2\}}, f_{\{1,2\}}) = (1, 4, 4, 9)$ and $(h_\emptyset, h_{\{1\}}, h_{\{2\}}, h_{\{1,2\}}) = (1, 3, 3, 2)$, so its ${\mathbf{a}}{\mathbf{b}}$-index is $\Phi_{L_{\mathcal{A}}}({\mathbf{a}},{\mathbf{b}}))={\mathbf{a}}{\mathbf{a}}+3{\mathbf{a}}{\mathbf{b}}+3{\mathbf{b}}{\mathbf{a}}+2{\mathbf{b}}{\mathbf{b}}$. Therefore
\[
\Psi_{Z(A)}({\mathbf{c}}, {\mathbf{d}}) = \omega({\mathbf{a}}{\mathbf{a}}{\mathbf{a}}+3{\mathbf{a}}{\mathbf{a}}{\mathbf{b}}+3{\mathbf{a}}{\mathbf{b}}{\mathbf{a}}+2{\mathbf{a}}{\mathbf{b}}{\mathbf{b}}) = {\mathbf{c}}{\mathbf{c}}{\mathbf{c}}+3{\mathbf{c}}(2{\mathbf{d}})+3(2{\mathbf{d}})c+2(2{\mathbf{d}}){\mathbf{c}} = {\mathbf{c}}^3+6{\mathbf{c}}{\mathbf{d}}+10{\mathbf{d}}{\mathbf{c}}
\]
and
\[
\Psi_{F({\mathcal{A}})}({\mathbf{c}},{\mathbf{d}}) = {\mathbf{c}}^3 + 6 {\mathbf{d}}{\mathbf{c}} + 10{\mathbf{c}}{\mathbf{d}}.
\]
in agreement with the computation in Section \ref{f.sec:Eulerian}.

\newpage

\section{\textsf{Matroids}}\label{f.sec:matroids}

Matroid theory is a combinatorial theory of independence which has its roots in linear algebra and graph theory, but which turns out to have deep connections with many fields, and numerous applications in pure and applied mathematics. Rota was a particularly enthusiastic ambassador:

\begin{quote}
\emph{``It is as if one were to condense all trends of present day mathematics onto a single finite structure, a feat that anyone would a priori deem impossible, were it not for the fact that matroids do exist."} \cite{f.Rotaindiscrete}
\end{quote}

There are natural notions of independence in linear algebra, graph theory, matching theory, the theory of field extensions, and the theory of routings, among others. Matroids capture the combinatorial essence that those notions share. 

In this section we will mostly focus on enumerative aspects of matroid theory. For a more complete introduction, see \cite{f.Oxley, f.Welsh} and the three volume series \cite{f.White2, f.White1, f.White3}.

In Section \ref{f.sec:matroidmotivation} we discuss some combinatorial aspects of vector configurations and linear independence, and introduce some terminology. This example and terminology strongly motivates the definition(s) of a matroid, which we give in Section \ref{f.sec:matroiddefinitions}. Section \ref{f.sec:matroidexamples} discusses many important families of matroids arising in algebra, combinatorics, and geometry. Section \ref{f.sec:matroidconstructions} introduces some basic constructions, and Section \ref{f.sec:matroidstructure} gives a very brief overview of structural matroid theory. The Tutte polynomial is our main enumerative tool; it is introduced in Section \ref{f.sec:Tutte}. In Section \ref{f.sec:Tutteevaluations} we answer many enumeration problems in terms of Tutte polynomials, and in Section \ref{f.sec:computingTutte} we see how we can actually compute the polynomial in some examples of interest. Section \ref{f.sec:Tuttegeneralizations} is devoted to two generalizations: the multivariate and the arithmetic Tutte polynomials. 
%Finally, we give a brief overview of matroid subdivisions
in Section \ref{f.sec:matroidsubdivisions} we discuss matroid subdivisions, 
%, including a discussion on 
matroid valuations and the Derksen--Fink invariant.

%
%
%
%A set of vectors, a graph, a bipartite matching problem, a set of elements in a field extension, and a routing problem all give rise to matroids. When one proves a theorem about matroids, one obtains a result about each one of these structures.
%
%The theory of matroids is deep and broad on its own, and has intimate connections with and applications in many fields. In this chapter, we will focus on enumerative aspects of matroids, many of which revolve around the Tutte polynomial. 
%

\subsection{{\textsf{The main motivating example: Vector configurations and linear matroids}}} \label{f.sec:matroidmotivation}

Let $E \subset {{\mathbbm{k}}}^d$ be a finite set of vectors. For simplicity, we assume that $E$ does not contain $\mathbf{0}$, does not contain repeated vectors, and spans ${{\mathbbm{k}}}^d$. The \textbf{matroid} of $E$ is given by any of the following definitions:

\medskip

$\bullet$ The \textbf{independent sets} of $E$ are the subsets of $E$ which are linearly independent over ${{\mathbbm{k}}}$. 

$\bullet$ The \textbf{bases} of $E$ are the subsets of $E$ which are bases of ${{\mathbbm{k}}}^d$.

$\bullet$ The \textbf{circuits} of $E$ are the minimal linearly dependent subsets of $E$.

$\bullet$ The \textbf{rank function} $r:2^E \rightarrow {\mathbb{N}}$ is $r(A) = \dim({\mathrm{span }} \, A)$ for $A \subseteq E$.

$\bullet$ The \textbf{flats} $F$ of $E$ are the subspaces of ${{\mathbbm{k}}}^d$ spanned by subsets of $E$. We identify a flat with the set of vectors of $E$ that it contains.

$\bullet$ The \textbf{lattice of flats} $L_M$ is the poset of flats ordered by containment.

$\bullet$ The \textbf{matroid (basis) polytope} is $P_M={\mathrm{conv }}\{{\mathbf{e}}_{b_1} + \cdots + {\mathbf{e}}_{b_r} \, : \, \{b_1, \ldots, b_r\} \textrm{ is a basis}\} \subset {\mathbb{R}}^E$.

%We denote the collections of independent sets, bases, and flats by ${\mathcal{I}}, {\mathcal{B}}, {\mathcal{C}}$ and ${\mathcal{F}}$, respectively. If we know the ground set, any one of $r, {\mathcal{I}}, {\mathcal{B}}, {\mathcal{F}}$ is sufficient to determine all the other ones. We call any of $(E, r)$, $(E, {\mathcal{I}})$, $(E, {\mathcal{B}})$, $(E, {\mathcal{C}})$and $(E, {\mathcal{F}})$ the \textbf{matroid} of $E$. 

\medskip

More concretely, if we know one of the following objects, we can determine them all: the list ${\mathcal{I}}$ of independent sets, the list ${\mathcal{B}}$ of bases, the list ${\mathcal{C}}$ of circuits, the rank function $r$, the list ${\mathcal{F}}$ of flats, the lattice $L_M$ of flats, or the matroid polytope $P_M$. For this reason, we define the \textbf{matroid} of $E$ to be any one (and all) of these objects.
A matroid that arises in this way from a set of vectors in a vector space (over ${{\mathbbm{k}}}$) is called a \textbf{linear} or \textbf{representable matroid} (over ${{\mathbbm{k}}}$).

\begin{example}
Let ${\mathbf{t}}=(1,-1,0), {\mathbf{u}}=(0, 1, -1), {\mathbf{v}} =(-1,0,1), {\mathbf{w}},=(1,1,1)$ be the vector configuration shown in the left panel of Figure \ref{f.fig:vectorshypszonotope}. Then\footnote{We are omitting brackets for clarity, so for example we write ${\mathbf{t}}{\mathbf{u}}{\mathbf{v}}$ for $\{{\mathbf{t}},{\mathbf{u}},{\mathbf{v}}\}$.}
\begin{eqnarray*}
{\mathcal{I}} &=& \{\emptyset, {\mathbf{t}}, {\mathbf{u}}, {\mathbf{v}}, {\mathbf{w}}, {\mathbf{t}}{\mathbf{u}}, {\mathbf{t}}{\mathbf{v}}, {\mathbf{t}}{\mathbf{w}}, {\mathbf{u}}{\mathbf{v}}, {\mathbf{u}}{\mathbf{w}}, {\mathbf{v}}{\mathbf{w}}, {\mathbf{t}}{\mathbf{u}}{\mathbf{w}}, {\mathbf{t}}{\mathbf{v}}{\mathbf{w}}, {\mathbf{u}}{\mathbf{v}}{\mathbf{w}}\},\\
{\mathcal{B}} &=& \{{\mathbf{t}}{\mathbf{u}}{\mathbf{w}}, {\mathbf{t}}{\mathbf{v}}{\mathbf{w}}, {\mathbf{u}}{\mathbf{v}}{\mathbf{w}}\},\\
{\mathcal{C}} &=& \{{\mathbf{t}}{\mathbf{u}}{\mathbf{v}}\} \\
{\mathcal{F}} &=& \{\emptyset, {\mathbf{t}}, {\mathbf{u}}, {\mathbf{v}}, {\mathbf{w}}, {\mathbf{t}}{\mathbf{u}}{\mathbf{v}}, {\mathbf{t}}{\mathbf{w}}, {\mathbf{u}}{\mathbf{w}}, {\mathbf{v}}{\mathbf{w}}, {\mathbf{t}}{\mathbf{u}}{\mathbf{v}}{\mathbf{w}}\}
\end{eqnarray*}
The rank function is $r({\mathbf{t}}{\mathbf{u}}{\mathbf{v}}{\mathbf{w}})=3, r({\mathbf{t}}{\mathbf{u}}{\mathbf{v}})=2$, and $r(A) = |A|$ for all other sets $A$. 
The matroid polytope is the equilateral triangle with vertices $(1, 1, 0, 1), (1, 0, 1, 1), (0, 1, 1, 1)$ in ${\mathbb{R}}^4$. 
\end{example}

A vector arrangement $E \subset {{\mathbbm{k}}}^d$ determines a hyperplane arrangement ${\mathcal{A}}$ in ${{\mathbbm{k}}}^d$ consisting of the normal hyperplanes $\{{\mathbf{x}} \in {{\mathbbm{k}}}^d \, : {\mathbf{a}} \cdot {\mathbf{x}} = 0\}$ for ${\mathbf{a}} \in E$. The rank function and lattice of flats of $E$ are the same as the rank function and intersection poset of ${\mathcal{A}}$, as defined in Section \ref{f.sec:charpoly}. For instance, the lattice of flats of the example above is the same as the intersection poset of the corresponding arrangement, which is illustrated in Figure \ref{f.fig:intposet}.

\subsection{{\textsf{Basic definitions}}}\label{f.sec:matroiddefinitions}

A simple but important insight of matroid theory is that many of the properties of linear independence (and many other notions of independence in mathematics) are inherently combinatorial. That combinatorial structure is unexpectedly rich, and matroids provide a general combinatorial framework that is ideally suited for exploring it.

A \textbf{matroid} $M=(E, {\mathcal{I}})$ consists of a finite set $E$ and a collection ${\mathcal{I}}$ of subsets of $E$ such that

 (I1) $\emptyset \in {\mathcal{I}}$

 (I2) If  $J \in {\mathcal{I}}$ and $I \subseteq J$ then $I \in {\mathcal{I}}$.

 (I3) If $I, J \in {\mathcal{I}}$ and $|I|<|J|$ then there exists $j \in J-I$ such that $I \cup j \in {\mathcal{I}}$.

\noindent The sets in ${\mathcal{I}}$ are called \textbf{independent}.

\medskip

\begin{proposition}\label{f.prop:bases}
In a matroid $M=(E,{\mathcal{I}})$, all the \textbf{bases} (the maximal elements of ${\mathcal{I}}$) have the same size, called the \textbf{rank} of the matroid.
%Let $(E,{\mathcal{I}})$ be a matroid. Then all the maximal elements of ${\mathcal{I}}$, called the \textbf{bases}, have the same size.
\end{proposition}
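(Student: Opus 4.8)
The plan is to deduce this directly from axiom (I3), the exchange axiom. Suppose $B_1$ and $B_2$ are both bases of $M$, that is, both are maximal elements of ${\mathcal{I}}$ under inclusion. I want to show $|B_1| = |B_2|$. Assume for contradiction that they have different sizes, say $|B_1| < |B_2|$ (after possibly relabeling). Since $B_1, B_2 \in {\mathcal{I}}$ and $|B_1| < |B_2|$, axiom (I3) furnishes an element $j \in B_2 - B_1$ such that $B_1 \cup \{j\} \in {\mathcal{I}}$. But $B_1 \cup \{j\}$ strictly contains $B_1$ and lies in ${\mathcal{I}}$, contradicting the maximality of $B_1$. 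Hence no two bases can have different sizes, so all bases have the same cardinality; this common value is by definition the rank of $M$.

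The only mild subtlety worth addressing is making sure that bases exist and that ``maximal element of ${\mathcal{I}}$'' is the right notion to run this argument on: since $E$ is finite and $\emptyset \in {\mathcal{I}}$ by (I1), the collection ${\mathcal{I}}$ is nonempty and any chain in ${\mathcal{I}}$ terminates, so maximal elements exist. Moreover a set $I \in {\mathcal{I}}$ is a basis precisely when no $I \cup \{e\}$ with $e \in E - I$ lies in ${\mathcal{I}}$, which is exactly the inclusion-maximality condition used above. I do not expect any real obstacle here — the statement is essentially immediate from the exchange axiom, and the ``main step'' is simply recognizing that (I3) applied to two bases of unequal size contradicts maximality of the smaller one. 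One could also phrase the contradiction symmetrically to emphasize that the argument shows the stronger fact that any independent set extends to a basis and all bases are equicardinal, but for the proposition as stated the one-line exchange argument suffices.
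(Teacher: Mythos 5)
Your proof is correct, and it is exactly the standard exchange-axiom argument the paper has in mind (the paper states the proposition without proof, calling it an easy result). The application of (I3) to two bases of unequal size, contradicting maximality of the smaller one, together with your remark that maximal independent sets exist by finiteness, is all that is needed.
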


This easy result
%, which is an immediate consequence of (I3), 
shows that this simple definition already captures the notion of dimension.
% gives a small glimpse of the power of matroids.

\medskip

Now we can extend the definitions of Section \ref{f.sec:matroidmotivation} to any matroid % of the previous section.
$M=(E, {\mathcal{I}})$:

$\bullet$ An \textbf{independent set} is a set in ${\mathcal{I}}$.

$\bullet$ A \textbf{basis} is a maximal independent set.

$\bullet$ The \textbf{rank function} $r:2^E \rightarrow {\mathbb{N}}$ is $r(A) = $(size of the largest independent subset of $A$). 
 
$\bullet$ A \textbf{circuit} is a minimal dependent set; i.e., a minimal set not in ${\mathcal{I}}$.
 
$\bullet$ A \textbf{flat} is a set $F$ such that $r(F \cup e) > r(F)$ for all $e \notin F$.

$\bullet$ The \textbf{lattice of flats} $L_M$ is the poset of flats ordered by containment.

$\bullet$ The \textbf{matroid (basis) polytope} $P_M={\mathrm{conv }}\{{\mathbf{e}}_{b_1} + \cdots + {\mathbf{e}}_{b_r} \, : \, \{b_1, \ldots, b_r\} \textrm{ is a basis}\}.$

\medskip

Again, we let ${\mathcal{B}}$, $r$, ${\mathcal{C}}$  and ${\mathcal{F}}$ denote the set of bases, the rank function, the set of circuits, and the set of flats of $M$. If we know the ground set $E$ and any one of ${\mathcal{B}}$, $r$, ${\mathcal{C}}$, ${\mathcal{F}}$, $L_M$, or $P_M$\footnote{assuming that we know the labels of the elements of $L_M$ and the embedding of the polytope $P_M$ in ${\mathbb{R}}^E$.}, we know them all.
%Knowing $E$, we can recover the collection ${\mathcal{I}}$ of independent sets from ${\mathcal{B}}$, $r$, ${\mathcal{C}}$, or ${\mathcal{F}}$. 
For that reason, we call any one (and all) of $(E, {\mathcal{I}}), (E, {\mathcal{B}}), (E,r), (E, {\mathcal{C}})$ and $(E, {\mathcal{F}})$ ``the matroid $M$". Each one of these points of view has its own axiomatization:

\begin{proposition} 
We have the following characterizations of the possible bases ${\mathcal{B}}$, rank function $r$,  circuits ${\mathcal{C}}$, lattice of flats $L$, independent sets ${\mathcal{I}}$, and matroid polytopes $P_M$  of a matroid.

%flats ${\mathcal{F}}$, and matroid polytopes $P_M$  of a matroid.
\begin{enumerate}
\item
The collection ${\mathcal{B}} \subseteq 2^E$ is the set of bases of a matroid if and only if

(B1) ${\mathcal{B}}$ is nonempty.

(B2) If $B_1, B_2 \in {\mathcal{B}}$ and $b_1 \in B_1 - B_2$, there exists $b_2 \in B_2-B_1$ such that $(B_1 - b_1) \cup b_2 \in {\mathcal{B}}$.

\item
The function $r: 2^E \rightarrow {\mathbb{N}}$ is the rank function of a matroid if and only if:

(R1) $0 \leq r(A) \leq |A|$ for all $A \subseteq E$.

(R2) If $A \subseteq B \subseteq E$ then $r(A) \leq r(B)$.

(R3) $r(A) + r(B) \geq r(A \cup B) + r(A \cap B)$ for all $A, B \subseteq E$.

\item
The collection ${\mathcal{C}} \subseteq 2^E$ is the set of circuits of a matroid if and only if

(C1) $\emptyset \notin {\mathcal{C}}$.

(C2) If $C \in {\mathcal{C}}$ and $C \subsetneq D$ then $D \notin {\mathcal{C}}$.

(C3) If $C_1, C_2 \in {\mathcal{C}}$ and $e \in C_1 \cap C_2$ then there exists $C \in {\mathcal{C}}$ with $C \subseteq C_1 \cup C_2 - e$.

\item 
The poset $L$ is the lattice of flats of a matroid if and only if it is a geometric lattice. 

\item 
The collection ${\mathcal{I}} \subseteq 2^E$ is the set of independent sets of a matroid if and only if

(I1) $\emptyset \in {\mathcal{I}}$.

(I2) If  $J \in {\mathcal{I}}$ and $I \subseteq J$ then $I \in {\mathcal{I}}$.

(I3') For every weight function $w: E \rightarrow {\mathbb{R}}$, the following \emph{greedy algorithm}:
\begin{quote}
Start with $B=\emptyset$. Then, at each step, add to $B$ an element $e \notin B$ of minimum weight $w(e)$ such that $B \cup e \in {\mathcal{I}}$. Stop when $B$ is maximal in ${\mathcal{I}}$.
\end{quote}
produces a maximal element $B$ of ${\mathcal{I}}$ of minimum weight $w(B) = \sum_{b \in B} w(b)$.

\item \cite{f.GGMS}
The collection ${\mathcal{B}} \subseteq 2^E$ is the set of bases of a matroid if and only if every edge of the polytope ${\mathrm{conv }}\{{\mathbf{e}}_{b_1} + \cdots + {\mathbf{e}}_{b_r} \, : \, \{b_1, \ldots, b_r\} \in {\mathcal{B}}\}$ in ${\mathbb{R}}^E$ is a translate of ${\mathbf{e}}_i-{\mathbf{e}}_j$ for some $i, j \in E$. 

%\item \comment{Should I include closure? Greedy algorithm? Circuits?}
\end{enumerate}
\end{proposition}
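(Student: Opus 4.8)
I would first reduce everything to showing that the edge condition is equivalent to the basis exchange axiom (B2). Two free observations set this up. First, for every $B\in\mathcal{B}$ the point ${\mathbf{e}}_B=\sum_{b\in B}{\mathbf{e}}_b$ is a $0/1$ vector, and a $0/1$ vector is never a proper convex combination of other $0/1$ vectors, so each ${\mathbf{e}}_B$ is a vertex of $P:={\mathrm{conv }}\{{\mathbf{e}}_B:B\in\mathcal{B}\}$; in particular every edge of $P$ joins two such ${\mathbf{e}}_B$'s. Second, if the edge condition holds then all members of $\mathcal{B}$ have the same size: the $1$-skeleton of $P$ is connected, every edge direction ${\mathbf{e}}_i-{\mathbf{e}}_j$ has coordinate sum $0$, so all vertices ${\mathbf{e}}_B$ have the same coordinate sum $|B|=:r$. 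Since (B1) is just the statement that $\mathcal{B}\ne\emptyset$ (equivalently that $P$ is nonempty), part 1 of this proposition reduces the theorem to: $\mathcal{B}$ satisfies (B2) if and only if every edge of $P$ is parallel to some ${\mathbf{e}}_i-{\mathbf{e}}_j$.

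For the forward implication I would assume $\mathcal{B}$ is the set of bases of a matroid and argue by contradiction, supposing $[{\mathbf{e}}_{B_1},{\mathbf{e}}_{B_2}]$ is an edge with $k:=|B_1\setminus B_2|\ge 2$. Being a face, this edge equals $P_w=\{x\in P: w\cdot x \text{ maximal}\}$ for some linear functional $w$, so writing $w(B)=\sum_{e\in B}w_e$ we have $w(B_1)=w(B_2)>w(B)$ for all other $B\in\mathcal{B}$. Picking $b_1\in B_1\setminus B_2$, the symmetric exchange property of matroids \cite{f.Oxley} gives $b_2\in B_2\setminus B_1$ with $B_1'=(B_1-b_1)\cup b_2$ and $B_2'=(B_2-b_2)\cup b_1$ both bases. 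Then $w(B_1')+w(B_2')=w(B_1)+w(B_2)$ forces $w(B_1')=w(B_2')$ to be maximal, so ${\mathbf{e}}_{B_1'},{\mathbf{e}}_{B_2'}$ lie on the edge and hence $\{B_1',B_2'\}\subseteq\{B_1,B_2\}$; but $b_2\notin B_1$ gives $B_1'\ne B_1$ and $|B_1'\setminus B_2|=k-1\ge 1$ gives $B_1'\ne B_2$, a contradiction. Thus $|B_1\triangle B_2|=2$, i.e.\ the edge direction is ${\mathbf{e}}_{b_1}-{\mathbf{e}}_{b_2}$.

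The converse is the substance of the proof. Assuming all edges of $P$ are parallel to some ${\mathbf{e}}_i-{\mathbf{e}}_j$, I would verify (B2) as follows. Fix $B_1,B_2\in\mathcal{B}$ and $b_1\in B_1\setminus B_2$, put $w=\mathbf{1}_{B_1}+\mathbf{1}_{B_2}$, and let $G=P_w$. A short computation shows $\max_{x\in P}w\cdot x=2r-|B_1\setminus B_2|$, attained exactly at the vertices ${\mathbf{e}}_B$ with $B_1\cap B_2\subseteq B\subseteq B_1\cup B_2$, so ${\mathbf{e}}_{B_1}$ and ${\mathbf{e}}_{B_2}$ are both vertices of $G$. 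Now I would look at the edges of $G$ (hence of $P$) at ${\mathbf{e}}_{B_1}$: each is parallel to some ${\mathbf{e}}_f-{\mathbf{e}}_e$, and demanding the other endpoint to be a $0/1$ vector of coordinate sum $r$ forces it to be ${\mathbf{e}}_{(B_1-e)\cup f}$ with $e\in B_1$, $f\notin B_1$; the extra containment $B_1\cap B_2\subseteq(B_1-e)\cup f\subseteq B_1\cup B_2$ then forces $e\in B_1\setminus B_2$ and $f\in B_2\setminus B_1$. Since ${\mathbf{e}}_{B_2}\in G$, the vector ${\mathbf{e}}_{B_2}-{\mathbf{e}}_{B_1}$ is a feasible direction at the vertex ${\mathbf{e}}_{B_1}$ of $G$, hence lies in the cone generated by the edge directions at ${\mathbf{e}}_{B_1}$ (a standard fact about polytopes): ${\mathbf{e}}_{B_2}-{\mathbf{e}}_{B_1}=\sum_i\lambda_i({\mathbf{e}}_{f_i}-{\mathbf{e}}_{e_i})$ with $\lambda_i\ge 0$, $e_i\in B_1\setminus B_2$, $f_i\in B_2\setminus B_1$. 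Extracting the coefficient of ${\mathbf{e}}_{b_1}$: the left side is $-1$ (as $b_1\in B_1\setminus B_2$), while on the right the ${\mathbf{e}}_{f_i}$ never contribute since each $f_i\ne b_1$, so the right side is $-\sum_{i:e_i=b_1}\lambda_i$. Hence some index $i$ has $e_i=b_1$, so $[{\mathbf{e}}_{B_1},{\mathbf{e}}_{(B_1-b_1)\cup f_i}]$ is an edge of $P$; in particular $(B_1-b_1)\cup f_i\in\mathcal{B}$ with $f_i\in B_2\setminus B_1$, which is exactly (B2) with $b_2=f_i$.

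The genuinely routine parts are the evaluation of $P_w$ and the $0/1$-vertex bookkeeping identifying the edge directions at ${\mathbf{e}}_{B_1}$. The step that needs real care, and which I expect to be the main obstacle, is the choice of the face $G=P_w$ in the converse: it must be chosen so that the edges at ${\mathbf{e}}_{B_1}$ are constrained to \emph{remove} an element of $B_1\setminus B_2$ and \emph{add} one of $B_2\setminus B_1$ (so that the coefficient extraction at $b_1$ produces an exchange of exactly the right shape). I will use without proof the standard facts that every face of a polytope is $P_w$ for some $w$ and that the cone of feasible directions at a vertex is generated by the edge directions there; for the forward direction the symmetric exchange property can be replaced, if preferred, by the fact that a face of a matroid polytope is again a matroid polytope.
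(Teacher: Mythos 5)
The paper states this proposition without proof: it is presented as a compendium of standard cryptomorphisms, with \cite{f.GGMS} cited for item 6 and the reader pointed to references such as Brylawski's dictionary for the rest, so there is no in-paper argument to compare yours against. Within item 6 your argument is correct and is essentially the standard one. The forward direction is sound: the key point, that after a symmetric exchange both $B_1'=(B_1-b_1)\cup b_2$ and $B_2'=(B_2-b_2)\cup b_1$ remain on the maximizing face because $w(B_1')+w(B_2')=w(B_1)+w(B_2)$, is exactly what makes plain basis exchange insufficient, and your citation of symmetric exchange (or, as you note, the fact that a face of a matroid polytope is again a matroid polytope) is the right patch. The converse, maximizing $w=\mathbf{1}_{B_1}+\mathbf{1}_{B_2}$, identifying the edges of that face at ${\mathbf{e}}_{B_1}$ as exchanges removing an element of $B_1\setminus B_2$ and adding one of $B_2\setminus B_1$, and then extracting the $b_1$-coordinate from a conic combination of edge directions, is the standard GGMS-type argument and is carried out correctly; the preliminary observations (every ${\mathbf{e}}_B$ is a vertex, and the edge condition forces all members of ${\mathcal{B}}$ to have equal size via connectivity of the $1$-skeleton) are also right.

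The gap is one of scope rather than of substance: the statement has six parts, and your proposal proves only item 6, explicitly assuming item 1 (the basis-exchange axiomatization) and never addressing items 2--5 (the rank axioms, circuit elimination, the geometric-lattice characterization of lattices of flats, and the greedy/Rado--Edmonds characterization of independent sets). Your opening sentence, that the proposition ``reduces'' to the equivalence of the edge condition with (B2), is therefore not accurate as stated: items 2--5 are independent equivalences that need their own (standard, but nontrivial) arguments or citations. If your intent is to supply a proof of the full proposition, you should either prove those cryptomorphisms or cite them as the paper implicitly does; if your intent is to prove the polytope characterization, the write-up is correct but should be framed as a proof of item 6 only.
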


This proposition allows us to give six other definitions of matroids in terms of bases, ranks, circuits, flats, independent sets, and polytopes. The first three axiom systems are ``not too far" from each other, but it is useful to have them all. For instance, to prove that linear matroids are indeed matroids, it is  easier to check the circuit axioms (C1)--(C3) than the independence axioms (I1)--(I3). The last three axiom systems suggest deeper, fruitful connections to posets, optimization, and polytopes.

In fact, there are several other equivalent (and useful) definitions of matroids. It is no coincidence that matroid theory gave birth to the notion of a \textbf{cryptomorphism}, which refers to the equivalence of different axiomatizations of the same object. This feature of matroid theory can be frustrating at first; but as one goes deeper, it becomes indispensable and extremely powerful. 
A very valuable resource is \cite{f.Brylawskicrypto}, a multilingual cryptomorphism dictionary that translates between these different points of view.

\medskip

We say $(E, {\mathcal{B}})$ and $(E', {\mathcal{B}}')$ are \textbf{isomorphic} if there is a bijection between $E$ and $E'$ that induces a bijection between ${\mathcal{B}}$ and ${\mathcal{B}}'$. 
A \textbf{loop} of $M$ is an element $e \in E$ of rank $0$, so $\{e\}$ is dependent. A \textbf{coloop} is an element $e$ which is independent of $E-e$, so $r(E-e)=r-1$. 
Non-loops ${\mathbf{e}}$ and ${\mathbf{f}}$ are \textbf{parallel} if $r({\mathbf{e}}{\mathbf{f}})=1$; they are indistinguishable inside the matroid. A matroid is \textbf{simple} if it contains no loops or parallel elements. The map $M \mapsto L_M$ is a bijection between simple matroids and geometric lattices.

\subsection{{\textsf{Examples}}}\label{f.sec:matroidexamples}

We now describe a few contexts where matroids arise naturally. These statements are nice (and not always trivial) exercises in their respective fields. Most proofs may be found, for example, in \cite{f.Oxley}. We give references for the rest.

\begin{enumerate}
\item[0.] (Uniform matroids) For $n\geq k \geq 1$, the \textbf{uniform matroid} $U_{k,n}$ is the matroid on $[n]$ where every $k$-subset is a basis. 
It is the linear matroid of $n$ generic vectors in a $k$-dimensional space. 
%It has $n$ elements, and the bases are all the $k$-subsets of the ground set.

\item[1a.] (Vector configurations) Let $E$ be a set of vectors in ${{\mathbbm{k}}}^d$ and let ${\mathcal{I}}$ be the set of linearly independent subsets of $E$. Then $(E, {\mathcal{I}})$ is a matroid. Such a matroid is called \textbf{linear} or \textbf{representable} (over ${{\mathbbm{k}}}$). \cite{f.MacLane, f.Whitney} There are at least three other ways of describing the same family of matroids, listed in 1b, 1c, and 1d below.

\item[1b.] (Point configurations) Let $E$ be a set of points in ${{\mathbbm{k}}}^d$ and let ${\mathcal{I}}$ be the set of affinely independent subsets of $E$; that is, the subsets $\{{\mathbf{a}}_1, \ldots, {\mathbf{a}}_k\}$ whose affine span is $k$-dimensional. Then $(E, {\mathcal{I}})$ is a matroid. 

\item[1c.] (Hyperplane arrangements) Let $E$ be a hyperplane arrangement in ${{\mathbbm{k}}}^d$ and let $r(\{H_1, \ldots, H_k\}) = d - \dim(H_1 \cap \cdots \cap H_k)$ for $\{H_1, \ldots, H_k\} \subseteq E$. Then $(E, r)$ is a matroid. 

\item[1d.] (Subspaces) Let $V \cong {{\mathbbm{k}}}^E$ be a vector space with a chosen basis, and let $U \subseteq V$ be a subspace of codimension $r$. Consider the coordinate subspaces $V_S = \{{\mathbf{v}} \in V \, : \, v_s = 0 \textrm{ for } s \in S\}$. Say an $r$-subset $B \subseteq [d]$ is a basis if $U \cap  V_B = \{0\}$. If ${\mathcal{B}}$ is the set of bases, $(E, {\mathcal{B}})$ is a matroid.

Given a matrix over ${{\mathbbm{k}}}$, the matroid on the columns of $A$ (in the sense of 1a) is the same as the matroid of the rowspace of $A$ (in the sense of 1d).

\setcounter{enumi}{1}

\item (Graphs) Let $E$ be the set of edges of a connected graph $G$ and let ${\mathcal{C}}$ be the set of cycles of $G$ (where each cycle is regarded as the list of its edges). Then $(E, {\mathcal{C}})$ is a matroid. Such a matroid is called \textbf{graphical}. The bases are the spanning trees of $G$. 

\item (Field extensions) Let $\mathbb{F} \subseteq \mathbb{K}$ be two fields, and let $E$ be a finite set of elements in the extension field $\mathbb{K}$. Say $I=\{i_1, \ldots, i_k\} \subseteq E$ is independent if it is algebraically independent over $\mathbb{F}$; that is, if there does not exist a non-trivial polynomial $P(x_1, \ldots, x_k)$ with coefficients in $\mathbb{F}$ such that $P(i_1, \ldots, i_k)=0$. If ${\mathcal{I}}$ is the collection of independent sets,  $(E, {\mathcal{I}})$ is a matroid. Such a matroid is called \textbf{algebraic}. \cite{f.VanderWaerden}

\item (Matchings) Let $G=(S \cup T, E)$ be a bipartite graph, so every edge in $E$ joins a vertex of $S$ and a vertex of $T$. Recall that a \textbf{partial matching} of $G$ is a set of edges, no two of which have a common vertex. Let ${\mathcal{I}}$ be the collection of subsets $I \subseteq T$ which can be matched to $S$; that is, those for which there exists a partial matching whose edges contain the vertices in $I$. Then $(T, {\mathcal{I}})$ is a matroid. Such a matroid is called \textbf{transversal}. \cite{f.EdmondsFulkerson}

\item (Routings) Let $G=(V,E)$ be a directed graph and $B_0$ be an $r$-subset of the vertices. Let ${\mathcal{B}}$ be the collection of $r$-subsets of $V$ for which there exists a routing (a collection of $r$ vertex-disjoint paths) starting at $B$ and ending at $B_0$. Then $(V,{\mathcal{B}})$ is a matroid. Such a matroid is called \textbf{cotransversal}. \cite{f.Mason}

\item (Lattice paths) For each Dyck path $P$ of length $n$, let $B \subseteq [2n]$ be the upstep set of $P$; that is, the set of integers $i$ for which the $i$th step of $P$ is an upstep. Let ${\mathcal{B}}$ be the collection of upstep sets of Dyck paths of length $n$. Then $([2n], {\mathcal{B}})$ is the set of basis of a matroid. This matroid is called a \textbf{Catalan matroid} $\mathbf{C}_n$. 
More generally, we may consider the lattice paths of $2n$  steps $(1,1)$ and $(1, -1)$ which stay between an upper and a lower border; their upstep sets form the bases of a \textbf{lattice path matroid}. \cite{f.ArdilaCatalan, f.BonindeMierNoy}

\item (Schubert matroids) Given $n \in {\mathbb{N}}$ and a set of positive integers $I=\{i_1 < \cdots < i_k\}$, the sets $\{j_1 < \cdots < j_k\} \subseteq [n]$ such that $j_1 \geq i_1, \ldots, j_k \geq i_k$ are the bases of a matroid. These matroids are called  \textbf{Schubert matroids} due to their connection to the Schubert cells in the Grassmannian Gr$(k,n)$. 
%In the stratification of the Grassmannian of $k$-planes in $n$-space into Schubert cells, the matroid of a generic point on a Schubert cell is a Schubert matroid.
This simple but important family of matroids has been rediscovered many times under  names such as freedom matroids \cite{f.CrapoSchmitt} and shifted matroids \cite{f.ArdilaCatalan, f.Klivans}.

\item (Positroids) A \textbf{positroid} is a matroid on $[n]$ which can be represented by the columns of a full rank $d \times n$ matrix such that all its maximal minors are nonnegative. \cite{f.PostnikovTNN} These matroids arise in the study of the totally nonnegative part of the Grassmannian, and have recently found applications in the physics of scattering amplitudes. Remarkably, these matroids can be described combinatorially, and they are in bijection with several interesting classes of objects with elegant enumerative properties. \cite{f.ArdilaRinconWilliams, f.Oh, f.PostnikovTNN, f.WilliamsLe}

\end{enumerate}

\begin{figure}[ht]
 \begin{center}
  \includegraphics[scale=1]{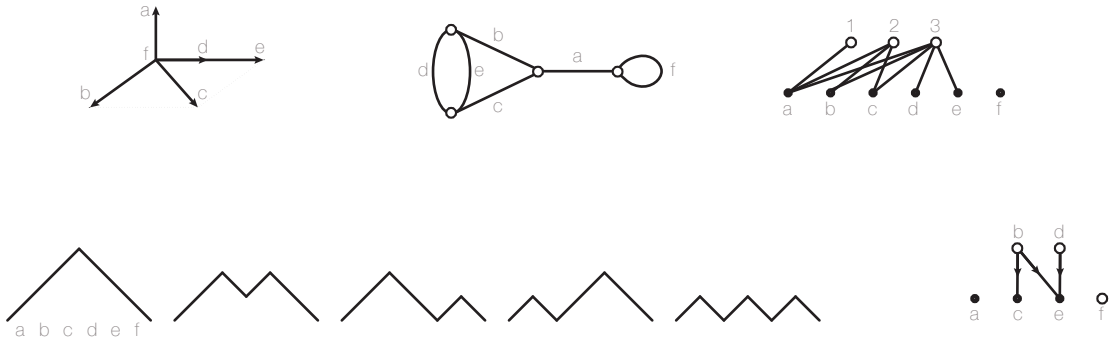}
  \caption{ \label{f.fig:exmatroid}
Many manifestations of the same matroid.}
 \end{center}
\end{figure}

\begin{example}
The list above gives eight different manifestations of the matroid on $\{a,b,c,d,e,f\}$ with bases ${\mathcal{B}}=\{abc, abd, abe, acd, ace\}$. Figure \ref{f.fig:exmatroid} shows a vector configuration in ${\mathbb{R}}^3$ (with $f=\mathbf{0}$) which realizes $M$ as a linear matroid, a graph which realizes it as a graphical matroid, a bipartite graph which realizes it as a transversal matroid, the five Dyck paths of length $3$ which show that $M$ is the Catalan matroid $\mathbf{C}_3$, and a directed graph with sinks $B_0=\{a,c,e\}$ which realize $M$ as a cotransversal matroid. It can  be realized as an algebraic matroid in the field extension ${\mathbb{R}} \subset {\mathbb{R}}(x,y,z)$ with $a=z^3, \, b=x+y, \, c=x-y, \, d=xy, \, e=x^2y^2, \, f=1$. It is also isomorphic to the Schubert matroid on $[6]$ for $I=\{2,4,6\}$. Finally, it is a positroid, since in the realization shown, all five bases are positive: they all satisfy the ``right-hand rule" for vectors in ${\mathbb{R}}^3$.
\end{example}

To give a small illustration of the power of matroids, let us revisit the almost trivial observation that all bases of a matroid have the same size. This result has many interesting consequences: 

$\bullet$ All bases of a vector space $V$ have the same size, called the ``dimension" of $V$. 

$\bullet$  All spanning trees of a connected graph have the same number ($v-1$) of edges, where $v$ is the number of vertices of the graph.

$\bullet$ In a bipartite graph $G=(S \cup T, E)$, all the maximal subsets of $T$ that can be matched to $S$ have the same size.

$\bullet$ All transcendence bases of a field extension have the same size, called the ``transcendence degree" of the extension. 

\noindent Naturally, once we work harder to obtain more interesting results about matroids, we will obtain more impressive results in all of these different fields.

\subsection{{\textsf{Basic constructions}}}\label{f.sec:matroidconstructions}

We now discuss the notions of duality, minors, direct sums, and connected components in matroids; in Section \ref{f.sec:matroidstructure} we explain how they generalize similar notions in linear algebra and graph theory.

\begin{propdef} (Duality)
Let $M=(E, {\mathcal{B}})$ be a matroid, and let ${\mathcal{B}}^* = \{E-B \, : \, B \in {\mathcal{B}}\}$. Then $M^* =  (E, {\mathcal{B}}^*)$ is also a matroid, called the \textbf{dual matroid} of $M$.
\end{propdef}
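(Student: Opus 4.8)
The statement to prove is that if $M=(E,\mathcal{B})$ is a matroid, then $\mathcal{B}^* = \{E-B : B \in \mathcal{B}\}$ is the set of bases of a matroid on $E$. The plan is to verify the two basis axioms (B1) and (B2) for $\mathcal{B}^*$, using that $\mathcal{B}$ satisfies them.

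First I would check (B1): since $\mathcal{B}$ is nonempty, $\mathcal{B}^*$ is nonempty, and moreover all elements of $\mathcal{B}^*$ have the same cardinality $|E|-r$, where $r$ is the rank of $M$ (using Proposition \ref{f.prop:bases}, that all bases of $M$ have the same size). This common cardinality will be the rank of $M^*$, the \emph{corank} $|E|-r$.

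The main work is verifying (B2) for $\mathcal{B}^*$: given $B_1^*, B_2^* \in \mathcal{B}^*$ and $b_1 \in B_1^* - B_2^*$, I must produce $b_2 \in B_2^* - B_1^*$ with $(B_1^* - b_1) \cup b_2 \in \mathcal{B}^*$. Writing $B_1^* = E - B_1$ and $B_2^* = E - B_2$ with $B_1, B_2 \in \mathcal{B}$, the condition $b_1 \in B_1^* - B_2^*$ translates to $b_1 \in B_2 - B_1$, and what I want translates to finding $b_2 \in B_1 - B_2$ with $(E - B_1) - b_1 \cup b_2 = E - ((B_1 - b_2) \cup b_1) \in \mathcal{B}^*$, i.e. $(B_1 - b_2) \cup b_1 \in \mathcal{B}$. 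So the dual exchange property for $\mathcal{B}^*$ is precisely a ``reverse'' exchange property for $\mathcal{B}$: for $B_1, B_2 \in \mathcal{B}$ and $b_1 \in B_2 - B_1$, there exists $b_2 \in B_1 - B_2$ such that $(B_1 - b_2) \cup b_1 \in \mathcal{B}$. The expected main obstacle is that axiom (B2) as stated only gives the ``forward'' exchange (remove from $B_1$, add from $B_2$), whereas here I need to add a prescribed element $b_1$ to $B_1$ and remove \emph{some} element; this symmetric exchange statement is a standard but not entirely trivial consequence of (B2). I would prove it as a lemma: apply (B2) to the pair $B_2, B_1$ (in that order) with the chosen element $b_1 \in B_2 - B_1$, obtaining $b_2 \in B_1 - B_2$ with $(B_2 - b_1) \cup b_2 \in \mathcal{B}$ — but that is not quite what I want either. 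The cleaner route is to invoke the rank/independence description: $(B_1 - b_2) \cup b_1$ has the right size $r$, so it suffices to show it is independent for a suitable $b_2$; since $B_1 \cup b_1$ has rank $r$ and contains the independent set $B_1$, the circuit created by adding $b_1$ to $B_1$ passes through some element $b_2 \neq b_1$ of $B_1$ not in $B_2$ (such a $b_2$ exists because $B_2$ is independent and $B_2 \supseteq \{b_1\} \cup (B_1 \cap B_2)$ cannot contain the whole circuit), and deleting $b_2$ from $B_1 \cup b_1$ breaks the circuit, yielding a basis.

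Finally I would note the immediate corollaries that fall out of the construction and belong in the same discussion: $(M^*)^* = M$, since $\mathcal{B}^{**} = \{E - (E-B) : B \in \mathcal{B}\} = \mathcal{B}$; and the rank function of $M^*$ satisfies $r_{M^*}(A) = |A| - r_M(E) + r_M(E - A)$, which one derives by a short counting argument relating a maximal independent subset of $A$ in $M^*$ to a basis of $M$ contained in $E - A$ extended appropriately. These give the dual notions of loop/coloop (a coloop of $M$ is a loop of $M^*$) that will be used later. The key step — and the only one requiring genuine care — is the symmetric exchange lemma for bases; everything else is bookkeeping.
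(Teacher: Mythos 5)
The paper states this Proposition/Definition without proof, deferring the verification to standard references, so there is no in-text argument to compare against; your proof stands on its own, and it is essentially the standard textbook argument. Your translation of axiom (B2) for $\mathcal{B}^*$ into the ``co-exchange'' statement for $\mathcal{B}$ (for bases $B_1, B_2$ and $b_1 \in B_2 - B_1$, there is $b_2 \in B_1 - B_2$ with $(B_1 - b_2) \cup b_1 \in \mathcal{B}$) is correct, and your circuit proof of that lemma is sound: $B_1 \cup b_1$ is dependent, any circuit $C$ in it contains $b_1$, and $C$ cannot be contained in $(B_1 \cap B_2) \cup \{b_1\} \subseteq B_2$, so it meets $B_1 - B_2$ in some $b_2$.

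The only step you should make explicit is why $(B_1 - b_2)\cup b_1$ is in fact a basis after ``breaking the circuit.'' Removing one element of one circuit does not by itself guarantee independence unless you know $C$ is the \emph{only} circuit of $B_1 \cup b_1$; so either invoke the uniqueness of the fundamental circuit of $b_1$ with respect to the basis $B_1$, or argue via spanning: since $b_2 \in C$ and $C - b_2 \subseteq (B_1 \cup b_1) - b_2$, the element $b_2$ lies in the span of $(B_1 \cup b_1) - b_2$, so this set spans $E$ and, having exactly $r$ elements, is a basis. Either is a one-line fix. Your concluding remarks --- $\mathcal{B}^{**} = \mathcal{B}$, the corank $|E| - r$ as the rank of $M^*$, and the rank formula $r_{M^*}(A) = |A| - r_M(E) + r_M(E-A)$ --- are correct and consistent with the surrounding discussion in the paper (e.g.\ the statement that loops of $M$ are coloops of $M^*$).
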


\begin{proposition}
For any matroid $M$ we have $(M^*)^* = M$.
\end{proposition}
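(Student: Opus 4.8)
The statement $(M^*)^* = M$ is immediate from the definition of the dual. The plan is to unwind the definitions: if $M = (E, \mathcal{B})$, then by definition $M^* = (E, \mathcal{B}^*)$ where $\mathcal{B}^* = \{E - B \, : \, B \in \mathcal{B}\}$, and applying the same construction again gives $(M^*)^* = (E, \mathcal{B}^{**})$ where $\mathcal{B}^{**} = \{E - B' \, : \, B' \in \mathcal{B}^*\}$. So the only thing to check is the set equality $\mathcal{B}^{**} = \mathcal{B}$ of collections of subsets of $E$.

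First I would verify $\mathcal{B}^{**} \subseteq \mathcal{B}$: an element of $\mathcal{B}^{**}$ has the form $E - B'$ for some $B' \in \mathcal{B}^*$, and $B'$ in turn has the form $E - B$ for some $B \in \mathcal{B}$; then $E - B' = E - (E - B) = B \in \mathcal{B}$, using that $B \subseteq E$ so that $E - (E - B) = B$. Conversely, for $\mathcal{B} \subseteq \mathcal{B}^{**}$: given $B \in \mathcal{B}$, the set $E - B$ lies in $\mathcal{B}^*$ by definition, hence $E - (E - B) = B$ lies in $\mathcal{B}^{**}$. This establishes $\mathcal{B}^{**} = \mathcal{B}$, and therefore $(M^*)^* = (E, \mathcal{B}) = M$.

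There is essentially no obstacle here; the one point that deserves a word is the purely set-theoretic identity $E - (E - B) = B$ for $B \subseteq E$, which is what makes the involution work, together with the fact (guaranteed by Proposition/Definition on duality, which already asserts $M^*$ is a matroid) that $\mathcal{B}^*$ is a legitimate basis system so that iterating the construction is meaningful. No appeal to the basis exchange axiom (B1)--(B2) is needed for this particular claim — matroid-hood of each dual is already granted by the preceding Proposition/Definition, so the corollary is genuinely just the observation that complementation within $E$ is an involution on subsets.
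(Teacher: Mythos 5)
Your proof is correct and is exactly the argument the paper has in mind: the paper states this proposition without proof precisely because, once the Proposition/Definition guarantees that $M^* = (E, \mathcal{B}^*)$ is a matroid, the identity reduces to the set-theoretic fact that $E - (E - B) = B$ for $B \subseteq E$. Nothing further is needed.
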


Note that the loops of a matroid $M$ are the coloops of $M^*$.

\begin{propdef} (Deletion, contraction)
For a matroid $M=(E, {\mathcal{I}})$ and $s \in E$, let
\[
{\mathcal{I}}' = \{I \in {\mathcal{I}} \, : \, s \notin I\}, \qquad \qquad {\mathcal{I}}'' = 
\begin{cases}
 \{I \subseteq E-s \, : \, I \cup s \in I\}, \quad & \textrm{ if $s$ is not a loop}\\
{\mathcal{I}}, \quad & \textrm{ if $s$ is a loop}
\end{cases}
\]
Then $M \backslash s = (E-s, {\mathcal{I}}')$ and $M / s = (E-s, {\mathcal{I}}'')$ are also matroids. They are called the \textbf{deletion} of $s$ from $M$ (or the \textbf{restriction} of $M$ to $E-s$) and %$M / e = (E-e, {\mathcal{I}}'')$ is a matroid called 
the \textbf{contraction} of $s$ from $M$.
\end{propdef}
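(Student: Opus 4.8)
\emph{Proof proposal.} The statement to verify is simply that the set systems $(E-s,\mathcal{I}')$ and $(E-s,\mathcal{I}'')$ satisfy the independence axioms (I1)--(I3); the definitional part requires nothing further. I would handle the deletion first. Here the point is that $\mathcal{I}'$ is precisely the collection of members of $\mathcal{I}$ that are contained in $E-s$, so all three axioms are inherited from $M$ with essentially no work: $\emptyset\in\mathcal{I}$ and $s\notin\emptyset$ give (I1); if $J\in\mathcal{I}'$ and $I\subseteq J$ then $I\in\mathcal{I}$ by (I2) for $M$ and $s\notin I$, giving (I2); and if $I,J\in\mathcal{I}'$ with $|I|<|J|$, the element $j\in J-I$ furnished by (I3) for $M$ lies in $J\subseteq E-s$, so $I\cup j$ is again a member of $\mathcal{I}$ avoiding $s$, i.e.\ of $\mathcal{I}'$. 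This settles $M\backslash s$.

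For the contraction I would first dispose of the loop case: if $s$ is a loop, then no independent set of $M$ contains $s$, since otherwise (I2) would force $\{s\}\in\mathcal{I}$; hence $\mathcal{I}=\mathcal{I}'$ and $M/s=M\backslash s$, which is already a matroid. So assume $s$ is not a loop, i.e.\ $\{s\}\in\mathcal{I}$. Then (I1) holds because $\emptyset\cup s=\{s\}\in\mathcal{I}$, so $\emptyset\in\mathcal{I}''$. For (I2), if $J\in\mathcal{I}''$ and $I\subseteq J$, then $I\cup s\subseteq J\cup s\in\mathcal{I}$, so $I\cup s\in\mathcal{I}$ by (I2) for $M$ and $I\subseteq E-s$, hence $I\in\mathcal{I}''$. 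For (I3), given $I,J\in\mathcal{I}''$ with $|I|<|J|$, note that $I\cup s$ and $J\cup s$ are independent in $M$ with $|I\cup s|<|J\cup s|$ since $s\notin I\cup J$; applying (I3) for $M$ to these lifted sets yields $x\in(J\cup s)-(I\cup s)=J-I$ with $(I\cup x)\cup s=(I\cup s)\cup x\in\mathcal{I}$, that is, $I\cup x\in\mathcal{I}''$ with $x\in J-I$. This completes the verification.

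There is essentially no genuine obstacle here; the only places calling for a little care are the loop case in the contraction (where one must observe that a loop never belongs to an independent set, so the two clauses of the definition of $\mathcal{I}''$ agree with $\mathcal{I}'$) and the bookkeeping in the exchange step for $M/s$, where (I3) is applied not to $I$ and $J$ but to the augmented sets $I\cup s$ and $J\cup s$, whose cardinalities differ by exactly $|J|-|I|$. I would also remark, without relying on it, that once duality is available one can alternatively obtain the contraction as $M/s=(M^{*}\backslash s)^{*}$, thereby reducing it to the deletion case; but the direct argument above is shorter and keeps the section self-contained.
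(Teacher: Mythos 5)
Your proof is correct and complete: the direct verification of (I1)--(I3) for $\mathcal{I}'$ is immediate, and for $\mathcal{I}''$ you handle the two delicate points properly, namely that in the loop case no independent set contains $s$ (so $\mathcal{I}''=\mathcal{I}=\mathcal{I}'$ and $M/s=M\backslash s$), and that in the non-loop case the exchange axiom must be applied to the lifted sets $I\cup s$ and $J\cup s$, whose cardinalities still differ, yielding an exchange element in $J-I$. The paper itself states this Proposition/Definition without proof, deferring such verifications to standard references, and the argument you give is precisely the standard one; your closing remark that contraction could instead be obtained from deletion via duality, $M/s=(M^{*}\backslash s)^{*}$, is also valid but, as you note, would make the treatment less self-contained.
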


\begin{propdef} (Minors) Let $M$ be a matroid on $E$.
\begin{enumerate}
\item
Deletion and contraction commute; that is, for all $s \neq t$ in $E$,
\[
(M \backslash s) \backslash  t = (M\backslash t)\backslash s, \quad (M/s)/t = (M/t)/s, \quad (M/s)\backslash t = (M\backslash t)/s.
\]
\item
Let the \textbf{deletion} $M \backslash S$ (resp. \textbf{contraction} $M/S$) of a subset $S \subseteq E$ from $M$ be the successive deletion (resp. contraction) of the individual elements of $S$. 
Their rank functions are
\[
r_{M\backslash S}(A) = r(A), \qquad \qquad r_{M/S}(A) = r(A \cup S) - r(S) \qquad \qquad (A \subseteq E-S)
\]
Let a \textbf{minor} of $M$ be a matroid of the form $M / S \backslash T$ for disjoint $S, T \subseteq E$.
\item
Deletion and contraction are dual operations; that is, for all $S \subseteq E$,
\[
(M\backslash S)^* = M^*/S.
\]
\end{enumerate}
\end{propdef}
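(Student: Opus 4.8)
The statement to prove is the "Propdef" on minors, consisting of three parts: (1) deletion and contraction commute, (2) the rank functions of $M \backslash S$ and $M/S$ are as stated, and (3) deletion and contraction are dual operations. I would organize the proof around the three parts, exploiting the fact that it suffices to verify each assertion for the deletion/contraction of single elements and then extend inductively.

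The plan is to work primarily with rank functions, since the rank function determines the matroid and the rank axioms (R1)--(R3) are the easiest to manipulate. First I would record the single-element rank formulas: for a non-loop $s$, one checks directly from the independent-set definitions that $r_{M \backslash s}(A) = r_M(A)$ for $A \subseteq E - s$, and $r_{M/s}(A) = r_M(A \cup s) - 1 = r_M(A \cup s) - r_M(s)$; when $s$ is a loop the contraction formula also holds since $r_M(s) = 0$ and adjoining a loop never changes rank. With these in hand, part (2) follows by a straightforward induction on $|S|$: deleting the elements of $S$ one at a time never changes ranks of subsets of $E - S$, giving $r_{M \backslash S}(A) = r_M(A)$; and iterating the contraction formula, with the telescoping $r_M((A \cup s_1 \cup \cdots \cup s_k)) - r_M(s_1 \cup \cdots \cup s_k)$ collapsing correctly, gives $r_{M/S}(A) = r_M(A \cup S) - r_M(S)$. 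I should note in passing that this also shows $M \backslash S$ and $M/S$ really are matroids, since the claimed functions visibly satisfy (R1)--(R3) (e.g. $r_{M/S}$ is monotone, submodular, and bounded because $r_M$ is).

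For part (1), commutativity of deletion and contraction, the cleanest route is again via rank functions on the common ground set $E - \{s,t\}$. Using the single-element formulas, $r_{(M\backslash s)\backslash t}(A) = r_M(A) = r_{(M\backslash t)\backslash s}(A)$; $r_{(M/s)/t}(A) = r_M(A \cup s \cup t) - r_M(s \cup t)$, which is symmetric in $s$ and $t$; and $r_{(M/s)\backslash t}(A) = r_{M/s}(A) = r_M(A \cup s) - r_M(s) = r_{(M\backslash t)/s}(A)$. Since matroids are determined by their rank functions, equality of rank functions gives equality of matroids. (A minor subtlety: the single-element contraction formula I want to use requires the element not be a loop in the ambient matroid; but a loop of $M$ remains a loop of $M \backslash t$, and in the loop case both sides reduce to the $\mathcal{I}$-preserving definition, so the formulas still match — I would spell this out.) Then the well-definedness of "minor" $M/S \backslash T$ follows since by commutativity all contractions and deletions can be performed in any order.

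For part (3), duality $(M \backslash S)^* = M^*/S$, I would first reduce to a single element $s$ and then induct. The clean argument is through bases. By definition $M^*/s$ (for $s$ not a coloop of $M$, i.e. not a loop of $M^*$) has independent sets $\{I \subseteq E - s : I \cup s \in \mathcal{I}(M^*)\}$; translating this through $\mathcal{B}(M^*) = \{E - B : B \in \mathcal{B}(M)\}$ and the corank formula, one identifies the bases of $M^*/s$ with $\{B - s : B \in \mathcal{B}(M), s \in B\} \cup \cdots$, and separately computes the bases of $(M \backslash s)^*$ as complements (within $E - s$) of bases of $M \backslash s$; a short case analysis on whether $s$ is a loop, coloop, or neither of $M$ shows the two basis families coincide. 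Alternatively — and this is the version I would actually present, as it avoids casework — I would just verify equality of rank functions: the corank function of any matroid $N$ on ground set $F$ is $r_{N^*}(A) = |A| - r_N(F) + r_N(F - A)$, so plugging in $N = M \backslash S$ on $F = E - S$ gives $r_{(M\backslash S)^*}(A) = |A| - r_M(E - S) + r_M((E-S) - A) = |A| - r_M(E-S) + r_M(E - S - A)$, and plugging $N = M^*$, contracting $S$, gives $r_{M^*/S}(A) = r_{M^*}(A \cup S) - r_{M^*}(S)= [|A \cup S| - r_M(E) + r_M(E - A - S)] - [|S| - r_M(E) + r_M(E - S)] = |A| + r_M(E - A - S) - r_M(E - S)$, and these two expressions agree. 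This last computation is the one place where I expect to need care with the bookkeeping of the several $r_M$ terms, but it is purely mechanical; there is no real obstacle, and the main "work" of the whole proof is simply setting up the single-element rank formulas correctly and then letting induction and the rank-function characterization of matroids do everything else.
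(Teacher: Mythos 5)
Your proposal is correct; note, though, that the paper itself offers no proof of this Proposition/Definition --- it is stated as a standard fact with the proofs deferred to the references (Oxley, Welsh) --- so there is no ``paper proof'' to compare against. Your route via rank functions is exactly the standard textbook argument: establish $r_{M\backslash s}(A)=r_M(A)$ and the uniform formula $r_{M/s}(A)=r_M(A\cup s)-r_M(s)$ for single elements, telescope to get part (2), deduce part (1) (and the order-independence implicit in the definition of $M\backslash S$ and $M/S$) by comparing rank functions on the common ground set, and verify part (3) by computing both corank functions. Two small remarks. First, the caveat you flag about loops is not actually needed: as you observe, $r_M(s)=0$ for a loop, so the single formula $r_{M/s}(A)=r_M(A\cup s)-r_M(s)$ covers all cases and no case split is required in the commutativity computation. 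Second, the one ingredient you use without proof is the corank formula $r_{N^*}(A)=|A|-r_N(F)+r_N(F\setminus A)$; since the paper defines $M^*$ only through its bases, a complete write-up should include its short derivation, namely $r_{N^*}(A)=\max_{B\in\mathcal{B}(N)}|A\setminus B|=|A|-\min_{B\in\mathcal{B}(N)}|A\cap B|=|A|-\bigl(r_N(F)-r_N(F\setminus A)\bigr)$, where the last equality comes from extending a maximal independent subset of $F\setminus A$ to a basis. With that inserted, the argument is complete.
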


\begin{propdef} (Direct sum, connected components)
\begin{enumerate}
\item
If $M=(E_1, {\mathcal{I}}_1)$ and $M_2 = (E_2, {\mathcal{I}}_2)$ are matroids on disjoint ground sets, then ${\mathcal{I}}=\{I_1 \cup I_2 \, : \, I_1 \in {\mathcal{I}}_1, I_2  \in {\mathcal{I}}_2\}$ is the collection of independent sets of a matroid $M_1 \oplus M_2 = (E_1 \cup E_2, {\mathcal{I}})$, called the \textbf{direct sum} of $M_1$ and $M_2$.
\item
Any matroid $M$ decomposes uniquely as a direct sum $M=M_1 \oplus \cdots \oplus M_c$. The ground sets of $M_1, \ldots, M_c$ are called the \textbf{connected components} of $M$. If $c=1$, $M$ is \textbf{connected}.
\end{enumerate}
\end{propdef}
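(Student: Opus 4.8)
The statement to prove is the Proposition/Definition about direct sums and connected components, which has two parts: (1) that $M_1 \oplus M_2$ as defined is indeed a matroid, and (2) that any matroid decomposes uniquely into connected components, with the $c=1$ case being the definition of connectedness.

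For part (1), the plan is to verify the independence axioms (I1)--(I3) for the collection $\mathcal{I} = \{I_1 \cup I_2 : I_1 \in \mathcal{I}_1, I_2 \in \mathcal{I}_2\}$ on the disjoint union $E_1 \cup E_2$. Axiom (I1) is immediate since $\emptyset = \emptyset \cup \emptyset$. Axiom (I2) follows because if $I = I_1 \cup I_2 \in \mathcal{I}$ and $J \subseteq I$, then $J = (J \cap E_1) \cup (J \cap E_2)$ with $J \cap E_i \subseteq I_i$, hence each piece is independent in $M_i$ by (I2) for $M_i$. For (I3), given $I = I_1 \cup I_2$ and $J = J_1 \cup J_2$ in $\mathcal{I}$ with $|I| < |J|$, we have $|I_1| + |I_2| < |J_1| + |J_2|$, so WLOG $|I_1| < |J_1|$; apply (I3) for $M_1$ to find $j \in J_1 - I_1$ with $I_1 \cup j \in \mathcal{I}_1$, and then $(I_1 \cup j) \cup I_2 = I \cup j \in \mathcal{I}$. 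Since $E_1, E_2$ are disjoint, $j \notin I_2$ and $j \in J - I$, completing the verification. It is also worth recording (though not strictly needed) that the rank function of $M_1 \oplus M_2$ satisfies $r(A) = r_1(A \cap E_1) + r_2(A \cap E_2)$, which follows directly from the definition.

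For part (2), the plan is to define an equivalence relation on $E$ (or rather, a partition), and show the resulting blocks give the desired decomposition. The cleanest route: declare $e \sim f$ if $e = f$ or there is a circuit of $M$ containing both $e$ and $f$. The circuit elimination axiom (C3) is exactly what is needed to show transitivity: if $C_1$ is a circuit containing $e, g$ and $C_2$ is a circuit containing $g, f$, then... this requires a standard but slightly delicate argument (one typically needs to choose $C_1, C_2$ and iterate (C3), or invoke the known fact that this relation is transitive). Let the equivalence classes be $E_1, \ldots, E_c$. One then shows $M = M|_{E_1} \oplus \cdots \oplus M|_{E_c}$ by checking that a set is independent in $M$ iff its intersection with each $E_i$ is independent in $M|_{E_i}$ --- the nontrivial direction uses that no circuit of $M$ straddles two classes (by definition of $\sim$, since all elements of a circuit are mutually related). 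Uniqueness: any direct sum decomposition $M = N_1 \oplus \cdots \oplus N_d$ must have each circuit of $M$ contained in a single ground set of some $N_j$ (a circuit is connected in the $\sim$ sense within a summand and cannot cross summands since circuits of a direct sum are circuits of a single factor), so the partition $\{E(N_1), \ldots, E(N_d)\}$ refines --- and is in fact equal to --- the partition into $\sim$-classes. Hence the components are well-defined, and $c = 1$ means $E$ is a single $\sim$-class, which is the stated definition of connected.

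The main obstacle I expect is the transitivity of the relation $e \sim f$, i.e., establishing that ``being in a common circuit'' is transitive. This is the one genuinely non-formal step; it is a classical lemma in matroid theory whose proof requires a careful induction using the circuit elimination axiom (C3) (choosing, among all circuits through $e$ and $f$, one of minimal size and deriving a contradiction if it also fails to link through $g$). Everything else --- the axiom checks for the direct sum, the matching of independent sets to their pieces, and the uniqueness argument --- is routine bookkeeping that I would not spell out in full. Since this is a Proposition/Definition in a survey, I would likely state the transitivity as the key lemma, give the (C3)-based argument in a sentence or two, and cite \cite{f.Oxley} for the details, keeping the exposition brief and in line with the style of the surrounding text.
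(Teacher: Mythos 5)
The paper states this Proposition/Definition without proof, in keeping with its convention of deferring such structural facts to \cite{f.Oxley}, so there is no in-text argument to compare against; your proposal supplies the standard textbook proof, and it is essentially correct. The verification of (I1)--(I3) for the direct sum is fine, and you are right that the only non-formal step in part (2) is the transitivity of the relation ``$e\sim f$ iff $e=f$ or some circuit contains both,'' which is proved by a careful use of circuit elimination; citing \cite{f.Oxley} for that lemma is appropriate at the level of this survey.

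Two small corrections on the uniqueness part. First, the refinement goes the other way from what you wrote: since every circuit of a direct sum lies inside a single summand, each $\sim$-class is contained in a single $E(N_j)$, so the $\sim$-class partition \emph{refines} the partition $\{E(N_1),\dots,E(N_d)\}$, not vice versa. Second, equality of the two partitions -- and hence uniqueness -- only holds if one reads the statement as asserting a unique \emph{finest} decomposition, i.e.\ one in which every summand is connected; grouping two components gives another, coarser direct-sum decomposition, so uniqueness fails without that reading. To finish, one also needs the easy observation that the restriction of $M$ to a single $\sim$-class is indecomposable, which follows from the same ``circuits do not straddle summands'' fact applied inside that restriction. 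With these adjustments your argument is complete and is the standard route to this result.
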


The matroid polytope of $M$ has $\dim(P_M) = n-c$ where $n$ is the number of elements of $M$ and $c$ is the number of connected components.

\bigskip

\subsection{\textsf{{A few structural results}}} \label{f.sec:matroidstructure}
Although they will not be strictly necessary in our discussion of enumerative aspects of matroids, it is worthwhile to mention a few important structural results. In particular, in this section we explain the meaning of duality and minors for various families of matroids.

\bigskip
\noindent \textsf{\textbf{Duality.}}

$\bullet$
The dual of a linear matroid is linear. 
When $M$ is the matroid of a subspace $V$ of a vector space $W$, $M^*$ is the matroid of its orthogonal complement $V^\perp$. For that reason, $M^*$ is sometimes called the \textbf{orthogonal matroid} of $M$.  

 $\bullet$
The dual of a graphical matroid is not necessarily graphical; however, matroid duality is a generalization of graph duality.
We say a graph $G$ is  \textbf{planar} if it can be drawn on the plane so that its edges intersect only at the endpoints. Such a drawing is called a \textbf{plane} graph. The dual graph $G^*$ is obtained by putting a vertex $v^*$ inside each region of $G$ (including the ``outside region"), and joining $v^*$ and $w^*$ by an edge labelled $e^*$ if the corresponding regions of $G$ are separated by an edge $e$ in $G$. This construction is exemplified in Figure \ref{f.fig:dualgraph}. The graph $G^*$ depends on the drawing of $G$, but its matroid does not: if $G$ is planar, we have $M(G^*) = M(G)^*$.

\begin{figure}[ht]
 \begin{center}
  \includegraphics[scale=.8]{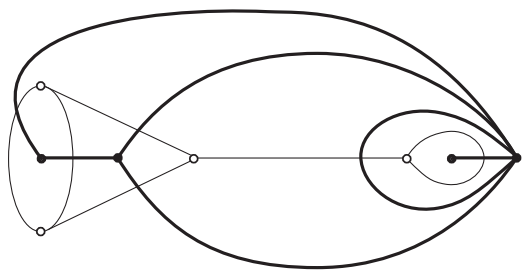}
  \caption{ \label{f.fig:dualgraph}
A plane graph $G$ and its dual $G^*$.}
 \end{center}
\end{figure}

 $\bullet$
Embarrassingly, no one knows whether the dual of an algebraic matroid is always algebraic. This illustrates how poorly we currently understand this family of matroids.

 $\bullet$
 Cotransversal matroids are precisely the duals of  transversal matroids.

 $\bullet$
The Catalan matroid is self-dual. The dual of a lattice path matroid is a lattice path matroid.

$\bullet$
Linear programming duality can be framed and generalized in the context of \emph{oriented matroids}; see \cite{f.OMs} for details. 

\begin{corollary}
If a connected planar graph has $v$ vertices, $e$ edges, and $f$ faces, then $v-e+f=2$.
\end{corollary}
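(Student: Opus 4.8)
The statement to prove is Euler's polyhedron formula $v - e + f = 2$ for a connected planar graph, presented here as a corollary of the matroid duality statement $M(G^*) = M(G)^*$ for plane graphs. The plan is to deduce it purely from rank bookkeeping in the two matroids $M(G)$ and $M(G^*)$, using only facts already available in the excerpt: that a graphical matroid $M(G)$ on a connected graph with $v$ vertices has rank $v-1$ (its bases are the spanning trees, which have $v-1$ edges, as stated in Example 2 of Section~\ref{f.sec:matroidexamples} and again in Section~\ref{f.sec:spanningtrees}), that the dual matroid on ground set $E$ satisfies $r_{M^*}(E) = |E| - r_M(E)$ (immediate from $\mathcal{B}^* = \{E - B : B \in \mathcal{B}\}$), and that $M(G^*) = M(G)^*$ for a plane graph $G$.

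First I would set up the counting. Let $G$ be a connected plane graph with $v$ vertices, $e$ edges, and $f$ faces (including the outer face). Both $G$ and $G^*$ have ground set of size $e$, since each edge of $G$ gives rise to exactly one dual edge $e^*$. Next, I would observe that $G^*$ is connected: every region of $G$ touches the edges bounding it, so the construction of $G^*$ visibly yields a connected graph (any two dual vertices are joined by a path crossing a sequence of adjacent regions). Moreover $G^*$ has exactly $f$ vertices, one per region of $G$. Applying the rank formula for graphical matroids of connected graphs to each of $G$ and $G^*$ gives $r(M(G)) = v - 1$ and $r(M(G^*)) = f - 1$.

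Now the key step: combine these with duality. Since $M(G^*) = M(G)^*$, and the ground set has size $e$, we get
\[
f - 1 = r(M(G^*)) = r(M(G)^*) = e - r(M(G)) = e - (v-1).
\]
Rearranging yields $v - e + f = 2$, as desired. That is essentially the whole argument; the only points requiring a sentence of justification are the connectedness of $G^*$ and the identification of its vertex count with $f$, both of which are clear from the definition of the dual graph given before Figure~\ref{f.fig:dualgraph}.

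The main obstacle, such as it is, is conceptual rather than technical: one must be careful that the corollary is being proved from the matroid statement (which the excerpt asserts), not circularly from Euler's formula itself — in particular one should not invoke the Matrix-Tree theorem's Euler-characteristic input or anything topological, but only the purely combinatorial rank identities. I would also remark (optionally) that the argument in fact only needs $M(G^*) = M(G)^*$ at the level of ranks of the full ground set, i.e. that a spanning tree of $G$ is the complement of a spanning tree of $G^*$; this weaker fact can be seen directly and makes the proof self-contained even if one is wary of the full matroid-duality claim.
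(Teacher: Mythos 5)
Your argument is correct and is essentially the paper's own proof: both deduce Euler's formula from $r(M(G)) = v-1$, $r(M(G^*)) = f-1$, and the duality identity $r(M) + r(M^*) = e$ coming from $M(G^*) = M(G)^*$. The extra remarks about connectedness of $G^*$ and its vertex count are fine but just make explicit what the paper leaves implicit.
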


\begin{proof}
A basis of our graph $G$ has $v-1$ elements and a basis of its dual graph $G^*$ has $f-1$ elements; so if $M$ is the matroid of $G$, we have  $(v-1)+(f-1)=r(M) + r(M^*) = e$. 
\end{proof}

\bigskip

\noindent \textsf{\textbf{Minors.}}

$\bullet$
The minors of a linear matroid are linear. 
If $M$ is the matroid of a vector configuration $E \subset V$ and ${\mathbf{v}} \in E$, then $M \backslash {\mathbf{v}}$ is the matroid of $E \backslash {\mathbf{v}}$, and $M / {\mathbf{v}}$ is the matroid of $E \backslash {\mathbf{v}}$ modulo ${\mathbf{v}}$; that is, the matroid of $\pi(E \backslash {\mathbf{v}})$, where $\pi: V \rightarrow V/({\mathrm{span }} \, {\mathbf{v}})$ is the canonical projection map.

 $\bullet$
The minors of a graphical matroid are graphical.
If $M$ is the matroid of a graph $G$ and $e=uv$ is an edge, then $M \backslash e$ is the matroid of the graph $G \backslash e$ obtained by removing the edge $e$, and $M/e$ is the matroid of the graph $G/e$ obtained by removing the edge $e$ and identifying vertices $u$ and $v$.

 $\bullet$ \cite[Cor. 6.7.14]{f.Oxley}
The minors of an algebraic matroid are algebraic.

 $\bullet$
Transversal matroids are closed under contraction, but not deletion. Cotransversal matroids are closed under deletion, but not contraction. A \textbf{gammoid} is a deletion of a transversal matroid or, equivalently, a contraction of a cotransversal matroids. Gammoids are the smallest minor-closed class of matroids containing transversal (or cotransversal) matroids.

 $\bullet$ \cite{f.BonindeMierNoy}
Catalan matroids are not closed under deletion or contraction. Lattice path matroids are closed under deletion and contraction.

\bigskip

\noindent \textsf{\textbf{Direct sums and connectivity.}}

$\bullet$
If $M_1$ and $M_2$ are the linear matroids of configurations $E_1 \subset V_1$ and $E_2 \subset V_2$ over the same field ${{\mathbbm{k}}}$, then $M_1 \oplus M_2$ is the matroid of $\{({\mathbf{v}}_1,\textbf{0}) \, : \, {\mathbf{v}}_1 \in E_1\}  \cup \{(\textbf{0}, {\mathbf{v}}_2) \, : \, {\mathbf{v}}_2 \in E_2\}$ in $V_1 \oplus V_2$.

$\bullet$
If $M_1$ and $M_2$ are the matroids of graphs $G_1$ and $G_2$, then $M_1 \oplus M_2$ is the matroid of the disjoint union of $G_1$ and $G_2$. The matroid  of a loopless graph $G$ is connected if and only if $G$ is $2$-connected; that is, $G$ is connected, and there does not exist a vertex of $G$ whose removal disconnects it.

\bigskip
\noindent \textsf{\textbf{Non-representability.}} Although it is conjectured that almost all matroids are not linear and not algebraic, it takes some effort to construct examples of non-linear and non-algebraic matroids. We describe the simplest examples.

\begin{figure}[ht]
 \begin{center}
  \includegraphics[scale=.7]{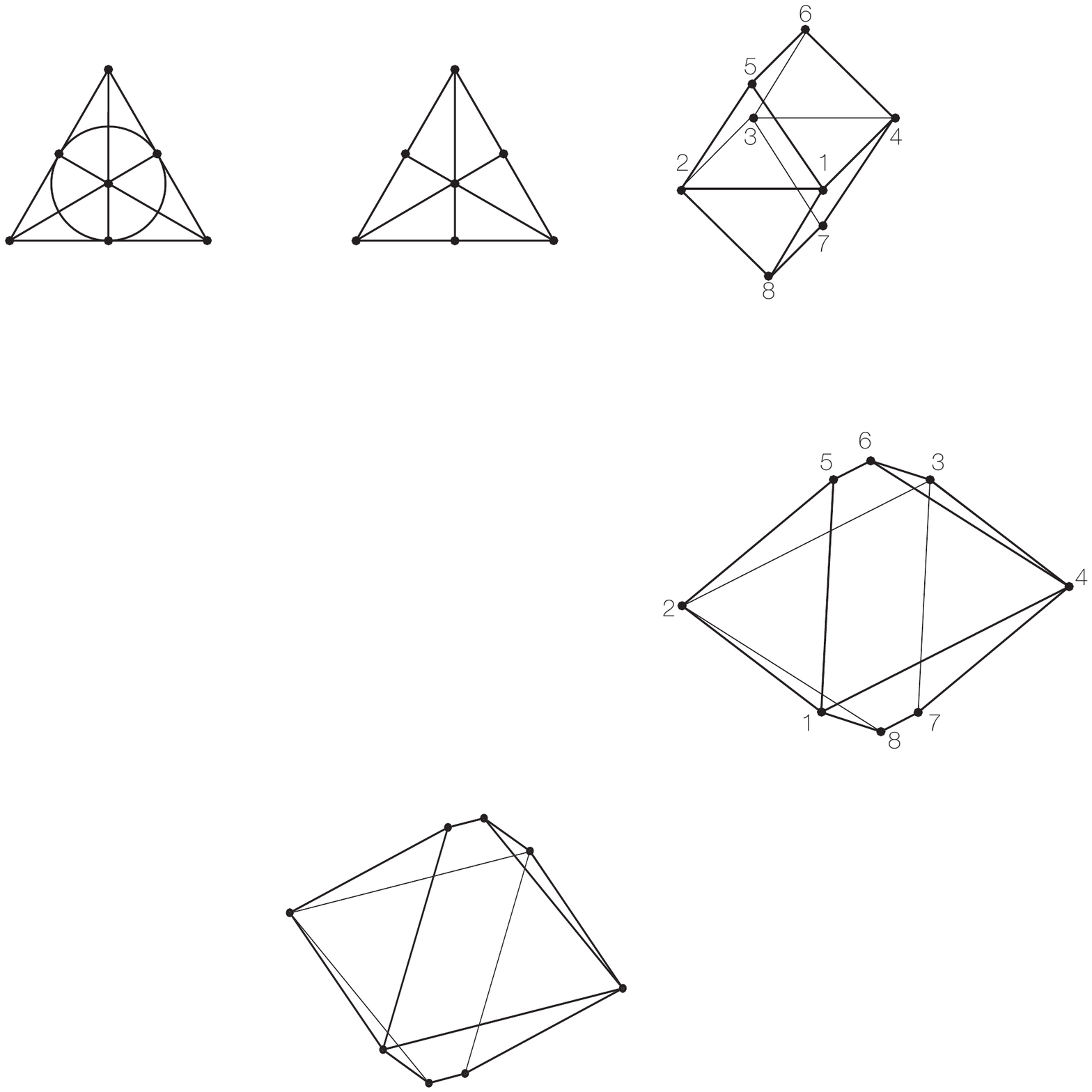}
  \caption{ \label{f.fig:FanoyVamos}
The Fano matroid, the non-Fano matroid $F_7^-$, and the V\'amos matroid $V_8$.}
 \end{center}
\end{figure}

$\bullet$ The \textbf{Fano matroid} $F_7$ is the linear matroid determined by the seven non-zero vectors in ${\mathbb{F}}_2^3$. This matroid is illustrated in the left panel of Figure \ref{f.fig:FanoyVamos}, where an element is denoted by a vertex, and a circuit is denoted by a line or circle joining the points in question. The Fano matroid is representable over ${{\mathbbm{k}}}$ if and only if $\textrm{char}({{\mathbbm{k}}}) = 2$.

$\bullet$ The \textbf{non-Fano matroid} $F_7^-$ is the linear matroid determined by the seven non-zero 0-1 vectors $(0,0,1), (0,1,0), (1,0,0), (0,1,1), (1,0,1), (1,1,0), (1,1,1)$ in ${\mathbb{R}}^3$. It is representable over ${{\mathbbm{k}}}$ if and only if $\textrm{char}({{\mathbbm{k}}}) \neq 2$. 

$\bullet$ The direct sum $F_7 \oplus F_7^-$ is not representable over any field.

$\bullet$ The \textbf{V\'amos matroid} $V_8$  is a matroid on $[8]$ of rank $4$ whose only non-trivial circuits are $1234$, $1456$, $1478$, $2356$, $2378$, as illustrated in the right panel of Figure \ref{f.fig:FanoyVamos}. This matroid is not representable over any field, because in any affine configuration of points with these coplanarities, $5678$ would also be coplanar. The V\'amos matroid is also not algebraic over any field. 
%It is conjectured that asymptotically every matroid has $V_8$ as a minor, and hence is not representable or algebraic over any field. \cite{f.Mayhewetal}

\bigskip
\noindent \textsf{\textbf{Relating various classes.}} Most matroids that we encounter naturally in combinatorics are linear, although this  is usually not clear from their definition. 

$\bullet$ All the examples of Section \ref{f.sec:matroidexamples} are linear, with the possible exception of algebraic matroids.

$\bullet$ Graphical, transversal, cotransversal matroids, and positroids are linear.

$\bullet$ Linear matroids over ${{\mathbbm{k}}}$  are algebraic over ${{\mathbbm{k}}}$.

$\bullet$ Algebraic matroids over ${{\mathbbm{k}}}$ of characteristic $0$ are linear over a field extension of ${{\mathbbm{k}}}$.
%
%$\bullet$ The Fano matroid $F_7$ is linear over ${{\mathbbm{k}}}$ if and only if $\textrm{char}({{\mathbbm{k}}}) = 2$. The non-Fano matroid $F_7^-$ is linear over ${{\mathbbm{k}}}$ if and only if $\textrm{char}({{\mathbbm{k}}}) \neq 2$. Therefore $F_7 \oplus F_7^-$ is not linear.
%
%$\bullet$ The V\'amos matroid is not algebraic. 

$\bullet$ Catalan matroids are Schubert matroids.

$\bullet$ Schubert matroids are lattice path matroids.

$\bullet$ Lattice path matroids are transversal.

$\bullet$ Positroids are gammoids.

\bigskip
\noindent \textsf{\textbf{Obstructions to representability.}} A general question in structural matroid theory is to try to describe a family ${\mathcal{F}}$ of matroids by understanding the ``obstructions" to belonging to ${\mathcal{F}}$. These results are inspired by Kuratowski's Theorem in graph theory, which asserts that a graph is planar if and only if it does not contain the complete graph $K_5$ or the complete bipartite graph $K_{3,3}$ as a minor. Similarly, one seeks a (hopefully finite) list of minors that a matroid must avoid to be in ${\mathcal{F}}$. 

The question that has received the most attention has been that of characterizing the matroids that are representable over a particular field. Here are some results:

 $\bullet$ $M$ is representable over ${\mathbb{F}}_2$ if and only if it has no minor isomorphic to $U_{2,4}$.

 $\bullet$ $M$ is representable over ${\mathbb{F}}_3$ if and only if it has no minor isomorphic to $U_{2,5}, U_{3,5}, F_7,$ or $F_7^*$.

 $\bullet$ $M$ is representable over every field if and only if it has no minor isomorphic to $U_{2,4}, F_7,$ or $F_7^*$.

 $\bullet$ $M$ is graphical if and only if it has no minor isomorphic to $U_{2,4}, F_7, F_7^*, M(K_5)^*,$ or $M(K_{3,3})^*$.

 $\bullet$ For any field ${\mathbb{F}}$ of characteristic zero, there are infinitely many minor-minimal matroids that are not representable over ${\mathbb{F}}$.

 $\bullet$ (Rota's Conjecture) For any finite field ${\mathbb{F}}_q$, there is a finite set $\mathcal{S}_q$ of matroids such that $M$ is representable over ${\mathbb{F}}_q$ if and only if it contains no minor isomorphic to a matroid in $\mathcal{S}_q$.

In 2013, Geelen, Gerards, and Whittle announced a proof of Rota's conjecture. 
In \cite{f.GeelenGerardsWhittle} they write: \emph{``We are now immersed in the lengthy task of writing up our results. Since that process will take a few years, we have written this article offering a high-level preview of the proof".}

\bigskip
\noindent \textsf{\textbf{Asymptotic enumeration.}} The asymptotic growth of various classes of matroids is not well understood. Let $P$ be a matroid property, and let $p(n)$ %(resp. $p_u(n)$) 
be the number of matroids on $[n]$
% labelled 
%(resp. unlabelunlabeleded) elements 
having property $P$. Let $m(n)$ %(resp. $u(n)$) 
be the number of matroids on $[n]$.
 %labelled (resp. unlabelled) elements. 
If  $\lim_{n \rightarrow \infty} p(n)/m(n) = 1$, we say that asymptotically almost every matroid has property $P$. 
%The asymptotic proportion of matroids having property $P$ is $\lim_{n \rightarrow \infty} p(n)/m(n)$, if it exists.
 %or $\lim_{n \rightarrow \infty} p_u(n)/u(n)$,  respectively.  %-- when one of these limits exists, the other one also exists, and they are equal. 

\bigskip

We list some conjectures: \cite{f.Mayhewetal}

$\bullet$ Asymptotically almost every matroid is connected.

$\bullet$  Asymptotically almost every matroid $M$ is \textbf{paving}; that is, 
%A matroid of rank $r$ is \textbf{paving} if 
all its circuits have size $r$ or $r+1$.

$\bullet$ Asymptotically almost every matroid has a trivial automorphism group.

$\bullet$ Asymptotically almost every matroid is not linear.

$\bullet$ Asymptotically almost every matroid of $n$ elements on rank $r$ satisfies $(n-1)/2 \leq r \leq (n+1)/2$.

\bigskip

We also list some results:

$\bullet$  \cite{f.BansalPendavinghVanderpol, f.Knuthmatroids} The number $m_n$ of matroids on $[n]$ satisfies 
\[
n - \frac32 \log n + \frac12 \log \frac2\pi - o(1) \leq 
\log \log m_n \leq n - \frac32 \log n + \frac12 \log \frac2\pi + 1 + o(1)
\]

$\bullet$ \cite{f.Mayhewetal} Asymptotically almost every matroid is loopless and coloopless.

$\bullet$ \cite{f.Mayhewetal} Asymptotically, the proportion of matroids that are connected is at least $1/2$. (It is conjectured to equal $1$.)

$\bullet$ \cite{f.ArdilaRinconWilliams} Asymptotically, the proportion of \textbf{positroids} that are connected is $1/e^2$.

\subsection{\textsf{{The Tutte polynomial}}}\label{f.sec:Tutte}

Throughout this section, let $R$ be an arbitrary commutative ring. A function $f:\textrm{Matroids} \rightarrow R$ is a \textbf{matroid invariant} if $f(M) = f(N)$ whenever $M \cong N$.  Let $L$ and $C$ be the matroids consisting of a single loop and a single coloop, respectively. 

\subsubsection{\textsf{{Explicit definition}}}

The invariant that appears most often in enumerative questions related to matroids is the \textbf{Tutte polynomial}
\begin{equation}\label{f.th:Tutteformula}
T_M(x,y) = \sum_{A \subseteq M} (x-1)^{r-r(A)} \, (y-1)^{|A|-r(A)}.
\end{equation}
When we meet a new matroid invariant, a good first question to ask is whether it is an evaluation of the Tutte polynomial.

\subsubsection{\textsf{{Recursive definition and universality property}}} The ubiquity of the Tutte polynomial is not an accident: it is \emph{universal} in a large, important family of matroid invariants. Let us make this precise. Let $R$ be a ring. Say $f:\textrm{Matroids} \rightarrow R$ is a \textbf{generalized Tutte-Grothendieck invariant} if for every matroid $M$ and every element $e \in M$, we have
\begin{equation} \label{f.eq:T-G}
f(M) =
\begin{cases} 
a f(M \backslash e) +  b f(M / e)  & \textrm{ if $e$ is neither a loop nor a coloop} \\
f(M \backslash e) f(L)  &  \textrm{ if $e$ is a loop}\\
f(M / e) f(C)  & \textrm{ if $e$ is a coloop}
\end{cases}
\end{equation}
for some non-zero constants $a,b \in R$.We say $f(M)$  is a \textbf{Tutte-Grothendieck invariant} when $a=b=1$.

\newpage

\begin{theorem} \label{f.th:Tutterecursion} The Tutte polynomial is a universal Tutte-Grothendieck invariant; namely:
\begin{enumerate}
\item For every matroid $M$ and every element $e \in M$
\[
T_M(x,y) =
\begin{cases} 
 T_{M \backslash e}(x,y) +   T_{M / e}(x,y)  & \textrm{ if $e$ is neither a loop nor a coloop}, \\
y  \, T_{M \backslash e}(x,y)   &  \textrm{ if $e$ is a loop},\\
x \, T_{M / e}(x,y)  & \textrm{ if $e$ is a coloop}.
\end{cases}
\]
\item Any generalized Tutte-Grothendieck invariant is a function of 
 %can be expressed in terms of 
 the Tutte polynomial. Explicitly, if $f$ satisfies (\ref{f.eq:T-G}), then
\[
f(M) = a^{n-r} \, b^r  \, T_M\left(\frac{f(C)}{b}, \frac{f(L)}{a}\right).
\]
where $n$ is the number of elements and $r$ is the rank  of $M$.\footnote{We do not need to assume $a$ and $b$ are invertible; when we multiply by $a^{n-r} \, b^r$, we cancel all denominators.}
\end{enumerate}
\end{theorem}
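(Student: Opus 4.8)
\textbf{Proof plan for Theorem \ref{f.th:Tutterecursion}.}

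The plan is to prove part (1) first by manipulating the explicit sum-over-subsets formula \eqref{f.th:Tutteformula}, and then derive part (2) by induction on the number of elements, using part (1) as the skeleton of the recursion. For part (1), fix an element $e \in M$ that is neither a loop nor a coloop, and split the sum defining $T_M(x,y)$ into the subsets $A$ with $e \notin A$ and those with $e \in A$. For the first group, $r(A)$ computed in $M$ agrees with the rank computed in $M \backslash e$, and $r$ and $|A| - r(A)$ are unchanged, so this group contributes exactly $T_{M\backslash e}(x,y)$. For the second group, write $A = A' \cup e$ with $e \notin A'$; here I would use the rank identities $r_{M/e}(A') = r_M(A' \cup e) - r_M(e) = r_M(A) - 1$ (valid since $e$ is not a loop), together with $r(M/e) = r - 1$ and $|A'| = |A| - 1$, to check that the exponents $(x-1)^{r - r(A)}$ and $(y-1)^{|A| - r(A)}$ match $(x-1)^{(r-1) - r_{M/e}(A')}$ and $(y-1)^{|A'| - r_{M/e}(A')}$. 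This shows the second group contributes $T_{M/e}(x,y)$. The loop and coloop cases are handled similarly: if $e$ is a loop, every $A$ containing $e$ satisfies $r(A) = r(A \backslash e)$ and the $(y-1)^{|A|-r(A)}$ factor picks up exactly one extra power of $y-1$ relative to $A \backslash e$; pairing $A$ with $A \backslash e$ and summing yields $y \cdot T_{M\backslash e}$ (using $T_L(x,y) = y$, which one checks directly from the two-element subset sum on a single loop). Dually, if $e$ is a coloop, it lies in every basis, $r(A \cup e) = r(A) + 1$ for all $A \not\ni e$, and the same pairing gives $x \cdot T_{M/e}$. One should also record the base case $T_M = 1$ when $M$ is the empty matroid, so the recursion is well-founded.

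For part (2), suppose $f$ satisfies \eqref{f.eq:T-G}. I would argue by induction on $n$, the number of elements of $M$. If $n = 0$ then $M$ is empty; here one needs the convention (forced by \eqref{f.eq:T-G} applied to a one-element matroid) that $f(\emptyset) = 1$, matching $a^0 b^0 T_\emptyset = 1$. For the inductive step, pick any $e \in M$. If $e$ is neither a loop nor a coloop, then $f(M) = a f(M\backslash e) + b f(M/e)$; applying the inductive hypothesis to $M \backslash e$ (which has rank $r$ and $n-1$ elements) and to $M/e$ (which has rank $r-1$ and $n-1$ elements), and then using part (1), one computes
\[
f(M) = a \cdot a^{n-1-r} b^r T_{M\backslash e}\!\left(\tfrac{f(C)}{b}, \tfrac{f(L)}{a}\right) + b \cdot a^{n-r} b^{r-1} T_{M/e}\!\left(\tfrac{f(C)}{b}, \tfrac{f(L)}{a}\right) = a^{n-r} b^r T_M\!\left(\tfrac{f(C)}{b}, \tfrac{f(L)}{a}\right),
\]
as desired. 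The loop case uses $f(M) = f(M\backslash e) f(L)$, the inductive hypothesis on $M \backslash e$ (same rank $r$, $n-1$ elements), and the loop case of part (1): $a^{n-1-r} b^r T_{M\backslash e} \cdot f(L) = a^{n-r} b^r \cdot \frac{f(L)}{a} T_{M\backslash e} = a^{n-r} b^r T_M$, since $T_M = y \cdot T_{M \backslash e}$ evaluated at $y = f(L)/a$. The coloop case is dual, using $f(C)/b$ in place of $f(L)/a$ and rank $r-1$ for $M/e$.

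The main obstacle — and the place where care is genuinely required — is the well-definedness implicit in part (2): the recursion \eqref{f.eq:T-G} can be unwound by deleting and contracting elements in many different orders, and one must know a priori that $f$ is consistent, i.e. that such an $f$ exists and is uniquely determined by the constants $f(L), f(C), a, b$. My induction sidesteps this by never choosing an order: it shows that \emph{any} function satisfying \eqref{f.eq:T-G} must equal the claimed formula, so consistency is automatic (existence follows from part (1), which exhibits $T_M$ itself as the prototypical such invariant). The only subtlety to double-check is that the rank arithmetic in part (1) — particularly $r_{M/e}(A') = r_M(A'\cup e) - r_M(\{e\})$ and the behavior of the total rank under deletion/contraction of loops and coloops — is applied correctly; these follow from the rank axioms (R1)–(R3) and the definitions of deletion and contraction given earlier, but they are exactly the steps where a sign or an off-by-one error would propagate. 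Everything else is bookkeeping.
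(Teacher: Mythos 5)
Your proposal is correct and follows essentially the same route as the paper, which proves part (1) by a direct computation from the subset-sum definition (splitting over $e \in A$ versus $e \notin A$ and using the rank identities for deletion and contraction) and part (2) by induction on the number of elements. The only quibble is your remark that $f(\emptyset)=1$ is ``forced'' by \eqref{f.eq:T-G}: it is only forced when $f(L)$ or $f(C)$ is cancellable in $R$, but since you can equally well start the induction at single-element matroids (where the formula holds trivially), this convention-level point does not affect the argument.
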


\begin{proof}[Sketch of Proof] This is a very powerful result with a very simple proof. The first part is a straightforward computation from the definitions. The second statement then follows easily by induction on the number of elements of $M$. 
\end{proof}

More generally, write $T(x,y) = \sum_{i,j} t_{ij}x^iy^j$. If $f$ satisfies $f(M) = f(M \backslash e) + f(M / e)$ when $e$ is neither a loop nor a coloop, then $f(M) = \sum_{i,j} t_{ij} f(C^i \oplus L^j)$, where $C^i \oplus L^j$ denotes the matroid consisting of $i$ coloops and $j$ loops.

\medskip

%\subsubsection{ \textsf{\textbf{Recursive definition.}}} 

%The Tutte polynomial behaves well with respect to duality and direct sums, as follows easily from the definitions:

\begin{proposition} The Tutte polynomial behaves well with respect to duality and direct sums:
\begin{enumerate}
\item For any matroid $M$, $T_{M^*}(x,y) = T_M(y,x)$.
\item For any matroids $M$ and $N$ on disjoint ground sets, $T_{M \oplus N}(x,y) = T_M(x,y)T_N(x,y)$.
\end{enumerate}
\end{proposition}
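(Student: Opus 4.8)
The plan is to prove both statements directly from the explicit definition of the Tutte polynomial in (\ref{f.th:Tutteformula}), since each is essentially a bookkeeping exercise once the right bijection between the summation indices is identified. For part (1), I would start from the defining sum $T_{M^*}(x,y) = \sum_{A \subseteq E} (x-1)^{r^* - r^*(A)} (y-1)^{|A| - r^*(A)}$, where $E$ is the common ground set, $r^* = |E| - r$ is the rank of $M^*$, and $r^*(A)$ is the rank function of the dual. The key identity I need is the standard formula $r^*(A) = |A| - r(E) + r(E - A)$, which follows from the bases-complementation definition of $M^*$ and the submodularity of $r$; I would either cite this as a routine fact or derive it in a sentence. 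Substituting this expression for $r^*(A)$, and reindexing the sum via the involution $A \mapsto E - A$ (so that $A$ and its complement $B = E-A$ swap roles), the exponent of $(x-1)$ becomes $|B| - r(B)$ and the exponent of $(y-1)$ becomes $r - r(B)$, which is exactly the definition of $T_M(y,x)$ with $x$ and $y$ interchanged. The main care required is tracking the four quantities $r^*$, $r^*(A)$, $|A|$, $|E|$ through the substitution without sign or index errors.

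For part (2), I would again work from the explicit sum. A subset $S \subseteq E_M \sqcup E_N$ decomposes uniquely as $S = A \sqcup B$ with $A \subseteq E_M$ and $B \subseteq E_N$, and the rank function of the direct sum satisfies $r_{M \oplus N}(S) = r_M(A) + r_N(B)$ (this is immediate from the independent-sets definition of $M_1 \oplus M_2$), as well as $r(M \oplus N) = r(M) + r(N)$ and $|S| = |A| + |B|$. Therefore every exponent appearing in the summand for $M \oplus N$ splits as a sum of the corresponding exponent for $M$ (evaluated at $A$) and the one for $N$ (evaluated at $B$), so the summand factors as a product, and the double sum over $A$ and $B$ factors as $T_M(x,y) \cdot T_N(x,y)$. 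This is a one-line computation once the rank decomposition is in hand.

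There is essentially no serious obstacle here; the only place where a little thought is needed is supplying (or citing) the dual rank formula $r^*(A) = |A| + r(E-A) - r(E)$ in part (1), and being careful that the reindexing $A \mapsto E-A$ is used correctly. Alternatively, if one prefers, both statements can be derived from the recursive characterization in Theorem \ref{f.th:Tutterecursion}: for duality, one checks that $M \mapsto T_{M^*}(y,x)$ satisfies the same deletion--contraction recursion as $T_M(x,y)$ (using that $(M \backslash e)^* = M^* / e$ and that loops and coloops swap under duality), and for direct sums one checks that $M \mapsto T_M(x,y) T_N(x,y)$, with $N$ fixed, satisfies the recursion in the $M$-variable (noting an element of $M$ is a loop/coloop of $M \oplus N$ iff it is a loop/coloop of $M$); then universality forces equality. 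I would present the direct computational proof as the main argument and perhaps remark on the recursive alternative, since the computational one is shorter and completely self-contained.
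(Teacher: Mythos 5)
Your proposal is correct. The paper states this proposition without giving any proof, so there is nothing to compare against; your direct computation from the defining sum is the standard argument, and the two ingredients it rests on check out: the dual rank identity $r^*(A) = |A| + r(E-A) - r(E)$ combined with the complementation $A \mapsto E-A$ gives exponents $|E-A| - r(E-A)$ for $(x-1)$ and $r(E) - r(E-A)$ for $(y-1)$, which is exactly $T_M(y,x)$, and the additivity of the rank function over a direct sum makes the sum for $T_{M\oplus N}$ factor termwise. The alternative deletion--contraction route you sketch (using $(M\backslash e)^* = M^*/e$ and the loop/coloop swap under duality) is also valid, but the computational proof is indeed the shorter, self-contained choice.
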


\subsubsection{\textsf{{Activity interpretation}}} An unexpected consequence of Theorem \ref{f.th:Tutterecursion} is that the Tutte polynomial has non-negative coefficients; this is not at all apparent from the explicit formula (\ref{f.th:Tutteformula}). As usual, the natural question for a combinatorialist is: what do these coefficients count? The natural question for an algebraist is: what  vector spaces have these coefficients as their dimensions? At the moment, the first question has a nice answer, while the second one does not. 

Fix a linear order $<$ on the elements of $E$. Say an element $i \in B$ is \textbf{internally active} if there is no basis $B-i \cup j$ with $j<i$. Say an element $j\notin B$ is \textbf{externally active} if there is no basis $B-i \cup j$ with $i<j$. These are dual notions: $i$ is internally active with respect to  basis $B$ in $M$ if and only if it is externally active with respect to basis $E-B$ in $M^*$.

\begin{theorem}\cite{f.Crapo, f.Tuttecontribution}
For any linear order $<$ on the ground set of a matroid, let $I(B)$ and $E(B)$ be, respectively, the set of internally active and externally active elements with respect to $B$. Then 
\[
T_M(x,y) = \sum_{B \textrm{ basis}} x^{|I(B)|}y^{|E(B)|}.
\]
\end{theorem}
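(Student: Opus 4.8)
The plan is to prove the activity expansion by showing that the right-hand side $\widetilde{T}_M(x,y) := \sum_{B \textrm{ basis}} x^{|I(B)|}y^{|E(B)|}$ satisfies the same recursion that characterizes the Tutte polynomial in Theorem \ref{f.th:Tutterecursion}.1. Since the ordered-activity statistics are defined once we fix a linear order $<$ on $E$, I would fix $<$ and let $e$ be the \emph{smallest} element of $E$; this choice is convenient because deleting or contracting $e$ leaves the induced order on $E-e$ unchanged, and ``smallest'' interacts cleanly with the definitions of internal and external activity. First I would dispose of the base case: when $E=\emptyset$, there is a unique (empty) basis with no active elements, so $\widetilde{T}=1=T$. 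Then I would handle the two degenerate cases. If $e$ is a coloop, it lies in every basis, and since it is the smallest element it is automatically internally active in every basis and contributes a factor $x$; moreover the remaining activities of $B$ in $M$ match those of $B-e$ in $M/e$, giving $\widetilde{T}_M = x\,\widetilde{T}_{M/e}$. Dually, if $e$ is a loop, it lies in no basis, is externally active in every basis, contributes a factor $y$, and $\widetilde{T}_M = y\,\widetilde{T}_{M\backslash e}$.

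The heart of the argument is the case where $e$ is neither a loop nor a coloop. Here I would partition the bases of $M$ into those containing $e$ and those not containing $e$. The bases not containing $e$ are exactly the bases of $M\backslash e$; the bases containing $e$ are exactly the sets $B'\cup e$ where $B'$ is a basis of $M/e$. The key lemma to establish is that, because $e$ is the minimum element, the activity of an element $f\neq e$ is ``the same'' whether computed in $M$ or in the appropriate minor: for a basis $B\not\ni e$, an element $f\notin B$ with $f\neq e$ is externally active in $M$ iff it is externally active in $M\backslash e$, and similarly for internal activity; and $e$ itself is never internally active for such $B$ (it's not in $B$) and is externally active in $M$ with respect to $B$ iff there is no basis $B-i\cup e$ with $i<e$ — but $e$ is the minimum, so this condition is vacuously true, hence $e$ \emph{is} externally active. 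Wait — that would force an extra factor of $y$ on the deletion term, which is wrong. So the subtlety, and what I expect to be the main obstacle, is that one cannot blithely take $e$ to be the global minimum; the standard fix is to take $e$ to be the \emph{largest} element instead, or to argue more carefully. With $e$ maximal: for a basis $B\not\ni e$, the element $e$ is externally active in $M$ iff there is no basis $B-i\cup e$ with $i<e$; since $e$ is maximal this is a genuine condition, and in fact $e$ fails to be externally active precisely when $B$ is not a basis of $M\backslash e$... no. The correct and classical route is: with $e$ the maximum element, $e$ is externally active with respect to every basis of $M\backslash e$ (because $i<e$ for all $i$, so $B-i\cup e$ being a basis of $M$ would contradict... ). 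I would need to sort this sign/activity bookkeeping out carefully, and this is exactly the delicate step; the cleanest presentation uses the maximum element and the observation that $e$ maximal is never externally active (if $B-i\cup e$ is a basis, then since every $i$ in $B$ satisfies $i<e$, external activity of $e$ fails unless $B\cup e$ spans without $e$ being in a circuit — precisely the coloop case already excluded).

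Once the activity-preservation lemma is correctly formulated, the induction is immediate: the bases $B\not\ni e$ contribute $\widetilde{T}_{M\backslash e}(x,y)$ with no extra factor (since $e$ is not externally active for them, using maximality), the bases $B\ni e$ split as $B'\cup e$ with $B'$ a basis of $M/e$ and contribute $\widetilde{T}_{M/e}(x,y)$ (since $e\in B'\cup e$ is not internally active, again by maximality, as $e$ being internally active would require no basis $B'\cup e - e\cup j = B'\cup j$... which would make it a coloop-like situation), so $\widetilde{T}_M = \widetilde{T}_{M\backslash e}+\widetilde{T}_{M/e}$, matching Theorem \ref{f.th:Tutterecursion}.1. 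By uniqueness of the Tutte-Grothendieck recursion (the second part of that theorem, applied with $a=b=1$, $f(C)=x$, $f(L)=y$), we conclude $\widetilde{T}_M = T_M$, and the resulting expansion is independent of the chosen order $<$. I would remark that the duality statement $i$ internally active in $B$ for $M$ iff $i$ externally active in $E-B$ for $M^*$ gives a consistency check against $T_{M^*}(x,y)=T_M(y,x)$, and that this argument is the matroid generalization of the classical Tutte result for graphs.
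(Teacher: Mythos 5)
Your overall strategy --- verify that $\sum_B x^{|I(B)|}y^{|E(B)|}$ satisfies the deletion--contraction recursion of Theorem \ref{f.th:Tutterecursion} and invoke universality --- is a legitimate classical route, and it differs from the proof in the paper, which never runs this induction: there the theorem is deduced from Crapo's partition of the Boolean lattice $2^E$ into the intervals $[B \backslash I(B), B \cup E(B)]$, together with the rank identity $r((B\backslash J)\cup F) = r - |J|$, after which the expansion falls out of the corank--nullity formula (\ref{f.th:Tutteformula}) in two lines. But as written your proposal has a genuine gap exactly at the step you yourself flag as delicate, and it contains a false assertion there. With the paper's (minimum-based) definitions of activity, the element to remove is indeed the \emph{maximum} element $e$; however your claim that ``$e$ is externally active with respect to every basis of $M\backslash e$'' is the opposite of the truth: if $e$ is maximal and not a loop, and $B$ is a basis with $e \notin B$, then the fundamental circuit of $e$ in $B \cup e$ contains some $i \neq e$, and $B - i \cup e$ is a basis with $i < e$, so $e$ is \emph{never} externally active. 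Dually, if $e \in B$ and $e$ is not a coloop, some $B - e \cup j$ is a basis with $j < e$, so $e$ is never internally active. You do state these correct facts in your final paragraph, but without argument and in contradiction with the earlier sentence, and you explicitly leave the bookkeeping ``to be sorted out,'' so the decisive step is not established.

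Moreover, the activity-preservation lemma, which is the real content of the induction, is stated but never proved: for $e$ maximal and $f \neq e$, the internal and external activity of $f$ with respect to $B$ computed in $M$ agree with the same activities computed in $M \backslash e$ (when $e \notin B$), respectively with respect to $B - e$ in $M/e$ (when $e \in B$). The proof is short once maximality is used twice: first, $e$ can never serve as a witness in the activity conditions for $f$, since any witness must be smaller than $f$ while $e > f$; second, the exchange sets $B - f \cup g$ and $B - i \cup f$ appearing in those conditions automatically avoid $e$ when $e \notin B$ and contain $e$ when $e \in B$, so they are bases of $M$ exactly when they correspond to bases of $M\backslash e$, respectively of $M/e$. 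Your loop and coloop cases are fine --- they hold for $e$ in any position of the order, so having set them up with the minimum is harmless, though it is cleaner to fix $e$ maximal throughout --- and the appeal to Theorem \ref{f.th:Tutterecursion}.2 is fine provided you note that the induction only ever needs the recursion for one chosen element per matroid (always the current maximum), or simply run the induction on $|E|$ directly against $T_M$. With the false claim removed and the lemma above actually proved, your deletion--contraction argument goes through; as it stands, the proposal is an outline whose pivotal step is both misstated and unproven.
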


\begin{proof}[Sketch of Proof.] It is possible to give a standard deletion-contraction proof, by showing that the right hand side is a Tutte-Grothendieck invariant, and applying Theorem \ref{f.th:Tutterecursion}. However, there is a much more enlightening explanation. One may prove \cite{f.Crapo} that the intervals $[B \backslash I(B), B \cup E(B)]$ partition the Boolean lattice $2^E$. In other words, every subset of the ground set can be written uniquely in the form $(B  \backslash J)  \cup F$ where $B$ is a basis, $J \subseteq I(B),$ and $F \subseteq E(B)$. Furthermore, under these assumptions $r((B  \backslash J)  \cup F) = r-|J|$. 
Then (\ref{f.th:Tutteformula}) becomes
\[
T_M(x,y) = \sum_{B \textrm{ basis }} \sum_{J \subseteq I(B)}  \sum_{F \subseteq E(B)} (x-1)^{|J|} (y-1)^{|F|} = \sum_{B \textrm{ basis}} x^{|I(B)|}y^{|E(B)|}
\]
as desired.
\end{proof}

This combinatorial interpretation of the non-negative coefficients of the Tutte polynomial is very interesting, but it would be even nicer to find an interpretation that does not depend on choosing a linear order on the ground set.

In a different direction, Procesi \cite{f.DeConciniProcesiBjorner} asked the following question: Given a matroid $M$, is there a natural bigraded algebra whose bigraded Hilbert polynomial is $T_M(x,y)$? This question is still open. It may be more tractable (and still very interesting) when $M$ is representable; we will see some approximations in Section \ref{f.sec:Tutteevaluations}.

\subsubsection{\textsf{{Finite field interpretation}}} The \textbf{coboundary polynomial} $\overline{\chi}_M(X,Y)$ is the following simple transformation of the Tutte polynomial:
\[
\overline{\chi}_M(X,Y) = (Y-1)^rT_M\left(\frac{X+Y-1}{Y-1}, Y\right). 
\]
%\[
%\overline{\chi}_M(X,Y) = (Y-1)^rT_M\left(\frac{X+Y-1}{Y-1}, Y\right), \qquad 
%\]
%These polynomials are equivalent; i
It is clear how to recover $T(x,y)$ from $\overline{\chi}(X,Y)$. We have the following interpretation of the coboundary (and hence the Tutte) polynomial.

\begin{theorem}\label{f.th:Tuttefinitefield} (Finite Field Method) \cite{f.ArdilaTutte, f.CrapoRota, f.WelshWhittle} Let ${\mathcal{A}}$ be a hyperplane arrangement of rank $r$ in ${\mathbb{F}}_q^d$. For each point $p \in {\mathbb{F}}_q^d$ let $h(p)$ be the number of hyperplanes of ${\mathcal{A}}$ containing $p$. Then
\[
\sum_{p \in {\mathbb{F}}_q^d} t^{h(p)} = q^{d-r} \overline{\chi}(q,t).
\]
\end{theorem}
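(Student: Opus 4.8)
\textbf{Proof plan for Theorem \ref{f.th:Tuttefinitefield}.}

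The plan is to show that the left-hand side, viewed as a function of the matroid of the arrangement, is (up to the easy normalization $q^{d-r}$) a generalized Tutte--Grothendieck invariant in the sense of (\ref{f.eq:T-G}), and then invoke the universality part of Theorem \ref{f.th:Tutterecursion} to identify it with an evaluation of $T_M$ --- which, after unwinding the definition of $\overline{\chi}$, is exactly $q^{d-r}\overline{\chi}(q,t)$. Since only the matroid structure enters, I may freely essentialize and assume the arrangement is essential of rank $r=d$; the factor $q^{d-r}$ simply accounts for the points lying in directions along which the arrangement is constant, each fibre of the projection $\mathbb{F}_q^d \to \mathbb{F}_q^r$ contributing the same value of $t^{h(p)}$. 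So it suffices to prove $\sum_{p \in \mathbb{F}_q^r} t^{h(p)} = \overline{\chi}_{\mathcal{A}}(q,t)$ for essential ${\mathcal{A}}$.

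First I would set up the deletion--contraction recursion for the essential case. Fix a hyperplane $H \in {\mathcal{A}}$ and write $f({\mathcal{A}}) = \sum_{p} t^{h(p)}$. Split the sum according to whether $p \in H$ or $p \notin H$. For $p \notin H$, the hyperplane $H$ does not contribute to $h(p)$, and the number of remaining hyperplanes through $p$ is exactly the count for ${\mathcal{A}}\backslash H$; summing over $p \notin H$ gives $f({\mathcal{A}}\backslash H) - f_H({\mathcal{A}}\backslash H)$ where $f_H$ restricts the sum to $p \in H$. For $p \in H$, we have $h_{\mathcal{A}}(p) = 1 + h_{{\mathcal{A}}/H}(p)$ where we view $p$ as a point of $H \cong \mathbb{F}_q^{r-1}$ and ${\mathcal{A}}/H$ is the contraction; so that piece contributes $t \cdot f({\mathcal{A}}/H)$. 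The term $f_H({\mathcal{A}}\backslash H)$ is the sum over $p \in H$ of $t$ to the number of hyperplanes of ${\mathcal{A}}\backslash H$ through $p$, and each such hyperplane meets $H$ in a hyperplane of $H$ (or equals $H$'s ambient complement, but $H \notin {\mathcal{A}}\backslash H$), so this is again $f({\mathcal{A}}/H)$. Putting it together: $f({\mathcal{A}}) = f({\mathcal{A}}\backslash H) + (t-1) f({\mathcal{A}}/H)$ when $H$ is neither a loop nor a coloop of the arrangement matroid, i.e. when ${\mathcal{A}}\backslash H$ still has rank $r$ and $H$ is not a repeated hyperplane. The boundary cases --- a coloop (a hyperplane whose removal drops the rank) and a loop (which in a hyperplane arrangement would be a ``hyperplane'' equal to all of $\mathbb{F}_q^r$, i.e. a repeated or degenerate hyperplane) --- need to be checked directly: for a coloop the complement of ${\mathcal{A}}$ splits off a free coordinate and one gets $f({\mathcal{A}}) = (q-1+t)\,f({\mathcal{A}}/H)$, while a loop multiplies $f$ by $t$. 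These match the Tutte--Grothendieck recursions (\ref{f.eq:T-G}) with $a=1$, $b=t-1$, $f(L)=t$, $f(C)=q-1+t$.

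Then I would apply Theorem \ref{f.th:Tutterecursion}.2 with these constants to conclude
\[
f({\mathcal{A}}) = 1^{\,n-r}(t-1)^{\,r}\, T_M\!\left(\frac{q-1+t}{t-1},\, \frac{t}{1}\right) = (t-1)^r\, T_M\!\left(\frac{q+t-1}{t-1},\, t\right),
\]
which is precisely $\overline{\chi}_M(q,t)$ by the defining formula for the coboundary polynomial; restoring the $q^{d-r}$ factor from essentialization gives the stated identity. I expect the main obstacle to be the careful bookkeeping of the two boundary cases (loops and coloops of the arrangement matroid) and, more subtly, making sure the recursion is applied only when ${\mathcal{A}}\backslash H$ is a genuine arrangement of the same rank --- the deletion--contraction formula for arrangements has the annoying non-closure issues flagged in Section~\ref{f.sec:charpolyproperties}, since a contraction of an arrangement need not be an arrangement. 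The clean way around this is to prove the identity purely matroid-theoretically: fix the arrangement once, let $M$ be its matroid, and check that $g(M) := q^{-(d-r)} f({\mathcal{A}})$ depends only on $M$ (which follows because $h(p)$ and the combinatorics of fibres are determined by $L_{\mathcal{A}}$), then run the Tutte--Grothendieck argument at the level of matroids where deletion and contraction are always defined. Alternatively, one can bypass the recursion entirely and give a direct double-counting proof: expand $t^{h(p)} = \prod_{H}\bigl(1 + (t-1)\mathbbm{1}[p\in H]\bigr)$, sum over $p$, interchange the sum and the product expansion to get $\sum_{{\mathcal{B}}\subseteq{\mathcal{A}}} (t-1)^{|{\mathcal{B}}|}\,|\{p : p \in \bigcap_{H\in{\mathcal{B}}}H\}| = \sum_{{\mathcal{B}}\subseteq{\mathcal{A}}} (t-1)^{|{\mathcal{B}}|} q^{d-r({\mathcal{B}})}$, and recognize this as $q^{d-r}$ times Whitney's rank-generating-function form of $\overline{\chi}$; this second route is probably the shortest and avoids all the closure headaches, so I would present it as the main argument and mention the Tutte--Grothendieck viewpoint as the conceptual explanation.
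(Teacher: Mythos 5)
Your designated main argument---expanding $t^{h(p)} = \prod_{H \in {\mathcal{A}}}\bigl(1 + (t-1)\cdot[p \in H]\bigr)$, summing over $p$, and interchanging to get $\sum_{{\mathcal{B}} \subseteq {\mathcal{A}}} (t-1)^{|{\mathcal{B}}|} q^{d-r({\mathcal{B}})}$, which is the corank--nullity form of $q^{d-r}\overline{\chi}(q,t)$---is exactly the paper's proof, merely run from the opposite end (the paper starts from $q^{d-r}\overline{\chi}$ and swaps the sums to reach $\sum_p t^{h(p)}$). The deletion--contraction/Tutte--Grothendieck route you sketch first is a reasonable alternative, but since you correctly identify its closure headaches and fall back on the double count, your proof coincides with the paper's.
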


\begin{proof}
%Let $q$ be a power of a large enough prime, so that ${\mathcal{A}}$ reduces correctly over ${\mathbb{F}}_q$. 
%For each ${\mathcal{B}} \subseteq {\mathcal{A}}$, let ${\mathcal{B}}_q$ be the subarrangement of ${\mathcal{A}}_q$ induced by it. 
For each $p \in {\mathbb{F}}_q^n$, let $H(p)$ be the set of hyperplanes
of ${\mathcal{A}}$ that $p$ lies on. Then 
\begin{eqnarray*}
q^{n-r}\,\overline{\chi}_{{\mathcal{A}}}(q,t) &=& 
\sum_{{{\mathcal{B}} \subseteq {\mathcal{A}}}} q^{n-r({\mathcal{B}})} (t-1)^{|{\mathcal{B}}|} 
 =  \sum_{{{\mathcal{B}} \subseteq {\mathcal{A}}}} q^{\dim \cap {\mathcal{B}}} (t-1)^{|{\mathcal{B}}|}
=  \sum_{{{\mathcal{B}} \subseteq {\mathcal{A}}}} |\cap {\mathcal{B}} \,| \, (t-1)^{|{\mathcal{B}}|} 
 \\
& = & \sum_{{{\mathcal{B}} \subseteq {\mathcal{A}}}} \sum_{p \, \in \, \cap {\mathcal{B}}} \, (t-1)^{|{\mathcal{B}}|} 
  = \sum_{p \, \in \, {\mathbb{F}}_q^n} \sum_{{\mathcal{B}} \subseteq H(p)} (t-1)^{|{\mathcal{B}}|} 
 =  \sum_{p \, \in \, {\mathbb{F}}_q^n} (1+(t-1))^{h(p)},
\end{eqnarray*}
as desired. 
\end{proof}

Computing Tutte polynomials is extremely difficult (\#P-complete \cite{f.Welshcomplexity}) for general matroids, but it is still possible in some cases of interest. Theorem \ref{f.th:Tuttefinitefield} is one of the most effective methods for computing Tutte polynomials of (a few) particular arrangements ${\mathcal{A}}$ in ${{\mathbbm{k}}}^d$, as follows.

If the hyperplanes of ${\mathcal{A}}$ have integer coefficients (as most arrangements of interest do), we may use the same equations to define an arrangement ${\mathcal{A}}_q$ over ${\mathbb{F}}_q^d$. If $q$ is a power of a large enough prime, then ${\mathcal{A}}$ and ${\mathcal{A}}_q$ have isomorphic matroids, and hence have the same Tutte polynomial. Then Theorem \ref{f.th:Tuttefinitefield} reduces the computation of $T_{\mathcal{A}}(x,y)$ to an enumerative problem over ${\mathbb{F}}_q^d$, which can sometimes be solved. \cite{f.ArdilaTutte}

\subsection{\textsf{{Tutte polynomial evaluations}}}\label{f.sec:Tutteevaluations} 

Many important invariants of a matroid are generalized Tutte-Grothendieck invariants, and hence are evaluations of the Tutte polynomial. In fact, many results outside of matroid theory fit naturally into this framework. In this section we collect, without proofs, results of this sort in many different areas of mathematics and applications.

One can probably prove every statement in this section by proving that the quantities in question satisfy a deletion--contraction recursion; many of the results also have more interesting and enlightening explanations. The wonderful surveys \cite{f.BrylawskiOxley}, \cite{f.EllisMerino}, and \cite{f.Welshcomplexity} include most of the results mentioned here; we provide references for the other ones. 

\subsubsection{\textsf{{General evaluations}}}  Let $M$ be any matroid of rank $r$.

$\bullet$ The number of independent sets is $T(2, 1)$.

$\bullet$ The number of spanning sets is $T(1,2)$.

$\bullet$ The number of bases is $T(1,1)$.

$\bullet$ The number of elements of the ground set is $\log_2 T(2,2)$.

$\bullet$ The \textbf{M\"obius number} is $\mu(M) = \mu_{L_M}({\widehat{0}}, {\widehat{1}}) = (-1)^r T(1,0)$. 

$\bullet$ We have the formula $\chi_M(q) = (-1)^{r(M)}T(1-q,0)$ for the \textbf{characteristic polynomial}:\footnote{Note that if ${\mathcal{A}}$ is a hyperplane arrangement of rank $r$ in ${{\mathbbm{k}}}^n$ and $M$ is its matroid, then the characteristic polynomial $\chi_{\mathcal{A}}(q)$  defined in Section \ref{f.sec:charpoly} is given by $\chi_{\mathcal{A}}(q) = q^{d-r}\chi_M(q)$.}
\[
\chi_M(q) = \sum_{F \in L_M} \mu(\widehat{0}, F) q^{r - r(F)}.
\]

$\bullet$ The \textbf{beta invariant} of $M$ is defined to be $\beta(M)= [x^1y^0]T(x,y) = [x^0y^1]T(x,y)$.
It has some useful properties. We always have $\beta(M) \geq 0$. We have $\beta(M)=0$ if and only if $M$ is disconnected. We have $\beta(M) = 1$ if and only if $M$ is the graphical matroid of a \textbf{series-parallel} graph; that is, a graph obtained from a single edge by repeatedly applying series extensions (convert an edge $uv$ into two edges $uw$ and $wv$ for a new vertex $w$) and parallel extensions (convert one edge $uv$ into two edges joining $u$ and $v$). There are similar characterizations of the matroids with $\beta(M) \leq 4$. 

\bigskip

The \textbf{independence complex} of $M$ is the simplicial complex consisting of the independent sets of $M$. Recall that the $f$-vector $(f_0, \ldots, f_r)$ and $h$-vector $(h_0, \ldots, h_r)$ are defined so that $f_i$ is the number of sets of size $i$ and $\sum h_ix^{r-i} = \sum f_i(x-1)^{r-i}$. Then:

$\bullet$ 
The reduced Euler characteristic of the independence complex is $T(0,1)$.

$\bullet$ 
 The $f$-polynomial of the independence complex is $\sum_i f_ix^i = x^rT(1+ \frac1x,1)$.

$\bullet$ The $h$-polynomial of the independence complex is $\sum_i h_ix^i = x^rT(\frac1x,1)$.

\noindent The polynomial $\sum_i h_i x^{r-i} = T(x,1)$ is also known as the \textbf{shelling polynomial} of $M$.

There are many other interesting geometric/topological objects associated to a matroid; we mention two. Bj\"orner \cite{f.Bjornerhomology} described the \textbf{order complex of the (proper part of the) lattice of flats} $\Delta(L_M-\{{\widehat{0}},{\widehat{1}}\})$, showing it is a wedge of $T(1,0)$ spheres. 
The \textbf{Bergman fan} $\textrm{Trop}(M) = \{w \in {\mathbb{R}}^E \, : \, \textrm{for every circuit } C,\,  \max_{c \in C} w_c \textrm{ is achieved at least twice}\}$ is the tropical geometric analog of a linear space. %It is $\textrm{Trop}(M) = \{w \in {\mathbb{R}}^E \, : \, \textrm{for every circuit } C,\,  \min_{c \in C} w_c \textrm{ is achieved at least twice}\}$. 
The \textbf{Bergman complex} $\mathcal{B}(M)$ is its intersection with the hyperplane $\sum_i x_i=0$ and the unit sphere $\sum_i x_i^2=1$; it captures all the combinatorial structure of Trop$(M)$. Ardila and Klivans showed $\mathcal{B}(M)$ and $\Delta(L_M-\{{\widehat{0}},{\widehat{1}}\})$ are homeomorphic. \cite{f.ArdilaKlivans} Then:

$\bullet$ \cite{f.Bjornerhomology}
The reduced Euler characteristic of the order complex $\Delta(L_M-\{{\widehat{0}},{\widehat{1}}\})$  is $T(1,0)$. 
 
$\bullet$ \cite{f.ArdilaKlivans}
The reduced Euler characteristic of the Bergman complex $\mathcal{B}(M)$  is $T(1,0)$. 
 
$\bullet$ \cite{f.Zharkov}
The Poincar\'e polynomial of the \emph{tropical cohomology} of $\textrm{Trop}(M)$ is $q^rT(1+1/q,0)$.

\bigskip

One of the most intriguing conjectures in matroid theory is \textbf{Stanley's $h$-vector conjecture} \cite{f.Stanleyh-vector} which states that for any matroid $M$, there exists a set $X$ of monomials such that:

- if $m$ and $m'$ are monomials such that $m \in X$ and $m'|m$, then $m' \in X$,

- all the maximal monomials in $X$ have the same degree,

- there are exactly $h_i$ monomials of degree $i$ in $X$.

\noindent This conjecture has been proved, using rather different methods, for several families: duals of graphic matroids,  \cite{f.Merinocographic}, lattice path matroids \cite{f.Schweig} cotransversal matroids \cite{f.Ohcotransversal}, paving matroids \cite{f.Merinoetal}, and matroids up to rank $4$ or corank $2$ \cite{f.DeLoeraKemperKlee,  KleeSamper}. The general case remains open.

%$\bullet$ \cite{f.ArdilaTutte} Suppose each element of matroid $M$ is deleted with probability $p$, independently of the other elements. Then the expected characteristic polynomial of  $\chi(x)$ of the resulting matroid is $x^{n-r}\overline{chi}(x,p)$.
%
%

\subsubsection{\textsf{{Graphs}}} Let $G=(V,E)$ be a graph with $v$ vertices, $e$ edges, and $c$ connected components. Let $M(G)$ be the matroid of $G$ and $T(x,y)$ be its Tutte polynomial. 

\bigskip

\noindent \textbf{\textsf{Colorings.}}
Recall that the \textbf{chromatic polynomial} $\chi_G(q)$ of a graph, when evaluated at a positive integer $q$, counts the proper colorings of $G$ with $q$ colors.

$\bullet$ We have 
\[
\chi_G(q) = (-1)^{v-c}q^c \, T(1-q,0).
\]

$\bullet$ More generally, for every $q$-coloring $\chi$ of the vertices of $G$, let $h(\chi)$ be the number of improperly colored edges, that is, those whose endpoints have the same color. Then
\begin{equation} \label{f.eq:dichromatic}
\sum_{\chi: V \rightarrow [q]} t^{h(\chi)} = (t-1)^{v-c} q^c \, T\left(\frac{q+t-1}{t-1},t\right)
\end{equation}

\bigskip

\noindent \textbf{\textsf{Flows.}} Fix an orientation of the edges of $G$ and a finite Abelian group $H$ of $t$ elements, such as ${\mathbb{Z}}/t{\mathbb{Z}}$, for some $t \in {\mathbb{N}}$. An \textbf{$H$-flow} is an assignment $f:E \rightarrow H$ of a ``flow" (an element of $H$) to each edge such that at every vertex, the total inflow equals the outflow as elements of $H$. We say $f$ is \textbf{nowhere zero} if $f(e) \neq 0$ for all edges $e$.

$\bullet$ The number of nowhere zero $t$-flows is given by the \textbf{flow polynomial}
\[
\chi^*_G(t) = (-1)^{e-v+c}T(0, 1-t).
\]
In particular, this number is independent of the orientation of $G$. It also does not depend on the particular group $H$, but only on its size.

$\bullet$ More generally, for every $H$-flow $f$ on the edges of $G$, let $h(v)$ be the number of edges having flow equal to $0$. Then
\[
\sum_{f: E \rightarrow H} t^{h(f)} = (t-1)^{e-v+c}T\left(t, \frac{q+t-1}{t-1}\right).
\]

\bigskip

\noindent \textbf{\textsf{Acyclic orientations.}} An \textbf{acyclic orientation} of $G$ is an orientation of the edges that creates no directed cycles. 

$\bullet$ The number of acyclic orientations is $T(2,0)$.

$\bullet$ A \textbf{source} (resp. a \textbf{sink}) of an orientation is a vertex with no incoming (resp. outgoing) edges. For any fixed vertex $w$, the number of acyclic orientations whose unique source is $w$ is $(-1)^{v-c}\mu(M(G)) = T(1,0)$. In particular, it does not depend on the choice of $w$.

$\bullet$ For any edge $e=uv$, the number of acyclic orientations whose unique source is $u$ and whose unique sink is $v$ is the beta invariant $\beta(M(G))$. In particular, it is independent of the choice of $e$.

\bigskip

\noindent \textbf{\textsf{Totally cyclic orientations.}}
A \textbf{totally cyclic orientation} of $G$ is an orientation of the edges such that every edge is contained in a cycle. If $G$ is connected, this is the same as requiring that there are directed paths in both directions between any two vertices.

$\bullet$ The number of totally cyclic orientations is $T(0,2)$.

$\bullet$ Given an orientation $o$ of $G$, the outdegree of a vertex is the number of outgoing edges, and the outdegree sequence of $o$ is $(\textrm{outdeg}(v) \, : \, v \in V)$. The number of distinct outdegree sequences among the orientations of $G$ is $T(2,1)$ 

$\bullet$ For any edge $e$, the number of totally cyclic orientations of $G$ such that every directed cycle contains $e$ is equal to $2\beta(M(G))$. In particular, it is independent of $e$. 

\bigskip

\noindent \textbf{\textsf{Eulerian orientations.}} Recall that an orientation of $G$ is \textbf{Eulerian} if each vertex has the same number of incoming and outgoing edges.
Say $G$ is 4-regular if every vertex has degree $4$. 

$\bullet$ A 4-regular graph $G$ has $(-1)^{v+c}T_G(0,-2)$ Eulerian orientations or \textbf{ice configurations} . (They are in easy bijection with the nowhere-zero ${\mathbb{Z}}_3$-flows.)

\bigskip

\noindent \textbf{\textsf{Chip firing.}} 
Let $G=(V,E)$ be a graph and let $q \in V$ be a vertex called the \textbf{bank}. A \textbf{chip configuration} is a map $\theta: V \rightarrow {\mathbb{Z}}$ with $\theta(v) \geq 0$ for $v \neq q$ and $\theta(q) = -\sum_{v \neq q} \theta(v)$. We think of vertex $v$ as having $\theta(v)$ chips. We say $v \neq q$ is \textbf{ready} if $\theta(v) \geq \deg(v)$, and $q$ is \textbf{ready} if no other vertex is ready. In each step of the \textbf{chip firing game}, a ready vertex $v$ gives away $\deg(v)$ chips, giving $e$ chips to $w$ if there are $e$ edges connecting $v$ and $w$. 

We say a configuration $\theta$ is \textbf{stable} if the only vertex that is ready is $q$. We say $\theta$ is \textbf{recurrent} if it is possible to fire a (non-empty) sequence of vertices subsequently, and return to $\theta$. We say $\theta$ is \textbf{critical} if it is stable and recurrent. One can check that for any starting chip configuration, the chip firing game leads us to a unique critical configuration. The following result is the key to Merino's proof of Stanley's $h$-vector conjecture in the special case of cographic matroids:

$\bullet$ \cite{f.Merinocographic} The generating function for critical configurations is 
\[
\sum_{\theta \textrm{ critical } } y^{-\theta(q)} = y^{|E|-\deg(q)}T_G(1,y).
\]
Chip-firing games have recently gained prominence as that they are intimately connected to the study of divisors on tropical algebraic curves. \cite{f.BakerNorine, f.GathmannKerber}

%
%We define the \textbf{level} of a critical configuration $\theta$ to be $\textrm{level}(\theta) = \sum_{v \neq q} \theta(v) - |E| + \deg(q)$, and we let $c_i$ be the number of critical configurations of level $i$. Then
%$\sum_{i \geq 0} c_iy^i = y^{\deg(q)-|E|}T_G(1,y)$.

\bigskip

%$\bullet$ Suppose each element of matroid $M$ is deleted with probability $p$, independently of the other elements. The probability that the resulting matroid has the same rank as $M$ is $p^{n-r}(1-p)^rT(1,1/p)$.

\noindent \textbf{\textsf{Plane graphs and Eulerian partitions.}} Let $G$ be a connected plane graph. The \textbf{medial graph} $H$ has a vertex on each edge of $G$, and two vertices of $H$ are connected if the corresponding edges of $G$ are neighbors in some face of $G$. This construction is shown in Figure \ref{f.fig:medialgraph}.

\begin{figure}[ht]
 \begin{center}
  \includegraphics[scale=.8]{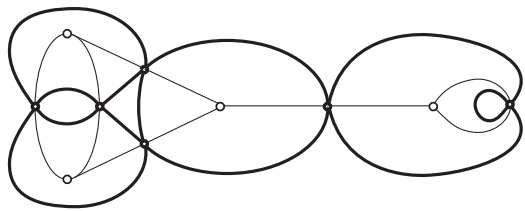}
  \caption{ \label{f.fig:medialgraph}
A connected graph $G$ and its medial graph $H$ in bold.}
 \end{center}
\end{figure}
An \textbf{Eulerian partition} $\pi$ of $H$ is a partition of the edges into closed paths. A closed path may revisit vertices, but it may not revisit edges; and different starting points and orientations are considered to be the same path.
Let $\gamma(\pi)$ be the number of paths in $\pi$. If the edges of a vertex $v$ of $H$ are $e_1,e_2,e_3,e_4$ in clockwise order, say $\pi$ has a \textbf{crossing} at $v$ if one of its paths uses edges $e_1$ and $e_3$ consecutively.

%\noindent o 

$\bullet$ \cite{f.LasVergnas33}
 We have $T_G(-1,-1) = (-1)^e(-2)^{\gamma(\pi_c)-1}$, where $\pi_c$ is the unique \textbf{fully crossing} Eulerian partition of $H$ with a crossing at every vertex.

%\noindent o I
$\bullet$ \cite{f.LasVergnas33}
If $\Pi_{nc}(H)$ is the set of Eulerian partitions of $H$ with no crossings, then 
\[
T_G(x,x) = \sum_{P \in \Pi_{nc}(H)} (x-1)^{\gamma(P)-1}.
\]

In the example above, note that the fully crossing Eulerian partition of $G$ consists of one Eulerian walk, in agreement with $T_G(-1,-1) = 1 = (-1)^6(-2)^{1-1}$. We have
$T_G(x,x) = 3x^4+2x^3$.
%o Let $\Pi_{nc}(H)$ be the set of Eulerian partitions of $H$ consisting of noncrossing paths. Then $T_G(x,x) = \sum_{P \in \Pi_{nc}(H)} (x-1)^{\gamma(P)-1}$.

%o Note that every vertex of $H$ has degree $4$; let $O(H)$ be the set of its \textbf{Eulerian orientations}. In an orientation $o \in O(H)$, call vertex $v$ a saddle if its incident edges are oriented ``in, out, in, out" in cyclic order, and let $s(o)$ be the number of saddles. Then $\sum_{o \in O(H)} 2^{s(o)} = 2T_G(3,3)$

\subsubsection{\textsf{{Hyperplane arrangements}}} \label{f.sec:Tuttearrangements} Let ${\mathcal{A}}$ be an arrangement of $n$ hyperplanes in ${{\mathbbm{k}}}^d$ of rank $r$. Let $V({\mathcal{A}})$ be the complement of ${\mathcal{A}}$ in ${{\mathbbm{k}}}^d$. We restate some theorems from Section \ref{f.sec:charpoly} and present new ones.

$\bullet$ If ${{\mathbbm{k}}} = {\mathbb{R}}$, the number of regions of $V({\mathcal{A}})$ is $T(2,0)$.

$\bullet$ If ${{\mathbbm{k}}}={\mathbb{C}}$, the Poincar\'e polynomial of $V({\mathcal{A}})$ is $q^rT(1+\frac1q,0)$.

$\bullet$ If ${{\mathbbm{k}}}={\mathbb{F}}_q$, the number of elements of $V({\mathcal{A}})$ is $(-1)^rq^{n-r}T(1-q,0)$

$\bullet$ Suppose ${{\mathbbm{k}}} = {\mathbb{R}}$, and consider an affine hyperplane $H$ which is in general position with respect to ${\mathcal{A}}$. Then the number of regions of ${\mathcal{A}}$ which  have a bounded (and non-empty) intersection with $H$ is $(-1)^r|\mu(M)| = T(1,0)$. In particular, it is independent of $H$.

$\bullet$ Suppose ${{\mathbbm{k}}} = {\mathbb{R}}$, and add to ${\mathcal{A}}$ an affine hyperplane $H'$ which is a translation of $H \in {\mathcal{A}}$. The number of bounded regions of ${\mathcal{A}} \cup H'$ is $\beta(M)$. In particular, it is independent of $H$.

\subsubsection{\textsf{{Algebras from vector and hyperplane arrangements}}} There are several natural algebraic spaces related to the Tutte polynomial arising in commutative algebra, hyperplane arrangements, box splines, and index theory; we discuss a few. For each hyperplane $H $ in a hyperplane arrangement ${\mathcal{A}}$ in ${{\mathbbm{k}}}^d$ let $l_H$ be a linear function such that $H$ is given by the equation $l_H(x)=0$. 

$\bullet$ \cite{f.Wagner} 
Let $C_{{\mathcal{A}},0} = {\mathrm{span }} \{\prod_{H \in {\mathcal{B}}} l_H \, : \, {\mathcal{B}} \subseteq {\mathcal{A}}\}$. This is a subspace of a polynomial ring in $d$ variables, graded by degree. Its dimension is $T(2,1)$ and its Hilbert series is 
\[
\textrm{Hilb}(C_{{\mathcal{A}}, 0}; q) = \sum_{j \geq 0} \dim (C_{{\mathcal{A}},0})_j \, q^j = 
q^{n-r}T\left(1+q, \frac1q\right).
\]
 
$\bullet$ \cite{f.ArdilaPostnikov, f.DahmenMicchelli, f.HoltzRon, f.PostnikovShapiro, f.PostnikovShapiroShapiro} More generally, let $C_{{\mathcal{A}},k}$ be the vector space of polynomial functions such that the restriction of $f$ to any line $l$ has degree at most $\rho_{{\mathcal{A}}}(h)+k$, where $\rho_{{\mathcal{A}}}(h)$ is the number of hyperplanes of ${\mathcal{A}}$ not containing $h$. It is not obvious, but true, that this definition of $C_{{\mathcal{A}},0}$ matches the one above. Then %In fact, there are similar (but more complicated) descriptions for $k \geq -2$. We have 
\[
\textrm{Hilb}(C_{{\mathcal{A}}, -1}; q) = q^{n-r}T\left(1, \frac1q\right), \qquad \textrm{Hilb}(C_{{\mathcal{A}}, -2}; q) = q^{n-r}T\left(0, \frac1q\right)
\]
and similar formulas hold for any $k \geq -2$.

$\bullet$ \cite{f.BrionVergne, f.ProudfootSpeyer, f.Teraoalgebras} Let $R({\mathcal{A}})$ be the vector space of rational functions whose poles are in ${\mathcal{A}}$. It may be described as the ${{\mathbbm{k}}}$-algebra generated by the rational functions $\{1/l_H \, : \, H \in {\mathcal{A}}\}$; we grade it so that $\deg(1/l_H)=1$. Then
\[
\textrm{Hilb}(R({\mathcal{A}}); q) = \frac{q^d}{(1-q)^d} T\left( \frac1q, 0 \right).
\]

\subsubsection{\textsf{{Error-correcting codes}}} Suppose we wish to transmit a message over a noisy channel. We might then encode our message in a redundant way, so that we can correct small errors introduced during transmission. 
An \textbf{error-correcting code} is a set $C \subset A^n$ of \textbf{codewords} of length $n$ over an alphabet $A$. The sender encodes each word into a redundant codeword, which they transmit. If the channel is not too noisy and the codewords in $C$ are sufficiently different from each other, the recipient will succeed in recovering the original message. Of course it is useful to have many codewords available under these constraints.

A common kind of error-correcting code is a \textbf{linear code} $C$, which is a $k$-dimensional subspace of a vector space ${\mathbb{F}}_q^n$. The codewords have length $n$ and alphabet ${\mathbb{F}}_q$. The \textbf{support} of a word is the set of non-zero entries. The \textbf{distance} $d({\mathbf{u}},{\mathbf{v}})$ between two words ${\mathbf{u}}$ and ${\mathbf{v}}$ is the number of coordinates where ${\mathbf{u}}$ and ${\mathbf{v}}$ differ. Since ${\mathbf{u}}-{\mathbf{v}} \in {\mathcal{C}}$, we have $d({\mathbf{u}},{\mathbf{v}}) = |{\mathrm{supp }}({\mathbf{u}}-{\mathbf{v}})|$. The minimum \textbf{distance} $d$ in the code is $d=\min_{c \in C} |{\mathrm{supp }}(c)|$. The code $C$ is said to be of type $[n,k,d]$.

%The \textbf{dual code} of $C$ is $C^\perp \ \{{\mathbf{x}} \in {\mathbb{F}}_q^n \, : \, ({\mathbf{x}},\c)=0 \textrm{ for all } \c \in C\}$. 

\medskip

$\bullet$ \cite{f.Greene}
The \textbf{weight enumerator} of a linear code 
%$C$ of rank $k$ and length $n$  
over ${\mathbb{F}}_q$  is $W_C(q,z) = \sum_{c \in C} z^{|{\mathrm{supp }}(c)|}$. Greene  discovered that it can be expressed in terms of the Tutte polynomial; in fact, Theorem \ref{f.th:Tuttefinitefield} is equivalent to the equation
\begin{equation}\label{f.eq:Greene}
W_C(q,z) = (1-z)^k z^{n-k} \,  T_C\left(\frac{1+(q-1)z}{1-z}, \frac1z\right). 
\end{equation}

The \textbf{dual code} of $C$ is $C^\perp = \{{\mathbf{x}} \in {\mathbb{F}}_q^n \, : \, ({\mathbf{x}},{\mathbf{c}})=0 \textrm{ for all } {\mathbf{c}} \in C\}$. An important example is the \textbf{Hamming code} $H$. Let $\mathcal{A}(2,n)$ be the rowspace of the $n \times (2^n-1)$ matrix whose columns are all the nonzero vectors in ${\mathbb{F}}_2^n$, and let $H = \mathcal{A}(2,n)^\perp$. This is a largest possible code (consisting of $2^{2^n-n-1}$ words) of length $2^n-1$ and distance $3$. 

$\bullet$  \cite{f.MacWilliams} MacWilliams's identity elegantly relates the weight enumerators of $C$ and $C^\perp$:
\[
W_{C^\perp}(q,z) = \frac{(1+(q-1)z)^n}{q^k} W_C\left(q, \frac{1-z}{1+(q-1)z}\right)
\]
In light of (\ref{f.eq:Greene}), this is equivalent to Tutte polynomial duality: $T_{C^\perp}(x,y) = T_C(y,x)$. As an application, it is easy to show 
%not too difficult to use (\ref{f.eq:Greene}) to show 
that the weight enumerator of $\mathcal{A}(2,n)$ is $1+(q^n-1)z^{q^{n-1}}$. \cite[Example 3.4]{f.Greene} MacWilliams's identity then gives us the weight enumerator of the Hamming code.

$\bullet$ Another nice result \cite{f.Jaeger} is that if $C$ is a linear code in ${\mathbb{F}}_2^n$ then $T_C(-1,-1) = (-1)^n|C \cap C^\perp|$.

\subsubsection{\textsf{{Probability and statistical mechanics}}} %The Tutte polynomial arises in several standard processes in probability and statistical physics. %We consider three related models. \cite{f.BrylawskiOxley, f.SokalTutte, f.Welshcomplexity}

$\bullet$ \cite{f.BrylawskiOxley}
Suppose each edge of a connected graph $G$ is white with probability $p$ and black with probability $1-p$ (for fixed $0 \leq p \leq 1$), independently of the other edges. The probability that the white graph is connected is given by the \textbf{reliability polynomial} 
\[
R_G(p) = \sum_{\stackrel{A \subseteq E}{A \textrm{ spanning}}} p^{|A|} (1-p)^{|E-A|} = 
 (1-p)^{e-v+1}p^{v-1} \, T_G\left(1,\frac1{1-p}\right).
\]

%$\bullet$ The random-cluster model on $G$ has parameters $0 \leq p \leq 1$ and $q>0$. The partition function is $Z_G(q,p) := \sum_{F \subset E} q^{c(F)} p^|F| =  q^c \overline{\chi}(q,1+p)$  
%

$\bullet$ \cite{f.SokalTutte} The \textbf{random cluster model} depends on parameters $0 \leq p \leq 1$ and $q > 0$. Now we choose a white set of edges at random, and the probability of choosing $A \subset E$ is $p^{|A|} (1-p)^{|E-A|} q^{c(A)}/Z$, where $c(A) = |V|-r(A)$ is the number of components of $A$, and $Z$ is a scaling constant. To know the probability of a particular state, it is fundamental to know the scaling constant $Z$, which is the \textbf{partition function}
\[
Z(p,q) = \sum_{A \subseteq E}  p^{|A|} (1-p)^{|E-A|} q^{c(A)} = p^{v-c}(1-p)^{e-v+c}q^c \, T_G\left(1+\frac{q(1-p)}p, \frac1{1-p}\right).
\]

$\bullet$ \cite{f.Welshcomplexity}
In the \textbf{$q$-state Potts model}, a graph $G=(V,E)$ models a set $V$ of ``atoms" and a set $E$ of ``bonds" between them. (When $q=2$, this is the \textbf{Ising model}.)  Each atom can exist in one of $q$ states or ``spins". Each edge $e=uv$ has an associated \emph{interaction energy} $J_e$ between $u$ and $v$. The energy (or Hamiltonian)  $H$ of a configuration is the sum of $-J_e$ over all edges $e$ whose vertices have the same spin.\footnote{The case $J_e \geq 0$ is called ferromagnetic, as it favors adjacent spins being equal. The case $J_e \leq 0$ is antiferromagnetic.
Here we are assuming that there is no external magnetic field. If there were such a field, it would contribute an additional term to the Hamiltonian, and the direct connection with the Tutte polynomial is no longer valid.} 
A configuration of energy $H$ has Boltzmann weight $e^{-\beta H}$ where  
 $\beta = 1/kT > 0$, $T$ is the temperature, and $k$ is Boltzmann's constant. 

The \textbf{partition function} is the sum of the Boltzmann weights of all configurations:
\[
Z_G(q,\mathbf{w}) = \sum_{\sigma: V \rightarrow [q]} \prod_{\stackrel{e=ij \in E}{\sigma(i) = \sigma(j)}} e^{\beta J_e}=  \sum_{\sigma: V \rightarrow [q]} \prod_{\stackrel{e=ij \in E}{\sigma(i) = \sigma(j)}} (1+w_e)
\]
where $w_e = e^{\beta J_e} - 1$.
If all $J_e$s are equal to $J$ and $w=e^{\beta J}-1$, then (\ref{f.eq:dichromatic}) gives
\[
Z_G(q,w) = w^{v-c}q^c \,  T\left(\frac{q}{w}+1, w+1\right).
\]
In general, $Z_G(q, \mathbf{w})$ is essentially the
%the partition function can be expressed in terms of the %
\textbf{multivariate Tutte polynomial} $\widetilde{Z}_G(q, \mathbf{w})$  
of Section \ref{f.sec:multivariateTutte}:

%a simple computation shows that
\[
q^{-v} Z_G(q, \mathbf{w}) = \sum_{A \subseteq E} q^{-r(A)} \prod_{e \in A} w_e =  \widetilde{Z}_G(q, \mathbf{w}).
\]
%where $\widetilde{Z}_G(q, \mathbf{w})$ is the \textbf{multivariate Tutte polynomial} of Section \ref{f.sec:multivariateTutte}.

%\[
%Z_G(q,J) = \left(e^{\beta J} -1\right)^{v-c}q^c T\left(\frac{q+e^{\beta J}-1}{e^{\beta J}-1}, e^{\beta J}\right)
%\]

\subsubsection{\textsf{{Other applications}}} 

$\bullet$ 
 \cite{f.Jones, f.Welshcomplexity}
A \textbf{knot} is an embedding of a circle in ${\mathbb{R}}^3$. It is a difficult, important question to determine whether a knot can be deformed smoothly (without cutting or crossing segments) to obtain another knot; or even to determine whether a given knot is actually knotted or not. Let \textbf{O} be the unknotted circle in ${\mathbb{R}}^3$.

One common approach is to construct a function $f(K)$ of a knot $K$ which does not change under smooth deformation. If $f(K) \neq f(\textbf{O})$, then $K$ is actually knotted. One such function is the \textbf{Jones polynomial }$V(K)$. If $K$ is an \textbf{alternating knot}, there is a graph $G$ associated to it such that the Jones polynomial of $K$ is an evaluation of the Tutte polynomial of $G$. 
$\bullet$ \cite{f.ReinerTutte} There is a more symmetric finite field interpretation for the Tutte polynomial. Let $M$ be an integer matrix, and let $p$ and $q$ be prime powers such that the matroid of $M$ does not change when $M$ is considered as a matrix over ${\mathbb{F}}_p$ or over ${\mathbb{F}}_q$. Then
\[
T_M(1-p, 1-q) = (-1)^{r(M)} \sum_{\stackrel{{\mathbf{x}} \in \textrm{row}(M), {\mathbf{y}} \in \textrm{ker}(M)}{\textrm{supp}({\mathbf{x}}) \sqcup \textrm{supp}({\mathbf{y}}) = E}} (-1)^{|\textrm{supp}(y)|},
\]
where the rowspace $\textrm{row}(M)$ is considered as a subspace of ${\mathbb{F}}_p^n$ and the kernel $\textrm{ker}(M)$ is considered as a subspace of ${\mathbb{F}}_q^n$, and $\sqcup$ denotes a disjoint union.

$\bullet$ \cite{f.ArdilaCatalan, f.BonindeMierNoy} The Tutte polynomial of the Catalan matroid is 
\[
T_{\mathbf{C}_n}(x,y) = \sum_{P \textrm{ Dyck}} x^{a(P)}y^{b(P)},
\]
where $a(P)$ is the number of upsteps before the first downstep of $P$, and $b(P)$ is the number of times that $P$ returns to the $x$-axis. Since ${\mathbf{C}_n}$ is self-dual, this polynomial is symmetric in $x$ and $y$. 

In fact, the coefficient of $x^iy^j$ in $T_{\mathbf{C}_n}(x,y)$ depends only on $i+j$. It would be interesting to find other families of matroids with this unusual property.

$\bullet$ \cite{f.BonindeMierNoy} More generally, the Tutte polynomial of a lattice path matroid is the sum over the bases $P$ of $x^{a(P)}y^{b(P)}$, where $a(P)$ and $b(P)$ are the numbers of returns to the upper and lower boundary paths, respectively. 

$\bullet$ \cite{f.KornPak} A T-tetromino is a T-shape made of four unit squares. Figure \ref{f.fig:tetris} shows a T-tetromino tiling of an $8 \times 8$ square.
The number of \textbf{T-tetromino tilings} of a $4m \times 4n$ rectangle equals $2T_{L_{m,n}}(3,3)$, where $L_{m,n}$ is the $m \times n$ grid graph. This extends to T-tetromino tilings of many other shapes and, more generally, to coverings of many graphs with copies of the \emph{claw} graph $K_{1,3}$.

\begin{figure}[ht]
 \begin{center}
  \includegraphics[scale=.34]{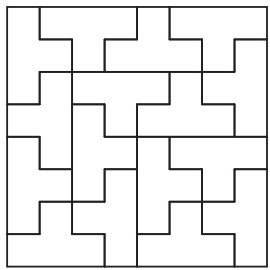}
  \caption{ \label{f.fig:tetris}
A tiling of an $8 \times 8$ rectangle by T-tetrominoes.}
 \end{center}
\end{figure}

%$\bullet$ Let $K_{n+1}$ be the complete graph on $[n+1]$. The Euler number $E_n$, which counts the \textbf{alternating permutations} of $[n]$ (see Section \ref{f.sec:ogfexamples}) is given by $E_n = T_{K_{n+1}}(1,-1)$.

\subsection{\textsf{{Computing the Tutte polynomial}}} 
\label{f.sec:computingTutte}

As we already mentioned, computing the Tutte polynomial of an arbitrary graph or matroid is not a tractable problem. In the language of complexity theory, this is a \#P-complete problem. \cite{f.Welshcomplexity}  However, the results of Section \ref{f.sec:Tutte} allow us to compute the Tutte polynomial of \textbf{some} matroids of interest. We now survey some of the most interesting examples; see \cite{f.MerinoRamirezetal} for others. Some of these formulas are best expressed in terms of the \textbf{coboundary polynomial} 
\[
\overline{\chi}_{{\mathcal{A}}}(X,Y) = (y-1)^{r({\mathcal{A}})} T_{{\mathcal{A}}}(x,y), \qquad \textrm{where } x = \frac{X+Y-1}{Y-1}, \quad y=Y. 
\]
Almost all of them are most easily proved using the Finite Field Method (Theorem \ref{f.th:Tuttefinitefield}) or its graph version (\ref{f.eq:dichromatic}).

$\bullet$ For the uniform matroid $U_{k,n}$ we have $T_{U_{k,n}}(x,y) = \sum_{i=1}^r{n - i - 1 \choose n-r-1} x^i +  \sum_{j=1}^{n-r} {n-j-1 \choose r-1} y^j$.

$\bullet$ If $M^{(k)}$ is the matroid obtained from $M$ by replacing each  element by $k$ copies of itself, then
%parallel elements, and replacing each loop by $k$ loops. Then
\[
T_{M^{(k)}}(x,y) = (y^{k-1} + y^{k-2} + \cdots + y+1)^r T_M\left(\frac{y^{k-1} + y^{k-2} + \cdots + y+x}{y^{k-1} + y^{k-2} + \cdots + y+1}, y^k\right)
\]
This formula is straightforward in terms of coboundary polynomials: $\overline{\chi}_{M^{(k)}}(X,Y) = \overline{\chi}_{M}(X,Y^K)$. For an extensive generalization, see Section \ref{f.sec:multivariateTutte}.

$\bullet$ \cite{f.ArdilaTutte, f.Mphako} Root systems are arguably the most important vector configurations; these highly symmetric arrangements play a fundamental role in many branches of mathematics. For the general definition and properties, see for example \cite{f.Humphreys}; we focus on the four infinite families of \textbf{classical root systems}:
\begin{eqnarray*}
A_{n-1} &=& \{e_i-e_j,\, : \, 1\leq i < j\leq n\} \\
B_n &=& \{e_i -  e_j, e_i + e_j \, : \,  1\leq i <  j\leq n\} \cup \{e_i \, : \, 1 \leq i \leq n\} \\
C_n &=& \{e_i -  e_j, e_i + e_j \, : \,  1\leq i <  j\leq n\} \cup \{2e_i \, : \, 1 \leq i \leq n\} \\
D_n &=& \{e_i -  e_j, e_i + e_j \, : \,  1\leq i <  j\leq n\} 
\end{eqnarray*}
Figure \ref{f.fig:rootsystems} illustrates the two-dimensional examples.

\begin{figure}[ht]
 \begin{center}
  \includegraphics[scale=.6]{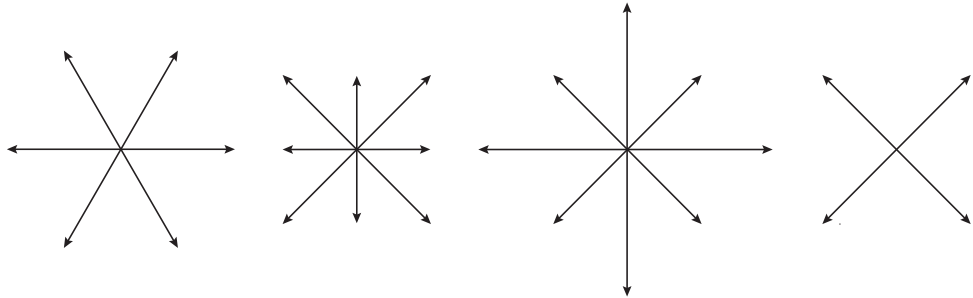}
  \caption{ \label{f.fig:rootsystems}
The root systems $A_2, B_2, C_2,$ and $D_2$, respectively.} \end{center}
\end{figure}

Let the \textbf{deformed exponential function} be 
%$F(\alpha, \beta) = \sum_{n \geq 0} \frac{\alpha^n \, \beta^{n \choose 2}}{n!}$. 
$F(\alpha, \beta) = \sum_{n \geq 0} {\alpha^n \, \beta^{n \choose 2}}/{n!}$. 
Then
 the \textbf{Tutte generating functions} of the infinite families $A$ and $\Phi=B,C,D$:
\[
{T}_A(X,Y,Z) = 1+X \sum_{n \geq 1} \overline{\chi}_{A_{n-1}}(X,Y) \frac{Z^n}{n!}, \quad
{T}_\Phi(X,Y,Z) = \sum_{n \geq 0} \overline{\chi}_{\Phi_n}(X,Y) \frac{Z^n}{n!}
\]
are given by:

\begin{eqnarray*}
T_A &=& F(Z,Y)^X,\\
T_B &=& F(2Z,Y)^{(X-1)/2}F(YZ,Y^2),\\
T_C &=& F(2Z,Y)^{(X-1)/2}F(YZ,Y^2),\\
T_D &=& F(2Z,Y)^{(X-1)/2}F(Z,Y^2).
\end{eqnarray*}
Aside from the four infinite families, there is a small number of exceptional root systems, which are also very interesting objects. Their Tutte polynomials are computed in \cite{f.DeConciniProcesi.Tutte}.

$\bullet$ 
Since the matroid of the complete graph $K_n$ is isomorphic to the matroid of the vector configuration $A_{n-1}$, the first formula above is a formula for the Tutte polynomials of the complete graphs, proved originally by Tutte \cite{f.Tuttedichromatic}. 

Similarly, the coboundary polynomials of the complete graphs $K_{m,n}$ are given by
\[
1 + X \sum_{\stackrel{m,n \geq 0}{(m,n) \neq (0,0)}} \overline{\chi}_{K_{m,n}}(X,Y) \frac{Z_1^m}{m!}\frac{Z_2^n}{n!} = \left(\sum_{m, n \geq 0} Y^{mn} \frac{Z_1^m}{m!} \frac{Z_2^n}{n!}\right)^X.
\]

$\bullet$ \cite{f.BaranyReiner, f.Mphako}.
The Tutte polynomial of the arrangement ${\mathcal{A}}(p,n)$ of all linear hyperplanes in ${\mathbb{F}}_p^n$ is best expressed in terms of a ``$p$-exponential generating function":
\[
\sum_{n \geq 0} \overline{\chi}_{{\mathcal{A}}(p,n)}(X,Y) \frac{u^n}{(p;p)_n}
= 
\frac{(u;p)_\infty}{(Xu;p)_\infty}
\sum_{n \geq 0} Y^{1+p+\cdots + p^{n-1}} \frac{u^n}{(p;p)_n}
\]
where $(a;p)_\infty = (1-a)(1-pa)(1-p^2a)\cdots$ and  $(a;p)_n = (1-a)(1-pa) \cdots (1-p^{n-1}a)$.

$\bullet$ 
Equation (\ref{f.eq:dichromatic}) expresses the Tutte polynomial of a graph $G$ as the number of $q$-colorings of $G$, weighted by the number of improperly colored edges. When $G$ has a path-like or cycle-like structure, one may use the transfer-matrix method 
of Section \ref{f.sec:transfermatrix}
to carry out that enumeration. This was done for the grid graphs $L_{m,n}$ for small fixed $m$ \cite{f.CalkinMerinoetal} and for the wheel graphs $W_n$ \cite{f.ChangShrock} shown in Figure \ref{f.fig:gridwheel}. For example, the Tutte polynomial of the wheel graph is
\[
T_{W_n}(x,y) = \frac1{2^n}\left(b+\sqrt{b^2-4a}\right)^n + \frac1{2^n}\left(b-\sqrt{b^2-4a}\right)^n +xy-x-y-1
\]
where $b=1+x+y$ and $a=xy$.

\begin{figure}[ht]
 \begin{center}
  \includegraphics[scale=1]{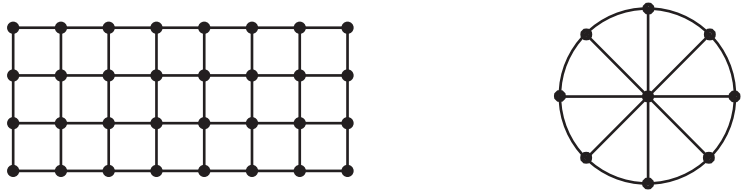}
  \caption{ \label{f.fig:gridwheel}
   The grid graph $L_{4,8}$ and the wheel graph $W_8$.} \end{center}
\end{figure}

\subsection{\textsf{{Generalizations of the Tutte polynomial}}} \label{f.sec:Tuttegeneralizations}

\noindent {\textsf{\textbf{The multivariate Tutte polynomial.}}} \label{f.sec:multivariateTutte}
The \textbf{multivariate Tutte polynomial} of a matroid $M$ is
\[
\widetilde{Z}_M(q; \mathbf{w}) = \sum_{A \subseteq E} q^{-r(A)} \prod_{e \in A} w_e 
\]
where $q$ and $(w_e)_{e \in E}$ are indeterminates. When all $w_e=w$, we get $\widetilde{Z}(q,w) = w^{v-c}q^cT(\frac{q}{w}+1, w+1)$.

Notice that $\widetilde{Z}_M(q; \mathbf{w})$ determines $E$ and $r$, and hence $M$. In fact, among the many definitions of a matroid, we could define $M$ to be $\widetilde{Z}_M(q; \mathbf{w})$. This is a useful encoding of the matroid. 

$\bullet$ We saw in Section \ref{f.sec:Tutteevaluations} that for a graph $G$ and a positive integer $q$, the multivariate Tutte polynomial is equal to the partition function of the $q$-state Potts model on $G$.

$\bullet$ For a vector ${\mathbf{a}} \in {\mathbb{N}}^n$, let $M({\mathbf{a}})$ be the matroid $M$ where each element $e$ is replaced by $a_e$ copies of $e$. It is natural to ask for the Tutte polynomials of the various matroids $M({\mathbf{a}})$. For each ${\mathbf{a}}$,
\[
T_{M({\mathbf{a}})}(x,y) = (x-1)^{r({\mathrm{supp }}({\mathbf{a}}))} \widetilde{Z}_M \left((x-1)(y-1); y^{a_1-1}, \ldots y^{a_n-1}\right).
\]
The generating function for 
%We can also encode 
the Tutte polynomials of \textbf{all} the matroids $M({\mathbf{a}})$ turns out to be equivalent to the multivariate Tutte polynomial, disguised under a change of variable:
\[
\sum_{{\mathbf{a}} in {\mathbb{N}}^n} \frac{T_{M({\mathbf{a}})}(x,y)}{(x-1)^{r({\mathrm{supp }}({\mathbf{a}}))}} w_1^{a_1}\cdots w_n^{a_n}
=
\frac{1}{\prod_{i=1}^n(1-w_i)} \widetilde{Z}_M\left((x-1)(y-1); \frac{(y-1)w_1}{1-yw_1}, \ldots,  \frac{(y-1)w_n}{1-yw_n}\right)
\]

$\bullet$ For an algebraic interpretation of the multivariate Tutte polynomial, see \cite{f.ArdilaPostnikov}.

\bigskip

\noindent {\textsf{\textbf{The arithmetic Tutte polynomial}}} \label{f.sec:arithmeticTutte}
When we have a collection $A \subseteq {\mathbb{Z}}^n$ of integer vectors, there is a variant of the Tutte polynomial that is quite useful. The \textbf{arithmetic Tutte polynomial} is
\[
M_{A}(x,y)= \sum_{B \subseteq {A}} m(B)(x-1)^{r(A)-r(B)}(y-1)^{|B|-r(B)}
\]
where, for each $B \subseteq A$, the \emph{multiplicity} $m(B)$ is the index of ${\mathbb{Z}} B$ as a sublattice of $({\mathrm{span }} \, B) \cap {\mathbb{Z}}^n$. If we use the vectors in $B$ as the  columns of a matrix, then $m(B)$ equals the greatest common divisor of the minors of full rank. This polynomial is related to the zonotope of $A$ \cite{f.Stanleyzonotope, f.D'AdderioMoci.Ehrhart} as follows:

$\bullet$
The volume of the zonotope $Z(A)$ is $M_A(1,1)$.

$\bullet$
The zonotope $Z(A)$ contains $M_A(2,1)$ lattice points, $M_A(0,1)$ of which are in its interior.

$\bullet$
The Ehrhart polynomial of the zonotope $Z(A)$ is $q^rM(1+\frac1q,1)$.

\bigskip

Let $T=\mathrm{Hom}({\mathbb{Z}}^n,G)$ be the group of homomorphisms from ${\mathbb{Z}}^n$ to a multiplicative group $G$, such as the unit circle $\mathbb{S}^1$ or ${\mathbb{F}}^*={\mathbb{F}}\backslash \{0\}$ for a field ${\mathbb{F}}$. 
%We might also consider the unitary characters $T=\Hom(\Lambda,\SS^1)$ where $\SS^1$ is the unit circle in ${\mathbb{C}}$. It is easy to check that $T$ is isomorphic to ${\mathbb{F}}^*$ and to $\SS^1$, respectively. 
Each element $a \in A$ determines a (hyper)torus $T_a = \{t \in T \, : \, t(a) = 1\}$ in $T$. For instance $a=(2,-3,5)$ gives the torus $x^2y^{-3}z^5=1$. Let 
\[
{\mathcal{T}}(A) = \{T_a \, : \, a \in A\}, \qquad R(A) = T \, \setminus \bigcup_{a \in {\mathcal{T}}(A)} T_a
\]
be the \emph{toric arrangement} of $A$ and its complement, respectively. The following results are toric analogs to the theorems about hyperplane arrangements in Section \ref{f.sec:Tuttearrangements}:

$\bullet$ \cite{f.EhrenborgReaddySlone, f.Moci.toric} If $G = \mathbb{S}^1$, the number of regions of $R({{A}})$ in the torus $(\mathbb{S}^1)^r$ is $M_A(1,0)$.

$\bullet$ \cite{f.DeConciniProcesi.toric, f.Moci.toric} If $G={\mathbb{C}}^*$, the Poincar\'e polynomial of $R({{A}})$ is $q^rM_A(2+\frac1q,0)$.

$\bullet$ \cite{f.BrandenMoci, f.ArdilaCastilloHenley} If $G={\mathbb{F}}_{q+1}^*$ where $q+1$ is a prime power, then the number of elements of $R({{A}})$ is $(-1)^rq^{n-r}M_A(1-q,0)$. Furthermore,
\[
\sum_{p \in {\mathbb{F}}_{q+1}^*} t^{h(p)} = (t-1)^r q^{n-r} M_A \left( \frac{q+t-1}{t-1}, t \right)
\]
where $h(p)$ is the number of tori of ${\mathcal{T}}(A)$ that $p$ lies on.

\bigskip

As with ordinary Tutte polynomials, this last result may be used as a \textbf{finite field method} to compute arithmetic Tutte polynomials for some vector configurations $A$. However, at the moment there are very few results along these lines. 
One exception is the case of root systems; the study of their geometric properties motivates much of the theory of arithmetic Tutte polynomials. % was motivated by geometric questions about root systems
Explicit formulas for the arithmetic Tutte polynomials of the classical root systems $A_n, B_n, C_n,$ and $D_n$ are given in \cite{f.ArdilaCastilloHenley}, based on the computation of Example 15 in Section \ref{f.sec:egfs}.

\bigskip

\noindent 

\subsection{\textsf{{Matroid subdivisions, valuations, and the Derksen-Fink invariant}}} \label{f.sec:matroidsubdivisions}

A very interesting recent developments in matroid theory has been the study of matroid subdivisions and valuative matroid invariants. We offer a brief account of some key results and some pointers to the relevant bibliography.

% The Tutte polynomial, and many other matroid invariants, have the remarkable valuative property. 

A \textbf{matroid subdivision} is a polyhedral subdivision $\mathcal{P}$ of a matroid polytope $P_M$ where every polytope $P \in \mathcal{P}$ is itself a matroid polytope. Equivalently, it is a subdivision of $P_M$ whose only edges are the edges of $P_M$.
In the most important case, $M$ is the uniform matroid $U_{d,n}$ and  $P_M$ is the hypersimplex $\Delta(d,n)$. 

\begin{figure}[ht]
 \begin{center}
  \includegraphics[scale=.8]{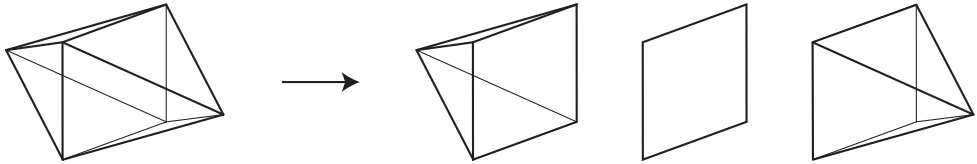}
  \caption{ \label{f.fig:matroidsubdiv}
A matroid subdivision of $\Delta(2,4)$.}
 \end{center}
\end{figure}

Matroid subdivisions arose in algebraic geometry \cite{f.HackingKeelTevelev, f.Kapranov, f.Lafforgue}, in the theory of valuated matroids \cite{f.DressWenzel, f.Murota}, and in tropical geometry \cite{f.Speyer1}. For instance, Lafforgue showed that if a matroid polytope $P_M$ has no nontrivial matroid subdivisions, then the matroid $M$ has (up to trivial transformations) only finitely many realizations over a fixed field ${\mathbb{F}}$. This is one of very few results about realizability of matroids over arbitrary fields.

The connection with tropical geometry is a rich source of examples. Let $K$ be the field of Puiseux series $a_0x^{m/N} + a_1x^{(m+1)/N} + \cdots$ (where $a_0, a_1, \ldots \in {\mathbb{C}}$, $m,N \in {\mathbb{Z}}$ and $N>0$). Roughly speaking, every subspace $L$ of $K^n$ may be \emph{tropicalized}, and there is a canonical way of decomposing the resulting tropical linear space into Bergman fans of various matroids. These matroids give a matroid subdivision of $\Delta(d,n)$. For some nice choices of $L$, the corresponding matroid subdivisions can be described explicitly. For details, see for example \cite{f.ArdilaKlivans, f.Rincon, f.Speyer1}. 

There is also a useful connection with the subdivisions of the product of simplices $\Delta_{d-1} \times \Delta_{n-d-1}$, which are much better understood, as we discussed in Section \ref{f.sec:Ehrhart}. Note that the vertex figure of any vertex of $\Delta(d,n)$ is $\Delta_{d-1} \times \Delta_{n-d-1}$. Every matroid subdivision of $\Delta(d,n)$ then induces a ``local" polyhedral subdivision of $\Delta_{d-1} \times \Delta_{n-d-1}$ at each one of its vertices. Conversely, we can ``cone" any subdivision $\mathcal{S}$ of $\Delta_{d-1} \times \Delta_{n-d-1}$ to get a matroid subdivision that looks like $\mathcal{S}$ at a given vertex. \cite{f.Herrmannetal, f.Rincon}

\bigskip

\noindent {\textbf{\textsf{Enumerative aspects of matroid subdivisions.}}}
Say a face $P$ of a subdivision $\mathcal{P}$ of $P_M$ is \emph{internal} if it is not on the boundary of $P_M$. Let $\mathcal{P}^{\textrm{int}}$ be the set of internal faces of $\mathcal{P}$.
Currently, the most interesting enumerative question on matroid subdivisions is the following.

\begin{conjecture} \label{f.conj:fvector} (Speyer's $f$-vector conjecture \cite{f.Speyer1})
A matroid subdivision of the hypersimplex $\Delta(d,n)$ has at most $\frac{(n-c-1)!}{(d-c)!(n-d-c)!(c-1)!}$ interior faces of dimension $n-c$, with equality if and only if all facets correspond to series-parallel matroids. 
\end{conjecture}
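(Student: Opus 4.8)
\textbf{Proof proposal for Speyer's $f$-vector conjecture (Conjecture \ref{f.conj:fvector}).}

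The plan is to attack this via tropical geometry and an Euler-characteristic/valuation argument, following Speyer's own strategy. First I would pass to the tropical setting: a matroid subdivision $\mathcal{P}$ of $\Delta(d,n)$ corresponds to a tropical linear space $L_\mathcal{P}$ of dimension $d-1$ in tropical projective space $\mathbb{TP}^{n-1}$, whose bounded faces of dimension $n-c$ are exactly the interior faces of $\mathcal{P}$ of the same dimension (via the cell structure discussed in Sections \ref{f.sec:Tutteevaluations} and \ref{f.sec:matroidsubdivisions}, where the Bergman complex $\mathcal{B}(M)$ of each cell matroid $M$ captures its combinatorics). The key invariant to track is the \emph{$f$-vector} $(f_{n-1}, f_{n-2}, \ldots)$ recording the number of interior faces of each dimension. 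The heart of the argument is to show this $f$-vector is a \emph{valuative} invariant of matroid subdivisions in the sense of \cite{f.DerksenFink}: if $P_M = \bigcup P_{M_i}$ is a matroid subdivision, then $f(\mathcal{P})$ can be reconstructed by inclusion–exclusion from the finer pieces, so it extends to a function on the space generated by matroid polytopes modulo subdivision relations.

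Next I would bound each $f_{n-c}$ individually. The strategy is to produce, for each interior face dimension, an explicit linear functional on the Derksen--Fink valuative invariant (the universal valuative invariant $\mathcal{G}$) that computes $f_{n-c}(\mathcal{P})$, and then maximize this functional over the cone of valuative classes of matroid subdivisions of $\Delta(d,n)$. Here the binomial coefficient $\frac{(n-c-1)!}{(d-c)!\,(n-d-c)!\,(c-1)!}$ should emerge as the value on the ``maximal'' subdivision — the one all of whose facets are series-parallel matroid polytopes. To see where this number comes from, I would first verify the conjectured maximum directly for the finest such subdivisions (e.g.\ those arising from generic tropicalizations of subspaces $L \subseteq K^n$ as in Section \ref{f.sec:matroidsubdivisions}), check that series-parallel matroids are precisely the connected matroids $M$ with $\beta(M) = 1$ (this is exactly the characterization stated in Section \ref{f.sec:Tutteevaluations}), and use the beta-invariant/$T(1,0)$ machinery to count the bounded faces of their Bergman fans: a series-parallel matroid on $n-c$ elements of rank $d-c$ should contribute exactly $\beta(M) = 1$ to the top-dimensional count, which is what forces the equality case.

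The main obstacle I anticipate is the \emph{equality characterization}: proving that the bound $\frac{(n-c-1)!}{(d-c)!\,(n-d-c)!\,(c-1)!}$ is attained \emph{only} when every facet is series-parallel. The inequality itself should follow from the concavity/extremality of the relevant functional on the Derksen--Fink cone (one can estimate the contribution of each cell matroid to $f_{n-c}$ by an evaluation related to $\beta$ or to a Tutte-polynomial coefficient, and series-parallel matroids are the extremal ones). But ruling out all other equality configurations requires understanding which matroid subdivisions saturate the bound simultaneously at every cell, and this is a genuinely global constraint rather than a cell-by-cell one; it is likely to require an induction on $n$ using the local structure at each vertex of $\Delta(d,n)$ — namely that the vertex figure is $\Delta_{d-1} \times \Delta_{n-d-1}$ and each matroid subdivision induces a subdivision there, as discussed in Section \ref{f.sec:matroidsubdivisions} — together with a careful analysis of how series-parallel facets propagate under these local restrictions. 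A secondary technical difficulty is making the ``interior faces of a matroid subdivision $\leftrightarrow$ bounded faces of a tropical linear space'' dictionary fully rigorous for non-realizable matroids, where one must work with the abstract tropical linear space of \cite{f.Speyer1} rather than an honest tropicalization.
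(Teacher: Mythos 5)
The statement you are trying to prove is an open conjecture, and your proposal does not close it: it is a research plan whose load-bearing steps are precisely the parts that remain unproven. The paper itself only establishes the case $c=1$, by observing that the Tutte polynomial (hence the beta invariant $\beta$) is a valuation on matroid subdivisions and that $\beta(U_{d,n})={n-2 \choose d-1}$ dominates the number of facets, with equality forcing every facet to be series-parallel. Beyond $c=1$, the known partial result is Speyer's, for subdivisions all of whose cells are realizable in characteristic $0$, and it rests on a geometrically constructed K-theoretic invariant $g_M(t)$ whose nonnegativity is proved only in that realizable setting; the nonnegativity of $g_M(t)$ for arbitrary matroids is exactly what would yield the full conjecture and is still open.

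Concretely, the gaps in your plan are these. First, the number of interior faces of dimension $n-c$ of a subdivision is not bounded cell-by-cell by any obvious quantity: one needs a single valuative invariant whose coefficients simultaneously control all interior face numbers and which is provably nonnegative (this is the role of $g_M(t)$), and you do not supply such an invariant or its nonnegativity. Second, ``maximize the functional over the cone of valuative classes'' presupposes that you know the extreme structure of that cone and that series-parallel cells are extremal for every $c$ at once — but that assertion is the content of the conjecture, not a step toward it; invoking the universality of the Derksen--Fink invariant tells you every valuative invariant factors through $\mathcal{G}$, but gives no inequality. Third, your equality analysis is explicitly deferred (``likely to require an induction\dots''), yet even the inequality is not established by the preceding steps. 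So the proposal reproduces Speyer's strategy in outline but does not advance past the points where it is known to be stuck; as written it is not a proof, and no proof of the general statement exists in the paper or, to date, in the literature.
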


Speyer constructed a subdivision simultaneously achieving the conjectural maximum number of interior faces for all $c$. In attempting to prove this conjecture, he pioneered the study of valuative matroid invariants. We say that a matroid invariant $f$ is \textbf{valuative} if for any matroid subdivision $\mathcal{P}$ of any matroid polytope $P_M$ we have
\[
f(M) = \sum_{P_{M_i} \in \mathcal{P^\textrm{int}}} (-1)^{\dim P_M - \dim P_{M_i}} f(M_i).
\]
%where $\mathcal{P}^o$ denotes the set of faces of $\mathcal{P}$ that are not on the boundary of $P_M$.

There are obvious matroid valuations, such as the volume or (thanks to Ehrhart reciprocity) the Ehrhart polynomial of $P_M$. Much more remarkably, we have the following result.

\begin{theorem}\label{f.th:Tuttevaluative}\cite{f.Speyer1} 
The Tutte polynomial $T_M(x,y)$ is a matroid valuation.
\end{theorem}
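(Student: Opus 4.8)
The plan is to reduce Theorem \ref{f.th:Tuttevaluative} to a statement about a single, very explicit valuative invariant -- namely the function that assigns to a matroid $M$ on ground set $E$ (with a fixed linear order $<$) and a pair of subsets $(I,E')$ the indicator of whether $I$ is the set of internally active elements and $E'$ the set of externally active elements of some basis of $M$. More precisely, for a fixed linear order $<$ on $E$ and fixed $i,j$, let $f_{i,j}(M)$ be the number of bases $B$ of $M$ with $|I(B)| = i$ and $|E(B)| = j$, where $I(B)$ and $E(B)$ are the internally and externally active sets as in the activity interpretation stated earlier in the excerpt. By that interpretation, $T_M(x,y) = \sum_{i,j} f_{i,j}(M) x^i y^j$, so it suffices to show each $f_{i,j}$ is a valuation. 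Since a linear combination of valuations is a valuation, it is in fact enough to show that the single-basis indicator invariants are valuative, or more cleanly, that the map sending $M$ to the formal sum $\sum_{B \text{ basis}} x^{|I(B)|} y^{|E(B)|}$ is a valuation.

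First I would recall the key local feature of a matroid polytope: the edges of $P_M$ are exactly the translates of $\mathbf{e}_p - \mathbf{e}_q$, and for a point $\mathbf{w}$ in the interior of $P_M$ the matroid structure ``near'' a face of a subdivision is recorded combinatorially. The crucial observation is that internal/external activity of a basis $B$, defined via the minimal circuits and cocircuits relative to the order $<$, is determined \emph{locally} in the following sense: for a generic cost vector $\mathbf{c}$ that is constant on each part of a subdivision, the basis of $M$ selected by the greedy algorithm with cost $\mathbf{c}$ can be tracked through the subdivision, and activity is compatible with this. Concretely, I would fix a linear order and use the partition of the Boolean lattice $2^E$ into intervals $[B \setminus I(B), B \cup E(B)]$ over all bases $B$ (this partition was stated in the sketch of proof of the activity interpretation). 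Summing the valuative behaviour of the ``interval counting'' functions $A \mapsto \mathbbm{1}[A \text{ is independent}]$, $A \mapsto \mathbbm{1}[A \text{ is spanning}]$ -- both of which are visibly valuations because independence and spanning-ness of a \emph{fixed} set $A$ are determined by the rank function of $M$, and rank functions add up correctly over matroid subdivisions by inclusion-exclusion on $P_M \cap \{$coordinates of $A$ summing to $|A|\}$ -- one recovers the Tutte polynomial via formula (\ref{f.th:Tutteformula}), $T_M(x,y) = \sum_{A \subseteq E}(x-1)^{r-r(A)}(y-1)^{|A|-r(A)}$. The idea is that for fixed $k,l$, the number of subsets $A$ with $r - r(A) = k$ and $|A| - r(A) = l$ is a valuation because it is a finite integer combination of indicator functions each of which only depends on $(|A|, r(A))$, and $r(A)$ for fixed $A$ is valuative.

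The main step, then, is to prove the base case: for a fixed $A \subseteq E$, the invariant $\rho_A(M) := r_M(A)$ (when $A$ is a subset of the common ground set of all matroids appearing in the subdivision) satisfies $\rho_A(M) = \sum_{P_{M_i} \in \mathcal{P}^{\mathrm{int}}} (-1)^{\dim P_M - \dim P_{M_i}} \rho_A(M_i)$. This is where I expect the real work to be, and I would do it by a geometric/Euler-characteristic argument: identify $r_M(A)$ with a quantity read off from the face of $P_M$ maximizing the linear functional $\sum_{e \in A} x_e$, apply the M\"obius-function / Euler characteristic formula for subdivisions (Theorem \ref{f.th:Mobiusformulas}.11, exactly as used in the proof of Ehrhart reciprocity in Step 2 of the proof of Theorem \ref{f.th:Ehrhart}), and check that the contributions telescope. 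An alternative, possibly cleaner, route is to invoke the fact that the characteristic function $\mathbbm{1}_{P_M}$ of a matroid polytope is an \emph{additive} (valuative) assignment in the polytope algebra -- this is essentially the content of the statement that matroid subdivisions are polyhedral subdivisions -- and then observe that $r_M(A)$ is obtained by integrating an appropriate piecewise-linear function against $\mathbbm{1}_{P_M}$, so it inherits valuativity.

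The hard part will be making the last paragraph's telescoping argument fully rigorous while correctly handling the internal-versus-boundary distinction in the definition of a valuation and the signs $(-1)^{\dim P_M - \dim P_{M_i}}$; this is precisely the subtle point that also appears in Stanley's treatment of interior Ehrhart polynomials, and the cleanest fix is to phrase everything in terms of the indicator functions in the polytope algebra and the inclusion-exclusion identity $\mathbbm{1}_{\mathrm{relint}(P_M)} = \sum_{P_{M_i} \in \mathcal{P}^{\mathrm{int}}} (-1)^{\dim P_M - \dim P_{M_i}} \mathbbm{1}_{P_{M_i}}$, which follows from Theorem \ref{f.th:Mobiusformulas}.11 applied to the face poset of $\mathcal{P}$. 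Once that identity is in hand, valuativity of $r_M(A)$ -- and hence, by the reduction above, of every coefficient of $T_M(x,y)$ -- follows by applying it to the linear functional $\sum_{e \in A} x_e$ and taking the maximum value over the (relative interior of the) polytope, and the theorem is proved.
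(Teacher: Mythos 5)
First, a point of reference: the paper does not actually prove this statement — it is quoted with a citation to Speyer — so your proposal has to stand on its own. Its overall strategy (expand $T_M$ by the corank–nullity formula $\sum_{A}(x-1)^{r-r_M(A)}(y-1)^{|A|-r_M(A)}$, and reduce to a valuativity statement about the rank data $r_M(A)$ for fixed $A$, handled via the inclusion–exclusion identity for indicator functions of the faces of a subdivision) is a genuinely workable route; it is essentially the later approach of Ardila–Fink–Rinc\'on and Derksen–Fink built around the invariant $R(M)=\sum_{A\subseteq E}R_{A,r(A)}$ mentioned in Section \ref{f.sec:matroidsubdivisions}, rather than Speyer's original argument.

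However, there is a genuine gap at the central reduction. Valuations are closed under linear combinations, but not under post-composition with nonlinear functions, and your argument needs exactly such a composition: to make the corank–nullity expansion valuative term by term you need, for each fixed $A\subseteq E$ and each integer $k$, that the level-set indicator $M\mapsto \mathbbm{1}[r_M(A)=k]$ is a valuation. This does not follow from valuativity of the number $r_M(A)$ itself, which is all you argue for (both in the phrase ``$r(A)$ for fixed $A$ is valuative'' and in the final paragraph). Your proposed fix has the same problem: the identity $\mathbbm{1}_{\mathrm{relint}(P_M)}=\sum_{P_{M_i}\in\mathcal{P}^{\mathrm{int}}}(-1)^{\dim P_M-\dim P_{M_i}}\mathbbm{1}_{P_{M_i}}$ is correct, but ``applying it to the linear functional $\sum_{e\in A}x_e$ and taking the maximum'' only yields additivity of the single number $\max_{P_M}\sum_{e\in A}x_e=r_M(A)$, not of the indicators of its level sets. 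The missing lemma — the real content of the theorem — is, for instance, that $\mathbbm{1}[r_M(A)\ge k]$ equals the Euler characteristic of $P_M\cap\{x:\sum_{e\in A}x_e\ge k\}$ and that this Euler characteristic is additive over the subdivision induced on that slice; carrying out that computation, with the sign and internal-versus-boundary bookkeeping, is where the work lies, and the proposal never does it. (The opening gambit via internal/external activities should simply be dropped: activities depend on a choice of linear order and are not compatible with subdivisions basis by basis; you in effect abandon it, but it obscures the argument.)
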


\begin{corollary}
The $f$-vector conjecture is true for $c=1$.
\end{corollary}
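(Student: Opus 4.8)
The plan is to deduce the $c=1$ case of Speyer's $f$-vector conjecture (Conjecture \ref{f.conj:fvector}) from the valuativity of the Tutte polynomial (Theorem \ref{f.th:Tuttevaluative}), using the fact that the $\beta$-invariant $\beta(M)=[x^1y^0]T_M(x,y)$ detects both connectedness and series-parallelness. Recall from Section \ref{f.sec:Tutteevaluations} that $\beta(M)\geq 0$ always, that $\beta(M)=0$ if and only if $M$ is disconnected, and that $\beta(M)=1$ if and only if $M$ is the (graphical) matroid of a series-parallel network; moreover the $c=1$ case of the conjecture asserts that a matroid subdivision of $\Delta(d,n)$ has at most $\frac{(n-2)!}{(d-1)!(n-d-1)!}$ interior facets, with equality iff all facets are series-parallel matroids.

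First I would extract from Theorem \ref{f.th:Tuttevaluative} the single scalar identity obtained by taking the coefficient of $x^1y^0$ on both sides of the valuation formula: for a matroid subdivision $\mathcal{P}$ of $P_M$,
\[
\beta(M) = \sum_{P_{M_i}\in\mathcal{P}^{\mathrm{int}}} (-1)^{\dim P_M - \dim P_{M_i}}\,\beta(M_i).
\]
Now specialize to $M=U_{d,n}$, so $P_M=\Delta(d,n)$ is $(n-1)$-dimensional and full-dimensional matroid cells (facets) of the subdivision are themselves $(n-1)$-dimensional. The key observation is that $\beta(U_{d,n})$ is a small explicit number — indeed $T_{U_{d,n}}$ is computed in Section \ref{f.sec:computingTutte}, and one reads off $\beta(U_{d,n})=[x^1y^0]T_{U_{d,n}}(x,y)=\binom{n-2}{d-1}$ for $2\le d\le n-1$ — which is exactly the conjectured bound. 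So the statement to prove becomes: the number of interior facets of $\mathcal{P}$ is at most $\beta(U_{d,n})$, with equality iff every facet is series-parallel.

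The second step is to control the right-hand side. Every interior face $P_{M_i}$ of codimension $\geq 1$ is the intersection of two or more matroid cells and corresponds to a matroid $M_i$ that is disconnected (its polytope lies in a proper face of some matroid polytope, hence $\dim P_{M_i} < |E| - (\text{number of components})$ forces more than one component whenever $P_{M_i}$ is not a translate of a full hypersimplex facet structure); by the characterization above, $\beta(M_i)=0$ for all such lower-dimensional internal faces. Hence the alternating sum collapses: only the top-dimensional cells (the facets $P_{M_1},\dots,P_{M_k}$) contribute, and since they have the same dimension as $P_M$ the sign is $+1$, giving
\[
\binom{n-2}{d-1} = \beta(U_{d,n}) = \sum_{i=1}^k \beta(M_i).
\]
Because each $\beta(M_i)\geq 1$ (facets of a matroid subdivision of $\Delta(d,n)$ are connected, so their $\beta$ is positive), this immediately yields $k \leq \binom{n-2}{d-1}$, with equality precisely when $\beta(M_i)=1$ for all $i$, i.e. when every facet matroid is series-parallel. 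This is exactly the $c=1$ assertion.

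The main obstacle — and the one point requiring genuine care rather than bookkeeping — is justifying that $\beta(M_i)=0$ for every \emph{internal} face of positive codimension, equivalently that such faces correspond to disconnected matroids. This is where the hypothesis that $\mathcal{P}$ is a \emph{matroid} subdivision (all cells are matroid polytopes, all edges are hypersimplex edges) and the structure of the boundary of $\Delta(d,n)$ must be used: one needs that a matroid cell strictly contained in the interior but of dimension $<n-1$ has its polytope contained in an affine hyperplane of the form $x_i=0$ or $x_i=1$ relative to some containing facet, forcing the corresponding matroid to split as a direct sum. I would handle this by invoking the description of faces of matroid polytopes as matroid polytopes of minors (the facet-defining inequalities $\sum_{e\in S}x_e\le r(S)$ from Example 13 of Section \ref{f.sec:polytopeexamples}): a proper face is $P_{M/S}\oplus P_{M\setminus (E\setminus S)}$-type, hence disconnected, hence has $\beta=0$. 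Once this is pinned down, the proof is the three-line computation above; everything else is a direct application of results already established in the excerpt.
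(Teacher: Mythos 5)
Your proposal is correct and follows essentially the same route as the paper: extract the $x^1y^0$ coefficient of the Tutte valuation to get that $\beta$ is valuative, use $\beta(U_{d,n})=\binom{n-2}{d-1}$, kill the lower-dimensional internal faces because their matroids are disconnected (hence $\beta=0$), and bound the number of facets by $\sum_i\beta(M_i)$ with equality exactly in the series-parallel case. The only difference is that the paper disposes of your ``main obstacle'' more directly via the stated fact $\dim P_M = n - c$ (so any internal cell of dimension $<n-1$ is automatically disconnected), rather than through the face-of-a-matroid-polytope/direct-sum analysis you sketch.
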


\begin{proof}
By Theorem \ref{f.th:Tuttevaluative} the beta invariant  $\beta(M) = [x^1y^0]T_M(x,y)$ is also a matroid valuation.
Recall that $\beta(N)=0$ if  and only if $N$ is not connected (or, equivalently, if $P_N$ is not a facet of $\mathcal{P}$), $\beta(N) = 1$ if and only if $N$ is series-parallel, and $\beta(N) \geq 2$ otherwise. Also $\beta(U_{d,n}) = {n-2 \choose d-1}$. Therefore, in a matroid subdivision of $\Delta(d,n)$ we have
\[
{n-2 \choose d-1} = \beta(U_{d,n}) = \sum_{\stackrel{P_N \in \mathcal{P}}{P \textrm{ facet}}}\beta(N) \geq (\textrm{number of facets of } \mathcal{P}).  
\]
with equality if and only if every facet is a series-parallel matroid.
\end{proof} 

For matroids $M$ realizable over some field of characteristic $0$, Speyer \cite{f.Speyer2} used the K-theory of the Grassmannian to construct another polynomial invariant $g_M(t)$, which he used to prove the f-vector conjecture for matroid subdivisions whose matroids are realizable in characteristic $0$. 

\begin{theorem}\cite{f.Speyer2}
The $f$-vector conjecture is true for matroid subdivisions consisting of matroids which are representable over a field of characteristic $0$.
\end{theorem}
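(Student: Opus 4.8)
The plan is to imitate the proof of the $c=1$ case (the Corollary above), with Speyer's polynomial invariant $g_M(t)$ \cite{f.Speyer2} playing the role that the beta invariant $\beta(M)=[x^1y^0]T_M(x,y)$ plays there. Concretely, one would establish three facts about $g$: that it is a matroid valuation in the sense of Section \ref{f.sec:matroidsubdivisions}; that its order of vanishing at $t=0$ equals the number of connected components of $M$, so that (as with $\beta$) low-dimensional interior faces of a hypersimplex subdivision drop out of the valuation identity; and — the crucial point — that for $M$ representable over a field of characteristic $0$ all the coefficients of $g_M(t)$ are nonnegative, with a distinguished linear functional $c_\star(M)$ extracted from $g_M$ being equal to $1$ exactly when $M$ is a direct sum of series-parallel matroids, and reducing to $\beta(M)$ when $c=1$. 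Given these, the theorem follows by the same bookkeeping as the $c=1$ argument, carried out in the $t$-adic expansion of the valuation identity.

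\emph{Construction and valuativity.} Following \cite{f.Speyer2}, one works inside $K^0(\mathrm{Gr}(d,n))$ with its natural action of the torus $T=({\mathbb{F}}^{*})^n$. A matroid $M$ of rank $d$ on $[n]$ realized by a point $p\in\mathrm{Gr}(d,n)$ determines the class $y(M)=[\mathcal{O}_{\overline{Tp}}]$ of the structure sheaf of the torus orbit closure; by the localization theorem this class is expressed purely in terms of the $T$-fixed points $e_B$ (for $B$ a basis of $M$) and the edge directions at the vertices of the matroid polytope $P_M$, hence depends only on $M$ and not on the chosen $p$. The invariant $g_M(t)$ is obtained by applying to $y(M)$ a fixed ${\mathbb{Z}}[t]$-linear operation (a pushforward to a point, after twisting by powers of the tautological classes), so that the ordinary inclusion–exclusion identity for indicator functions of a matroid subdivision $\mathcal{P}$ of $P_M$ transports, through the localization formulas, to the $K$-theoretic identity $y(M)=\sum_i(-1)^{\dim P_M-\dim P_{M_i}}y(M_i)$, and therefore to $g_M(t)=\sum_i(-1)^{\dim P_M-\dim P_{M_i}}g_{M_i}(t)$, the sum over interior faces. (The analogous, and much easier, statement for the Tutte polynomial is Theorem \ref{f.th:Tuttevaluative}, and $\beta(M)$ is recovered as a coefficient of $g_M$, so this step also re-proves the $c=1$ case.) The direct-sum multiplicativity $g_{M_1\oplus M_2}=g_{M_1}g_{M_2}$ and the explicit shape of $g_{U_{d,n}}$ — in particular the identity $c_\star(U_{d,n})=\frac{(n-c-1)!}{(d-c)!(n-d-c)!(c-1)!}$ — are then straightforward computations from the definition.

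\emph{Positivity via characteristic-$0$ geometry; the main obstacle.} This is the step where realizability is indispensable and the one I expect to be hardest. One must show that for $M$ representable over a field of characteristic $0$ the class $y(M)$ is represented by an honest sheaf all of whose higher cohomology vanishes after the nef twists entering the definition of $g_M$, so that the coefficients of $g_M(t)$ are dimensions of genuine cohomology groups and hence nonnegative. This rests on vanishing theorems of Kawamata–Viehweg type (equivalently, on the orbit closures $\overline{Tp}$, or suitable desingularizations of them, having rational singularities) — tools with no analogue in positive characteristic or for non-realizable matroids, which is precisely why nonnegativity of $g_M$ is only conjectural in general. Alongside this one records the combinatorial normalization: a matroid with more than $c$ connected components is killed by $c_\star$; a matroid with exactly $c$ components $N_1\oplus\cdots\oplus N_c$ has $c_\star(N)=\prod_j\beta(N_j)\ge 1$, with equality iff every $N_j$ is series-parallel. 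Both follow from multiplicativity together with the known facts $\beta(N_j)\ge 1\iff N_j$ connected and $\beta(N_j)=1\iff N_j$ series-parallel.

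\emph{Conclusion.} Apply the valuation identity of the construction step to a matroid subdivision $\mathcal{P}$ of $\Delta(d,n)=P_{U_{d,n}}$ and extract $c_\star$. Since the interior faces of $\mathcal{P}$ of codimension $k$ are exactly the matroid polytopes $P_{M_i}$ with $k+1$ connected components (because $\dim P_{M_i}=n-\#\text{components}$), the positivity step shows that interior faces of dimension $<n-c$ contribute nothing, so the identity reads $\frac{(n-c-1)!}{(d-c)!(n-d-c)!(c-1)!}=\sum_i(-1)^{\dim P_M-\dim P_{M_i}}c_\star(M_i)$ with the sum over interior faces of dimension $\ge n-c$; one then proceeds by induction on $c$ (the base case $c=1$ being the Corollary above), using the inductive bounds to control the finitely many faces of dimension $>n-c$ and the inequality $c_\star(M_i)\ge 1$ on the dimension-exactly-$(n-c)$ faces to obtain the bound $\frac{(n-c-1)!}{(d-c)!(n-d-c)!(c-1)!}$, together with the equality characterization (all such faces series-parallel). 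Since every matroid appearing in the subdivision was assumed representable over a field of characteristic $0$, the positivity step applies to each $M_i$, and Conjecture \ref{f.conj:fvector} holds in the stated case.
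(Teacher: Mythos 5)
The paper does not actually prove this theorem: it cites Speyer \cite{f.Speyer2} and records only that his proof hinges on the nonnegativity of $g_M(t)$, established geometrically for matroids realizable in characteristic $0$. Your outline follows exactly that cited route (the $K$-class of a torus orbit closure in $\mathrm{Gr}(d,n)$, valuativity via localization, positivity from characteristic-$0$ geometry, then coefficient extraction as in the $c=1$ corollary), so the strategy matches the source. But as a proof your text has genuine gaps. The decisive ingredient — that the coefficients of $g_M(t)$ are nonnegative when $M$ is realizable in characteristic $0$ — is the substance of Speyer's paper, and you only gesture at it (``vanishing theorems of Kawamata--Viehweg type''), alongside further unproven normalizations you rely on: that the order of vanishing of $g_M$ at $t=0$ is the number of connected components, that your functional $c_\star$ reduces to $\beta(M)$ for connected $M$, and that $c_\star(N)=1$ exactly for direct sums of series-parallel matroids. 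None of these is established in your argument, and they cannot be waved through, since everything else in the proposal is formal bookkeeping.

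More seriously, the concluding bookkeeping does not close as written. In the valuation identity for a subdivision of $\Delta(d,n)$, an interior face of dimension $n-c'$ enters with sign $(-1)^{c'-1}$. Extracting the coefficient of $t^c$ does kill the faces with more than $c$ components, as you say, but the interior faces of dimension greater than $n-c$ (fewer than $c$ components) generally have nonzero $t^c$-coefficients and appear with alternating signs. Your plan to handle them — ``induction on $c$ to control the finitely many faces of dimension $>n-c$'' — controls only how many such faces there are, not the size of their contributions $[t^c]g_{M_i}$, so nonnegativity plus that induction does not yield the bound
\[
\frac{(n-c-1)!}{(d-c)!\,(n-d-c)!\,(c-1)!}
\]
on the number of faces of dimension $n-c$, nor the series-parallel equality characterization. (For $c=1$ there is no such interference, because the beta invariant vanishes on every disconnected matroid; that is why the corollary's argument is clean and why it does not transfer verbatim.) Deriving the face bounds of Conjecture \ref{f.conj:fvector} from positivity of $g$ requires a more careful argument than the one you sketch, and that derivation, together with the positivity theorem itself, is precisely what \cite{f.Speyer2} supplies.
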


Speyer's proof of this result relies on the nonnegativity of $g_M(t)$, which is proved geometrically for matroids realizable in characteristic $0$. The nonnegativity of $g_M(t)$ for all $M$, which would  prove Conjecture \ref{f.conj:fvector} in full generality, remains open.
 
\bigskip

\noindent {\textbf{\textsf{The Derksen-Fink invariant.}}}
Many other natural matroid functions were later discovered to be valuative. %Several other valuative invariants were constructed
 \cite{f.ArdilaFinkRincon, f.BilleraJiaReiner, f.Derksen}. 
An example of a very general valuation on matroid polytopes from \cite{f.ArdilaFinkRincon} is the formal sum $R(M) = \sum_{A \subseteq E} R_{A, r(A)}$ of symbols of the form $R_{S,k}$ where $S$ is a subset of $E$ and $k$ is an integer. Many other nice valuations can be obtained from this one. In fact, the matroid $M$ can clearly be recovered from $R(M)$.\footnote{However, note that $R$ is not a matroid isomorphism invariant.}

Eventually, Derksen \cite{f.Derksen} constructed a valuative matroid invariant, which he and Fink proved to be universal. \cite{f.DerksenFink} We call it the Derksen--Fink invariant. They gave several versions; we present one which is particularly simple to define.

Let $\textrm{Matroids}_{n,r}$ be the set of matroids on $[n]$ of rank $r$.  
Let $\mathcal{C}_n$ be the set of $n!$ complete chains $\mathcal{S}: \emptyset = S_0 \subsetneq S_1 \subsetneq \cdots \subsetneq S_{n-1} \subsetneq S_n = [n]$. For  a complete chain $\mathcal{S}$ and a matroid $M=([n],r)$, let $r[S_{i-1}, S_i] = r(S_i) - r(S_{i-1})$ for $1 \leq i \leq n$; this is always $0$ or $1$. The \textbf{Derksen--Fink invariant} of $M$ is 
\[
\mathcal{G}(M) = 
\sum_{\mathcal{S}  \in \mathcal{C}_n} 
U_{(r[S_0,S_1], r[S_1,S_2], \ldots, r[S_{n-1},S_n])}
%\sum_{i_1 < \cdots < i_n}
%x_{i_1}^{r[F_0,F_1]} x_{i_2}^{r[F_1,F_2]} \cdots x_{i_n}^{r[F_{n-1},F_n]}.
\]
in the vector space $\mathbf{U}$ spanned by the ${n \choose r}$ formal symbols $U_{w_1 \cdots w_n}$ where $w_1, \ldots, w_n \in \{0,1\}$ and $w_1+\cdots+w_n=r$.

\begin{theorem}\cite{f.DerksenFink}
The Derksen--Fink invariant $\mathcal{G}: \textrm{Matroids}_{n,r} \rightarrow \mathbf{U}$ is a universal valuative matroid invariant; that is, for any valuative matroid invariant $f: \textrm{Matroids}_{n,r} \rightarrow V$ there is a linear map $\pi: \mathbf{U} \rightarrow V$ such that $f = \pi \circ \mathcal{G}$.
\end{theorem}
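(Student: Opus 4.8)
The plan is to prove that $\mathcal{G}$ is universal by establishing two things: first, that $\mathcal{G}$ is itself a valuative matroid invariant; and second, that the only linear relations holding among the values $\mathcal{G}(M)$, as $M$ ranges over $\textrm{Matroids}_{n,r}$, are those forced by valuativeness together with the requirement of being an isomorphism invariant. Granting the second point, any valuative invariant $f$ factors through $\mathcal{G}$: the assignment $\mathcal{G}(M) \mapsto f(M)$ is well defined on the span of the $\mathcal{G}(M)$ precisely because every relation among the $\mathcal{G}(M)$ is also a relation among the $f(M)$, and one extends this assignment arbitrarily to a linear map $\pi: \mathbf{U} \to V$. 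So the heart of the matter is a statement about the dimension of the span of $\{\mathcal{G}(M)\}$ inside $\mathbf{U}$, and about identifying it with the universal receptacle for valuations.

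First I would check that $\mathcal{G}$ is a valuation. The key observation is that for a fixed complete chain $\mathcal{S} \in \mathcal{C}_n$, the function $M \mapsto U_{(r[S_0,S_1],\ldots,r[S_{n-1},S_n])}$ depends on $M$ only through the rank increments along $\mathcal{S}$, i.e. through which of the ``steps'' $S_{i-1} \subsetneq S_i$ are rank-increasing. Geometrically, the chain $\mathcal{S}$ picks out a generic linear functional on $\mathbb{R}^E$ (one whose induced order on coordinates refines $\mathcal{S}$), and the face of the matroid polytope $P_M$ maximizing that functional is a vertex, namely the indicator vector of the basis $\{i : r[S_{i-1},S_i]=1\}$ obtained greedily along $\mathcal{S}$. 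In a matroid subdivision $\mathcal{P}$ of $P_M$, a generic functional is maximized at a vertex lying in a unique facet, and an inclusion–exclusion over $\mathcal{P}^{\textrm{int}}$ collapses: the alternating sum $\sum_{P_{M_i} \in \mathcal{P}^{\textrm{int}}} (-1)^{\dim P_M - \dim P_{M_i}}$ of those pieces whose polytope contains that vertex equals $1$ (this is exactly the reduced-Euler-characteristic computation of the M\"obius function of a face figure, as in the proof of Ehrhart reciprocity earlier in the excerpt via Theorem~\ref{f.th:Mobiusformulas}.11). Summing over all $n!$ chains $\mathcal{S}$ gives $\mathcal{G}(M) = \sum_{P_{M_i}\in\mathcal{P}^{\textrm{int}}} (-1)^{\dim P_M - \dim P_{M_i}} \mathcal{G}(M_i)$, as required.

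Next I would set up the universality. Consider the free vector space on isomorphism-or-not classes of matroids in $\textrm{Matroids}_{n,r}$, modulo the subspace $\mathcal{R}$ of all valuative relations (the relations $[P_M] - \sum (-1)^{\cdots}[P_{M_i}]$ over all matroid subdivisions). The quotient $\mathbf{P}_{n,r}$ is by construction the universal target: a valuative invariant with values in $V$ is the same as a linear map $\mathbf{P}_{n,r} \to V$. Now $\mathcal{G}$ induces a linear map $\overline{\mathcal{G}}: \mathbf{P}_{n,r} \to \mathbf{U}$, and universality of $\mathcal{G}$ is equivalent to $\overline{\mathcal{G}}$ being injective. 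To prove injectivity I would exhibit an explicit spanning set for $\mathbf{P}_{n,r}$ whose image under $\mathcal{G}$ is visibly linearly independent in $\mathbf{U}$. The natural candidate is the set of \emph{Schubert matroids} (the ``shifted'' matroids of Section~\ref{f.sec:matroidexamples}), one for each rank-$r$ subset of $[n]$: on one hand these give a basis of $\mathbf{U}$ in the sense that $\mathcal{G}(\text{Schubert matroid for } I)$ is ``triangular'' with respect to the symbols $U_{w}$ (the dominant term being $U_w$ where $w$ is the $0/1$ word encoding $I$, with all other contributions involving words that are smaller in dominance order); on the other hand one shows the Schubert matroids span $\mathbf{P}_{n,r}$ by using matroid subdivisions to reduce an arbitrary matroid polytope to a combination of Schubert matroid polytopes. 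Counting dimensions, $\dim \mathbf{P}_{n,r} \le \binom{n}{r} = \dim \mathbf{U}$ and $\overline{\mathcal{G}}$ is surjective onto a space of dimension $\binom{n}{r}$, forcing equality and injectivity.

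The main obstacle is the spanning claim: showing that every matroid polytope $P_M$ is, modulo valuative relations, a $\mathbb{Z}$-combination of Schubert matroid polytopes. This requires producing, for a matroid that is not already a Schubert matroid, an explicit matroid subdivision of $P_M$ (for instance by a hyperplane cut of the form $x(S) = r(S)$ separating two ``corners'' of $P_M$) whose pieces are matroid polytopes of a controlled, combinatorially simpler type, and then inducting on a suitable complexity measure until only Schubert matroids remain — this is the technical core, essentially the content of the Derksen–Fink argument and the place where the polytopal combinatorics of the hypersimplex (together with the GGMS edge characterization of matroid polytopes, Proposition~\ref{f.prop:bases} item~6) is genuinely needed. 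Verifying the triangularity of $\mathcal{G}$ on Schubert matroids, by contrast, should be a direct computation: for a shifted matroid the greedy basis along a chain $\mathcal{S}$ is easy to read off, so the multiset of words $w$ appearing in $\mathcal{G}(\text{Schubert}_I)$ can be described combinatorially and its dominance-order maximum identified as $w_I$.
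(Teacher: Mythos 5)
First, a point of reference: the paper itself gives no proof of this theorem --- it is quoted from Derksen and Fink --- so there is no in-paper argument to compare against; your outline is in fact close in spirit to the original one (valuativeness of $\mathcal{G}$, plus the fact that the ${n \choose r}$ Schubert matroids give a basis of matroids modulo valuative relations on which $\mathcal{G}$ acts triangularly). However, two of your steps have genuine gaps. In the valuativeness argument, after fixing a chain $\mathcal{S}$ and its generic functional $\omega$, you only account for the internal pieces containing the $\omega$-maximizing vertex $v^*$ of $P_M$; their signed count is indeed $1$, by the M\"obius computation you cite. But every internal piece $P_{M_i}$ contributes a symbol $U_{w(M_i,\mathcal{S})}$, and the pieces whose $\omega$-maximum is some other vertex $v \neq v^*$ contribute words different from $w(M,\mathcal{S})$; you must also prove that for each such $v$ the signed sum of internal pieces with $\omega$-argmax equal to $v$ vanishes. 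This is true (it follows from a similar Euler-characteristic argument, or from showing that each chain-rank indicator $M \mapsto [\,r_M(S_1)=k_1,\ldots,r_M(S_n)=k_n\,]$ is itself a valuation), but it is a separate cancellation that your sketch never addresses.

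Second, there is a labeled-versus-unlabeled issue which, as written, makes your key reduction false. $\mathcal{G}$ is invariant under relabeling, while ``valuative'' alone is not: the function $f(M)=[\textrm{element } 1 \textrm{ is a loop of } M]$ is valuative (no internal face of a subdivision can lie in the facet $x_1=0$ of $P_M$), yet it cannot factor through $\mathcal{G}$, since $\mathcal{G}$ takes the same value on the two rank-one matroids on $\{1,2\}$ having a loop at $1$, respectively at $2$. Consequently, if your $\mathbf{P}_{n,r}$ is spanned by labeled matroids modulo valuative relations only, then $\overline{\mathcal{G}}$ is not injective and the Schubert matroids do not span (already for $n=2$, $r=1$ that space has dimension $3 > {2 \choose 1}$). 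The theorem is rescued by the paper's convention that a ``matroid invariant'' is isomorphism-invariant, so you must quotient by the $S_n$-action as well and prove that Schubert matroids span matroids modulo valuative relations \emph{together with} relabeling. That spanning statement --- which you explicitly leave unproved --- is the real content of the Derksen--Fink theorem (the triangularity of $\mathcal{G}$ on Schubert matroids is, as you say, a direct computation), so your proposal should be read as a plausible blueprint whose central step, and one cancellation lemma, remain to be supplied.
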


\bigskip

\noindent{\textsf{\textbf{Acknowledgments.}}} I am extremely grateful to my combinatorics teachers, Richard Stanley, Gian-Carlo Rota, Sergey Fomin, Sara Billey, and Bernd Sturmfels (in chronological order); their influence on the way that I  understand combinatorics is apparent in these pages. I've also learned greatly from my wonderful collaborators and students.

Several superb surveys have been instrumental in the preparation of this manuscript; they are mentioned throughout the text, but it is worth acknowledging a few that I found especially helpful:
Aigner's \emph{A Course in Enumeration},
Beck and Robins's \emph{Computing the continuous discretely}, 
Brylawski and Oxley's \emph{The Tutte polynomial and its applications},
Flajolet and Sedgewick's \emph{Analytic Combinatorics}, 
Krattenthaler's \emph{Advanced Determinantal Calculus}, and
Stanley's \emph{Enumerative Combinatorics} and \emph{Lectures on hyperplane arrangements}.

%In the middle of this project, I had the fortune of spending a week in Colombia with Lou Billera and Richard Stanley, and hearing many illuminating stories about the origins and motivations behind algebraic and geometric combinatorics. I hope I was able to transmit some of the spirit of those lessons here.

I am very grateful to Mikl\'os B\'ona for the invitation to contribute to this project. Writing this survey has not been an easy task, but I have enjoyed it immensely, and have learned an enormous amount. I also thank Alex Fink, Alicia Dickenstein, Alin Bostan, David Speyer, Federico Castillo, Mark Wildon,  Richard Stanley, and the anonymous referee for their helpful comments.

Finally, and most importantly, I am so very thankful to May-Li for her patience, help, and support during my months of hard work on this project.

\newpage

\begin{footnotesize}
\bibliographystyle{amsalpha}
\bibliography{algmethods.bib}

\end{footnotesize}

\end{document}